\documentclass{article}

\usepackage[letterpaper,margin=1.2in]{geometry}

\usepackage{amsmath,amssymb,amsthm}
\usepackage{MnSymbol}
\usepackage{enumerate}
\usepackage{hyperref}
\usepackage{mathtools}
\usepackage{bbm}
\usepackage{xcolor}
\usepackage{tikz}
\usepackage{tikz-cd}

\usepackage{comment}

\newtheorem{introthm}{Theorem}

\newtheorem{introcor}[introthm]{Corollary}
\newtheorem{introprop}[introthm]{Proposition}

\newtheorem{theorem}{Theorem}[section]

\newtheorem{lemma}[theorem]{Lemma}
\newtheorem{proposition}[theorem]{Proposition}
\newtheorem{corollary}[theorem]{Corollary}

\newtheorem*{theorem*}{Theorem}

\numberwithin{equation}{section}
\numberwithin{figure}{section}

\theoremstyle{definition}
\newtheorem{definition}[theorem]{Definition}
\newtheorem{notation}[theorem]{Notation}

\theoremstyle{remark}
\newtheorem{remark}[theorem]{Remark}
\newtheorem{example}[theorem]{Example}

\newcommand\R{\mathbb{R}}
\newcommand\C{\mathbb{C}}
\newcommand\Q{\mathbb{Q}}

\newcommand{\M}{\mathbb{M}}
\newcommand\N{\mathbb{N}}

\newcommand{\cD}{\mathcal{D}}

\newcommand{\cF}{\mathcal{F}}

\newcommand{\cL}{\mathcal{L}}
\newcommand{\cM}{\mathcal{M}}
\newcommand{\cN}{\mathcal{N}}

\newcommand{\cQ}{\mathcal{Q}}

\newcommand{\cU}{\mathcal{U}}
\newcommand{\cV}{\mathcal{V}}

\newcommand{\rT}{\mathrm{T}}

\DeclareMathOperator{\id}{id}

\DeclareMathOperator{\re}{Re}
\DeclareMathOperator{\im}{Im}
\DeclareMathOperator{\Tr}{Tr}
\DeclareMathOperator{\tr}{tr}

\DeclareMathOperator{\sa}{sa}

\DeclareMathOperator{\qf}{qf}
\DeclareMathOperator{\tp}{tp}
\DeclareMathOperator{\Th}{Th}

\DeclareMathOperator{\Boch}{Boch}
\DeclareMathOperator{\chron}{chron}
\DeclareMathOperator{\filt}{filt}
\DeclareMathOperator{\proc}{proc}
\DeclareMathOperator{\stat}{stat}

\DeclareMathOperator{\acl}{acl}
\DeclareMathOperator{\vol}{vol}
\DeclareMathOperator{\Lip}{Lip}

\DeclarePairedDelimiter{\norm}{\lVert}{\rVert}
\DeclarePairedDelimiter{\ip}{\langle}{\rangle}

\title{Free information geometry and the model theory of noncommutative stochastic processes}
\author{David Jekel}

\begin{document}
	
	\maketitle
	
	\begin{abstract}
		We study entropy and optimal transport theory in the multivariate free probabilistic setting motivated by the large-$n$ theory of random tuples of matrices.  We define a new version of free entropy $\chi_{\chron}^{\cU}$, which is concave along geodesics in the corresponding Wasserstein space.  Moreover, the heat evolution satisfies the evolution variational inequality, which means that the heat evolution is the Wasserstein gradient flow for entropy in the metric sense.  It also has further desirable properties such as a chain rule for iterated conditioning, and an expression in terms of stochastic control problems.  This entropy is defined using microstate spaces of matrix approximations with respect to an expanded class of test functions called \emph{chronological formulas}, which are constructed so as to be closed under taking partial suprema and infima and application of a free heat semigroup.  These formulas are part of a novel framework for studying noncommutative filtrations and stochastic processes as metric structures in the sense of continuous model theory.
	\end{abstract}
	
	\section{Introduction} \label{sec: intro}
	
	\subsection{Motivation} \label{subsec: intro motivation}
	
	The goal of this work is to study Wasserstein information geometry, the theory of optimal transport and entropy, in the setting of free probability.  In classical Wasserstein geometry, one studies the space of probability distributions on $\R^m$, or more generally some manifold, equipped with a metric coming from optimal transport theory.  The $L^2$ Wasserstein distance of two probability measures $\mu, \nu \in \mathcal{P}(\C^m)$ is the infimum of $\norm{\mathbf{X} - \mathbf{Y}}_{L^2}$ over random variables $\mathbf{X} \sim \mu$ and $\mathbf{Y} \sim \nu$ in some diffuse probability space.  The Wasserstein space can also be viewed as an infinite-dimensional manifold \cite{Lafferty1988}.  The entropy $h(\mu) = \int -\rho \log \rho$ is function on this manifold.  The entropy (with the sign convention chosen here) is concave along geodesics \cite{McCann1997convexity} \cite[Corollary 17.19]{Villani2008}.  Moreover, the gradient flow of entropy on the Wasserstein manifold is precisely given by the heat evolution of the measure \cite{JKO1998}.  These fundamental results unify many functional inequalities relating entropy, Fisher information, and Wasserstein distance.  Wasserstein geometry has had many applications in probability, statistics, and data analysis.  It has also inspired many generalizations, such as the development of gradient flow in general metric spaces (see \cite{AGS2008gradient}), and the gradient flow of entropy in the quantum setting \cite{CarlenMaas2017gradient}.  In fact, the free probability setting is a quite suitable test case for the metric geometry perspective because, as explained below, it has very limited regularity compared to the classical setting.
	
	Free probability can be described as a theory of non-commuting random variables which bridges random matrix theory and self-adjoint operator algebras.  The space of random variables $L^\infty(\Omega,\mathcal{F},\mathbb{P})$ is replaced with a von Neumann algebra $\cM$, and the expectation is replaced by a tracial state $\tau: M \to \C$.  Independence is replaced by free independence, resulting in a free central limit theorem with the Gaussian measure replaced by Wigner's semicircular measure \cite{Voiculescu1985,Voiculescu1986}.  The objects in free probability also describe the large-$n$ bulk behavior of several random matrices in many cases.  A free semicircular system $(s_1,\dots,s_m)$ for instance models the large-$n$ behavior of random matrices $(S_1^{(n)},\dots,S_m^{(n)})$ from the Gaussian unitary ensemble (GUE) \cite{Voiculescu1991}.
	
	Analogs of entropy in free probability were introduced by Voiculescu \cite{VoiculescuFE2,VoiculescuFE5} and soon had many applications to the structure of free product von Neumann algebras (e.g.\ \cite{VoiculescuFE3,Ge1998Prime,Hayes2018,HJNS2021,Hayes2022PT,JKE2026upgraded}).  A free Wasserstein distance was defined by Biane and Voiculescu \cite{BV2001}, and many functional inequalities in information theory also have free analogs (e.g.\ \cite{VoiculescuFE5,BS1998,HPU2004,HU2006,Dabrowski2010,MM2014transportentropy,FathiNelson2017,ST2022fractional,Diez2024,LLX2020}).  In particular, the information geometry for free probability is well-developed in the setting of a single (self-adjoint) variable, which deals with probability measures on $\R$; for instance, the free heat flow is the Wasserstein gradient flow of free entropy by \cite[Theorem 1.2]{LLX2020}.  In the multivariate setting, Guionnet and Shlyakhtenko \cite{GS2014} introduced a noncommutative analog of optimal transport to show that von Neumann algebra associated to free Gibbs law for convex potentials (a noncommutative analog of strongly log-concave probability distributions) is isomorphic to that of a free semicircular family, and the behavior of entropy and Wasserstein distance for free Gibbs law from convex potentials is somewhat well understood (see \cite{Dabrowski2017Laplace,JekelEntropy,JekelExpectation,DGS2021,JLS2022}).
	
	However, free information geometry for several variables is beset with long-standing open problems that prevent a satisfying unified theory, in particular:
	\begin{enumerate}[(1)]
		\item Voiculescu defined two analogs $\chi$ and $\chi^*$ for entropy in \cite{VoiculescuFE2} and \cite{VoiculescuFE5}.  While $\chi$ is defined in terms of random matrix models, $\chi^*$ is defined by integrating the free Fisher information along the heat flow, and it is unknown in general $\chi$ and $\chi^*$ are equal outside the single-variable setting (see \cite{Voiculescu2002}).  More generally, it is unclear whether the heat flow is actually the gradient flow for free entropy, and whether the free Fisher information is actually the norm of the gradient squared for entropy.
		\item The large deviations theory for several GUE matrices $(S_1^{(n)},\dots,S_d^{(n)})$ has remained incomplete since \cite{BCG2003}.  The rate function for the large deviations principle should be the free entropy.  The paper \cite{BCG2003} proved an upper bound (implying $\chi \leq \chi^*$), and also gave a lower bound in the case of elements given as a solution to certain class of free SDE.
		\item We do not know in general whether the classical Wasserstein distance associated to natural matrix models converges to the free Wasserstein distance (and this is false for certain settings by \cite[\S 6.1]{GJNS2021} and \cite[\S 4.4]{Jekel2025InfoGeom}).
	\end{enumerate}
	A key piece of the puzzle that was not available until recently was a good free analog of Monge--Kantorovich duality, which would be crucial for analyzing optimal couplings.  A more systematic study of the free Wasserstein distance and Monge--Kantorovich duality in \cite{GJNS2021} also brought into focus some of the obstructions that make the noncommutative setting inherently harder than the classical case:
	\begin{enumerate}[(1)]
		\item There are many non-isomorphic tracial von Neumann algebras, including uncountably many that cannot embed into each other \cite{Ozawa2004}.  As a consequence, the space of noncommutative laws (or probability distributions) is not separable with respect to the Wasserstein distance \cite[\S 5.5]{GJNS2021}.
		\item For noncommutative probability distributions of bounded support, the Wasserstein distance generates a much stronger topology than the weak-$*$ topology \cite[\S 5.4]{GJNS2021}.
		\item Thanks to the far-reaching quantum complexity result MIP$^*=$RE \cite{JNVWY2020}, there are tracial von Neumann algebras which do not admit any finite-dimensional approximations.  Moreover, this also implies that there are some noncommutative laws which do have finite-dimensional approximations, but such that the optimal coupling in the sense of Biane--Voiculescu distance does not admit any finite-dimensional approximations \cite[Corollary 5.14]{GJNS2021}.
		\item It was also shown in \cite[\S 4.4]{Jekel2025InfoGeom} that for certain noncommutative laws, one cannot construct random matrix models that simultaneously have the correct behavior for entropy and for Wasserstein distance (even after modifying the distance to only look at couplings that admit finite-dimensional approximations).
	\end{enumerate}
	In particular, unlike the classical case, two measures with finite entropy in the free setting can generate very different von Neumann algebras, so that there is no hope of the optimal coupling being realized by transport functions.  However, the noncommutative Monge--Kantorovich duality shows that at least in the \emph{interior} of a geodesic, the associated von Neumann algebra is the same \cite[Theorem 1.5]{GJNS2021}.
	
	In \cite{JekelModelEntropy,JekelTypeCoupling,Jekel2025InfoGeom}, we began to approach the problem of constructing a free information geometry by expanding the class of test functions (hence also changing the notion of ``noncommutative probability distribution'').  Let us motivate this in terms of geodesic concavity.  In the classical setting, the geodesic concavity of entropy on $\R^m$ is not hard to prove from optimal transport theory.  Indeed, if $(\mathbf{X}_0,\mathbf{X}_1)$ are an optimal coupling of sufficiently regular probability measures, then there is a convex function $\varphi$ such that $\mathbf{X}_1 = \nabla \varphi(\mathbf{X}_0)$.  The corresponding Wasserstein geodesic is the law $\mu_t$ of $\mathbf{X}_t = (\id + t \nabla \varphi)(\mathbf{X}_0)$.  One can then compute $h(\mathbf{X}_t)$ using the change of variables formula for entropy:
	\[
	h((\id + t \nabla \varphi)(\mathbf{X}_0)) = h(\mathbf{X}_0) + \mathbb{E} \Tr \log(\id + t H \varphi(\mathbf{X}_0)),
	\]
	and the latter is easily seen to be concave.  One can also compute the derivative
	\begin{equation} \label{eq: classical change of variables}
		\frac{d}{dt}\biggr|_{t=0} h((\id + t \nabla \varphi)(\mathbf{X}_0)) = \mathbb{E} \Delta \varphi(\mathbf{X}_0).
	\end{equation}
	The problem in the free setting is that there is a large gap between the very \emph{low} regularity that we have for the functions in Monge--Kantorovich duality and the comparatively \emph{high} regularity that we need in order to use the change of variables for entropy.
	
	The most basic scalar-valued functions in free probability are those of the form $\varphi(\mathbf{x}) = \tr(p(\mathbf{x}))$ for some noncommutative polynomial $p$ in several variables.  By taking sums and products of such functions, one obtains the algebra of \emph{trace polynomials}.  The derivatives of $\tr(p(\mathbf{x}))$ (in particular, the gradient and Hessian) can be described in terms of Voiculescu's cyclic gradient and free difference quotient operators, and certainly one can make sense of the Laplacian and the trace of the log of the Hessian, and prove of change of variables for free microstate entropy; see \cite[\S 3]{VoiculescuFE2}, \cite[\S 7.2]{JLS2022}
	
	Unfortunately, this is not a large enough class of functions to realize Monge--Kantorovich duality.  For this, we need to obtain a pair of convex functions $\varphi$ and $\psi$ such that
	\[
	\varphi(\mathbf{x}) + \psi(\mathbf{y}) \geq \ip{\mathbf{x},\mathbf{y}}
	\]
	with equality in the case of an optimal coupling of $(\mu_0,\mu_1)$.  It is natural to try to construct $\varphi$ and $\psi$ via Legendre transforms of suitable other functions (see proof of Theorem \ref{thm: MK duality}).  Thus, we might set
	\[
	\varphi(\mathbf{x}) = \sup_{\mathbf{y} \in M_{\sa}^m} \left[ \ip{\mathbf{x},\mathbf{y}} - \eta(\mathbf{y}) \right]
	\]
	where $\eta$ is some trace polynomial (not necessarily convex).  However, we immediately run into the problem that the supremum might depend on how $\mathbf{x}$ sits inside the ambient von Neumann algebra $\cM$, so it does not only depend on the noncommutative law of $\mathbf{x}$.  Hence, $\varphi$ immediately falls out of the class of trace polynomials (and its completion).  This behavior is also strikingly different than the classical setting:  If I take a continuous function $f(x,y)$ on $\R^{2m}$ and a compact $K \subseteq \R^m$, then $g(x) = \sup_{y \in K} f(x,y)$ will be continuous, and hence approximated by polynomials.  In the case of classical probability, or equivalently commutative von Neumann algebras, the supremum also does not depend on the ambient algebra, provided the measure space is atomless.  In model-theoretic terms, this is because atomless classical probability spaces admit \emph{quantifier elimination} (see \cite[Example 4.3]{BYU2010} and \cite[Fact 2.10]{BY2012}), and noncommutative tracial von Neumann algebras usually do not \cite{Farah2024,FJP2026}.
	
	We can enlarge the space of test functions so that it is closed under partial suprema and infima over operator-norm balls; this produces the class of \emph{formulas} for tracial von Neumann algebras in the sense of continuous model theory.  Furthermore, in this larger class of test functions, we can find convex functions that witness Monge--Kantorovich duality \cite[Theorem 1.1]{JekelTypeCoupling}.  However, we can no longer apply the change of variables for entropy in this larger class of test functions, as it is not clear how to define the Laplacian or free difference quotients.  We would expect that the Laplacian should satisfy
	\[
	\frac{1}{2} \Delta \varphi(\mathbf{x}) = \lim_{t \to 0^+} \frac{\varphi(\mathbf{x} + \mathbf{z}_t) - \varphi(\mathbf{x})}{t},
	\]
	where $\mathbf{z}_t$ is a free Brownian motion freely independent of $\mathbf{x}$.  The value of $\varphi(\mathbf{x} + \mathbf{z}_t)$ should be the free heat semigroup applied to the function $\varphi$.  Note here that for scalar-valued functions in the free setting we do not need to take any expectation; for motivation, note that $\varphi(\mathbf{x} + \mathbf{z}_t)$ is supposed to be a large-$n$ model of $\varphi(\mathbf{X}^{(n)} + \mathbf{Z}_t^{(n)})$ where $\mathbf{Z}_t^{(n)}$ is a matrix Brownian motion independent of $\mathbf{X}^{(n)}$, and due to concentration of measure the limiting quantity will be deterministic.  In the free probabilistic limit, we want the free Brownian motion $\mathbf{z}_t$ to be freely independent of the base algebra $M_0$ where the point $\mathbf{x}$ lives.
	
	When we combine the partial supremum operation and the heat semigroup, we need to specify the domain where we take the supremum.  In the classical case, in general
	\[
	\mathbb{E}_{\mathbf{X}} \sup_{\mathbf{Y}} \varphi(\mathbf{X},\mathbf{Y}) \neq \sup_{\mathbf{Y}} \mathbb{E}_{\mathbf{X}} \varphi(\mathbf{X},\mathbf{Y})
	\]
	since in the first case $\mathbf{Y}$ is allowed to depend on $\mathbf{X}$ and in the second case it is not.  Analogously, in the free case, some of the variables in the suprema and infima might only range over the original algebra $M_0$, and some might range over the algebra $M_t$ that contains the Brownian motion $\mathbf{z}_t$ at time $t$.  For example, one might have a formula like
	\[
	\varphi(\mathbf{x}) = \inf_{\mathbf{v} \in M^m} \sup_{\mathbf{w} \in M_t^m} \varphi(\mathbf{x} + \mathbf{z}_t, \mathbf{v},\mathbf{w});
	\]
	here we first apply the $\sup$ over $\mathbf{w}$, then the heat semigroup in $\mathbf{x}$, and then the infimum over $\mathbf{v}$.  Thus, $\mathbf{w}$ should be allowed to depend on $\mathbf{z}_t$, while $\mathbf{v}$ does not, and so $\mathbf{w} \in \cM_t^m$ and $\mathbf{v} \in M_0^m$.
	
	Therefore, in order for our class of test functions to be closed under both partial suprema and infima and the heat semigroup, we want to study a filtration $(M_t)_{t \geq 0}$ of von Neumann algebras and a compatible Brownian motion $\mathbf{z}_t$ (this means that $M_t$ is a von Neumann subalgebra of $\cM$ and $M_s \subseteq M_t$ for $s \leq t$; $\mathbf{z}_t$ is in $M_t$ and $\mathbf{z}_t - \mathbf{z}_s$ is freely independent of $M_s$ for $s \leq t$).  Each supremum or infimum will have a specified domain which is an operator-norm ball one of the $M_t$'s.  Moreover, we will assume that all the $\sup$ and $\inf$ operations occur in chronological order, meaning that if $s \leq t$, a supremum over $M_s$ will always occur \emph{outside} of a infimum over $M_t$, and so forth.  Starting from scalar-valued trace polynomials and repeatedly applying continuous functions, substitution of the Brownian motion for some of the arguments, and partial suprema and infima in chronological order produces a class of test functions we call \emph{chronological formulas}.  The associated analog of probability distributions is called the \emph{chronological type}.
	
	Fortunately, the effort undertaken to define and study the space of chronological formulas (and its completion) will be rewarded.
	\begin{enumerate}[(1)]
		\item We will obtain both a Monge--Kantorovich duality and an infinitesimal change-of-variables formula for the associated free entropy $\chi_{\chron}^{\cU}$, and as a consequence show concavity of $\chi_{\chron}^{\cU}$ along Wasserstein geodesics.
		\item We will also show that the heat evolution of chronological types is the gradient flow for $\chi_{\chron}^{\cU}$ in the metric sense, that is, it satisfies the evolution variational inequality \cite{MurSav2020gradient}. 
		\item We can define conditional entropy and obtain a chain rule for entropy under conditioning.
		\item We will make some progress in understanding the large deviations problem for several random matrices; namely, we will prove a large deviations principle along each ultrafilter on $\N$, with the rate function given by a stochastic optimization problem, which can also be expressed as a limit of chronological formulas.
	\end{enumerate}
	Beyond the specific free probabilistic content, this article constitutes a case study of how Wasserstein geometry interacts with ultraproducts, and in this connection we remark that Warren \cite{Warren2023ultralimits} has studied the behavior of Wasserstein spaces under taking ultralimits of the underlying metric spaces.  The noncommutative setting is particularly challenging because of the model-theoretically wild behavior of the limiting objects, which requires the development of new spaces and new techniques.  Besides the techniques from Voiculescu's free entropy theory \cite{VoiculescuFE2}, this paper incorporates analysis of ultraproducts and randomness from \cite{BY2012,GaoJekel2025}, noncommutative Monge--Kantorovich duality from \cite{GJNS2021,JekelTypeCoupling}, matrix models for such optimal couplings from \cite{Jekel2025InfoGeom}, noncommutative stochastic optimization problems from \cite{GJNPviscositysolution,GJNPmatrixcontrols}
	
	\subsection{Main Results} \label{subsec: intro results}
	
	We use the following terminology and conventions.
	\begin{itemize}
		\item A \emph{tracial von Neumann algebra} is a von Neumann equipped with a specified faithful, normal, tracial state.
		\item A \emph{noncommutative filtration} is a family of tracial von Neumann algebra $(M_t)_{t \in [0,\infty]}$ with $M_s \subseteq M_t$ for $s \leq t$.
		\item We consider a free Brownian $\mathbf{z}_t = (z_{t,j})_{j \in \N}$ whose operator real and imaginary parts form a free semicircular family with common variance $t$, such that $z_{t,j} \in M_t$ and $\mathbf{z}_{s,t} = \mathbf{z}_t - \mathbf{z}_s$ is free from $M_s$ when $s \leq t$.
		\item We use boldface for tuples (despite that many works in continuous model theory use overline instead).  Lowercase letters will be used for elements of general von Neumann algebras and uppercase for matrices.  We also write $\mathbb{M}_n$ for the $n \times n$ complex matrices.
		\item We work with general non-self-adjoint tuples.
	\end{itemize}
	Each non-self-adjoint variable could be expressed equivalently using two self-adjoint variables by taking the operator real and imaginary parts.  However, the non-self-adjoint version is convenient for applying model theory of von Neumann algebras, since we take suprema and infima over operator norm balls rather than sets of self-adjoint operators in the operator norm balls.  Hence, the use of non-self-adjoint variables makes the notation somewhat simpler, although it does not materially affect the results.
	
	As described at the end of the previous section, we consider \emph{chronological formulas} which are generated recursively by the following operations:
	\begin{itemize}
		\item Starting with the real part of the trace of some noncommutative polynomial in the given variables and in the increments of the process $\mathbf{z}_{s,t}$.
		\item Applying a continuous function $f: \R^k \to \R$ to some existing formulas $\varphi_1$, \dots, $\varphi_k$.
		\item Applying the supremum or infimum over an operator-norm ball in $M_0$ in one of the free variables, which is thus no longer a free variable,
	\end{itemize}
	with the restriction that the suprema and infima occur in chronological order; see Definition \ref{def: chronological formulas}.  We also study the space of \emph{chronologically definable predicates} which is a natural completion of the space of chronological formulas (Definition \ref{def: chron def pred}).
	
	The space of chronological formulas in $m$ free variables is denoted $\mathcal{F}_{\chron,m}$.  The chronological type $\tp_{\chron}^{\cM}(\mathbf{x})$ of an $m$-tuple $\mathbf{x} \in M_0^m$ is the mapping $\mathcal{F}_{\chron,m} \to \R$ given by sending a chronological formula $\varphi$ to its evaluation $\varphi^{\cM}(\mathbf{x})$.  We will define a variant of free entropy $\chi_{\chron}^{\cU}(\mu)$ where $\mu$ is the chronological type of some $m$-tuple.  The entropy $\chi_{\chron}^{\cU}(\mu)$ is defined in terms of the Lebesgue measure or the Gaussian measure of certain spaces of matrix approximations, in the same way as Voiculescu's free entropy $\chi$ in \cite{VoiculescuFE2}; we take the log-volume of the space of matrix approximations, divide by $n^2$, and take the limit along an ultrafilter $\cU$ on $\N$.  However, the relationship between the matrix approximations and the tuple $\mathbf{x}$ in the filtered noncommutative probability space is considerably more complicated, as we explain in \S \ref{subsec: intro filtrations}.
	
	For the entropy for chronological types $\chi_{\chron}^{\cU}$, we are able to prove several foundational properties for information geometry.  This is the first time that such properties have been shown for any version of multivariate free entropy.  We first highlight the property of geodesic concavity with respect to the $L^2$-Wasserstein distance.  As one might expect the Wasserstein distance of two types $\mu_0$ and $\mu_1$ is defined as the minimal $L^2$-distance $\norm{\mathbf{x}_0 - \mathbf{x}_1}_2$ where $\mathbf{x}_0$ and $\mathbf{x}_1$ are $m$-tuples with types $\mu_0$ and $\mu_1$ respectively.
	
	\begin{introthm}[Geodesic concavity; see {Theorem \ref{thm: geodesic concavity}}] \label{introthm: geodesic concavity}
		Let $\mathbf{x}_0$ and $\mathbf{x}_1$ be $m$-tuples which form an optimal coupling of chronological types $\mu_0$ and $\mu_1$.  Let $\mu_t$ be the chronological type of $(1-t)\mathbf{x}_0 + t \mathbf{x}_1$.  Then $t \mapsto \chi_{\chron}^{\cU}(\mu_t)$ is concave.
	\end{introthm}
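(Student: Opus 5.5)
The strategy mirrors the classical argument but replaces the exact change-of-variables formula with a first-order (infinitesimal) version combined with Monge--Kantorovich duality. Concavity of $t \mapsto \chi_{\chron}^{\cU}(\mu_t)$ follows once we show the one-sided derivative inequality at every interior time:
\[
\chi_{\chron}^{\cU}(\mu_s) \leq \chi_{\chron}^{\cU}(\mu_t) + (s-t)\, D_t
\]
for all $s$, where $D_t$ is a number independent of $s$, namely a ``Laplacian'' of a dual potential. This gives a supporting line at each $t \in (0,1)$. Upper semicontinuity of entropy in the Wasserstein topology, which I expect the paper to establish, then handles the endpoints $t=0,1$.

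\textbf{Step 1 (duality along the geodesic).} Restricted sub-segments of an optimal coupling remain optimal. Applying the chronological Monge--Kantorovich duality (Theorem \ref{thm: MK duality}) to the pair $(\mathbf{x}_t,\mathbf{x}_s)$, I would obtain a convex chronologically definable predicate $\varphi$ with
\[
\varphi(\mathbf{x}) + \varphi^*(\mathbf{y}) \geq \re \ip{\mathbf{x},\mathbf{y}},
\]
with equality at the coupling. For interior $t$, I would realize $\mathbf{x}_s$ as the image of $\mathbf{x}_t$ under a map of the form $\mathbf{x} \mapsto \mathbf{x} + (s-t)\mathbf{v}(\mathbf{x})$, where $\mathbf{v}$ is the gradient of a semiconcave/semiconvex potential $\psi_t = \tfrac12\norm{\mathbf{x}}_2^2 - \varphi$ (suitably rescaled). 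The purpose is to use the interior regularity, analogous to \cite[Theorem 1.5]{GJNS2021}, so that the transport from time $t$ is Lipschitz with a quadratic remainder.

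\textbf{Step 2 (microstate comparison).} Here I would use the definition of $\chi_{\chron}^{\cU}$ via log-volumes of microstate spaces for chronological formulas. The goal is to show that the push-forward of the microstate space for $\mu_t$ under a matrix version of $\id + (s-t)\nabla\psi_t$ lands, up to small error, inside microstate spaces for $\mu_s$. Closure of chronological formulas under partial sup/inf is exactly what makes a matrix version of the Legendre-type potential available, since $\psi_t$ evaluated on matrices is again a sup of a chronological expression.

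The Jacobian factor is controlled by semiconvexity. The Hessian of $\psi_t$ is bounded above and below by multiples of the identity, with constants of order $1/t$ and $1/(1-t)$. Hence $\log\det(1 + (s-t)H)$ is bounded above by $\Tr((s-t)H)$, the tangent-line bound for the concave function $\log$. Normalizing and passing to the limit gives $(s-t)$ times the free Laplacian. That Laplacian is defined through the heat semigroup as $\lim_{h\to0} (\varphi(\mathbf{x}+\mathbf{z}_h)-\varphi(\mathbf{x}))/h$, and it is here that the heat-semigroup closure of chronological formulas enters.

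\textbf{Main obstacle.} The hardest step is justifying this infinitesimal change-of-variables inequality for merely semiconvex chronological predicates, for which there is no free difference quotient. I would first regularize by the heat semigroup: replace $\psi$ by $\psi(\cdot + \mathbf{z}_h)$, which stays chronological. I would then use Gaussian integration by parts on matrix microstates, coupling random matrix Brownian motion with matrices in the microstate space, to show that the averaged Jacobian term converges to the semigroup-defined Laplacian as $n\to\cU$ and then $h\to0$. Semiconvexity should make the error one-sided, and a one-sided error is all that concavity requires.
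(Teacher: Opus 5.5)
There is a genuine gap at the endpoints. Upper semicontinuity of $\chi_{\chron}^{\cU}$ (which the paper has, via Corollary \ref{cor: semicontinuity of entropy}) only gives $\limsup_{t\to 0^+}\chi_{\chron}^{\cU}(\mu_t)\le\chi_{\chron}^{\cU}(\mu_0)$. But a function concave on $(0,1)$ is concave on $[0,1]$ exactly when the endpoint values do \emph{not exceed} the one-sided limits, so you need $\chi_{\chron}^{\cU}(\mu_0)\le\lim_{t\to 0^+}\chi_{\chron}^{\cU}(\mu_t)$, and likewise at $t=1$. Semicontinuity points the wrong way, and your push-forward scheme cannot supply the missing inequality. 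Pushing from an interior $\mathbf{x}_t$ to $\mathbf{x}_0$ again bounds $\chi_{\chron}^{\cU}(\mu_0)$ from below. Pushing out of $\mathbf{x}_0$ is impossible at the free level: $\varphi_{t,0}$ is not semiconcave, and $\mathbf{x}_t$ need not be a (definable) function of $\mathbf{x}_0$ at all. The paper handles this with Theorem \ref{thm: optimal coupling lifts}. Take random matrix models $\mathbf{X}_0^{(n)}$ whose normalized classical entropy tends to $\chi_{\chron}^{\cU}(\mathbf{x}_0\mid\mathbf{y}_0)$ (Proposition \ref{prop: classical-free variational principle}). By measurable selection, construct models $\mathbf{X}_1^{(n)}$ of $\mathbf{x}_1$ that are \emph{classically} optimally coupled with $\mathbf{X}_0^{(n)}$ and have the correct joint chronological type in the limit. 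Then $\mathbf{X}_0^{(n)}$ is a $\frac{1}{1-t}$-Lipschitz function of $\mathbf{X}_t^{(n)}=(1-t)\mathbf{X}_0^{(n)}+t\mathbf{X}_1^{(n)}$, so $h^{(n)}(\mathbf{X}_t^{(n)})\ge h^{(n)}(\mathbf{X}_0^{(n)})+2m\log(1-t)$. Proposition \ref{prop: classical-free variational principle} then gives $\chi_{\chron}^{\cU}(\mu_t)\ge\chi_{\chron}^{\cU}(\mu_0)+2m\log(1-t)$, which is exactly the bound needed.

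The interior step also does not close as written. Let $G$ be the matrix version of $\id+(s-t)\nabla\psi_t$, and $\Gamma_t,\Gamma_s$ microstate spaces for $\mu_t,\mu_s$. The inclusion you aim for, $G(\Gamma_t)\subseteq\Gamma_s$, yields $\vol(\Gamma_s)\ge\int_{\Gamma_t}\lvert\det DG\rvert$. That is a \emph{lower} bound on $\chi_{\chron}^{\cU}(\mu_s)$, and using it requires a lower bound on $\log\det(1+(s-t)H)$. The tangent-line bound $\log\det(1+A)\le\Tr A$ goes the other way. It would only help together with the reverse inclusion $\Gamma_s\subseteq G(\Gamma_t)$, which you do not address and which is harder: after heat regularization the image no longer has type $\mu_s$, and removing the regularization via upper semicontinuity again preserves only lower bounds. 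What the method actually produces is $\chi_{\chron}^{\cU}(\mu_s)\ge\chi_{\chron}^{\cU}(\mu_t)+(s-t)\Delta\omega_t(\mathbf{x}_t)-O((s-t)^2)$ for $s$ near $t$. Here $\Delta$ means $\overline{\Delta}$ or $\underline{\Delta}$ according to the sign of $s-t$, and $\omega_t=\frac{1}{1-t}(\varphi_{1,t}-q)$. This is Theorem \ref{thm: infinitesimal change of variables}, proved with $\log(1+z)\ge z-2z^2$ and entropy-achieving random matrix models rather than deterministic microstate volumes.

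Such bounds are not supporting lines, and the paper closes the argument with an ingredient your plan lacks. The forward and backward bounds make $t\mapsto\chi_{\chron}^{\cU}(\mu_t)$ locally Lipschitz with a.e.\ derivative $\overline{\Delta}\omega_t(\mathbf{x}_t)=\underline{\Delta}\omega_t(\mathbf{x}_t)$. Concavity on $(0,1)$ then follows from Lemma \ref{lem: displacement Laplacian}(4): $t\mapsto\overline{\Delta}\omega_t(\mathbf{x}_t)$ is decreasing. That lemma is proved by writing $\varphi_{1,t}$ as an inf-convolution of a rescaled $\varphi_{1,s}$ and inserting a Brownian-shifted candidate into the infimum.

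Two smaller points. First, duality applied to $(\mathbf{x}_t,\mathbf{x}_s)$ by itself does not give a semiconcave potential; the paper gets that regularity by building $\varphi_{1,s}$ from the endpoint potential $\varphi_0$ via inf-convolution (Proposition \ref{prop: displacement interpolation}). Second, the workable regularization is the time average $\frac{1}{h}\int_0^h P_r\varphi\,dr$. Its matrix Laplacian is exactly $\frac{2}{h}(\Lambda^{(n)}_{P_h\varphi}-\Lambda^{(n)}_{\varphi})$, a difference of approximations of formulas, whereas a single $P_h\psi$ has no such identity.
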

	
	There are several important ingredients for this geodesic concavity theorem:
	\begin{enumerate}[(1)]
		\item We prove an analog of Monge--Kantorovich duality for the space of chronological types in Theorem \ref{thm: MK duality}, in much the same way as \cite[Theorem 1.1]{JekelTypeCoupling}.
		\item We prove an infinitesimal change of variables for free entropy in Theorem \ref{thm: infinitesimal change of variables} (analogous to \eqref{eq: classical change of variables}) which works in general for chronologically definable predicates with limited regularity.
		\item Similar to \cite[Lemma 4.9]{Jekel2025InfoGeom}, we prove that there exist random matrix models $\mathbf{X}_0^{(n)}$ and $\mathbf{X}_1^{(n)}$ for $\mathbf{x}_0$ and $\mathbf{x}_1$, such that $\mathbf{X}_0^{(n)}$ and $\mathbf{X}_1^{(n)}$ form a classical optimal coupling and also the normalized classical entropy of $\mathbf{X}_0^{(n)}$ approximates $\chi_{\chron}^{\cU}(\mathbf{x}_0)$; see Theorem \ref{thm: optimal coupling lifts}.  This result also heavily relies on the Monge--Kantorovich duality in item (1).
	\end{enumerate}
	The ingredients (1) and (2) are in fact sufficient to obtain concavity in the interior of the geodesic.  For $0 < s < t < 1$, we express convex functions associated to $\mathbf{x}_s$ and $\mathbf{x}_t$ in the Monge--Kantorovich duality using Legendre transforms and inf-convolutions, show that the infinitesimal change of variables for entropy can be applied to them, and give bounds for the Laplacian of the inf-convolution in terms of that of the original function.  The behavior at the endpoints of the geodesic requires special attention, and this is handled using item (3) in the same way as in \cite[Lemma 4.10]{Jekel2025InfoGeom}.
	
	Item (3) is also the key ingredient for proving the evolution variational inequality (EVI), which is a metric version of the statement that the heat evolution gives the Wasserstein gradient flow of entropy (see \cite{AGS2008gradient,MurSav2020gradient}; similar inequalities also played a key role in the study of quantum Wasserstein gradient flow by Carlen and Maas in \cite[\S 8]{CarlenMaas2017gradient}).  Recall that the sign convention here is that $\chi_{\chron}^{\cU}$ should be geodesically \emph{concave} and we consider the \emph{upward} gradient flow.  The version of the EVI that we use here is the second integral characterization EVI$_0$'' from \cite[Theorem 3.3]{MurSav2020gradient} with $\lambda = 0$, applied to $-\chi_{\chron}^{\cU}$.  The factor of $1/2$ on the left-hand side is because we express the heat semigroup in terms of Brownian motion, which puts a factor of $1/2$ in front of the Laplacian in the heat equation.
	
	\begin{introthm}[Evolution variational inequality; see {Theorem \ref{thm: EVI}}] \label{introthm: EVI}
		Let $\nu_0$ be the chronological type of some $m$-tuple $\mathbf{x}_0$ in $\cQ^m$ and let $\nu_t$ be the chronological type of $\mathbf{x}_0 + \mathbf{z}_t$ where $\mathbf{z}_t$ is the Brownian motion in $\cQ$.  Let $\sigma$ be another chronological type.  Then we have the evolution variational inequality
		\[
		\frac{1}{2} \left[ d_W(\nu_t,\sigma)^2 - d_W(\nu_s,\sigma)^2 \right] \leq (t-s) \left[ \chi_{\chron}^{\cU}(\nu_t) - \chi_{\chron}^{\cU}(\sigma) \right] \text{ for } 0 \leq s \leq t < \infty.
		\]
	\end{introthm}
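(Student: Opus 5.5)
The plan follows the classical matrix-level proof of the EVI for the heat flow, transferred to the free setting by the matrix-model lifting of optimal couplings (Theorem \ref{thm: optimal coupling lifts}). By the semigroup property (the type of $\mathbf{x}_0+\mathbf{z}_t$ is obtained from that of $\mathbf{x}_0+\mathbf{z}_s$ by running the heat evolution for time $t-s$), it suffices to treat $s=0$ with an arbitrary starting type. That is, we must show
\[
\tfrac12\bigl[d_W(\nu_t,\sigma)^2-d_W(\nu_0,\sigma)^2\bigr]\le t\bigl[\chi_{\chron}^{\cU}(\nu_t)-\chi_{\chron}^{\cU}(\sigma)\bigr].
\]
We may assume $\chi_{\chron}^{\cU}(\sigma)>-\infty$, since otherwise there is nothing to prove.

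\textbf{Step 1: matrix models for an optimal coupling.} Let $(\mathbf{x}_0,\mathbf{y})$ be an optimal coupling of $\nu_0$ and $\sigma$. Apply Theorem \ref{thm: optimal coupling lifts} with $\sigma$ as the entropy-carrying endpoint. This gives random matrices $\mathbf{X}^{(n)},\mathbf{Y}^{(n)}$ with the following properties: they form a classical optimal coupling, their chronological types converge along $\cU$ to those of $\mathbf{x}_0$ and $\mathbf{y}$, the normalized classical entropy $n^{-2}h(\mathbf{Y}^{(n)})+\tfrac{m}{2}\log n$ (in the paper's normalization) tends to $\chi_{\chron}^{\cU}(\sigma)$, and $\mathbb{E}\norm{\mathbf{X}^{(n)}-\mathbf{Y}^{(n)}}_2^2\to d_W(\nu_0,\sigma)^2$.

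\textbf{Step 2: the classical EVI on matrices.} Let $\mathbf{Z}^{(n)}_t$ be an independent GUE Brownian motion, normalized so that $n^{-1}$-traces match the free Brownian motion. The classical heat flow is the Wasserstein gradient flow of the Boltzmann entropy on Euclidean space $M_n^m$ with the normalized Hilbert--Schmidt norm, which has zero curvature, so the classical EVI$_0$ holds. After the rescaling by $n^2$ (the normalized metric makes the Laplacian carry the factor $1/n^2$), it reads
\[
\tfrac12\bigl[W(\mathbf{X}^{(n)}+\mathbf{Z}^{(n)}_t,\mathbf{Y}^{(n)})^2-W(\mathbf{X}^{(n)},\mathbf{Y}^{(n)})^2\bigr]\le t\bigl[n^{-2}h(\mathbf{X}^{(n)}+\mathbf{Z}^{(n)}_t)-n^{-2}h(\mathbf{Y}^{(n)})\bigr].
\]
Here $W$ denotes the classical Wasserstein distance for the normalized $L^2$ norm, and the constant $\tfrac m2\log n$ normalizations cancel on the right-hand side.

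\textbf{Step 3: pass to the limit along $\cU$.} Four terms must be controlled.
\begin{enumerate}[(a)]
\item The second Wasserstein term converges to $d_W(\nu_0,\sigma)^2$ by the choice of the lift.
\item Entropy upper semicontinuity: the chronological type of $\mathbf{X}^{(n)}+\mathbf{Z}^{(n)}_t$ converges to $\nu_t$, because chronological formulas are built to commute with the heat semigroup and $\mathbf{Z}$ becomes asymptotically free with the correct filtration structure (the \S\ref{subsec: intro filtrations} machinery). From this we need $\lim_\cU n^{-2}h(\mathbf{X}^{(n)}+\mathbf{Z}^{(n)}_t)\le\chi_{\chron}^{\cU}(\nu_t)$. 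This follows the standard argument that classical entropy of a random matrix concentrated near a microstate neighborhood is at most the log-volume of that neighborhood plus $o(n^2)$. The smoothing by $\mathbf{Z}_t$ guarantees exponential concentration, so the mass concentrates on microstate spaces for any neighborhood of $\nu_t$.
\item Lower bound on distance: we need $d_W(\nu_t,\sigma)^2\le\liminf_\cU W(\mathbf{X}^{(n)}+\mathbf{Z}^{(n)}_t,\mathbf{Y}^{(n)})^2$. Take classical optimal couplings at level $n$; the $\cU$-ultraproduct of these, together with concentration, produces a coupling in a filtered ultraproduct whose marginals have chronological types $\nu_t$ and $\sigma$. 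Its $L^2$ cost is the limit of the classical costs, which gives the required lower bound.
\item The $\mathbf{Y}$-entropy term converges to $\chi_{\chron}^{\cU}(\sigma)$ by Step 1.
\end{enumerate}
Combining (a)--(d) with the inequality of Step 2 gives the EVI for $s=0$, and the semigroup reduction then gives it for general $0\le s\le t$.

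\textbf{Main obstacle.} I expect the hardest step to be (c) together with the type convergence in (b). We must show that the ultraproduct of random matrix couplings realizes the correct \emph{chronological} types, not merely the correct noncommutative laws. This requires concentration of measure for chronological formulas evaluated on matrix tuples, so that the randomness disappears in the limit, which I would derive from their Lipschitz dependence on the inputs. It also requires that the filtration generated by the GUE Brownian motion in the ultraproduct matches the filtration $(M_t)$ for which the types are defined. I would first try to prove that chronological formulas with matrix inputs are uniformly Lipschitz in the normalized $\norm{\cdot}_2$ and apply Gaussian concentration, reducing (b) and (c) to statements about deterministic microstates.
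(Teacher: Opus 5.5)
Your proposal is correct and follows the paper's proof essentially step for step. The paper reduces to $s=0$, takes an entropy-achieving bounded model of $\sigma$ from Proposition \ref{prop: classical-free variational principle}, and lifts it via Theorem \ref{thm: optimal coupling lifts} to a classically optimal coupling with a model of $\nu_0$. It then applies the classical EVI$_0$ on $\mathbb{M}_n^m$ and passes to the limit: the first half of Proposition \ref{prop: classical-free variational principle} bounds the entropy of $\mathbf{X}^{(n)}+\mathbf{Z}_t^{(n)}$, and an ultraproduct of level-$n$ optimal couplings bounds $d_W(\nu_t,\sigma)$. The "main obstacle" you flag is already covered by the $\Lambda_\varphi^{(n)}$ machinery (Propositions \ref{prop: approx by pointwise function} and \ref{prop: convergence of type} together with Poincar\'e concentration). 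The only extra detail the paper adds in your step (c) is to truncate the $\nu_t^{(n)}$-marginal with $F_{r+3\sqrt{t}}$, so that the coupling defines elements of $\cQ_0$ without increasing the $L^2$ cost.
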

	
	In fact, the evolution variational inequality implies geodesic concavity by \cite[Theorem 3.2]{DanSav2008eulerian}, so in fact Theorem \ref{introthm: geodesic concavity} is a consequence of Theorem \ref{introthm: EVI}.  Our proof of Theorem \ref{introthm: geodesic concavity}, however, has independent interest, since it gives a more specific description of the derivative of entropy along the geodesic in Theorem \ref{thm: geodesic concavity} (2) and thereby demonstrates the viability of change of variables for free entropy in a much more general setting.
	
	Although ingredient (3) already holds in the setting of types in the language of tracial von Neumann algebras, without any filtration, this is not enough for Theorem \ref{introthm: EVI} to hold in this setting.  The problem is that we do not know whether the type of $\mathbf{x}_0 + \mathbf{z}_t$ is uniquely determined by the type of $\mathbf{x}_0$, so that $\nu_t$ is not well-defined to begin with.  If there were a way to obtain the type of $\mathbf{x}_0 + \mathbf{z}_t$ from the type of $\mathbf{x}_0$ (specifically for the random matrix filtration $Q_t$ described in Proposition \ref{introprop: quotient filtration} below), then many of the results in the paper could likely be proved in a simpler framework.  However, this question seems difficult and out of reach by current methods.
	
	In this work, we also study the conditional version of the entropy.  We define $\chi_{\chron}^{\cU}(\mathbf{x} \mid \mathbf{y})$, roughly speaking, by fixing deterministic matrix approximations $\mathbf{Y}^{(n)}$ for the tuple $\mathbf{y}$, and then studying the measure of the space of compatible matrix approximations for $\mathbf{x}$ (analogous to Shlyakhtenko's conditional microstate entropy in \cite{Shlyakhtenko2002}).  In fact, Theorems \ref{introthm: geodesic concavity} and \ref{introthm: EVI} hold in the more general setting of conditional entropy and Wasserstein distance.
	
	A key point is that for the setting of chronological types, the resulting conditional entropy is independent of the choice of deterministic matrix approximations $\mathbf{Y}^{(n)}$ (see Theorem \ref{thm: final formula for entropy}).  This is the first time that this property has been proved for any version of conditional free entropy outside of the very restricted cases (1) where $\mathbf{y}$ generates an amenable von Neumann algebra and (2) where $\mathbf{y}$ is freely independent of $\mathbf{x}$.  The intuition is that the chronological type has enough information to pin down the behavior of $\mathbf{y}$ exactly, so that two different approximations cannot be chosen which would exhibit different behaviors in any way that affects entropy (see the discussion around Theorem \ref{introthm: entropy in terms of pressure} below).  As a consequence, we can obtain a chain rule for entropy under iterated conditioning, while for previous versions of free entropy one can only get inequalities (see \cite[Proposition 2.5]{Shlyakhtenko2002}).
	
	\begin{introthm}[Chain rule for entropy; see Theorem {\ref{thm: chain rule for entropy}}] \label{introthm: chain rule}
		Let $\mathbf{x}$, $\mathbf{y}$, and $\mathbf{w}$ be tuples of length $m_1 \in \N$, $m_2 \in \N$, and $m_3 \in \N \cup \{0,\infty\}$.  Then
		\[
		\chi_{\chron}^{\cU}(\mathbf{x}, \mathbf{y} \mid \mathbf{w}) = \chi_{\chron}^{\cU}(\mathbf{x} \mid \mathbf{y}, \mathbf{w}) + \chi_{\chron}^{\cU}(\mathbf{y} \mid \mathbf{w}).
		\]
	\end{introthm}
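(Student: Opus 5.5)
The plan is to reduce the chain rule to the corresponding classical chain rule for conditional differential entropy at the matrix level, and to rely on the key fact (Theorem \ref{thm: final formula for entropy}) that conditional chronological entropy does not depend on the choice of deterministic matrix approximations for the conditioning variables. Concretely, I would first use Theorem \ref{thm: final formula for entropy} to write each of the three quantities as an ultralimit of normalized log-volumes (or Gaussian-normalized measures). The variables $\mathbf{w}$ are fixed by some sequence $\mathbf{W}^{(n)}$ of microstates, and the entropy is measured over microstate spaces defined by a neighborhood $\cO$ of the chronological type (a finite collection of chronological formulas with tolerance). The expression to aim for is $\chi(\mathbf{x},\mathbf{y}\mid\mathbf{w}) = \inf_{\cO} \lim_{n\to\cU} n^{-2}\log \vol\{(\mathbf{X},\mathbf{Y}) : (\mathbf{X},\mathbf{Y},\mathbf{W}^{(n)})\in \Gamma_\cO\}$, plus normalization constants.

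The inequality $\le$ comes from Fubini. The microstate set for $(\mathbf{x},\mathbf{y})$ given $\mathbf{W}^{(n)}$ fibers over the $\mathbf{Y}$-coordinate. Each fiber is contained in the microstate set for $\mathbf{x}$ given $(\mathbf{Y},\mathbf{W}^{(n)})$, and the set of $\mathbf{Y}$ appearing lies in the microstate set for $\mathbf{y}$ given $\mathbf{W}^{(n)}$. The fiber volumes must then be bounded \emph{uniformly} in $\mathbf{Y}$. Here the independence from the approximating sequence is essential: if for some $\cU$-large set of $n$ there were choices $\mathbf{Y}^{(n)}$ in the microstate space with fiber volume exceeding $\chi(\mathbf{x}\mid\mathbf{y},\mathbf{w})+\epsilon$, one could select such a sequence. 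If it converges in chronological type to that of $(\mathbf{y},\mathbf{w})$ after shrinking the neighborhoods diagonally, it gives an approximating sequence whose conditional entropy exceeds the value, contradicting Theorem \ref{thm: final formula for entropy}. Some care is needed because the fibers are defined by joint conditions on $(\mathbf{X},\mathbf{Y},\mathbf{W})$; I would arrange the neighborhoods so that the $(\mathbf{Y},\mathbf{W})$-conditions appear as part of the joint ones, and use a diagonal argument over a countable cofinal family of neighborhoods.

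For $\ge$, I would use the same Fubini decomposition in the opposite direction. The conditional entropy of $\mathbf{x}$ given any good approximating sequence equals the common value, so it suffices to show that for most $\mathbf{Y}$ in the $\mathbf{y}$-microstate set (a set of nearly full normalized measure) the fiber has volume at least $\exp(n^2(\chi(\mathbf{x}\mid\mathbf{y},\mathbf{w})-\epsilon))$. The sticking point is that a microstate $\mathbf{Y}$ for the chronological type of $\mathbf{y}$ given $\mathbf{W}$ need not admit any extension $\mathbf{X}$ unless the chronological type is rich enough. I would get around this with the chronological formulas themselves. The formula $\sup_{\mathbf{x}}$ or $\inf_{\mathbf{x}}$ of the distance to a joint neighborhood, taken over an operator-norm ball in $M_0$, is a chronological formula in $(\mathbf{y},\mathbf{w})$ whose value is $0$. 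Requiring it to be near $0$ in the $\mathbf{y}$-neighborhood forces the existence of extensions, via the large-deviation and concentration properties of the random matrix filtration that make ultraproduct suprema match matrix suprema. To pass from existence of an extension to a lower volume bound, I would invoke the pressure or variational description of entropy (Theorem \ref{introthm: entropy in terms of pressure}), which expresses the conditional entropy as a chronological quantity. The volume of the $\mathbf{X}$-fiber over $\mathbf{Y}$ is then controlled by a chronologically definable function of $(\mathbf{Y},\mathbf{W})$, and so it is uniform over the microstate set.

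I expect this second direction, the uniform lower bound on fibers, to be the main obstacle. The first thing to try is the pressure formula, because it makes $\log$-fiber-volume, up to $o(n^2)$, a definable predicate evaluated at $(\mathbf{Y},\mathbf{W})$. Once both bounds are in place, taking infima over neighborhoods and then $\epsilon\to0$ yields the equality. The cases $m_3\in\{0,\infty\}$ follow identically: for $m_3=\infty$ one uses finite subtuples with cofinal neighborhoods, and for $m_3=0$ one drops $\mathbf{W}$.
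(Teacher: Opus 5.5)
Your argument is sound, but it takes a different route from the paper.

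\textbf{The paper's route.} The paper never works with fibers of microstate sets. It first proves a chain rule for pressure, $\mathcal{P}_{m_1+m_2,m_3}(\varphi)=\mathcal{P}_{m_2,m_3}(\mathcal{P}_{m_1,m_2+m_3}(\varphi))$ (Lemma \ref{lem: chain rule for pressure}), by applying Fubini to the Laplace integrals $\int\int \exp(-n^2\Lambda^{(n)}_\varphi)$. Uniformity in $\mathbf{Y}$ is automatic there: Lemma \ref{lem: discrete approx of matrix pressure} approximates $\mathcal{P}^{(n)}_{m_1,m_2+m_3}(\varphi)(\cdot,\mathbf{W}^{(n)})$ by $\Lambda^{(n)}_{\psi_{k,r}}$ for a single chronological formula $\psi_{k,r}$, with an error independent of $\mathbf{Y}$. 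The entropy chain rule is then pure Legendre duality from $\tilde{\chi}_{\chron}^{\cU}(\mathbf{x}\mid\mathbf{y})=\sup_\varphi[\mathcal{P}(\varphi)(\mathbf{y})-\varphi(\mathbf{x},\mathbf{y})]$. This is the Gaussian version, equivalent to the Lebesgue one because the correction terms are additive over the tuples. One inequality comes from inserting $\pm\mathcal{P}_{m_1,m_2+m_3}(\varphi)(\mathbf{y},\mathbf{w})$. The other comes from testing the joint supremum with $\varphi'=\psi-\mathcal{P}_{m_1,m_2+m_3}(\varphi)+\varphi$, whose inner pressure is exactly $\psi$.

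\textbf{What your route buys and costs.} Your version is closer to the classical proof and uses only the independence assertion of Theorem \ref{thm: final formula for entropy} as a black box. The price is bookkeeping that the paper avoids:
\begin{itemize}
\item a countable cofinal family of neighborhoods (Lemma \ref{lem: separability});
\item operator-norm cutoffs (Proposition \ref{prop: entropy with norm cutoff}), so that your diagonal sequences $\mathbf{Y}^{(n)}$ are bounded, as that theorem requires;
\item separate handling of the infinite cases.
\end{itemize}

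\textbf{Your second direction needs no new idea.} Fix the joint neighborhood. Suppose every finer $(\mathbf{y},\mathbf{w})$-neighborhood contained, for $\cU$-many $n$, some $\mathbf{Y}$ whose $\mathbf{x}$-fiber has normalized log-volume below $\chi_{\chron}^{\cU}(\mathbf{x}\mid\mathbf{y},\mathbf{w})-\epsilon$, empty fibers included. Diagonalizing would give an admissible sequence along which the conditional entropy, being an infimum over neighborhoods, is at most $\chi_{\chron}^{\cU}(\mathbf{x}\mid\mathbf{y},\mathbf{w})-\epsilon$. That contradicts independence, exactly as in your first direction. So existence of extensions needs no separate treatment and no large-deviation or concentration input. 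In any case, $\Lambda^{(n)}$ of an $\inf$-formula is by construction the matrix infimum.

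\textbf{On your pressure alternative.} It also works, but the log-fiber-volume is not itself asymptotically a definable predicate. What the two estimates in the proof of Lemma \ref{lem: microstates versus pressure} give are upper and lower bounds by $\mathcal{P}^{(n)}(\varphi)(\mathbf{Y},\mathbf{W}^{(n)})$ for suitable $\varphi$. Using those bounds uniformly in $\mathbf{Y}$ amounts essentially to reproving the paper's pressure chain rule.
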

	
	We also show another natural invariance property, namely, that if $\mathbf{y}$ and $\mathbf{y}'$ generate the same von Neumann algebra, then $\chi_{\chron}^{\cU}(\mathbf{x} \mid \mathbf{y}) = \chi_{\chron}^{\cU}(\mathbf{x} \mid \mathbf{y}')$, which was already known for other versions of conditional free entropy; see \cite[Proposition 2.15]{Shlyakhtenko2002} and \cite[Lemma 4.4]{JekelPi2024}.  In our setting, we can upgrade the von Neumann algebra generated by $\mathbf{y}$ to the \emph{algebraic closure} in the model-theoretic sense.  See \S \ref{subsec: invariance} and compare also \cite[\S 3]{JekelTypeCoupling} and \cite[\S 4.6]{JekelCoveringEntropy}.
	
	\begin{remark}
		We warn the reader that for the chronological entropy to be finite, the chronological type of $(\mathbf{x},\mathbf{y})$ has to admit some matrix approximations as $n \to \cU$, which in particular means that the structure $\cM$ has to be chronologically elementarily equivalent to the matrix ultraproduct $\cQ$ described in the next section (see Remark \ref{rem: m prime zero case}). However, this does not mean that $\chi_{\chron}^{\cU}$ (or, similarly, the entropy for full types in \cite{JekelModelEntropy}) is useless for studying other von Neumann algebras.  Rather, the notion of types and the corresponding free entropy quantities are naturally suited for studying von Neumann \emph{subalgebras} inside $\cQ$.  Any Connes-embeddable von Neumann algebra can be embedded in $\cQ$, usually in many distinct ways.  In general, spaces of types can also be viewed as spaces of embeddings into an ultraproduct up to automorphic equivalence as in \cite{AGKE2022}.  One can also use variational principles as in \cite[Lemma 4.3 and 4.7]{JekelModelEntropy} to relate the more complicated entropy for types with the original free entropy for noncommutative laws and entropy in the presence.  See \cite{JKE2026upgraded} for an application of these techniques.
	\end{remark}

	\subsection{Classical and noncommutative filtrations} \label{subsec: intro filtrations}
	
	In defining $\chi_{\chron}^{\cU}$, the main question is what to use for the space of matrix approximations for a chronological type.  In the original microstate version of free entropy \cite{VoiculescuFE2}, matrix approximations are defined in terms of moments; we look at matrix tuples $\mathbf{X}$ with $\tr_n(p(\mathbf{X})) \approx \tr^{\cM}(p(\mathbf{x}))$ for noncommutative polynomials $p$.  In free entropy for types from \cite{JekelModelEntropy}, we consider formulas in the language of tracial von Neumann algebras and demand that $\varphi^{\mathbb{M}_n}(\mathbf{X}) \approx \varphi^{\cM}(\mathbf{x})$.  In both cases, the test functions are interpreted as pointwise functions on $\mathbb{M}_n$, so that there is no classical randomness involved in the definition of the microstate space.  This fits well with the concentration of measure phenomenon that applies to many of the random matrix models that we care about.  However, for chronological types, this approach cannot be applied na{\"\i}vely since we need a filtration rather than a single von Neumann algebra in order to evaluate a formula.  Clearly, $\mathbb{M}_n$ does not have a suitable noncommutative filtration indexed by real $t \geq 0$, since any chain of nested subalgebras must have finite length!
	
	Instead, we work with the random matrix algebra $L^\infty(\Omega,\mathcal{G},\mathbb{P}) \otimes \mathbb{M}_n$ and consider a classical filtration $\mathcal{G}_t$ of $\sigma$-algebras.  We consider Brownian motions $Z_{t,j}^{(n)}$ on $\mathbb{M}_n$ for $t \geq 0$ and $j \in \N$, whose operator real and imaginary parts are GUE-Brownian motion.  The Brownian motion $(Z_{t,j}^{(n)})_{j \in \N}$ is assumed to be compatible with the filtration $\mathcal{G}_t$ (i.e. it is progressively measurable and the increment $(Z_{t,j}^{(n)} - Z_{s,j}^{(n)})_{j \in \N}$ is independent of $\mathcal{G}_s$ for $s < t$).  This Brownian motion should play an analogous role to the process $z_{t,j}$ in our chronological formulas.  We then define a pointwise function $\Lambda_\varphi^{(n)}: \mathbb{M}_n^m \to \R$ associated to each chronological formula $\varphi$ in $m$ variables.  The main difference between the construction of $\Lambda_{\varphi}^{(n)}$ and the evaluation of $\varphi$ for a noncommutative filtration is that in defining $\Lambda_{\varphi}^{(n)}$ we take the classical expectation at appropriate steps in the construction.  As an illustrative example, suppose that $\varphi$ is a chronological formula of the form $\varphi(\mathbf{x}) = \psi(\mathbf{x}, z_{0,t,1},\dots,z_{0,t,m})$, where $\psi$ is a formula where all the quantifiers occur for times $\geq t$.  Then we want to take
	\[
	\Lambda_\varphi^{(n)}(\mathbf{X}) = \mathbb{E} \Lambda_{\psi}^{(n)}(\mathbf{X}, Z_{0,t,1}^{(n)},\dots,Z_{0,t,m}^{(n)}).
	\]
	The precise construction of $\Lambda_{\varphi}^{(n)}$ is described in Proposition \ref{prop: approx by pointwise function}.  Importantly, we rely on concentration of measure for the Gaussian random matrices to relate the expected value of $\Lambda_{\psi}^{(n)}(\mathbf{X}, Z_{0,t,1}^{(n)},\dots,Z_{0,t,m}^{(n)})$ with the pointwise value (in fact, this is the reason why we need to use chronological formulas rather than general formulas for noncommutative filtrations as metric structures; because the quantifiers occur in chronological order, we are able to handle them one at a time and have Lipschitz dependence on the random increments $Z_{s,t,j}^{(n)}$ at each step, and thus apply concentration of measure).  These pointwise functions $\Lambda_{\varphi}^{(n)}$ are used in place of $\varphi$ itself to define the spaces of matrix approximations in the definition of $\chi_{\chron}^{\cU}$.
	
	We furthermore need to relate the classical random matrix models and the classical filtration with some noncommutative filtration.  From a free probability viewpoint, a natural candidate is the filtration $M_t$ generated by the free Brownian motion itself.  Unfortunately, it is not clear if this filtration adequately captures the large-$n$ behavior of $\Lambda_\varphi^{(n)}$ on matrices.  Indeed, even without considering filtrations, we do not know if the \emph{theory} of $\mathbb{M}_n$ (in the model-theoretic) sense converges as $n \to \infty$ (and in fact an analogous limit for permutation groups does not exist \cite{AlTh2024}).  Moreover, if the limit does exist, there is no reason it would agree with the theory of the von Neumann algebra generated by a free Brownian motion.
	
	Instead, we consider a filtration constructed from the random matrix algebra $L^\infty(\Omega,\mathcal{G},\mathbb{P}) \otimes \mathbb{M}_n$ using ultraproducts.  Take
	\begin{align} 
		M &= \prod_{n \to \cU} L^\infty(\Omega,\mathcal{G},\mathbb{P}) \otimes \mathbb{M}_n \\ \label{eq: random matrix ultraproduct intro} \\
		M_t &= \prod_{n \to \cU} L^\infty(\Omega,\mathcal{G}_t,\mathbb{P}) \otimes \mathbb{M}_n. \label{eq: random matrix ultraproduct intro 2}
	\end{align}
	We also take $z_{t,j}$ in $M$ to be the equivalence class of the sequence $[Z_{t,j}^{(n)}]_{n \in \N}$ in $M$ (technically, we have to apply an operator-norm cutoff function to make this well-defined).  After first taking the ultraproduct, we then must remove the classical randomness in a sense by taking a quotient, so that we can obtain a truly noncommutative filtration.  To see why a quotient is needed, consider for instance that the increment $z_{t,j} - z_{s,j}$ is not free from $M_s$ with respect to the trace since $z_{t,j} - z_{s,j}$ commutes with the center $Z(M_s) = \prod_{n \to \cU} L^\infty(\Omega,\mathcal{G}_t,\mathbb{P})$ (we have instead freeness with amalgamation over the center).  We therefore follow the method of \cite{GaoJekel2025} and take a quotient of $M$ by a maximal ideal to obtain a $\mathrm{II}_1$ factor.  Specifically, let $\cV$ be a pure state or character on the center $Z(M)$, and consider the ideal $I_{\cV} = \{x \in M: \cV \circ E_{Z(M)}[x^*x] = 0\}$.  Let $Q = M / I_{\cV}$ and $\pi: Q \to M$ be the quotient map, and $Q_t = \pi(M_t)$.
	
	\begin{introprop}[See Propositions {\ref{prop: approx by pointwise function}} and {\ref{prop: matrix quotient filtration}}] \label{introprop: quotient filtration}
		Let $M_t$ and $Q_t$ be as above.
		\begin{enumerate}[(1)]
			\item $(Q_t)_{t \geq 0}$ is filtration of von Neumann algebras such that for $s \leq t$, the inclusion $Q_s \subseteq Q_t$ is elementary, and $Q_t$ is elementarily equivalent to the matrix ultraproduct $\prod_{n \to \cU} \mathbb{M}_n$.
			\item  $(\pi(z_{t,j}))_{t \geq 0, j \in \N}$ is a free Brownian motion compatible with the filtration $(Q_t)_{t \geq 0}$.
			\item Let $\cQ$ be the filtration $(Q_t)_{t \geq 0}$ together with the Brownian motion described above.  Let $\mathbf{X}^{(n)}$ be an $m$-tuple of random matrices from $L^\infty(\Omega,\mathcal{G}_0,\mathbb{P})$ which is bounded in operator norm by some $R$.  Let $[\mathbf{X}^{(n)}]_{n \in \N}$ be the corresponding equivalence class in $M_0^m$.  For chronological formulas $\varphi$, we have
			\[
			\varphi^{\cQ}(\pi([\mathbf{X}^{(n)}]_{n \in \N})) = \cV \left( [\Lambda_{\varphi}^{(n)}(\mathbf{X}^{(n)})]_{n \in \N} \right);
			\]
			here note that $[\Lambda_{\varphi}^{(n)}(\mathbf{X}^{(n)})]_{n \in \N} \in \prod_{n \to \cU} L^\infty(\Omega,\mathcal{G},\mathbb{P}) = Z(M)$.
		\end{enumerate}
	\end{introprop}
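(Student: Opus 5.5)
The plan is to build everything from the random matrix filtration and pass to the quotient by $I_{\cV}$, organizing the argument around item (3). I would prove (3) first, by induction on the construction of chronological formulas, and then read off (1) and (2) from it together with concentration of measure.

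\textbf{The quotient and the base case.} First I would record the basic properties of the quotient.
\begin{itemize}
\item Since $\cV$ is a character on $Z(M)$, the state $\tau_Q = \cV \circ E_{Z(M)}$ is a faithful normal tracial state on $Q = M/I_{\cV}$, and it makes $Q$ a $\mathrm{II}_1$ factor, as in \cite{GaoJekel2025}.
\item The map $E_{Z(M)}$ is computed fiberwise: it is the ultraproduct of the maps $\omega \mapsto \tr_n(X(\omega))$.
\end{itemize}
For the base case $\varphi = \re \tr(p(\mathbf{x}, \mathbf{z}_{s,t}))$, we get $\varphi^{\cQ}(\pi(\mathbf{X})) = \cV([\re \tr_n p(\mathbf{X}^{(n)}, \mathbf{Z}^{(n)}_{s,t})])$. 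Concentration of measure for GUE increments (Lipschitz in $\mathbf{Z}$ on operator-norm balls, using the cutoff) replaces the random trace by its conditional expectation $\Lambda^{(n)}_\varphi$ up to an error vanishing as $n \to \cU$ in $L^\infty$ outside negligible events. Applying continuous functions $f$ is immediate, since $\cV$ is multiplicative on $Z(M)$.

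\textbf{The quantifier step.} Consider $\varphi(\mathbf{x}) = \sup_{\mathbf{y} \in (Q_t)_R} \psi(\mathbf{x}, \mathbf{y})$. Here chronological order guarantees that all quantifiers inside $\psi$ are at times $\geq t$, and $\Lambda_\varphi^{(n)}$ is defined in Proposition \ref{prop: approx by pointwise function} via a measurable supremum over $\mathcal{G}_t$-measurable matrix selections, with conditional expectations inserted. The two inequalities go as follows.
\begin{itemize}
\item ``$\geq$'': a measurable selection of near-maximizers $\mathbf{Y}^{(n)}(\omega)$ lifts to an element of $M_t$. By the inductive hypothesis applied to $(\mathbf{X}, \mathbf{Y})$, its image gives the required value.
\item ``$\leq$'': every $\mathbf{y} \in (Q_t)_R$ lifts to a sequence $\mathbf{Y}^{(n)}$ in $L^\infty(\mathcal{G}_t) \otimes \mathbb{M}_n$ of norm $\leq R$. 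Then $\Lambda_\psi^{(n)}(\mathbf{X}, \mathbf{Y}) \leq \Lambda_\varphi^{(n)}(\mathbf{X})$ pointwise in $\omega$.
\end{itemize}
The delicate point is that $\Lambda^{(n)}_\psi$ must be evaluated at a random, $\mathcal{G}_t$-measurable argument. I would get this by strengthening the induction hypothesis to $\mathcal{G}_s$-measurable arguments for all $s$ up to the earliest quantifier time, with $\cV$ applied to the resulting central element. I would also carry a uniform modulus of continuity for $\Lambda^{(n)}$ as part of the hypothesis. Heat-semigroup substitution is exactly where $\mathbb{E}$ appears in $\Lambda$, and its agreement with the pointwise value is again concentration of measure, conditioned on $\mathcal{G}_s$.

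\textbf{Deducing (1) and (2).} For (2), the increment $\pi(\mathbf{z}_{s,t})$ is semicircular and free from $Q_s$. To see this, compute mixed moments $\tau_Q(a_0 z a_1 \cdots)$ with $a_i \in M_s$: conditionally on $\mathcal{G}_s$, these are moments of GUE matrices independent of deterministic matrices. Asymptotic freeness, with concentration giving uniform convergence over $\omega$ in norm-bounded sets, yields freeness after applying $\cV$. For (1), elementarity of $Q_s \subseteq Q_t$ and equivalence with $\prod_{n \to \cU} \mathbb{M}_n$ are the case of (3) where the formula is an ordinary (non-chronological) formula with all quantifiers at a single time. Then $\Lambda^{(n)}_\varphi = \varphi^{\mathbb{M}_n}$ pointwise (no expectations occur), so $\varphi^{Q_t}(\pi \mathbf{X}) = \cV([\varphi^{\mathbb{M}_n}(\mathbf{X}^{(n)}(\omega))])$ is independent of $t \geq s$ for $\mathbf{X}$ in $M_s$. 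For sentences, the value is the constant $\lim_{\cU} \varphi^{\mathbb{M}_n}$.

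\textbf{Main obstacle.} The hardest step is the quantifier step, specifically the measurable selection of near-optimizers and the uniform (in $\omega$ and $n$) continuity estimates. These are what let $\cV$ commute with the supremum; a failure here would break the ``$\geq$'' direction.
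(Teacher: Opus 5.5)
Your route is correct, but it is organized differently from the paper's proof.

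\textbf{What the paper does.} The paper first proves a $\cV$-independent identity inside the random matrix ultraproduct, $(S_{t_0}\varphi)^{\cM}_E(\mathbf{x}) = [\Lambda^{(n)}_\varphi(\mathbf{X}^{(n)})]_{n\in\N}$ for the center-valued evaluation (Proposition \ref{prop: approx by pointwise function}). This is an induction over restricted formulas built as in Lemma \ref{lem: chronological formulas alternate}. Quantifiers are handled by finite-$n$ measurable selection, and the shift--substitution step by the Poincar\'e inequality. From this identity the paper deduces the chronological invariance property. It then passes to $\cQ$ through a separate {\L}o{\'s} theorem for ultrafibers (Proposition \ref{prop: Los theorem}), whose quantifier step rests on the projection-valued-measure gluing of Lemma \ref{lem: formula partition} and Proposition \ref{prop: maximizer}. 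Elementarity of $Q_s\subseteq Q_t$ comes via stationarity (Corollary \ref{cor: random matrix ultraproduct stationarity}, Lemma \ref{lem: stationary implies elementary}). Elementary equivalence with $\prod_{n\to\cU}\mathbb{M}_n$ is cited from \cite{GaoJekel2025}.

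\textbf{What you do instead.} You run the same induction directly in $\cQ$. Three ingredients give both inequalities at the quotient level: lifting operator-norm balls of $Q_t$ to norm-bounded sequences in $M_t$, positivity and multiplicativity of $\cV$ on $Z(M)$, and the finite-$n$ selection of near-maximizers. As a result, the abstract {\L}o{\'s}/maximizer machinery and the chronological invariance hypothesis are not needed. Item (1) also falls out directly. For formulas without increments, $\Lambda^{(n)}_\varphi=\varphi^{\mathbb{M}_n}$ with no averaging, so the value at $\pi(\mathbf{X})$ does not depend on $t$. For sentences the value is $\lim_{n\to\cU}\varphi^{\mathbb{M}_n}$, which is the ultraproduct value by {\L}o{\'s}.

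\textbf{Trade-offs.} Your version is shorter and self-contained in the random matrix setting. The paper's version gives a transfer principle for any filtration with chronological invariance. It also provides the center-valued identity in $\cM$ that is reused later (Corollary \ref{cor: center-valued definable predicates}, Lemma \ref{lem: FD approximation 1}). That identity could, however, also be recovered from your statement by letting $\cV$ range over all characters.

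\textbf{Points to repair.}
\begin{enumerate}
\item You never show that $Q_t=\pi(M_t)$ is a von Neumann subalgebra of $Q$. This is part of (1), and it is needed for $\cQ$ to be an $\cL_{\proc}$-structure, since the balls must be $\norm{\cdot}_2$-complete. It follows because $E_{Z(M)}$ is computed fiberwise and so maps $M_t$ into $\prod_{n\to\cU}L^\infty(\Omega,\mathcal{G}_t,\mathbb{P})$. Hence $I_{\cV}\cap M_t$ is the ultrafiber ideal of $M_t$ for the restricted character, and $\pi(M_t)$ is the corresponding ultrafiber, embedded trace-preservingly in $Q$.
\item The increments are circular, not semicircular.
\item In the freeness argument, the inputs $\mathbf{X}^{(n)}(\omega)$ need not have convergent moments, so you need asymptotic freeness uniformly over norm-bounded deterministic inputs. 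That uniformity comes from a subsequence/compactness argument applied to Theorem \ref{thm: asymptotic freeness}, not from concentration. Concentration only replaces the random trace by its conditional expectation.
\item If you keep increments inside basic formulas rather than using the normal form of Lemma \ref{lem: chronological formulas alternate}, your $\Lambda$ must take the increments preceding the argument time as inputs rather than averaging over them. The normal form plus independence of $\Lambda$ from the classical filtration is the cleanest way to handle this.
\end{enumerate}
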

	
	This proposition shows in particular that the properties that we care about are independent of the choice of the character $\cV$ used to define the quotient $Q$.  The properties above also make the filtration $(Q_t)_{t \geq 0}$ a suitable noncommutative filtration for studying the behavior of the matrix Brownian motion in the limit as $n \to \cU$.  The proof of Proposition \ref{introprop: quotient filtration} relies on the technology of \cite{GaoJekel2025} developed to study the relationship between direct integrals and model theory of von Neumann algebras (see also \cite{BY2012} and \cite{BYIT2024} for related more general results on metric structures).
	
	With the functions $\Lambda_{\varphi}^{(n)}$ and the filtration $(Q_t)_{t \geq 0}$ in hand, we can now describe why the chronological type leads to a better behaved notion of entropy than in previous settings (see comments before Theorem \ref{introthm: chain rule}).  For this section, it is convenient to work with
	\[
	\tilde{\chi}_{\chron}^{\cU}(\mathbf{x} \mid \mathbf{y}) = \frac{1}{2} \norm{\mathbf{x}}_2^2 - \chi_{\chron}^{\cU}(\mathbf{x} \mid \mathbf{y}),
	\]
	which corresponds to using the Gaussian measure rather than the Lebesgue measure of the spaces of matrix microstates, and making a sign change so that $\tilde{\chi}_{\chron}^{\cU}$ is nonnegative.  Fix a tuple of matrix approximations $\mathbf{Y}^{(n)}$ for $\mathbf{y}$.  In light of Varadhan's lemma and its converse in Bryc's theorem, we approximate the log of the Gaussian measure of the microstate spaces by quantities of the form
	\[
	\mathcal{P}^{(n)}(\varphi) = -\frac{1}{n^2} \log \int_{\mathbb{M}_n^m} \exp(-n^2 \Lambda_\varphi^{(n)}(\mathbf{X}))\,d\sigma^{(n)}(\mathbf{X},\mathbf{Y}^{(n)}),
	\]
	where $\varphi$ is a suitable chronological formula and $\sigma^{(n)}$ is the Gaussian measure on $\mathbb{M}_n^m$ described by an $m$-tuple of matrices whose real and imaginary parts are GUE (in the case where $\varphi$ is the trace of noncommutative polynomial, this is exactly Hiai's free pressure 
	\cite{Hiai2005}).  We then apply Borell's formula that
	\begin{equation} \label{eq: matrix control problem intro}
		\mathcal{P}^{(n)}(\varphi)(\mathbf{Y}^{(n)}) = \inf_{\alpha} \mathbb{E} \left[ \Lambda_\varphi^{(n)}\left(\mathbf{Z}_1^{(n)} + \int_0^1 \alpha_t\,dt,\mathbf{Y}^{(n)} \right) + \frac{1}{2} \int_0^1 \norm{\alpha_t}_2^2\,dt \right],
	\end{equation}
	where $\mathbf{Z}_t^{(n)} = (Z_{t,j}^{(n)})_{j=1}^m$ is a matrix Brownian motion in $\mathbb{M}_n^m$, and $\alpha$ ranges over progressively measurable processes $[0,1] \to \mathbb{M}_n^m$.  This is an application of the same technique as in \cite{GJNPmatrixcontrols}, and also related to Dabrowski's approach to large deviations in \cite{Dabrowski2017Laplace}.
	
	Equation \ref{eq: matrix control problem intro} is a stochastic control problem in $\mathbb{M}_n^m$ with continuous time.  However, we can make a discrete-time approximation with error bound independent of $n$, as well as arrange that $\alpha$ is bounded in operator norm (see \S \ref{subsec: stochastic variational}).  The discretized control problem is something that we can represent using chronological formulas, and hence apply Proposition \ref{introprop: quotient filtration} (3).  We thus arrive at the following result.
	
	\begin{introthm}[See {Theorem \ref{thm: final formula for entropy}}] \label{introthm: entropy in terms of pressure}
		With the notation above, for $\varphi \in \overline{\mathcal{F}}_{\chron,m}^0$ (see Definition \ref{def: uniform of completion of RCF}), define
		\begin{equation} \label{eq: variational problem intro}
		\mathcal{P}^{\cU}(\varphi)(\mathbf{y}) = \inf_{\alpha} \left[ \varphi^{\cQ}\left(\pi(\mathbf{z}_1) + \int_0^1 \alpha_t\,dt, \mathbf{y} \right) +  \frac{1}{2} \int_0^1 \norm{\alpha_t}_2^2\,dt \right],
		\end{equation}
		where $\mathbf{z}_t = (z_{t,j})_{j=1}^m$ is the free Brownian motion in $\cQ$ and $\alpha$ ranges over measurable maps $[0,1] \to \cQ_{\sa}^m$ which are bounded in operator norm and adapted to the noncommutative filtration $(\cQ_t)_{t \in [0,1]}$.  Then
		\[
		\lim_{n \to \cU} \mathcal{P}^{(n)}(\varphi)(\mathbf{Y}^{(n)}) = \mathcal{P}^{\cU}(\varphi)(\mathbf{y}),
		\]
		so in particular this is independent of the choice of matrix approximations $\mathbf{Y}^{(n)}$ for $\mathbf{y}$.  Moreover,
		\[
		\tilde{\chi}_{\chron}^{\cU}(\mathbf{x}) = \sup_{\varphi \in \overline{\mathcal{F}}_{\chron,m}^0} \left[ \mathcal{P}^{\cU}(\varphi)(\mathbf{y}) -  \varphi^{\cQ}(\mathbf{x},\mathbf{y}) \right].
		\]
	\end{introthm}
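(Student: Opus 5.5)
The argument has two parts: the identification of the limiting pressure, and then the variational formula for entropy. For the first, I start from the Borell/Boué--Dupuis formula \eqref{eq: matrix control problem intro}, which writes $\mathcal{P}^{(n)}(\varphi)(\mathbf{Y}^{(n)})$ as an infimum over progressively measurable controls $\alpha$ on $\mathbb{M}_n^m$.

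\textbf{Step 1 (discretization).} Using the stochastic variational estimates of \S\ref{subsec: stochastic variational}, I replace the infimum by one over controls that are piecewise constant on a mesh $0=t_0<\dots<t_K=1$, $\mathcal{G}_{t_k}$-measurable on $[t_k,t_{k+1})$, and bounded in operator norm by some $R$. The error should be $\epsilon(K,R)\to 0$ uniformly in $n$. This uses that $\varphi\in\overline{\mathcal{F}}_{\chron,m}^0$ is uniformly continuous in $\norm{\cdot}_2$ on bounded sets, together with a truncation argument for large controls, which costs little because of the quadratic penalty.

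\textbf{Step 2 (chronological formula).} The discretized problem
\[
\inf_{\alpha_0}\mathbb{E}\inf_{\alpha_1}\cdots\mathbb{E}\inf_{\alpha_{K-1}}\Bigl[\Lambda^{(n)}_\varphi\bigl(\mathbf{Z}_1+\textstyle\sum_k (t_{k+1}-t_k)\alpha_k,\mathbf{Y}^{(n)}\bigr)+\tfrac12\sum_k (t_{k+1}-t_k)\norm{\alpha_k}_2^2\Bigr]
\]
is exactly the nested inf/expectation structure of $\Lambda^{(n)}_{\psi_K}$ for the chronological formula
\[
\psi_K(\mathbf{y})=\inf_{\mathbf{a}_0\in M_{t_0}}\cdots\inf_{\mathbf{a}_{K-1}\in M_{t_{K-1}}}\Bigl[\varphi\bigl(\mathbf{z}_{0,1}+\textstyle\sum_k\Delta t_k\,\mathbf{a}_k,\mathbf{y}\bigr)+\tfrac12\textstyle\sum_k\Delta t_k\norm{\mathbf{a}_k}_2^2\Bigr],
\]
whose quantifiers are in chronological order. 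Here I take the quantifiers over self-adjoint parts, or equivalently via real and imaginary parts. The pointwise-versus-expectation issue is precisely what Proposition \ref{prop: approx by pointwise function} handles via concentration of measure. Then Proposition \ref{introprop: quotient filtration}(3), applied with the deterministic $\mathbf{Y}^{(n)}$, gives
\[
\lim_{n\to\cU}\Lambda^{(n)}_{\psi_K}(\mathbf{Y}^{(n)})=\psi_K^{\cQ}(\mathbf{y}).
\]
Here $\cV$ acts trivially on the deterministic quantity, and I use continuity of $\psi_K$ under convergence in type of $\mathbf{Y}^{(n)}\to\mathbf{y}$.

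\textbf{Step 3 (identification with $\mathcal{P}^{\cU}$).} I then check that $\psi_K^{\cQ}(\mathbf{y})$ approximates $\mathcal{P}^{\cU}(\varphi)(\mathbf{y})$ as $K,R\to\infty$. This is the noncommutative analogue of Step 1, now run inside the filtration $(Q_t)$: adapted bounded controls can be approximated by step controls in $L^2$. Combining Steps 1--3 gives the limit, and independence of $\mathbf{Y}^{(n)}$ is then automatic.

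\textbf{Step 4 (variational formula).} For the entropy I use a Varadhan/Bryc duality at the level of microstate spaces. In one direction, the microstate space for $\mathbf{x}$ given $\mathbf{y}$ is defined through neighborhoods $\{|\Lambda_{\varphi_i}^{(n)}-\varphi_i^{\cQ}(\mathbf{x},\mathbf{y})|<\delta\}$. Restricting the integral to that space gives
\[
\mathcal{P}^{(n)}(\varphi)\le \varphi(\mathbf{x},\mathbf{y})+\tilde\chi+o(1),
\]
so the supremum is at most $\tilde\chi_{\chron}^{\cU}$. For the reverse inequality I pick a formula that is a steep penalty, $\varphi=C\sum_i f(|\varphi_i-\varphi_i(\mathbf{x},\mathbf{y})|)$ with $f$ vanishing only at $0$, and use Gaussian concentration to show the integral localizes to the microstate space. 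This requires closure of $\overline{\mathcal{F}}^0_{\chron,m}$ under continuous connectives and a large-deviation upper bound outside the neighborhood.

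\textbf{Main obstacle.} I expect Step 1 to be the main difficulty: obtaining discretization and truncation errors uniform in $n$. I would try first an It\^o/BDG-type estimate on the $\norm{\cdot}_2$-modulus of $\varphi$ together with the a priori bound $\mathbb{E}\int\norm{\alpha}_2^2\le 2\sup|\varphi|$ for near-optimal controls.
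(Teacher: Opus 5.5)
Your architecture is the paper's: Bou{\'e}--Dupuis, then reduction to bounded step controls uniformly in $n$, then recognizing the discrete problem as a chronological formula whose matrix approximation is the nested $\inf/\mathbb{E}$ structure of Proposition \ref{prop: approx by pointwise function}, then the same reduction inside $\cQ$, and finally the Varadhan/Bryc argument of Lemma \ref{lem: microstates versus pressure}. However, Step 1 is not justified by what you propose, and everything hinges on it, because the quantifiers in $\psi_K$ must range over operator-norm balls.

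The quadratic penalty only gives $\mathbb{E}\int_0^1\norm{\alpha_t}_2^2\,dt\le C$ for near-optimal controls. Such a bound does not make the operator-norm truncation $\alpha\mapsto P_R(\alpha)$ small in $\norm{\cdot}_2$. Mass $C$ can sit on a fraction $p$ of singular values of size $(C/p)^{1/2}$, and truncating at $R<(C/p)^{1/2}$ removes most of it. Suppose you balance a $\norm{\cdot}_2$-modulus $\omega$ of the terminal cost against the quadratic saving, which is at least $\delta^2/2$ when the drift moves by $\delta$ in $\norm{\cdot}_2$. This only bounds the net error by $\sup_{\delta>0}[\omega(\delta)-\delta^2/2]$, a fixed positive number independent of $R$. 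So $\norm{\cdot}_2$-uniform continuity gives no error bound uniform in $n$, and you invoke it only on bounded sets, while the controls are precisely what is unbounded. An It\^o/BDG estimate does not touch this issue.

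The missing ingredient is $\norm{\cdot}_1$-regularity, which makes truncation \emph{exact}. For $\varphi\in\mathcal{F}^0_{\chron}$, both $\varphi$ and $\Lambda^{(n)}_\varphi$ are $L$-Lipschitz in $\norm{\cdot}_1$ with $L$ independent of $n$ (Proposition \ref{prop: approx by pointwise function}, Remark \ref{rem: bounded and Lipschitz}). Lemma \ref{lem: projection onto R ball}(3) says that projecting onto the $R$-ball lowers $\norm{x}_2^2$ by at least $2R\norm{x-x'}_1$. Hence for $R\geq L$ the quadratic saving dominates the change of the terminal cost, so truncation costs nothing (Step 3 of Lemma \ref{lem: discrete approx of matrix pressure}).

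With controls bounded by $L$, discretization is then a deterministic estimate. The noise $\mathbf{Z}_1$ is common to both problems. Take the delayed averages $\beta_1=0$ and $\beta_i=k\int_{(i-2)/k}^{(i-1)/k}\alpha_t\,dt$. These are $\mathcal{G}_{(i-1)/k}$-measurable and do not increase the quadratic cost, by Jensen. They change the drift only by $\int_{1-1/k}^1\alpha_t\,dt$, whose $\norm{\cdot}_1$-size is at most $L/k$ per coordinate, so the error is of order $L^2/k$. Prove this for $\varphi\in\mathcal{F}^0_{\chron}$ and pass to $\overline{\mathcal{F}}^0_{\chron}$ via \eqref{eq: P contraction}.

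Two smaller corrections:
\begin{itemize}
\item $\psi_K$ must contain $S_1\varphi$ rather than $\varphi$. Otherwise the time-$0$ quantifiers inside $\varphi$ sit inside those over $M_{t_k}$, and the formula is not chronological. The paper builds this shift into $T_{t,m,m',r}$.
\item Step 4 needs no concentration or large-deviation bound. Let $\psi=\max_{\varphi\in\Phi}|\varphi-\varphi^{\cQ}(\mathbf{x},\mathbf{y})|$ and let $\Gamma^{(n)}$ be the microstate space for $(\Phi,\varepsilon)$. Since $\sigma^{(n)}$ is a probability measure, one already has $\int e^{-n^2\Lambda^{(n)}_{a\psi}}\,d\sigma^{(n)}\leq\sigma^{(n)}(\Gamma^{(n)})+e^{-n^2a\varepsilon}$.
\end{itemize}
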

	
	The fact that $\mathcal{P}(\varphi)$ is a chronologically definable predicate is precisely why the entropy $\chi_{\chron}^{\cU}(\mathbf{x} \mid \mathbf{y})$ does not depend on the choice of matrix approximations for $\mathbf{y}$.
	
	To illustrate this result more concretely, let us return to the setting without conditioning on another tuple $\mathbf{y}$ and without sup and inf quantifiers.  Consider an $m$-tuple $\mathbf{x}$ and the ordinary microstates free entropy from \cite{VoiculescuFE2} with respect to its noncommutative $*$-distribution, or equivalently its quantifier-free type.  We denote the corresponding free entropy along the ultrafilter $\cU$ by $\tilde{\chi}_{\qf}^{\cU}$ to highlight the use of quantifier-free formulas.
	
	\begin{introcor} \label{introcor: LDP}
		Let $\mathcal{T}$ be the set of trace $*$-polynomials in $(x_j + i)^{-1}$ for $j = 1$, \dots, $m$.  Then the free entropy with respect to the Gaussian measure is given by
		\[
		\tilde{\chi}_{\qf}^{\cU}(\mathbf{x}) = \sup_{\varphi \in \mathcal{T}} \left[ \mathcal{P}^{\cU}(\varphi) -  \varphi^{\cQ}(\mathbf{x}) \right].
		\]
		Moreover, the noncommutative distribution of a Ginibre matrix tuple $Z_1^{(n)}$, \dots, $Z_m^{(n)}$ satisfies a large deviations principle as $n \to \infty$ (see \S \ref{subsec: LDP} for definitions) if and only if $\mathcal{P}^{\cU}(\varphi)$ is independent of the ultrafilter $\cU$ for all trace $*$-polynomials in $(x_j + i)^{-1}$ for $j = 1$, \dots, $m$, and in this case the right-hand side serves as the rate function.
	\end{introcor}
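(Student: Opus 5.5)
We will deduce Corollary \ref{introcor: LDP} from Theorem \ref{introthm: entropy in terms of pressure}, specialized to the unconditional setting with no quantifiers, combined with the classical Varadhan--Bryc machinery.

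\emph{Step 1: the entropy formula.} Every $\varphi \in \mathcal{T}$ is a quantifier-free formula, so it is in particular a chronological formula. For such $\varphi$ and deterministic $\mathbf{X}$, the function $\Lambda_\varphi^{(n)}(\mathbf{X})$ is just $\varphi^{\mathbb{M}_n}(\mathbf{X})$, and $\varphi^{\cQ}(\mathbf{x})$ depends only on the $*$-distribution of $\mathbf{x}$. Hence $\mathcal{P}^{(n)}(\varphi)$ is exactly Hiai's free pressure for the Ginibre-type Gaussian measure, and by Theorem \ref{introthm: entropy in terms of pressure} its limit along $\cU$ equals $\mathcal{P}^{\cU}(\varphi)$.

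The inequality $\tilde{\chi}_{\qf}^{\cU}(\mathbf{x}) \geq \sup_{\varphi\in\mathcal{T}}[\mathcal{P}^{\cU}(\varphi) - \varphi^{\cQ}(\mathbf{x})]$ is the easy direction. Restricting the Gaussian integral to a microstate neighborhood of $\mathbf{x}$ for the quantifier-free topology, on which $\varphi$ is within $\epsilon$ of $\varphi(\mathbf{x})$, gives the Chebyshev/Varadhan-type upper bound. For the reverse inequality we adapt the proof of Theorem \ref{introthm: entropy in terms of pressure} (a Bryc-type argument).

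The key point is that the functions $\tr(p((x_j+i)^{-1}))$ are bounded and Lipschitz on all of $\mathbb{M}_n^m$, uniformly in $n$. Moreover, they separate $*$-distributions: resolvent moments determine the law of the real and imaginary parts, which here form unbounded self-adjoint tuples. In addition, exponential tightness holds for Gaussian tuples, since $\norm{\mathbf{X}}_2$ has Gaussian tails at speed $n^2$.

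With tightness and a separating algebra of bounded continuous test functions closed under continuous functional calculus, the Bryc inverse lemma identifies the $\cU$-limit rate function as the Legendre-type transform over $\mathcal{T}$. We may pass from continuous functions of trace polynomials to $\mathcal{T}$ itself because $\mathcal{T}$ is an algebra that is dense, via Stone--Weierstrass, in the continuous functions on the relevant compactified state space. The rate function obtained this way is exactly $\tilde{\chi}_{\qf}^{\cU}$, computed via microstates along $\cU$.

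\emph{Step 2: the LDP equivalence.} If $\mathcal{P}^{\cU}(\varphi)$ is independent of $\cU$ for all $\varphi \in \mathcal{T}$, then the limit $\lim_n \mathcal{P}^{(n)}(\varphi)$ exists, since a sequence whose limit along every nonprincipal ultrafilter is the same converges. Bryc's theorem, together with exponential tightness, then yields a full LDP whose rate function is the right-hand side.

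Conversely, if an LDP holds with a good rate function $I$, Varadhan's lemma applied to the bounded continuous functions $\varphi$ shows that $\lim_n \mathcal{P}^{(n)}(\varphi) = \inf_\mu [\varphi(\mu) + I(\mu)]$ exists. Its $\cU$-limits therefore agree for all $\cU$.

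The main obstacle is Step 1's reverse inequality. We must verify that $\mathcal{T}$ is rich enough: the weak topology on distributions of non-self-adjoint, possibly unbounded tuples must be generated by $\mathcal{T}$, and exponential tightness must make compact sublevel sets available. A secondary point is checking that the general theorem's class $\overline{\mathcal{F}}_{\chron,m}^0$ can be replaced by $\mathcal{T}$ without loss. This should follow because the Bryc argument only uses a convex cone of test functions that is a lattice-dense separating class on the quantifier-free type space.
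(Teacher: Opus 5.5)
Your outline is correct, and its skeleton matches the paper's. You get the easy inequality by restricting the Gaussian integral to a microstate set. You identify $\lim_{n\to\cU}\mathcal{P}^{(n)}(\varphi)$ with $\mathcal{P}(\varphi)^{\cQ}$ via \eqref{eq: convergence of matrix pressure}, with $m'=0$ and $\cM=\cQ$. For the LDP equivalence you use exactly the paper's Varadhan/Bryc argument, together with the fact that a bounded sequence converges iff all its nonprincipal ultralimits agree.

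Where you differ is the reverse inequality in Step 1. You route it through an ultrafilter version of Bryc's inverse lemma, and so you treat exponential tightness and separation of $*$-distributions of unbounded tuples as the main obstacle. The paper instead reruns the elementary argument of Lemma \ref{lem: microstates versus pressure}. For finite $\Phi$ and $\varepsilon>0$, set $\psi=\max_{\varphi\in\Phi}|\varphi-\varphi(\mathbf{x})|$. Then $\int e^{-n^2a\psi}\,d\sigma^{(n)}\le\sigma^{(n)}(\Gamma^{(n)}(\mathbf{x};\Phi,\varepsilon))+e^{-n^2a\varepsilon}$, and one lets $a\to\infty$.

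Because $\tilde\chi_{\qf}^{\cU}$ is defined through microstate spaces cut out by the same test functions (finite $\Phi\subseteq\overline{\mathcal{F}}_{\qf,m}^0$), this needs neither tightness nor any separation property. For the LDP statement, the paper works on the compact spectrum $\Sigma_{\qf,m}$, where exponential tightness is automatic. The lower bound in Bryc's inverse lemma is this same $a\psi$ trick in disguise. So your extra hypotheses are harmless but superfluous for the identity. They would matter only if you wanted to compare with a moment-based definition of entropy, or to prove an LDP on a noncompact space of laws. What your route buys is the $\cU$-LDP bounds \eqref{eq: LDP for law} along the way; what the paper's route buys is brevity.

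One precision is needed in either route, in Step 1 and again in Step 2. The function $\psi$ does not lie in $\mathcal{T}$: $\mathcal{T}$ is an algebra, but it is not closed under $\max$, $|\cdot|$, or continuous functional calculus, contrary to what you assert. Likewise, Bryc's theorem on $\Sigma_{\qf,m}$ needs existence of $\lim_n\mathcal{P}^{(n)}(\varphi)$ for all $\varphi$ in the uniform closure, not only in $\mathcal{T}$.

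To move suprema and limits between $\mathcal{T}$ and $\overline{\mathcal{F}}_{\qf,m}^0$, density of $\mathcal{T}$ is not enough on its own. That density does hold, by Weierstrass approximation of the Lipschitz connective on the bounded range of the resolvent traces. You also need the contraction estimate
\[
|\mathcal{P}^{(n)}(\varphi)-\mathcal{P}^{(n)}(\varphi')|\le\norm{\varphi-\varphi'}_u ,
\]
that is, \eqref{eq: P contraction}, together with continuity of $\varphi\mapsto\varphi^{\cQ}(\mathbf{x})$ in the uniform norm. Density alone does not let you transfer a supremum or the existence of a limit.
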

	
	\begin{remark} \label{rem: existential embedding}
	If one believes that $\mathcal{P}^{\cU}(\varphi)$ is independent of the choice of $\cU$ when $\varphi$ is quantifier-free, one natural candidate would be the value of the variational problem using only the filtration generated by a Brownian motion as in Example \ref{ex: filtration of free Brownian motion}.  More precisely, consider a free Brownian motion increments $\mathbf{z}_{s,t}$ indexed by the real line, and let $N_t$ be the von Neumann algebra generated by $\mathbf{z}_{s_1,s_2}$ for $s_1, s_2 \leq t$, and let $N = N_\infty$.  Let $\mathcal{P}^{\operatorname{free}}(\varphi)$ be defined by \eqref{eq: variational problem intro} with measurable $\alpha: [0,1] \to N_\infty$ such that $\alpha(t) \in N_t$ for every $t$.  Then does $\lim_{n \to \infty} \mathcal{P}^{(n)}(\varphi) = \mathcal{P}^{\operatorname{free}}(\varphi)$, or equivalently, does $\mathcal{P}^{\cU}(\varphi) = \mathcal{P}^{\operatorname{free}}(\varphi)$ for all ultrafilters $\cU$?
	
	We note that by the same reasoning as Proposition \ref{introprop: quotient filtration}, random matrix models produce a natural embedding of $N_t$ into $Q_t$ for $t \geq 0$.  From this, it is immediate that $\mathcal{P}^{\cU}(\varphi) \leq \mathcal{P}^{\operatorname{free}}(\varphi)$ since it is an infimum over a larger set.  Now let $\mathcal{N} = (N_t)_{t \in [0,\infty]}$ be the metric structure corresponding to the filtration.  The embedding of $\mathcal{N}$ into $\mathcal{Q}$ is said to be \emph{existential} if for every quantifier-free formula $\psi$ and $\mathbf{x} \in \mathcal{N}^m$,
	\[
	\inf_{y_1 \in D_{t_1,r_1}^{\cN}, \dots, y_{m'} \in D_{t_m,r_m}^{\cN}} \psi^{\mathcal{N}}(\mathbf{x},\mathbf{y}) = \inf_{y_1 \in D_{t_1,r_1}^{\cQ}, \dots, y_{m'} \in D_{t_m,r_m}^{\cQ}} \psi^{\mathcal{Q}}(\mathbf{x}),
	\]
	where $D_{t,r}^{\cN}$ is the $r$-ball in $N_t$ and likewise for $D_{t,r}^{\cQ}$.  If the embedding of $\cN$ into $\cQ$ is existential, then we would have $\mathcal{P}^{\cU}(\varphi) = \mathcal{P}^{\operatorname{free}}(\varphi)$, since the pressure can be approximated by inf-formulas using the discretization procedure in Lemmas \ref{lem: discrete approx of matrix pressure} and \ref{lem: discrete approximation of free pressure}.  Therefore, existentiality of the embedding of $\cN$ into $\cQ$ would imply the large deviations principle.  For related problems about random matrices and existential embeddings, see \cite{HJKE2026existential}.
	
	In the opposite direction, recent work of Peterson has shown that a $\mathrm{II}_1$ factor $M$ elementarily equivalent to the matrix ultraproduct cannot existentially embed into any $N \equiv L(\mathbb{F}_\infty)$ \cite[Theorem 7.10]{Peterson2026}, which immediately implies a filtration elemementarily equivalent to $\mathcal{Q}$ cannot existentially embed into a filtration elementarily equivalent to $\mathcal{N}$ as well.  In particular, $\mathcal{N}$ and $\mathcal{Q}$ are not elementarily equivalent, so there is no hope of $\mathcal{P}^{\operatorname{free}}(\varphi)$ being equal to $\mathcal{P}^{\cU}(\varphi)$ for general non-quantifier-free $\varphi$.
	\end{remark}
	
	\begin{remark}
	We do not know whether the ultraproduct factors $\mathcal{Q}$ are elementarily equivalent for different choices of the ultrafilter $\cU$.  In fact, in the somewhat analogous setting of ultraproducts of permutation groups as metric structures, different ultrafilters actually produce different elementarily equivalence classes \cite{AlTh2024}.  This suggests the strange possibility that the information geometry developed here might depend nontrivially on the choice of ultrafilter $\cU$, and it indicates how difficult the large deviations problem for random matrices really is.
	\end{remark}
	
	\subsection{Organization}
	
	\begin{itemize}
		\item \S \ref{sec: preliminaries} recalls relevant background and notation on von Neumann algebras, free probability, random matrices, and model theory for metric structures.
		\item \S \ref{sec: model theory and filtration} sets up the language for a filtration of von Neumann algebras and an associated stochastic process as metric structures.  In particular, it defines the chronological formulas and a model-theoretic version of stationarity.
		\item \S \ref{sec: matrix ultraproduct} applies the setup of \S \ref{sec: model theory and filtration} to quotients $\cQ$ of a random matrix ultraproduct $\cM$ as in \eqref{eq: random matrix ultraproduct intro}.  We use the techniques of \cite{GaoJekel2025} to relate chronological formulas evaluated in $\cQ$ to analogous objects in $\cM$. We thus define the finite-dimensional approximations $\Lambda_\varphi^{(n)}$ and prove Proposition \ref{introprop: quotient filtration}.
		\item \S \ref{sec: entropy} introduces the version of free entropy for chronological types, with respect to Gaussian measure.  We prove Theorem \ref{introthm: entropy in terms of pressure} to express the entropy in terms of stochastic control problems.  We then show Theorem \ref{introthm: chain rule} on the chain rule, establish invariance of $\chi_{\chron}^{\cU}(\mathbf{x} \mid \mathbf{y})$ with respect to $\mathbf{y}$, and conclude with Corollary \ref{introcor: LDP} on large deviations.
		\item \S \ref{sec: change of variables} introduces entropy for chronological types with respect to the Lebesgue measure.  After developing basic real analytic results for chronologically definable predicates and their finite-dimensional approximations, we define $\limsup$ and $\liminf$ versions of the Laplacian and prove the infinitesimal change of variables for entropy.
		\item \S \ref{sec: info geom} begins the proper study of information geometry.  We define a Wasserstein distance for a space of conditional chronological types.  We prove a version of Monge--Kantorovich duality.  We prove an extension theorem for random matrix models of optimal couplings.  We then conclude by proving Theorem \ref{introthm: geodesic concavity} on geodesic concavity and Theorem \ref{introthm: EVI} on the evolution variational inequality.
	\end{itemize}
	
	\subsection*{Acknowledgements}
	
	I thank the following colleagues for enlightening discussions that aided this work.  Jennifer Pi provided feedback on the exposition and the model-theoretic concepts. Charles-Philippe Diez explained many important aspects of information geometry and functional inequalities, especially during his visit to Copenhagen.  Kohei Suzuki highlighted the problem of proving evolution variational inequalities in discussions on March 22-24, 2026 at the conference ``Entropy and random matrices'' at CIRM in Marseille.
	
	\subsection*{Funding}
	
	The work was funded by an EU Horizon Marie Sk{\l}odowska Curie Action, FREEINFOGEOM, grant id: 101209517.  Views and opinions expressed are those of the author(s) only and do not necessarily reflect 
	those of the European Union or the Research Executive Agency. Neither the European Union nor the granting authority can be held responsible for them.
	
	\subsection*{AI Declaration}
	
	No large language models were used at any stage in the preparation of this article.
	
	\section{Preliminaries} \label{sec: preliminaries}
	
	\subsection{Von Neumann algebras} \label{subsec: von Neumann preliminaries}
	
	In this paper, a \emph{tracial von Neumann algebra}, or equivalently \emph{tracial $\mathrm{W}^*$-algebra} refers to a finite von Neumann algebras with a specified tracial state.  We recommend \cite{Ioana2023} for an introduction to the topic, as well as the following standard reference books \cite{KadisonRingroseI,Dixmier1969,Sakai1971,TakesakiI,Blackadar2006,Zhu1993}.
	
	\textbf{Tracial von Neumann algebras:} A \emph{von Neumann algebra} on a Hilbert space $H$ is a unital $*$-subalgebra of $B(H)$ which is closed in the weak operator topology (WOT) or equivalently closed in the strong operator topology (SOT).  Sakai \cite{Sakai1971} gave an abstract characterization of von Neumann algebras as $\mathrm{C}^*$-algebras which are also dual spaces as Banach spaces ($\mathrm{C}^*$-algebra in turn are abstractly characterized as Banach $*$-algebras satisfying $\norm{x^*x} = \norm{x}^2$).  A von Neumann algebra thus always has a weak-$*$ topology, and this coincides with the $\sigma$-WOT, which is the WOT for $M$ acting on $H^{\oplus \infty}$.
	
	A \emph{tracial von Neumann algebra} is a pair $\cM = (M,\tau)$ where $M$ is a von Neumann algebra and $\tau: M \to \C$ is a faithful normal tracial state, that is, a linear map satisfying
	\begin{itemize}
		\item \emph{positivity}: $\tau(x^*x) \geq 0$ for all $x \in M$
		\item \emph{unitality}: $\tau(1) = 1$
		\item \emph{traciality}: $\tau(xy) = \tau(yx)$ for $x, y \in A$
		\item \emph{faithfulness}:  $\tau(x^*x) = 0$ implies $x = 0$ for $x \in A$.
		\item \emph{weak-$*$ continuity}:  $\tau: M \to \C$ is weak-$*$ continuous.
	\end{itemize}
	We will often denote $\tau$ by $\tr^{\cM}$ for consistency with the model-theoretic notation introduced below.  We also denote by $D_r^{\cM}$ the operator-norm ball of radius $r$.
	
	We also use the following notations:
	\begin{itemize}
		\item $Z(M)$ denotes the center of $M$.
		\item $M_{\sa}$ denotes the set of self-adjoint elements.
		\item $D_r^M$ denotes the $r$-ball in $M$ with respect to operator norm.
	\end{itemize}

	\textbf{Real/imaginary and polar decompositions:} We will make frequent use of the operator real and imaginary parts:  For $x \in M$, write $\re(x) = (x + x^*)/2$ and $\im(x) = (x - x^*)/2i$.  Thus, $\re(x)$ and $\im(x)$ are self-adjoint and $x = \re(x) + i \im (x)$.  We also recall that $|x|$ is defined as $ (x^*x)^{1/2}$.  In any tracial von Neumann algebra, one can write express $x$ using a polar decomposition of the form $x = u|x|$ for some unitary $u$ (not necessarily unique).
	
	\textbf{noncommutative $L^p$ spaces:} Let $(M,\tau)$ be a tracial von Neumann algebra.  For $m \in \N$ and $\mathbf{x}, \mathbf{y} \in M^m$, denote
	\[
	\norm{\mathbf{x}}_p = \norm{\mathbf{x}}_{\tau,p} = \begin{cases} \left( \sum_{j=1}^m \tau(|x_j|^p) \right)^{1/p}, & p \in [1,\infty), \\
		\max_j \norm{x_j}, & p = \infty, \end{cases}
	\]
	where a plain $\norm{\cdot}$ always denotes the operator norm, and
	\[
	\ip{\mathbf{x},\mathbf{y}} = \ip{\mathbf{x},\mathbf{y}}_\tau = \sum_{j=1}^m \tau(x_j^* y_j).
	\]
	(In this paper, inner products are assumed to be linear in the \emph{right} coordinate.)  We recall that $M$ can be completed with respect to $\norm{\cdot}_p$ into a noncommutative $L^p$ space $L^p(M)$ (the dependence on $\tau$ is suppressed in the notation).  The adjoint operation $x \mapsto x^*$ always extends isometrically to $L^p(M)$.  The analogs of H{\"o}lder's inequality and $L^p$-$L^{p/(p-1)}$-duality also hold for tracial von Neumann algebra. In this paper, we will mostly use $L^2(M)$ and sometimes $L^1(M)$.
	
	\textbf{Embeddings and conditional expectations:} An \emph{embedding} or \emph{inclusion} of tracial von Neumann algebras always refers to a trace-preserving $*$-homomorphism, which is automatically injective because of faithfulness of the trace.  Given a tracial von Neumann algebra $(M,\tau)$ and a von Neumann subalgebra $(N,\tau|_{N})$, the orthogonal projection $L^2(M) \to L^2(N,\tau|_N)$ restricts to a map $E_N: M \to N$, and $E_N$ is contractive with respect to $\norm{\cdot}_p$ for all $p \in [1,\infty]$.  We refer to $E_N$ as the \emph{conditional expectation} from $M$ onto $N$.  The map $E_N$ is also a conditional expectation in the $\mathrm{C}^*$-algebraic sense; it is a completely positive $N$-$N$ bimodule map with $E_N|_N = \id_N$.  We point out that the choice of trace is necessary in order to uniquely determine $E_N$, even though the choice of trace is often suppressed in the notation.
	
	\textbf{Functional calculus:} Von Neumann algebras are always closed under application of Borel functional calculus and so in particular continuous functional calculus.  Thus, for every normal operator $x$ in a tracial von Neumann algebra $(M,\tau)$, there is an associated \emph{spectral measure} $\mu$ with respect to the trace, given by
	\[
	\int_{\C} f\,d\mu = \tau(f(x)) \text{ for } f \in C(\C);
	\]
	$\mu$ is a Borel probability measure supported on the spectrum of $x$, and in particular if $x$ is self-adjoint then $\mu \in \mathcal{P}(\R)$.  An important fact for us is that the application of a Lipschitz function on $\R$ by functional calculus will be Lipschitz on $L^2(M)_{\sa}$.  (Beware, however, that it is \emph{not} Lipschitz with respect to operator norm.)  This can be considered a folklore result in free probability or in operator theory, but we include the proof for the reader's convenience.
	
	\begin{lemma} \label{lem: Lipschitz functional calculus}
		Let $f: \R \to \R$ be an $L$-Lipschitz function.  Let $x$ and $y$ be self-adjoint elements in a tracial von Neumann algebra $(M,\tau)$.  Then
		\[
		\norm{f(x) - f(y)}_2 \leq L \norm{x - y}_2.
		\]
	\end{lemma}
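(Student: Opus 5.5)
The plan is to reduce to the case where $x$ and $y$ generate a commutative situation by using spectral measures on a tensor product, in the standard way one proves the folklore inequality. Write the spectral decompositions of $x$ and $y$ using their spectral projections. Since $M$ is tracial, the quantity $\norm{f(x)-f(y)}_2^2$ can be expressed as an integral against a positive measure on $\R^2$. Concretely, consider the bilinear form on $C(\R)\times C(\R)$ given by
\[
(g,h) \mapsto \tau(g(x) h(y)).
\]
Positivity of $\tau(g(x)^* \cdot\, h(y)\ldots)$ fails in general, so instead I would use the map $(g,h)\mapsto \tau(g(x)\, h(y))$ for $g,h\ge 0$. Writing $g(x) = g(x)^{1/2}g(x)^{1/2}$ and using traciality gives $\tau(g(x)h(y)) = \tau(g(x)^{1/2} h(y) g(x)^{1/2}) \geq 0$. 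Hence this defines a positive functional on $C(\R)\otimes C(\R)$ restricted to the spectra, which are compact since $x,y$ are bounded. By Riesz representation, it extends to a positive Borel measure $\nu$ on $\sigma(x)\times\sigma(y)$ with $\int g(s)h(t)\,d\nu(s,t) = \tau(g(x)h(y))$.

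For the main step, expand
\[
\norm{f(x)-f(y)}_2^2 = \tau(f(x)^2) + \tau(f(y)^2) - 2\tau(f(x)f(y)),
\]
and note that the marginals of $\nu$ are the spectral measures of $x$ and $y$, since $\tau(g(x)\cdot 1) = \tau(g(x))$. Therefore
\[
\norm{f(x)-f(y)}_2^2 = \int (f(s)-f(t))^2\,d\nu(s,t).
\]
The same computation with $f=\id$ gives $\norm{x-y}_2^2 = \int (s-t)^2\,d\nu$. The pointwise bound $(f(s)-f(t))^2 \leq L^2 (s-t)^2$ then yields the claim after integrating.

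The only step needing care is the positivity needed to obtain a measure. Here I must check that positivity on products $g\otimes h$ with $g,h\geq 0$ extends to all nonnegative continuous functions on $\sigma(x)\times\sigma(y)$. My first approach is to approximate a nonnegative $F$ uniformly by finite sums $\sum g_i\otimes h_i$ with $g_i,h_i\ge0$, for instance via a partition of unity subordinate to small squares; this gives $\Lambda(F)\ge 0$ in the limit. Alternatively, I can bypass measures entirely. Approximate $x,y$ in operator norm by elements with finite spectrum, namely $p(x)$ and $q(y)$ for step-like continuous functions. For such elements $\sum_{i,j}(f(s_i)-f(t_j))^2\tau(P_iQ_j)$ with $\tau(P_iQ_j)=\tau(P_iQ_jP_i)\ge0$, and the inequality is immediate termwise. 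Finally, pass to the limit using continuity of $f$ and the $\norm{\cdot}_2$-continuity of continuous functional calculus on bounded sets.
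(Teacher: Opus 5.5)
Your proof is correct in substance but takes a different route from the paper. The paper uses the $*$-representation $\pi(g\otimes h)z=g(x)zh(y)$ of $C([-R,R])\otimes C([-R,R])\cong C([-R,R]^2)$ on $L^2(M)$. For a polynomial $f$ it writes $f(x)-f(y)=\pi(\partial f)(x-y)$ and bounds $\norm{\pi(\partial f)}$ by $\sup_{s\neq t}|f(s)-f(t)|/|s-t|$. It must then approximate a general Lipschitz $f$ by polynomials with Lipschitz constant at most $L$, because the difference quotient of a merely Lipschitz function does not extend continuously to the diagonal and so is not an element of $C([-R,R]^2)$.

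You instead prove the exact identity $\norm{f(x)-f(y)}_2^2=\int (f(s)-f(t))^2\,d\nu(s,t)$ and compare integrands. Since $(f(s)-f(t))^2$ and $(s-t)^2$ are continuous, no approximation of $f$ is needed. In fact your $\nu$ is the vector state $F\mapsto\ip{\hat 1,\pi(F)\hat 1}_{L^2(M)}$ of the paper's representation, so both proofs rest on the same object. The paper uses the operator-norm bound for $\pi$; you use positivity of its vector state applied to $L^2\,(s-t)^2-(f(s)-f(t))^2\geq 0$. Your discretized variant is the most elementary of all: it needs only spectral projections, traciality, and $\tau(P_iQ_j)=\tau(P_iQ_jP_i)\geq 0$.

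Two points need tightening.

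\textbf{Extension of $\Lambda$ in the measure route.} The step ``approximate $F\geq 0$ uniformly by nonnegative combinations of $g\otimes h$ with $g,h\geq 0$, so $\Lambda(F)\geq 0$ in the limit'' presupposes that $\Lambda$ is continuous for the sup norm on the algebraic tensor product. You only know $|\Lambda(g\otimes h)|\le\norm{g}_\infty\norm{h}_\infty$, which is a projective-norm bound. Without sup-norm continuity you cannot even extend $\Lambda$ to $C(\sigma(x)\times\sigma(y))$ in order to invoke Riesz. The quickest repair is the identification $\Lambda(F)=\ip{\hat1,\pi(F)\hat1}$ above: left multiplication by $g(x)$ and right multiplication by $h(y)$ are commuting $*$-representations of commutative C$^*$-algebras, so $\pi$ is a $*$-homomorphism and $\Lambda$ is a state outright.

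\textbf{The discretization in the second route.} ``Step-like continuous functions'' cannot work. A continuous function with finite range is constant on each connected component of $\sigma(x)$, so it cannot approximate the identity when the spectrum is an interval. Use Borel step functions instead, $x'=\sum_i s_i\mathbbm{1}_{[a_{i-1},a_i)}(x)$ with $s_i\in[a_{i-1},a_i)$. This is legitimate because $M$ is a von Neumann algebra, and it gives $\norm{f(x)-f(x')}\leq L\cdot\mathrm{mesh}$ directly. Expanding $\tau((f(x')-f(y'))^2)$ using $\sum_iP_i=\sum_jQ_j=1$ and traciality gives $\sum_{i,j}(f(s_i)-f(t_j))^2\tau(P_iQ_j)$. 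Letting the mesh tend to zero then finishes the proof.
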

	
	\begin{proof}
		Assume that $x$ and $y$ are bounded in operator norm by some constant $R > 0$.  Suppose that $f$ is polynomial.  Let $\partial f$ be the free difference quotient of $f$; $\partial$ is defined as the linear operator on $\C[t] \to \C[t] \otimes \C[t]$, such that when $\partial[t^k] = t^j \otimes t^{k-1-j}$.  Recall that there is a unique $\mathrm{C}^*$-tensor product $C([-R,R]) \otimes C([-R,R])$, or the maximal and minimal tensor products agree, and moreover, $C([-R,R]) \otimes C([-R,R]) \cong C([-R,R]^2)$ in the natural way.  Hence, $\partial[t^k]$ viewed as an element of $C([-R,R]) \otimes C([-R,R])$ corresponds to the two-variable function
		\[
		g(s,t) = \sum_{j=0}^{k-1} s^j t^{k-1-j} = \frac{s^k - t_k}{s-t}.
		\]
		Hence, for $f \in \C[t]$, the element $\partial f$ corresponds to $(f(s) - f(t)) / (s - t)$.  Hence,
		\[
		\norm{\partial f}_{C([-R,R]) \otimes C([-R,R])} = \norm{\frac{f(s) - f(t)}{s-t} }_{C([-R,R]^2)} = \norm{f}_{\Lip([-R,R])}.
		\]
		Let $\pi: C([-R,R]) \otimes C([-R,R]) \to B(L^2(M))$ be the $*$-representation $\pi(g \otimes h) z = g(x)zh(y)$.  By direct computation on monomials, we have
		\[
		f(x) - f(y) = \pi(\partial f)(x - y).
		\]
		Thus,
		\[
		\norm{f(x) - f(y)}_2 \leq \norm{\partial f}_{C([-R,R]) \otimes C([-R,R])} \norm{x - y}_2 \leq \norm{f}_{\Lip([-R,R])} \norm{x - y}_2.
		\]
		Now it is straightforward to show (using smooth curoffs and convolution) that if $f$ is $L$-Lipschitz on $\R$, then $f$ can be uniformly approximated on $[-R,R]$ by polynomials $f_k$ which are also $L$-Lipschitz on $[-R,R]$.  We have $f_k(x) \to f(x)$ and $f_k(y) \to f(y)$ in operator norm, hence also $\norm{\cdot}_2$, and the asserted inequality follows.
	\end{proof}
	
	\textbf{Truncation:} The following truncation trick will be useful for our study of suprema and infima over operator norm balls.
	
	\begin{lemma} \label{lem: projection onto R ball}
		Let $(M,\tau)$ be a tracial von Neumann algebra and $x \in M$. Let $R > 0$.  Let $F_R: \R \to \R$ be the function
		\begin{equation} \label{eq: function FR}
			F_R(t) = \min(\max(t,-R),R).
		\end{equation}
		\begin{enumerate}[(1)]
			\item There is a unique closest point to $x$ in $\{x': \norm{x'} \leq R\}$.
			\item Write a polar decomposition $x = u|x|$ where $u$ is unitary.  Then $x' = u F_R(|x|)$.
			\item $2R \norm{x - x'}_1 \leq \norm{x}_2^2 - \norm{x'}_2^2$.
		\end{enumerate}
	\end{lemma}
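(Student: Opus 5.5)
We work in $L^2(M)$: the ball $B_R = \{x' \in M : \norm{x'} \leq R\}$ is convex, and it is closed in $L^2(M)$ because operator-norm balls are weak-$*$ closed and the $\norm{\cdot}_2$-topology on bounded sets agrees with the strong topology. The plan is to prove (1) by Hilbert-space projection, then establish (2) and (3) together from a variational inequality, using a pointwise scalar estimate for $t \mapsto F_R(t)$ applied through functional calculus.

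For (1), we use that $B_R$ is $\norm{\cdot}_2$-closed in $L^2(M)$, since a weak-$*$/$L^2$ limit of operators bounded by $R$ is bounded by $R$. The Hilbert projection theorem onto a closed convex set then gives existence and uniqueness of the closest point.

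For (2), set $x' = uF_R(|x|)$, where $F_R(|x|) = \min(|x|,R)$ because $|x| \geq 0$. Then $\norm{x'} \leq R$. By uniqueness, it suffices to check the obtuse-angle criterion
\[
\re \ip{x - x', y - x'} \leq 0 \quad \text{for all } y \in B_R.
\]
Write $x - x' = u(|x| - R)_+$. Then
\[
\re\ip{x - x', y - x'} = \re \tau\bigl((|x|-R)_+ u^* y\bigr) - \tau\bigl((|x|-R)_+ \min(|x|,R)\bigr).
\]
The second term equals $R\,\tau((|x|-R)_+)$, because $(|x|-R)_+$ is supported where $|x| \geq R$. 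For the first term, positivity of $(|x|-R)_+$ and $\norm{u^*y} \leq R$ give $|\tau((|x|-R)_+ u^* y)| \leq R\,\norm{(|x|-R)_+}_1 = R\,\tau((|x|-R)_+)$. The two terms therefore combine to a nonpositive quantity, which proves (2).

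For (3), we compute both sides via functional calculus of $|x|$. Since $u$ is unitary,
\[
\norm{x}_2^2 - \norm{x'}_2^2 = \tau\bigl(|x|^2 - \min(|x|,R)^2\bigr), \qquad \norm{x - x'}_1 = \tau\bigl((|x|-R)_+\bigr).
\]
It therefore suffices to prove the scalar inequality $2R(t - R)_+ \leq t^2 - \min(t,R)^2$ for $t \geq 0$. This holds trivially for $t \leq R$, and for $t > R$ it reads $t^2 - R^2 - 2R(t-R) = (t - R)^2 \geq 0$. Applying the trace to the corresponding operator inequality yields (3).

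The main obstacle is the first term in (2): the trace estimate $|\tau(ab)| \leq \norm{a}_1\norm{b}$ with $a = (|x|-R)_+ \geq 0$. This is a standard consequence of noncommutative Hölder's inequality, which we recalled above. A secondary point is that the polar part $u$ can be taken unitary, which holds in finite von Neumann algebras as stated; everything else reduces to functional calculus in the abelian algebra generated by $|x|$.
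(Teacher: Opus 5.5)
Your proof is correct and follows essentially the same route as the paper: Hilbert-space projection onto the closed convex ball for (1), the variational (obtuse-angle) characterization of the projection for (2), and the scalar inequality $t^2 - F_R(t)^2 \geq 2R(t - F_R(t))$ applied through functional calculus and the trace for (3). The only difference is cosmetic: you keep the unitary $u$ throughout and bound the cross term by $|\tau(ab)| \leq \norm{a}_1 \norm{b}$ for $a \geq 0$, whereas the paper first reduces to $x \geq 0$ using unitary invariance of the ball, and then gets nonnegativity of the trace of a product of two positive operators by compressing with the projection $p = \mathbbm{1}_{(R,\infty)}(x)$, using $p(R - \re y)p \geq 0$.
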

	
	\begin{proof}
		(1) Note that $\{x': \norm{x'} \leq R\}$ is a closed convex set in the Hilbert space $L^2(M,\tau)$.
		
		(2) Since the $R$-ball is invariant under left multiplication by unitaries, $u^* x'$ is the closest point to $|x|$.  Hence, it suffices to show that if $x \geq 0$, then $F_R(x)$ is the closest point to $x$.  Let $\norm{y} \leq R$.  Let $p = \mathbbm{1}_{(R,\infty)}(x)$ and note that $(x - F_R(x))p = x - F_R(x)$.  Then
		\begin{align*}
			\re \ip{F_R(x) - y, x - F_R(x)} &= \tau[(F_R(x) - \re(y))(x - F_R(x))] \\
			&= \tau[(F_R(x) - \re(y))p(x - F_R(x))p] \\
			&= \tau[p(F_R(x) - \re(y))p(x - F_R(x))].
		\end{align*}
		Now $p (F_R(x) - \re(y)) p = Rp - p \re(y) p \geq 0$ and $x - F_R(x) \geq 0$.  Hence, the trace of the product is nonegative, and so $\re \ip{F_R(x) - y, x - F_R(x)} \geq 0$ for all $y$ in the $R$-ball, which implies that $F_R(x)$ is the closest point.
		
		(3) Again, it suffices to consider the case when $x$ is positive.  Since $t - F_R(t)$ vanishes on $[0,R]$, we have
		\[
		t^2 - F_R(t)^2 = (t + F_R(t))(t - F_R(t)) \geq 2R(t - F_R(t)).
		\]
		Evaluating on $x$ and taking the trace, we see that
		\[
		\tau(x^2) - \tau(F_R(x)^2) \geq 2R \tau(x - F_R(x)) = 2R \norm{x - F_R(x)}_1.  \qedhere
		\]
	\end{proof}
	
	\textbf{Ultraproducts:}  We briefly recall the definition of ultrafilters and ultraproducts of tracial von Neumann algebras.  For background, see \cite[Appendix A]{BrownOzawa2008}, \cite[\S 2]{Capraro2010}, \cite[\S 5.4]{ADP}, \cite[\S 5.3]{GJNS2021}.
	
	An \emph{ultrafilter} $\cU$ on the natural numbers $\N$ is a collection of subsets $\cU$ which is closed under supersets and finite intersections, such that $\varnothing \not \in \cU$ for every $A \subseteq \N$, either $A$ or $A^c$ is in $\cU$.  For each $n \in \N$, there is an associated \emph{principal ultrafilter} $\{A \subseteq \N: n \in A\}$.  We are primarily concerned with the \emph{non-principal} or \emph{free} ultrafilters on $\N$.
	
	If $\cU$ is an ultrafilter on $\N$, we say that a set $A$ is \emph{$\cU$-large} if $A \in \cU$.  Similarly, a property is said to hold for \emph{$\cU$-many $n$} if the \emph{set} of $n$ that satisfies this property is an element of $\cU$.  Let $\cU$ be an ultrafilter on $\N$, and let $(x_n)_{n \in \N}$ be a sequence in some topological space.  We say that $\lim_{n \to \cU} x_n = x$ if for every neighborhood $O$ of $x$, we have $\{n: x_n \in O \} \in \cU$.  If $\cU$ is an ultrafilter on $\N$ and $x_n$ is a sequence in a compact Hausdorff space, then $\lim_{n \to \cU} x_n$ exists and is unique.
	
	We remark that ultrafilters on $\N$ are in bijection with characters on $\ell^\infty(\N)$; here ``character'' refers to a multiplicative linear functional $\ell^\infty(\N) \to \C$, or equivalently a pure state on $\ell^\infty(\N)$ as a $\mathrm{C}^*$-algebra, where the character $\varphi$ is given by $\varphi(f) = \lim_{n \to \cU} f(n)$.  The space of ultrafilters can thus be identified with the space of characters on $\ell^\infty(\N)$ or the Stone-{\v C}ech compactification of $\N$.
	
	Ultraproducts of tracial von Neumann algebras are defined as follows.  For $n \in \N$, let $\cM_n = (M_n,\tau_n)$ be a sequence of tracial von Neumann algebras.  Let $\prod_{n \in \N} M_n$ be the set of sequences $(x_n)_{n \in \N}$ such that $\sup_n \norm{x_n} < \infty$, which is a $\mathrm{C}^*$-algebra.  Let
	\[
	I_{\mathcal{U}} = \left\{(x_n)_{n \in \N} \in \prod_{n \in \N} M_n: \lim_{n \to {\mathcal{U}}} \norm{x_n}_{L^2(\cM_n)} = 0 \right\}.
	\]
	It turns out that $I_{\cU}$ is a closed two-sided ideal, so the quotient
	\[
	\prod_{n \to \cU} M_n := \prod_{n \in \N} M_n / I_{\cU}
	\]
	is a $\mathrm{C}^*$-algebra.  We denote by $[x_n]_{n \in \N}$ the equivalence class in $\prod_{n \in \N} M_n / I_{\mathcal{U}}$ of a sequence $(x_n)_{n \in \N}$. Furthermore, we define a trace $\tau_{\cU}$ on $\prod_{n \in \N} M_n / I_\mathcal{U}$ by
	\[
	\tau_{\cU}([x_n]_{n \in \N}) = \lim_{n \to \cU} \tau_n(x_n).
	\]
	the limit exists because of the boundedness of the sequence and it is independent of the particular representative of the equivalence class $[x_n]_{n \in \N}$ because $|\tau_n(x_n) - \tau(y_n)| \leq \norm{x_n - y_n}_{L^2(M_n,\tau_n)}$.  The pair $(\prod_{n \in \N} A_n / I_{\mathcal{U}},\tau_{\mathcal{U}})$ is automatically a tracial von Neumann algebra.
	
	\subsection{Free probability and random matrices} \label{subsec: free probability}
	
	Here we recall standard definitions and background on free probability and random matrices.  See also \cite{VDN1992,AGZ2009,MS2017}.
	
	\textbf{Free independence:} Let $(M,\tau)$ be a tracial von Neumann algebra.  We say that $*$-subalgebras $(A_i)_{i \in I}$ are \emph{freely independent} if whenever $i_1$, \dots, $i_k \in I$ with $i_j \leq i_{j+1}$, whenever $a_j \in A_{i_j}$ for $j = 1$, \dots, $k$, we have
	\[
	\tau[(a_1 - \tau(a_1)) \dots (a_k - \tau(a_k))] = 0.
	\]
	Similarly, elements are said to be freely independent if the $*$-algebras that they generate are freely independent.
	
	\textbf{Semicircular and circular operators:}  A \emph{standard semicircular operator} is a self-adjoint operator $x$ in a tracial von Neumann algebra whose spectral measure is the Wigner semicircular distribution $\frac{1}{2\pi} \sqrt{4 - t^2} \mathbbm{1}_{[-2,2]}(t)\,dt$.  A \emph{standard semicircular family} is a tuple $(x_1,\dots,x_m)$ where $x_1$, \dots, $x_m$ are freely independent standard semicircular elements.  A \emph{standard circular family} in this paper will be a family $(z_1,\dots,z_m)$ such that $(\re(z_1),\im(z_1),\dots,\re(z_m),\im(z_m))$ are a standard semicircular family.  Beware that many authors define a free circular operator as $(x + iy) / \sqrt{2}$ rather than $x + iy$, where $x$ and $y$ are free semicirculars.  We do not include the factor of $1/\sqrt{2}$ because we want a simpler conversion between the self-adjoint and non-self-adjoint cases, and because we want the resulting conventions around free entropy for non-self-adjoint operators to match those of \cite{JekelModelEntropy}.  Note that $\norm{x + iy}$ thus becomes $2 \sqrt{2}$ rather than $2$.
	
	\textbf{Filtrations and free Brownian motion:}  A \emph{noncommutative filtration} is a family $(M_t)_{t \geq 0}$ of tracial von Neumann subalgebras of some $(M,\tau)$ such that if $s \leq t$, then $M_s \subseteq M_t$.  We also adopt the convention that $M_\infty = M$.  An \emph{adapted process} $\alpha: [0,t_0] \to L^2(M,\tau)$ is a Bochner-measurable map such that $\alpha_t \in L^2(M_t,\tau)$ for each $t \geq 0$.
	
	For $m \in \N \cup \{\infty\}$, an \emph{$m$-variable (non-self-adjoint) free Brownian motion compatible with the filtration $(M_t)$} is a family of operators $(z_{t,j})_{t \geq 0, j \leq m}$ such that
	\begin{itemize}
		\item For each $t \geq 0$ and $j \in \N$, we have $z_{t,j} \in M_t$ (i.e.\ $t \mapsto z_{t,j}$ is adapted).
		\item For each $s < t$, the family $(z_{t,j} - z_{s,j})_{j \leq m}$ is freely independent of $M_s$.
		\item For each $s < t$, the family $(t-s)^{-1/2} (z_{t,j} - z_{s,j})_{j \leq m}$ is a standard circular family.
	\end{itemize}
	The \emph{filtration generated by the Brownian motion} is the filtration $(N_t)_{t \geq 0}$ where $N_t$ is the von Neumann algebra generated by $(z_{s,j})_{s \leq t, j \leq m}$.
	
	\textbf{Free Gaussian functor:}  One of the main ways to construct a free Brownian motion is Voiculescu's free Gaussian functor \cite[\S 3]{Voiculescu1985} \cite[\S 2.6]{VDN1992}.  Given a real Hilbert space $H_{\R}$ and its complexification $H$, the functor produces a von Neumann algebra $M(H)$ and a family of operators $X(h)$ for $h \in H_{\R}$ such that
	\begin{itemize}
		\item $x(h)$ is $\norm{h}$ times a standard semicircular.
		\item If $H_1$, \dots, $H_k$ are orthogonal subspaces of $H$, then the corresponding families of operators are freely independent.
	\end{itemize}
	Moreover, any isometry $H \to H'$ yields an trace-preserving $*$-homomorphism from $M(H)$ to $M(H')$; similarly, an autmorphism of $H$ produces an automorphism of $M(H)$.  See \cite[Theorem 2.6.2]{VDN1992}.
	
	In particular, by taking $H = L^2([0,\infty);\R^m)$ and $x_{t,j} = x(\mathbbm{1}_{[0,t]}e_j)$, we obtain a self-adjoint free Brownian motion.  We can adapt the construction to the non-self-adjoint case, by taking the free Gaussian functor applied to $H_{\R} \oplus H_{\R}$ and setting $z(h) = x(h \oplus 0) + i x(0 \oplus h)$ for $h \in H_{\R}$.  Then with $L^2([0,\infty);\R^m)$ we obtain a non-self-adjoint free Brownian motion.

	\textbf{Matrix algebras:} We denote by $\mathbb{M}_n$ the $n \times n$ complex matrices and $\tr_n$ the normalized trace $(1/n) \Tr_n$.  We equip $\M_n^m$ with the inner product
	\[
	\ip{\mathbf{X},\mathbf{Y}}_{\tr_n} = \sum_{j=1}^m \tr_n(X_j^*Y_j),
	\]
	where $\tr_n = (1/n) \Tr_n$ is the normalized trace; we also denote the corresponding normalized Hilbert-Schmidt norm by $\norm{\cdot}_{\tr_n}$.  As a complex inner-product space $\M_n^m$ may be transformed by a linear isometry to $\C^m$.  By the \emph{Lebesgue measure} on $\M_n^m$, we mean the Lebesgue measure on $\C^m$ transported by such an isometry, which is independent of the particular choice of isometry.  Note that many authors identify $\M_n$ with $\C^{n^2}$ entrywise to obtain Lebesgue measure, but our choice of Lebesgue measure differs by a factor depending on $n$ since we use $\tr_n$ rather than $\Tr_n$ to define the inner product.  See e.g.\ \cite[\S 3.3]{JekelPi2024} for further discussion of the normalization.
	
	\textbf{Gaussian random matrices and Brownian motion:} A \emph{random matrix tuple} is a random variable on some classical probability space $(\Omega,\mathcal{G},\mathbb{P})$ which takes values in $\mathbb{M}_n^m$.
	
	A \emph{GUE (Gaussian unitary ensemble)} random matrix is a random element of $(\mathbb{M}_n)_{\sa}$ with probability density $(2 \pi n^2)^{-n^2/2} \exp(-n^2 \norm{X}_2^2/2)$ with respect to the Lebesgue measure on $(\M_n)_{\sa}$ with respect to the normalized inner product $\ip{\cdot,\cdot}_{\tr_n}$.  A \emph{Ginibre random matrix} is a random element of $\M_n$ with probability density $(2 \pi n^2)^{-n^2} \exp(-n^2 \norm{X}_2^2/2)$ on $\M_n$ with respect to Lebesgue measure; note that the real and imaginary parts are independent GUE matrices.  Similar to the case of circular operators, our convention differs by a factor of $\sqrt{2}$ from that of many other works.
	
	Similarly, \emph{GUE} and \emph{Ginibre families} are $m$-tuples of independent GUE and Ginibre matrices respectively.  A \emph{Brownian motion on} $\M_n^m$ will refer to a family $(Z_{t,j})_{t \geq 0,j \leq m}$ with independent increments such that $(t-s)^{-1/2} (Z_{t,j} - Z_{s,j})_{j \leq m}$ is a Ginibre family for each $s < t$.  Such a Brownian motion will is said to be compatible with some filtration $\mathcal{G}_t$ of $\sigma$-algebras on the classical probability space $(\Omega,\mathcal{G},\mathbb{P})$ if $(Z_{s,j})_{s \leq t,j\leq m}$ is $\mathcal{G}_t$-measurable for each $t > 0$ (that is, $(Z_{t,j})_{t \geq 0}$ is \emph{progressively measurable}), and $(Z_{t,j} - Z_{s,j})_{j \leq m}$ is independent of $\mathcal{G}_s$ for each $s < t$.
	
	\textbf{Asymptotic freeness:}  Voiculescu showed that GUE random matrices are asymptotically freely independent of deterministic matrices.  The same applies of course to Ginibre matrices and Brownian motion on $\M_n^m$.  Various statements of the asymptotic freeness theorem can be found in \cite[Theorem 2.2]{Voiculescu1991}, \cite[Theorem 5.4.2]{AGZ2009}, \cite[Chapter 4, Theorem 5]{MS2017}, from which it is not hard to extract the following statement.
	
	\begin{theorem} \label{thm: asymptotic freeness}
		Let $m, m' \in \N \cup \{\infty\}$.  Let $\mathbf{Z}_t^{(n)} = (Z_{t,j}^{(n)})_{j \leq m}$ be a Brownian motion on $\M_n^m$.  Let $\mathbf{Y}^{(n)} = (Y_j^{(n)})_{j \leq m'}$ be a tuple of random matrices such that $\norm{Y_j^{(n)}} \leq r_j$ for some $r_j \in (0,\infty)$, and $\mathbf{Y}^{(n)}$ is independent of $(\mathbf{Z}_t^{(n)})_{t \geq 0}$.
		
		Let $\mathbf{z}_t = (z_{t,j})_{j \leq m}$ be a free circular Brownian motion in a tracial von Neumann algebra $(M,\tau)$.  Let $\mathbf{y} = (y_j)_{j \leq m'} \in M^{m'}$ be a tuple in $(M,\tau)$ freely independent of $(\mathbf{z}_t)_{t \geq 0}$.  Assume that for all $m'$-variable $*$-polynomials $f$, we have
		\[
		\lim_{n \to \infty} \tr_n(f(\mathbf{Y}^{(n)})) = \tau(f(\mathbf{y})) \text{ almost surely.}
		\]
		Then for all $k \in \N$ and $t_1$, \dots, $t_k \geq 0$ and $(m+km')$-variable $*$-polynomials $f$, we have
		\[
		\lim_{n \to \infty} \tr_n(f(\mathbf{Y}^{(n)},\mathbf{Z}_{t_1}^{(n)},\dots,\mathbf{Z}_{t_k}^{(n)})) = \tau(f(\mathbf{y},\mathbf{z}_{t_1},\dots,\mathbf{z}_{t_k})) \text{ almost surely.}
		\]
	\end{theorem}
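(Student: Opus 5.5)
The plan is to reduce to the standard asymptotic freeness theorem for a Gaussian family together with an independent, uniformly bounded family, which can be quoted from \cite[Theorem 5.4.2]{AGZ2009} or \cite[Chapter 4, Theorem 5]{MS2017}.

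\textbf{Reduction to finitely many variables.} Any given $*$-polynomial $f$ involves only finitely many of the indices $j \leq m$ and $j \leq m'$, so we may assume $m, m' < \infty$.

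\textbf{Reduction to independent increments.} Order the times so that $0 = t_0 < t_1 < \dots < t_k$. Write each $\mathbf{Z}_{t_i}^{(n)}$ as the sum of the increments $\mathbf{Z}_{t_l}^{(n)} - \mathbf{Z}_{t_{l-1}}^{(n)}$ for $l \leq i$. Then $f(\mathbf{Y}^{(n)},\mathbf{Z}_{t_1}^{(n)},\dots)$ becomes a $*$-polynomial $g$ in $\mathbf{Y}^{(n)}$ and the normalized increments $\mathbf{G}_l^{(n)} = (t_l - t_{l-1})^{-1/2}(\mathbf{Z}_{t_l}^{(n)} - \mathbf{Z}_{t_{l-1}}^{(n)})$. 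These increments form a single Ginibre family of length $km$, and this family is independent of $\mathbf{Y}^{(n)}$. The same substitution in $M$ expresses $f(\mathbf{y},\mathbf{z}_{t_1},\dots)$ as $g$ applied to $\mathbf{y}$ and the normalized free increments $\mathbf{g}_l$. These form a standard circular family of length $km$, free from $\mathbf{y}$, because increments over disjoint intervals of a free Brownian motion are freely independent.

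\textbf{From Ginibre to GUE.} Take operator real and imaginary parts. A Ginibre family of length $km$ becomes a GUE family of length $2km$, and a circular family becomes a semicircular family of length $2km$. This is consistent because, under the paper's normalization, the real and imaginary parts of a Ginibre matrix are independent GUE matrices.

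\textbf{Applying the quoted theorem.} Condition on $\mathbf{Y}^{(n)}$, which is legitimate since $\mathbf{Y}^{(n)}$ is independent of the Gaussians. Restrict to the almost sure event on which $\tr_n(p(\mathbf{Y}^{(n)})) \to \tau(p(\mathbf{y}))$ for every $*$-polynomial $p$ with rational coefficients; countably many polynomials suffice by linearity. On this event $\mathbf{Y}^{(n)}$ is a deterministic sequence with norms uniformly bounded by $r_j$ and convergent $*$-distribution. The cited theorem then gives almost sure convergence of mixed moments of the GUE family together with $\mathbf{Y}^{(n)}$ to those of a semicircular family free from $\mathbf{y}$.

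\textbf{Fubini step.} The conditional almost sure statement has to be converted into an unconditional one. Since the convergence event is measurable in the pair $(\mathbf{Y}^{(n)}, \text{Gaussians})$ and has conditional probability $1$ for almost every realization of $\mathbf{Y}^{(n)}$, Fubini's theorem gives probability $1$ overall.

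The main obstacle is matching the hypotheses of the cited theorem, which some sources state for deterministic matrices and with convergence in expectation. For almost sure convergence one also needs concentration: the variance of $\tr_n$ of a polynomial in GUE matrices and bounded deterministic matrices is $O(n^{-2})$, and Borel--Cantelli then upgrades convergence in expectation to almost sure convergence.
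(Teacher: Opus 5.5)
Your reductions (restricting to finitely many indices, rewriting in terms of normalized increments, passing to real and imaginary parts) are the extraction the paper has in mind. The paper gives no proof; it only says the statement is easily obtained from the standard asymptotic freeness theorems cited just before it. There is, however, one step that does not work as written under the hypothesis as literally stated: the conditioning/Fubini step.

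The hypothesis says only that, for each fixed $n$, $\mathbf{Y}^{(n)}$ is independent of $(\mathbf{Z}_t^{(n)})_{t \geq 0}$. Nothing is assumed about how $\mathbf{Y}^{(n')}$ depends on $\mathbf{Z}^{(n)}$ when $n' \neq n$.

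When you condition on the whole sequence $(\mathbf{Y}^{(n)})_{n}$ and treat it as deterministic, you need every Gaussian family to keep its Ginibre law under this conditioning. That amounts to independence of the entire sequences. Otherwise the conditional law of $\mathbf{Z}^{(n)}$ need not be Gaussian, for example if $\mathbf{Y}^{(n+1)}$ is built from $\mathbf{Z}^{(n)}$. Then the cited deterministic theorem does not apply to it, and Fubini has no product structure to use. If you read the hypothesis as independence of the whole sequences, your argument is fine as written.

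In general, the repair uses only ingredients you already named.
\begin{itemize}
\item Fix $n$ and condition only on $\mathbf{Y}^{(n)}$. For deterministic $\mathbf{D}$ with $\norm{D_j} \leq r_j$, set $\Psi_n(\mathbf{D}) = \mathbb{E}\, \tr_n(g(\mathbf{D},\mathbf{G}^{(n)}))$.
\item On the almost sure event where the $*$-moments of $\mathbf{Y}^{(n)}(\omega)$ converge, apply the deterministic convergence-in-expectation theorem to the fixed sequence $(\mathbf{Y}^{(n)}(\omega))_n$. This gives $\Psi_n(\mathbf{Y}^{(n)}(\omega)) \to \tau(g(\mathbf{y},\mathbf{g}))$.
\item The Gaussian Poincar\'e inequality, in gradient form together with uniform-in-$n$ moment bounds on $\norm{G_j^{(n)}}$, gives $\mathrm{Var}(\tr_n g(\mathbf{D},\mathbf{G}^{(n)})) \leq C n^{-2}$. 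Here $C$ depends only on $g$ and the $r_j$. Integrating the conditional Chebyshev bound yields
\[
\mathbb{P}\bigl( |\tr_n g(\mathbf{Y}^{(n)},\mathbf{G}^{(n)}) - \Psi_n(\mathbf{Y}^{(n)})| > \varepsilon \bigr) \leq \frac{C}{\varepsilon^2 n^2},
\]
which is summable in $n$.
\item Borel--Cantelli then finishes the argument. It needs no assumption on the coupling across different $n$, and no Fubini step.
\end{itemize}
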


	\textbf{Concentration of measure:}  Let $m \in \N$ and let $\mathbf{Z}^{(n)} = (Z_1^{(n)},\dots,Z_m^{(n)})$ be a Ginibre family.  Then $\mathbf{Z}^{(n)}$ satisfies strong concentration-of-measure properties which show that functions of $\mathbf{Z}^{(n)}$ are close to being constant with high probability.  Suppose that $f: \mathbb{M}_n^m \to \R$ is $L$-Lipschitz with respect to the normalized $2$-norm $\norm{\cdot}_2 = \norm{\cdot}_{2,\tr_n}$.  Then we have the \emph{Herbst concentration inequality}
	\begin{equation} \label{eq: Herbst concentration inequality}
		\mathbb{P}(|f(\mathbf{Z}^{(n)}) - \mathbb{E} f(\mathbf{Z}^{(n)})| \geq \delta) \leq 2 e^{-n^2 \delta^2 / 2L^2} \text{ for } \delta > 0,
	\end{equation}
	and the \emph{Poincar{\'e} inequality}
	\begin{equation} \label{eq: Poincare inequality}
		\mathbb{E} |f(\mathbf{Z}^{(n)}) - \mathbb{E} f(\mathbf{Z}^{(n)})|^2 \leq \frac{1}{n^2} \mathbb{E} \norm{\nabla f(\mathbf{Z}^{(n)})}_2^2 \leq \frac{L^2}{n^2}.
	\end{equation}
	Here $\nabla f$ is the gradient of a function on $\mathbb{M}_n^m \cong \R^{mn^2}$ with respect to the real inner product $\re \ip{\cdot,\cdot}_{\tr_n}$.  For proof, see e.g.\ \cite[\S 2.3 and \S 4.4]{AGZ2009}.  Both of these inequalities can also be applied to the rescaled Brownian motion increments $(t-s)^{-1/2}(\mathbf{Z}_t^{(n)} - \mathbf{Z}_s^{(n)})$ by suitable substitutions.
	
	\textbf{Truncation:}  It is important at various points to guarantee that the tuples of matrices are bounded in operator norm as $n \to \infty$.  We therefore recall the following well-known fact (see e.g.\ \cite[p. 24]{AGZ2009} or \cite{Ledoux2003remark}):  Let $S^{(n)}$ be a GUE random matrix.  Then
	\begin{equation} \label{eq: operator norm bound for GUE}
		\lim_{n \to \infty} \norm{S^{(n)}} = 2 \text{ almost surely.}
	\end{equation}
	For $R > 0$, we use the function $F_R(t) = \max(-R,\min(t,R))$ as in \eqref{eq: function FR}.  We then note the following facts:  Equation \eqref{eq: operator norm bound for GUE} implies that for $R > 2$,
	\begin{equation} \label{eq: truncated GUE matrix}
		\text{almost surely } F_R(S^{(n)}) = S^{(n)} \text{ for sufficiently large $n$.}
	\end{equation}
	Moreover, standard tail estimates for the GUE matrix imply that
	\begin{equation} \label{eq: truncated GUE matrix 2}
		\lim_{n \to \infty} \norm{S^{(n)} - F_R(S^{(n)})}_{L^2} = 0 \text{ for } R \geq 2,
	\end{equation}
	where $\norm{X}_{L^2} := (\mathbb{E} \norm{X}_2^2)^{1/2}$ for any random matrix $X$.
	
	The same statements can be applied to Ginibre matrices and tuples.  For ease of notation, if $\mathbf{Z}^{(n)} = (Z_j^{(n)})_{j \leq m}$ is matrix tuple (for instance, a Ginibre tuple) and $R > 0$, we set
	\begin{equation} \label{eq: truncation of tuple}
		F_R(\mathbf{Z}^{(n)}) = (F_R(\re Z_j^{(n)}) + i F_R(\im Z_j^{(n)}))_{j \leq m}.
	\end{equation}
	
	\subsection{Model theory} \label{subsec: model theory}
	
	Next, let us sketch the setup of continuous model theory, or model theory for metric structures \cite{BYBHU2008,BYU2010}.  We will follow the treatment in \cite{FHS2014} which introduces ``domains of quantification'' rather than ``sorts.''  Classical model theory deals with first-order statements:  one starts with basic predicates associated to the class of objects under consideration (for example, ``$xy = z$'' in the setting of groups), and then builds new statements from them by applying quantifiers ``for all'' and ``there exists'' and connectives like ``and'', ``or'', and ``if/then.''  In particular, each predicate takes the value true or false for any given inputs.  In continuous model theory, the predicates will take real values rather than true/false values.  For instance, the relation ``$=$'' which takes values true or false will be replaced by the metric $d$ which outputs a real-valued distance between two points.
	
	Just as in classical logic, a particular language comes with \emph{function symbols} and \emph{relation (or predicate) symbols}.  In the case of groups, for instance, there is a binary function symbol for multiplication and a $0$-ary function, i.e. a constant, for the identity.  In the case of $*$-algebras over $\mathbb{C}$, one needs symbols for addition, multiplication, adjoint, and scalar multiplication.  While the function symbols encode operations from $M^n$ into $M$ for each structure $M$, the relation symbols take inputs from $M$ and output true/false in the discrete setting, and real values in the metric setting.
	
	When the structures are metric spaces, the operations associated to the structures should also be assumed to be continuous and globally bounded.  In fact, there should be a uniform bound and modulus of continuity independent of the particular structure, so that we will be able to construct ultraproducts of the structures.  This leads to another annoyance in the setting of Banach spaces, $\mathrm{C}^*$-algebras, von Neumann algebras, and the like, namely, the operations are not always globally bounded or uniformly continuous on the whole space.  We therefore dissect the space into a family of \emph{domains of quantification} (often the family of balls of radius $r$ for nonnegative integers or rationals or reals $r$) on which the operations are bounded and uniformly continuous.  When we apply a supremum or infimum, it will always be over one of these domains, to ensure that the result remains finite; for instance, we may take $\sup_{y \in D_1} \varphi(x,y)$ where $D_1$ is the unit ball and $\varphi$ is some formula that is already defined.  If one wants to avoid using multiple domains, this can sometimes be done by suitably modifying the collection of function symbols in order to phrase everything in terms of the unit ball \cite[Proposition 29.4]{BYIT2024}; however, this is not convenient for our purposes here since we want to reason about globally defined functions.

	\textbf{Languages:} A \emph{metric language} $\mathcal{L}$ consists of:
	\begin{itemize}
		\item A set $\mathcal{D}$ whose elements are called \emph{domains of quantification}.
		\item For each $D, D' \in \mathcal{D}$ an assigned constant $C_{D,D'}$.
		\item A countably infinite set of \emph{variable symbols} for each sort $S$.  We denote the variables by $(x_i)_{i \in \N}$.
		\item A set of \emph{function symbols}, each with an assigned \emph{arity} $m \in \N$.
		\item For each function symbol $f$ and for every $\mathbf{D} = (D_1,\dots,D_m) \in \mathcal{D}^m$, an assigned $D_{f,\mathbf{D}} \in \mathcal{D}$ (representing a range bound), and assigned moduli of continuity $\omega_{f,\mathbf{D},1}$, \dots, $\omega_{f,\mathcal{D},m}$. (Here ``modulus of coninuity'' means a continuous increasing, zero-preserving function $[0,\infty) \to [0,\infty)$).
		\item A set of \emph{relation symbols}, each with an assigned arity $n \in \N$.
		\item A special binary relation symbol $d$ for the metric.
		\item For each relation symbol $R$ and for every $\mathbf{D} = (D_1,\dots,D_m) \in \mathcal{D}^m$, an assigned bound $N_{R,\mathbf{D}} \in [0,\infty)$ and assigned moduli of continuity $\omega_{R,\mathbf{D},1}$, \dots, $\omega_{R,\mathcal{D},m}$.
	\end{itemize}
	
	\textbf{Structures:} Given a language $\mathcal{L}$, an \emph{$\mathcal{L}$-structure} $\cM$ is a metric space $M$ with an object assigned to each symbol in $\mathcal{L}$, called the \emph{interpretation} of that symbol, in the following manner:
	\begin{itemize}
		\item Each domain of quantification $D \in \mathcal{D}_S$ is assigned a subset $D^{\cM} \subseteq M$, such that $D^{\cM}$ is complete for each $D$, and $\cM = \bigcup_{D \in \mathcal{D}_S} D^{\cM}$, and $\sup_{x \in D, y \in D'} d^{\cM}(x,y) \leq C_{D,D'}$.
		\item Each function symbol $f$ of arity $m$ is interpreted as a function $f^{\mathcal{M}}: M^m \to M$.  Moreover, for each $\mathbf{D} = (D_1,\dots,D_n) \in \mathcal{D}_{S_1} \times \dots \times \mathcal{D}_{S_n}$, the function $f^{\cM}$ maps $D_1^{\cM} \times \dots \times D_n^{\cM}$ into $D_{f,\mathbf{D}}^{\cM}$.  Finally, $f^{\cM}$ restricted to $D_1^{\cM} \times \dots \times D_n^{\cM}$ is uniformly continuous in the $i$th variable with modulus of continuity (less than or equal to) $\omega_{f,\mathbf{D},i}$.
		\item Each relation symbol $R$ of arity $m$ is interpreted as a function $R^{\cM}: M^m \to \R$.  Moreover, for each $\mathbf{D} = (D_1,\dots,D_m) \in \mathcal{D}^m$, $f^{\cM}$ is bounded by $N_{R,\mathbf{D}}$ on $D_1^{\cM} \times \dots \times D_m^{\cM}$ and uniformly continuous in the $i$th argument with modulus of continuity of $\omega_{R,\mathbf{D},i}$.
	\end{itemize}
	
	\textbf{Language for tracial von Neumann algebras:} Next, we recall the \emph{language $\mathcal{L}_{\tr}$ of tracial $\mathrm{W}^*$-algebras} together with how each symbol will be interpreted for a given tracial von Neumann algebra, as formulated in \cite{FHS2014}.
	\begin{itemize}
		\item Domains of quantification $\{D_r\}_{r \in (0,\infty)}$, to be interpreted as the operator norm balls of radius $r$ in $M$.
		\item The metric symbol $d$, to be interpreted as the metric induced by the noncommutative $L^2$-norm $\norm{\cdot}_{2,\tau}$.
		\item A binary function symbol $+$, to be interpreted as addition.
		\item A binary function symbol $\cdot$, to be interpreted as multiplication.
		\item A unary function symbol $*$, to be interpreted as the adjoint operation.
		\item For each $\lambda \in \C$, a unary function symbol, to be interpreted as multiplication by $\lambda$.
		\item Function symbols of arity $0$ (in other words constants) $0$ and $1$, to be interpreted as additive and multiplicative identity elements.
		\item Two unary relation symbols $\re \tr$ and $\im \tr$, to be interpreted the real and imaginary parts of the trace $\tau$.
		\item For technical reasons (see \cite[Proposition 3.2]{FHS2014} and \cite[p.\ 93]{Hart2023}), we also introduce for each $d$-variable noncommutative polynomial $p$ and $n \in \N$ a $d$-variable function symbol $t_p$ representing the evaluation of $p$, along with the appropriate range bounds $N_{t_p,\mathbf{r}}$ between $M_p$ and $M_p + 1/n$, where $M_p$ is the supremum of $\norm{p(X_1,\dots,X_d)}$ over all $(X_1,\dots,X_d)$ in a tracial $\mathrm{W}^*$-algebra $\cM$.
	\end{itemize}
	Each function and relation symbol is assigned range bounds and moduli of continuity that one would expect, e.g.\ multiplication is supposed to map $D_r \times D_{r'}$ into $D_{rr'}$ with $\omega_{(D_r,D_r'),1}^{\cdot}(t) = r't$ and $\omega_{(D_r,D_r'),2}^{\cdot} = rt$.  Although not every $\mathcal{L}_{\tr}$-structure comes from a tracial $\mathrm{W}^*$-algebra, one can formulate axioms in the language such that any structure satisfying these axioms comes from a tracial $\mathrm{W}^*$-algebra \cite[\S 3.2]{FHS2014}.
	
	\textbf{Syntax:} We now briefly recall \emph{terms}, \emph{formulas}, \emph{sentences}, \emph{types}, and \emph{theories}.  See \cite[\S 2.3-2.4]{JekelTypeCoupling} for more detail.  Fix a set of variables $X$.  In the following definitions, we write $\cL$-terms, $\cL$-formulas, etc., but we will also refer to these objects simply as terms, formulas, etc.\ if the language $\cL$ is clear from context.
	
	$\cL$-\emph{terms} in $X$ are expressions formed recursively by composition of the function symbols in $\cL$ applied to variables in $X$.  For each $\cL$-structure $\cM$, the interpretation of a term $t(x_1,\dots,x_m)$ is the function $M^m \to M$ obtained by composing the interpretation of the function symbols in $M$.  For instance, a term in the language of tracial von Neumann algebras is an expression in the $*$-algebra operations, such as $((x_1 + x_2) x_3 + x_4^*) + 3 x_1 (x_3 x_2)$ where $x_1$, $x_2$, \dots are variables in $X$.  Although the $*$-algebra axioms are not assumed to hold in the definition of terms, every term will coincide with a $*$-polynomial when evaluated in an actual $*$-algebra.
	
	$\cL$-\emph{formulas} in $X$ are expressions defined recursively as follows:
	\begin{itemize}
		\item \emph{Basic formulas} have the form $r(t_1(\mathbf{x}),\dots,t_k(\mathbf{x}))$ where $r$ is a $k$-ary relation symbol from $\cL$ and $t_1$, \dots, $t_k$ are terms in variables $\mathbf{x} = (x_1,\dots,x_m)$ from $X$.
		\item \emph{Connectives:}  If $\varphi_1$, \dots, $\varphi_k$ are formulas in $\mathbf{x} = (x_1,\dots,x_m)$ and $f: \R^k \to \R$ is continuous, then $f(\varphi_1,\dots,\varphi_k)$ is a formula.
		\item \emph{Quantifiers:} If $\varphi(\mathbf{x},y)$ is a formula where $x_1,\dots,x_m, y \in X$ are distinct, and if $D \in \mathcal{D}$, then $\psi_1(\mathbf{x}) = \sup_{y \in D} \varphi(\mathbf{x},y)$ and $\psi_2(\mathbf{x}) = \inf_{y \in D} \varphi(\mathbf{x},y)$ are formulas.
	\end{itemize}
	\emph{Quantifier-free formulas} are defined recursively using only the basic formulas and connectives.  For instance, for tracial von Neumann algebras, trace polynomials can be expressed using quantifier-free formulas.
	
	In a formula, the \emph{free variables} are the variables which are not bound to any quantifier, and we usually denote a formula as a function of the free variables.  In general, a variable could have one occurrence that is bound to a quantifier and one occurrence which is free, but by relabeling variables we can assume that each quantifier in the formula has a unique variable associated to it.  We also denote by $\mathcal{F}_m$ the set of formulas in $m$ free variables $x_1$, \dots, $x_m$.  Here not every free variable is required to appear in the formula in a nontrivial way; that is, every formula in a subset of the variables $x_1$, \dots, $x_m$ is also an element of $\mathcal{F}_m$.
	
	\textbf{Evaluation of formulas:} Given an $\cL$-structure $\cM$ and a formula $\varphi(\mathbf{x})$, the \emph{evaluation} or \emph{interpretation} of $\varphi$ in $\cM$ is the function $\varphi^{\cM}: M^m \to \R$ defined by plugging in inputs $x_1$, \dots, $x_m$ from $M$, evaluating each term and relation symbol in $M$, and evaluating each expression $\sup_{y \in D}$ as the actual supremum over $y \in D^{\cM}$.   We remark that if $\varphi$ is a formula in $m$ free variables and $\mathbf{D} = (D_1,\dots,D_m) \in \mathcal{D}^m$, then $\varphi^{\cM}$ is uniformly continuous on $D_1^{\cM} \times \dots \times D_m^{\cM}$ with a modulus of continuity that is independent of the particular $\cL$-structure $\cM$.  This is easily seen by induction on the complexity of the formulas \cite[Theorem 3.5]{BYBHU2008}.
	
	\textbf{Sentences, theories, and axiomatization:} A $\cL$-\emph{sentence} is a formula with no free variables.  Thus, the evaluation of a sentence in $\cM$ is a real number associated to $\cM$.  An $\cL$-\emph{theory} is a collection of $\cL$-sentences.  An $\cL$-structure $\cM$ is said to \emph{satisfy} a theory $\rT$, if $\varphi^{\cM} = 0$ for all $\varphi \in \rT$ (one also says that $\cM$ is a \emph{model} of $\rT$ or $\cM \models \rT$).  A class $\mathcal{C}$ of $\cL$-structures is said to be \emph{axiomatizable} if there is a theory $\rT$ such that $\mathcal{C}$ is the class of $\cL$-structures satisfying $\rT$.  For instance, the class of tracial von Neumann algebras and the class of $\mathrm{II}_1$ factors are axiomatizable in $\cL_{\tr}$ by \cite{FHS2014}.  Similarly, a property is said to be axiomatizable if the class of structures with that property is axiomatizable.
	
	\textbf{Elementary equivalence and embeddings:} Two $\cL$-structures $\cM_1$ and $\cM_2$ are said to be \emph{elementarily equivalent} if $\varphi^{\cM_1} = \varphi^{\cM_2}$ for all $\cL$-sentences $\varphi$.  An inclusion $\cM_1 \subseteq \cM_2$ is said to be \emph{elementary} if for every formula $\varphi$ in $m$ variables and $\mathbf{x} \in (\cM_1)^m$, we have $\varphi^{\cM_2}(\mathbf{x}) = \varphi^{\cM_1}(\mathbf{x})$.
	
	\textbf{Types and definable predicates:} Let $m \in \N \cup \{\infty\}$.  Given an $\cL$-structure $\cM$ and $\mathbf{x} \in M^m$, the \emph{full type} $\tp^{\cM}(\mathbf{x})$ of $\mathbf{x}$ is the map $\mathcal{F}_m \to \R$ sending $\varphi$ to $\varphi^{\cM}(\mathbf{x})$.  We denote by $\mathbb{S}_m(\rT)$ the set of types $\tp^{\cM}(\mathbf{x})$ of tuples from $\cL$-structures satisfying $\rT$.  If $\mathbf{D} = (D_1,\dots,D_m)$ is a tuple from $\cD$, then $\mathbb{S}_{\mathbf{D}}(\rT)$ will denote the set of types associated to tuples from $D_1^{\cM} \times \dots \times D_m^{\cM}$ and $\cL$-structures satisfying $\rT$.  We equip $\mathbb{S}_{\mathbf{D}}(\rT)$ with the weak-$*$ topology from the space of linear functionals on $\mathcal{F}_m$.  We equip $\mathbb{S}_m(\rT)$ with the topology such that $O$ is open in $\mathbb{S}_m(\rT)$ if and only if $O \cap \mathbb{S}_{\mathbf{D}}(\rT)$ is open in $\mathbb{S}_{\mathbf{D}}(\rT)$ for each $\mathbf{D}$.
	
	\emph{Definable predicates} are a certain completion of the space of formulas that are dual to full types.  As motivation, note that by definition of the weak-$*$ topology, every formula $\varphi$ defines a continuous function on $\mathbb{S}_{\mathbf{D}}(\rT)$, which is also a compact Hausdorff space (see \cite[Proposition 8.6]{BYBHU2008}).  The continuous functions given by formulas are a subalgebra of $C(\mathbb{S}_{\mathbf{D}}(\rT);\R)$ that separates points and contiains $1$, and hence they are dense in $C(\mathbb{S}_{\mathbf{D}}(\rT);\R)$.  The norm of $C(\mathbb{S}_{\mathbf{D}}(\rT);\R)$ corresponds to
	\[
	\norm{\varphi}_{\mathbf{D},\rT} = \sup_{\cM \models \rT} \sup_{\mathbf{x} \in D_1^{\cM} \times \dots \times D_m^{\cM}} |\varphi^{\cM}(\mathbf{x})|.
	\]
	It is therefore natural to study the completion of $\mathcal{F}_m$ with respect to the family of seminorms $\norm{\cdot}_{\mathbf{D},\rT}$ for $\mathbf{D} \in \cD^m$.  The elements in the completion will be called \emph{definable predicates} in $m$ variables relative to $\rT$.  By \cite[Lemma 2.16]{JekelTypeCoupling}, every element of $C(\mathbb{S}_{\mathbf{D}}(\rT);\R)$ agrees with some definable predicate.  In this paper, we will use $(\mu,\varphi)$ to denote the pairing of a type $\mu$ with a definable predicate $\varphi$, namely,
	\[
	(\mu,\varphi) = \varphi^{\cM}(\mathbf{x}) \text{ when } \tp^{\cM}(\mathbf{x}) = \mu.
	\]
	
	\textbf{Notation for the von Neumann algebra case:} For tracial von Neumann algebras, the domains are indexed by $r \geq 0$, and we therefore use the following notation:
	\begin{itemize}
		\item For $m \in \N \cup \{\infty\}$ and $\mathbf{r} \in (0,\infty)^m$ and a tracial von Neumann algebra $\cM$, we write $D_{\mathbf{r}}^{\cM} = \prod_{j \leq m} D_{r_j}^{\cM}$.
		\item We write $\mathbb{S}_{\mathbf{r}}(\rT)$ for the space of types associated to $(D_{r_j})_{j \leq m}$.
	\end{itemize}

	\section{Filtered NC probability spaces as metric stuctures} \label{sec: model theory and filtration}
	
	\subsection{Setup of the language} \label{subsec: language for filtration}
	
	The goal of this section is to set up the study of filtered NC probability spaces and adapted processes as \emph{metric structures} in the sense of \S \ref{subsec: model theory} (for terminology on NC filtrations, see \S \ref{subsec: free probability}).  Throughout, we consider a process $\mathbf{z}_t = (z_{t,j})_{j \in \N}$ for $t \geq 0$ which is not necessarily self-adjoint, and we usually work in terms the increments $z_{s,t,j} = z_{t,j} - z_{s,j}$ for $s < t$ and $j \in \N$.
	
	Due to the nature of the metric language for von Neumann algebras, we want to assume that $\mathbf{z}_{s,t,j}$ lies in an operator-norm ball $D_{r(s,t)}$ where $r(s,t)$ is independent of the particular structure.  Because we are concerned with circular Brownian motion, we take $r(s,t)$ be some number larger than $2^{3/2} (t-s)^{1/2}$.  For concreteness and for convenience in the random matrix arguments, we take $r(s,t) = 6(t-s)^{1/2}$.  Of course, if one would like to study a \emph{different} process which is still continuous in operator norm, one can adjust the constants accordingly, but we simply use $6(t-s)^{1/2}$ throughout this paper.
	
	\begin{definition}
		The language $\cL_{\proc}$ is defined as follows:
		\begin{itemize}
			\item For each $t \in [0,\infty]$ and $r \in [0,\infty)$, there is an associated domain $D_{t,r}$.
			\item There is a binary function symbol $+$ which maps $D_{t_1,r_1} \times D_{t_2,r_2}$ into $D_{\max(t_1,t_2),r_1+r_2}$ for each $t_1$, $t_2$, $r_1$, and $r_2$.
			\item There is a binary function symbol $\cdot$ which maps $D_{t_1,r_1} \times D_{t_2,r_2}$ into $D_{\max(t_1,t_2),r_1 r_2}$ for each $t_1$, $t_2$, $r_1$, and $r_2$.
			\item There is a unary function symbol $*$ which maps $D_{t,r}$ into $D_{t,r}$ for each $t$ and $r$.
			\item For each $\lambda \in \C$, there is a unary function symbol $\lambda$ that maps $D_{t,r}$ into $D_{t,|\lambda|r}$ for each $t$ and $r$.
			\item There is a constant symbol $1 \in D_{0,1}$.
			\item For $0 \leq s \leq t < \infty$ and $j \in \N$, there is a constant symbol $z_{s,t,j} \in D_{t,3(t-s)^{1/2}}$.
			\item There is a predicate symbol $d$ for the distance, which maps $D_{t_1,r_1} \times D_{t_2,r_2}$ into $[0,r_1+r_2]$.
			\item There are predicate symbols $\re \tr$ and $\im \tr$ for the real and imaginary parts of the trace, which map $D_{t,r}$ into $[-r,r]$.
		\end{itemize}
	\end{definition}
	
	We now consider how $\cL_{\proc}$-structures relate with $\cL_{\tr}$ structures and with tracial von Neumann algebras.
	
	\begin{definition}
		Let $\cM$ be an $\cL_{\proc}$-structure.  For $t \in [0,\infty]$, denote by $\cM_t$ the $\cL_{\tr}$-structure obtained by considering only the domains $(D_{t,r})_{r \in [0,\infty)}$ and ignoring the constant symbols $z_{s,t,j}$.  Moreover, let $M_t = \bigcup_{r > 0} D_{t,r}$ as a set.
	\end{definition}
	
	\begin{definition}
		Let $\rT_{\filt}$ be the set of axioms in $\cL_{\proc}$ described as follows:
		\begin{enumerate}[(1)]
			\item For each $t_1 \leq t_2$ and $r_1 \leq r_2$, we have
			\[
			\sup_{x \in D_{t_1,r_1}} \inf_{y \in D_{t_2,r_2}} d(x,y) = 0.
			\]
			Note that this axiom expresses that $D_{t_1,r_1}^{\cM} \subseteq D_{t_2,r_2}^{\cM}$.
			\item $\cM_\infty$ satisfies the theory $\rT_{\tr}$ used to axiomatize tracial von Neumann algebras in \cite{FHS2014}.
		\end{enumerate}
		We also define $\rT_{\proc}$ to be the $\rT_{\filt}$ together with the additional axioms:
		\begin{itemize}
			\item[(3)] For each $t_1 \leq t_2 \leq t_3 < \infty$, we have $d(z_{t_1,t_2,j}+z_{t_2,t_3,j}, z_{t_1,t_3,j}) = 0$, which expresses that $z_{t_1,t_2,j}+z_{t_2,t_3,j} = z_{t_1,t_3,j}$.
		\end{itemize}
	\end{definition}
	
	The following lemma is immediate from the fact $\rT_{\tr}$ axiomatizes tracial von Neumann algebras.
	
	\begin{lemma}
		If $\cM$ is an $\cL_{\proc}$-structure satisfying $\rT_{\filt}$, then $M_\infty$ is a tracial von Neumann algebra and $M_t$ is a von Neumann subalgebra for each $t \in [0,\infty]$, with $M_{t_1} \subseteq M_{t_2}$ for $t_1 < t_2$.
	\end{lemma}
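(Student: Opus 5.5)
The plan is to read off the three conclusions from the two groups of axioms in $\rT_{\filt}$, using that $\rT_{\tr}$ axiomatizes tracial von Neumann algebras in the sense of \cite{FHS2014}.

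\textbf{Step 1: $M_\infty$ is a tracial von Neumann algebra.} Axiom (2) says that the $\cL_{\tr}$-structure $\cM_\infty$, obtained from the domains $D_{\infty,r}$, satisfies $\rT_{\tr}$. By \cite[\S 3.2]{FHS2014} this structure is the operator-norm-ball presentation of a tracial von Neumann algebra, namely $(M_\infty,\tau)$ with $\tau = \re\tr + i\im\tr$ and $D_{\infty,r}^{\cM}$ equal to the $r$-ball. The operations $+$, $\cdot$, $*$, scalars and $1$ are the restrictions of the $\cL_{\proc}$ symbols, and the metric is $\norm{\cdot}_2$.

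\textbf{Step 2: the domains are nested.} Axiom (1) states that $\sup_{x\in D_{t_1,r_1}} \inf_{y\in D_{t_2,r_2}} d(x,y)=0$. Because $D_{t_2,r_2}^{\cM}$ is complete, hence closed, this gives $D_{t_1,r_1}^{\cM}\subseteq D_{t_2,r_2}^{\cM}$ whenever $t_1\le t_2$ and $r_1\le r_2$. Taking $t_2=\infty$ shows that every $D_{t,r}^{\cM}$ sits inside $D_{\infty,r}^{\cM}$, the $r$-ball of $M_\infty$. Taking unions over $r$ yields $M_{t_1}\subseteq M_{t_2}\subseteq M_\infty$.

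\textbf{Step 3: each $M_t$ is a von Neumann subalgebra.} The range bounds on the function symbols make $M_t$ a unital $*$-subalgebra of $M_\infty$. Indeed, $+$ and $\cdot$ map $D_{t,r_1}\times D_{t,r_2}$ into $D_{t,\cdot}$, $*$ and the scalars preserve $D_{t,\cdot}$, and $1\in D_{0,1}\subseteq D_{t,1}$.

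It remains to show that $M_t$ is weak-$*$ closed. The plan is to use the standard criterion that a unital $*$-subalgebra $A$ of a tracial von Neumann algebra is a von Neumann subalgebra as soon as its operator-norm unit ball is $\norm{\cdot}_2$-closed. By Kaplansky density, the unit ball of the strong closure of $A$ lies in the $\norm{\cdot}_2$-closure of the unit ball of $A$; on bounded sets the strong topology agrees with the $\norm{\cdot}_2$ topology.

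The one delicate point, and the place I expect the real work, is that the $\cL_{\proc}$ domain $D_{t,1}^{\cM}$ need not a priori equal the full unit ball $M_t\cap D_{\infty,1}^{\cM}$.

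Two routes handle this. The first is to note that axiom (2) may be intended to hold for every $\cM_t$, in which case each $\cM_t$ satisfies $\rT_{\tr}$ and $D_{t,r}^{\cM}$ is exactly the $r$-ball of the tracial von Neumann algebra $M_t$. The second is to argue directly. Take $x\in M_t$ with $\norm{x}\le 1$, so $x\in D_{t,r}$ for some $r$. The truncation $uF_1(|x|)$ from Lemma \ref{lem: projection onto R ball} equals $x$ itself. One would like to approximate it by $p(x^*x)$-type terms that land in $D_{t,1+\epsilon}$ via the \cite{FHS2014} symbols $t_p$, but those symbols are not part of $\cL_{\proc}$.

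So the practical choice is the first route. I would read axiom (2) as applying to each $\cM_t$, or, equivalently for the purposes here, read the lemma as asserting that $\cM_t$ is the $\cL_{\tr}$-structure of a tracial von Neumann algebra. Then $D_{t,r}^{\cM}$ is complete in $\norm{\cdot}_2$ and equals the $r$-ball of $M_t$, and Kaplansky's criterion applies immediately. Restricting $\tau$ gives the trace, and this completes the argument.
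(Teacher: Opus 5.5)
Under the reading you finally adopt, namely that axiom (2) holds for every $\cM_t$ so that $D_{t,r}^{\cM}$ is exactly the $r$-ball of $M_t$, your argument is correct. It is also essentially the paper's argument. The paper says only that the lemma is immediate from the fact that $\rT_{\tr}$ axiomatizes tracial von Neumann algebras, and that is only immediate if the axiomatization is applied to each $\cM_t$.

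Your hesitation about the literal axiom (2) is justified. The stronger reading is forced, not merely practical: with (2) imposed only on $\cM_\infty$, the statement is false. Here is a counterexample.
\begin{itemize}
\item Take $M_\infty = L^\infty[0,1]$ with the Lebesgue trace, and let $D_{\infty,r}$ be its operator-norm balls.
\item Set all $z_{s,t,j} = 0$.
\item For $t<\infty$ let $D_{t,r} = \{ f \text{ Lipschitz on } [0,1] : \norm{f}_\infty + \Lip(f) \leq r \}$.
\end{itemize}
The quantity $\norm{f}_\infty + \Lip(f)$ is a submultiplicative, $*$-invariant norm. It equals $1$ at the unit and dominates $\norm{f}_\infty$. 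Hence every range bound in $\cL_{\proc}$ holds, including the mixed-time ones, and the moduli of continuity are inherited from $\cM_\infty$.

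Each $D_{t,r}$ is $\norm{\cdot}_2$-complete by Arzel\`a--Ascoli. A Cauchy sequence is uniformly bounded and equi-Lipschitz, so it has a uniformly convergent subsequence whose limit stays in $D_{t,r}$. Axioms (1) and (2) also hold. Yet $M_t = \bigcup_r D_{t,r}$ is the algebra of Lipschitz functions, which is not weak-$*$ closed, and not even norm closed, in $L^\infty[0,1]$.

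So your second route, proving directly that $D_{t,1}^{\cM}$ is the full unit ball of $M_t$, cannot succeed with the symbols available in $\cL_{\proc}$. In the example above, $D_{t,1}$ is strictly smaller than $M_t \cap D_{\infty,1}^{\cM}$. You should state $\cM_t \models \rT_{\tr}$ for all $t$ as the hypothesis being used, not present it as an interpretive choice.

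Once that hypothesis is in place, your Kaplansky-density step is valid. It can also be replaced by the observation that $M_t$ is then a tracial von Neumann algebra whose operations and trace are restrictions of those of $M_\infty$. A trace-preserving unital $*$-embedding of tracial von Neumann algebras automatically has weak-$*$ closed range.
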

	
	We close the section with important notation that will be used throughout the paper.
	
	\begin{notation}
		Let $m \in \N \cup \{\infty\}$, $\mathbf{r} \in (0,\infty)^m$, and $t \in [0,\infty]$.  For an $\cL_{\proc}$-structure $\cM$, we write
		\[
		D_{t,\mathbf{r}}^{\cM} = \prod_{j \leq m} D_{t,r_j}^{\cM}.
		\]
		Let $\rT$ be a theory in $\cL_{\proc}$.  For a formula $\varphi$ in $m$-free variables and $\mathbf{r} \in (0,\infty)^m$, we denote
		\[
		\norm{\varphi^{\cM}}_{t,\mathbf{r}} = \sup_{\mathbf{x} \in D_{t,\mathbf{r}}^{\cM}} |\varphi^{\cM}(\mathbf{x})|
		\]
		and
		\[
		\norm{\varphi}_{t,\mathbf{r},\rT} = \sup_{\cM \models \rT} \norm{\varphi^{\cM}}_{t,\mathbf{r}} = \sup_{\cM \models \rT} \sup_{\mathbf{x} \in D_{t,\mathbf{r}}^{\cM}} |\varphi^{\cM}(\mathbf{x})|.
		\]
		The notation could also be extended to consider a tuple of different times $\mathbf{t} = (t_j)_{j \leq m}$, although this will not be needed in the present work.  We remark that although the class of $\cM$ satisfying $\rT$ is not a set, the above expression can be equivalently expressed in terms of sets using standard techniques (see Remark \ref{rem: extending definable functions} below).  Also, there is no difficulty in applying the definition when $m = \infty$ since each formula only depends on a finite subset of the variables.
	\end{notation}

	\subsection{Chronological formulas and stationarity} \label{subsec: chronological formulas}
	
	For our results in \S \ref{subsec: concentration}, we must restrict to formulas where the quantifiers appear in chronological order, meaning that if $s < t$, each $\sup$ or $\inf$ over $D_{s,r}$ occurs to the left (or outside) of any $\sup$ or $\inf$ over $D_{t,r'}$ in the formula (for $r, r' \in (0,\infty)$).  Hence, in the remainder of the paper, we will mostly work with these \emph{chronological formulas}.
	
	\begin{definition}[Chronological formulas] \label{def: chronological formulas}
		For $t \in [0,\infty)$, define a set of $\mathcal{L}_{\chron}$-formulas $\mathcal{F}_{\chron}(t)$ recursively as follows:
		\begin{enumerate}[(1)]
			\item Every basic formula is in $\mathcal{F}_{\chron}(t)$ for all $t$.
			\item If $\varphi_1$, \dots, $\varphi_k \in \mathcal{F}_{\chron}(t)$ and $f: \R^k \to \R$ is continuous, then $f(\varphi_1,\dots,\varphi_k) \in \mathcal{F}_{\chron}(t)$.
			\item If $\varphi(\mathbf{x},y) \in \mathcal{F}_{\chron}(t)$ and $s \leq t$ and $r > 0$, then $\sup_{y \in D_{r,s}} \varphi(\mathbf{x},y)$ and $\inf_{y \in D_{r,s}} \varphi(\mathbf{x},y)$ are in $\mathcal{F}_{\chron}(s)$.
		\end{enumerate}
		We then set $\mathcal{F}_{\chron} = \bigcup_{t \in [0,\infty)} \mathcal{F}_{\chron}(t)$, and refer to this as the set of chronological formulas.  Moreover, let $\mathcal{F}_{\chron,m}$ be the set of chronological formulas with $m$ free variables $x_1$, \dots, $x_m$.
	\end{definition}
	
	\begin{definition}[Shift on formulas] \label{def: shift on formulas}
		For $t_0 \in [0,\infty)$, we define a shift operation $S_{t_0}$ on $\cL_{\proc}$-formulas recursively:
		\begin{enumerate}[(1)]
			\item For a basic formula $\varphi$, we obtain $S_{t_0} \varphi$ by replacing each occurrence of a constant symbol $z_{t,t',j}$ by $z_{t_0+t,t_0+t',j}$.
			\item For a continuous connective $f: \R^k \to \R$, we set $S_{t_0}[f(\varphi_1,\dots,\varphi_k)] = f(S_{t_0}\varphi_1,\dots,S_{t_0} \varphi_k)$.
			\item For a quantifier $\sup_{y \in D_{r,t}}$, we set
			\[
			S_{t_0} \left[\sup_{y \in D_{r,t}} \varphi(x,y) \right] = \sup_{y \in D_{r,t_0+t}} S_{t_0} \varphi(x,y),
			\]
			and analogously for $\inf_{y \in D_{r,t}}$.
		\end{enumerate}
	\end{definition}
	
	\begin{lemma}
		The shift operators satisfy $S_{t_0} S_{t_1} = S_{t_0+t_1}$.
	\end{lemma}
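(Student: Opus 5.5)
The plan is to prove $S_{t_0} S_{t_1} \varphi = S_{t_0+t_1} \varphi$ for every $\cL_{\proc}$-formula $\varphi$ by structural induction on $\varphi$, following the three clauses of Definition \ref{def: shift on formulas}. Since both sides are defined recursively by the same clauses, it suffices to check that the identity holds for basic formulas and is preserved under connectives and quantifiers.

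\textbf{Basic formulas.} A basic formula is a relation symbol applied to terms. The shift acts only by substituting constant symbols. Applying $S_{t_1}$ replaces each $z_{t,t',j}$ by $z_{t_1+t,t_1+t',j}$. Applying $S_{t_0}$ then replaces this by $z_{t_0+t_1+t,\,t_0+t_1+t',\,j}$, which is exactly the substitution made by $S_{t_0+t_1}$. The point to check is that each substitution is performed once, simultaneously on the original occurrences, so the two-step replacement composes without collisions. This holds because the replacement is a function applied occurrence by occurrence to the term tree. Hence the two sides agree as terms and therefore as basic formulas.

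\textbf{Connectives.} For $\varphi = f(\varphi_1,\dots,\varphi_k)$, we compute
\[
S_{t_0}S_{t_1}\varphi = f(S_{t_0}S_{t_1}\varphi_1,\dots,S_{t_0}S_{t_1}\varphi_k) = f(S_{t_0+t_1}\varphi_1,\dots,S_{t_0+t_1}\varphi_k) = S_{t_0+t_1}\varphi,
\]
where the middle equality is the induction hypothesis.

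\textbf{Quantifiers.} For $\varphi = \sup_{y \in D_{r,t}} \psi$, we have
\[
S_{t_0}S_{t_1}\varphi = \sup_{y \in D_{r,t_0+t_1+t}} S_{t_0}S_{t_1}\psi = \sup_{y \in D_{r,t_0+t_1+t}} S_{t_0+t_1}\psi = S_{t_0+t_1}\varphi,
\]
again by the induction hypothesis. The case of $\inf$ is identical.

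There is no real obstacle here. The only subtlety is that formulas are treated as syntactic objects, so equality means literal equality of expressions. For this to be meaningful, the shifted domains $D_{r,t_0+t}$ and the shifted constants must exist in the language, and they do, since times range over $[0,\infty)$ and the radius bound $3(t-s)^{1/2}$ depends only on the difference $t'-t$, which the shift preserves.
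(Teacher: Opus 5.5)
Your proposal is correct and follows the paper's proof essentially verbatim. Both argue by structural induction on formulas: the basic case composes the substitutions $z_{t,t',j} \mapsto z_{t_1+t,t_1+t',j} \mapsto z_{t_0+t_1+t,t_0+t_1+t',j}$, the connective case is immediate, and the quantifier case applies the same two-step rewriting to $\sup_{y \in D_{r,t}}$ (and symmetrically to $\inf$).
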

	
	\begin{proof}
		We proceed by induction on the complexity of the formulas.  Consider the case of a basic formula $\varphi$.  To obtain $S_{t_1} \varphi$, we replace $z_{t,t',j}$ by $z_{t_1+t,t_1+t',j}$.  Then applying $S_{t_0}$ will replace $z_{t_1+t,t_1+t',j}$ with $z_{t_0+t_1+t,t_0+t_1+t',j}$.  Hence, $S_{t_0} S_{t_1} \varphi$ agrees with $S_{t_0+t_1} \varphi$.
		
		The case of a continuous connective is immediate.
		
		For the case of a sup quantifier, observe that if $\varphi$ satisfies the induction hypothesis, then
		\begin{align*}
			S_{t_0} S_{t_1} \left[\sup_{y \in D_{r,t}} \varphi(x,y) \right] &= S_{t_0} \left[ \sup_{y \in D_{r,t_1+t}} S_{t_1} \varphi(x,y) \right] \\
			&= \sup_{y \in D_{r,t_0+t_1+t}} S_{t_0} S_{t_1} \varphi(x,y) \\
			&= \sup_{y \in D_{r,t_0+t_1+t}} S_{t_0+t_1} \varphi(x,y) \\
			&= S_{t_0+t_1} \left[\sup_{y \in D_{r,t}} \varphi(x,y) \right],
		\end{align*}
		which completes the proof.
	\end{proof}
	
	We can view the shift operators in another way by defining a dual shift operation on $\cL_{\proc}$-structures.  This will allow us to verify properties involving the shift operators by means of manipulating particular structures.
	
	\begin{definition}[Shift on structures] \label{def: shifted structure}
		For an $\cL_{\proc}$-structure $\cM$, define the shifted structure $S_{t_0}\cM$ as follows:
		\begin{itemize}
			\item $D_{t,r}^{S_{t_0}\cM} = D_{t+t_0,r}^{\cM}$ for $r \in [0,\infty)$ and $t \in [0,\infty]$.
			\item $z_{s,t,j}^{S_{t_0} \cM} = z_{s+t_0,t+t_0,j}^{\cM}$ for $0 \leq s < t < \infty$.
			\item The other function and relation symbols are the same in $S_{t_0} \cM$ as in $\cM$.
		\end{itemize}
	\end{definition}
	
	The following observation is immediate from the definitions, using induction on the definition of formulas.
	
	\begin{lemma} \label{lem: shift on formulas and structures}
		For an $\cL_{\proc}$-structure $\cM$, an $\cL_{\proc}$-formula $\varphi$ in $m$ free variables, $t_0 > 0$, and $\mathbf{x} \in M_0^m$, we have
		\[
		(S_{t_0} \varphi)^{\cM}(\mathbf{x}) = \varphi^{S_{t_0} \cM}(\mathbf{x}).
		\]
	\end{lemma}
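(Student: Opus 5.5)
The plan is to argue by induction on the recursive construction of $\cL_{\proc}$-formulas, proving the slightly stronger statement that for every formula $\varphi$ with $m$ free variables and every tuple $\mathbf{x} \in M_\infty^m$ (not just $M_0^m$), we have $(S_{t_0}\varphi)^{\cM}(\mathbf{x}) = \varphi^{S_{t_0}\cM}(\mathbf{x})$. The strengthening is needed because the quantified variables range over domains $D_{t,r}$ with $t > 0$. Throughout, note that $S_{t_0}\cM$ has the same underlying set as $\cM$ (namely $\bigcup_{t,r} D_{t+t_0,r}^{\cM} \subseteq M_\infty$, with $D_{\infty,r}$ unchanged). It also has the same interpretations of $+$, $\cdot$, $*$, scalars, $1$, $d$, $\re\tr$, $\im\tr$. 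So evaluating a tuple in either structure makes sense.

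First, terms. By induction on terms, a term $\tau(\mathbf{x})$ in $\cL_{\proc}$ evaluated in $S_{t_0}\cM$ equals the term $S_{t_0}\tau$ evaluated in $\cM$. Here $S_{t_0}\tau$ is obtained by replacing each constant $z_{t,t',j}$ by $z_{t_0+t,t_0+t',j}$. The base cases are:
\begin{itemize}
\item variables, which are unchanged;
\item the constant $1$, which is the same in both structures;
\item the constants $z_{t,t',j}$, whose interpretation $z_{t,t',j}^{S_{t_0}\cM}$ is $z_{t_0+t,t_0+t',j}^{\cM}$ by Definition \ref{def: shifted structure}.
\end{itemize}
The function symbols agree in both structures, which handles the inductive step. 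Applying the (identical) relation symbols then gives the claim for basic formulas, matching clause (1) of Definition \ref{def: shift on formulas}.

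Next, connectives. If the claim holds for $\varphi_1,\dots,\varphi_k$, then applying $f$ to both sides gives it for $f(\varphi_1,\dots,\varphi_k)$, by clause (2).

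Finally, quantifiers, which is the only step needing care. Suppose the claim holds for $\varphi(\mathbf{x},y)$. Then
\[
\Bigl(\sup_{y \in D_{r,t}} \varphi\Bigr)^{S_{t_0}\cM}(\mathbf{x}) = \sup_{y \in D_{t,r}^{S_{t_0}\cM}} \varphi^{S_{t_0}\cM}(\mathbf{x},y) = \sup_{y \in D_{t+t_0,r}^{\cM}} (S_{t_0}\varphi)^{\cM}(\mathbf{x},y).
\]
The last expression is exactly the evaluation in $\cM$ of $\sup_{y \in D_{r,t_0+t}} S_{t_0}\varphi$, which is $S_{t_0}$ of the original formula by clause (3). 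The infimum case is identical.

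The main (minor) obstacle is this domain bookkeeping. The inductive hypothesis must be applied at points $y$ lying in later domains, which is exactly why the statement is proved for all $\mathbf{x} \in M_\infty^m$. This also includes the case $t = \infty$, where $D_{\infty,r}$ is unchanged since $\infty + t_0 = \infty$. Restricting to $\mathbf{x} \in M_0^m$ then yields the lemma.
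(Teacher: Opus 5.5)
Your induction on formulas, strengthened to arbitrary inputs so that the hypothesis applies to quantified variables in the later domains $D_{t+t_0,r}^{\cM}$, is exactly the argument the paper means when it calls the lemma immediate by induction on formulas. One small caveat: for a structure not assumed to satisfy $\rT_{\filt}$, the domains need not be nested, so state the strengthened claim for all tuples in the full underlying set rather than $M_\infty^m$; the argument is otherwise unchanged.
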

	
	With the aid of the shift operation, we can give an alternative procedure for generating chronological formulas, which is useful for inductive arguments later in the paper.
	
	\begin{definition}
		If $\rT$ is a theory in some metric language (e.g.\ $\cL_{\proc}$) and $\varphi$ and $\psi$ are formulas, we say that $\varphi$ and $\psi$ are \emph{equivalent modulo $\rT$} if $\varphi^{\cM} = \psi^{\cM}$ for every $\cM$ satisfying $\rT$.  Similarly, we say that two collections of formulas are equivalent modulo $\rT$ if every formula in one of the collections is equivalent to a formula in the other collection.
	\end{definition}
	
	\begin{lemma} \label{lem: chronological formulas alternate}
		Let $\mathcal{S}$ be the set of formulas in $\mathcal{L}_{\proc}$ generated recursively as follows:
		\begin{enumerate}[(1)]
			\item Every basic formula without the symbols $z_{s,t}$ (i.e.\ basic formula in $\mathcal{L}_{\filt}$) is in $\mathcal{S}$.
			\item If $\varphi_1$, \dots, $\varphi_k \in \mathcal{S}$ and $f: \R^k \to \R$ is continuous, then $f(\varphi_1,\dots,\varphi_k) \in \mathcal{S}$.
			\item If $\varphi(\mathbf{x},y)$ is in $\mathcal{S}$ and $r > 0$, then $\sup_{y \in D_{0,r}} \varphi(\mathbf{x},y)$ and $\inf_{y \in D_{0,r}} \varphi(\mathbf{x},y)$ are in $\mathcal{S}$.
			\item If $\varphi(\mathbf{x},y_1,\dots,y_k) \in \mathcal{S}$  and $j_1$, \dots, $j_k \in \N$ and $t > 0$, then $(S_t \varphi)(\mathbf{x},z_{0,t,j_1},\dots,z_{0,t,j_k})$ is in $\mathcal{S}$.
		\end{enumerate}
		Then $\mathcal{S}$ is equivalent to $\cF_{\chron}$ modulo $\rT_{\proc}$.
	\end{lemma}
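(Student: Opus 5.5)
We prove the two inclusions separately, each by induction on the complexity of formulas; throughout, ``$\varphi$ is equivalent to $\psi$'' means equivalence modulo $\rT_{\proc}$. We first isolate two elementary facts about the shift, both proved by induction on complexity using Lemma \ref{lem: shift on formulas and structures}.
\begin{enumerate}[(a)]
\item $S_t$ maps $\mathcal{F}_{\chron}(s)$ into $\mathcal{F}_{\chron}(s+t)$.
\item Substituting the constants $z_{0,t,j}$ for free variables preserves membership in $\mathcal{F}_{\chron}(u)$ for every $u$, since it only changes terms inside basic formulas.
\end{enumerate}
We also use the axiom (3) of $\rT_{\proc}$: if $s \leq t \leq u$, then $z_{s,u,j}$ may be replaced by $z_{s,t,j} + z_{t,u,j}$ inside a term without changing the interpretation.

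\textbf{Inclusion $\mathcal{S} \subseteq \mathcal{F}_{\chron}$.} We show by induction that every $\varphi \in \mathcal{S}$ lies in $\mathcal{F}_{\chron}(0)$.
\begin{itemize}
\item Rules (1)--(3) are immediate, since quantifiers over $D_{0,r}$ keep us in $\mathcal{F}_{\chron}(0)$.
\item For rule (4), take $\varphi \in \mathcal{F}_{\chron}(0)$. By (a), $S_t\varphi \in \mathcal{F}_{\chron}(t)$. By (b), substituting $z_{0,t,j}$ leaves it in $\mathcal{F}_{\chron}(t)$.
\item Since $\mathcal{F}_{\chron}(t) \subseteq \mathcal{F}_{\chron}(0)$, the result lies in $\mathcal{F}_{\chron}(0)$.
\end{itemize}
The last inclusion holds because $\mathcal{F}_{\chron}(t) \subseteq \mathcal{F}_{\chron}(t')$ for $t' \leq t$, which one checks directly from clause (3) of Definition \ref{def: chronological formulas} by induction. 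Hence $\mathcal{S} \subseteq \mathcal{F}_{\chron}$.

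\textbf{Inclusion $\mathcal{F}_{\chron} \subseteq \mathcal{S}$ up to equivalence.} We prove the following stronger statement by induction: for every $t$ and every $\varphi \in \mathcal{F}_{\chron}(t)$, the formula $S_{-t}\varphi$ is equivalent to a formula in $\mathcal{S}$, and $\varphi$ itself is equivalent to $(S_t \psi)(\mathbf{x}, z_{0,t,j_1},\dots)$ with $\psi \in \mathcal{S}$.

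Here $S_{-t}\varphi$ means $\varphi$ rewritten relative to time $t$. The catch is that $\varphi$ may contain constants $z_{s,u,j}$ with $s < t$, which do not shift back. We handle this by splitting each such constant with axiom (3), $z_{s,u,j} = z_{s,t,j} + z_{t,u',j}$, and treating the pre-$t$ increments $z_{s,t,j}$ as extra free variables $\mathbf{w}$. The formula becomes $\varphi = \theta(\mathbf{x}, \mathbf{w})$ with $\mathbf{w}$ a tuple of increments at times $\leq t$, where $\theta$ is built from quantifiers over domains $D_{u,r}$ with $u \geq t$ and constants $z_{a,b,j}$ with $a \geq t$ only. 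Then $\theta = S_t\theta'$ for a formula $\theta'$ of the same shape based at time $0$.

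The expected main obstacle is showing $\theta' \in \mathcal{S}$. Each $z_{a,b,j}$ with $0 < a$ must be generated by rule (4), which only introduces increments $z_{0,t,j}$ starting at $0$. So we express $z_{a,b,j} = z_{0,b,j} - z_{0,a,j}$, again by axiom (3). We then peel off quantifiers in chronological order, which is where the induction is really used:
\begin{itemize}
\item If the outermost quantifiers of $\theta'$ are over $D_{0,r}$, apply rule (3).
\item Otherwise, let $t_1 > 0$ be the smallest time among the remaining quantifiers and constants. Chronological order guarantees that everything is at time $\geq t_1$.
\item Write the formula as $S_{t_1}\chi(\ldots, z_{0,t_1,j}, \ldots)$, where $\chi$ is strictly less complex in quantifier-time structure, and apply rule (4).
\end{itemize}
A constant $z_{0,b,j}$ with $b > t_1$ becomes $z_{0,t_1,j} + z_{t_1,b,j}$, and the second summand shifts to $z_{0,b-t_1,j}$. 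Connectives are handled by rule (2), after first bringing the components to a common first time by inserting trivial shifts.

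Finally, the pre-$t$ variables $\mathbf{w}$ are themselves increments of the form $z_{0,s,j}$ or differences of these. They are reinserted via rule (4) with a finite sequence of times $s_1 < s_2 < \dots$, composing shifts with the lemma $S_{t_0}S_{t_1} = S_{t_0+t_1}$. Careful bookkeeping of the finitely many times occurring in a given formula is the main work. Each step is an equivalence modulo axiom (3) only, so the overall result is equivalence modulo $\rT_{\proc}$.
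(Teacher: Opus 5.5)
Your proof is correct in substance. The inclusion $\mathcal{S}\subseteq\mathcal{F}_{\chron}$ is handled as in the paper, but you organize the reverse inclusion differently.

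The paper argues bottom-up, by induction along Definition \ref{def: chronological formulas}. It carries the invariant that each $\varphi\in\mathcal{F}_{\chron}(t)$ equals $(S_t\psi)(\mathbf{x},z_{s_1,t_1,i_1},\dots,z_{s_n,t_n,i_n})$ with $\psi\in\mathcal{S}$ and $0\le s_j\le t_j\le t$. Basic formulas are treated by induction on the number of distinct time points. When a quantifier over $D_{s,r}$ is added, the accumulated increments are re-split at $s$, and the pieces over $[s,t]$ are absorbed into a new $\psi'\in\mathcal{S}$.

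You work top-down instead. After rewriting every constant via increments $z_{0,b,j}$, you repeatedly factor out a shift by the earliest time occurring (rule (4)), use rule (3) when the outermost quantifier is at time $0$, and use rule (2) at connectives. This is a single uniform mechanism, and it shows exactly where chronological order enters: under an outermost quantifier at time $u>0$ all quantifiers are at times $\ge u$, so the earliest time is positive. To make the induction rigorous, name the well-founded measure. For example, take the lexicographic order on (number of connectives and quantifiers, number of distinct positive times); a rule-(4) step preserves the first entry and lowers the second. What the paper's route buys in exchange is the explicit level-$t$ normal form, which it reuses later when showing that the random matrix ultraproduct has the chronological invariance property.

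Several statements in your write-up need correcting.
\begin{enumerate}[(1)]
\item The second half of your announced ``stronger statement'' is false. At level $t$ you must allow arbitrary increments $z_{s,u,j}$ with $0\le s\le u\le t$, not only $z_{0,t,j}$. For example, $\re\tr(z_{0,1,1})\in\mathcal{F}_{\chron}(2)$, whereas any $(S_2\psi)(\mathbf{x},z_{0,2,j_1},\dots)$ depends only on the structure after time $2$ and on the $z_{0,2,j}$. Scalar models with $M_t=\C$ and $z_{s,t,j}=f_j(t)-f_j(s)$, where $f_1$ is a small tent supported in $[0,2]$ versus $f_1\equiv 0$, agree on those data but differ at $z_{0,1,1}$.
\item Your actual argument uses the correct form, since your $\mathbf{w}$ are general pre-$t$ increments. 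In fact the splitting into $\mathbf{w}$ and the final reinsertion step are superfluous. The peel-off only needs quantifier times $\ge 0$ in chronological order, so you can run it directly on $\varphi$; constants $z_{s,u,j}$ need no separate treatment.
\item The monotonicity $\mathcal{F}_{\chron}(t)\subseteq\mathcal{F}_{\chron}(t')$ for $t'\le t$ is not derivable from clause (3) as written, which places $\sup_{y\in D_{s,r}}\varphi$ only in $\mathcal{F}_{\chron}(s)$. It is part of the intended reading of the definition; the paper uses it tacitly as well, for closure under connectives. Present it as such rather than as a consequence of an induction.
\item Facts (a) and (b) are syntactic and follow by induction on Definition \ref{def: chronological formulas}. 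Lemma \ref{lem: shift on formulas and structures}, which is semantic, plays no role there.
\end{enumerate}
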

	
	\begin{proof}
		First, we prove that $\mathcal{S} \subseteq \mathcal{F}_{\chron}$.  It suffices to show that $\cL_{\chron}$ is closed under the operations (1) - (4).  In fact, closure under (1) - (3) is clear from Definition \ref{def: chronological formulas}.  For (4), it suffices to show that $\mathcal{L}_{\chron}$ is closed under shifts and under substituting one of the constant symbols $z_{0,t}$ for one of the free variables.  The closure of $\mathcal{L}_{\chron}$ under shifts follows from an inductive argument based on Definition \ref{def: chronological formulas}; in fact, one can show that if $\varphi \in \mathcal{F}_{\chron}(s)$, then $S_t \varphi \in \mathcal{F}_{\chron}(s+t)$.  Similarly, an inductive argument on Definition \ref{def: chronological formulas} shows that $\mathcal{F}_{\chron}(t)$ is closed under the operation of substituting one of the constant symbols for one of the free variables.
		
		Second, we prove that $\mathcal{F}_{\chron} \subseteq \mathcal{S}$ modulo $\rT_{\proc}$.  In fact, we prove the following claim:  If $\varphi(\mathbf{x}) \in \mathcal{L}_{\chron}(t)$, then
		\begin{equation} \label{eq: expressing chron formula using shift}
			\varphi(\mathbf{x}) = (S_t \psi)(\mathbf{x}, z_{s_1,t_1,i_1}, \dots, z_{s_n,t_n,i_n})
		\end{equation}
		for some $n \in \N_0$ and $\psi(\mathbf{x},y_1,\dots,y_n) \in \mathcal{S}$ and $0 \leq s_j \leq t_j \leq t$ for $j = 1, \dots, n$.  We handle each of the cases in Definition \ref{def: chronological formulas}:
		
		(1) A basic formula in $\cL_{\proc}$ is a composition of predicate symbols with terms.  By an induction argument, each $\cL_{\proc}$-term can be expressed as an $\cL_{\filt}$ term composed with the constant symbols $(z_{s,t})_{0 \leq s \leq t}$.  Hence, every basic formula has the form $\varphi(\mathbf{x}) = \psi(\mathbf{x},z_{s_1,t_1,i_1},\dots,z_{s_n,t_n,i_n})$ where $\varphi$ is a basic formula in $\cL_{\filt}$.
		
		Now we can show that $\varphi \in \mathcal{S}$.  Let $\{\tau_1,\dots,\tau_k\} = \{s_1,\dots,s_n,t_1,\dots,t_n\}$ as sets with $\tau_1 < \dots < \tau_k$.  We proceed by induction on $k$.  The case $k = 1$ is degenerate since $z_{t,t} = 0$ by $\rT_{\proc}$.  So we take as the base case $k = 2$ where $s_j = \tau_1$ and $t_j = \tau_2$ for all $j$.  Then
		\[
		\phi(\mathbf{x}) = S_{\tau_1} \left[ (S_{\tau_2-\tau_1} \psi)(\mathbf{x},z_{0,\tau_2-\tau_1,i_1},\dots,z_{0,\tau_2-\tau_1,i_n}) \right],
		\]
		which is in $\mathcal{S}$.  Similarly, for the inductive step, assume without loss of generality that $t_j > s_j$, and let
		\[
		\varphi'(\mathbf{x},y_1,\dots,y_n) = \psi(\mathbf{x},y_1+z_{\tau_2 \vee s_1-\tau_2,t_1-\tau_2,i_1},\dots,y_n+z_{\tau_2 \vee s_n-\tau_2,t_n-\tau_2,i_n}).
		\]
		Then $\varphi' \in \mathcal{S}$ by inductive hypothesis, and
		\[
		\varphi(\mathbf{x}) = S_{\tau_1}[(S_{\tau_2-\tau_1}\varphi')(\mathbf{x},z_{s_1 \wedge \tau_2 -\tau_1,\tau_2-\tau_1,i_1},\dots,z_{s_n \wedge \tau_2 -\tau_1,\tau_2-\tau_1,i_n})].
		\]
		Here $s_1 \wedge \tau_2$ is either $\tau_2$ or $\tau_1$, and so each of the constant symbols is either $z_{\tau_2-\tau_1,\tau_2-\tau_1}$, which is zero under $\rT_{\proc}$, or $z_{0,\tau_2-\tau_1}$.  Hence $\varphi \in \mathcal{S}$.
		
		Thus, we have shown that $\varphi \in \mathcal{S}$, but it remains to show that it has the form \eqref{eq: expressing chron formula using shift} for each $t > 0$ (since $\varphi$ can be viewed as an element of $\mathcal{F}_{\chron}(t)$ for every $t > 0$).  Invoking the additivity axiom in $\rT_{\proc}$, $z_{s_j,t_j,i_j}$ can be replaced by $z_{s_j\wedge t,t_j'\wedge t,i_j} + z_{t_j' \vee t,t_j \vee t,i_j}$.  Let
		\[
		\varphi'(\mathbf{x},y_1,\dots,y_n) = \psi(\mathbf{x},y_1+z_{s_1 \vee t - t,t_1 \vee t - t,i_1},\dots,y_n+z_{s_n \vee t - t,t_n \vee t - t,i_n}).
		\]
		By the foregoing argument, $\varphi' \in \mathcal{S}$, and
		\[
		\varphi(\mathbf{x}) = (S_t\varphi')(\mathbf{x},z_{s_1 \wedge t - t,t_1 \wedge t - t,i_1},\dots, z_{s_n \wedge t - t,t_n \wedge t - t,i_n}),
		\]
		which is an expression of the desired form \eqref{eq: expressing chron formula using shift}.
		
		(2) It is straightforward to see that formulas of the form \eqref{eq: expressing chron formula using shift} are closed under application of continuous connectives.
		
		(3) Suppose that $0 \leq s \leq t$ and $\varphi(\mathbf{x},y) \in \mathcal{F}_{\chron}(t)$ and $\varphi'(\mathbf{x}) = \sup_{y \in D_{r,s}} \varphi(\mathbf{x},y)$ (the case of an $\inf$ rather than a $\sup$ is, of course, symmetrical).  Suppose that $\varphi$ satisfies the inductive hypothesis, and so
		\[
		\varphi(\mathbf{x},y) = (S_t \psi)(\mathbf{x}, y, z_{s_1,t_1,i_1}, \dots, z_{s_n,t_n,i_n}),
		\]
		where $0 \leq s_j \leq t_j \leq t$ and $\psi \in \mathcal{S}$.  Similar to case (1), by splitting each interval $[s_j,t_j]$ at the point $s$ when applicable, we can write
		\[
		(S_t \psi)(\mathbf{x}, y, z_{s_1,t_1,i_1}, \dots, z_{s_n,t_n,i_n}) = (S_s \psi')(\mathbf{x},y,z_{s_1 \wedge s,t_1 \wedge s,i_1},\dots,z_{s_n \wedge s, t_n \wedge s, i_n}),
		\]
		where $\psi' \in \mathcal{S}$.  Then by definition of $S_s$,
		\[
		\sup_{y \in D_{s,r}} (S_s \psi')(\mathbf{x},y,z_{s_1 \wedge s,t_1 \wedge s,i_1},\dots,z_{s_n \wedge s, t_n \wedge s, i_n}) = \left( S_s \left[ \sup_{y \in D_{0,r}} \psi' \right] \right)(\mathbf{x},z_{s_1,t_1,i_1}, \dots, z_{s_n,t_n,i_n}),
		\]
		which is an expression of the desired form \eqref{eq: expressing chron formula using shift} for $\varphi$ with respect to the parameter $s$.  This completes the inductive proof of \eqref{eq: expressing chron formula using shift} and hence the claim that $\mathcal{F}_{\chron} \subseteq \mathcal{S}$ up to equivalence modulo $\rT_{\proc}$.
	\end{proof}
	
	\begin{definition} \label{def: stationary}
		We say that $\mathcal{M}$ is \emph{stationary} if for every $m \in \N_0$, for all formulas $\varphi \in \mathcal{F}_{\chron,m}$ and $t_0 > 0$ and $\mathbf{r} \in (0,\infty)^m$, we have
		\begin{equation} \label{eq: stationary def}
			\sup_{\mathbf{x} \in D_{0,\mathbf{r}}^{\mathcal{M}}} |S_{t_0} \varphi^{\mathcal{M}}(\mathbf{x}) - \varphi^{\mathcal{M}}(\mathbf{x})| = 0.
		\end{equation}
	\end{definition}
	
	\begin{definition}
		We denote by $\rT_{\stat}$ the theory consisting of $\rT_{\proc}$ together with the axioms \eqref{eq: stationary def} for chronological formulas $\varphi$.
	\end{definition}
	
	It is immediate from the definition that stationarity is an axiomatizable property using the collection of axioms \eqref{eq: stationary def}.  Hence also $\rT_{\stat}$ axiomatizes the class of noncommutative filtrations which are also stationary.
	
	\begin{lemma}
		Suppose that $\mathcal{M}$ is stationary.  Then for every formula $\varphi \in \mathcal{F}_{\operatorname{chron},m}$ and $t_0 > 0$ and $\mathbf{r} \in (0,\infty)^m$, we have
		\[
		\norm{S_{t_0} \varphi^{\cM}}_{t_0,\mathbf{r}} = \norm{\varphi^{\cM}}_{0,\mathbf{r}}.
		\]
		Hence, also for $t_0, t_1 > 0$,
		\begin{equation} \label{eq: stationarity for later times}
			\sup_{\mathbf{x} \in D_{t_0,\mathbf{r}}^{\mathcal{M}}} |S_{t_0+t_1} \varphi^{\mathcal{M}}(\mathbf{x}) - S_{t_0} \varphi^{\mathcal{M}}(\mathbf{x})| = 0.
		\end{equation}
	\end{lemma}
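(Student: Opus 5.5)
The plan is to reduce the first statement to the stationarity axiom applied to a \emph{sentence}, i.e.\ the case $m = 0$ of Definition \ref{def: stationary}, which is allowed since there $m \in \N_0$. Given $\varphi \in \mathcal{F}_{\chron,m}$ and $\mathbf{r} \in (0,\infty)^m$, consider the sentence
\[
\psi = \sup_{x_1 \in D_{0,r_1}} \cdots \sup_{x_m \in D_{0,r_m}} |\varphi(x_1,\dots,x_m)|.
\]

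First I will check that $\psi$ is chronological. The formula $|\varphi|$ is obtained from $\varphi$ by a continuous connective, so it lies in the same $\mathcal{F}_{\chron}(t)$ as $\varphi$. Quantifying over time-$0$ domains is always permitted by clause (3) of Definition \ref{def: chronological formulas}, because $0 \leq t$; each such quantifier lands in $\mathcal{F}_{\chron}(0)$, and further time-$0$ quantifiers remain allowed. Hence $\psi \in \mathcal{F}_{\chron,0}$.

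Next I compute $S_{t_0}\psi$ from Definition \ref{def: shift on formulas}. The shift commutes with continuous connectives, and it moves each quantifier domain $D_{0,r_j}$ to $D_{t_0,r_j}$. Therefore
\[
S_{t_0}\psi = \sup_{\mathbf{x} \in D_{t_0,\mathbf{r}}} |S_{t_0}\varphi(\mathbf{x})|.
\]
Evaluated in $\cM$, this gives $(S_{t_0}\psi)^{\cM} = \norm{S_{t_0}\varphi^{\cM}}_{t_0,\mathbf{r}}$, while $\psi^{\cM} = \norm{\varphi^{\cM}}_{0,\mathbf{r}}$. The stationarity axiom for the sentence $\psi$ (no free variables, so the supremum over $D_{0,\mathbf{r}}$ in \eqref{eq: stationary def} is vacuous) states exactly that $(S_{t_0}\psi)^{\cM} = \psi^{\cM}$. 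This yields the first claim.

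For \eqref{eq: stationarity for later times}, I will apply the first claim to the formula $\varphi' = |S_{t_1}\varphi - \varphi|$. This formula is chronological: the proof of Lemma \ref{lem: chronological formulas alternate} shows that shifts preserve chronological formulas, with $S_{t_1}\mathcal{F}_{\chron}(s) \subseteq \mathcal{F}_{\chron}(s+t_1)$, and $\mathcal{F}_{\chron}$ is closed under continuous connectives. Stationarity gives $\norm{\varphi'^{\cM}}_{0,\mathbf{r}} = 0$. Since $S_{t_0}$ commutes with connectives, and $S_{t_0}S_{t_1} = S_{t_0+t_1}$ by the lemma on composition of shifts, we get
\[
S_{t_0}\varphi' = |S_{t_0+t_1}\varphi - S_{t_0}\varphi|.
\]
The first part then gives $\norm{S_{t_0}\varphi'^{\cM}}_{t_0,\mathbf{r}} = \norm{\varphi'^{\cM}}_{0,\mathbf{r}} = 0$, which is \eqref{eq: stationarity for later times}.

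The only point requiring care, and the one I expect to be the main obstacle, is the bookkeeping that the shift acts on the quantifiers introduced in $\psi$. The shift moves \emph{all} quantifier domains, including the new outer ones over $D_{0,r_j}$, and this is what turns a supremum over time-$0$ balls into a supremum over time-$t_0$ balls. Lemma \ref{lem: shift on formulas and structures} is stated only for inputs in $M_0$, so it does not directly address inputs from $D_{t_0,\mathbf{r}}^{\cM}$. Encoding the supremum inside a sentence avoids that issue entirely.
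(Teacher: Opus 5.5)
Your proposal is correct and matches the paper's proof: the paper also encodes $\norm{\varphi^{\cM}}_{0,\mathbf{r}}$ as the sentence $\sup_{\mathbf{x} \in D_{0,\mathbf{r}}} |\varphi(\mathbf{x})|$, notes that $S_{t_0}$ moves these outer quantifier domains to $D_{t_0,\mathbf{r}}$, and applies stationarity with $m = 0$. For the second claim, the paper applies stationarity to the sentence $\eta = \sup_{\mathbf{x} \in D_{0,\mathbf{r}}} |S_{t_1}\varphi(\mathbf{x}) - \varphi(\mathbf{x})|$, which is exactly what your application of the first claim to $|S_{t_1}\varphi - \varphi|$ produces.
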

	
	\begin{proof}
		Fix $\varphi$ and let $\psi$ be the sentence
		\[
		\psi = \sup_{\mathbf{x} \in (D_{0,\mathbf{x}})^m} |\varphi(\mathbf{x})|.
		\]
		Then
		\[
		S_{t_0} \psi = \sup_{\mathbf{x} \in D_{t_0,\mathbf{r}}} |S_{t_0} \varphi(\mathbf{x})|.
		\]
		Hence, $\norm{\varphi^{\cM}}_{0,\mathbf{r}} = \psi^{\cM}$ and $\norm{S_{t_0} \varphi^{\cM}}_{t_0,\mathbf{r}} = (S_{t_0} \psi)^{\cM}$.  By stationarity, $(S_{t_0} \psi)^{\cM} = \psi^{\cM}$, so the first claim follows.
		
		For the second claim, let
		\[
		\eta = \sup_{\mathbf{x} \in D_{0,\mathbf{r}}^{\mathcal{M}}} |S_{t_1} \varphi^{\mathcal{M}}(\mathbf{x}) - \varphi^{\mathcal{M}}(\mathbf{x})|,
		\]
		so that
		\[
		S_{t_0} \eta = \sup_{\mathbf{x} \in D_{t_0,\mathbf{r}}^{\mathcal{M}}} |S_{t_0+t_1} \varphi^{\mathcal{M}}(\mathbf{x}) - S_{t_0} \varphi^{\mathcal{M}}(\mathbf{x})|.
		\]
		Stationarity implies that $\eta^{\cM} = 0$ and also that $S_{t_0} \eta^{\cM} = \eta^{\cM}$, hence $S_{t_0} \eta^{\cM} = 0$.
	\end{proof}
	
	\begin{lemma} \label{lem: stationary implies elementary}
		Suppose that $\mathcal{M}$ is stationary.  Let $0 \leq t_0 < t_1$.  Then the inclusion $\mathcal{M}_{t_0} \subseteq \mathcal{M}_{t_1}$ is an elementary inclusion of tracial von Neumann algebras.  Moreover, if $M_\infty = \bigvee_{t > 0} M_t$, then $\cM_{t_0} \subseteq \cM$ is an elementary embedding as well.
	\end{lemma}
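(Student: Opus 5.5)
Given an $\cL_{\tr}$-formula $\varphi(\mathbf{x})$ in $m$ free variables, I will build from it an $\cL_{\proc}$-formula $\varphi_s$ for each $s \in [0,\infty]$. The construction replaces each quantifier $\sup_{y \in D_r}$ (resp.\ $\inf$) by $\sup_{y \in D_{s,r}}$. Since all quantifiers then range over the single time $s$ and no $z$-symbols occur, $\varphi_s \in \cF_{\chron}(s)$, and $\varphi_s$ is chronological. By induction on complexity, $\varphi_s^{\cM}(\mathbf{x}) = \varphi^{\cM_s}(\mathbf{x})$ for $\mathbf{x} \in M_s^m$. Moreover, directly from Definition \ref{def: shift on formulas}, $S_{t}\varphi_s = \varphi_{s+t}$, since the shift only moves the time indices of the domains.

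For the first claim, take $\mathbf{x} \in M_{t_0}^m$, say $\mathbf{x} \in D_{t_0,\mathbf{r}}^{\cM}$. Then $\varphi_{t_1} = S_{t_1-t_0}\varphi_{t_0}$ and $\varphi_{t_0} = S_{t_0}\varphi_0$. Applying \eqref{eq: stationarity for later times} to the chronological formula $\varphi_0$, with the roles $t_0 \mapsto t_0$ and $t_1 \mapsto t_1 - t_0$, gives $S_{t_1}\varphi_0^{\cM}(\mathbf{x}) = S_{t_0}\varphi_0^{\cM}(\mathbf{x})$ for all $\mathbf{x} \in D_{t_0,\mathbf{r}}^{\cM}$. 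When $t_0 = 0$, use \eqref{eq: stationary def} directly. Hence $\varphi^{\cM_{t_1}}(\mathbf{x}) = \varphi^{\cM_{t_0}}(\mathbf{x})$ for every $\cL_{\tr}$-formula and every tuple from $M_{t_0}$, which is exactly elementarity of $\cM_{t_0} \subseteq \cM_{t_1}$.

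For the second claim, I will use that a union of an elementary chain is an elementary extension of each member. Concretely, the family $(\cM_t)_{t \geq t_0}$ is an elementary chain by the first part, and I will show by induction on $\varphi$ that $\varphi^{\cM}(\mathbf{x}) = \varphi^{\cM_t}(\mathbf{x})$ for $\mathbf{x} \in M_t^m$. I will do this via a Tarski--Vaught type argument rather than an abstract union, because $M_\infty = \bigvee_{t} M_t$ is the weak closure and not literally the union.

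The key density step is the following. For $\mathbf{x} \in M_t^m$ and $y \in D_r^{M_\infty}$, Kaplansky density gives $y_n \in \bigcup_{t'} D_{t',r}^{\cM}$ with $y_n \to y$ in $\norm{\cdot}_2$. To see this, note that $\bigcup_{t'} M_{t'}$ is a $*$-algebra whose weak closure is $M_\infty$, hence is $\norm{\cdot}_2$-dense in the ball. We can take $t' \geq t$ since the filtration is increasing.

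Then for the quantifier step, $\sup_{y \in D_r^{\cM}}\psi^{\cM}(\mathbf{x},y)$ equals the supremum over $y \in \bigcup_{t' \geq t} D_{t',r}$ by uniform continuity of $\psi^{\cM}$ on balls. By the inductive hypothesis, for such $y$ we have $\psi^{\cM}(\mathbf{x},y) = \psi^{\cM_{t'}}(\mathbf{x},y) \leq \sup_{D_r^{\cM_{t'}}} \psi^{\cM_{t'}}(\mathbf{x},\cdot)$. By elementarity of $\cM_t \subseteq \cM_{t'}$, that supremum equals $\sup_{D_r^{\cM_t}}\psi^{\cM_t}(\mathbf{x},\cdot)$. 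The reverse inequality is trivial. The inductive hypothesis must be applied uniformly over all $t' \geq t$, so I will phrase it for all $t$ simultaneously.

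The main obstacle is this density and continuity step: ensuring that the union of the balls is $\norm{\cdot}_2$-dense in $D_r^{M_\infty}$ with the same radius, which is the Kaplansky density theorem. The remainder is bookkeeping of the shift identities.
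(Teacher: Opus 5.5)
Your proposal is correct and follows the paper's proof. The first part is the same argument: translate the $\cL_{\tr}$-formula into the time-$0$ chronological formula $\varphi_0$, observe that $S_t\varphi_0$ evaluates $\varphi$ in $\cM_t$, and invoke \eqref{eq: stationarity for later times}. For the second part the paper only cites that the limit of an elementary chain is elementary, and your induction on formulas proves exactly this, using Kaplansky density to show that $\bigcup_{t'} D_{t',r}^{\cM}$ is $\norm{\cdot}_2$-dense in $D_r^{M_\infty}$. That density step handles the fact that $M_\infty = \bigvee_t M_t$ is a weak closure rather than a literal union, a point the paper leaves implicit.
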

	
	\begin{proof}
		Let $\varphi$ be a formula in $\mathcal{L}_{\tr}$ with $m$ free variables, and let $\varphi_0$ be the $\mathcal{L}_{\operatorname{proc}}$ formula obtained by replacing each occurrence of $D_r$ in $\mathcal{L}_{\tr}$ with $D_{0,r}$ in $\mathcal{L}_{\operatorname{proc}}$; clearly $\varphi_0$ is a chronological formula.  Then for $t > 0$, and $\mathbf{x} \in \mathcal{M}_t^m$, we have
		\[
		\varphi^{\mathcal{M}_t}(\mathbf{x}) = (S_t \varphi_0)^{\cM}(\mathbf{x}).
		\]
		Therefore, for $t_0 < t_1$, and $\mathbf{x} \in \mathcal{M}_{t_0}^m$,
		\[
		\varphi^{\mathcal{M}_{t_0}}(\mathbf{x}) = (S_{t_0} \varphi_0)^{\cM}(\mathbf{x}) = (S_{t_1} \varphi_0)^{\cM}(\mathbf{x}) = \varphi^{\cM_{t_1}}(\mathbf{x}).
		\]
		Since $\varphi$ was arbitrary, we have $\cM_{t_0} \preceq \cM_{t_1}$.  The second claim about $\cM_{t_0} \preceq \cM$ follows because $\cM$ is the inductive limit of a chain of elementary inclusions $\cM_{t_j} \to \cM_{t_{j+1}}$, and hence elementary.
	\end{proof}
	
	\begin{example}[Filtration generated by free Brownian motion] \label{ex: filtration of free Brownian motion}
		Let $M$ be the tracial von Neumann algebra generated by a family of semicircular operators associated to $(L^2(\R;\R) \oplus L^2(\R;\R)) \otimes \ell^2(\N)$.  Let
		\[
		\mathbf{z}_{s,t,j} = S((\mathbf{1}_{[s,t]} \oplus 0) \otimes e_j) + i S( (0 \oplus \mathbf{1}_{[s,t]}) \otimes e_j).
		\]
		Note that $(\mathbf{z}_{0,t,j})_{t \geq 0, j \in \N}$ is a non-self-adjoint free Brownian motion, but the increments are also extended consistently to $\R$ instead of $[0,\infty)$.  For $t \in \R$, let $M_t$ be the von Neumann subalgebra generated by $(z_{t_1,t_2,j}: t_1 \leq t_2 \leq t, j \in \N)$.  Consider the $\cL_{\proc}$-structure given by $(M_t)_{t \in [0,\infty]}$ with the process $(z_{s,t,j})_{-\infty < s \leq t < \infty, j \in \N}$.
		
		We claim that $\cM$ is stationary.  In fact, we claim that for every $\cL_{\proc}$-formula $\varphi$ (not only the chronological formulas), for every $t_0 > 0$, and every $\mathbf{x} \in M_0^m$  we have $\varphi^{\cM}(\mathbf{x}) = (S_{t_0} \varphi)^{\cM}(\mathbf{x})$.  Because formulas are uniformly continuous in $L^2$ on each operator-norm ball, it suffices to prove the claim for $\mathbf{x}$ from a dense subset of $M_0$.  Denote by $M_{s,t}$ the von Neumann algebra generated by $(z_{t_1,t_2,j}: s \leq t_1 \leq t_2 \leq t, j \in \N)$.  Thus, $M_0$ is the inductive limit of $M_{-t,0}$ as $t \to \infty$.  Hence, assume without loss of generality that $\mathbf{x} \in M_{-t_1,0}$ for some $t_1 > 0$.
		
		Now we define an automorphism $\alpha$ of $M$ as follows.  Consider the Lebesgue-measure-preserving transformation of $\R$ given by
		\[
		f(t) = \begin{cases}
			t + t_0, & t \in \R \setminus (-t_0-t_1,0] \\
			t + t_0 + t_1, & t \in (-t_0-t_1,-t_1] \\
			t, & t \in (-t_1,0].
		\end{cases}
		\]
		This induces an automorphism of $L^2(\R)$ given by $h \mapsto h \circ f^{-1}$.  In turn, by construction of the free Gaussian functor, this induces an automorphism $\alpha$ of $M$, which is easily verified to have the following properties:
		\begin{itemize}
			\item $\alpha|_{M_{-t_1,0}} = \id$.
			\item $\alpha(z_{s,t,j}) = z_{s+t_0,t+t_0,j}$ for $0 \leq s \leq t < \infty$.
			\item $\alpha(M_t) = M_{t+t_0}$ for $t \geq 0$.
		\end{itemize}
		In particular, $\alpha$ induces an isomorphism between the $\cL_{\proc}$-structure $\cM$ and the $\cL_{\proc}$-structure $S_{t_0} \cM$, and also $\alpha(\mathbf{x}) = \mathbf{x}$.  We therefore have that
		\[
		\varphi^{\cM}(\mathbf{x}) = \varphi^{S_{t_0} \cM}(\alpha(\mathbf{x})) = \varphi^{S_{t_0} \cM}(\mathbf{x}) = (S_{t_0} \varphi)^{\cM}(\mathbf{x})
		\]
		as desired.
	\end{example}
	
	\begin{remark}
		This example has the following application for the model theory of von Neumann algebras.  Let $L(\mathbb{F}_\infty)$ be the free group von Neumann algebra on infinitely many generators, which is well known to be isomorphic to the von Neumann algebra $M$ generated by a free Brownian motion above.  Then the first-factor embedding of $L(\mathbf{F}_\infty)$ into $L(\mathbb{F}_\infty) * L(\mathbb{F}_\infty)$ is an elementary embedding.  Indeed, this inclusion is isomorphic to the inclusion of $M_0$ into $M_{t_0}$ discussed above, which is elementary as a special case of stationarity (Lemma \ref{lem: stationary implies elementary}).
	\end{remark}
	
	\begin{remark} \label{rem: properties of increments}
		Various natural properties of a NC filtration and stochastic process are \emph{chronologically axiomatizable} (i.e.\ axiomatizable using chronological formulas).  In the following, we already restrict our attention to structures $\cM$ satisfying $\cL_{\proc}$.
		\begin{itemize}
			\item We say that $\cM$ has the \emph{martingale property} if for each $s \leq t$ and $j \in \N$, we have $E_{M_s}[z_{s,t,j}] = 0$.  This property is axiomatizable using the conditions
			\[
			\sup_{x \in D_{s,1}} |\tr(x z_{s,t,j})| = 0
			\]
			for each $s \leq t$ and $j \in \N$.  Note that the above expression can be written in terms of the real and imaginary parts of the trace.
			\item We say that $\cM$ has \emph{free increments} if for each $s \leq t$, the tuple $\mathbf{z}_{s,t} = (z_{s,t,j})_{j \in \N}$ is free from $M_s$.   This property is axiomatizable using the conditions
			\[
			\sup_{x_0, x_1, \dots, x_m \in D_{s,1}} |\tr[x_0[p_1(\mathbf{z}_{s,t}) - \tr(p_1(\mathbf{z}_{s,t})][x_1 - \tr(x_1)] \dots [p_k(\mathbf{z}_{s,t}) - \tr(p_m(\mathbf{z}_{s,t})] x_m ] |
			\]
			for noncommutative polynomials $p_1$, \dots, $p_m$ and $m \in \N$.  The above expression, again, can be expanded and expressed in terms of the real and imaginary parts of traces of polynomials in $\mathbf{z}_{s,t}$ and $x_1$, \dots, $x_m$.
		\end{itemize}
		Similarly, one can axiomatize (with quantifier-free sentences) the property that the increments $\mathbf{z}_{s,t}$ have real and imaginary parts which form a free semicircular family with variance $t-s$.
	\end{remark}
	
	\subsection{The restriction to $M_0$; definable predicates and functions} \label{subsec: M zero definable things}
	
	In this subsection, we consider the restriction of chronological formulas to the initial element of the filtration $M_0$ as defining a language $\cL_{\chron,0}$ which extends $\mathcal{L}_{\tr}$.  While this might appear to be merely a semantic exercise, it is useful because it will allow us to apply results about types, definable predicates, and definable functions over metric structures directly to the chronological versions, rather than repeating the proofs.
	
	If we only have access to $M_0$, then we would only be able to apply suprema and infima associated to $D_{0,r}$ for $r \geq 0$, while the suprema and infima associated to $D_{t,r}$ for $t > 0$ cannot be expressed in terms of $M_0$.  We therefore have to artificially add in predicates in $\cL_{\chron,0}$ which represent general chronological formulas.  This is essentially the same as the procedure of \emph{Morleyization} in model theory, in which a new language is defined with relation symbols for every formula in the original language; in particular, the formulas in the original language are all quantifier-free formulas in the new language.  We will then discuss several theories and axioms in the new language $\cL_{\chron,0}$ related to chronological theories in $\cL_{\proc}$.
	
	\begin{definition}
		We define $\mathcal{L}_{\operatorname{chron},0}$ as follows:
		\begin{itemize}
			\item The domains are $(D_r)_{r > 0}$ as in $\mathcal{L}_{\tr}$ the language of tracial von Neumann algebras.
			\item The function symbols describe the $*$-algebra operations, the same as in $\mathcal{L}_{\tr}$.
			\item The predicate symbols are indexed by the chronological formulas $\varphi$ in $\mathcal{L}_{\proc}$; for clarity, we denote the corresponding predicate in $\mathcal{L}_{\chron,0}$ by $\Theta(\varphi)$.
		\end{itemize}
	\end{definition}
	
	Note that a more common notation for $\Theta(\varphi)$ in the model theory literature is $P_\varphi$.
	
	\begin{definition} \label{def: theory on M zero}
		Given an $\mathcal{L}_{\chron}$-theory $\mathrm{T}$, we define an $\mathcal{L}_{\chron,0}$-theory $\Theta_0(\rT)$ as follows:
		\begin{enumerate}[(1)]
			\item For each $\mathcal{L}_{\chron}$-sentence $\varphi \in \rT$, we have $\Theta(\varphi) \in \Theta(\rT)$.
			\item For each $\mathcal{L}_{\chron}$-formula $\varphi$ in $k$ free variables and for terms $t_1$, \dots, $t_k$ in variables $(x_1,\dots,x_m)$, we have
			\[
			\sup_{x_1 \in D_{r_1}} \dots \sup_{x_m \in D_{r_m}} |\Theta(\varphi)(t_1(\mathbf{x}),\dots,t_k(\mathbf{x})) - \Theta(\varphi(t_1,\dots,t_m))(\mathbf{x})| = 0.
			\]
			\item For each $\mathcal{L}_{\chron}$-formula $\varphi(\mathbf{x},y)$ in $m+1$ variables, letting $\psi(\mathbf{x}) = \sup_{y \in D_{0,r}} \varphi(\mathbf{x},y)$, we have
			\[
			\sup_{x_1 \in D_{r_1}} \dots \sup_{x_m \in D_{r_m}} |\sup_{y \in D_{r'}} \Theta(\varphi)(\mathbf{x},y) - \Theta(\psi)(\mathbf{x})| \in \Theta(\rT),
			\]
			and the same with $\sup_{y \in D_{0,r}}$ and $\sup_{y \in D_r}$ replaced by $\inf_{y \in D_{0,r}}$ and $\inf_{y \in D_r}$ respectively.
			\item For each $f: \R^k \to \R$ is continuous and $\varphi_1$, \dots, $\varphi_k$ are $\mathcal{L}_{\chron}$-formulas in $m$ free variables, we have
			\[
			\sup_{x_1 \in D_{r_1}} \dots \sup_{x_m \in D_{r_m}} |f(\Theta(\varphi_1),\dots,\Theta(\varphi_k)) - \Theta(f(\varphi_1,\dots,\varphi_k))| \in \Theta(\rT).
			\]
		\end{enumerate}
	\end{definition}
	
	\begin{remark}
		Intuitively, these axioms are saying that the map $\varphi \mapsto \Theta(\varphi)$ respects composition with terms, connectives, and the supremum operation if we understand the underlying space for the $\mathcal{L}_{\chron,0}$-structure to be $0$th level of the filtration in the $\mathcal{L}_{\proc}$-structure.  Hence, the next lemma should not be surprising.
	\end{remark}
	
	\begin{lemma}
		Let $\rT$ be an $\mathcal{L}_{\chron}$ theory.  Then every formula in $\mathcal{L}_{\chron,0}$ is equivalent modulo $\Theta_0(\rT)$ to a formula of the form $\Theta(\varphi)$ for some $\mathcal{L}_{\chron}$-formula $\varphi$.
	\end{lemma}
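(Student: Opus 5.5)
We argue by induction on the complexity of $\mathcal{L}_{\chron,0}$-formulas, proving the stronger statement that for every $\mathcal{L}_{\chron,0}$-formula $\eta(\mathbf{x})$ in $m$ free variables there is an $\mathcal{L}_{\chron}$-formula $\varphi$ (in fact a chronological formula, with all free variables interpreted in $D_{0,\cdot}$) such that
\[
\sup_{x_1 \in D_{r_1}} \dots \sup_{x_m \in D_{r_m}} |\eta(\mathbf{x}) - \Theta(\varphi)(\mathbf{x})|
\]
evaluates to zero in every model of $\Theta_0(\rT)$, for all $\mathbf{r}$. Since this holds for every choice of radii, it gives equality of $\eta^{\cN}$ and $\Theta(\varphi)^{\cN}$ on all of $N^m$ for every model $\cN \models \Theta_0(\rT)$, which is exactly equivalence modulo $\Theta_0(\rT)$.

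\textbf{Base case: basic formulas.} In $\mathcal{L}_{\chron,0}$ a basic formula has the form $\Theta(\psi)(t_1(\mathbf{x}),\dots,t_k(\mathbf{x}))$, where $\psi$ is a chronological formula in $k$ variables and the $t_i$ are $\mathcal{L}_{\tr}$-terms. We read each $t_i$ as an $\mathcal{L}_{\proc}$-term; it uses only the $*$-algebra symbols and no constants $z_{s,t,j}$. Then $\psi(t_1,\dots,t_k)$ is again chronological, because substituting terms for free variables preserves each $\mathcal{F}_{\chron}(t)$, which is a straightforward induction on Definition \ref{def: chronological formulas}. Axiom (2) of Definition \ref{def: theory on M zero} then says precisely that $\Theta(\psi)(t_1(\mathbf{x}),\dots,t_k(\mathbf{x}))$ is equivalent to $\Theta(\psi(t_1,\dots,t_k))(\mathbf{x})$.

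One point needs care here. The metric symbol $d$ and the trace symbols are part of $\mathcal{L}_{\tr}$ but are not listed among the predicate symbols of $\mathcal{L}_{\chron,0}$. If they are meant to be present, we handle them by noting that $\re\tr(t(\mathbf{x}))$ is itself the chronological basic formula $\varphi = \re\tr(t)$. We then need to identify it with $\Theta(\varphi)$, and this would have to come from the convention that the $\mathcal{L}_{\tr}$-symbols are interpreted via $\Theta$. I would simply state that convention explicitly.

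\textbf{Connectives.} Suppose $\eta = f(\eta_1,\dots,\eta_k)$ and $\eta_i \equiv \Theta(\varphi_i)$ by the inductive hypothesis. Since $f$ is continuous, it is uniformly continuous on bounded sets. On each $D_{\mathbf{r}}$ we have $\eta_i = \Theta(\varphi_i)$ exactly, in every model, so $f(\eta_1,\dots,\eta_k) = f(\Theta(\varphi_1),\dots,\Theta(\varphi_k))$. Axiom (4) then identifies the latter with $\Theta(f(\varphi_1,\dots,\varphi_k))$, and $f(\varphi_1,\dots,\varphi_k)$ is chronological by Definition \ref{def: chronological formulas}(2).

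\textbf{Quantifiers.} Suppose $\eta(\mathbf{x}) = \sup_{y \in D_{r}} \eta'(\mathbf{x},y)$ and $\eta' \equiv \Theta(\varphi)$ with $\varphi(\mathbf{x},y)$ chronological. Pointwise equality on all balls gives $\sup_{y\in D_r}\eta' = \sup_{y \in D_r}\Theta(\varphi)$. Axiom (3) identifies this with $\Theta(\psi)$, where $\psi = \sup_{y \in D_{0,r}} \varphi$, and $\psi$ lies in $\mathcal{F}_{\chron}(0)$ by Definition \ref{def: chronological formulas}(3). The case of $\inf$ is symmetric.

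\textbf{Main obstacle.} The only delicate point is bookkeeping. The axioms of $\Theta_0(\rT)$ are stated with particular radii, and the domains of $D_r$ in the $\mathcal{L}_{\chron,0}$-structure have to match $D_{0,r}$. Axiom (3) is written with $D_{r'}$ against $D_{0,r}$, and I will take $r'=r$. Because the axioms are imposed for every radius, exact equality on each ball propagates through the induction. No uniform-continuity estimate beyond what the axioms already supply is needed, since we are proving exact equality in every model rather than approximation.
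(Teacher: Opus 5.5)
Your proposal is correct and follows the paper's proof: an induction on complexity in which basic formulas are handled by axiom (2), continuous connectives by axiom (4), and quantifiers by axiom (3), with $\sup_{y \in D_r}$ matched to $\sup_{y \in D_{0,r}}$. Your caveat about the built-in metric symbol $d$ is fair, and the paper's proof does not address it (it simply asserts that every basic formula has the form $\Theta(\psi)(t_1,\dots,t_k)$), so stating the identification of $d$ with $\Theta(d)$ explicitly as a convention or axiom is the right way to close that point.
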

	
	\begin{proof}
		We proceed by induction on the complexity of formulas.
		\begin{enumerate}[(1)]
			\item Suppose that $\varphi$ is a basic formula in $\mathcal{L}_{\chron,0}$.  Then $\varphi$ has the form $\Theta(\psi)(t_1,\dots,t_k)$ for some predicate symbol $\Theta(\psi)$ and terms $t_1$, \dots, $t_k$ in light of Definition \ref{def: theory on M zero} (1).  By Definition \ref{def: theory on M zero} (2), $\varphi$ is equivalent to $\Theta(\psi(t_1,\dots,t_k))$.
			\item Suppose that $\varphi(\mathbf{x}) = \sup_{y \in D_r} \psi(\mathbf{x},y)$.  By inductive hypothesis, assume $\psi$ is equivalent to $\Theta(\psi')$ for some $\psi' \in \mathcal{L}_{\chron}$.  Let $\varphi' = \sup_{y \in D_{0,r}} \psi'(\mathbf{x},y)$, which is in $\mathcal{L}_{\chron}$. Then using Definition \ref{def: theory on M zero} (3), $\varphi$ is equivalent to $\Theta(\varphi')$.
			\item Suppose that $\varphi = f(\varphi_1,\dots,\varphi_k)$ where $f$ is a continuous connective.  Then applying the inductive hypothesis to $\varphi_1$, \dots, $\varphi_k$ and using Definition \ref{def: theory on M zero}, we obtain that $\varphi$ is equivalent to $\Theta(\varphi')$ for some $\varphi'$.  \qedhere
		\end{enumerate}
	\end{proof}
	
	\begin{lemma} \label{lem: restricted structure}
		Suppose that $\mathcal{M}$ is an $\cL_{\filt}$-structure satisfying a chronological theory $\rT$.  Let $\cM_0$ be the $\mathcal{L}_{\chron,0}$-structure obtained as follows:
		\begin{itemize}
			\item $D_r$ in $\mathcal{L}_{\chron,0}$ is interpreted as $D_{0,r}^{\cM}$.
			\item The function symbols in $\cM_0$ are the restrictions of the function symbols from $\cM$.
			\item For $\varphi \in \mathcal{L}_{\chron}$, $\Theta(\varphi)$ is interpreted as $\varphi^{\cM}$ restricted to inputs in $\bigcup_{r > 0} D_r^{\cM_0}$.
		\end{itemize}
		Then $\cM_0$ satisfies $\Theta_0(\rT)$.
	\end{lemma}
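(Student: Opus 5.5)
The plan is to check directly that $\cM_0$ satisfies each of the four families of axioms in Definition \ref{def: theory on M zero}. First we verify that $\cM_0$ is a legitimate $\mathcal{L}_{\chron,0}$-structure. The domains $D_{0,r}^{\cM}$ are complete, and the $*$-algebra operations restricted to $M_0$ satisfy the required range bounds and moduli of continuity; both facts come from $\rT_{\filt}$ (the lemma after that definition shows $M_0$ is a von Neumann subalgebra of $M_\infty$). Each $\Theta(\varphi)^{\cM_0} = \varphi^{\cM}|_{M_0^m}$ is bounded and uniformly continuous on the products of domains $D_{0,\mathbf{r}}$, and its bounds and moduli depend only on the formula, not on the structure, by the standard induction on complexity \cite[Theorem 3.5]{BYBHU2008}. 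So the needed bounds and moduli can be assigned to the symbols $\Theta(\varphi)$ in the language itself.

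Next we check the axioms in order.
\begin{enumerate}[(1)]
\item For a sentence $\varphi \in \rT$, the value $\Theta(\varphi)^{\cM_0}$ equals $\varphi^{\cM} = 0$, since $\cM \models \rT$.
\item For substitution of terms, evaluating the $\cL_{\proc}$-formula $\varphi(t_1,\dots,t_k)$ at $\mathbf{x}$ is by definition evaluating $\varphi$ at $(t_1^{\cM}(\mathbf{x}),\dots,t_k^{\cM}(\mathbf{x}))$. The terms are built from the same function symbols interpreted identically in $\cM_0$ and $\cM$, and they map $M_0^m$ into $M_0$. Hence the two sides agree pointwise on $M_0^m$, and the sup of their difference is $0$.
\item For connectives, evaluation of $f(\varphi_1,\dots,\varphi_k)$ in $\cM$ is $f$ applied to the evaluations. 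So the difference vanishes identically.
\item The only step needing care is the quantifier axiom. Here $\psi(\mathbf{x}) = \sup_{y \in D_{0,r}} \varphi(\mathbf{x},y)$, so $\psi^{\cM}(\mathbf{x}) = \sup_{y \in D_{0,r}^{\cM}} \varphi^{\cM}(\mathbf{x},y)$. Since $D_r^{\cM_0} = D_{0,r}^{\cM}$ by construction, this is exactly $\sup_{y \in D_r^{\cM_0}} \Theta(\varphi)^{\cM_0}(\mathbf{x},y)$ for $\mathbf{x} \in M_0^m$. The inf case is symmetric.
\end{enumerate}
Because $\psi$ quantifies only over time $0$, it is chronological whenever $\varphi$ is, so $\Theta(\psi)$ is a symbol of the language.

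The main point to watch is bookkeeping rather than mathematics. The axioms in Definition \ref{def: theory on M zero} quantify over domains $D_r$ of $\mathcal{L}_{\chron,0}$, and we need them to match the quantifiers $D_{0,r}$ in $\cL_{\proc}$. The identification $D_r^{\cM_0} = D_{0,r}^{\cM}$ gives this exactly. We should also confirm that every element of $M_0$ lies in some $D_{0,r}^{\cM}$, so that the restriction to $\bigcup_r D_r^{\cM_0}$ does not lose any inputs.
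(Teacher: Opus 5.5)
Your proposal is correct and takes the same approach as the paper: it checks each family of sentences in Definition~\ref{def: theory on M zero} directly from how $\cM_0$ is defined, and the quantifier axiom comes down to the identification $D_r^{\cM_0} = D_{0,r}^{\cM}$. Your preliminary check that $\cM_0$ is a legitimate $\mathcal{L}_{\chron,0}$-structure, with uniform bounds and moduli for the symbols $\Theta(\varphi)$, is a detail the paper leaves implicit; your numbering of the quantifier and connective cases is swapped relative to the definition, which is harmless.
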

	
	\begin{proof}
		The result follows by straightforwardly checking that $\cM_0$ satisfies each of the kinds of sentences in $\Theta_0(\rT)$.  For instance the sentences of type (1) hold because for each sentence $\varphi \in \rT$, we have that $\Theta(\varphi)^{\cM_0} = \varphi^{\cM}$ by definition.  For type (2), this is because the function symbols in $\cM_0$ are the restrictions of those in $\cM$.  Cases (3) and (4) follow similarly.
	\end{proof}
	
	\begin{definition}
		Let $\rT$ be a $\cL_{\chron}$-theory.  Define $\Theta_1(\rT)$ as the set of $\cL_{\chron,0}$-sentences satisfied by $\cM_0$ by $\cM_0$ for all $\cL_{\filt}$-structures satisfying $\rT$.
	\end{definition}
	
	\begin{remark}
		Note it is immediate from Lemma \ref{lem: restricted structure} that $\Theta_1(\rT)$ implies $\Theta_0(\rT)$, and hence also that every $\cL_{\chron,0}$-formula is equivalent to a formula of the form $\Theta(\varphi)$.  Such a formula is, by construction, considered a quantifier-free formula in $\cL_{\chron,0}$.
	\end{remark}
	
	\begin{remark} \label{rem: restriction of theories}
		$\Theta_1(\rT)$ can also be described as follows.  If we view complete theories as linear functionals as in \cite[Lemma 3.8ff]{GaoJekel2025}, the mapping that sends $\Th(\cM)$ to $\Th(\cM_0)$ for $\cL_{\filt}$-structures $\cM$ is a restriction mapping and hence is continuous.  Thus, the image of the restriction map gives a closed set of complete $\cL_{\chron,0}$-theories, which corresponds to an axiomatizable class of $\cL_{\chron,0}$-structures.  Then $\Theta_1(\rT)$ is the complete theory of this axiomatizable class.  From this reasoning, it follows that every consistant complete theory extending $\Theta(\rT)$ can be realized by the restriction $\cM_0$ for some $\cM$ satisfying $\rT$.
	\end{remark}
	
	With these preparations in hand, we set up the definitions and basic facts for chronological definable predicates, definable functions, and types.
	
	\begin{definition}[Chronologically definable predicates] \label{def: chron def pred}
		Let $\rT$ be an $\cL_{\chron}$-theory. The space of \emph{chronologically definable predicates} in $m$ free variables in $M_0$ is the completion of $\mathcal{F}_{\chron,m}$ with respect to the family of seminorms $\norm{\cdot}_{0,r,\rT}$ for $r \in (0,\infty)$.
	\end{definition}
	
	The following fact is now straightforward to verify from the setup.
	
	\begin{lemma} \label{lem: isomorphism of definable predicates}
		For $\cL_{\chron}$ formulas in $m$-free variables, we have $\norm{\varphi}_{0,r,\rT} = \norm{\Theta(\varphi)}_{r,\Theta_1(\rT)}$.  Hence, the space of chronologically definable predicates with inputs in $\cM_0$ is isomorphic as a topological vector space to the space of definable predicates in $m$ free variables in $\cL_{\chron,0}$ with respect to $\Theta_1(\rT)$.
	\end{lemma}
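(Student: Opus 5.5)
The argument rests on comparing the two seminorms through the restriction construction of Lemma \ref{lem: restricted structure}, together with the description of $\Theta_1(\rT)$ in Remark \ref{rem: restriction of theories}. Fix a chronological formula $\varphi$ in $m$ free variables and $r>0$.

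For the inequality $\norm{\Theta(\varphi)}_{r,\Theta_1(\rT)} \geq \norm{\varphi}_{0,r,\rT}$, take any $\cL_{\filt}$-structure $\cM \models \rT$ and form $\cM_0$. By construction $D_r^{\cM_0} = D_{0,r}^{\cM}$ and $\Theta(\varphi)^{\cM_0} = \varphi^{\cM}$ on these inputs. Moreover $\cM_0 \models \Theta_1(\rT)$ by the definition of $\Theta_1(\rT)$. Hence
\[
\norm{\varphi^{\cM}}_{0,r} = \sup_{\mathbf{x} \in (D_r^{\cM_0})^m} |\Theta(\varphi)^{\cM_0}(\mathbf{x})| \leq \norm{\Theta(\varphi)}_{r,\Theta_1(\rT)},
\]
and taking the supremum over $\cM$ gives the claim.

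For the reverse inequality, the point is that $\norm{\Theta(\varphi)}_{r,\Theta_1(\rT)}$ equals the value of the sentence $\sigma = \sup_{\mathbf{x} \in D_r^m} |\Theta(\varphi)(\mathbf{x})|$ maximized over models of $\Theta_1(\rT)$. Since $\Theta_1(\rT)$ is the complete theory of the class of restrictions $\{\cM_0\}$, the condition $\sigma \leq c$ with $c = \sup_{\cM \models \rT} \norm{\varphi^{\cM}}_{0,r}$ holds in every $\cM_0$. The sentence $\max(\sigma - c, 0)$ therefore belongs to $\Theta_1(\rT)$, and so $\sigma^{\cN} \leq c$ for every $\cN \models \Theta_1(\rT)$. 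Alternatively, one can invoke the last sentence of Remark \ref{rem: restriction of theories}: every model of $\Theta_1(\rT)$ has the complete theory of some restriction $\cM_0$, and so gives the same value of $\sigma$. I expect this step to be the main, though mild, obstacle. It needs a little care in being clear that the bound $\norm{\cdot}_{r,\Theta_1(\rT)}$ is a supremum of the sentence values, and in handling the set-theoretic issue that the class of models is not a set, which is treated as in Remark \ref{rem: extending definable functions}.

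Given the equality of seminorms, the map $\varphi \mapsto \Theta(\varphi)$ is isometric for each seminorm from $\mathcal{F}_{\chron,m}$ into the $\cL_{\chron,0}$-formulas. It therefore extends to an isomorphism of the completions onto its closed image. For surjectivity, I would use the lemma above, which says that every $\cL_{\chron,0}$-formula is equivalent modulo $\Theta_0(\rT)$, and hence modulo $\Theta_1(\rT)$, to some $\Theta(\varphi)$. The image is thus dense, and so the completions coincide. This gives the isomorphism of topological vector spaces, since both topologies are generated by the corresponding seminorm families indexed by $r$.
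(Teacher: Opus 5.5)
Your proposal is correct and is essentially the routine verification the paper has in mind; the paper gives no written proof, calling the lemma straightforward from the setup. Your argument that $\max(\sigma - c,0)\in\Theta_1(\rT)$ is the right way to get the reverse inequality.

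The one point you leave implicit is that $\Theta$ respects the linear structure modulo $\Theta_1(\rT)$. For instance, $\Theta(\varphi-\psi)$ and $\Theta(\varphi)-\Theta(\psi)$ agree in every $\cM_0$, and hence modulo $\Theta_1(\rT)$ (this is also axiom (4) of $\Theta_0(\rT)$). This is what makes the map linear and isometric on differences, so that it extends to an isomorphism of the completions.
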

	
	\begin{definition}[Chronological types] ~
		\begin{enumerate}[(1)]
			\item Let $\cM$ be an $\cL_{\filt}$-structure and $\mathbf{x} \in \cM_0^m$.  The \emph{chronological type} of $\mathbf{x}$ is the map $\tp_{\chron}^{\cM}: \cF_{\chron,m} \to \R$, $\varphi \mapsto \varphi^{\cM}(\mathbf{x})$.
			\item For $m \in \N \cup \{\infty\}$, define $\mathbb{S}_{\chron,\mathbf{r}}(\rT)$ as the set of chronological types $\tp_{\chron}^{\cM}(\mathbf{x})$ for $\cM$ satisfying $\rT$ and $\mathbf{x} \in D_{0,\mathbf{r}}^{\cM}$.  We equip $\mathbb{S}_{\chron,\mathbf{r}}(\rT)$ with the weak-$*$ topology.
			\item The \emph{space of chronological types} is $\mathbb{S}_{\chron,m}(\rT) = \bigcup_{\mathbf{r} \in (0,\infty)^m} \mathbb{S}_{\chron,\mathbf{r}}(\rT)$ equipped with the inductive limit topology.
		\end{enumerate}
	\end{definition}
	
	\begin{definition}
		For an $\cL_{\chron,0}$-theory $\rT$, and $\mathbf{r} \in (0,\infty)^m$, let $\mathbb{S}_{\mathbf{r}}(\rT)$ denote the space of types of $m$-tuples in $D_{0,\mathbf{r}}$, and similarly $\mathbb{S}_m(\rT)$ the space of types of all $m$-tuples.
	\end{definition}
	
	\begin{lemma} \label{lem: isomorphism of types}
		Let $\rT$ be an $\cL_{\chron}$-theory.  The map $\Theta$ from $\mathcal{F}_{\chron,m}$ to $\mathcal{L}_{\chron,0}$-formulas in $m$ variables induces homeomorphisms $\mathbb{S}_{\chron,\mathbf{r}}(\rT) \to \mathbb{S}_{\mathbf{r}}(\Theta_1(\rT))$ and $\mathbb{S}_{\chron,m}(\rT) \to \mathbb{S}_m(\Theta_1(\rT))$.
	\end{lemma}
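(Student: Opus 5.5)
The plan is to define the map explicitly and check it is a bijection with continuous inverse on each compact piece. Given $\mu = \tp_{\chron}^{\cM}(\mathbf{x})$ with $\cM \models \rT$ and $\mathbf{x} \in D_{0,\mathbf{r}}^{\cM}$, set $\Theta_*(\mu) = \tp^{\cM_0}(\mathbf{x})$, where $\cM_0$ is the restricted $\cL_{\chron,0}$-structure of Lemma \ref{lem: restricted structure}. Modulo $\Theta_1(\rT)$, every $\cL_{\chron,0}$-formula in $m$ variables is equivalent to one of the form $\Theta(\varphi)$. By construction, $\Theta(\varphi)^{\cM_0}(\mathbf{x}) = \varphi^{\cM}(\mathbf{x}) = (\mu,\varphi)$. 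Hence $\tp^{\cM_0}(\mathbf{x})$ is determined by $\mu$ through the formula $(\Theta_*\mu, \Theta(\varphi)) = (\mu,\varphi)$. This shows $\Theta_*$ is well defined, independent of the realization $(\cM,\mathbf{x})$, and injective. Since $\cM_0 \models \Theta_1(\rT)$ by the definition of $\Theta_1(\rT)$, and $\mathbf{x} \in D_{\mathbf{r}}^{\cM_0}$, the image lies in $\mathbb{S}_{\mathbf{r}}(\Theta_1(\rT))$.

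\textbf{Surjectivity} is the main point. Take a type $\nu \in \mathbb{S}_{\mathbf{r}}(\Theta_1(\rT))$, realized by some $\mathbf{y}$ in a model $\cN \models \Theta_1(\rT)$. We need an $\cL_{\filt}$-structure $\cM \models \rT$ whose restriction $\cM_0$ realizes $\nu$. I would use Remark \ref{rem: restriction of theories} as follows. Expand the language by $m$ constants, or equivalently work with the theory of $(\cN,\mathbf{y})$. Every complete theory extending $\Theta(\rT)$ that is consistent with $\Theta_1(\rT)$ is realized as $\Th(\cM_0)$ for some $\cM \models \rT$. Applying this to the theory of $(\cN,\mathbf{y})$ in the language with constants, realized by restrictions of $\cM \models \rT$ with named constants in $M_0$, produces $\cM$ and $\mathbf{x} \in M_0^m$ with $\tp^{\cM_0}(\mathbf{x}) = \nu$. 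The operator-norm bound $\mathbf{x} \in D_{0,\mathbf{r}}$ is expressed by a sentence and so transfers.

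I expect this step to be the main obstacle: the remark must be justified with parameters. I would argue it directly by compactness. Consider the set of $\cL_{\proc}$-conditions $\{\,|\varphi(\mathbf{x}) - (\nu,\Theta(\varphi))| = 0 : \varphi \in \cF_{\chron,m}\,\} \cup \rT$, with $\mathbf{x}$ ranging over $D_{0,\mathbf{r}}$. Suppose this set were not finitely satisfiable. Then some finite subset would fail, which yields an $\epsilon > 0$ and a condition of the form $\inf_{\mathbf{x} \in D_{0,\mathbf{r}}} \max_i |\varphi_i(\mathbf{x}) - c_i| \geq \epsilon$ holding in all models of $\rT$. This is a chronological sentence true in all such $\cM$. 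Its $\Theta$-translation then belongs to $\Theta_1(\rT)$, but it is violated at $\mathbf{y}$ in $\cN$, a contradiction. The compactness theorem, via ultraproducts, then gives the desired $\cM$.

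\textbf{Topology.} Both spaces carry the weak-$*$ topology. Under the identification $\varphi \leftrightarrow \Theta(\varphi)$, the pairing is preserved. Moreover, the formulas $\Theta(\varphi)$ represent all $\cL_{\chron,0}$-formulas up to equivalence, and Lemma \ref{lem: isomorphism of definable predicates} gives equality of seminorms. So a net converges in one space if and only if its image converges in the other, and $\Theta_*$ is a homeomorphism on each $\mathbb{S}_{\chron,\mathbf{r}}(\rT)$. Alternatively, one can note that $\Theta_*$ is a continuous bijection from a compact space onto a Hausdorff space. Finally, $\Theta_*$ maps $\mathbb{S}_{\chron,\mathbf{r}}$ onto $\mathbb{S}_{\mathbf{r}}$ compatibly with the inclusions as $\mathbf{r}$ increases. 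Both $\mathbb{S}_{\chron,m}(\rT)$ and $\mathbb{S}_m(\Theta_1(\rT))$ carry the inductive limit topologies, so $\Theta_*$ is a homeomorphism there as well.
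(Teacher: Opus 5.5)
Your proposal is correct and is essentially the argument the paper intends: the paper gives no separate proof of this lemma and relies on the restriction map $\tp_{\chron}^{\cM}(\mathbf{x}) \mapsto \tp^{\cM_0}(\mathbf{x})$, on the reduction of every $\cL_{\chron,0}$-formula to some $\Theta(\varphi)$ (which gives well-definedness, injectivity and matching weak-$*$ topologies), and on Remark~\ref{rem: restriction of theories} for surjectivity. Your direct compactness/ultraproduct argument supplies the version of that remark with parameters, which is what surjectivity onto types actually requires, since the remark as written only concerns complete theories (sentences).
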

	
	\begin{corollary}
		There is an isomorphism of topological vector spaces from the space of chronologically definable predicates with respect to $\rT$ to $C(\mathbb{S}_m(\rT))$, such that if $f_\varphi$ is the function corresponding to the definable predicate $\varphi$, then $\norm{\varphi}_{0,r,\rT} = \norm{f_\varphi|_{\mathbb{S}_m(\rT)}}_u$.
	\end{corollary}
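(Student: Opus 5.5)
The plan is to deduce the corollary by transporting the corresponding statement for ordinary definable predicates across the identifications already set up: Lemma \ref{lem: isomorphism of definable predicates} on the predicate side and Lemma \ref{lem: isomorphism of types} on the type side. (Here $\mathbb{S}_m(\rT)$ is read as $\mathbb{S}_{\chron,m}(\rT)$.)

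\textbf{Step 1: definable predicates in the Morleyized language.} Apply the standard result recalled in \S\ref{subsec: model theory}, namely \cite[Lemma 2.16]{JekelTypeCoupling} together with the Stone--Weierstrass argument given there, to the $\cL_{\chron,0}$-theory $\Theta_1(\rT)$. For each $\mathbf{r}$, the formulas form a unital subalgebra of $C(\mathbb{S}_{\mathbf{r}}(\Theta_1(\rT)))$ that separates points, since types are determined by formula values. Hence the completion of $\cL_{\chron,0}$-formulas in the seminorm $\norm{\cdot}_{\mathbf{r},\Theta_1(\rT)}$ equals $C(\mathbb{S}_{\mathbf{r}}(\Theta_1(\rT)))$ isometrically. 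Passing to the inverse limit over $\mathbf{r}$ gives definable predicates $\cong C(\mathbb{S}_m(\Theta_1(\rT)))$ in the topology of uniform convergence on each $\mathbb{S}_{\mathbf{r}}$. This topology is exactly the one given by the seminorms, because $\mathbb{S}_m$ carries the inductive limit topology, so a function is continuous if and only if all its restrictions are.

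\textbf{Step 2: transfer to the chronological setting.} By Lemma \ref{lem: isomorphism of definable predicates}, $\Theta$ is seminorm-preserving, so it extends to an isomorphism of the completions. By Lemma \ref{lem: isomorphism of types}, the homeomorphism $\mathbb{S}_{\chron,m}(\rT)\to\mathbb{S}_m(\Theta_1(\rT))$ preserves each $\mathbf{r}$-piece, so pulling back gives $C(\mathbb{S}_m(\Theta_1(\rT)))\cong C(\mathbb{S}_{\chron,m}(\rT))$ isometrically on each piece. Composing the two maps sends a chronological formula $\varphi$ to $f_\varphi(\mu)=(\mu,\varphi)$. This is consistent because $\Theta(\varphi)^{\cM_0}=\varphi^{\cM}$ on $M_0$ by Lemma \ref{lem: restricted structure}, and every type in $\mathbb{S}_m(\Theta_1(\rT))$ is realized by such a restriction (Remark \ref{rem: restriction of theories}). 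The norm identity follows by chaining the two isometries, using a single radius $r$ as in the statement and noting that $\norm{\cdot}_{0,r,\rT}$ corresponds to $\mathbf{r}=(r,\dots,r)$.

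\textbf{Main obstacle.} The only delicate point is surjectivity onto $C(\mathbb{S}_{\mathbf{r}})$, which requires compactness of $\mathbb{S}_{\mathbf{r}}(\Theta_1(\rT))$. I will get this from the ultraproduct or compactness theorem for the axiomatizable class $\Theta_1(\rT)$ \cite[Proposition 8.6]{BYBHU2008}, so that Stone--Weierstrass applies. It also requires checking that the inverse-limit completion matches the seminorm completion in Definition \ref{def: chron def pred}. Here I need the fact that the seminorms over the diagonal radii $(r,\dots,r)$ already dominate those over arbitrary $\mathbf{r}$, which holds since $D_{\mathbf{r}}\subseteq D_{(\max r_j,\dots)}$.
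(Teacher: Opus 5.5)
Your proposal is correct and follows essentially the same route as the paper. Both arguments identify definable predicates for the Morleyized theory $\Theta_1(\rT)$ in $\cL_{\chron,0}$ with $C(\mathbb{S}_m(\Theta_1(\rT)))$ and transport this across Lemmas \ref{lem: isomorphism of definable predicates} and \ref{lem: isomorphism of types}; the paper simply cites that identification from \cite[Proposition 3.9]{JekelCoveringEntropy}, whereas you sketch it yourself via compactness of the pieces $\mathbb{S}_{\mathbf{r}}$ and Stone--Weierstrass.
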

	
	\begin{proof}
		It is standard that the space of definable predicates with respect to $\cL_{\chron,0}$ and $\Theta_1(\rT)$ is isomorphic to the continuous functions on $\mathbb{S}_m(\Theta_1(\rT))$ (see \cite[Proposition 3.9]{JekelCoveringEntropy} for the setting with domains of quantification used here).  We then apply Lemmas \ref{lem: isomorphism of definable predicates} and \ref{lem: isomorphism of types} to transform this into a statement about the spaces of chronological definable predicates and types.
	\end{proof}
	
	\begin{definition}[Chronologically definable functions] \label{def: chr def funct}
		Let $\rT$ be an $\cL_{\chron}$-theory.  A \emph{chronologically definable function} in $m$ variables is a collection of functions $f^{\cM}: (\cM_0)^m \to \cM_0$ for models $\cM$ of $\rT$ such that
		\begin{enumerate}[(1)]
			\item For every $r > 0$, there exists $r' > 0$ such that $f^{\cM}$ maps $D_{0,r}^{\cM}$ into $D_{0,r'}^{\cM}$ for every $\cM$ satisfying $\rT$.
			\item There exists a chronologically definable predicate $\varphi$ relative to $\rT$ such that $d^{\cM}(f^{\cM}(\mathbf{x}),y) = \varphi^{\cM}(\mathbf{x},y)$ for all $\cM$ satisfying $\rT$ and all $\mathbf{x} \in \cM_0^m$ and $y \in \cM_0$.
		\end{enumerate}
	\end{definition}
	
	\begin{remark} \label{rem: extending definable functions}
		Although the class of models of $\rT$ is not a set, it is sufficient for $f^{\cM}$ to be given for a single $\cM$ in each elementary equivalence class for $\rT$ that satisfies (2) for the same $\varphi$.  Indeed, it is clear that for each $\cM$, $f^{\cM}$ is definable over $\cM$ as in \cite[Definition 2.22]{BYBHU2008} and thus by \cite[Definition 2.25]{BYBHU2008}, $f^{\cM}$ admits a canonical extension in any elementary extension $\cN$ of $\cM$, and of course the same definable predicate $\varphi$ satisfies (2) in $\cN$.  Any $\cM' \equiv \cM$ can be elementarily embedded into some elementary extension $\cN$ of $\cM$ (this is an easier form of the Keisler--Shelah theorem; see \cite[Theorem 5.7 and Proposition 7.10]{BYBHU2008}), and $f^{\cN}$ will leave an elementary submodel invariant, and so we obtain $f^{\cM'}$ as the restriction.  But then we note that by the downward L{\"o}wenheim--Skolem theorem \cite[Proposition 7.3]{BYBHU2008}, each elementary equivalence class has a representative with a fixed bound on its cardinality that only depends on the language. In conclusion, by specifying $f^{\cM}$ only for set of equivalence class representatives, we see that the class of definable functions is indeed a set.
	\end{remark}
	
	\begin{lemma}
		Let $\rT$ be an $\cL_{\chron}$-theory and fix $m \in \N$.  There is a bijection $\Theta$ from chronologically definable functions in $m$-variables with respect to $\rT$ and definable functions with respect to $\Theta_1(\rT)$ such that for each $\cM$ satisfying $\rT$, we have $\Theta(f)^{\cM_0} = f^{\cM}$ as functions on $\cM_0$.
	\end{lemma}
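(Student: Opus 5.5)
The plan is to reduce everything to the correspondence between $\cL_{\filt}$-structures $\cM \models \rT$ and their restrictions $\cM_0$ from Lemma \ref{lem: restricted structure}, using Lemma \ref{lem: isomorphism of definable predicates} to pass between the two kinds of definable predicates. The key point is that both notions of definable function are defined by the same two conditions: a uniform range bound on balls, and definability of the distance function $d(f(\mathbf{x}),y)$ by a predicate. So the bijection should simply be $f \mapsto \Theta(f)$ with $\Theta(f)^{\cM_0} := f^{\cM}$, and the content of the proof is checking that each condition transfers in both directions.

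\emph{Forward direction.} Let $f$ be chronologically definable, witnessed by a chronologically definable predicate $\varphi$ in $m+1$ variables. Define $\Theta(f)^{\cM_0} = f^{\cM}$ for each $\cM \models \rT$.
\begin{enumerate}[(1)]
\item The range bound transfers verbatim, since $D_r^{\cM_0} = D_{0,r}^{\cM}$.
\item By Lemma \ref{lem: isomorphism of definable predicates}, $\varphi$ is a limit of chronological formulas $\varphi_k$ in the seminorms $\norm{\cdot}_{0,\mathbf{r},\rT}$. The predicates $\Theta(\varphi_k)$ are then Cauchy in $\norm{\cdot}_{\mathbf{r},\Theta_1(\rT)}$ and converge to an $\cL_{\chron,0}$-definable predicate $\Theta(\varphi)$. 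This limit satisfies $\Theta(\varphi)^{\cM_0} = \varphi^{\cM}|_{\cM_0}$, because the identity holds for each $\varphi_k$ by construction of $\cM_0$ and passes to uniform limits on balls.
\end{enumerate}
Hence $d^{\cM_0}(\Theta(f)^{\cM_0}(\mathbf{x}),y) = \Theta(\varphi)^{\cM_0}(\mathbf{x},y)$.

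\emph{Models not of the form $\cM_0$.} Definable functions for $\Theta_1(\rT)$ must be specified on every model of $\Theta_1(\rT)$, not only on restrictions. By Remark \ref{rem: restriction of theories}, every complete theory extending $\Theta_1(\rT)$ is realized by some $\cM_0$. Remark \ref{rem: extending definable functions} then says it suffices to define the function on one model in each elementary equivalence class, since it extends canonically to elementary extensions and restricts to elementarily equivalent models. The definition on restrictions is therefore enough. I expect this to be the main obstacle: I must also check consistency, namely that two structures $\cM,\cM'$ with elementarily equivalent (or isomorphic) restrictions give compatible extensions. This holds because the extension is uniquely determined by the single predicate $\Theta(\varphi)$: $f(\mathbf{x})$ is the unique zero of $\Theta(\varphi)(\mathbf{x},\cdot)$. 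The same uniqueness argument gives well-definedness and injectivity.

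\emph{Inverse direction.} Given a definable function $g$ for $\Theta_1(\rT)$ with witnessing predicate $\psi$, apply the isomorphism of Lemma \ref{lem: isomorphism of definable predicates} backwards to get a chronologically definable predicate $\varphi$ with $\Theta(\varphi) = \psi$. Then set $f^{\cM} := g^{\cM_0}$; this satisfies both conditions of Definition \ref{def: chr def funct}. The two constructions are mutually inverse because a definable function is determined by its distance predicate, as noted in the previous step.
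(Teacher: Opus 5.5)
Your proposal is correct and follows the same route as the paper. The paper defines $\Theta(f)^{\cM_0} := f^{\cM}$ on restrictions, uses Remark \ref{rem: restriction of theories} to see that every model of $\Theta_1(\rT)$ is elementarily equivalent to some $\cM_0$, and then applies the canonical-extension argument of Remark \ref{rem: extending definable functions}. Beyond this, you make explicit three points the paper leaves implicit: the transfer of the witnessing predicate via Lemma \ref{lem: isomorphism of definable predicates}, the consistency of the extension via uniqueness of the zero of $\Theta(\varphi)(\mathbf{x},\cdot)$, and the inverse map.
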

	
	\begin{proof}
		For each chronologically definable $f$, suppose $\varphi$ is a corresponding chronologically definable predicate in Definition \ref{def: chr def funct} (2).  Now $\Theta(f)^{\cM_0}$ can defined as $f^{\cM}$ when $\cM_0$ is the restriction of an $\cL_{\filt}$-structure $\cM$ satisfying $\rT$.  By Remark \ref{rem: restriction of theories}, every $\cN$ satisfying $\Theta_1(\rT)$ is elementarily equivalent to such an $\cM_0$, so, as in Remark \ref{rem: extending definable functions}, $\Theta(f)^{\cN}$ can be canonically defined for any $\cN$ satisfying $\Theta_1(\rT)$.
	\end{proof}
	
	\begin{corollary}
		Let $f = (f_1,\dots,f_{m_2})$ be a tuple of chronologically definable functions in $m_1$ variables, and let $g = (g_1,\dots,g_{m_3})$ be a tuple of chronologically definable functions in $m_2$ variables.  Then $g \circ f$ is a tuple of chronologically definable functions.
		
		Similarly, if $\varphi$ is a chronologically definable predicate in $m_2$ variables, then $\varphi \circ f$ is a chronologically definable predicate.
	\end{corollary}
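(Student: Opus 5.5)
The plan is to transfer the statement to the Morleyized language $\cL_{\chron,0}$ and invoke the standard fact that definable functions and definable predicates are closed under composition there. By the preceding lemma, each $f_i$ corresponds to a definable function $\Theta(f_i)$ relative to $\Theta_1(\rT)$ with $\Theta(f_i)^{\cM_0} = f_i^{\cM}$ for every $\cM \models \rT$, and likewise for each $g_k$. By Lemma \ref{lem: isomorphism of definable predicates}, $\varphi$ corresponds to a definable predicate $\Theta(\varphi)$ in $\cL_{\chron,0}$ relative to $\Theta_1(\rT)$ with $\Theta(\varphi)^{\cM_0} = \varphi^{\cM}$ on $\cM_0$.

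For the ordinary (non-chronological) setting with domains of quantification, closure under composition is standard; see \cite[\S 9]{BYBHU2008}, or \cite{JekelCoveringEntropy} for this domain setup. The key step is that if $\psi(\mathbf{y},\mathbf{w})$ is a definable predicate and $f$ is a tuple of definable functions, then $\psi(f(\mathbf{x}),\mathbf{w})$ is a definable predicate. I would prove this explicitly as follows.

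Fix a radius $r$, and let $r'$ be the bound from Definition \ref{def: chr def funct} (1) with $f(D_r) \subseteq D_{r'}$. Let $\omega$ be a modulus of uniform continuity of $\psi$ on $D_{r'}$ with respect to $d$. Then, uniformly over models,
\[
\psi(f(\mathbf{x}),\mathbf{w}) = \inf_{\mathbf{y} \in D_{r'}^{m_2}} \Bigl[\psi(\mathbf{y},\mathbf{w}) + \omega\bigl(d(f(\mathbf{x}),\mathbf{y})\bigr)\Bigr],
\]
where $d(f(\mathbf{x}),\mathbf{y})$ is computed from the predicates $d(f_i(\mathbf{x}),y_i)$, which are definable by Definition \ref{def: chr def funct} (2). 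To make this valid, $\omega$ should be chosen subadditive (or simply increasing and continuous with $\omega(0)=0$); the infimum is then attained at $\mathbf{y} = f(\mathbf{x})$ and is bounded below by $\psi(f(\mathbf{x}),\mathbf{w})$. The right-hand side is an infimum over a domain applied to a continuous combination of definable predicates, so it is a definable predicate. Since definable predicates are only specified on each domain $D_r$, these local expressions must be consistent across $r$; they are, because all of them agree with the pointwise value.

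Applying this with $\psi = \varphi$ gives the second claim. Applying it with $\psi(\mathbf{y},w) = d(g_k(\mathbf{y}),w)$ shows that $d(g_k(f(\mathbf{x})),w)$ is definable. The range bound for $g \circ f$ follows by composing the radius bounds, so $g \circ f$ is a definable function.

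Finally, I would transport the result back through $\Theta$ using the bijection of the preceding lemma and Lemma \ref{lem: isomorphism of definable predicates}. This works since $\Theta(g)^{\cM_0}\circ\Theta(f)^{\cM_0} = (g\circ f)^{\cM}$ pointwise.

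The only mildly delicate point is the uniformity of the modulus of continuity across all models of $\Theta_1(\rT)$. This holds for definable predicates because they are uniform limits of formulas, each of which has a model-independent modulus on each product of domains. Everything else is bookkeeping, so I expect no real obstacle.
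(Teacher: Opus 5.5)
Your proposal is correct and is essentially the paper's argument. The paper also uses $\Theta$ to rewrite the statement in $\cL_{\chron,0}$ relative to $\Theta_1(\rT)$, and then appeals to the standard closure of definable functions and predicates under composition, citing \cite{JekelCoveringEntropy}. Your explicit formula $\inf_{\mathbf{y}}[\psi(\mathbf{y},\mathbf{w}) + \omega(d(f(\mathbf{x}),\mathbf{y}))]$ just spells out that cited fact, and it is valid as you state it; subadditivity of $\omega$ is not needed.
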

	
	\begin{proof}
		This follows by rewriting the statement in terms of $\Theta_1(\rT)$ and $\cL_{\chron,0}$ and then applying the known results about definable functions and definable predicates.  See e.g.\ \cite[Definition 3.16, Proposition 3.17, Observation 3.21]{JekelCoveringEntropy}.
	\end{proof}
	
	We conclude the section by showing that the space of chronologically definable predicates is separable.
	
	\begin{lemma}[Separability of chronologically definable predicates] \label{lem: separability}
		Fix $m \in \N \cup \{\infty\}$.  The space of chronological definable predicates with repsect to $\rT_{\stat}$ is separable with respect to the family of seminorms $\norm{\cdot}_{0,\mathbf{r},\rT_{\stat}}$ for $\mathbf{r} \in (0,\infty)^m$.  In particular, $\mathbb{S}_{\chron,\mathbf{r}}(\rT_{\stat})$ is metrizable in the weak-$*$ topology for each $\mathbf{r}$.
	\end{lemma}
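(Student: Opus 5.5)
We will exhibit an explicit countable family of chronological formulas that is dense in every seminorm $\norm{\cdot}_{0,\mathbf{r},\rT_{\stat}}$. The plan is to build the family by mirroring the recursive generation of chronological formulas, replacing each uncountable parameter by a countable dense one. Then we show by induction on complexity that every chronological formula is approximated, uniformly on each $D_{0,\mathbf{r}}$ over all models of $\rT_{\stat}$, by members of the family.

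\textbf{The countable family $\mathcal{C}$.} Let $\mathcal{C}$ be the smallest set of formulas with the following properties.
\begin{enumerate}[(a)]
\item It contains the basic formulas whose terms are $*$-polynomials with coefficients in $\Q + i\Q$, in finitely many of the variables $x_j$ and finitely many increments $z_{s,t,j}$ with $s,t \in \Q$.
\item It is closed under connectives given by polynomials with rational coefficients (or by $\max$, $\min$ and rational affine maps).
\item It is closed under quantifiers $\sup_{y \in D_{s,r}}$ and $\inf_{y \in D_{s,r}}$ with $s, r \in \Q$, used in chronological order.
\end{enumerate}
This set is countable.

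\textbf{Approximation steps.} The reduction of each ingredient is handled separately.
\begin{enumerate}[(1)]
\item \emph{Coefficients and connectives.} Scalar coefficients are handled by the uniform continuity of terms on operator-norm balls. Continuous connectives are handled by Stone--Weierstrass: on each ball every formula takes values in a compact interval, uniformly over models, so $f$ can be replaced by a rational polynomial up to uniform error. The quantifier radius $r$ is approximated from below by rationals via the truncation map of Lemma~\ref{lem: projection onto R ball}, which moves $y$ by little in $\norm{\cdot}_2$; uniform continuity of formulas in $L^2$ on balls then closes the gap.
\item \emph{Rational times.} This is where $\rT_{\stat}$ enters.
 \begin{itemize}
 \item For increments $z_{s,t,j}$ with irrational $s,t$, the axioms of $\rT_{\proc}$ together with the norm bound $\norm{z_{s,t,j}} \le 6(t-s)^{1/2}$ let us replace $z_{s,t,j}$ by $z_{s',t',j}$ with $s',t'$ rational and nearby. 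Additivity gives $z_{s,t} - z_{s',t'} = \pm z_{\cdot,\cdot} \pm z_{\cdot,\cdot}$ over short intervals, and this difference is small in operator norm, hence in $L^2$.
 \item For a quantifier over $D_{s,r}$ with $s$ irrational, we want to replace $s$ by a rational $s'$ slightly larger, while keeping chronological order with respect to the later quantifiers and increments.
 \end{itemize}
\end{enumerate}

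\textbf{Main obstacle: quantifier times.} Moving a quantifier time changes the filtration level, and there is no metric continuity of $M_s$ in $s$. I would first use Lemma~\ref{lem: chronological formulas alternate} to rewrite any chronological formula in the shift form $\mathcal{S}$. In that form the only real parameters are the shift amounts $t$ in operation (4) and the increments $z_{0,t,j}$, with quantifiers always at level $0$ inside a shift.

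By the lemma following Definition~\ref{def: stationary}, stationarity says that $S_{t_0}\varphi$ and $\varphi$ agree on $D_{0,\mathbf{r}}$, and more generally (equation \eqref{eq: stationarity for later times}) that shifts can be transported to later times. Thus the outer shift $S_t$ in operation (4) can be changed to $S_{t'}$ for any $t'$ without changing the value, provided the substituted increment is $z_{0,t,j}$. That increment is in turn approximated by $z_{0,t',j}$ plus a small increment, as in step (2).

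So the induction runs over the generation of $\mathcal{S}$, not over the chronological definition. At the shift step we:
\begin{enumerate}[(i)]
\item approximate the inner $\varphi$ by an element of $\mathcal{C}$ using the induction hypothesis, applied on the appropriate ball at time $0$ of the shifted structure $S_t\cM$, which is again a model of $\rT_{\stat}$;
\item apply Lemma~\ref{lem: shift on formulas and structures} to convert this into an estimate for the shifted formula;
\item replace $t$ by a rational, using stationarity and continuity in the increment argument.
\end{enumerate}
Checking that $S_t\cM \models \rT_{\stat}$, so that the uniform estimates are legitimate, is the delicate point. I expect it to follow from stationarity together with the identity $S_{t_0}S_{t_1} = S_{t_0+t_1}$.

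\textbf{Metrizability.} Once $\mathcal{C}$ is dense, the weak-$*$ topology on the compact space $\mathbb{S}_{\chron,\mathbf{r}}(\rT_{\stat})$ is generated by countably many continuous functions. It is therefore metrizable.
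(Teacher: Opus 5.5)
Your approach is essentially the paper's. You take a countable family generated as in Lemma~\ref{lem: chronological formulas alternate}, with scalars in $\Q[i]$, a countable dense set of connectives, rational quantifier radii and rational shift times. You run the induction over the generation of $\mathcal{S}$ and use stationarity to move the shift time to a rational. The point you flag as delicate, $S_t\cM \models \rT_{\stat}$, is immediate: combine \eqref{eq: stationarity for later times} with Lemma~\ref{lem: shift on formulas and structures} and $S_tS_{t_0}=S_{t+t_0}$.

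One step needs correcting. The identity \eqref{eq: stationarity for later times} lets you exchange $S_t$ and $S_{t'}$ only on inputs lying in $D_{\min(t,t'),\mathbf{r}}$. So changing $S_t$ to $S_{t'}$ while the increment $z_{0,t,j}$ is still plugged in is justified only when $t'\geq t$, not ``for any $t'$''. For $t'<t$ it can genuinely fail. Take $\psi(y)=\sup_{w\in D_{0,1}}\re\tr(wy)$ in the free Brownian filtration of Example~\ref{ex: filtration of free Brownian motion}. Then $(S_{t'}\psi)(z_{0,t,1})=\norm{E_{M_{t'}}[z_{0,t,1}]}_1=\norm{z_{0,t',1}}_1$, whereas $(S_t\psi)(z_{0,t,1})=\norm{z_{0,t,1}}_1$, and these differ. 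You have two ways to repair the step:
\begin{enumerate}[(1)]
\item Choose the rational $t'$ slightly larger than $t$. Then $z_{0,t,j}\in D_{t',3\sqrt{t'}}$ and $\norm{z_{0,t',j}-z_{0,t,j}}_2\le 3\sqrt{t'-t}$, and you use uniform continuity of the approximant on a slightly larger ball.
\item Follow the paper's order. Take a rational $t'\le t$, first replace $z_{0,t,j}$ by $z_{0,t',j}$ using uniform continuity, and only then apply \eqref{eq: stationarity for later times} to replace $S_t$ by $S_{t'}$.
\end{enumerate}
With this fix the rest of your argument, including the metrizability conclusion, goes through.
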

	
	\begin{proof}
		Consider the countable subset $\mathcal{S}_0$ of chronological formulas generated as in Lemma \ref{lem: chronological formulas alternate} with the following modifications:
		\begin{enumerate}[(1)]
			\item The terms in the basic formulas are defined using scalar multiplication by $\lambda \in \Q[i]$ rather than $\C$.
			\item The connectives are chosen from a fixed countable dense subset of $C(\R^k;\R)$ for each $k$ (for instance, polynomials with rational coefficients or certain sets of piecewise affine functions).
			\item The quantifiers $\sup_{y \in D_{0,r}}$ and $\inf_{y \in D_{0,r}}$ are only applied for rational $r > 0$.
			\item We construct formulas $(S_t \varphi)(\mathbf{x},z_{0,t,j_1},\dots,z_{0,t,j_k})$ only for rational values of $t \geq 0$.
		\end{enumerate}
		To prove that every chronological formula can be approximated by elements of $\mathcal{S}_0$ with respect to $\norm{\cdot}_{0,\mathbf{r}}$, we proceed by induction using the method of generating chronological formulas given in Lemma \ref{lem: chronological formulas alternate}.  For the base case, it is an exercise to show that a basic formula can always be approximated in $\norm{\cdot}_{0,\mathbf{r}}$ by basic formulas where the scalar multiplications are taken only from $\Q[i]$.  Similarly, the case of connectives is immediate since we chose a dense set of connectives in $C(\R^k;\R)$.
		
		Now suppose that $\varphi(\mathbf{x}) = \sup_{y \in D_{0,r}} \psi(\mathbf{x},y)$ where $\psi$ can be approximated by elements of $\mathcal{S}_0$.  Let $\psi'(\mathbf{x},y) \in \mathcal{S}_0$ approximate $\psi$ within $\varepsilon/2$ in $\norm{\cdot}_{0,(\mathbf{r},r),\rT_{\stat}}$.  Since $\psi'$ is uniformly continuous on $D_{0,(\mathbf{r},r)}$, there exists $\delta > 0$ such that for $\cM \models \rT_{\stat}$ and $\mathbf{x} \in D_{0,\mathbf{r}}$ and $y, y' \in D_{0,r}$, we have
		\[
		\norm{y - y'}_2 < \delta \implies |(\psi')^{\cM}(\mathbf{x},y) - (\psi')^{\cM}(\mathbf{x},y')| \leq \frac{\varepsilon}{2}.
		\]
		Let $r'$ be a rational number with $|r - r'| < \delta$.  Since every element of the $r$-ball is within distance $\delta$ of some element of the $r'$-ball and vice versa,
		\[
		\left| \sup_{y \in D_{0,r'}} (\psi')^{\cM}(\mathbf{x},y) - \sup_{y \in D_{0,r}} (\psi')^{\cM}(\mathbf{x},y) \right| \leq \frac{\varepsilon}{2}.
		\]
		Then setting $\varphi'(\mathbf{x}) = \sup_{y \in D_{0,r'}} \psi'(\mathbf{x},y) \in \mathcal{S}_0$, we have $\norm{\varphi - \varphi'}_{0,\mathbf{r},\rT_{\stat}} \leq \varepsilon$.
		
		Finally, suppose that $t > 0$ and $j_1$, \dots, $j_k \in \N$ and $\varphi(\mathbf{x}) = (S_t \psi)(\mathbf{x},z_{0,t,j_1}, \dots, z_{0,t,j_k})$.  Let $\mathbf{r}'$ be the tuple with entries all equal to $3 \sqrt{t}$.  Choose $\psi'$ that approximates $\psi$ within $\varepsilon/2$ with respect to $\norm{\cdot}_{0,(\mathbf{r},\mathbf{r}'),\rT_{\stat}}$.  By uniform continuity, fix $\delta > 0$ such that for $\cM \models \rT_{\stat}$ and $\mathbf{x} \in D_{0,\mathbf{r}}$ and $\mathbf{y}, \mathbf{y}' \in D_{0,\mathbf{r}'}$, we have
		\[
		\max_j \norm{y_j - y_j'}_2 < \delta \implies |(\psi')^{\cM}(\mathbf{x},\mathbf{y}) - (\psi')^{\cM}(\mathbf{x},\mathbf{y}')| \leq \frac{\varepsilon}{2}.
		\]
		Let $t' \in [0,t]$ be rational such that $3\sqrt{t - t'} < \delta$, and hence $\norm{z_{0,t,j}^{\cM} - z_{0,t',j}^{\cM}}_2 < \delta$ for $j \in \N$, which implies that
		\[
		|(S_t \psi)^{\cM}(\mathbf{x},z_{0,t,j_1}^{\cM}, \dots, z_{0,t,j_k}^{\cM}) - (S_t \psi)^{\cM}(\mathbf{x},z_{0,t',j_1}^{\cM}, \dots, z_{0,t',j_k}^{\cM})| \leq \frac{\varepsilon}{2}.
		\]
		Moreover, using condition \eqref{eq: stationarity for later times} from stationarity, we have
		\[
		(S_t \psi)^{\cM}(\mathbf{x},z_{0,t',j_1}^{\cM}, \dots, z_{0,t',j_k}^{\cM}) = (S_{t'} \psi)^{\cM}(\mathbf{x},z_{0,t',j_1}^{\cM}, \dots, z_{0,t',j_k}^{\cM}).
		\]
		Therefore, letting $\varphi'(\mathbf{x}) = (S_{t'} \psi')(\mathbf{x},z_{0,t',j_1}, \dots, z_{0,t',j_k})$, we have that $\norm{\varphi' - \varphi}_{0,\mathbf{r},\rT_{\stat}} \leq \varepsilon$.
	\end{proof}

	\section{Free Brownian motion in the matrix ultraproduct} \label{sec: matrix ultraproduct}
	
	In this section, we begin to relate chronological formulas with random matrices.  In particular, we want to obtain a filtration of von Neumann algebras and a free Brownian motion as an ultraproduct of the Brownian motions in $n \times n$ matrices (compare \cite{Dabrowski2017Laplace}).
	
	We consider a classical filtration of $\sigma$-algebras $\mathcal{G}_t$ on a probability space $(\Omega,\mathcal{G},\mathbb{P})$.   For concreteness, one can assume that $\mathcal{G}_0$ is a $\sigma$-algebra associated to an atomless (or diffuse) probability space measure space and $\mathcal{G}_t$ is the $\sigma$-algebra generated by $\mathcal{G}_0$ and an independent Hermitian Brownian motion $(Z_{0,t,j}^{(n)})$ for $t \geq 0$.  Let $Z_{s,t,j}^{(n)} = Z_{0,t,j}^{(n)} - Z_{0,s,j}^{(n)}$.  We then study the random matrix ultraproduct given by
	\begin{align}
		M &:= \prod_{n \to \cU} (L^\infty(\Omega,\mathcal{G},\mathbb{P}) \otimes \mathbb{M}_n), \label{eq: random matrix ultraproduct} \\
		M_t &:= \prod_{n \to \cU} (L^\infty(\Omega,\mathcal{G}_t,\mathbb{P}) \otimes \mathbb{M}_n). \label{eq: random matrix ultraproduct 2}
	\end{align}

	Then $(L^\infty(\Omega,\mathcal{G}_t,\mathbb{P}) \otimes \mathbb{M}_n)_{t \in [0,\infty]})$ is a filtration of von Neumann algebras with the trace given by $\mathbb{E} \otimes \tr_n$.  However, as usual in the study of asymptotic freeness, we want to study almost sure convergence of traces rather than merely convergence of the expected trace.  We thus want to be able to interpret formulas as \emph{pointwise} functions on $\mathbb{M}_n$.
	
	While \cite{Jekel2025InfoGeom} simply evaluated $\cL_{\tr}$-formulas on $\mathbb{M}_n$ pointwise, this is not an option in our setting because our filtration is not defined on $\mathbb{M}_n$ itself, but rather inherently uses the classical randomness.  Instead, we use the idea of center-valued evaluation of formulas from \cite{GaoJekel2025}, where formulas are interpreted as functions with inputs in the von Neumann algebra and outputs in the center, rather than the real numbers.  In particular for $L^\infty(\Omega,\mathcal{G},\mathbb{P}) \otimes \mathbb{M}_n$, this produces a function in $L^\infty(\Omega,\mathcal{G},\mathbb{P})$.
	
	We study the behavior of these center-valued formulas in the 
	We take a quotient $\pi: \mathcal{M} \to \mathcal{Q}$ by a maximal ideal, which produces a $\mathrm{II}_1$-factor $\mathcal{Q}$ elementarily equivalent to the matrix ultraproduct $\prod_{n \to \cU} \mathbb{M}_n$.  Following \cite[\S 6]{GaoJekel2025}, this can be viewed as a kind of ``derandomization''; rather than selecting a point $\omega_n$ in $\Omega$ for each $n$, we perform the ultraproduct first and then take the quotient that removes the center.  The quotients $\mathcal{Q}_t = \pi(\mathcal{M}_t)$ provide a filtration of $\mathrm{II}_1$ factors with an associated free Brownian motion $z_{s,t,j}$ obtained as the ultraproduct of the matrix Brownian motions.
	
	A key point in making this construction work is to show that the center-valued evaluation of a chronological formula $\varphi$ in the random matrix ultraproduct can be approximated asymptotically by a pointwise function $\Lambda_{\varphi}^{(n)}: (\mathbb{M}_n)^m \to \mathbb{M}_n$ (see Proposition \ref{prop: approx by pointwise function}).  The functions $\Lambda_{\varphi}^{(n)}$ will also be crucial for our definition of entropy for chronological types in \S \ref{subsec: entropy definition}, and allows us to connect classical stochastic optimization problems with their noncommutative counterparts in \S \ref{subsec: stochastic variational}.
	
	\subsection{Operator-valued interpretation of formulas} \label{subsec: center valued formulas}
	
	The following definition is the analog of \cite[Definition 3.11]{GaoJekel2025} but for $\cL_{\filt}$ rather than $\cL_{\tr}$.  For (2) below, we recall that multivariable continuous function calculus is well-defined on any \emph{commutative} $\mathrm{C}^*$-algebra.  For (3), we recall that every bounded family of self-adjoint elements in a \emph{commutative} von Neumann algebra has a supremum.
	
	\begin{definition}[$N$-valued interpretation of formulas]
		Let $\cM = (M_t)_{t \in [0,\infty]}$ be a filtration of von Neumann algebras and let $\mathbf{z} = (z_{s,t,j})_{0\leq s \leq t < \infty, j\in \N}$ be elements $z_{s,t,j} \in D_{0,6 \sqrt{t-s}}^{\cM}$.  Let $N \subseteq Z(M_\infty)$ and let $E: M_\infty \to N$ be a normal, faithful, tracial conditional expectation.  For formulas $\varphi$ in the language of tracial von Neumann algebras in $n$-variables, we define the \textit{$N$-valued interpretation} $\varphi_E^{\cM}(x_1,\dots,x_m)$ as follows, by recursion on the complexity of $\varphi$.
		\begin{enumerate}[(1)]
			\item If $\varphi = \re \tr(p(\mathbf{x},z_{s_1,t_1,j_1},\dots,z_{s_n,t_n,j_n})$ for some $*$-polynomial $p$, then
			\[
			\varphi^{\cM}_E(\mathbf{x}) = \re(E(p(\mathbf{x},z_{s_1,t_1,j_1},\dots,z_{s_n,t_n,j_n}))
			\]
			for $\mathbf{x} \in M_\infty^m$.  We make the same definition for $\im \tr_n$ instead of $\re \tr_n$.  For a formula $\varphi(\mathbf{x}) = d(p(\mathbf{x},z_{s_1,t_1,j_1},\dots,z_{s_n,t_n,j_n}),q(\mathbf{x},z_{s_1,t_1,j_1},\dots,z_{s_n,t_n,j_n}))$, we set
			\[
			\varphi^{\cM}_E(\mathbf{x}) = \sqrt{E[| (p-q)(\mathbf{x},z_{s_1,t_1,j_1},\dots,z_{s_n,t_n,j_n})|^2]}.
			\]
			\item If $\varphi = f(\varphi_1, \cdots, \varphi_k)$ where $f: \mathbb{R}^k \to \mathbb{R}$ is continuous, then $\varphi^{\cM}_E(\mathbf{x}) = f((\varphi_1)^{\cM}_E(\mathbf{x}), \cdots, (\varphi_n)^{\cM}_E(\mathbf{x}))$, where $f$ is applied in the sense of continuous functional calculus.
			\item If $\varphi = \sup_{y \in D_{t,r}} \phi(\mathbf{x}, y)$, then
			\[
			\varphi^{\cM}_E(\mathbf{x}) = \sup_{y \in D_{t,r}^{\cM}} \psi^{\cM}_E(\mathbf{x}, y)
			\]
			The $\inf$ case is defined similarly.
		\end{enumerate}
	\end{definition}
	
	First, in order to take the suprema and infima in the above definition, we note that each formula will be bounded on each operator norm ball.  The proof proceeds the same way as in \cite{GaoJekel2025}, so we omit the details.
	
	\begin{lemma}[{See \cite[Lemma 3.12]{GaoJekel2025}}]
		Let $\cM = (M_t)_{t \in [0,\infty]}$ be a filtration of von Neumann algebras and let $\mathbf{z} = (z_{s,t,j})_{0\leq s \leq t < \infty, j\in \N}$ be elements $z_{s,t,j} \in D_{0,6 \sqrt{t-s}}^{\cM}$.  Let $N \subseteq Z(M_\infty)$ and let $E: M_\infty \to N$ be a normal, faithful, tracial conditional expectation.  Let $\varphi$ be an $\cL_{\proc}$-formula with $m$ free variables. Then there exists $C > 0$ s.t. $\norm{\varphi^{\cM}_E(\mathbf{x})} \leq C$ for all $\mathbf{x} \in (D_{\infty,r}^{\cM})^m$. Furthermore, $C$ can be chosen independent of $\cM$, $N$, and $E$.
	\end{lemma}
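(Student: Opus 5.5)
We argue by induction on the complexity of $\varphi$, following the recursive definition of the $N$-valued interpretation. The inductive claim is slightly stronger than the statement: for every formula $\varphi$ and every tuple of radii $\mathbf{r}$ there is a constant $C_{\varphi,\mathbf{r}}$, depending only on $\varphi$ and $\mathbf{r}$, with two properties. First, $\varphi^{\cM}_E(\mathbf{x})$ is a well-defined self-adjoint element of $N$. Second, $\norm{\varphi^{\cM}_E(\mathbf{x})} \leq C_{\varphi,\mathbf{r}}$ for all $\mathbf{x} \in D_{\infty,\mathbf{r}}^{\cM}$, for every filtration $\cM$, every central subalgebra $N$ and every expectation $E$. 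We need the radii to vary because the quantifier step enlarges the tuple of inputs by a variable from some $D_{t,r}$.

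For basic formulas, each constant $z_{s,t,j}$ lies in $D_{0,6\sqrt{t-s}}^{\cM}$ by hypothesis, and each input $x_i$ lies in a ball of radius $r_i$. The triangle inequality and submultiplicativity of the operator norm then bound $\norm{p(\mathbf{x},\mathbf{z})}$ by a constant $K_p$ depending only on $p$, $\mathbf{r}$ and the finitely many times $(s_i,t_i)$ that occur. Since $E$ is a conditional expectation, it is positive, unital and contractive in operator norm. Hence $\norm{\re E(p)} \leq K_p$. Likewise $0 \leq E[|p-q|^2] \leq \norm{p-q}^2 \leq (K_p+K_q)^2$, so its square root, taken in the commutative algebra $N$, is bounded by $K_p+K_q$.

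For connectives $f(\varphi_1,\dots,\varphi_k)$, the elements $(\varphi_i)^{\cM}_E(\mathbf{x})$ are commuting self-adjoint elements of $N$. Their joint spectrum lies in $\prod_i [-C_i,C_i]$, so the norm of the functional calculus is at most $\sup_{\prod_i [-C_i,C_i]}|f|$, which is finite because $f$ is continuous. For $\varphi = \sup_{y\in D_{t,r}}\psi(\mathbf{x},y)$, the induction hypothesis applied to $\psi$ with radii $(\mathbf{r},r)$ shows that the family $\{\psi^{\cM}_E(\mathbf{x},y)\}_{y\in D_{t,r}^{\cM}}$ is bounded in $N_{\sa}$ by $C_{\psi,(\mathbf{r},r)}$. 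It therefore has a supremum in the commutative von Neumann algebra $N$, and this supremum lies between $-C$ and $C$. The infimum is handled symmetrically. Uniformity in $\cM$, $N$ and $E$ holds at every step because no constant was ever taken from the structure.

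The one point needing care is the meaning of the supremum in the quantifier step, namely the order-theoretic supremum in $N_{\sa}$. Order-boundedness by $\pm C\cdot 1$ gives the operator-norm bound directly. I expect no real obstacle; this is the same routine induction as in \cite[Lemma 3.12]{GaoJekel2025}.
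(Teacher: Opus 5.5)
Your proposal is correct and is essentially the argument the paper intends: the paper omits the proof and defers to \cite[Lemma 3.12]{GaoJekel2025}, which is the same induction on the complexity of $\varphi$. You bound basic formulas using positivity, unitality and contractivity of $E$, connectives by $\sup_{\prod_i[-C_i,C_i]}|f|$ via the functional calculus, and quantifiers by order-bounded suprema in the commutative von Neumann algebra $N$. Strengthening the claim to a tuple of radii $\mathbf{r}$, so that the quantified variable $y \in D_{t,r}^{\cM}\subseteq D_{\infty,r}^{\cM}$ can be absorbed in the inductive step, is the right bookkeeping.
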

	
	We now want to adapt the results of \cite[\S 3]{GaoJekel2025} for the setting of $\cL_{\chron}$ rather than $\cL_{\tr}$.  Most of the proofs work in the exact same way, but they require an important hypothesis.
	
	\begin{definition}[Chronological invariance property]
		Let $(M_t)_{t \in [0,\infty]}$ be a filtration of von Neumann algebras and let $\mathbf{z} = (z_{s,t,j})_{0\leq s \leq t < \infty, j\in \N}$ be elements $z_{s,t,j} \in D_{0,6 \sqrt{t-s}}^{\cM}$.  Let $N \subseteq Z(M_\infty)$ and let $E: \cM \to \cN$ be a normal, faithful, tracial conditional expectation.  We say that $\cM = ((M_t)_{t \in [0,\infty]}, \mathbf{z},N,E)$ has the \emph{chronological invariance property} if whenever $\varphi \in \mathcal{F}_{\chron}(t)$ is a formula with $m$ free variables, we have
		\[
		x_1, \dots, x_m \in M_t \implies \varphi_E^{\cM}(x_1,\dots,x_m) \in M_t \cap N.
		\]
	\end{definition}
	
	The chronological invariance property is somewhat similar to various notions of adaptedness in classical and noncommutative probability (compare for instance \cite[Definition 6.2(iv)-(v)]{JKN2026martingale}).  We will show in the next section that the random matrix ultraproducts \eqref{eq: random matrix ultraproduct} satisfy the chronological invariance property.  It is natural to ask for general criteria to test the chronological invariance property, but we leave this question for future research.
	
	The key point in \cite[\S 3]{GaoJekel2025} is the behavior of the $N$-valued interpretation of formulas with respect to partitioning by projection-valued measures.  Recall that a \emph{projection-valued measure (PVM)} over $\cM$ is a family $(e_j)_{j \in J}$ of projections in $\cM$ such that $\sum_{j \in J} e_j = 1$ in the strong operator topology.  (Of course, projection-valued measures in the sense of the spectral theorem are generally defined over a measure space, rather than a discrete index set.  The terminology here is borrowed from the quantum setting, but can be regarded as a special case of projection-valued measures in the functional-analytic sense.)  In adapting \cite[Lemma 3.14]{GaoJekel2025}, we need to assume that the PVM takes values in $\cN \cap \cM_t$ when the formula is in $\mathcal{F}_{\chron}(t)$, which is what will lead us to the chronological invariance property as a hypothesis in the later statements (see Proposition \ref{prop: maximizer}).
	
	\begin{lemma}[{Compare \cite[Lemma 3.14]{GaoJekel2025}}]\label{lem: formula partition}
		Let $(M_t)_{t \in [0,\infty]}$ be a filtration of von Neumann algebras and let $\mathbf{z} = (z_{s,t,j})_{0\leq s \leq t < \infty, j\in \N}$ be elements $z_{s,t,j} \in D_{0,6 \sqrt{t-s}}^{\cM}$.  Let $N \subseteq Z(M_\infty)$ and let $E: M_\infty \to N$ be a normal, faithful, tracial conditional expectation.  Let $\varphi \in \cL_{\chron}(t)$ be a formula with $m$ free variables $\mathbf{x} = (x_1, \cdots, x_m)$. Let $\{e_j\}_{j \in J}$ be a PVM over $M_t \cap N$ and $x_{i,j} \in M_t$ for $i = 1, \dots, n$ and $j \in J$ with $\sup_j \norm{x_{i,j}} < \infty$ for each $i$. Then
		\[
		\sum_{j \in J} e_j\varphi^{\cM}_E(x_{1,j},\dots,x_{n,j}) = \varphi^{\cM}_E\left( \sum_{j \in J} e_j x_{1,j},\dots, \sum_{j \in J} e_j x_{n,j} \right).
		\]
	\end{lemma}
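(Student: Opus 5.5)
We argue by induction on the complexity of $\varphi \in \mathcal{F}_{\chron}(t)$, following Definition \ref{def: chronological formulas}, and prove the identity simultaneously for all $t$, all PVMs over $M_t \cap N$, and all bounded families $(x_{i,j})$ in $M_t$. Throughout we write $\tilde{x}_i = \sum_j e_j x_{i,j}$. This sum converges in SOT because the $e_j$ are orthogonal central projections and $\sup_j \norm{x_{i,j}} < \infty$, and it lies in $M_t$ because $e_j \in M_t$. Two facts will be used repeatedly: $e_j \tilde{x}_i = e_j x_{i,j}$, and $E$ is an $N$-bimodule map, so $e_j E(a) = E(e_j a)$ for every $j$.

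For the base case, take $\varphi = \re\tr(p(\mathbf{x},\mathbf{z}))$. The $e_j$ are central, so $e_j p(\tilde{\mathbf{x}},\mathbf{z}) = e_j p(\mathbf{x}_{\cdot,j},\mathbf{z})$; this is where we need the $e_j$ to commute with the constants $z_{s,t,k}$, which holds since $e_j \in Z(M_\infty)$. Applying $E$ and the bimodule property, then summing over $j$, gives the claim. The $\re$ and $\im$ parts pass through because the $e_j$ are self-adjoint. The distance predicate is handled the same way: $E[|q(\tilde{\mathbf{x}})|^2] = \sum_j e_j E[|q(\mathbf{x}_{\cdot,j})|^2]$, and the square root commutes with the decomposition by functional calculus in the commutative algebra $N$.

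For connectives, continuous functional calculus in the commutative algebra $N$ respects direct sums over central projections. Concretely, $f(\sum_j e_j a_{1,j},\dots,\sum_j e_j a_{k,j}) = \sum_j e_j f(a_{1,j},\dots,a_{k,j})$ for uniformly bounded self-adjoint $a_{\ell,j} \in N$, which follows by first checking polynomials and then using uniform approximation on a compact box.

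The main step is the quantifier case $\varphi = \sup_{y \in D_{s,r}} \psi(\mathbf{x},y)$ with $\psi \in \mathcal{F}_{\chron}(t')$ and $s \leq t'$, so that $\varphi \in \mathcal{F}_{\chron}(s)$ and the PVM lies in $M_s \cap N \subseteq M_{t'} \cap N$.

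For the inequality $\le$, fix $y_j \in D_{s,r}$ for each $j$ and set $\tilde{y} = \sum_j e_j y_j$. Then $\tilde{y} \in D_{s,r}$, because the $e_j$ are orthogonal central projections lying in $M_s$. By the inductive hypothesis,
\[
\sum_j e_j \psi_E(\mathbf{x}_{\cdot,j},y_j) = \psi_E(\tilde{\mathbf{x}},\tilde{y}) \le \varphi_E(\tilde{\mathbf{x}}).
\]
Taking the supremum over each $y_j$ separately, and using that suprema in the commutative von Neumann algebra $N$ localize under central projections (that is, $e_j \sup_\alpha a_\alpha = \sup_\alpha e_j a_\alpha$), gives $\sum_j e_j \varphi_E(\mathbf{x}_{\cdot,j}) \le \varphi_E(\tilde{\mathbf{x}})$.

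For the inequality $\ge$, take any $y \in D_{s,r}$. Then $e_j y \in D_{s,r}$, and $e_j \tilde{\mathbf{x}} = e_j \mathbf{x}_{\cdot,j}$. Applying the inductive hypothesis to the two-element partition $\{e_j, 1-e_j\}$ shows that
\[
e_j \psi_E(\tilde{\mathbf{x}},y) = e_j \psi_E(\mathbf{x}_{\cdot,j},y) \le e_j \varphi_E(\mathbf{x}_{\cdot,j}).
\]
Summing over $j$ and taking the supremum over $y$ gives the reverse inequality. The $\inf$ case is symmetric.

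The only delicate point is the localization of suprema in $N$. Here I would follow \cite[Lemma 3.14]{GaoJekel2025}. Note that chronological order is exactly what guarantees that the PVM lies in $M_s$, so that the glued element $\tilde{y}$ stays in the quantified domain $D_{s,r}$. A PVM in $M_t$ with $t > s$ would break this step, and that is why the hypothesis $\varphi \in \mathcal{F}_{\chron}(t)$ is needed.
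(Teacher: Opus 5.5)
Your proof is correct, and its skeleton is the same as the paper's: induction on the construction of $\varphi$, with the base case and connectives handled by centrality of the $e_j$, the bimodule property of $E$, and functional calculus in $N$. The difference lies in the $\sup$ step.

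The paper proves the termwise identity $e_j\varphi_E(\mathbf{x}_{\cdot,j}) = e_j\varphi_E(\tilde{\mathbf{x}})$ directly. Applying the inductive hypothesis to families supported at the single index $j$, it shows that both sides equal $\sup_{y\in D_{s,r}} e_j\psi_E(e_j\tilde{\mathbf{x}}, e_j y)$. This is the same localization idea as your ``$\ge$'' argument.

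Your ``$\le$'' argument instead glues witnesses into $\tilde y=\sum_j e_j y_j$. This is valid but redundant. Your identity $e_j\psi_E(\tilde{\mathbf{x}},y)=e_j\psi_E(\mathbf{x}_{\cdot,j},y)$, together with $e_j\sup_y(\cdot)=\sup_y e_j(\cdot)$ in $N$, already yields both inequalities at once.

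For the same reason, your closing remark overstates the role of chronology in this lemma. The localization argument never needs a glued element to lie in $D_{s,r}$; run on its own, it works for any PVM in $N$. The step where the PVM genuinely must lie in $M_s$ is the gluing of near-maximizers in Proposition \ref{prop: maximizer}, which is exactly where the chronological invariance property enters.
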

	
	\begin{proof}
		The proof proceeds by induction on the complexity of the formula, using Definition \ref{def: chronological formulas}.  The base case of basic formulas and the inductive case of continuous connectives are proved in the exact same way as \cite[Lemma 3.14]{GaoJekel2025}, so we omit the details.
		
		Now let $s \leq t$ and suppose that $\varphi(\mathbf{x}) = \sup_{y \in D_{s,r}} \psi(\mathbf{x},y)$ where $\psi \in \cL_{\chron}(t)$ satisfies the induction hypothesis.  Let $(e_j)_{j \in J}$ be a PVM over $M_s \cap N$.  Let $x_{i,j} \in M_s$ with $\sup_j \norm{x_{i,j}} < \infty$, and let $x_i = \sum_j e_j x_{i,j}$.  Because we assumed that $e_j \in N \cap M_s$, we know that $x_i \in \cM_s$.  As noted in \cite{GaoJekel2025}, if $(z_i)_{i \in I}$ is a family of self-adjoint elements of $\cN$ and $e$ is a projection in $\cN$, then $e \sup_i z_i = \sup_i e z_i$.  Therefore,
		\begin{align*}
			e_j \varphi_E^{\cM}(x_{1,j},\dots,x_{m,j}) &= e_j \sup_{y \in D_{s,r}^{\cM}} \psi_E^{\cM}(x_{1,j},\dots,x_{m,j},y) \\
			&= \sup_{y \in D_{s,r}^{\cM}} e_j \psi_E^{\cM}(x_{1,j},\dots,x_{n,j},y) \\
			&= \sup_{y \in D_{s,r}^{\cM}} e_j \psi_E^{\cM}(e_j x_{1,j},\dots,e_jx_{m,j}, e_jy) \\
			&= \sup_{y \in D_{s,r}^{\cM}} e_j \psi_E^{\cM}(e_j x_1,\dots,e_j x_m, e_j y).
		\end{align*}
		Here the third equality follows from applying the inductive hypothesis to $\psi$ with the families of operators $(\mathbbm{1}_{j=j'} x_{i,j'})_{i,j}$ and $(\mathbbm{1}_{j=j'} y)_{j \in J}$ and the family of projections $(e_j)_{j \in J}$; importantly, the inductive hypothesis can be applied here because $e_j \in M_s \cap N \subseteq M_t \cap N$.  By the symmetrical manipulations,
		\[
		e_j \varphi_E^{\cM}(x_1,\dots,x_m) = \sup_{y \in D_{s,r}^{\cM}} e_j \psi_E^{\cM}(e_j x_1,\dots,e_j x_m, e_j y).
		\]
		Hence, for each $j \in J$,
		\[
		e_j \varphi_E^{\cM}(x_1,\dots,x_m) = e_j \varphi_E^{\cM}(x_{1,j},\dots,x_{m,j}).
		\]
		Summing over $j$ shows that
		\[
		\varphi_E^{\cM}(x_1,\dots,x_m) = \sum_{j \in J} e_j \varphi_E^{\cM}(x_{1,j},\dots,x_{m,j}), 
		\]
		which completes the proof.  The case of an infimum is symmetrical.
	\end{proof}
	
	\begin{proposition}[{Compare \cite[Proposition 3.13]{GaoJekel2025}}] \label{prop: maximizer}
		Let $\cM = (M_t)_{t \in [0,\infty]}$ be a filtration of von Neumann algebras and let $\mathbf{z} = (z_{s,t,j})_{0\leq s \leq t < \infty, j\in \N}$ be elements $z_{s,t,j} \in D_{0,6 \sqrt{t-s}}^{\cM}$.  Let $N \subseteq Z(M_\infty)$ and let $E: M_\infty \to N$ be a normal, faithful, tracial conditional expectation.  Assume the chronological invariance property.  Let $0 \leq s \leq t$ and $r > 0$, and let $\psi \in \mathcal{F}_{\chron}(t)$ be a formula with $m+1$ free variables $\mathbf{x} = (x_1, \cdots, x_m,y)$, and let
		\[
		\varphi(\mathbf{x}) = \sup_{y \in D_{r,s}} \psi(\mathbf{x},y).
		\]
		Then for every $(x_1,\dots,x_m) \in \cM_s^m$ and $\epsilon > 0$, there exists $y \in D_{s,r}^{\cM}$ such that 
		\[
		\psi^{\cM}_E(\mathbf{x}, y) \geq \varphi^{\cM}_E(\mathbf{x}) - \epsilon \text{ in } N_{\sa}.
		\]
	\end{proposition}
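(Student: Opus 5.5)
The argument will be the standard ``gluing of near-maximizers along a partition of unity in the center'' from \cite[Proposition 3.13]{GaoJekel2025}. The only new ingredients are that all projections must lie in $M_s \cap N$, and that the glued witness must stay inside $D_{s,r}^{\cM}$. Fix $\mathbf{x} \in \cM_s^m$ and $\epsilon > 0$, and write $f = \varphi_E^{\cM}(\mathbf{x})$ and $g_y = \psi_E^{\cM}(\mathbf{x},y)$ for $y \in D_{s,r}^{\cM}$. By definition, $f = \sup_y g_y$ in the commutative von Neumann algebra $N$.

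The first step is to record where these elements live. By the chronological invariance property, applied to $\varphi \in \mathcal{F}_{\chron}(s)$ and to $\psi \in \mathcal{F}_{\chron}(t)$ with $s \leq t$, all inputs lie in $M_s \subseteq M_t$. Hence $f \in M_s \cap N$. To get $g_y \in M_s \cap N$ as well, view $\psi(\mathbf{x},y)$ with $\mathbf{x}, y \in M_s$. If that is not directly available, I only need the spectral projections of $g_y - f$ to lie in $M_s \cap N$. I would handle this by noting that $\sup_y$ over $D_{s,r}$ is itself an $\mathcal{F}_{\chron}(s)$ formula, so $f \in M_s \cap N$. For each $y$, the formula $\psi(\mathbf{x}, y)$ with $y$ fixed as a parameter in $M_s$ is evaluated at inputs in $M_s$, and chronological invariance for $\psi$ viewed in $\mathcal{F}_{\chron}(t)$ gives membership in $M_t \cap N$. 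If needed, replace the projections below by their images under the conditional expectation onto $M_s \cap N$. This is the point I expect to be the main obstacle, and the cleanest fix is probably to use that $\psi(\cdot, y)$ with $y$ substituted is in $\mathcal{F}_{\chron}(t)$ and that invariance really gives $M_s$-valuedness once all arguments are in $M_s$.

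The second step is a Zorn/exhaustion argument in $P := M_s \cap N$. For each $y$, let $p_y = \mathbbm{1}_{[-\epsilon,\infty)}(g_y - f) \in P$. The key fact is that $\bigvee_y p_y = 1$. Indeed, if $q = 1 - \bigvee_y p_y \neq 0$, then $q g_y \leq q(f - \epsilon)$ for all $y$. Taking the supremum over $y$, and using $q\sup_y g_y = \sup_y q g_y$ as in Lemma \ref{lem: formula partition}, gives $qf \leq q(f-\epsilon)$, a contradiction. By a maximality argument, choose a family of pairwise orthogonal nonzero projections $e_j \in P$ with $\sum_j e_j = 1$ and each $e_j \leq p_{y_j}$ for some $y_j$. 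Such a family exists because any nonzero $q \in P$ satisfies $q p_y \neq 0$ for some $y$, and the projections $qp_y$ lie in $P$ since $P$ is commutative.

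The third step is to set $y = \sum_j e_j y_j$. This converges strongly, lies in $M_s$ because $e_j \in M_s$, and satisfies $\norm{y} \leq r$ because the $e_j$ are central orthogonal projections. So $y \in D_{s,r}^{\cM}$. By Lemma \ref{lem: formula partition}, applied with the PVM $(e_j)$ over $M_s \cap N \subseteq M_t \cap N$ and the constant families $x_{i,j} = x_i$ (so that $\sum_j e_j x_i = x_i$), we get
\[
g_y = \sum_j e_j g_{y_j} \geq \sum_j e_j (f - \epsilon) = f - \epsilon,
\]
where the inequality holds because $e_j \leq p_{y_j}$. This gives the required $y$. The infimum case is symmetric.
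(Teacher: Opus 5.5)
Steps 2 and 3 of your proposal are correct and follow the paper's route. The paper cites \cite[Lemma 3.15]{GaoJekel2025} for a partition of unity $(e_j)$ in $M_s \cap N$ together with near-maximizers $y_j$. You instead build these directly from the spectral projections $p_y$ and a maximality argument. The gluing $y = \sum_j e_j y_j$ and the appeal to Lemma \ref{lem: formula partition} are the same in both.

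The gap is the point you flagged in Step 1 and left unresolved. Everything downstream needs $g_y = \psi_E^{\cM}(\mathbf{x},y) \in M_s \cap N$. This is what puts each $p_y$, hence each $e_j$, in $M_s$, and hence puts the glued $y$ in $D_{s,r}^{\cM}$. Chronological invariance applied to $\psi$ as an element of $\mathcal{F}_{\chron}(t)$ only gives $g_y \in M_t \cap N$. Running your exhaustion inside $M_t \cap N$ would then produce a witness in $M_t$, not in $D_{s,r}^{\cM}$.

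Your fallback of passing to the conditional expectation onto $M_s \cap N$ does not repair this. $E_{M_s \cap N}(p_y)$ is not a projection. If you instead take the largest projection of $M_s \cap N$ below $p_y$, you keep the inequality $g_y \geq f - \epsilon$ on its range. But you lose the covering property $\bigvee_y p_y = 1$ that drives your exhaustion, because its proof needs $q p_y = 0$, not merely $q$ orthogonal to the $M_s$-part of $p_y$.

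The missing observation is the one the paper's proof opens with: $\psi \in \mathcal{F}_{\chron}(s)$. The classes are nested, $\mathcal{F}_{\chron}(t) \subseteq \mathcal{F}_{\chron}(s)$ for $s \leq t$. This holds because a formula in $\mathcal{F}_{\chron}(t)$ only quantifies, in chronological order, over domains $D_{t',r'}$ with $t' \geq t \geq s$. So you may apply chronological invariance to $\psi$ at level $s$. For $\mathbf{x} \in M_s^m$ and $y \in D_{s,r}^{\cM} \subseteq M_s$, this gives $g_y \in M_s \cap N$ directly, with no ``parameter substitution'' needed. Then $p_y \in M_s \cap N$ for every $y$, and the rest of your argument goes through as written.
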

	
	\begin{proof}
		Note that $\psi \in \mathcal{F}_{\chron}(s)$.  Thus, by the chronological invariance property, we have $\psi_E^{\cM}(\mathbf{x},y) \in M_s \cap N$ for all $y \in D_{s,r}^{\cM}$.  Using classical facts about commutative von Neumann algebras or about Stone spaces (see \cite[Lemma 3.15]{GaoJekel2025}), there exists a partition of unity $(e_j)_{j \in J}$ in $M_s \cap N$ and $y_j \in D_{s,r}^{\cM}$ such that
		\[
		\sum_{j \in J} e_j \psi_E^{\cM}(\mathbf{x},y_j) \geq \varphi_E^{\cM}(\mathbf{x}) - \varepsilon.
		\]
		Let $y = \sum_{j \in J} e_j y_j$.  Then by Lemma \ref{lem: formula partition},
		\[
		\psi_E^{\cM}(\mathbf{x},y) = \sum_{j \in J} e_j \psi_E^{\cM}(\mathbf{x},y_j) \geq \varphi_E^{\cM}(\mathbf{x}) - \varepsilon
		\]
		as desired.
	\end{proof}
	
	We now recall the ``ultrafiber'' construction from \cite{GaoJekel2025} (taken essentially from \cite{BY2012}).  Given $N \subseteq Z(M)$, we take a quotient of $M$ by a maximal ideal corresponding to a character (or equivalently, a pure state) on $\cN$.  We can then relate the $N$-valued interpretations of formulas in $\cM$ to the plain evaluation of formulas on the quotient by \cite[Theorem 3.19]{BY2012} or \cite[Theorem 3.18]{GaoJekel2025}, which is an analog of {\L}o{\'s}'s theorem for ultraproducts.  Our present goal is to adapt this result to the setting of filtrations.  First, we recall the definition of the ultrafiber.
	
	\begin{definition}[{\cite[Definition 2.1]{GaoJekel2025}}]
		Let $M$ be a finite von Neumann algebra, $N \subseteq Z(M)$ be a subalgebra of its center, and $E: M \to N$ be a normal, faithful, tracial conditional expectation.  Let $\cV$ be a character on $\cN$, and let
		\[
		I_{E,\cV} = \{ x \in \cM: \cV \circ E[x^*x] = 0 \}.
		\]
		Define $M^{/E,\cV} = M / I_{E,\cV}$ and let $\pi$ be the quotient map.  Let $\tau_{E,\cV} = \cV \circ E$ viewed as a state on $M^{/E,\cV}$.
	\end{definition}
	
	It is shown in \cite[Proposition 2.2]{GaoJekel2025} that $M^{/E,\cV}$ is a von Neumann algebra and $\tau_{E,\cV}$ is a faithful normal tracial state.  We then have the following result that adapts \cite[Theorem 3.18]{GaoJekel2025} to the setting of filtrations.
	
	\begin{proposition} \label{prop: Los theorem}
		Let $(M_t)_{t \in [0,\infty]}$ be a filtration of von Neumann algebras and let $\mathbf{z} = (z_{s,t,j})_{0\leq s \leq t < \infty, j\in \N}$ be elements $z_{s,t,j} \in D_{0,6 \sqrt{t-s}}^{\cM}$.  Let $N \subseteq Z(M_\infty)$ and let $E: M_\infty \to N$ be a normal, faithful, tracial conditional expectation.  Assume the chronological invariance property.
		
		Let $\pi: M_\infty \to Q_\infty := M_\infty^{/E,\cV}$ be the quotient map, and let $Q_t = \pi(M_t)$ for $t \in [0,\infty)$.  Then $(Q_t)_{t \in [0,\infty]}$ is a filtration of von Neumann algebras.  Viewing $\cQ = ((Q_t)_{t \in [0,\infty]}, (\pi(z_{s,t,j}))_{0\leq s\leq t,j\in \N}$ is an $\cL_{\proc}$-structure, for every $m$-variable formula $\varphi \in \cL_{\chron}(t)$ and $x_1$, \dots, $x_m \in M_t$, we have
		\begin{equation} \label{eq: Los theorem identity}
			\varphi^{\cQ}(\pi(x_1),\dots,\pi(x_m)) = \cV \circ \varphi_E^{\cM}(x_1,\dots,x_m).
		\end{equation}
	\end{proposition}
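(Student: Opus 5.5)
The plan is to run the proof of \cite[Theorem 3.18]{GaoJekel2025} by induction on the complexity of chronological formulas, following the recursive description in Definition \ref{def: chronological formulas}. The only new feature is that quantifiers range over the balls $D_{s,r}$ of the subalgebras $M_s$, and the chronological invariance property is what lets us control these.

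\textbf{Preliminaries on the filtration.} First I would check that $(Q_t)$ is a filtration of von Neumann algebras. Monotonicity $Q_s \subseteq Q_t$ is immediate from $M_s \subseteq M_t$. To see that each $Q_t = \pi(M_t)$ is a von Neumann subalgebra of $Q_\infty$, I would use that $\pi$ restricted to $M_t$ is the ultrafiber quotient of $M_t$ with respect to $E|_{M_t}$. This requires $E(M_t) \subseteq M_t \cap N$, which follows from chronological invariance applied to the basic formulas $\re\tr(x)$ and $\im\tr(x)$ with $x \in M_t$; it gives a faithful normal conditional expectation onto $M_t\cap N \subseteq Z(M_t)$. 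Then \cite[Proposition 2.2]{GaoJekel2025}, applied to $M_t$, shows that $\pi(M_t)$ is a von Neumann algebra. To see that it sits weak-$*$ closed inside $Q_\infty$, I would note that the operator-norm balls of $\pi(M_t)$ are images of the balls of $M_t$, so they are complete in $\norm{\cdot}_2$ and hence closed. The same fact gives $\pi(D_{t,r}^{\cM}) = D_{t,r}^{\cQ}$: by the lifting argument in \cite{GaoJekel2025}, every element of norm at most $r$ in $Q_t$ lifts to an element of norm at most $r$ in $M_t$. The elements $\pi(z_{s,t,j})$ lie in the correct balls. So $\cQ$ is an $\cL_{\proc}$-structure.

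\textbf{Induction on formulas.} For basic formulas, $\tau_{E,\cV} = \cV\circ E$ gives \eqref{eq: Los theorem identity} directly. For the distance symbol, I would use that $\cV$ is a character and hence commutes with continuous functional calculus on $N$, so that $\cV(\sqrt{E[|a|^2]}) = \sqrt{\cV E[|a|^2]}$. The same multiplicativity handles the connectives: $\cV(f(a_1,\dots,a_k)) = f(\cV(a_1),\dots,\cV(a_k))$ for commuting self-adjoint $a_i \in N$.

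\textbf{The quantifier step, which is the main point.} Let $\varphi(\mathbf{x}) = \sup_{y\in D_{s,r}}\psi(\mathbf{x},y)$ with $\psi\in\cF_{\chron}(t)$, $s \le t$, and $\mathbf{x}\in M_s^m$. Then $\psi \in \cF_{\chron}(t)$ for the appropriate time, and every $y\in D_{s,r}^{\cM} \subseteq M_t$, so the induction hypothesis applies.

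For the inequality $\le$: given $y' \in D^{\cQ}_{s,r}$, lift it to $y\in D^{\cM}_{s,r}$. Then
\[
\psi^{\cQ}(\pi\mathbf{x},y') = \cV\psi_E^{\cM}(\mathbf{x},y) \le \cV\varphi_E^{\cM}(\mathbf{x}),
\]
by positivity of $\cV$ and the fact that $\psi_E^{\cM}(\mathbf{x},y)\le\varphi_E^{\cM}(\mathbf{x})$ in $N_{\sa}$.

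For the inequality $\ge$: this is where chronological invariance enters, through Proposition \ref{prop: maximizer}. It produces $y\in D^{\cM}_{s,r}$ with $\psi_E^{\cM}(\mathbf{x},y)\ge\varphi_E^{\cM}(\mathbf{x})-\epsilon$. Applying $\cV$ and the induction hypothesis gives $\varphi^{\cQ}(\pi\mathbf{x}) \ge \cV\varphi_E^{\cM}(\mathbf{x})-\epsilon$.

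The infimum case is symmetric. The main obstacle I expect is ensuring that the partition-of-unity gluing stays inside $M_s$, so that the near-maximizer lies in the correct domain. Proposition \ref{prop: maximizer} together with Lemma \ref{lem: formula partition} handles exactly this.

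Finally, since \eqref{eq: Los theorem identity} is stated for $\mathbf{x}\in M_t^m$ with $\varphi\in\cF_{\chron}(t)$, the recursion must keep track of the time index. A quantified formula lies in $\cF_{\chron}(s)$ and its inputs lie in $M_s$, which is consistent with the hypotheses at each stage of the induction.
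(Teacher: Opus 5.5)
Your proposal is correct and follows the paper's proof. The paper likewise applies chronological invariance to $\re\tr,\im\tr$ to get $E(M_t)\subseteq M_t\cap N$, identifies $Q_t$ with the ultrafiber $M_t^{/E_t,\cV_t}$, and then runs the induction of \cite[Theorem 3.19]{GaoJekel2025}, with Proposition \ref{prop: maximizer} supplying the near-maximizers in the quantifier step. You have written out the details that the paper leaves to that citation, namely the norm-preserving lifts and the two inequalities at the quantifier step.
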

	
	\begin{proof}
		Note that the chronological invariance property applied to the basic formulas $\re \tr$ and $\im \tr$ asserts that $E$ maps $M_t$ into $M_t \cap N$.  Let $N_t = N \cap M_t$ and $E_t = E|_{M_t}: M_t \to N_t$ and $\cV_t = \cV|_{N_t}$.  Note that $I_{E_t,\cV_t} = I_{E,\cV} \cap M_t$.  From this we easily obtain a natural isomorphism $Q_t \cong M_t^{/E_t,\cV_t}$, which implies that $Q_t$ is a tracial von Neumann algebra.  Hence, $(Q_t)_{t \in [0,\infty]}$ is a filtration of von Neumann algebras as desired.
		
		The identity \eqref{eq: Los theorem identity} is proved by induction on Definition \ref{def: chronological formulas}.  The proof is the same as \cite[Theorem 3.19]{GaoJekel2025} using Proposition \ref{prop: maximizer} in place of \cite[Proposition 3.13]{GaoJekel2025}.
	\end{proof}
	
	\begin{corollary} \label{cor: center-valued definable predicates}
		Let $\cM$, $M_t$, $E$, and $N$ be as in Proposition \ref{prop: Los theorem} satisfying the chronological invariance property.  Let $\rT$ be an $\cL_{\chron}$-theory such that for all characters $\cV$ on $\cN$, $\cM^{/E,\cV}$ satisfies $\rT$.  Let $m \in \N \cup \{0,\infty\}$.  Then for any chronological formula $\varphi$ in $m$ variables and $\mathbf{r} \in (0,\infty)^m$ and $\mathbf{x} \in \prod_{j \leq m} D_{0,r_j}^{\cM}$, we have
		\begin{equation} \label{eq: norm of center-valued formula}
			\norm{\varphi_E^{\cM}(\mathbf{x})}_{N} \leq \norm{\varphi}_{0,\mathbf{r},\rT}.
		\end{equation}
		In particular, the center-valued evaluation $\varphi \mapsto \varphi_E^{\cM}$ extends to the space of chronologically definable predicates with respect to $\rT$.
	\end{corollary}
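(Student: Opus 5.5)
The norm bound reduces to the scalar identity \eqref{eq: Los theorem identity} of Proposition \ref{prop: Los theorem}, applied at every character of $N$. Fix a chronological formula $\varphi$ in $m$ variables, a radius tuple $\mathbf{r}$, and $\mathbf{x} \in D_{0,\mathbf{r}}^{\cM}$. By the boundedness lemma above, $a := \varphi_E^{\cM}(\mathbf{x})$ is a bounded self-adjoint element of the commutative von Neumann algebra $N$. Its norm is therefore $\sup_{\cV} |\cV(a)|$, taken over all characters $\cV$ of $N$, because $N \cong C(K)$ for its Gelfand spectrum $K$. So it suffices to bound $|\cV(a)|$ for each character $\cV$ separately.

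Fix a character $\cV$. The formula $\varphi$ lies in $\cF_{\chron}(t)$ for some $t$, and $\mathbf{x} \in M_0^m \subseteq M_t^m$, so Proposition \ref{prop: Los theorem} applies and gives
\[
\cV(\varphi_E^{\cM}(\mathbf{x})) = \varphi^{\cQ}(\pi(\mathbf{x})), \qquad \cQ = \cM^{/E,\cV}.
\]
We also need $\pi(\mathbf{x}) \in D_{0,\mathbf{r}}^{\cQ}$. This holds because $\pi$ is a $*$-homomorphism, hence contractive in operator norm, and $Q_0 = \pi(M_0)$ by definition. By hypothesis $\cQ \models \rT$, so
\[
|\varphi^{\cQ}(\pi(\mathbf{x}))| \leq \norm{\varphi^{\cQ}}_{0,\mathbf{r}} \leq \norm{\varphi}_{0,\mathbf{r},\rT}.
\]
Taking the supremum over $\cV$ yields \eqref{eq: norm of center-valued formula}.

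For the extension to definable predicates, the map $\varphi \mapsto \varphi_E^{\cM}$ must first be shown to be well defined on equivalence classes. Linearity is not the issue; it is compatibility with connectives. Under continuous functional calculus in the commutative algebra $N$, we have $(\varphi_1 - \varphi_2)_E^{\cM} = (\varphi_1)_E^{\cM} - (\varphi_2)_E^{\cM}$, since $f(a,b) = a - b$ applied via functional calculus is just subtraction. So the bound applied to $\varphi_1 - \varphi_2$ gives
\[
\sup_{\mathbf{x} \in D_{0,\mathbf{r}}^{\cM}} \norm{(\varphi_1)_E^{\cM}(\mathbf{x}) - (\varphi_2)_E^{\cM}(\mathbf{x})} \leq \norm{\varphi_1 - \varphi_2}_{0,\mathbf{r},\rT}.
\]
Hence a Cauchy sequence of chronological formulas, with respect to all the seminorms $\norm{\cdot}_{0,\mathbf{r},\rT}$, has center-valued evaluations that are uniformly Cauchy on each ball $D_{0,\mathbf{r}}^{\cM}$ in the operator norm of $N$. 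The limit exists because $N$ is norm complete, and it is independent of the chosen sequence by the same inequality. Formulas equivalent modulo $\rT$ (seminorm distance zero) have equal evaluations.

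The case $m = \infty$ needs no extra work, since each formula involves only finitely many variables. The one point requiring care is the need for $\cV \circ \varphi_E^{\cM}$ to be computed in a structure satisfying $\rT$ with the same domain $D_{0,\mathbf{r}}$. That is settled by the contractivity of $\pi$ and by $Q_0 = \pi(M_0)$, both noted above.
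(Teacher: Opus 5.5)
Your proposal is correct and follows the paper's argument. Apply Proposition \ref{prop: Los theorem} at each character $\cV$ of $N$ to get $|\cV(\varphi_E^{\cM}(\mathbf{x}))| = |\varphi^{\cM^{/E,\cV}}(\pi(\mathbf{x}))| \leq \norm{\varphi}_{0,\mathbf{r},\rT}$, take the supremum over characters, and deduce the extension to definable predicates from this estimate. You also spell out details the paper leaves implicit: contractivity of $\pi$ placing $\pi(\mathbf{x})$ in $D_{0,\mathbf{r}}^{\cQ}$, and the Cauchy-sequence argument via the connective $f(a,b)=a-b$; these are correct.
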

	
	\begin{proof}
		We note that for every $\cV$, letting $\pi_{E,\cV}$ be the associated quotient map,
		\[
		|\cV[\varphi_E^{\cM}(\mathbf{x})]| = |\varphi^{\cM^{/E,\cV}}(\pi_{E,\cV}(\mathbf{x}))| \leq \norm{\varphi}_{0,\mathbf{r},\rT}.
		\]
		Since $\norm{\varphi_E^{\cM}(\mathbf{x})}_{\cN}$ is the supremum of $|\cV[\varphi_E^{\cM}(\mathbf{x})]|$ over characters of $\cV$ on $\cN$, we conclude \eqref{eq: norm of center-valued formula}.  The extension to definable predicates follows from this norm estimate.
	\end{proof}
	
	\subsection{Formulas in the random matrix setting} \label{subsec: concentration}
	
	We define a subset of the chronological formulas which will be convenient to use in the convergence arguments.
	
	\begin{definition} \label{def: restricted chronological formulas}
		\emph{Restricted chronological formulas} $\mathcal{F}_{\operatorname{chron}}^0$ in $\mathcal{L}_{\operatorname{proc}}$ in variables $\mathbf{x} = (x_j)_{j \in \N}$ are defined as follows:
		\begin{enumerate}
			\item A \emph{basic restricted chronological formula} is an expression of the form
			\[
			\varphi(\mathbf{x}) = \re \tr[p((\re(x_j) + i)^{-1},(\im(x_j) + i)^{-1}: j \in \N )],
			\]
			where $p$ is a $*$-polynomial.
			\item If $\varphi_1$, \dots, $\varphi_k$ are in $\mathcal{F}_{\operatorname{chron}}^0$ and $f: \R^k \to \R$ is Lipschitz, then $f(\varphi_1,\dots,\varphi_k)$ is in $\mathcal{F}_{\operatorname{chron}}^0$.
			\item If $\varphi(\mathbf{x},y)$ is a formula in $\mathcal{F}_{\operatorname{chron}}^0$ evaluated on $(\mathbf{x},y)$ rather than $\mathbf{x}$, and $r > 0$, then
			\[
			\psi_1(\mathbf{x}) = \sup_{y \in D_{0,r}} \varphi(\mathbf{x},y), \qquad \psi_2(\mathbf{x}) = \inf_{y \in D_{0,r}} \varphi(\mathbf{x},y)
			\]
			are in $\mathcal{F}_{\operatorname{chron}}^0$.
			\item If $\varphi(\mathbf{x},y)$ is a formula in $\mathcal{F}_{\operatorname{chron}}^0$ evaluated on $(\mathbf{x},y)$ rather than $\mathbf{x}$, and if $t_0 > 0$, then
			\[
			\psi(\mathbf{x}) = (S_{t_0}\varphi)(\mathbf{x}, z_{0,t_0})
			\quad \text{is in } \mathcal{F}_{\operatorname{chron}}^0.
			\]
		\end{enumerate}
	\end{definition}
	
	\begin{remark}
		Technically, $\mathcal{F}_{\chron}^0$ are formulas in $(\re(x_j) + i)^{-1}$ and $(\im(x_j) + i)^{-1}$ rather than in $x_j$.  They will be chronologically definable predicates in $x_j$ with respect to $\rT_{\filt}$ since $(\re(x_j) + i)^{-1}$ and $(\im(x_j)+i)^{-1}$ can be approximated by polynomials in $\re(x_j)$ on each operator norm ball.
	\end{remark}
	
	\begin{remark}
		In item (4), we allow the trivial case that $\varphi$ is independent of $y$, or $\varphi(\mathbf{x},y) = \varphi_0(\mathbf{x})$.  Hence, in particular, if $\varphi \in \mathcal{F}_{\operatorname{chron}}^0$ and $t_0 > 0$, then $S_{t_0} \varphi \in \mathcal{F}_{\operatorname{chron}}^0$.
	\end{remark}
	
	\begin{lemma} \label{lem: approximation by restricted chronological formulas}
		Let $\varphi \in \cF_{\chron,m}$ and $R > 0$ and $\varepsilon > 0$.  Then there exists $\psi \in \cF_{\chron,m}^0$ such that whenever $\cM$ is a tuple consisting of an NC filtration $(M_t)_{t \in [0,\infty]}$ and conditional expectation $E: \cM \to \cN \subseteq Z(\cM)$ and a process $z_{s,t,j} \in D_{\sqrt{t-s},t}^{\cM}$, we have
		\[
		\sup_{x_1, \dots, x_m \in D_{0,r}^{\cM}} \norm{\varphi_E^{\cM}(\mathbf{x}) - \psi_E^{\cM}(\mathbf{x})} \leq \varepsilon.
		\]
	\end{lemma}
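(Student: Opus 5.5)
Our plan is to argue by induction on the complexity of $\varphi$, using the alternate generation of chronological formulas in Lemma \ref{lem: chronological formulas alternate}. The induction hypothesis is the statement itself, but uniformly over all radii: for every chronological formula $\varphi$ in $m$ variables, every $r>0$ and every $\varepsilon>0$, there is $\psi\in\cF_{\chron,m}^0$ approximating $\varphi$ within $\varepsilon$ in center-valued norm on $D_{0,r}^{\cM}$, uniformly in $(\cM,E,N)$. One technical point must be handled first. Lemma \ref{lem: chronological formulas alternate} gives equivalence only modulo $\rT_{\proc}$, so we need the additivity $z_{s,t}+z_{t,u}=z_{s,u}$ in the center-valued setting. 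We either assume this in the class of $\cM$ considered or, more directly, run the induction on Definition \ref{def: chronological formulas} while tracking formulas of the form $(S_t\psi)(\mathbf{x},z_{\dots})$ as in the proof of that lemma. We also use throughout that center-valued evaluation is uniformly continuous in $\norm{\cdot}_2$-type estimates on operator-norm balls, with moduli independent of $\cM$; this follows by the same induction as for ordinary formulas, since functional calculus by a continuous $f$ on a bounded commutative set is uniformly continuous.

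\textbf{Base case.} Take a basic formula $\re\tr(p(\mathbf{x},\mathbf{z}))$ (or $\im\tr$, or $d(p,q)$). Writing $x_j=\re x_j+i\im x_j$, it suffices to approximate $a=\re x_j$ and $\im x_j$ on $[-r,r]$ in operator norm by polynomials in $(a+i)^{-1}$ and its adjoint. The map $a\mapsto (a+i)^{-1}$ sends $[-r,r]$ homeomorphically onto an arc of a circle, so by Stone--Weierstrass the $*$-polynomials in $(t+i)^{-1}$ are uniformly dense in $C([-r,r])$. The identity function can therefore be uniformly approximated, giving operator-norm approximations by continuous functional calculus. Substituting these into $p$ yields a polynomial in the resolvent variables. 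Because $p$ is Lipschitz in operator norm on bounded sets, $E$ is contractive, and $\re$ is contractive, we get $\norm{\cdot}_N$ error at most $\varepsilon$.

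The increments $z_{s,t,j}$ pose a problem: they are not free variables, but in $\cF^0_{\chron}$ they only enter through clause (4) as $(S_{t_0}\varphi)(\mathbf{x},z_{0,t_0})$. We will follow the proof of Lemma \ref{lem: chronological formulas alternate}. First treat $\mathbf{z}$-free basic formulas, and insert the increments inductively by the shift-and-substitute operation, using that the resolvent of $\re z$ approximates $\re z$ uniformly on $D_{6\sqrt{t}}$. For $d(p,q)$, we write $\sqrt{E|p-q|^2}$ as $\sqrt{\cdot}$ applied to $\re E[(p-q)^*(p-q)]$. Here $\sqrt{\cdot}$ is not Lipschitz near $0$, so we replace it by a Lipschitz function $f_\delta$ uniformly within $\delta$ of it on the bounded range.

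\textbf{Inductive steps.} \emph{Connectives:} given $f$ continuous, approximate it on the relevant bounded cube by a Lipschitz function (for example a polynomial) within $\varepsilon/2$, using the uniform bound on the $\varphi_i$ from Lemma 3.12 of \cite{GaoJekel2025}. Then by uniform continuity of $f$ on a compact cube, approximating the $\varphi_i$ closely enough gives the estimate, since functional calculus in the commutative algebra $N$ obeys the scalar estimates pointwise on the spectrum. \emph{Quantifiers $\sup/\inf$ over $D_{0,r}$:} $\norm{\sup_y A_y-\sup_y B_y}\le\sup_y\norm{A_y-B_y}$ in a commutative von Neumann algebra, so the approximant on $D_{0,(r,r')}$ passes through. \emph{Shift substitution:} the shifted formula $S_{t_0}\varphi$ evaluated in $\cM$ is $\varphi$ evaluated in the shifted tuple $S_{t_0}\cM$, which is in the same class; this is the center-valued analogue of Lemma \ref{lem: shift on formulas and structures}. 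Uniformity over the class then lets us approximate $\varphi$ on $D_{0,(r,6\sqrt{t_0})}$ and substitute $z_{0,t_0}$. Only single-variable substitution is allowed in (4), but iterating it with dummy dependence handles several $z_{0,t,j_i}$.

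\textbf{Main obstacle.} We expect the main obstacle to be the bookkeeping that reconciles Definition \ref{def: chronological formulas} with the restricted generation. This involves splitting increments at intermediate times, which requires additivity of $\mathbf{z}$ in the given $\cM$, and handling quantifiers $D_{s,r}$ with $s>0$ via shifts. Once the representation \eqref{eq: expressing chron formula using shift} is available inside the class, every step is a uniform estimate as above.
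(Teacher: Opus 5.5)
Your proposal follows the same route as the paper: induction along the generation in Lemma~\ref{lem: chronological formulas alternate}, matched against Definition~\ref{def: restricted chronological formulas}, with the Stone--Weierstrass resolvent approximation as the base case. It also supplies details the paper leaves to the reader, namely the Lipschitz replacement of continuous connectives and of $\sqrt{\cdot}$ in $d(p,q)$, the center-valued shift identity, and the additivity of $\mathbf{z}$, which the statement omits but which is needed because Lemma~\ref{lem: chronological formulas alternate} holds only modulo $\rT_{\proc}$.

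One small correction: clause (4) of Definition~\ref{def: restricted chronological formulas} substitutes the whole tuple $\mathbf{z}_{0,t_0}$, as in the proof of Proposition~\ref{prop: approx by pointwise function}, so no iteration is needed. Iterating a single-variable substitution would not produce $(S_t\varphi)(\mathbf{x},z_{0,t,j_1},z_{0,t,j_2})$ anyway, since each further application of (4) re-shifts the increments and quantifiers already present.
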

	
	\begin{proof}
		The proof proceeds by induction on the construction of formulas in Definition \ref{def: restricted chronological formulas} and Lemma \ref{lem: chronological formulas alternate}.  For the base case, one uses the fact that in a von Neumann algebra $x_j$ can be approximated on any operator norm ball by $*$-polynomials in $(\re(x_j) + i)^{-1}$ and $(\im(x_j) + i)^{-1}$ with the error uniformly small over all von Neumann algebras $\cM$.  The inductive steps are straightforward and left to the reader.
	\end{proof}

	The next proposition describes the center-valued evaluation (in the sense of \cite{GaoJekel2025}) for the chronological formulas on $L^\infty(\Omega,\mathcal{F},\mathbb{P}) \otimes \mathbb{M}_n$.
	
	\begin{proposition} \label{prop: approx by pointwise function}
		Let $\cM$ and $\cM_t$ be the random matrix ultraproducts in \eqref{eq: random matrix ultraproduct} and \eqref{eq: random matrix ultraproduct 2} and let $E$ be expectation onto $\cN = Z(\cM)$.  Let $Z_{s,t,j}^{(n)}$ be the increments of independent Hermitian Brownian motions in $\mathbb{M}_n$, let $F_R$ be as in \eqref{eq: function FR}, and let
		\begin{equation} \label{eq: Brownian motion in random matrix ultraproduct}
			z_{s,t,j} = [F_{3 \sqrt{t-s}}(Z_{s,t,j}^{(n)})]_{n \in \N}.
		\end{equation}
		For every $\varphi \in \mathcal{F}_{\operatorname{chron}}^0$ with free variables among $x_1$, \dots, $x_m$, and for every $n \in \N$, there exists a function $\Lambda_{\varphi}^{(n)}: \mathbb{M}_n^m \to \R$ which is bounded and $\norm{\cdot}_1$-Lipschitz and satisfies
		\begin{equation} \label{eq: pointwise formula approximation}
			(S_{t_0}\varphi)^{\cM}_E(x_1,\dots,x_m) = [\Lambda_{\varphi}^{(n)}(X_1^{(n)},\dots,X_m^{(n)})]_{n \in \N},
		\end{equation}
		whenever $t_0 \geq 0$ and
		\[
		x_j = [X_j^{(n)}]_{n \in \N} \in \cM_{t_0}.
		\]
		Moreover, $\Lambda_{\varphi}^{(n)}$ are chosen independently of the ultrafilter $\cU$ in the random matrix ultraproduct as well as the original choice of classical filtration $(\mathcal{G}_t)_{t \geq 0}$.
	\end{proposition}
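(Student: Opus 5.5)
We construct $\Lambda_\varphi^{(n)}$ by recursion along the four clauses of Definition \ref{def: restricted chronological formulas}. At the same time we prove, by induction, the following strengthened statement: $\Lambda_\varphi^{(n)}$ is bounded by a constant $C_\varphi$ and is $L_\varphi$-Lipschitz in $\norm{\cdot}_1$ (hence also in $\norm{\cdot}_2$), with $C_\varphi, L_\varphi$ independent of $n$, and \eqref{eq: pointwise formula approximation} holds. The key point is that the inputs enter only through the resolvents $(\re(x_j)+i)^{-1}$ and $(\im(x_j)+i)^{-1}$, which are bounded and Lipschitz on all of $\mathbb{M}_n$. This is why no operator-norm ball appears on the inputs.

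\textbf{Recursive definition of $\Lambda_\varphi^{(n)}$.}
\begin{enumerate}[(1)]
\item For a basic formula we set $\Lambda^{(n)}_\varphi(\mathbf{X}) = \re\tr_n[p(\dots)]$, evaluated pointwise. Boundedness is immediate. Lipschitzness in $\norm{\cdot}_1$ follows by telescoping the monomials, using the resolvent identity $(A+i)^{-1}-(B+i)^{-1} = (A+i)^{-1}(B-A)(B+i)^{-1}$ and H\"older's inequality. In $\cM$, the $E$-valued evaluation of $\re\tr p$ at $x=[X^{(n)}]$ is the class of $\omega\mapsto \re\tr_n p(X^{(n)}(\omega))$, because $E$ onto $Z(\cM)=\prod_{n\to\cU}L^\infty(\Omega)$ is the ultraproduct of the maps $\id\otimes\tr_n$. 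Shifting by $S_{t_0}$ does nothing here.
\item For a Lipschitz connective we set $\Lambda^{(n)}_{f(\varphi_1,\dots,\varphi_k)} = f(\Lambda^{(n)}_{\varphi_1},\dots,\Lambda^{(n)}_{\varphi_k})$. Functional calculus on the commutative center commutes with ultraproduct classes.
\item For $\sup_{y\in D_{0,r}}$ we set $\Lambda^{(n)}_{\psi_1}(\mathbf{X}) = \sup_{\norm{Y}\le r}\Lambda^{(n)}_\varphi(\mathbf{X},Y)$. This preserves the bound and the Lipschitz constant.
\item For $\psi(\mathbf{x}) = (S_{t_0}\varphi)(\mathbf{x},z_{0,t_0})$ we set $\Lambda^{(n)}_\psi(\mathbf{X}) = \mathbb{E}\,\Lambda^{(n)}_\varphi(\mathbf{X}, F_{3\sqrt{t_0}}(Z^{(n)}_{0,t_0}))$. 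This is Lipschitz in $\mathbf{X}$ with the same constant.
\end{enumerate}
None of these definitions refers to $\cU$ or to $(\mathcal{G}_t)$, which gives the final independence claim.

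\textbf{Verifying \eqref{eq: pointwise formula approximation}, sup case.} Take $x_j=[X_j^{(n)}]\in\cM_{t_0}$, with $X_j^{(n)}$ being $\mathcal{G}_{t_0}$-measurable. By the induction hypothesis, $(S_{t_0}\varphi)_E$ at $(\mathbf{x},y)$ equals $[\Lambda_\varphi^{(n)}(\mathbf{X}^{(n)},Y^{(n)})]$ for $y\in D_{t_0,r}$. This gives the inequality ``$\le$''. For ``$\ge$'' we need a measurable selection: choose $Y^{(n)}(\omega)$ that is $\mathcal{G}_{t_0}$-measurable and nearly attains $\sup_{\norm{Y}\le r}\Lambda(\mathbf{X}^{(n)}(\omega),Y)$. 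This is possible by continuity of $\Lambda$ and a countable dense set in the ball. Then we compare suprema in the commutative algebra $Z(\cM)$. The supremum in $Z(\cM)$ of a family of ultraproduct classes need not be the class of the pointwise suprema. The selection handles this: every $y$ gives a lower bound on the pointwise sup, and the selected $y$ nearly attains it, so the class of the pointwise sup is the least upper bound up to $\varepsilon$. The inf case is symmetric.

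\textbf{Verifying \eqref{eq: pointwise formula approximation}, Brownian substitution; this is the main obstacle.} Here
\[
(S_{t_0+t_1}\varphi)_E(\mathbf{x},z_{t_1,t_1+t_0}) = [\Lambda^{(n)}_\varphi(\mathbf{X}^{(n)},F(Z^{(n)}_{t_1,t_1+t_0}))]
\]
by the induction hypothesis applied at time $t_1+t_0$. We must show this random function agrees in $L^\infty$ modulo the ultraproduct ideal with $\omega\mapsto \mathbb{E}[\,\cdot\mid\mathcal{G}_{t_1}]$, which equals $\Lambda^{(n)}_\psi(\mathbf{X}^{(n)}(\omega))$ by independence of the increment from $\mathcal{G}_{t_1}$. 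The identification is in $Z(\cM)$, i.e.\ in the $L^2(\Omega\times\tr)$-type ultraproduct norm, so an $L^2(\Omega)$ estimate of the difference that tends to $0$ suffices. The function $Z\mapsto \Lambda_\varphi(\mathbf{X},F(Z))$ is $L_\varphi$-Lipschitz in $\norm{\cdot}_2$, since $F$ is $1$-Lipschitz by Lemma \ref{lem: Lipschitz functional calculus}. Conditioning on $\mathcal{G}_{t_1}$ and applying the Poincar\'e inequality \eqref{eq: Poincare inequality} to the Ginibre increment gives
\[
\mathbb{E}\,|\Lambda_\varphi(\mathbf{X},F(Z))-\Lambda_\psi(\mathbf{X})|^2\le L_\varphi^2 t_0/n^2\to0.
\]
So the two sequences define the same element of $Z(\cM)$. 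The delicate points are that the Lipschitz constants must be uniform in $n$, which the induction carries along, and the truncation $F$. Finally, the tuple of variables in each case is handled by the same argument with $y$ replaced by the increment tuple.
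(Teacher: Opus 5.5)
Your proposal is correct and follows the paper's proof essentially step for step. It uses the same four recursive definitions of $\Lambda_\varphi^{(n)}$ (pointwise evaluation, composition with the connective, pointwise supremum over the $r$-ball, and expectation over the truncated increment), the same $\mathcal{G}_{t_0}$-measurable near-maximizer selection for the supremum clause, and the same conditional Poincar\'e estimate \eqref{eq: Poincare inequality} to identify the Brownian-substitution clause in $Z(\cM)$. The differences are cosmetic: you carry the ambient shift time through the induction rather than proving the $t_0 = 0$ case and re-applying it to the shifted filtration $(\mathcal{G}_{t_0+t})_{t \geq 0}$, and in the base case you identify $E$ with the ultraproduct of the maps $\id \otimes \tr_n$, which is correct (the paper's displayed identity there carries a stray classical expectation $\mathbb{E}$).
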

	
	\begin{proof}
		We proceed by induction on the complexity of the formula.  For simplicity, we focus first on the case where $t_0 = 0$.
		
		\textbf{Base case:} Suppose that $\varphi$ is a basic restricted chronological formula.  For $X_1, \dots, X_m \in \mathbb{M}_n$, let $\Lambda_\varphi^{(n)}(X_1,\dots,X_m) = \varphi^{\mathbb{M}_n}(X_1,\dots,X_m)$, where $\varphi^{\mathbb{M}_n}$ is the evaluation of $\varphi$ viewed as an $\mathcal{L}_{\tr}$-formula.  It follows from the resolvent identity that $(\re(x_j) + i)^{-1}$ is bounded in operator norm and Lipschitz in $1$-norm, uniformly for all tracial von Neumann algebras.  Hence, using the noncommutative H{\"o}lder inequality, the same holds for polynomials in these resolvents, and therefore also for their traces.  Thus, we obtain the desired uniform boundedness and Lipschitzness properties.  Moreover, if $x_j = [X_j^{(n)}]_{j \in \N}$ is in $\cM_0$, then
		\[
		\varphi_E^{\cM}(x_1,\dots,x_n) = E[[\Lambda_\varphi^{(n)}(X_1^{(n)},\dots,X_m^{(n)})]_{n \in \N}] = [\mathbb{E} \varphi^{\mathbb{M}_n}(X_1^{(n)},\dots,X_m^{(n)})]_{n \in \N},
		\]
		so \eqref{eq: pointwise formula approximation} holds.
		
		\textbf{Inductive case 1:}  Suppose $\varphi = f(\varphi_1,\dots,\varphi_k)$ where $f: \R^k \to \R$ is Lipschitz and $\varphi_1$, \dots, $\varphi_k$ are formulas in $x_1$, \dots, $x_m$ satisfying the inductive hypothesis with respect to some $\Lambda_{\varphi_j}^{(n)}$.  Let
		\[
		\Lambda_\varphi^{(n)} = f(\Lambda_{\varphi_1}^{(n)},\dots,\Lambda_{\varphi_k}^{(n)}),
		\]
		which is clearly bounded and $\norm{\cdot}_1$-Lipschitz as a composition of bounded $\norm{\cdot}_1$-Lipschitz functions (with constants uniform in $n$).  To show \eqref{eq: pointwise formula approximation}, observe that if $x_j = [X_j^{(n)}]_{n \in \N} \in \cM_0$, then 
		\begin{align*}
			\varphi_E^{\cM}(x_1,\dots,x_m) &= f((\varphi_1)_E^{\cM}(x_1,\dots,x_m),\dots,(\varphi_k)_E^{\cM}(x_1,\dots,x_m)) \\
			&= f([\Lambda_{\varphi_1}^{(n)}(X_1^{(n)},\dots,X_m^{(n)})]_{n \in \N},\dots, [\Lambda_{\varphi_k}^{(n)}(X_1^{(n)},\dots,X_m^{(n)})]_{n \in \N}) \\
			&= [f(\Lambda_{\varphi_1}^{(n)},\dots,\Lambda_{\varphi_k}^{(n)})(X_1^{(n)},\dots,X_m^{(n)})]_{n \in \N} \\
			&= [\Lambda_{\varphi}^{(n)}(X_1^{(n)},\dots,X_m^{(n)})]_{n \in \N},
		\end{align*}
		where we use that multivariable continuous functional calculus respects $*$-homomorphisms of commutative $\mathrm{C}^*$-algebras.
		
		\textbf{Inductive case 2:} Suppose that $\varphi$ is a formula in $(x_1,\dots,x_m,y)$ satisfying the inductive hypotheses, and let $\psi(\mathbf{x}) = \sup_{y \in D_{0,r}} \varphi(\mathbf{x},y)$.  Let
		\[
		\Lambda_\psi^{(n)}(x_1,\dots,x_m) = \sup_{y \in D_r^{\mathbb{M}_n}} \Lambda_\varphi^{(n)}(x_1,\dots,x_m,y).
		\]
		This is clearly a bounded $\norm{\cdot}_1$-Lipschitz function since it is a supremum of bounded $\norm{\cdot}_1$-Lipschitz functions in $x_1$, \dots, $x_m$ (with constants that are uniform in $n$).  Now suppose that $x_j = [X_j^{(n)}]_{n \in \N} \in \cM_0$.  Suppose $y = [Y^{(n)}]_{n \in \N}$ is in $\cM_0$ with $\norm{Y^{(n)}} \leq r$.  Then
		\[
		\Lambda_{\varphi}^{(n)}(X_1^{(n)},\dots,X_m^{(n)},Y^{(n)}) \leq \Lambda_{\psi}^{(n)}(X_1^{(n)},\dots,X_m^{(n)}),
		\]
		and hence
		\[
		\varphi_E^{\cM}(x_1,\dots,x_m,y) = [\Lambda_{\varphi}^{(n)}(X_1^{(n)},\dots,X_m^{(n)},Y)]_{n \in \N} \leq [\Lambda_{\psi}^{(n)}(X_1^{(n)},\dots,X_m^{(n)})]_{n \in \N}.
		\]
		Since $y$ was arbitrary,
		\[
		\psi_E^{\cM}(x_1,\dots,x_m) \leq [\Lambda_{\psi}^{(n)}(X_1^{(n)},\dots,X_m^{(n)})]_{n \in \N}.
		\]
		On the other hand, it is an elementary exercise in measure theory to show that there exists $Y^{(n)}$ in the $r$-ball of $L^\infty(\Omega,\mathcal{F}_0,\mathbb{P}) \otimes \mathbb{M}_n$ such that
		\[
		\Lambda_\varphi^{(n)}(X_1^{(n)},\dots,X_m^{(n)},Y^{(n)}) \geq \Lambda_\psi^{(n)}(X_1^{(n)},\dots,X_m^{(n)}) - \frac{1}{n}
		\]
		almost everywhere.  Therefore,
		\[
		\varphi_E^{\cM}(x_1,\dots,x_m,y) = [\Lambda_\varphi^{(n)}(X_1^{(n)},\dots,X_m^{(n)},Y^{(n)})]_{n \in \N} \geq [\Lambda_\psi^{(n)}(X_1^{(n)},\dots,X_m^{(n)})]_{n \in \N}.
		\]
		Hence,
		\[
		\psi_E^{\cM}(x_1,\dots,x_m) \geq \varphi_E^{\cM}(x_1,\dots,x_m,y) \geq [\Lambda_\psi^{(n)}(X_1^{(n)},\dots,X_m^{(n)})]_{n \in \N}.
		\]
		Therefore, \eqref{eq: pointwise formula approximation} holds since we proved inequality in both directions.
		
		\textbf{Inductive case 3:}  Fix a formula $\varphi(x_1,\dots,x_m,y)$, and let $\psi(x_1,\dots,x_m) = (S_{t_0} \varphi)(x_1,\dots,x_m,\mathbf{z}_{0,t_0})$.  Define
		\begin{equation} \label{eq: defining Lambda in semicircular case}
			\Lambda_{\psi}^{(n)}(x_1,\dots,x_m) = \mathbb{E} \Lambda_{\varphi}^{(n)}(x_1,\dots,x_m, F_{3\sqrt{t_0}}(\mathbf{Z}_{0,t_0}^{(n)})).
		\end{equation}
		Here only finitely many coordinates of the Brownian motion appear in the formula, even though we denote it as a function of the entire tuple.  Note that this is an average of uniformly bounded and $\norm{\cdot}_1$-Lipschitz functions in $(x_1,\dots,x_m)$ and hence is $\norm{\cdot}_1$-Lipschitz and bounded (with constants independent of $n$).  In particular, note that it is also Lipschitz with respect to $\norm{\cdot}_2$.  Now since $\Lambda_{\varphi}^{(n)}$ and $F_{3\sqrt{t_0}}$ are Lipschitz with respect to $\norm{\cdot}_2$, the Poincar{\'e} inequality \eqref{eq: Poincare inequality} implies that for $X_1$, \dots, $X_m \in \mathbb{M}_n$,
		\begin{equation} \label{eq: estimate from Poincare}
			\norm{\Lambda_{\varphi}^{(n)}(X_1,\dots,X_m,F_{3\sqrt{t_0}}(Z_{0,t_0}^{(n)})) - \Lambda_{\psi}^{(n)}(X_1,\dots,X_m)}_{L^2} \leq \frac{C t_0^{1/2}}{n},
		\end{equation}
		where $C$ is a Lipschitz constant of the function we take the expectation of.  Suppose that $x_j = [X_j^{(n)}]_{n \in \N} \in \cM_0$.  Using \eqref{eq: estimate from Poincare} together with conditioning, we conclude that
		\begin{equation} \label{eq: estimate from Poincare 2}
			\norm{\Lambda_{\varphi}^{(n)}(X_1^{(n)},\dots,X_m^{(n)},F_{3\sqrt{t_0}}(Z_{0,t_0}^{(n)})) - \Lambda_{\psi}^{(n)}(X_1,\dots,X_m)}_{L^2} \leq \frac{Ct_0^{1/2}}{n}.
		\end{equation}
		Meanwhile, we note that the statements of the proposition can be applied to $(\mathcal{G}_{t_0+t})_{t \geq 0}$ just as well as $(\mathcal{G}_t)_{t \geq 0}$.  Hence, using the inductive hypothesis on $\mathcal{G}_{t_0}$ and $\varphi$ and $(X_1^{(n)},\dots,X_m^{(n)},F_{3\sqrt{t_0}}(\mathbf{Z}_{0,t_0}^{(n)}))$, we get
		\[
		(S_{t_0}\varphi)_E^{\cM}(x_1,\dots,x_m,z_{0,t_0}) = [\Lambda_{\varphi}^{(n)}(X_1^{(n)},\dots,X_m^{(n)},F_{3\sqrt{t_0}}(\mathbf{Z}_{0,t_0}^{(n)})]_{n \in \N}.
		\]
		Then by applying \eqref{eq: estimate from Poincare},
		\begin{align*}
			\psi_E^{\cM}(x_1,\dots,x_m) &= (S_{t_0}\varphi)_E^{\cM}(x_1,\dots,x_m,z_{0,t_0}) \\
			&= [\Lambda_{\varphi}^{(n)}(X_1^{(n)},\dots,X_m^{(n)},F_R(Z_{0,t_0}^{(n)})]_{n \in \N} \\
			&= [\Lambda_\psi^{(n)}(X_1^{(n)},\dots,X_m^{(n)})]_{n \in \N},
		\end{align*}
		which proves \eqref{eq: pointwise formula approximation} in the case $t_0 = 0$.
		
		The case for general $t_0 > 0$ follows for the same reason as mentioned in Case 3, namely that the statement can be applied to the filtration $(\mathcal{G}_{t_0+t})_{t \geq 0}$.
	\end{proof}

	\begin{remark} \label{rem: independent of choices}
		The sequence of recursive steps taken to construct a formula in $\mathcal{F}_{\operatorname{chron}}^0$ are not quite unique since the application of connectives commutes with the operation of shifting and substituting $z_{0,t_0}$.  But for the finite-dimensional approximations, performing these operations in a different order results in a slightly different choice of $\Lambda_\varphi^{(n)}$ since in the one version we take the expectation over $F_{3 \sqrt{t_0}}(\mathbf{Z}_{0,t_0}^{(n)})$ before applying the continuous connective, and in the other we take the expectation afterward.  However, it follows from the Poincar{\'e} inequality \eqref{eq: Poincare inequality} that these two choices of $\Lambda_\varphi^{(n)}$ are within a distance of $O(1/n)$ uniformly, and so they are asymptotically the same.  In the sequel, we fix a choice of $\Lambda_{\varphi}^{(n)}$ for each $\varphi$ in $\mathcal{F}_{\chron}^0$.
	\end{remark}
	
	\begin{remark} \label{rem: bounded and Lipschitz}
		Similar reasoning as in the proof shows that every $\varphi \in \mathcal{F}_{\chron,m}^0$ is globally bounded and is $\norm{\cdot}_1$-Lipschitz, with a uniform bound for all $\cL_{\proc}$-structures satisfying $\rT_{\filt}$ (that is, all filtrations $(\cN_t)_{t \in [0,\infty]}$ of tracial von Neumann algebras with associated constants $z_{s,t,j}$).
	\end{remark}
	
	\begin{corollary}
		Let $\cM_t$ and $z_{s,t,j}$ be as above.  Then $((\cM_t)_{t \in [0,\infty]},(z_{s,t,j})_{0\leq s\leq t,j\in \N})$ has the chronological invariance property.
	\end{corollary}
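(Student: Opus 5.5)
The plan is to deduce the chronological invariance property from Proposition \ref{prop: approx by pointwise function}, together with a density argument based on Lemma \ref{lem: approximation by restricted chronological formulas}.

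Fix $t \geq 0$, a formula $\varphi \in \mathcal{F}_{\chron}(t)$ in $m$ free variables, and elements $x_1,\dots,x_m \in \cM_t$. We must show that $\varphi_E^{\cM}(\mathbf{x}) \in \cM_t \cap \cN$. Membership in $\cN = Z(\cM)$ is automatic from the construction. Note that $\cM_t \cap \cN$ is exactly $\prod_{n\to\cU} L^\infty(\Omega,\mathcal{G}_t,\mathbb{P})$, since the center of $\cM$ is $\prod_{n \to \cU} L^\infty(\Omega,\mathcal{G},\mathbb{P})$. So the real task is to show that the center-valued value has a representative that is $\mathcal{G}_t$-measurable in each coordinate $n$.

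\emph{Step 1: the case of restricted formulas.} Suppose first that $\varphi$ is restricted, with a shift structure adapted to time $t$. By Lemma \ref{lem: chronological formulas alternate}, or rather its analogue for $\mathcal{F}^0_{\chron}$, we may write $\varphi = (S_t\psi)(\mathbf{x}, z_{s_1,t_1,i_1},\dots)$ with all $t_j \leq t$ and $\psi$ restricted. The constants $z_{s_j,t_j,i_j}$ are represented by $F(Z^{(n)}_{s_j,t_j,i_j})$, which are $\mathcal{G}_t$-measurable. So $(\mathbf{x}, \mathbf{z})$ lies in $\cM_t$ with $\mathcal{G}_t$-measurable representatives $X_j^{(n)}$. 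Applying \eqref{eq: pointwise formula approximation} with $t_0 = t$ then gives
\[
\varphi_E^{\cM}(\mathbf{x}) = [\Lambda_\psi^{(n)}(\mathbf{X}^{(n)},\mathbf{Z}^{(n)})]_{n \in \N}.
\]
Each coordinate is a deterministic Borel function (it is Lipschitz) of $\mathcal{G}_t$-measurable random matrices, hence is $\mathcal{G}_t$-measurable. Therefore the value lies in $\cM_t \cap \cN$.

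\emph{Step 2: general chronological formulas.} I expect this to be the main obstacle, because we must be careful that the approximation respects the time level. Lemma \ref{lem: approximation by restricted chronological formulas} is stated for $\mathbf{x} \in D_{0,r}$. I would first apply it to the shifted description: writing $\varphi = (S_t\psi)(\mathbf{x},\mathbf{z})$, approximate $\psi$ uniformly within $\varepsilon$ by restricted formulas $\psi'$. The lemma's inductive proof is insensitive to shifting the base time, since it holds for the filtration $(\cM_{t+s})_{s\ge 0}$. The plan is to check that $S_t$ of such a uniform approximation still approximates $S_t\psi$ uniformly on $D_{t,\mathbf r}$; I would argue this by applying the lemma to the shifted structure $S_t\cM$ via Lemma \ref{lem: shift on formulas and structures}.

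This yields elements of $\cM_t \cap \cN$ within $\varepsilon$ of $\varphi_E^{\cM}(\mathbf{x})$ in operator norm. Since $\cM_t \cap \cN$ is norm closed, the conclusion follows. One bookkeeping point remains: the inputs $\mathbf{x}$ need bounded representatives, which holds since $\mathbf{x} \in \cM_t$ lies in some $D_{t,r}$ and representatives can be truncated via Lemma \ref{lem: projection onto R ball}.
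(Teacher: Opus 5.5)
Your proposal is correct and follows essentially the same route as the paper: write $\varphi = (S_t\psi)(\mathbf{x}, z_{s_1,t_1,j_1},\dots,z_{s_k,t_k,j_k})$ with all $t_i \leq t$, approximate $\psi$ uniformly by a restricted chronological formula, apply Proposition \ref{prop: approx by pointwise function} with $t_0 = t$ to get a representative that is visibly $\mathcal{G}_t$-measurable, and let $\varepsilon \to 0$. You also make explicit, via the shifted structure $S_t\cM$, why the approximation lemma (stated on $D_{0,\mathbf{r}}$) controls $S_t\psi$ on $D_{t,\mathbf{r}}$; the paper leaves this step implicit.
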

	
	\begin{proof}
		Let $t > 0$ and let $x_j = [X_j^{(n)}]_{n \in \N} \in D_{t_0,\mathbf{r}}^{\cM}$ for some $\mathbf{r} \in (0,\infty)^m$.  Let $\varphi \in \mathcal{L}_{\chron}(t)$.  As in the proof of Lemma \ref{lem: chronological formulas alternate}, we can write
		\[
		\varphi(\mathbf{x}) = (S_t\psi)(\mathbf{x},z_{s_1,t_1,j_1},\dots,z_{s_k,t_k,j_k})
		\]
		for some $\psi \in \cF_{\chron,m_k}(0)$ and $s_i \leq t_i \leq t$.  By Lemma \ref{lem: approximation by restricted chronological formulas}, we can approximate $\psi$ by some $\psi' \in \cF_{\chron,m+k}^0$ within any given error $\varepsilon$ on any of the domains $D_{0,\mathbf{r}}$.  By the previous proposition,
		\[
		(S_{t_0}\varphi)^{\cM}_E(x_1,\dots,x_m,z_{s_1,t_1,j_1},\dots,z_{s_k,t_k,j_k}) = [\Lambda_{\varphi}^{(n)}(X_1^{(n)},\dots,X_m^{(n)},Z_{s_1,t_1,j_1}^{(n)},\dots,Z_{s_k,t_k,j_k}^{(n)})]_{n \in \N},
		\]
		which is manifestly in
		\[
		\prod_{n \to \cU} L^\infty(\Omega,\mathcal{G}_t,\mathbb{P}) \subseteq Z(M_\infty) \cap M_t.
		\]
		Since $\varepsilon$ was arbitrary, $\varphi_E^{\cM}(x_1,\dots,x_m) \in Z(\cM) \cap \cM_t$ as desired.
	\end{proof}
	
	We also immediately obtain the following form of stationarity for the random matrix ultraproduct.
	
	\begin{corollary} \label{cor: random matrix ultraproduct stationarity}
		With the same notation as above, we have that
		\[
		x_1, \dots, x_n \in \mathcal{M}_0 \implies (S_{t_0}\varphi)_{E}^{\cM}(x_1,\dots,x_n) = \varphi_E^{\cM}(x_1,\dots,x_n).
		\]
	\end{corollary}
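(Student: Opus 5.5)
The plan is to read the statement off Proposition \ref{prop: approx by pointwise function}, which already encodes stationarity at the level of the pointwise approximants. First take $\varphi \in \mathcal{F}_{\chron}^0$ with free variables among $x_1,\dots,x_n$, and $x_i = [X_i^{(n)}]_{n\in\N} \in \cM_0 \subseteq \cM_{t_0}$. Apply \eqref{eq: pointwise formula approximation} twice, once with shift parameter $t_0$ and once with shift parameter $0$. Both applications are legitimate, since $\cM_0 \subseteq \cM_{t_0}$. This gives
\[
(S_{t_0}\varphi)^{\cM}_E(\mathbf{x}) = [\Lambda_\varphi^{(n)}(\mathbf{X}^{(n)})]_{n\in\N} = \varphi^{\cM}_E(\mathbf{x}).
\]
The crucial point is that the same $\Lambda_\varphi^{(n)}$ appears on both sides. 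That function was constructed independently of the time offset, because the proof for general $t_0$ simply reran the induction with the shifted classical filtration $(\mathcal{G}_{t_0+t})_{t\ge0}$, and the proposition asserts that $\Lambda_\varphi^{(n)}$ does not depend on the choice of classical filtration.

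Next, extend from $\mathcal{F}_{\chron}^0$ to arbitrary chronological formulas $\varphi$ (or at least to those in $\mathcal{F}_{\chron}(0)$, as in the stationarity definition) by approximation, using Lemma \ref{lem: approximation by restricted chronological formulas}. Fix $\mathbf{r}$ bounding $\mathbf{x}$ and $\varepsilon>0$, and choose $\psi\in\mathcal{F}_{\chron}^0$ with $\|\varphi_E^{\cM}-\psi_E^{\cM}\|\le\varepsilon$ uniformly on $D_{0,\mathbf{r}}$. We also need $\|(S_{t_0}\varphi)_E^{\cM} - (S_{t_0}\psi)_E^{\cM}\|\le\varepsilon$ on the same domain. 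For this, observe that the lemma is uniform over all filtrations with a process and a conditional expectation, and apply it to the shifted structure $S_{t_0}\cM$ of Definition \ref{def: shifted structure}, equipped with the same $E$ and $N$. An $N$-valued analogue of Lemma \ref{lem: shift on formulas and structures}, proved by the same trivial induction, gives $(S_{t_0}\varphi)^{\cM}_E = \varphi^{S_{t_0}\cM}_E$ on $\cM_0\subseteq \cM_{t_0}$. The triangle inequality then yields $\|(S_{t_0}\varphi)^{\cM}_E(\mathbf{x})-\varphi^{\cM}_E(\mathbf{x})\|\le 2\varepsilon$, and since $\varepsilon$ is arbitrary the two sides are equal.

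The main obstacle is bookkeeping rather than analysis. We must confirm that the shifted structure $S_{t_0}\cM$ satisfies the hypotheses of Lemma \ref{lem: approximation by restricted chronological formulas}: the increments must lie in the required balls, which holds since shifting preserves the length of intervals. We must also confirm that Remark \ref{rem: independent of choices} does not interfere, that is, that a single fixed $\Lambda_\varphi^{(n)}$ is used in both applications of \eqref{eq: pointwise formula approximation}. If one worries about the non-uniqueness of the recursive construction, the $O(1/n)$ discrepancy between different choices vanishes in the ultraproduct, so the conclusion is unaffected.
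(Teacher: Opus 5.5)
Your first paragraph is exactly the paper's proof: two applications of \eqref{eq: pointwise formula approximation} with the same $\Lambda_\varphi^{(n)}$, once viewing $x_j \in \cM_0$ and once viewing $x_j \in \cM_{t_0}$, which is all that is needed for $\varphi \in \mathcal{F}_{\chron}^0$ (the setting of the corollary). Your additional density step goes beyond what the paper writes and is correct: you apply Lemma \ref{lem: approximation by restricted chronological formulas} to the shifted structure $S_{t_0}\cM$ and use $(S_{t_0}\varphi)_E^{\cM} = \varphi_E^{S_{t_0}\cM}$, which extends the result to general chronological formulas, an extension the paper leaves implicit when it later uses this corollary to deduce stationarity of $\cQ$.
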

	
	\begin{proof}
		This follows from two applications of \eqref{eq: pointwise formula approximation}, viewing $x_j \in \cM_{t_0}$ as well as $x_j \in \cM_0$.
	\end{proof}
	
	\subsection{Ultrafiber selection} \label{subsec: matrix quotient}
	
	With all the pieces in place, we now conclude the application of the ideas of \cite[\S 6]{GaoJekel2025} by studying the quotient of the random matrix ultraproduct by a maximal ideal.  Again, let $\cM$ and $M_t$ be as in \eqref{eq: random matrix ultraproduct} and \eqref{eq: random matrix ultraproduct 2} with $z_{s,t,j}$ as in \eqref{eq: Brownian motion in random matrix ultraproduct}.  Let $\cV$ be a character on $Z(M_\infty)$, and let
	\begin{equation} \label{eq: random matrix quotient}
		\pi: M_\infty \to Q_\infty := M_\infty^{/E,\cV} 
	\end{equation}
	be the quotient, and let
	\begin{equation} \label{eq: random matrix quotient 2}
		Q_t := \pi(M_t).
	\end{equation}
	Note that by \cite[Proposition 6.1]{GaoJekel2025}, $Q_t$ is elementarily equivalent to $\prod_{n \to \cU} \mathbb{M}_n$ as tracial von Neumann algebras.  The following result is also a natural generalization of several of the statements in \cite[Proposition 6.1]{GaoJekel2025} to the setting of filtrations.
	
	\begin{proposition} \label{prop: matrix quotient filtration}
		Let $\cQ = ((Q_t)_{t \in [0,\infty]}, (\pi(z_{s,t,j}))_{0 \leq s \leq t, j \in \N}$ where $Q_t$ and $\pi$ are as above.  Then $\cQ$ is a stationary $\cL_{\proc}$-structure such that $\pi(z_{s,t,j})$ are the increments of a circular Brownian motion (as defined in \S \ref{subsec: free probability}).  Moreover, for $x_j = [X_j^{(n)}]_{n \in \N} \in \cM_0$ and $\varphi \in \cF_{\chron,m}^0$,
		\begin{equation} \label{eq: evaluation of formula in quotient}
			\varphi^{\cQ}(\pi(x_1),\dots,\pi(x_m)) = \cV\left( [\Lambda_\varphi^{(n)}(X_1^{(n)},\dots,X_m^{(n)})]_{n \in \N} \right).
		\end{equation}
	\end{proposition}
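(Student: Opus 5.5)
The plan is to derive everything from Proposition \ref{prop: Los theorem} (the filtered {\L}o{\'s} theorem), which applies because the random matrix ultraproduct has the chronological invariance property (the corollary following Proposition \ref{prop: approx by pointwise function}).

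\textbf{Step 1: the evaluation formula.} Proposition \ref{prop: Los theorem} shows that $(Q_t)$ is a filtration of tracial von Neumann algebras, so $\cQ$ is an $\cL_{\proc}$-structure. The same proposition gives $\varphi^{\cQ}(\pi(\mathbf{x})) = \cV(\varphi_E^{\cM}(\mathbf{x}))$ for chronological formulas. I would extend this to $\varphi \in \cF^0_{\chron,m}$ by approximation. The map $\varphi \mapsto \varphi^{\cQ}$ is continuous in the seminorms, and so is $\varphi \mapsto \varphi_E^{\cM}$ by Corollary \ref{cor: center-valued definable predicates} or Lemma \ref{lem: approximation by restricted chronological formulas}. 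Combining this with \eqref{eq: pointwise formula approximation} at $t_0 = 0$ gives \eqref{eq: evaluation of formula in quotient}.

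\textbf{Step 2: stationarity.} By Lemma \ref{lem: approximation by restricted chronological formulas} and the density of $\cF^0_{\chron}$, it suffices to check \eqref{eq: stationary def} for $\varphi \in \cF^0_{\chron}$. Every element of $Q_0$ has the form $\pi(x)$ with $x \in M_0$. If $\norm{\pi(x)} \le r$, I would choose a lift $x$ with $\norm{x} \le r$, which is possible by truncating with $F_r$ or by the usual lifting of balls. Corollary \ref{cor: random matrix ultraproduct stationarity} gives $(S_{t_0}\varphi)_E^{\cM}(\mathbf{x}) = \varphi_E^{\cM}(\mathbf{x})$. Applying $\cV$ and Proposition \ref{prop: Los theorem} then yields $(S_{t_0}\varphi)^{\cQ}(\pi(\mathbf{x})) = \varphi^{\cQ}(\pi(\mathbf{x}))$. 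Taking the supremum over the ball gives \eqref{eq: stationary def}.

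\textbf{Step 3: Brownian motion properties.} The additivity axiom $z_{t_1,t_2}+z_{t_2,t_3}=z_{t_1,t_3}$ holds because the truncations $F_{3\sqrt{\cdot}}$ are asymptotically inactive in $L^2$ by \eqref{eq: truncated GUE matrix 2}. Hence the identity holds already in $M$, and so in $Q$. Adaptedness is immediate since $z_{s,t,j} \in M_t$.

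For freeness of $\pi(\mathbf{z}_{s,t})$ from $Q_s$ and the circular distribution, I would use the chronologically axiomatizable conditions of Remark \ref{rem: properties of increments}. These are chronological sentences, such as $\sup_{x_0,\dots,x_k \in D_{s,1}} |\tr[\cdots]|$ and quantifier-free moment identities. By Proposition \ref{prop: Los theorem}, each such sentence has value $\cV(\varphi_E^{\cM})$, so it suffices to show that the center-valued values vanish. Via the $\Lambda^{(n)}$ construction, I would reduce this to showing that
\[
\sup_{X_i \in D_r^{\M_n}} \bigl|\mathbb{E}\,\tr_n[X_0(p_1(\mathbf{Z}_{s,t}) - \tr_n p_1(\mathbf{Z}_{s,t}))X_1\cdots]\bigr| \to 0
\]
uniformly over the deterministic matrices $X_i$.

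This uniform asymptotic freeness is the main obstacle. Theorem \ref{thm: asymptotic freeness} only gives convergence along fixed sequences, not uniformity over all deterministic matrices in a ball. To get uniformity, I would argue by contradiction. Suppose the supremum does not tend to $0$; then pick bad $X^{(n)}$ for each $n$, pass to a subsequence along which their $*$-distributions converge, and embed the limit in an ultraproduct. Theorem \ref{thm: asymptotic freeness}, which allows $\mathbf{Y}^{(n)}$ independent of $\mathbf{Z}$, then gives a contradiction. Concentration, via \eqref{eq: Herbst concentration inequality}, upgrades expected traces to almost sure ones, and the truncations are controlled by \eqref{eq: truncated GUE matrix 2}.

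The circular law of $(t-s)^{-1/2}\pi(\mathbf{z}_{s,t})$ follows more easily. Its moments are the $\cV$-images of the limiting expected traces of Ginibre moments, which are the circular moments.
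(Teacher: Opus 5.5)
Your proposal is correct. Steps 1 and 2 follow the paper's argument exactly: Proposition \ref{prop: Los theorem} combined with \eqref{eq: pointwise formula approximation}, then Corollary \ref{cor: random matrix ultraproduct stationarity}. You also supply the density reduction and the norm-preserving lifts that the paper leaves implicit.

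The real difference is in the freeness step. The paper does not use the quantified sentence of Remark \ref{rem: properties of increments}. Instead it fixes $\pi(x_1),\dots,\pi(x_m)\in Q_s$ and lifts them to $\mathcal{G}_s$-measurable random matrices. It then evaluates a quantifier-free alternating-moment formula pointwise, writing $\Lambda_\varphi^{(n)}(\mathbf{X}^{(n)})$ as a conditional expectation given $\mathcal{G}_s$. Finally it cites Theorem \ref{thm: asymptotic freeness} to get almost sure convergence to $0$.

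Your sentence-level route forces you to prove asymptotic freeness uniformly over $D_r^{\mathbb{M}_n}$, and your subsequence/compactness argument does this correctly. This is not an unnecessary detour. The paper's pointwise argument tacitly needs the same fact: the lifts $\mathbf{X}^{(n)}(\omega)$ need not have almost surely convergent $*$-distributions. For example, $\tr_n(X^{(n)})$ can be a nonconstant function of $\omega$. So Theorem \ref{thm: asymptotic freeness} as stated does not literally apply, and one needs the version for arbitrary bounded deterministic sequences, which is exactly what your argument gives.

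You also verify two things the paper does not check explicitly: the additivity axiom of $\rT_{\proc}$, and the circular law of $(t-s)^{-1/2}\pi(\mathbf{z}_{s,t})$.

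One small correction concerns concentration in your contradiction argument. Concentration is not what moves you between expected and almost sure traces there. Theorem \ref{thm: asymptotic freeness} already gives almost sure convergence, and because the truncated increments are bounded, bounded convergence passes this to expectations. Concentration is only needed to compare the different orders of taking expectations and applying connectives in $\Lambda^{(n)}$.
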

	
	\begin{proof}
		Suppose that $x_j = [X_j^{(n)}]_{n \in \N} \in \cM_0$ and $\varphi \in \mathcal{F}_{\chron,m}^0$.  By Proposition \ref{prop: Los theorem} and \ref{prop: approx by pointwise function},
		\begin{align*}
			\varphi^{\cQ}(\pi(x_1),\dots,\pi(x_m)) &= \cV\left( \varphi_E^{\cM}(x_1,\dots,x_m) \right) \\
			&= \cV\left( [\Lambda_\varphi^{(n)}(X_1^{(n)},\dots,X_m^{(n)})]_{n \in \N} \right).
		\end{align*}
		Since the same applies to $S_t \varphi$, Corollary \ref{cor: random matrix ultraproduct stationarity} implies stationarity.
		
		Finally, let us show that $(t-s)^{-1/2}(\pi(z_{s,t,j}))_{j \in \N}$ is a free circular family freely independent of $\cQ_s$.  Let $\pi(x_1), \dots, \pi(x_m) \in \cQ_s$ with $x_j = [X_j^{(n)}]_{n \in \N}$ where $X_j^{(n)} \in L^\infty(\Omega,\mathcal{F}_s,\mathbb{P})$.  Consider a formula of the form
		\begin{multline*}
			\varphi(\mathbf{x}) = \re \tr\Bigl[ f_1(\mathbf{x})[p_1(z_{s,t,j}: j \in \N) - \tr(p_1(z_{s,t,j}: j \in \N)] [f_2(\mathbf{x}) - \tr(f_2(\mathbf{x}))] \dots \\
			\dots [p_{k-1}(z_{s,t,j}: j \in \N) - \tr(p_{k-1}(z_{s,t,j}: j \in \N)][f_k(\mathbf{x}) - \tr(f_k(\mathbf{x}))] p_k(z_{s,t,j}: j \in \N) \Bigr],
		\end{multline*}
		where $p_i$ is a noncommutative polynomial and
		\[
		f_i(\mathbf{x}) = q_i((\re x_j + i)^{-1}, (\im x_j + i)^{-1}: j = 1, \dots, m)
		\]
		for some noncommutative polynomials $q_j$.  By Voiculescu's asymptotic freeness theorem (see Theorem \ref{thm: asymptotic freeness}) and independence of $Z_{s,t,j}^{(n)}$ from $\mathcal{G}_s$, we have
		\[
		\Lambda_\varphi^{(n)}(X_1^{(n)},\dots,X_m^{(n)}) = \mathbb{E} [\varphi^{L^\infty \otimes \mathbb{M}_n}(X_1^{(n)},\dots,X_m^{(n)}) \mid \mathcal{G}_s ] \to 0
		\]
		almost surely as $n \to \infty$.  Therefore, by Proposition \ref{prop: approx by pointwise function}
		\[
		(S_s\varphi)_E^{\cM}(x_1,\dots,x_m) = 0
		\]
		and by Proposition \ref{prop: Los theorem},
		\[
		(S_s\varphi)^{\cQ}(\pi(x_1),\dots,\pi(x_m)) = 0. \qedhere
		\]
	\end{proof}
	
	\begin{proposition} \label{prop: convergence of type}
		Let $X_1^{(n)}$, \dots, $X_m^{(n)}$ be random matrices in $L^\infty(\Omega,\mathcal{F}_0,\mathbb{P}) \otimes \mathbb{M}_n$ with operator norm bounded by $R$, and let $x_j = [X_j^{(n)}]_{n \in \N} \in \cM_0$.  Let $\mu \in \mathbb{S}_{\chron,0}(\rT_{\filt})$.  Then the following are equivalent:
		\begin{enumerate}[(1)]
			\item For every $\varphi \in \mathcal{F}_{\chron,m}^0$, we have
			\[
			\lim_{n \to \cU} \Lambda_\varphi^{(n)}(X_1^{(n)},\dots,X_m^{(n)}) = (\mu,\varphi) \text{ in probability.}
			\]
			\item For every character $\cV$ on $Z(M)$, we have
			\[
			\tp_{\chron}^{\cQ}(\pi(x_1),\dots,\pi(x_m)) = \mu
			\]
			in the quotient $\cL_{\proc}$-structure $\cQ$ associated to $\cV$.
		\end{enumerate}
	\end{proposition}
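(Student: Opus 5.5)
The plan is to reduce both conditions to statements about the central elements $c_\varphi := [\Lambda_\varphi^{(n)}(X_1^{(n)},\dots,X_m^{(n)})]_{n \in \N} \in Z(M) = \prod_{n \to \cU} L^\infty(\Omega,\mathcal{G},\mathbb{P})$, and then translate between ``convergence in probability along $\cU$'' and ``every character takes the same value.'' By Proposition \ref{prop: matrix quotient filtration}, for each character $\cV$ and each $\varphi \in \mathcal{F}_{\chron,m}^0$ we have $\varphi^{\cQ}(\pi(\mathbf{x})) = \cV(c_\varphi)$. So the first step is to show that (2) is equivalent to
\[
\cV(c_\varphi) = (\mu,\varphi) \text{ for all characters } \cV \text{ and all } \varphi \in \mathcal{F}_{\chron,m}^0. \tag{$\ast$}
\]
One direction is immediate. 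For the other, note that by Lemma \ref{lem: approximation by restricted chronological formulas} every chronological formula is uniformly approximable on $D_{0,\mathbf{r}}$ by restricted ones, uniformly over all structures. This holds both in $\cQ$ and for the pairing with $\mu$, since $\mu$ is the type of some tuple in some filtration. Hence agreement on $\mathcal{F}_{\chron,m}^0$ forces agreement of chronological types.

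The second step is to show that ($\ast$) for a fixed $\varphi$ is equivalent to $\Lambda_\varphi^{(n)}(\mathbf{X}^{(n)}) \to (\mu,\varphi)$ in probability along $\cU$. By Gelfand theory on the commutative von Neumann algebra $Z(M)$, the condition $\cV(c_\varphi) = a$ for every character $\cV$ is equivalent to $c_\varphi = a\cdot 1$ in $Z(M)$. Here $c_\varphi$ is self-adjoint and bounded, using the uniform bound from Proposition \ref{prop: approx by pointwise function}. By definition of the ultraproduct, $c_\varphi = a$ means
\[
\lim_{n\to\cU} \norm{\Lambda_\varphi^{(n)}(\mathbf{X}^{(n)}) - a}_{L^2(\Omega)} = 0,
\]
where the trace on $L^\infty\otimes\mathbb{M}_n$ restricted to scalar-valued functions is just $\mathbb{E}$. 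Because the functions $\Lambda_\varphi^{(n)}$ are uniformly bounded in $n$, $L^2(\Omega)$-convergence along $\cU$ is equivalent to convergence in probability along $\cU$: use Chebyshev in one direction, and split on the event $\{|\Lambda - a| \ge \delta\}$ in the other. Combining the two steps gives (1)$\iff$(2).

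The main point needing care is the first step, the passage from restricted formulas to all chronological formulas. Its key ingredient is checking that the approximation in Lemma \ref{lem: approximation by restricted chronological formulas} applies to the scalar evaluation in $\cQ$ (take $N = \C$, $E = \tau$) as well as to $\mu$. A second point is that $\Lambda_\varphi^{(n)}$ involves the resolvent-based variables, so the operator-norm bound $R$ on $\mathbf{X}^{(n)}$ is what places $\pi(\mathbf{x})$ in a fixed domain $D_{0,\mathbf{r}}$ where the uniform approximation holds.
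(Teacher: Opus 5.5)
Your proof is correct and follows essentially the same route as the paper. Uniform boundedness of $\Lambda_\varphi^{(n)}$ makes convergence in probability along $\cU$ equivalent to $c_\varphi = (\mu,\varphi)1$ in $Z(M)$, characters separate points of $Z(M)$, and the {\L}o{\'s}-type identity (Proposition \ref{prop: Los theorem} with \eqref{eq: pointwise formula approximation}, packaged as Proposition \ref{prop: matrix quotient filtration}) converts this into the statement about $\cQ$. Your extra step, using Lemma \ref{lem: approximation by restricted chronological formulas} with $N = \C$ and $E = \tau$ to pass from agreement on restricted formulas to equality of full chronological types, is a point the paper's proof leaves implicit, and you handle it correctly.
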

	
	\begin{proof}
		Note that since $\Lambda_\varphi^{(n)}$ is a bounded sequence of random variables, convergence in probability to a constant is equivalent to convergence in $L^2$ to that constant.  Hence, (1) is equivalent to
		\[
		\forall \varphi \in \mathcal{F}_{\chron,m}^0, \quad [\Lambda_\varphi^{(n)}(X_1^{(n)},\dots,X_m^{(n)})]_{n \in \N} = (\mu,\varphi) 1 \text{ in } \prod_{n \to \cU} L^\infty(\Omega,\mathcal{G},\mathbb{P}) = Z(M).
		\]
		Then using \eqref{eq: pointwise formula approximation} with $t_0 = 0$, this is equivalent to
		\[
		\forall \varphi \in \mathcal{F}_{\chron,m}^0, \quad \varphi_E^{\cM}(\mathbf{x}) = (\mu,\varphi) 1 \in Z(M).
		\]
		Since $Z(M)$ is a commutative $\mathrm{C}^*$-algebra, two elements $a$ and $b$ are equal if and only if $\cV(a) = \cV(b)$ for all characters $\cV$.  Thus, using Proposition \ref{prop: Los theorem}, the above condition is equivalent to
		\[
		\forall \varphi \in \mathcal{F}_{\chron,m}^0, \forall \cV, \quad \varphi^{\cQ}(\pi(x_1),\dots,\pi(x_m)) = (\mu,\varphi).
		\]
		After writing $\forall \cV \forall \varphi$ instead of $\forall \varphi \forall \cV$, we see this is equivalent to condition (2).
	\end{proof}
	
	\section{Entropy for chronological types} \label{sec: entropy}
	
	\subsection{Definition of entropy} \label{subsec: entropy definition}
	
	Next, we define the version of free entropy using chronological formulas.  First, note that unlike \cite{JekelModelEntropy}, we cannot directly phrase the definition in terms of open neighborhoods in the type space.  Indeed, the chronological formulas do not have a relevant interpretation in $\mathbb{M}_n$ because our filtration is on $L^\infty(\Omega,\mathcal{F},\mathbb{P}) \otimes \mathbb{M}_n$ rather than $\mathbb{M}_n$ itself. We will in fact define the microstate spaces using the functions $\Lambda_{\varphi}^{(n)}$ from Proposition \ref{prop: approx by pointwise function} (and verify below that the definition is independent of the different choices of $\Lambda_{\varphi}^{(n)}$ as in Remark \ref{rem: independent of choices}).
	
	For convenience, we define the entropy using the Gaussian Ginibre probability measure on $\mathbb{M}_n^m$, which we denote by $\sigma^{(n)}$, rather than Lebesgue measure as the background measure similar to \cite{BCG2003}, and the sign is changed so that the resulting quantity is nonnegative.  We include a tilde to distinguish this from the version with Lebesgue measure.  We also add the subscript $\chron$ to indicate the use of chronological formulas.  We write the definition in terms of some $\cL_{\proc}$-structure $\cM$ satisfying $\rT_{\proc}$, but as we will see in Remark \ref{rem: m prime zero case} below, it is only nontrivial in the case that $\cM$ is chronologically elementarily equivalent to the random matrix ultraproduct quotient structure $\cQ$ discussed in \S \ref{subsec: matrix quotient}.  Hence, for most purposes, the reader can always assume that tuples $\mathbf{x}$ and $\mathbf{y}$ are from $\cQ$.
	
	\begin{definition}[Free entropy for chronological types] \label{def: chronological entropy}
		Fix a choice of functions $\Lambda_{\varphi}^{(n)}$ as in Proposition \ref{prop: approx by pointwise function}.  Consider an $\mathcal{L}_{\operatorname{proc}}$-structure $\cM$ satisfying $\rT_{\stat}$ and $\mathbf{x} \in M_0^m$.  Let $\Phi$ be a finite subset of $\mathcal{F}_{\chron,m}^0$ and $\varepsilon > 0$. Define
		\[
		\Gamma^{(n)}(\mathbf{x}; \Phi, \varepsilon) := \left\{ \mathbf{X} \in \mathbb{M}_n^m: \max_{\varphi \in \Phi} |\Lambda_{\varphi}^{(n)}(\mathbf{X}) - \varphi^{\cM}(\mathbf{x})| < \varepsilon \right\}.
		\]
		Let $\sigma^{(n)} \in \mathcal{P}(\mathbb{M}_n^m)$ be the probability distribution of a Ginibre $m$-tuple as defined in \S \ref{subsec: free probability}.  Define
		\[
		\tilde{\chi}_{\chron}^{\cU}(\mathbf{x}; \Phi, \varepsilon) := \lim_{n \to \cU} -\frac{1}{n^2} \log \sigma^{(n)}(\Gamma^{(n)}(\mathbf{x}; \Phi, \varepsilon))
		\]
		and
		\[
		\tilde{\chi}_{\chron}^{\cU}(\mathbf{x}) = \sup_{\Phi, \varepsilon} \tilde{\chi}^{\cU}(\mathbf{x}; \Phi, \varepsilon)
		\]
	\end{definition}
	
	\begin{definition}[Conditional free entropy for chronological types] \label{def: conditional chronological entropy}
		Fix a choice of functions $\Lambda_{\varphi}^{(n)}$ as in Proposition \ref{prop: approx by pointwise function}.  Let $m \in \N$ and $m' \in \N \cup \{0,\infty\}$.  Consider an $\mathcal{L}_{\operatorname{proc}}$-structure $\cM$ satisfying $\rT_{\stat}$ and $\mathbf{x} \in M_0^m$ and $\mathbf{y} \in M_0^{m'}$.  Let $\Phi$ be a finite subset of $\mathcal{F}_{\chron,m+m'}^0$ and $\varepsilon > 0$.  Let $\mathbf{Y}^{(n)} \in (\mathbb{M}_n)_{\sa}^{m'}$.  Define
		\[
		\Gamma^{(n)}(\mathbf{x} \mid \mathbf{Y}^{(n)} \rightsquigarrow \mathbf{y}; \Phi, \varepsilon) := \left\{ \mathbf{X} \in \mathbb{M}_n^m: \max_{\varphi \in \Phi} |\Lambda_{\varphi}^{(n)}(\mathbf{X},\mathbf{Y}^{(n)}) - \varphi^{\cM}(\mathbf{x},\mathbf{y})| < \varepsilon \right\}.
		\]
		Here $\mathbf{y}$ is a possibly infinite tuple, but each formula $\varphi$ depends on only finitely many of its coordinates.  Define
		\[
		\tilde{\chi}_{\chron}^{\cU}(\mathbf{x} \mid \mathbf{Y}^{(n)} \rightsquigarrow \mathbf{y}; \Phi, \varepsilon) := \lim_{n \to \cU} -\frac{1}{n^2} \log \sigma_m^{(n)}(\Gamma^{(n)}(\mathbf{x} \mid \mathbf{Y}^{(n)} \rightsquigarrow \mathbf{y}; \Phi, \varepsilon))
		\]
		and
		\[
		\tilde{\chi}_{\chron}^{\cU}(\mathbf{x} \mid \mathbf{Y}^{(n)} \rightsquigarrow \mathbf{y}) = \sup_{\Phi, \varepsilon} \tilde{\chi}^{\cU}(\mathbf{x} \mid \mathbf{Y}^{(n)} \rightsquigarrow \mathbf{y}; \Phi, \varepsilon)
		\]
	\end{definition}
	
	\begin{remark}
		Definition \ref{def: chronological entropy} can be regarded as a special case of Definition \ref{def: conditional chronological entropy} with $m' = 0$.  Hence, we will prove most of our results in the more general setting of Definition \ref{def: conditional chronological entropy}, and freely use them in the setting of Definition \ref{def: chronological entropy}.
	\end{remark}
	
	\begin{remark}
		As noted in Remark \ref{rem: independent of choices}, there are multiple choices of $\Lambda_{\varphi}^{(n)}$, but any two choices are within a distance of $O(1/n)$ asymptotically.  Suppose $\Lambda_{\varphi}^{(n)}$ and $\widehat{\Lambda}_{\varphi}^{(n)}$ are two possible choices, and denote the microstate space associated to $\tilde{\Lambda}_{\varphi}^{(n)}$ with $\widehat{\Gamma}$ instead of $\Gamma$.  If $\varepsilon' < \varepsilon$, then for sufficiently large $n$, the $O(1/n)$ error in $|\Lambda_{\varphi}^{(n)}$ and $\tilde{\Lambda}_{\varphi}^{(n)}|$ is smaller than $\varepsilon - \varepsilon'$.  Therefore,
		\begin{align*}
			\Gamma^{(n)}(\mathbf{x} \mid \mathbf{Y}^{(n)} \rightsquigarrow \mathbf{y}; \Phi, \varepsilon') &\subseteq \widehat{\Gamma}^{(n)}(\mathbf{x} \mid \mathbf{Y}^{(n)} \rightsquigarrow \mathbf{y}; \Phi, \varepsilon) \\
			\widehat{\Gamma}^{(n)}(\mathbf{x} \mid \mathbf{Y}^{(n)} \rightsquigarrow \mathbf{y}; \Phi, \varepsilon') &\subseteq \Gamma^{(n)}(\mathbf{x} \mid \mathbf{Y}^{(n)} \rightsquigarrow \mathbf{y}; \Phi, \varepsilon).
		\end{align*}
		This results in a corresponding inequality between the Gaussian measures of the two microstate spaces.  Since in Definition \ref{def: conditional chronological entropy}, we take the supremum over $\Phi$ and $\varepsilon$ at the end, we see that $\Lambda_{\varphi}^{(n)}$ and $\widehat{\Lambda}_{\varphi}^{(n)}$ both lead to the same quantity $\tilde{\chi}_{\chron}^{\cU}(\mathbf{x} \mid \mathbf{Y}^{(n)} \rightsquigarrow \mathbf{y})$.
	\end{remark}
	
	We will soon show that $\tilde{\chi}_{\chron}^{\cU}(\mathbf{x} \mid \mathbf{Y}^{(n)} \rightsquigarrow \mathbf{y})$ is actually independent of the particular choice of $\mathbf{Y}^{(n)}$ and only depends on the chronological type of $(\mathbf{x},\mathbf{y})$, so long as $\Lambda_{\varphi}^{(n)}(\mathbf{Y}^{(n)}) \to \varphi(\mathbf{y})$ for $\varphi \in \mathcal{F}_{\chron,m'}^0$ (see Theorem \ref{thm: final formula for entropy}).
	
	\subsection{Stochastic variational formula for entropy} \label{subsec: stochastic variational}
	
	In order to evaluate the entropy in terms of chronological formulas, we first express the entropy in an equivalent form using an analog of the pressure functional on $\mathcal{F}_{\chron,m+m'}^0$.  The manipulations in the proof of the lemma are fairly standard in large deviations theory (compare Varadhan's lemma and Bryc's theorem), and analogous to Hiai's expression of free microstate entropy in terms of modified free pressure in \cite[\S 6]{Hiai2005}.
	
	\begin{lemma} \label{lem: microstates versus pressure}
		Let $m \in \N$ and $m' \in \N \cup \{0,\infty\}$.  Fix tuples $\mathbf{x}$ and $\mathbf{y}$ from $M_0$.  For each $\varphi \in \mathcal{F}_{\chron,m+m'}^0$, let
		\begin{equation} \label{eq: pressure functional}
			\mathcal{P}_{m,m'}^{(n)}(\varphi)(\mathbf{Y}^{(n)}) = -\frac{1}{n^2} \log \int_{\mathbb{M}_n^m} \exp( -n^2 \Lambda_\varphi^{(n)}(\mathbf{z},\mathbf{Y}^{(n)}))\,d\sigma^{(n)}(\mathbf{z}),
		\end{equation}
		where $\sigma^{(n)}$ is the Ginibre measure.  Then
		\begin{equation} \label{eq: entropy as Laplace transform}
			\tilde{\chi}_{\chron}^{\cU}(\mathbf{x} \mid \mathbf{Y}^{(n)} \rightsquigarrow \mathbf{y}) = \sup_{\varphi \in \mathcal{F}_{\chron,m+m'}^0} \left[ \lim_{n \to \cU} \mathcal{P}_{m,m'}^{(n)}(\varphi)(\mathbf{Y}^{(n)}) - \varphi^{\cM}(\mathbf{x},\mathbf{y}) \right].
		\end{equation}
	\end{lemma}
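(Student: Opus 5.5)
We prove the two inequalities separately, in the style of Varadhan's lemma and Bryc's theorem. Throughout we use that each $\varphi \in \mathcal{F}_{\chron,m+m'}^0$ is globally bounded and $\norm{\cdot}_1$-Lipschitz, uniformly in $n$, by Proposition \ref{prop: approx by pointwise function} and Remark \ref{rem: bounded and Lipschitz}. We also use that $\mathcal{F}_{\chron}^0$ is closed under Lipschitz connectives. Write $\Gamma^{(n)}(\Phi,\varepsilon)$ for the conditional microstate space, and $L := \lim_{n\to\cU}\mathcal{P}^{(n)}_{m,m'}(\varphi)(\mathbf{Y}^{(n)})$.

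\textbf{Inequality ``$\geq$''.} Fix $\varphi$ and $\delta>0$. Take $\Phi=\{\varphi\}$ and $\varepsilon=\delta$. On $\Gamma^{(n)}(\{\varphi\},\delta)$ we have $\Lambda_\varphi^{(n)}(\mathbf{z},\mathbf{Y}^{(n)}) < \varphi^{\cM}(\mathbf{x},\mathbf{y})+\delta$. Hence
\[
\int \exp(-n^2\Lambda_\varphi^{(n)})\,d\sigma^{(n)} \geq e^{-n^2(\varphi^{\cM}(\mathbf{x},\mathbf{y})+\delta)}\,\sigma^{(n)}(\Gamma^{(n)}(\{\varphi\},\delta)).
\]
Taking $-\frac{1}{n^2}\log$ and the limit along $\cU$ gives
\[
L \leq \varphi^{\cM}(\mathbf{x},\mathbf{y})+\delta+\tilde\chi^{\cU}_{\chron}(\mathbf{x}\mid\mathbf{Y}^{(n)}\rightsquigarrow\mathbf{y};\{\varphi\},\delta).
\]
This is at most $\varphi^{\cM}(\mathbf{x},\mathbf{y})+\delta+\tilde\chi_{\chron}^{\cU}(\mathbf{x}\mid\mathbf{Y}^{(n)}\rightsquigarrow\mathbf{y})$. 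Let $\delta\to0$ and take the supremum over $\varphi$.

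\textbf{Inequality ``$\leq$''.} Fix a finite $\Phi$ and $\varepsilon>0$. We construct a penalty formula
\[
\psi = C\min\Bigl(1,\ \varepsilon^{-1}\max_{\varphi\in\Phi}|\varphi - \varphi^{\cM}(\mathbf{x},\mathbf{y})|\Bigr),
\]
a Lipschitz connective applied to $\Phi$, so $\psi\in\mathcal{F}_{\chron,m+m'}^0$. By construction $\psi^{\cM}(\mathbf{x},\mathbf{y})=0$. Off $\Gamma^{(n)}(\Phi,\varepsilon)$ we have $\Lambda_\psi^{(n)}=C$, since connectives commute with $\Lambda$ by the inductive construction, or up to $O(1/n)$ by Remark \ref{rem: independent of choices}. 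Everywhere $\Lambda_\psi^{(n)}\ge 0$. Therefore
\[
\int e^{-n^2\Lambda_\psi^{(n)}}\,d\sigma^{(n)} \leq \sigma^{(n)}(\Gamma^{(n)}(\Phi,\varepsilon)) + e^{-n^2 C}.
\]
If $\tilde\chi^{\cU}_{\chron}(\mathbf{x}\mid\mathbf{Y}^{(n)}\rightsquigarrow\mathbf{y};\Phi,\varepsilon) = a<\infty$, choose $C>a+1$. The first term then dominates, and the bracket for $\psi$ is at least $\min(a,C)=a$. If $a=\infty$, letting $C\to\infty$ shows the supremum is $\infty$. Taking the supremum over $(\Phi,\varepsilon)$ gives ``$\leq$''.

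\textbf{Main obstacle.} The delicate step is ensuring that $\Lambda^{(n)}_\psi$ is exactly, or up to $o(1)$ uniformly, equal to the connective applied to the $\Lambda_\varphi^{(n)}$, so that the indicator-type bounds on the microstate space are valid pointwise on $\mathbb{M}_n^m$. This holds by Inductive case 1 of Proposition \ref{prop: approx by pointwise function}, together with the $O(1/n)$ uniform comparison in Remark \ref{rem: independent of choices}; the latter costs only a harmless shrinking of $\varepsilon$ to $\varepsilon/2$. One should also check that the supremum is insensitive to $\cU$-limits of $o(1)$ perturbations, which is immediate since the errors are multiplied by $n^2$ only inside $\exp$ and are $O(1/n)\cdot n^2$... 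To avoid the issue entirely, we instead use the exact composition $\Lambda_\psi^{(n)}=f(\Lambda_{\varphi_1}^{(n)},\dots)$ from the inductive construction, fixing this choice.
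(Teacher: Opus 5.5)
Your proof is correct and follows the paper's argument. The ``$\geq$'' direction is the same lower bound of the integral over $\Gamma^{(n)}(\{\varphi\},\delta)$, and for ``$\leq$'' your truncated penalty $C\min(1,\varepsilon^{-1}\max_{\varphi\in\Phi}|\varphi-\varphi^{\cM}(\mathbf{x},\mathbf{y})|)$ plays the role of the paper's $a\max_{\varphi\in\Phi}|\varphi-\varphi^{\cM}(\mathbf{x},\mathbf{y})|$ with $a\to\infty$, giving the same bound $\min(\tilde\chi_{\chron}^{\cU}(\mathbf{x}\mid\mathbf{Y}^{(n)}\rightsquigarrow\mathbf{y};\Phi,\varepsilon),C)$; your ``main obstacle'' is handled, as in the paper, by taking $\Lambda^{(n)}$ of the penalty to be the exact composition (and a uniform $O(1/n)$ change in $\Lambda^{(n)}$ would only shift $\mathcal{P}^{(n)}_{m,m'}$ by $O(1/n)$ anyway).
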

	
	\begin{proof}
		($\geq$)  Let $\varphi \in \mathcal{F}_{\chron,m+m'}^0$.  We can examine the microstate space $\Gamma^{(n)}(\mathbf{x} \mid \mathbf{Y}^{(n)} \rightsquigarrow \mathbf{y}; \varphi, \varepsilon)$ where we take $\Phi = \{\varphi\}$.  Then
		\begin{align*}
			\int_{\mathbb{M}_n^m} \exp( -n^2 \Lambda_\varphi^{(n)}(\mathbf{z},\mathbf{Y}^{(n)}))\,d\sigma^{(n)}(\mathbf{z}) &\geq \int_{\Gamma^{(n)}(\mathbf{x} \mid \mathbf{Y}^{(n)} \rightsquigarrow \mathbf{y}; \varphi, \varepsilon)} \exp( -n^2 \Lambda_\varphi^{(n)}(\mathbf{z},\mathbf{Y}^{(n)}))\,d\sigma^{(n)}(\mathbf{z}) \\
			&\geq \int_{\Gamma^{(n)}(\mathbf{x} \mid \mathbf{Y}^{(n)} \rightsquigarrow \mathbf{y}; \varphi, \varepsilon)} \exp( -n^2 (\varphi^{\cM}(\mathbf{x},\mathbf{y}) + \varepsilon))\,d\sigma^{(n)}(\mathbf{z}) \\
			&\geq \sigma^{(n)}(\Gamma^{(n)}(\mathbf{x} \mid \mathbf{Y}^{(n)} \rightsquigarrow \mathbf{y}; \varphi, \varepsilon)) \exp(-n^2 (\varphi^{\cM}(\mathbf{x},\mathbf{y}) + \varepsilon)),
		\end{align*}
		which results in
		\begin{equation} \label{eq: upper bound for pressure using measure}
			\mathcal{P}_{m,m'}^{(n)}(\varphi)(\mathbf{Y}^{(n)}) \leq -\frac{1}{n^2} \log \sigma^{(n)}(\Gamma^{(n)}(\mathbf{x} \mid \mathbf{Y}^{(n)} \rightsquigarrow \mathbf{y}; \varphi, \varepsilon)) + \varphi^{\cM}(\mathbf{x},\mathbf{y}) + \varepsilon,
		\end{equation}
		and so
		\[
		-\frac{1}{n^2} \log \sigma^{(n)}(\Gamma^{(n)}(\mathbf{x} \mid \mathbf{Y}^{(n)} \rightsquigarrow \mathbf{y}; \varphi, \varepsilon)) + \varphi^{\cM}(\mathbf{x},\mathbf{y})  \geq \mathcal{P}_{m,m'}^{(n)}(\varphi)(\mathbf{Y}^{(n)}) - \varphi^{\cM}(\mathbf{x},\mathbf{y}) - \varepsilon.
		\]
		Therefore,
		\begin{align*}
			\tilde{\chi}_{\chron}^{\cU}(\mathbf{y} \mid \mathbf{Y}^{(n)} \rightsquigarrow \mathbf{y}) &\geq
			\tilde{\chi}_{\chron}^{\cU}(\mathbf{x} \mid \mathbf{Y}^{(n)} \rightsquigarrow \mathbf{y}; \varphi,\varepsilon) \\
			&\geq \lim_{n \to \cU} \mathcal{P}_{m,m'}^{(n)}(\varphi)(\mathbf{Y}^{(n)}) - \varphi^{\cM}(\mathbf{x},\mathbf{y}) - \varepsilon.
		\end{align*}
		Since $\varepsilon$ was arbitrary, the proof of ($\geq$) is complete.
		
		($\leq$) Fix a finite subset $\Phi \subseteq \mathcal{F}_{\chron,m+m'}^0$.  Let
		\[
		\psi = \max_{\varphi \in \Phi} |\varphi - \varphi^{\cM}(\mathbf{x},\mathbf{y})| \geq 0,
		\]
		where $(\mathbf{x},\mathbf{y})$ is the tuple fixed in the statement.  Let $a > 0$ and $\varepsilon > 0$.  Note that $\Lambda_{a\psi}^{(n)}$ can be chosen nonnegative.
		\begin{align*}
			\int_{\mathbb{M}_n^m} \exp( -n^2 \Lambda_{a\psi}^{(n)}(\mathbf{z},\mathbf{Y}^{(n)}))\,d\sigma^{(n)}(\mathbf{z}) &= \int_{\Gamma^{(n)}(\mathbf{x} \mid \mathbf{Y}^{(n)} \rightsquigarrow \mathbf{y}; \Phi, \varepsilon)} \exp( -n^2 \Lambda_{a\psi}^{(n)}(\mathbf{z},\mathbf{Y}^{(n)}))\,d\sigma^{(n)}(\mathbf{z}) \\
			& \quad + \int_{\mathbb{M}_n^m \setminus \Gamma^{(n)}(\mathbf{x} \mid \mathbf{Y}^{(n)} \rightsquigarrow \mathbf{y}; \Phi, \varepsilon)} \exp( -n^2 \Lambda_{a\psi}^{(n)}(\mathbf{z},\mathbf{Y}^{(n)}))\,d\sigma^{(n)}(\mathbf{z}) \\
			&\leq \sigma^{(n)}(\Gamma^{(n)}(\mathbf{x} \mid \mathbf{Y}^{(n)} \rightsquigarrow \mathbf{y}; \Phi, \varepsilon)) \\
			&\quad + \int_{\mathbb{M}_n^m \setminus \Gamma^{(n)}(\mathbf{x} \mid \mathbf{Y}^{(n)} \rightsquigarrow \mathbf{y}; \Phi, \varepsilon)} \exp( -n^2 a \varepsilon) \,d\sigma^{(n)}(\mathbf{z}) \\
			&\leq \sigma^{(n)}(\Gamma^{(n)}(\mathbf{x} \mid \mathbf{Y}^{(n)} \rightsquigarrow \mathbf{y}; \Phi, \varepsilon)) + \exp(-n^2 a \varepsilon) \\
			&\leq 2\max \left( \sigma^{(n)}(\Gamma^{(n)}(\mathbf{x} \mid \mathbf{Y}^{(n)} \rightsquigarrow \mathbf{y}; \Phi, \varepsilon)), \exp(-n^2 a \varepsilon) \right).
		\end{align*}
		Therefore,
		\begin{equation} \label{eq: lower bound for pressure using measure}
			\mathcal{P}_{m,m'}^{(n)}(a\psi)(\mathbf{Y}^{(n)}) \geq \min\left( -\frac{1}{n^2} \log \sigma^{(n)}(\Gamma^{(n)}(\mathbf{x} \mid \mathbf{Y}^{(n)} \rightsquigarrow \mathbf{y}; \Phi, \varepsilon)), a \varepsilon \right) - \frac{1}{n^2} \log 2.
		\end{equation}
		Hence, taking $n \to \cU$,
		\begin{align*}
			\min\left( \chi_{\chron}^{\cU}(\mathbf{x} \mid \mathbf{Y}^{(n)} \rightsquigarrow \mathbf{y}; \Phi, \varepsilon), a \varepsilon \right) &\leq \lim_{n \to \cU} \mathcal{P}_{m,m'}^{(n)}(a\psi)(\mathbf{Y}^{(n)}) \\
			&\leq \sup_{\varphi \in \mathcal{F}_{\chron,m+m'}^0} \left[\lim_{n \to \cU} \mathcal{P}_{m,m'}^{(n)}(\varphi)(\mathbf{Y}^{(n)}) - \varphi^{\cM}(\mathbf{x},\mathbf{y}) \right],
		\end{align*}
		where the last statement follows since $a \psi^{\cM}(\mathbf{x},\mathbf{y}) = 0$.  Then on the left hand side, we first take $a \to \infty$, and then take the supremum over $(\Phi,\varepsilon)$ to obtain the desired inequality ($\leq$) in \eqref{eq: entropy as Laplace transform}.
	\end{proof}
	
	We next will evaluate the pressure functional using the Bou{\'e}--Dupuis formula \cite{BoueDupuis1998variational}.
	
	\begin{theorem}[{Bou{\'e}--Dupuis \cite{BoueDupuis1998variational}}]
		Let $B_t$ be a standard Brownian motion on $\R^d$.  Let $\mathcal{G}_t$ be the associated augmented filtration \cite[\S 2]{BoueDupuis1998variational}.  Let $\varphi$ be a globally bounded Borel function $C([0,1];\R^d) \to \R$.  For $\alpha: [0,1] \to \R^d$ progressively measurable, let $\int_0^{\cdot} \alpha$ denote the process given as the antiderivative of $\alpha$.  Then
		\[
		-\log \mathbb{E} \exp(-\varphi(B)) = \inf_\alpha \mathbb{E} \left[ \varphi\left(B + \int_0^{\cdot} \alpha \right) + \frac{1}{2} \int_0^1 |\alpha|^2\,dt \right],
		\]
		where $\alpha$ ranges over the progressively measurable processes.
	\end{theorem}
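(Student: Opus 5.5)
This is the Boué--Dupuis variational formula. I would prove the two inequalities separately, using the Gibbs variational principle (Donsker--Varadhan duality) on Wiener space. That principle says
\[
-\log \mathbb{E} e^{-\varphi(B)} = \inf_{Q \ll P} \left[ \mathbb{E}_Q \varphi + H(Q \mid P) \right],
\]
where $P$ is Wiener measure on $C([0,1];\R^d)$ and $H$ denotes relative entropy. The infimum is attained at $dQ^*/dP = e^{-\varphi}/\mathbb{E} e^{-\varphi}$. The formula follows from Jensen's inequality applied to $\log \mathbb{E}_Q[e^{-\varphi} dP/dQ]$, and it only needs $\varphi$ bounded and Borel.

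\textbf{Upper bound ($\leq$).} Fix a progressively measurable $\alpha$ with $\mathbb{E}\int_0^1 |\alpha|^2 < \infty$; otherwise the right-hand side is $+\infty$ and there is nothing to prove. First I reduce to bounded $\alpha$ by truncating and stopping, using bounded convergence in $\varphi$ and monotone convergence in the energy term. Let $Q$ be the law of $B + \int_0^\cdot \alpha$. By the chain rule for relative entropy, together with Girsanov applied on the joint path space, one gets
\[
H(Q \mid P) \leq \mathbb{E} \tfrac12 \int_0^1 |\alpha_t|^2\,dt.
\]
The chain rule is used because the law of a function of $(B,\alpha)$ has entropy at most that of the joint law. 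Concretely, under the measure $\tilde P$ defined by $d\tilde P = \mathcal{E}(-\int \alpha\,dB)\,dP$, the process $B + \int \alpha$ is a Brownian motion, and $H(P \mid \tilde P) = \mathbb{E}\tfrac12\int|\alpha|^2$. Plugging $Q$ into the Gibbs principle gives $-\log \mathbb{E} e^{-\varphi(B)} \leq \mathbb{E}\,\varphi(B+\int\alpha) + \tfrac12\mathbb{E}\int|\alpha|^2$.

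\textbf{Lower bound ($\geq$).} This is the main step: I must show the optimizer $Q^*$ is realized by a drift in the original filtration. Because $\varphi$ is bounded, the density $F = e^{-\varphi(B)}/\mathbb{E} e^{-\varphi(B)}$ is bounded above and bounded away from $0$. By martingale representation in the augmented Brownian filtration, $M_t = \mathbb{E}[F \mid \mathcal{G}_t] = 1 + \int_0^t \beta_s\,dB_s$. Since $M$ is bounded below, set $u_t = \beta_t / M_t$, so that $M = \mathcal{E}(\int u\,dB)$. Under $Q^*$, Girsanov shows $W = B - \int u$ is a $Q^*$-Brownian motion, and $H(Q^* \mid P) = \mathbb{E}_{Q^*}\tfrac12\int |u|^2$. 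This gives a weak representation: on $(\Omega, Q^*)$, $B = W + \int u$ with $u$ adapted to the filtration of $B$, not necessarily to that of $W$.

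The obstacle is transferring this to a strong control adapted to the filtration of the given Brownian motion. I would first approximate. Replace $\varphi$ by functionals $\varphi_k$ that depend continuously and boundedly on finitely many increments, with $\mathbb{E}|e^{-\varphi_k}-e^{-\varphi}| \to 0$, which is possible by density. For such $\varphi_k$ the optimal drift can be chosen as $u_t = \nabla \log v(t, B_{t_1},\dots)$ with $v$ smooth and Lipschitz in space. This is Föllmer's drift: $v(t,x) = \mathbb{E}[e^{-\varphi_k}\mid B_t = x,\ldots]$ is the heat semigroup applied to $e^{-\varphi_k}$, which is smooth for $t<1$. With such a feedback drift the SDE $X = B + \int u(s,X)\,ds$ has a strong solution on $[0,1-\delta]$, so $\alpha_t = u(t,X)$ is progressively measurable with respect to $\mathcal{G}_t$. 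The law of $X$ is then $Q^*_k$, and the cost equals $-\log \mathbb{E} e^{-\varphi_k}$, up to an error from $[1-\delta,1]$ that vanishes as $\delta \to 0$. Letting $k \to \infty$ and using $\|\varphi_k-\varphi\|$-control of both sides — Varadhan-type stability of the left side and of the infimum on the right — gives ($\geq$) for general bounded Borel $\varphi$. Controlling this approximation for merely Borel $\varphi$ is the most delicate point. There I would follow Boué--Dupuis: approximate by continuous $\varphi_k$ in $L^1(P)$ and use that the entropy bound keeps the laws of controlled processes uniformly absolutely continuous.
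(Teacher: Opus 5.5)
The paper does not prove this statement. It is quoted from Bou{\'e}--Dupuis, and only the terminal-cost case $\varphi(f)=\Phi(f(1))$ (Borell's formula) is used, through Corollary~\ref{cor: application of Boue Dupuis}. Your outline is correct in substance but takes a different route from the original proof, differing only in the hard inequality. Roughly, Bou{\'e} and Dupuis keep the weak representation $B=W+\int u$ of the optimal measure. They approximate the optimal drift in $L^2$ by simple, piecewise-constant-in-time controls, for which the feedback equation can be solved recursively. These therefore become controls adapted to the driving noise, and the passage from continuous to Borel $\varphi$ uses the relative-entropy bound. Their approach never needs regularity of the optimal drift or SDE well-posedness. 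You instead regularize $\varphi$ until the F{\"o}llmer drift is Lipschitz, so the closed-loop SDE is strongly solvable and reproduces the optimizer exactly; this is essentially the argument Lehec uses for Borell's formula. It is shorter and gives an explicit near-optimizer. It is especially transparent in the case the paper needs, where the drift is just $\nabla\log P_{1-t}e^{-\Phi}(X_t)$.

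Two steps do not work as written. In the upper bound, value truncation $\alpha\mathbbm{1}_{|\alpha|\le N}$ combined with ``bounded convergence in $\varphi$'' fails for Borel $\varphi$. The paths $B+\int\alpha^N$ converge to $B+\int\alpha$ uniformly, but $\varphi(B+\int\alpha^N)$ need not converge. Stop at the energy level instead: take $\alpha^N=\alpha\mathbbm{1}_{[0,\tau_N]}$ with $\tau_N=\inf\{t:\int_0^t|\alpha|^2\ge N\}$. Then Novikov holds, and the controlled path equals $B+\int\alpha$ on $\{\int_0^1|\alpha|^2<N\}$, so $\mathbb{E}\varphi$ converges trivially. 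Alternatively, pass to the limit in $H(\cdot\mid P)$ by lower semicontinuity and apply the Gibbs principle directly to the law of $B+\int\alpha$.

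In the lower bound, continuity of $g$ in $\varphi_k=g(B_{t_1},\dots,B_{t_n})$ is not enough. On $[t_i,t_{i+1})$ the F{\"o}llmer drift is $\nabla_x\log v_i(t,B_{t_1},\dots,B_{t_i},B_t)$. Its Lipschitz constant in $x$ can blow up as $t\uparrow t_{i+1}$ for every $i$, not only at $t=1$, so a single cutoff on $[1-\delta,1]$ does not give a strong solution. Take $g\in C_b^2$ with $\|g\|_\infty\le\|\varphi\|_\infty$; such functionals are still dense in $L^1(P)$. Then the drift is bounded and Lipschitz on all of $[0,1]$, and the path-dependent SDE has a unique strong solution. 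Yamada--Watanabe identifies its law with $Q_k^*$, and no $\delta$-cutoff is needed. The final limit is then easier than you suggest. Since $dQ_k^*/dP\le e^{2\|\varphi\|_\infty}$, you get $\mathbb{E}_{Q_k^*}|\varphi-\varphi_k|\le e^{2\|\varphi\|_\infty}\|\varphi-\varphi_k\|_{L^1(P)}\to0$ directly, with no uniform-integrability argument.
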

	
	For the purposes of this work, we only consider the case where $\varphi(f)$ is a fixed function of $f(1)$ for $f \in C([0,1],\R^d)$ (in this case, the Bou{\'e}--Dupuis formula specializes to Borell's formula).  Since $(\mathbb{M}_n)_{\sa}^m \cong \R^{mn^2}$ as an inner-product space, we can apply the lemma to $(\mathbb{M}_n)_{\sa}^m$ with the Hermitian matrix Brownian motion, which is the standard Brownian motion in $(\mathbb{M}_n)_{\sa}^m$ up to renormalization.  This yields the following result, the proof of which can be found in \cite[Lemma 6.2]{GJNPmatrixcontrols}.
	
	\begin{corollary} \label{cor: application of Boue Dupuis}
		Consider the augmented filtration $\mathcal{G}_t$ generated by a Hermitian Brownian motion $(Z_{t,j}^{(n)})_{t \in [0,1],j \in [m]}$.  Let $\varphi: \mathbb{M}_n^m \to \R$ be bounded and Borel.  Then
		\[
		-\frac{1}{n^2} \log \int_{\mathbb{M}_n^m} \exp(-n^2 \varphi) \,d\sigma^{(n)} = \inf_{\mathbf{\alpha}} \mathbb{E} \left[ \varphi\left(\mathbf{Z}_t + \int_0^1 \mathbf{\alpha} \right) + \frac{1}{2} \int_0^1 \norm{\mathbf{\alpha}}_2^2 \,dt \right],
		\]
		where $\mathbf{Z}_t = (Z_{t,1}, \dots, Z_{t,m})$ and $\mathbf{\alpha} = (\alpha_1,\dots,\alpha_m): \Omega \times [0,1] \to \mathbb{M}_n^m$ is progressively measurable.
	\end{corollary}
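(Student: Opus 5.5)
The plan is to reduce the corollary to the Boué--Dupuis formula on the real Euclidean space underlying $\mathbb{M}_n^m$, by a change of variables. First I identify $\mathbb{M}_n^m$ with $\R^{2mn^2}$ as a real inner product space, using the real inner product $\re\ip{\cdot,\cdot}_{\tr_n}$. Writing each entry through its operator real and imaginary parts, $Z_{t,j} = \re Z_{t,j} + i \im Z_{t,j}$, we see that $(\re Z_{t,j}, \im Z_{t,j})_{j \le m}$ is a $2m$-tuple of independent Hermitian (GUE) Brownian motions. Under the normalization $\norm{\cdot}_{\tr_n}$, each of these is a standard Brownian motion on $(\mathbb{M}_n)_{\sa} \cong \R^{n^2}$ scaled by the factor $1/n$; this matches the density $\propto \exp(-n^2\norm{X}_2^2/2)$. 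Hence there is a real linear isometry $\iota: \R^{d} \to \mathbb{M}_n^m$, with $d = 2mn^2$, such that $\mathbf{Z}_t = n^{-1}\iota(B_t)$ for a standard Brownian motion $B$ on $\R^d$. This gives $\mathbf{Z}_1 \sim \sigma^{(n)}$, and the augmented filtrations of $\mathbf{Z}$ and $B$ coincide.

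Next I apply the Boué--Dupuis theorem to the bounded Borel functional
\[
\Phi(f) = n^2 \varphi\bigl(n^{-1}\iota(f(1))\bigr).
\]
Since $\Phi(B) = n^2 \varphi(\mathbf{Z}_1)$, the left-hand side of that theorem equals $-\log \int \exp(-n^2\varphi)\,d\sigma^{(n)}$. On the right-hand side, for a progressively measurable $\beta$ on $\R^d$, I set $\boldsymbol{\alpha}_t = n^{-1}\iota(\beta_t)$. Then
\[
n^{-1}\iota\Bigl(B_1 + \int_0^1 \beta\Bigr) = \mathbf{Z}_1 + \int_0^1 \boldsymbol{\alpha}.
\]
Because $\iota$ is an isometry, $\frac12 |\beta_t|^2 = \frac{n^2}{2}\norm{\boldsymbol{\alpha}_t}_2^2$. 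The correspondence $\beta \leftrightarrow \boldsymbol{\alpha}$ is a bijection between progressively measurable processes for the common filtration. Therefore the infimum equals
\[
n^2 \inf_{\boldsymbol{\alpha}} \mathbb{E}\Bigl[\varphi\Bigl(\mathbf{Z}_1 + \int_0^1 \boldsymbol{\alpha}\Bigr) + \frac12 \int_0^1 \norm{\boldsymbol{\alpha}_t}_2^2\,dt\Bigr].
\]
Dividing by $n^2$ gives the claim; the $\mathbf{Z}_t$ in the statement is to be read as the terminal value $\mathbf{Z}_1$.

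The only step needing care is the bookkeeping of normalizations: the factor $n$ between the normalized Hilbert--Schmidt norm and the Euclidean norm, and the paper's convention of no $1/\sqrt2$ for Ginibre matrices. I would check this once against the stated density of $\sigma^{(n)}$ to confirm that the scaling $n^{-1}$ makes $\mathbf{Z}_1$ exactly Ginibre-distributed. Everything else is a direct substitution into the Boué--Dupuis formula, as in \cite[Lemma 6.2]{GJNPmatrixcontrols}.
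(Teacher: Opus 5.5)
Your proposal is correct and follows the paper's own route. The paper likewise identifies the matrix space with Euclidean space, notes that the matrix Brownian motion is a standard Brownian motion up to the factor $n^{-1}$, and applies the Bou\'e--Dupuis (here Borell) formula to the terminal-value functional, deferring the details to \cite[Lemma 6.2]{GJNPmatrixcontrols}. Your bookkeeping of the $n^2$ scaling checks out, and you resolve the statement's typos consistently: $\mathbf{Z}_t$ should read $\mathbf{Z}_1$, and the Brownian motion should be Ginibre-valued, with Hermitian real and imaginary parts.
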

	
	For $\varphi \in \mathcal{F}_{\chron,m+m'}^0$, we will approximate
	\[
	\inf_{\mathbf{\alpha}} \mathbb{E} \left[ \Lambda_\varphi^{(n)}\left(\mathbf{Z}_t + \int_0^1 \mathbf{\alpha}, \mathbf{Y}^{(n)} \right) + \frac{1}{2} \int_0^1 \norm{\mathbf{\alpha}}_2^2 \,dt \right]
	\]
	by $\Lambda_\psi^{(n)}(\mathbf{Y}^{(n)})$ for some other restricted chronological formulas $\psi$.  The idea is that if we discretize time in the optimization problem and make an operator norm cutoff, then the problem can be expressed in terms of chronological formulas.  The discretization and cutoff argument proceeds largely in a similar way to \cite[\S 4]{GJNPmatrixcontrols} with a couple of differences:  We have a more explicit problem without any classical $1$-dimensional Brownian motion.  The ``running cost'' term from \cite{GJNPmatrixcontrols} is here simply quadratic.  On the other hand, we \emph{do not} have any convexity hypothesis on $\Lambda_{\varphi}^{(n)}$, so we use only the boundedness and the $\norm{\cdot}_1$-Lipschitzness.
	
	\begin{lemma} \label{lem: discrete approx of matrix pressure}
		Let $m \in \N$ and $m' \in \N \cup \{0,\infty\}$.  For $\varphi \in \mathcal{F}_{\chron,m+m'}^0$, let
		\begin{equation} \label{eq: discretized HJB operator}
			(T_{t,m,m',r} \varphi)(\mathbf{x},\mathbf{y}) = \inf_{\gamma_1,\dots,\gamma_m \in D_{0,r}} \left[(S_t \varphi)((x_j + z_{0,t,j} + t \re(\gamma_j))_{j=1}^m, \mathbf{y}) + \frac{1}{2} t \norm{\re \gamma_j}_2^2 \right].
		\end{equation}
		Let $L$ be a $\norm{\cdot}_1$-Lipschitz constant for $\Lambda_{\varphi}^{(n)}$ in each matrix argument for all $n$.  Then for $\mathbf{Y} \in (\mathbb{M}_n)_{\sa}^m$,
		\begin{equation} \label{eq: discrete approx of matrix pressure}
			|\Lambda_{(T_{1/k,m,m',r})^k \varphi}^{(n)}(0,\mathbf{Y}) - \mathcal{P}^{(n)}(\varphi)(\mathbf{Y})| \leq Lm^{1/2} \norm{F_3(Z_{0,1,1}^{(n)}) - Z_{0,1,1}^{(n)}}_{L^2} + \frac{L^2}{k} \text{ for } r \geq L.
		\end{equation}
	\end{lemma}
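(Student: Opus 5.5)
We compare both sides with the continuous-time control problem from Corollary \ref{cor: application of Boue Dupuis}. Write $V^{(n)}$ for its right-hand side with $\varphi$ replaced by $\Lambda_\varphi^{(n)}(\cdot,\mathbf{Y})$, so that $V^{(n)} = \mathcal{P}^{(n)}(\varphi)(\mathbf{Y})$.

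The first step is to unwind the recursion of Proposition \ref{prop: approx by pointwise function} (inductive cases 2 and 3) applied to $(T_{1/k,m,m',r})^k\varphi$. The outcome should be that $\Lambda^{(n)}_{(T_{1/k})^k\varphi}(0,\mathbf{Y})$ is exactly the value of a discrete-time stochastic control problem. At each step $i=1,\dots,k$ one observes the truncated increments $F_{3/\sqrt k}(\mathbf{Z}^{(n)}_{(i-1)/k,i/k})$ and chooses a control $\gamma_i\in D_r^{\mathbb{M}_n}$ measurable in the past increments. The terminal cost is
\[
\Lambda^{(n)}_\varphi\Bigl(\sum_i F(\mathbf{Z}_{\mathrm{incr},i}) + \tfrac1k\textstyle\sum_i \re\gamma_i,\mathbf{Y}\Bigr),
\]
and the running cost is $\tfrac{1}{2k}\sum_i\norm{\re\gamma_i}_2^2$. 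This is the dynamic programming principle: the nested $\mathbb{E}$ and $\inf$ in the definition of $\Lambda$ match the recursion for the value function. A measurable selection argument, as in inductive case 2, is what lets the infimum over constants be replaced by an infimum over adapted choices. Stationarity (the shift $S_{1/k}$ acting as a time translation) makes the successive increments line up with $\mathbf{Z}_{(i-1)/k,i/k}$.

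Next we remove the truncation. Replacing $F_{3/\sqrt k}$ of the increments by the untruncated increments changes the terminal input by at most $\bigl(\sum_j\norm{\sum_i (F-\mathrm{id})(Z^{(n)}_{\mathrm{incr},i,j})}_1^2\bigr)^{1/2}$. Bounding $\norm{\cdot}_1\le\norm{\cdot}_2$, using the $L$-Lipschitz property, and using Brownian scaling together with independence of the increments to compare with a single time-$1$ increment, this yields the term $Lm^{1/2}\norm{F_3(Z_{0,1,1}^{(n)})-Z_{0,1,1}^{(n)}}_{L^2}$. I expect to need some care here, since truncations of the $k$ separate increments do not literally sum to the truncation of their sum. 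My first attempt would be to rescale each increment by $\sqrt k$ and argue that the truncation errors are identically distributed with matching $L^2$-norm after scaling.

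Finally, we compare the discrete problem to $V^{(n)}$.

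For the direction $V^{(n)}\le$ discrete, a piecewise-constant adapted control $\alpha_t=\re\gamma_i$ on $[(i-1)/k,i/k)$ is admissible in the continuous problem and has the same cost, so there is no loss.

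For the direction discrete $\le V^{(n)}+L^2/k$, this is the main obstacle. Given a near-optimal continuous control $\alpha$, we may take $\alpha$ to be self-adjoint, since the imaginary part only adds cost. We first argue that we can restrict to $\norm{\alpha_t}_\infty\le L$. The heuristic is that the optimal drift is $-\nabla$ of a value function that is $L$-Lipschitz in $\norm{\cdot}_1$, hence has gradient bounded in operator norm by $L$. Rigorously, one can project $\alpha$ onto the $L$-ball using Lemma \ref{lem: projection onto R ball}(3): the quadratic cost drops by at least $L\norm{\alpha-\alpha'}_1$, which dominates the change in the terminal cost, so $r\ge L$ suffices. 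We then replace $\alpha$ on each interval by its conditional average given $\mathcal{G}_{(i-1)/k}$. By Jensen this does not increase the quadratic cost, but it does change the terminal point, and the resulting error must be controlled by $L^2/k$.

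For that last error I would try a stepwise dynamic programming comparison. On each subinterval, the one-step discrete operator differs from the continuous semigroup by $O(L\cdot L/k)$ per unit time, because a drift of size at most $L$ acting over time $1/k$ perturbs the Lipschitz cost by $O(L^2/k^2)$. Summing over the $k$ steps gives the $L^2/k$ term.
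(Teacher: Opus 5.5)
Your unwinding of $(T_{1/k,m,m',r})^k\varphi$ into a discrete-time control problem matches the paper. So do the projection argument via Lemma~\ref{lem: projection onto R ball}(3), which removes the operator-norm cutoff, and the inequality $V^{(n)} \le {}$(discrete value) via piecewise-constant controls.

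The gap is in the reverse inequality. Suppose you replace $\alpha$ on $I_i = [(i-1)/k, i/k)$ by $\beta_i = k\,\mathbb{E}[A_i \mid \mathcal{G}_{(i-1)/k}]$, where $A_i = \int_{I_i} \alpha_t\,dt$. Then the terminal point moves by the martingale sum $\sum_i (A_i - \mathbb{E}[A_i \mid \mathcal{G}_{(i-1)/k}])$. Each term has operator norm at most $2L/k$, so orthogonality of martingale differences bounds its $L^2$-size only by $O(L/\sqrt{k})$. This substitution by itself therefore only yields an error bound of order $L^2/\sqrt{k}$, not $L^2/k$.

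Your stepwise dynamic-programming repair rests on a miscount. A drift of operator norm at most $L$ acting for time $1/k$ displaces the state by up to $L/k$. It therefore changes an $L$-Lipschitz cost by up to $L \cdot L/k = L^2/k$, not $L^2/k^2$, and summing over $k$ steps gives $O(L^2)$ with no decay. This is not just a lossy bound. Already for $n = m = 1$ and terminal cost $L|x|$, letting the control react to the Brownian path inside a single step of length $h$ lowers the one-step value at the kink by an amount of order $L^2 h$. An $O(L^2/k^2)$ per-step error would need second-order (semiconcavity) bounds on the intermediate value functions, uniform in $n$. These are not available for costs that are merely $\norm{\cdot}_1$-Lipschitz: already the heat-smoothing at time $t$ of $L|\tr_n(X) - c|$ has curvature of order $Ln/\sqrt{t}$ in the direction of the identity.

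The missing idea is that the terminal cost depends on $\alpha$ only through the total displacement $\int_0^1 \alpha_t\,dt$. So instead of conditioning the control, delay it by one step: set $\beta_1 = 0$ and $\beta_i = k \int_{(i-2)/k}^{(i-1)/k} \alpha_t\,dt$ for $2 \le i \le k$. Then:
\begin{itemize}
\item each $\beta_i$ is $\mathcal{G}_{(i-1)/k}$-measurable because $\alpha$ is progressively measurable, and $\norm{\beta_i} \le L$;
\item Cauchy--Schwarz gives $\frac{1}{2k} \sum_i \norm{\beta_i}_2^2 \le \frac{1}{2} \int_0^1 \norm{\alpha_t}_2^2\,dt$;
\item $\frac{1}{k} \sum_i \beta_i - \int_0^1 \alpha_t\,dt = -\int_{1-1/k}^1 \alpha_t\,dt$ has operator norm at most $L/k$ pointwise on $\Omega$.
\end{itemize}
The only error is the Lipschitz cost evaluated at two points at distance $L/k$, which is the $L^2/k$ term, with no stochastic error at all; this is what the paper does. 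Your conditional-averaging version, made rigorous through the martingale orthogonality above, would still give a bound of order $L^2/\sqrt{k}$. That suffices for Theorem~\ref{thm: final formula for entropy}, where $k \to \infty$ after $n \to \cU$, but it does not prove the lemma as stated.

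A smaller point concerns your truncation step. Let $D_i = F_{3/\sqrt{k}}(Z_{(i-1)/k,i/k,j}^{(n)}) - Z_{(i-1)/k,i/k,j}^{(n)}$. Identical distribution and Brownian scaling together with the triangle inequality only give $\norm{\sum_i D_i}_{L^2} \le \sqrt{k}\, \norm{F_3(Z_{0,1,1}^{(n)}) - Z_{0,1,1}^{(n)}}_{L^2}$. To get a term independent of $k$, you need that the $D_i$ are independent and centered ($F_R$ is odd and the Gaussian law is symmetric), hence orthogonal. Then $\norm{\sum_i D_i}_{L^2}^2 = k \norm{D_1}_{L^2}^2 = \norm{F_3(Z_{0,1,1}^{(n)}) - Z_{0,1,1}^{(n)}}_{L^2}^2$.
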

	
	\begin{proof}
		\textbf{Step 1:} Assume that $\Lambda$ is constructed as in the proof of Proposition \ref{prop: approx by pointwise function}.  Then
		\[
		\Lambda_{T_{t,m,m',r} \varphi}^{(n)}(\mathbf{X},\mathbf{Y}) = \inf_{\substack{\mathbf{\beta} \in \mathbb{M}_n^m \\ \norm{\beta_j} \leq r}} \mathbb{E} \left[ \Lambda_\varphi^{(n)}(\mathbf{X} + (F_{3\sqrt{t}}(Z_{0,t,j}^{(n)}))_{j \in [m]} + t \mathbf{\beta}, \mathbf{Y}) + \frac{1}{2} t \norm{\mathbf{\beta}}_2^2 \right],
		\]
		where we replaced $\re(\gamma_j)$ with $\beta_j$.  Then a straightforward induction using conditioning shows that
		\[
		\Lambda_{(T_{1/k,m,m',r})^k \varphi}^{(n)}(\mathbf{X},\mathbf{Y}) = \inf_{\mathbf{\beta}_1, \dots, \mathbf{\beta}_k} \mathbb{E} \left[ \Lambda_\varphi^{(n)}\left(\mathbf{X} + \sum_{i=1}^k (F_{3\sqrt{1/k}}(Z_{(i-1)/k,i/k,j}^{(n)}))_{j \in [m]} + \frac{1}{k} \sum_{i=1}^k \mathbf{\beta}_i, \mathbf{Y} \right) + \frac{1}{2k} \sum_{i=1}^k \norm{\mathbf{\beta}_i}_2^2 \right],
		\]
		where $\mathbf{\beta}_i$ ranges over $\mathcal{F}_{(i-1)/k}$-measurable functions into $\mathbb{M}_n^m$ with $\norm{\beta_{i,j}} \leq r$ for $j \in [m]$.
		
		\textbf{Step 2:} Our next goal is to remove the cutoff function $F_{3\sqrt{1/k}}$ applied to the matrix Brownian motion. Since $F_{3\sqrt{1/k}}(Z_{(i-1)/k,i/k,j}^{(n)}) - Z_{(i-1)/k,i/k,j}^{(n)}$ are i.i.d.\ for $i = 1$, \dots, $k$ with mean zero, we have
		\begin{align*}
			\norm{\sum_{i=1}^k F_{3\sqrt{1/k}}(Z_{(i-1)/k,i/k,j}^{(n)}) - Z_{0,1,j}^{(n)}}_{L^2}^2 &= \norm{\sum_{i=1}^k (F_{\sqrt{1/k}}(Z_{(i-1)/k,i/k,j}^{(n)}) - Z_{(i-1)/k,i/k,j}^{(n)})}_{L^2}^2 \\
			&= k \norm{F_{3\sqrt{1/k}}(Z_{0,1/k,j}^{(n)}) - Z_{0,1/k,j}^{(n)}}_{L^2}^2 \\
			&= \norm{F_3(Z_{0,1,1}^{(n)}) - Z_{0,1,1}^{(n)}}_{L^2}^2
		\end{align*}
		because $F_{3\sqrt{t}}(x) = \sqrt{t}F_3(x/\sqrt{t})$ and $Z_{0,t,j}^{(n)} \sim t^{1/2} Z_{0,1,1}^{(n)}$ in distribution.  Let
		\[
		\Phi_{k,r}^{(n)}(\mathbf{X},\mathbf{Y}) = \inf_{\mathbf{\beta}_1, \dots, \mathbf{\beta}_k} \mathbb{E} \left[ \Lambda_\varphi^{(n)}\left(\mathbf{X} + (Z_{0,1,j}^{(n)})_{j \in [m]} + \frac{1}{k} \sum_{i=1}^k \mathbf{\beta}_i, \mathbf{Y} \right) + \frac{1}{2k} \sum_{i=1}^k \norm{\mathbf{\beta}_i}_2^2 \right].
		\]
		Using the Lipschitz property of $\Lambda_\varphi^{(n)}$ in $\norm{\cdot}_1$ and the fact that $\norm{\cdot}_1 \leq m^{1/2} \norm{\cdot}_2$ for $m$-tuples, we obtain that for each $\mathbf{\beta}_1$, \dots, $\mathbf{\beta}_k$,
		\begin{multline*}
			\biggl|\mathbb{E} \left[ \Lambda_\varphi^{(n)}\left(\mathbf{X} + (Z_{0,1,j}^{(n)})_{j \in [m]} + \frac{1}{k} \sum_{i=1}^k \mathbf{\beta}_i, \mathbf{Y} \right) + \frac{1}{2k} \sum_{i=1}^k \norm{\mathbf{\beta}_i}_2^2 \right] \\
			- \mathbb{E} \left[ \Lambda_\varphi^{(n)}\left(\mathbf{X} + \sum_{i=1}^k (F_{\sqrt{1/k}}(Z_{(i-1)/k,i/k,j}^{(n)}))_{j \in [m]} + \frac{1}{k} \sum_{i=1}^k \mathbf{\beta}_i, \mathbf{Y} \right) + \frac{1}{2k} \sum_{i=1}^k \norm{\mathbf{\beta}_i}_2^2 \right] \biggr| \\
			\leq Lm^{1/2} \norm{F_1(Z_{0,1,1}^{(n)}) - Z_{0,1,1}^{(n)}}_{L^2}.
		\end{multline*}
		Therefore, the same holds for the infimum of these expressions over $\mathbf{\beta}_1$, \dots, $\mathbf{\beta}_k$, that is,
		\begin{equation} \label{eq: substitution with Phi}
			|\Phi_{k,r}^{(n)}(\mathbf{X},\mathbf{Y}) - \Lambda_{(T_{1/k,m,m',r})^k \varphi}^{(n)}(\mathbf{X},\mathbf{Y})| \leq Lm^{1/2} \norm{F_3(Z_{0,1,1}^{(n)}) - Z_{0,1,1}^{(n)}}_{L^2}.
		\end{equation}
		
		\textbf{Step 3:} Our next goal is to remove the operator norm cutoff by comparing $\Phi_{k,r}^{(n)}$ with $\Phi_{k,\infty}^{(n)}$.  It is immediate that
		\[
		\Phi_{k,\infty}^{(n)}(\mathbf{X},\mathbf{Y}) \leq \Phi_{k,r}^{(n)}(\mathbf{X},\mathbf{Y})
		\]
		since the infimum is taken over a larger set, so we focus on the reverse inequality.  Consider a candidate $\beta_i$ for the infimum in $\Phi_{k,\infty}^{(n)}(\mathbf{X},\mathbf{Y})$.  For each $i$, $j$, let $\beta_{i,j}'$ be the projection onto the operator norm ball of radius $r$ as in Lemma \ref{lem: projection onto R ball}, i.e.\ $\beta_{i,j}'$ is the closest point in the $r$-ball to $X$ with respect to $\norm{\cdot}_2$, and let $\mathbf{\beta}_i = (\beta_{i,j})_{j=1}^m$.  Then by Lemma \ref{lem: projection onto R ball},
		\begin{align*}
			\norm*{\frac{1}{k} \sum_{i=1}^k \beta_{i,j} - \frac{1}{k} \sum_{i=1}^k \beta_{i,j}'}_1 &\leq \frac{1}{k} \sum_{i=1}^k \norm{\beta_{i,j} - \beta_{i,j}}_1 \\
			&\leq \frac{1}{2rk} \sum_{i=1}^k (\norm{\beta_{i,j}}_2^2 - \norm{\beta_{i,j}'}_2^2).
		\end{align*}
		Then
		\begin{align*}
			\biggl|\mathbb{E} \Lambda_\varphi^{(n)}\left(\mathbf{X} + (Z_{0,1,j}^{(n)})_{j \in [m]} + \frac{1}{k} \sum_{i=1}^k \mathbf{\beta}_i', \mathbf{Y} \right)
			& - \mathbb{E} \Lambda_\varphi^{(n)}\left(\mathbf{X} + (Z_{0,1,j}^{(n)})_{j \in [m]} + \frac{1}{k} \sum_{i=1}^k \mathbf{\beta}_i, \mathbf{Y} \right) \biggr| \\
			&\leq \frac{L}{k} \sum_{i=1}^k \sum_{j=1}^m \mathbb{E} \norm{\beta_{i,j} - \beta_{i,j}'}_1 \\
			&\leq \frac{L}{2kr} \sum_{i=1}^k \mathbb{E} (\norm{\mathbf{\beta}_i}_2^2 - \norm{\beta_i'}_2^2).
		\end{align*}
		When $r \geq L$, the right-hand side is bounded by $\frac{1}{2k} \sum_{i=1}^k \mathbb{E} (\norm{\mathbf{\beta}_i}_2^2 - \norm{\beta_i'}_2^2)$.  Thus, after rearranging,
		\begin{multline*}
			\mathbb{E} \left[ \Lambda_\varphi^{(n)}\left(\mathbf{X} + (Z_{0,1,j}^{(n)})_{j \in [m]} + \frac{1}{k} \sum_{i=1}^k \mathbf{\beta}_i', \mathbf{Y} \right) + \frac{1}{2k} \sum_{i=1}^k \norm{\mathbf{\beta}_i'}_2^2 \right] \\
			\leq \mathbb{E} \left[ \Lambda_\varphi^{(n)}\left(\mathbf{X} + (Z_{0,1,j}^{(n)})_{j \in [m]} + \frac{1}{k} \sum_{i=1}^k \mathbf{\beta}_i, \mathbf{Y} \right) + \frac{1}{2k} \sum_{i=1}^k \norm{\mathbf{\beta}_i}_2^2 \right].
		\end{multline*}
		Intuitively, the largest possible amount that $\Lambda_\varphi^{(n)}$ could grow when we replace $\beta_i$ by $\beta_i'$ is \emph{smaller} than the reduction in $L^2$-norm that get by replacing $\beta_i$ with $\beta_i'$.  We therefore have
		\begin{equation} \label{eq: removing operator norm cutoff}
			\Phi_{k,\infty}^{(n)}(\mathbf{X},\mathbf{Y}) = \Phi_{k,r}^{(n)}(\mathbf{X},\mathbf{Y}) \text{ for } r \geq L.
		\end{equation}
		
		\textbf{Step 4:} Finally, we pass from the discrete-time optimization problem for $\Phi_{k,\infty}^{(n)}$ to a continuous-time optimization problem.  Let
		\[
		\Phi_{\infty,\infty}^{(n)}(\mathbf{X},\mathbf{Y}) = \inf_{\mathbf{\alpha}} \mathbb{E} \left[ \varphi\left(\mathbf{X} + (Z_{0,t,j}^{(n)})_{j=1}^m + \int_0^1 \mathbf{\alpha}\,dt \right) + \frac{1}{2} \int_0^1 \norm{\mathbf{\alpha}}_2^2 \,dt \right],
		\]
		where $\mathbf{\alpha} = (\alpha_1,\dots,\alpha_m): \Omega \times [0,1] \to (\mathbb{M}_n)_{\sa}^m$ is progressively measurable.  We first observe that
		\[
		\Phi_{\infty,\infty}^{(n)}(\mathbf{X},\mathbf{Y}) \leq \Phi_{k,\infty}^{(n)}(\mathbf{X},\mathbf{Y})
		\]
		because if $\mathbf{\beta}_i$ is a $\mathcal{F}_{(i-1)/k}$-measurable function into $(\mathbb{M}_n)_{\sa}^m$, then we can take
		\[
		\mathbf{\alpha}_t = \sum_{k=1}^m \mathbbm{1}_{[(i-1)/k,i/k)}(t) \mathbf{\beta}_i
		\]
		as a candidate in the infimum for $\Phi_{\infty,\infty}^{(n)}(\mathbf{X},\mathbf{Y})$, noting that
		\[
		\int_0^1 \mathbf{\alpha}_t\,dt = \frac{1}{k} \sum_{k=1}^m \mathbf{\beta}_i
		\]
		and
		\[
		\int_0^1 \norm{\mathbf{\alpha}_t}_2^2\,dt = \frac{1}{k} \sum_{k=1}^m \norm{\mathbf{\beta}_i}_2^2.
		\]
		To obtain a corresponding upper bound on $\Phi_{k,\infty}^{(n)}(\mathbf{X},\mathbf{Y})$, suppose that $\mathbf{\alpha}$ is a candidate for the infimum in $\Phi_{\infty,\infty}^{(n)}$.  By Step 3, we can restrict our attention to candidates $\alpha$ which are bounded in operator norm by $L$.  Let
		\begin{align*}
			\mathbf{\beta}_1 &= 0; \\
			\mathbf{\beta}_i &= \int_{(i-2)/k}^{(i-1)/k} \mathbf{\alpha}_t\,dt \text{ for } 2 \leq i \leq k.
		\end{align*}
		Note that $\mathbf{\beta}_i$ is $\mathcal{F}_{i-1}$-measurable.  We also have
		\begin{align*}
			\frac{1}{k} \sum_{i=1}^k \norm{\mathbf{\beta}_i}_2^2 &= \frac{1}{k} \sum_{i=1}^{k-1} \norm*{\int_{(i-1)/k}^{i/k} \mathbf{\alpha}_t\,dt}_2^2 \\
			&\leq \frac{1}{k} \sum_{i=1}^{k-1} \int_{(i-1)/k}^{i/k} \norm{ \mathbf{\alpha}_t }_2^2\,dt \\
			&\leq \int_0^1 \norm{ \mathbf{\alpha}_t }_2^2\,dt.
		\end{align*}
		Meanwhile,
		\[
		\norm*{\frac{1}{k} \sum_{i=1}^k \mathbf{\beta}_i - \int_0^1 \mathbf{\alpha}_t\,dt}_1 = \norm*{\int_{(k-1)/k}^1 \mathbf{\alpha_t}\,dt }_1 \leq \int_{1-1/k}^1 \norm{\mathbf{\alpha_t} }_2 \,dt \leq \frac{L}{k}
		\]
		Then because $\Lambda_\varphi^{(n)}$ is $L$-Lipschitz, we have
		\begin{multline*}
			\mathbb{E} \left[ \Lambda_\varphi^{(n)}\left(\mathbf{X} + (Z_{0,1,j}^{(n)})_{j \in [m]} + \frac{1}{k} \sum_{i=1}^k \mathbf{\beta}_i, \mathbf{Y} \right) + \frac{1}{2k} \sum_{i=1}^k \norm{\mathbf{\beta}_i}_2^2 \right] \\
			\leq \mathbb{E} \left[ \varphi\left(\mathbf{X} + (Z_{0,t,j}^{(n)})_{j=1}^m + \int_0^1 \mathbf{\alpha}\,dt \right) + \frac{1}{2} \int_0^1 \norm{\mathbf{\alpha}}_2^2 \,dt \right] + 2L \sqrt{\frac{M}{k}}. 
		\end{multline*}
		We therefore conclude that
		\begin{equation} \label{eq: discrete to continuous error}
			\Phi_{\infty,\infty}^{(n)}(\mathbf{X},\mathbf{Y}) \leq \Phi_{k,\infty}^{(n)}(\mathbf{X},\mathbf{Y}) \leq \Phi_{\infty,\infty}^{(n)}(\mathbf{X},\mathbf{Y}) + \frac{L^2}{k}.
		\end{equation}
		
		\textbf{Conclusion:} Combining the results \eqref{eq: substitution with Phi}, \eqref{eq: removing operator norm cutoff}, and \eqref{eq: discrete to continuous error} from the previous steps,
		\begin{equation} \label{eq: overall control problem error}
			|\Lambda_{(T_{1/k,m,m',r})^k \varphi}^{(n)}(\mathbf{X},\mathbf{Y}) - \Phi_{\infty,\infty}^{(n)}(\mathbf{X},\mathbf{Y})| \leq Lm^{1/2} \norm{F_1(Z_{0,1,1}^{(n)}) - Z_{0,1,1}^{(n)}}_{L^2} + \frac{L^2}{k} \text{ for } r \geq L.
		\end{equation}
		By Corollary \ref{cor: application of Boue Dupuis}, we have
		\begin{align*}
			\mathcal{P}_{m,m'}^{(n)}(\varphi)(\mathbf{Y}) &= -\frac{1}{n^2} \log \int_{(\mathbb{M}_n)_{\sa}^m} \exp(-n^2 \varphi(\mathbf{Z},\mathbf{Y})) \,d\sigma^{(n)}(\mathbf{Z}) \\
			&= \inf_{\mathbf{\alpha}} \mathbb{E} \left[ \varphi\left( (Z_{0,t,j}^{(n)})_{j=1}^m + \int_0^1 \mathbf{\alpha}\,dt \right) + \frac{1}{2} \int_0^1 \norm{\mathbf{\alpha}}_2^2 \,dt \right] \\
			&= \Phi_{\infty,\infty}^{(n)}(0,\mathbf{Y}).
		\end{align*}
		Therefore, by taking $\mathbf{X} = 0$ in \eqref{eq: overall control problem error}, we obtain \eqref{eq: discrete approx of matrix pressure}.
	\end{proof}
	
	We next prove an analogous but easier estimate for the infinite-dimensional setting of $\cL_{\proc}$ structures.  First, we need to set up the corresponding optimization problem.  Suppose that $(M,\tau)$ is a tracial von Neumann algebra.  A \emph{simple function} $[0,T] \to L^2(M,\tau)$ is a function of the form
	\[
	f(t) = \sum_{j=1}^J \mathbbm{1}_{A_j}(t) x_j
	\]
	where $x_j \in L^2(M,\tau)$ and the $A_j$'s are disjoint and measurable.  The $L^2$-norm is then defined as
	\[
	\norm{f}_{L_{\Boch}^2([0,t_*],L^2(M))}^2 = \sum_j \operatorname{Leb}(A_j) \norm{x_j}_2^2,
	\]
	where $\operatorname{Leb}$ denotes the Lebesgue measure on $[0,T]$.
	The \emph{Bochner $L^2$-space} $L_{\Boch}^2([0,t_*],L^2(M,\tau))$ is the completion of the vector space of simple functions with respect to this norm.
	
	Let $\cM$ be a tracial von Neumann algebra $(M,\tau)$ equipped with a filtration $M_t$.  A function $\alpha \in L_{\Boch}^2([0,t_*],L^2(M,\tau))$ is \emph{adapted} if for almost every $t$ we have $\alpha(t) \in L^2(M_t,\tau)$.
	
	\begin{lemma} \label{lem: discrete approximation of free pressure}
		For $\cM = ((M_t)_{t \in [0,\infty]}, (z_{s,t,j})_{0\leq s \leq t,j\in \N})$ a model of $\rT_{\proc}$ and $\varphi \in \mathcal{F}_{\chron,m+m'}^0$, define
		\begin{equation} \label{eq: definition of P}
			\mathcal{P}_{m,m'}(\varphi)^{\cM}(\mathbf{y}) = \inf_{\mathbf{\alpha}} \left[ \varphi^{\cM}\left((z_{0,t,j})_{j=1}^m + \int_0^1 \mathbf{\alpha}_t\,dt, \mathbf{y} \right) + \frac{1}{2} \int_0^1 \norm{\alpha_t}_2^2 \,dt \right],
		\end{equation}
		where $\mathbf{\alpha} \in L_{\Boch}^2([0,1];L^2(M_\infty))$ is adapted and bounded in operator norm (note that the bound on operator norm is allowed to depend on $\mathbf{\alpha}$; we merely require each $\mathbf{\alpha}$ to be bounded).  Suppose that $\varphi$ is $L$-Lipschitz with respect to $\norm{\cdot}_1$ (see Remark \ref{rem: bounded and Lipschitz}).  Then
		\begin{equation} \label{eq: approx of free pressure}
			|\mathcal{P}_{m,m'}(\varphi)^{\cM}(\mathbf{y}) - (T_{1/k,m,m',r}^k \varphi)(0,\mathbf{y})| \leq \frac{L^2}{k} \text{ for } r \geq L.
		\end{equation}
		In particular, $\mathcal{P}_{m,m'}(\varphi)$ is a chronologically definable predicate with respect to $\rT_{\proc}$.
	\end{lemma}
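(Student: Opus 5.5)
The plan is to mirror Steps 1, 3 and 4 of the proof of Lemma \ref{lem: discrete approx of matrix pressure}, working directly in $\cM$. Step 2 has no counterpart here, since in $\cM$ the constants $z_{s,t,j}$ are already bounded and additive by $\rT_{\proc}$. Throughout, $\mathbf{z}_{a,b} = (z_{a,b,j})_{j=1}^m$.

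\textbf{Step 1: unfolding the iterated operator.} First unfold $(T_{1/k,m,m',r})^k \varphi$ in $\cM$. By the definition \eqref{eq: discretized HJB operator} of the shift, the $i$-th application of $T_{1/k}$ (counted from the outside) becomes an infimum over $\boldsymbol{\gamma}_i \in D_{(i-1)/k,r}^{\cM}$. Writing $\boldsymbol{\beta}_i = \re \boldsymbol{\gamma}_i$ and using additivity of increments, we get
\[
(T_{1/k}^k\varphi)^{\cM}(0,\mathbf{y}) = \inf_{\boldsymbol{\beta}_1}\cdots\inf_{\boldsymbol{\beta}_k} \Bigl[\varphi^{\cM}\Bigl(\mathbf{z}_{0,1} + \tfrac1k \sum_i \boldsymbol{\beta}_i,\mathbf{y}\Bigr) + \tfrac{1}{2k}\sum_i \norm{\boldsymbol{\beta}_i}_2^2\Bigr],
\]
with $\boldsymbol{\beta}_i$ self-adjoint of norm at most $r$ in $M_{(i-1)/k}$. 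Here stationarity is not needed, since $S_t$ only relabels times. Nested infima with no randomness collapse to a single joint infimum. The inner values depend only on the tuple, not on a stochastic expectation, so this is just an induction on $k$ using Definition \ref{def: shift on formulas}.

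\textbf{Step 2: removing the cutoff.} Let $\Phi_{k,\infty}$ be the same joint infimum without the norm bound $r$, but still with each $\boldsymbol{\beta}_i$ bounded and in $M_{(i-1)/k}$. Then Step 3 of the matrix proof gives $\Phi_{k,\infty} = \Phi_{k,r}$ for $r \geq L$. Replace $\beta_{i,j}$ by its projection $\beta'_{i,j}$ onto the $r$-ball via Lemma \ref{lem: projection onto R ball}. This projection is of the form $u F_r(|\beta|)$, which for self-adjoint $\beta$ equals $F_r(\beta)$, so it stays in $M_{(i-1)/k}$ and stays self-adjoint. The $\norm{\cdot}_1$-Lipschitz bound on $\varphi$ from Remark \ref{rem: bounded and Lipschitz}, together with item (3) of that lemma, shows that the gain in $\varphi$ is dominated by the loss in the quadratic cost.

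\textbf{Step 3: comparing discrete and continuous controls.} Compare $\Phi_{k,\infty}$ with $\mathcal{P}_{m,m'}(\varphi)^{\cM}(\mathbf{y})$.
\begin{itemize}
\item \emph{Lower bound for the continuous problem.} Piecewise constant controls $\alpha_t = \boldsymbol{\beta}_i$ on $[(i-1)/k,i/k)$ are admissible adapted bounded controls. Hence $\mathcal{P} \leq \Phi_{k,\infty}$.
\item \emph{Reverse inequality.} First truncate: applying $F_L$ pointwise in $t$ preserves adaptedness, so by the argument of Step 2 one may assume $\norm{\alpha_t} \leq L$. Then set $\boldsymbol{\beta}_1 = 0$ and $\boldsymbol{\beta}_i = k\int_{(i-2)/k}^{(i-1)/k}\alpha_t\,dt$. (The matrix proof omits the factor $k$; I will insert it so that $\frac1k \sum \boldsymbol{\beta}_i$ is the integral shifted by one slot.) This choice lies in $M_{(i-1)/k}$, since the Bochner integral of an adapted process over $[a,b]$ lies in $L^2(M_b)$ and is bounded. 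Jensen's inequality gives $\frac1k\sum\norm{\boldsymbol{\beta}_i}_2^2 \leq \int_0^1\norm{\alpha}_2^2$.
\item \emph{Error term.} The argument error is $\norm{\int_{1-1/k}^1\alpha}_1 \leq L/k$, where I take the $1$-norm summed over the coordinates so that the constants match. So the $\varphi$-term moves by at most $L \cdot L/k$.
\end{itemize}
This yields \eqref{eq: approx of free pressure}.

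\textbf{Definability, and the main obstacle.} The final claim follows because $(T_{1/k,m,m',r})^k\varphi$ is a chronological formula, hence a chronologically definable predicate in $\mathbf{y}$. The bound $L^2/k$ is uniform over all models of $\rT_{\proc}$, so $\mathcal{P}_{m,m'}(\varphi)$ is a uniform limit of formulas. The main obstacle I expect is in Step 3: justifying that truncation and averaging of a merely Bochner-measurable adapted control stay adapted and bounded. This requires that $L^2(M_t)$ be a closed subspace, and that $F_L$ be $L^2$-continuous by Lemma \ref{lem: Lipschitz functional calculus}, so that it can be applied to simple-function approximants.
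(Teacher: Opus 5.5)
Your plan is essentially the paper's proof. You unfold $(T_{1/k,m,m',r})^k\varphi$ using additivity of the increments, remove the operator-norm cutoff via Lemma \ref{lem: projection onto R ball}, and compare with the continuous problem using piecewise-constant controls in one direction and one-step-lagged averages of a truncated adapted control in the other. Your repairs are also correct: the missing factor $k$ in $\boldsymbol{\beta}_i$, and the checks that $F_r$ of a self-adjoint element and Bochner averages of an adapted bounded control stay self-adjoint, bounded and in $M_{(i-1)/k}$.

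There is one slip, which the paper's sketch shares. If each coordinate is truncated at $L$ and the $1$-norm is summed over coordinates, the tail $\norm{\int_{1-1/k}^1 \boldsymbol{\alpha}_t\,dt}_1$ is bounded only by $mL/k$, not $L/k$. So the argument actually yields $mL^2/k$; the stated $L^2/k$ needs $m=1$ or a Lipschitz constant taken relative to $\max_j \norm{x_j}_1$. This is harmless for the definability conclusion.
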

	
	\begin{proof}
		The argument proceeds similarly to Lemma \ref{lem: discrete approx of matrix pressure}, so we will merely sketch the proof; note that a similar argument is given in detail in \cite[Proposition 4.17]{GJNPmatrixcontrols} as well.
		
		Fix $\cM$.  For $k \in \N$ and $r \in [0,\infty]$, let
		\[
		\Phi_{k,r}(\mathbf{x},\mathbf{y}) = \inf_{\mathbf{\beta}_1, \dots, \mathbf{\beta}_k} \left[ \varphi\left( \mathbf{x} + (z_{0,1,j})_{j=1}^m + \frac{1}{k} \sum_{i=1}^k \mathbf{\beta}_i \right) + \frac{1}{2k} \sum_{i=1}^k \norm{\mathbf{\beta}_i}_2^2 \right],
		\]
		where $\mathbf{\beta}_i \in (\cM_{(i-1)/k})_{\sa}^m$ with $\norm{\mathbf{\beta}_i} \leq r$.  Similar to Step 1 in Lemma \ref{lem: discrete approx of matrix pressure}, we observe that
		\[
		(T_{1/k,m,m',r}^k \varphi)(\mathbf{x},\mathbf{y}) = \Phi_{k,r}(\mathbf{x},\mathbf{y}).
		\]
		Here we rely on the fact that $z_{0,1,j} = \sum_{i=1}^k z_{(i-1)/k,i/k,j}$ under $\rT_{\proc}$.  Step 2 is not relevant here since we do not have a cutoff function applied to the process.  By similar reasoning as Step 3 of Lemma \ref{lem: discrete approx of matrix pressure}, we obtain
		\[
		\Phi_{k,r}(\mathbf{x},\mathbf{y}) = \Phi_{k,\infty}(\mathbf{x},\mathbf{y}) \text{ for } r \geq L.
		\]
		Then as in Step 4, we set
		\[
		\Phi_{\infty,\infty}(\mathbf{x},\mathbf{y}) = \inf_{\mathbf{\alpha}} \varphi^{\cM}\left((z_{0,t,j})_{j=1}^m + \int_0^1 \mathbf{\alpha}_t\,dt, \mathbf{y} \right),
		\]
		where $\mathbf{\alpha} \in L_{\Boch}^2([0,1];L^2(\cM))$ is adapted and bounded in operator norm.  We obtain by similar reasoning that
		\[
		|\Phi_{k,\infty}(\mathbf{x},\mathbf{y}) - \Phi_{\infty,\infty}(\mathbf{x},\mathbf{y})| \leq \frac{L^2}{k}.
		\]
		We observe that
		\[
		\mathcal{P}_{m,m'}(\varphi)^{\cM}(\mathbf{y}) = \Phi_{\infty,\infty}(0,\mathbf{y}),
		\]
		and hence obtain the asserted result for $\cM$.  Since the error bound is uniform for all $\cM$ modeling $\rT_{\proc}$, we see that $\mathcal{P}_{m,m'}(\varphi)$ is a chronologically definable predicate as desired.
	\end{proof}
	
	\begin{theorem} \label{thm: final formula for entropy}
		Let $\cM$ be an $\cL_{\proc}$-structure, let $m \in \N$ and $m' \in \N \cup \{0,\infty\}$, and let $(\mathbf{x},\mathbf{y}) \in M_0^{m+m'}$.  Suppose that $\mathbf{Y}^{(n)}$ is a matrix tuple with $\norm{Y_j^{(n)}} \leq R$ and
		\begin{equation} \label{eq: convergence hypothesis}
			\lim_{n \to \cU} \Lambda_\psi^{(n)}(\mathbf{Y}^{(n)}) = \psi^{\cM}(\mathbf{y}) \text{ for } \psi \in \mathcal{F}_{\chron,m'}^0.
		\end{equation}
		Then
		\begin{equation} \label{eq: convergence of matrix pressure}
			\lim_{n \to \cU} \mathcal{P}^{(n)}(\varphi)(\mathbf{Y}^{(n)}) = \mathcal{P}(\varphi)^{\cM}(\mathbf{y}) \text{ for } \varphi \in \mathcal{F}_{\chron,m+m'}^0.
		\end{equation}
		Consequently,
		\begin{equation} \label{eq: final pressure formula for entropy}
			\tilde{\chi}_{\chron}^{\cU}(\mathbf{x} \mid \mathbf{Y}^{(n)} \rightsquigarrow \mathbf{y}) = \sup_{\varphi \in \mathcal{F}_{\chron,m+m'}^0} \left[ \mathcal{P}(\varphi)^{\cM}(\mathbf{y}) - \varphi^{\cM}(\mathbf{x},\mathbf{y}) \right],
		\end{equation}
		which only depends on the type of $(\mathbf{x},\mathbf{y})$ and not on $\mathbf{Y}^{(n)}$.
	\end{theorem}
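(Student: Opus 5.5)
The plan is to sandwich both pressures between the discretized operators $(T_{1/k,m,m',r})^k\varphi$, which are restricted chronological formulas, and then to pass to the limit using the convergence hypothesis \eqref{eq: convergence hypothesis}. Fix $\varphi \in \mathcal{F}_{\chron,m+m'}^0$ and let $L$ be a uniform $\norm{\cdot}_1$-Lipschitz constant for $\varphi$ and all $\Lambda_\varphi^{(n)}$ (Proposition \ref{prop: approx by pointwise function} and Remark \ref{rem: bounded and Lipschitz}). Fix $r \geq L$ and $k \in \N$.

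The key observation is that $\psi_k(\mathbf{y}) := (T_{1/k,m,m',r})^k\varphi(0,\mathbf{y})$ is again a restricted chronological formula in $m'$ variables. Indeed, $T_{t,m,m',r}$ is built from a shift $S_t$, substitution of $z_{0,t}$ (rule (4) of Definition \ref{def: restricted chronological formulas}), infima over $D_{0,r}$, and the Lipschitz connective $\frac12 t\norm{\re\gamma_j}_2^2$ on a bounded domain. The trace of a polynomial in $\gamma_j$ and $x_j+z+t\re\gamma_j$ is not literally a basic resolvent formula, so I would either verify closure directly or approximate via Lemma \ref{lem: approximation by restricted chronological formulas}, noting that the $\Lambda^{(n)}$ computation in Step 1 of Lemma \ref{lem: discrete approx of matrix pressure} only uses the recursive construction. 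Granting this, Lemma \ref{lem: discrete approx of matrix pressure} gives
\[
|\Lambda_{\psi_k}^{(n)}(\mathbf{Y}^{(n)}) - \mathcal{P}^{(n)}(\varphi)(\mathbf{Y}^{(n)})| \leq Lm^{1/2}\norm{F_3(Z_{0,1,1}^{(n)})-Z_{0,1,1}^{(n)}}_{L^2} + \frac{L^2}{k}.
\]
By \eqref{eq: truncated GUE matrix 2}, the first term tends to $0$ as $n\to\infty$. Lemma \ref{lem: discrete approximation of free pressure} gives $|\mathcal{P}(\varphi)^{\cM}(\mathbf{y}) - \psi_k^{\cM}(\mathbf{y})| \leq L^2/k$. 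The hypothesis \eqref{eq: convergence hypothesis} applied to $\psi_k$ gives $\lim_{n\to\cU}\Lambda_{\psi_k}^{(n)}(\mathbf{Y}^{(n)}) = \psi_k^{\cM}(\mathbf{y})$. Hence
\[
\limsup_{n\to\cU}|\mathcal{P}^{(n)}(\varphi)(\mathbf{Y}^{(n)}) - \mathcal{P}(\varphi)^{\cM}(\mathbf{y})| \leq \frac{2L^2}{k},
\]
and letting $k\to\infty$ yields \eqref{eq: convergence of matrix pressure}. Note that the proofs of Lemma \ref{lem: discrete approx of matrix pressure} and Lemma \ref{lem: discrete approximation of free pressure} use the same $r\ge L$ and the same $L$, so the constants match.

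Equation \eqref{eq: final pressure formula for entropy} then follows by substituting \eqref{eq: convergence of matrix pressure} into Lemma \ref{lem: microstates versus pressure}. The right-hand side is a supremum of expressions $\mathcal{P}(\varphi)^{\cM}(\mathbf{y}) - \varphi^{\cM}(\mathbf{x},\mathbf{y})$. By Lemma \ref{lem: discrete approximation of free pressure}, $\mathcal{P}(\varphi)$ is a chronologically definable predicate, so each expression is determined by the chronological type of $(\mathbf{x},\mathbf{y})$ (viewing $\mathcal{P}(\varphi)$ as a predicate in $(\mathbf{x},\mathbf{y})$ that ignores $\mathbf{x}$). The choice of $\mathbf{Y}^{(n)}$ therefore drops out.

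I expect the main obstacle to be the closure step: that $(T_{1/k})^k\varphi$ lies in (or is uniformly approximable within) $\mathcal{F}^0_{\chron,m'}$, with $\Lambda$ given by the Step 1 formula, so that the hypothesis applies to it. A secondary subtlety is that $\mathbf{y}$ may be an infinite tuple; this is harmless because each formula involves only finitely many coordinates.
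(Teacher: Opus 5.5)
Your proposal is correct and follows the paper's own proof essentially step for step: fix $r \geq L$, sandwich $\mathcal{P}^{(n)}(\varphi)(\mathbf{Y}^{(n)})$ and $\mathcal{P}(\varphi)^{\cM}(\mathbf{y})$ against $(T_{1/k,m,m',r})^k\varphi(0,\cdot)$ using Lemmas \ref{lem: discrete approx of matrix pressure} and \ref{lem: discrete approximation of free pressure}, apply the convergence hypothesis to this discretized formula, let $k \to \infty$, and substitute into Lemma \ref{lem: microstates versus pressure}. The paper also uses the closure point you flag without comment, namely that $(T_{1/k,m,m',r})^k\varphi(0,\cdot)$ is a restricted chronological formula whose $\Lambda^{(n)}$ is given by the Step 1 expression, so your treatment is if anything more careful than the original.
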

	
	\begin{remark} \label{rem: m prime zero case}
		Note in the case that $m' = 0$, the hypothesis \eqref{eq: convergence hypothesis} says that for restricted chronological formulas with no free variables (i.e.\ sentences),
		\[
		\varphi^{\cM} = \lim_{n \to \cU} \Lambda_\varphi^{(n)}.
		\]
		By Proposition \ref{prop: matrix quotient filtration}, this is the same as $\varphi^{\cQ}$ where $\cQ$ is $\cL_{\proc}$-structure obtain as the matrix ultraproduct quotient.  Hence, even if $m' = 0$, we need to assume that $\cM$ is chronologically elementarily equivalent to $\cQ$ (the values of all chronological sentences in the two structures agree).
	\end{remark}
	
	\begin{proof}[Proof of {Theorem \ref{thm: final formula for entropy}}]
		Let $\varphi \in \mathcal{F}_{\chron,m+m'}^0$, and let $M$ be a bound for $\varphi$ and $\Lambda_{\varphi}^{(n)}$, and let $L$ be a $\norm{\cdot}_1$-Lipschitz constant for $\varphi$ and $\Lambda_{\varphi}^{(n)}$.  For $k \in \N$ and $r > 0$, we apply \eqref{eq: approx of free pressure}:
		\[
		|\mathcal{P}_{m,m'}^{(n)}(\varphi)(\mathbf{Y}^{(n)}) - \Lambda_{T_{1/k,m,m',r}^k \varphi}(0,\mathbf{Y}^{(n)})| \leq Lm \norm{Z_{0,1}^{(n)} - F_3(Z_{0,1}^{(n)})}_{L^2} + \frac{L^2}{k} \text{ for } r \geq L.
		\]
		By \eqref{eq: truncated GUE matrix 2}, we have $\norm{Z_{0,1}^{(n)} - F_3(Z_{0,1}^{(n)})}_{L^2} \to 0$ as $n \to \infty$.  Moreover, by assumption \eqref{eq: convergence hypothesis}, $\Lambda_{T_{1/k,m,m',r}^k \varphi}(0,\mathbf{Y}^{(n)}) \to (T_{1/k,m,m',r}^k \varphi)^{\cM}(0,\mathbf{y})$ as $n \to \cU$, and therefore
		\[
		\left| \lim_{n \to \cU} \mathcal{P}_{m,m'}^{(n)}(\varphi)(\mathbf{Y}^{(n)}) - (T_{1/k,m,m',r}^k \varphi)^{\cM}(0,\mathbf{y}) \right| \leq \frac{L^2}{k} \text{ for } \geq L.
		\]
		Furthermore, applying \eqref{eq: approx of free pressure} with the triangle inequality,
		\[
		\left| \lim_{n \to \cU} \mathcal{P}_{m,m'}^{(n)}(\varphi)(\mathbf{Y}^{(n)}) - \mathcal{P}_{m,m'}(\varphi)^{\cM}(\mathbf{y}) \right| \leq \frac{2L^2}{k} \text{ for } r \geq L.
		\]
		Since $r$ and $k$ were arbitrary, we obtain \eqref{eq: approx of free pressure}.  Then by substituting \eqref{eq: approx of free pressure} into \eqref{eq: entropy as Laplace transform}, we obtain \eqref{eq: final pressure formula for entropy}.
	\end{proof}
	
	Now that we know that $\tilde{\chi}_{\chron}^{\cU}(\mathbf{x} \mid \mathbf{Y}^{(n)} \rightsquigarrow \mathbf{y})$ is independent of $\mathbf{Y}^{(n)}$ so long as \eqref{eq: convergence hypothesis} holds, we can make the following definition.
	
	\begin{definition}[Conditional chronological entropy: final version] \label{def: chronological entropy final}
		For a tuple $(\mathbf{x},\mathbf{y})$ in $M_0$ for an $\cL_{\proc}$-structure $\cM$ satisfying $\rT_{\proc}$, if there exists $\mathbf{Y}^{(n)}$ satisfying \eqref{eq: convergence hypothesis}, we define
		\[
		\tilde{\chi}_{\chron}^{\cU}(\mathbf{x} \mid \mathbf{y}) = \tilde{\chi}_{\chron}^{\cU}(\mathbf{x} \mid \mathbf{Y}^{(n)} \rightsquigarrow \mathbf{y}),
		\]
		or equivalently, define $\tilde{\chi}_{\chron}^{\cU}$ by the right-hand side of \eqref{eq: final pressure formula for entropy}.  In case that no $\mathbf{Y}^{(n)}$ exists satisfying \eqref{eq: convergence hypothesis}, we set $\tilde{\chi}_{\chron}^{\cU}(\mathbf{x} \mid \mathbf{y}) = +\infty$.
	\end{definition}
	
	\begin{remark}
		We remark that if $\mathbf{Y}^{(n)}$ is some matrix tuple that does not satisfy \eqref{eq: convergence hypothesis}, then $\tilde{\chi}_{\chron}^{\cU}(\mathbf{x} \mid \mathbf{Y}^{(n)} \rightsquigarrow \mathbf{y})$ will be infinite, because for a sufficiently precise choice of $\Phi$ and $\varepsilon$, the microstate space $\Gamma^{(n)}(\mathbf{x} \mid \mathbf{Y}^{(n)} \rightsquigarrow \mathbf{y}; \Phi, \varepsilon)$ will be empty.  Hence, it is natural to define the entropy to be infinite in this case.  Similarly, note that if $(\mathbf{x},\mathbf{y})$ does not admit any matrix approximations, then $\tilde{\chi}_{\chron}^{\cU}(\mathbf{x} \mid \mathbf{y})$ will always be $+\infty$.
	\end{remark}
	
	Theorem \ref{thm: final formula for entropy} also gives a convenient proof of lower semicontinuity of $\tilde{\chi}_{\chron}^{\cU}$ with respect to the weak-$*$ topology on types (in fact, the weak-$*$ topology induced by $\mathcal{F}_{\chron,m}^0$).
	
	\begin{corollary}[Lower semicontinuity] \label{cor: semicontinuity of entropy}
		Let $m \in \N$ and $m' \in \N \cup \{\infty\}$.  For $k \in \N \cup \{\infty\}$, let $\cM^{(k)}$ be an $\cL_{\proc}$-structure satisfying $\rT_{\stat}$.  Let $(\mathbf{x}^{(k)},\mathbf{y}^{(k)})$ be $(m+m')$-tuples from $M_0^{(k)}$, and assume that
		\begin{equation} \label{eq: limit for LSC}
			\forall \varphi \in \mathcal{F}_{\chron,m+m'}^0, \qquad \lim_{k \to \infty}  \varphi^{\cM^{(k)}}(\mathbf{x}^{(k)},\mathbf{y}^{(k)}) = \varphi^{\cM^{(\infty)}}(\mathbf{x}^{(\infty)},\mathbf{y}^{(\infty)}).
		\end{equation}
		Then
		\begin{equation} \label{eq: LSC}
			\liminf_{k \to \infty} \tilde{\chi}_{\chron}^{\cU}(\mathbf{x}^{(k)} \mid \mathbf{y}^{(k)}) \geq \tilde{\chi}_{\chron}^{\cU}(\mathbf{x}^{(\infty)} \mid \mathbf{y}^{(\infty)}). 
		\end{equation}
	\end{corollary}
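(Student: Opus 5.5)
The plan is to use the variational formula \eqref{eq: final pressure formula for entropy} from Theorem \ref{thm: final formula for entropy}, which writes $\tilde{\chi}_{\chron}^{\cU}(\mathbf{x} \mid \mathbf{y})$ as a supremum over $\varphi \in \mathcal{F}_{\chron,m+m'}^0$ of $\mathcal{P}(\varphi)(\mathbf{y}) - \varphi(\mathbf{x},\mathbf{y})$. Each term of this supremum depends continuously on the restricted chronological type of $(\mathbf{x},\mathbf{y})$. A supremum of continuous functions is lower semicontinuous, so this gives \eqref{eq: LSC}. The substantive work is in two places: showing that each term really is continuous under the convergence \eqref{eq: limit for LSC}, and handling the case where some entropies are $+\infty$ because no matrix approximations exist.

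First I treat the case $\tilde{\chi}_{\chron}^{\cU}(\mathbf{x}^{(k)} \mid \mathbf{y}^{(k)}) < \infty$ along a subsequence realizing the $\liminf$. If there is no such subsequence, the $\liminf$ is $+\infty$ and there is nothing to prove. For these $k$, Definition \ref{def: chronological entropy final} gives matrix tuples $\mathbf{Y}^{(n,k)}$ satisfying \eqref{eq: convergence hypothesis} for $\mathbf{y}^{(k)}$. I need to show the limit structure also admits approximations, meaning there exist $\mathbf{Y}^{(n)}$ with $\Lambda_\psi^{(n)}(\mathbf{Y}^{(n)}) \to \psi^{\cM^{(\infty)}}(\mathbf{y}^{(\infty)})$ as $n \to \cU$. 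I will get these by a diagonal argument. Lemma \ref{lem: separability} makes $\mathcal{F}^0_{\chron,m'}$ effectively countable, so I enumerate $\psi_1,\psi_2,\dots$. For each $\ell$, choose $k_\ell$ so that $|\psi_i^{\cM^{(k_\ell)}}(\mathbf{y}^{(k_\ell)}) - \psi_i^{\cM^{(\infty)}}(\mathbf{y}^{(\infty)})| < 1/\ell$ for $i \le \ell$. Then let $A_\ell \in \cU$ be the set of $n$ on which $\mathbf{Y}^{(n,k_\ell)}$ is within $1/\ell$ on these $\psi_i$. Setting $\mathbf{Y}^{(n)} = \mathbf{Y}^{(n,k_{\ell(n)})}$, where $\ell(n) = \max\{\ell \le n : n \in A_1 \cap \dots \cap A_\ell\}$, gives the required convergence.

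An operator-norm bound $R$ on $\mathbf{Y}^{(n,k)}$ has to be kept uniform in $k$. Since $m'$ may be infinite, I have to check the norms coordinatewise. Because the restricted formulas only use the resolvents $(\re y_j + i)^{-1}$, I expect the bound not to matter for the $\Lambda$'s, and the bounds can be enforced by truncation if needed. This shows the right-hand side of \eqref{eq: LSC} is given by formula \eqref{eq: final pressure formula for entropy} for $\cM^{(\infty)}$. If instead the left side is finite but the limit structure had no approximations, this same argument gives a contradiction. So the limit structure has approximations whenever infinitely many $\cM^{(k)}$ do.

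Next I show that for a fixed $\varphi$, $\mathcal{P}(\varphi)^{\cM^{(k)}}(\mathbf{y}^{(k)}) \to \mathcal{P}(\varphi)^{\cM^{(\infty)}}(\mathbf{y}^{(\infty)})$. Lemma \ref{lem: discrete approximation of free pressure} gives $|\mathcal{P}(\varphi) - (T_{1/k',m,m',r}^{k'}\varphi)(0,\cdot)| \le L^2/k'$ uniformly over models. Here $T^{k'}\varphi$ is built from $\varphi$ by the operations of Definition \ref{def: restricted chronological formulas}, namely shifts, Brownian substitution, an inf over $D_{0,r}$, and the quadratic connective. The main obstacle is that the quadratic term $\frac12 t\norm{\re\gamma_j}_2^2$ and the additive substitution $x_j + z + t\re\gamma_j$ do not literally lie in $\mathcal{F}^0_{\chron}$, which uses resolvent variables. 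I would handle this with Lemma \ref{lem: approximation by restricted chronological formulas}: approximate $T^{k'}\varphi(0,\cdot)$ uniformly on the relevant ball by some $\psi \in \mathcal{F}^0_{\chron,m'}$, up to $\varepsilon$, simultaneously in all models of $\rT_{\filt}$. Hypothesis \eqref{eq: limit for LSC} then gives convergence of $\psi$, and an $\varepsilon/3$ argument yields convergence of $\mathcal{P}(\varphi)$. Convergence of $\varphi^{\cM^{(k)}}(\mathbf{x}^{(k)},\mathbf{y}^{(k)})$ is assumed directly.

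Finally, for each $\varphi$,
\[
\liminf_{k}\tilde{\chi}_{\chron}^{\cU}(\mathbf{x}^{(k)}\mid\mathbf{y}^{(k)}) \ge \lim_k\left[\mathcal{P}(\varphi)^{\cM^{(k)}}(\mathbf{y}^{(k)}) - \varphi^{\cM^{(k)}}(\mathbf{x}^{(k)},\mathbf{y}^{(k)})\right] = \mathcal{P}(\varphi)^{\cM^{(\infty)}}(\mathbf{y}^{(\infty)}) - \varphi^{\cM^{(\infty)}}(\mathbf{x}^{(\infty)},\mathbf{y}^{(\infty)}).
\]
Taking the supremum over $\varphi$ and applying \eqref{eq: final pressure formula for entropy} for $\cM^{(\infty)}$ gives \eqref{eq: LSC}.
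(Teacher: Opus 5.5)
Your route is the paper's: write $\tilde{\chi}_{\chron}^{\cU}$ via \eqref{eq: final pressure formula for entropy}, show each term $\mathcal{P}(\varphi)(\mathbf{y}) - \varphi(\mathbf{x},\mathbf{y})$ is continuous under \eqref{eq: limit for LSC}, and take the supremum. Your diagonal argument for the $+\infty$ case and your remark that $T_{1/k',m,m',r}^{k'}\varphi$ is not literally in $\mathcal{F}^0_{\chron}$ both address points the paper's one-line proof passes over. However, the continuity of $\mathcal{P}(\varphi)$ does not go through as written in the stated generality.

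You approximate $T^{k'}_{1/k',m,m',r}\varphi(0,\cdot)$ using Lemma \ref{lem: approximation by restricted chronological formulas}. That lemma only controls the error on operator-norm balls, and you then run an $\varepsilon/3$ argument on ``the relevant ball''. Nothing in \eqref{eq: limit for LSC} puts the $\mathbf{y}^{(k)}$ in a common ball. Restricted formulas see $\mathbf{y}$ only through resolvents, and they are $\norm{\cdot}_1$-Lipschitz (Remark \ref{rem: bounded and Lipschitz}). For instance, take $\cM^{(k)} = \cQ$, $\mathbf{x}^{(k)} = \mathbf{x}$, and $y^{(k)} = y + k p_k$ with $p_k \in Q_0$ a projection of trace $k^{-2}$. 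Then $\norm{y^{(k)} - y}_1 = 1/k$, so \eqref{eq: limit for LSC} holds, yet $\norm{y^{(k)}} \to \infty$. The error $|\mathcal{P}(\varphi)^{\cM^{(k)}}(\mathbf{y}^{(k)}) - \psi^{\cM^{(k)}}(\mathbf{y}^{(k)})|$ is therefore not controlled by your approximation.

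What you need is an approximation that is uniform over all of $M_0^{m'}$. The paper gets this by working in the completion $\overline{\mathcal{F}}^0_{\chron}$ for the norm $\norm{\cdot}_u$, which takes the supremum over all of $M_0$ rather than over balls (Definition \ref{def: uniform of completion of RCF}). Then \eqref{eq: limit for LSC} extends to this completion by density with no norm bounds. Moreover, $\mathcal{P}(\varphi)$ lies in it, because the error $L^2/k'$ in Lemma \ref{lem: discrete approximation of free pressure} is already uniform in $\mathbf{y}$.

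In your framework, the missing observation is that $T^{k'}_{1/k',m,m',r}\varphi(0,\mathbf{y})$ depends on $\mathbf{y}$ only through $(\re y_j + i)^{-1}$ and $(\im y_j + i)^{-1}$. These have norm at most $1$ however large $\norm{y_j}$ is, while $\gamma_j$ and the $z_{s,t,j}$ range over fixed balls. Hence the polynomial-in-resolvent approximations of $(\re(z_{0,t,j} + t \re \gamma_j) + i)^{-1}$ and of $\frac{1}{2} t \norm{\re\gamma_j}_2^2$ can be made with error independent of $\mathbf{y}$. This gives $\psi \in \mathcal{F}^0_{\chron,m'}$ with $\norm{\psi - T^{k'}_{1/k',m,m',r}\varphi(0,\cdot)}_u \leq \varepsilon$. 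Alternatively, you can truncate the $\mathbf{y}^{(k)}$: this moves the resolvents in $\norm{\cdot}_1$ by at most twice the spectral mass outside $[-R,R]$, which tends to $0$ by \eqref{eq: limit for LSC}.

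The same issue affects your diagonal step. With $\norm{\mathbf{y}^{(k)}}$ unbounded, the $\mathbf{Y}^{(n,k)}$ have no uniform bound, so the truncation you mention must actually be carried out. Also, passing from your countable dense family to all of $\mathcal{F}^0_{\chron,m'}$ relies on the estimate $\lim_{n \to \cU} |\Lambda_{\psi}^{(n)}(\mathbf{Y}^{(n)}) - \Lambda_{\psi'}^{(n)}(\mathbf{Y}^{(n)})| \leq \norm{\psi - \psi'}_{0,\mathbf{r},\rT_{\stat}}$. That estimate follows from Proposition \ref{prop: approx by pointwise function} and Corollary \ref{cor: center-valued definable predicates} only for $\mathbf{Y}^{(n)}$ bounded by $\mathbf{r}$. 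If all the $\mathbf{y}^{(k)}$ lie in a common ball, which is how the corollary is used later in the paper, your argument works as it stands.
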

	
	\begin{proof}
		First, note that \eqref{eq: limit for LSC} extends to $\varphi$ in $\overline{\cF}_{\chron,m+m'}^0$ by density.  For each $\varphi$, the quantity $\mathcal{P}(\varphi)(\mathbf{y}) - \varphi(\mathbf{x},\mathbf{y})$ depends continuously on the chronological type of $(\mathbf{x},\mathbf{y})$ with respect to the topology given by \eqref{eq: limit for LSC}.  Hence, by \eqref{eq: final pressure formula for entropy}, $\tilde{\chi}_{\chron}^{\cU}(\cdot \mid \cdot)$ is the supremum of a family of continuous functions, hence lower semicontinuous.
	\end{proof}
	
	\subsection{Chain rule} \label{subsec: chain rule}
	
	In this section, we prove a chain rule for $\tilde{\chi}_{\chron}^{\cU}$ under iterated conditioning.  The corresponding result for classical entropy of course is well-known.  The main difficulty in establishing such a result in the free probability setting was that there was no analog of Theorem \ref{thm: final formula for entropy} which would show that the entropy of $\mathbf{x}$ conditioned on $\mathbf{y}$ is independent of the choice of matrix approximations $\mathbf{Y}^{(n)}$ for $\mathbf{y}$.
	
	\begin{theorem}[Chain rule for entropy under conditioning] \label{thm: chain rule for entropy}
		Let $\mathbf{x}$, $\mathbf{y}$, $\mathbf{z}$ be $m_1$-, $m_2$-, and $m_3$-tuples from an $\mathcal{L}_{\operatorname{proc}}$-structure $M$, where $m_1, m_2 \in \N$ and $m_3 \in \N \cup \{0,\infty\}$.  Then
		\[
		\tilde{\chi}_{\chron}^{\cU}(\mathbf{x}, \mathbf{y} \mid \mathbf{z}) = \tilde{\chi}_{\chron}^{\cU}(\mathbf{x} \mid \mathbf{y}, \mathbf{z}) + \tilde{\chi}_{\chron}^{\cU}(\mathbf{y} \mid \mathbf{z}).
		\]
	\end{theorem}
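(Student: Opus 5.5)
We compare Gaussian measures of microstate spaces via Fubini, slicing the joint microstate space for $(\mathbf{x},\mathbf{y})$ over microstates $\mathbf{Y}$ for $\mathbf{y}$. The key input is that the conditional entropy of $\mathbf{x}$ given $(\mathbf{Y},\mathbf{Z}^{(n)})$ is controlled \emph{uniformly} over all $\mathbf{Y}$ in a suitable microstate space for $\mathbf{y}$; Theorem \ref{thm: final formula for entropy} and Lemma \ref{lem: discrete approx of matrix pressure} supply this. We may assume $(\mathbf{x},\mathbf{y},\mathbf{z})$ admits matrix approximations. Otherwise both sides are $+\infty$: if $(\mathbf{y},\mathbf{z})$ has no approximations, the joint microstate spaces are eventually empty. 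We fix $\mathbf{Z}^{(n)}$ satisfying \eqref{eq: convergence hypothesis} for $\mathbf{z}$. Since $\sigma_{m_1+m_2}^{(n)} = \sigma_{m_1}^{(n)} \otimes \sigma_{m_2}^{(n)}$, we have
\[
\sigma_{m_1+m_2}^{(n)}(\Gamma^{(n)}(\mathbf{x},\mathbf{y} \mid \mathbf{Z}^{(n)} \rightsquigarrow \mathbf{z};\Phi,\varepsilon)) = \int \sigma_{m_1}^{(n)}(\Gamma^{(n)}(\mathbf{x} \mid (\mathbf{Y},\mathbf{Z}^{(n)}) \rightsquigarrow (\mathbf{y},\mathbf{z});\Phi,\varepsilon))\,d\sigma_{m_2}^{(n)}(\mathbf{Y}).
\]
This holds because the slice at $\mathbf{Y}$ of the joint microstate space is exactly the conditional microstate space for $\mathbf{x}$ with parameters $(\mathbf{Y},\mathbf{Z}^{(n)})$.

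\textbf{Uniform pressure estimate.} Fix $\varphi \in \mathcal{F}_{\chron,m_1+m_2+m_3}^0$ with $\norm{\cdot}_1$-Lipschitz constant $L$, and fix $k$ and $r \geq L$. The formula $\theta = T_{1/k,m_1,m_2+m_3,r}^k \varphi$ is a restricted chronological formula in $(\mathbf{y},\mathbf{z})$. Lemma \ref{lem: discrete approx of matrix pressure} bounds $|\Lambda_\theta^{(n)}(0,\mathbf{Y},\mathbf{Z}^{(n)}) - \mathcal{P}^{(n)}(\varphi)(\mathbf{Y},\mathbf{Z}^{(n)})|$ by $o(1) + L^2/k$, uniformly in $\mathbf{Y}$. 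Lemma \ref{lem: discrete approximation of free pressure} gives $|\theta^{\cM}(0,\mathbf{y},\mathbf{z}) - \mathcal{P}(\varphi)^{\cM}(\mathbf{y},\mathbf{z})| \leq L^2/k$. Hence, if $\theta$ belongs to the finite set $\Phi_2$ defining the $\mathbf{y}$-microstates, then for all $\mathbf{Y} \in \Gamma^{(n)}(\mathbf{y} \mid \mathbf{Z}^{(n)} \rightsquigarrow \mathbf{z}; \Phi_2, \eta)$ we have
\[
|\mathcal{P}^{(n)}(\varphi)(\mathbf{Y},\mathbf{Z}^{(n)}) - \mathcal{P}(\varphi)^{\cM}(\mathbf{y},\mathbf{z})| \leq \eta + 2L^2/k + o(1),
\]
uniformly in $\mathbf{Y}$.

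\textbf{Inequality $\tilde\chi(\mathbf{x},\mathbf{y}\mid\mathbf{z}) \leq \tilde\chi(\mathbf{x}\mid\mathbf{y},\mathbf{z}) + \tilde\chi(\mathbf{y}\mid\mathbf{z})$.} Given $\Phi,\varepsilon$, let $\psi = \max_{\varphi\in\Phi}|\varphi - \varphi^{\cM}(\mathbf{x},\mathbf{y},\mathbf{z})|$. Estimate \eqref{eq: lower bound for pressure using measure} applied to $a\psi$, combined with the uniform pressure estimate for $a\psi$ (putting the corresponding $\theta$ into $\Phi_2$), gives for every $\mathbf{Y}$ in the $\mathbf{y}$-microstate space
\[
\min\Bigl(-\tfrac{1}{n^2}\log \sigma^{(n)}_{m_1}(\text{slice at }\mathbf{Y}),\ a\varepsilon\Bigr) \leq \mathcal{P}(a\psi)^{\cM}(\mathbf{y},\mathbf{z}) + \delta \leq \tilde\chi_{\chron}^{\cU}(\mathbf{x}\mid \mathbf{y},\mathbf{z}) + \delta.
\]
The last inequality uses \eqref{eq: final pressure formula for entropy} together with $(a\psi)^{\cM}(\mathbf{x},\mathbf{y},\mathbf{z}) = 0$. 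Taking $a$ large, each slice has measure at least $e^{-n^2(\tilde\chi(\mathbf{x}\mid\mathbf{y},\mathbf{z})+\delta)}$ for $\mathbf{Y}$ in that space. Restrict the Fubini integral to the $\mathbf{y}$-microstate space, whose measure is at least $e^{-n^2(\tilde\chi(\mathbf{y}\mid\mathbf{z})+\delta)}$ by definition. This gives the inequality after letting $n\to\cU$, then $\delta \to 0$, then taking the supremum over $(\Phi,\varepsilon)$.

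\textbf{Inequality $\geq$.} Given $\delta>0$, choose $\Phi_2,\eta$ nearly attaining $\tilde\chi(\mathbf{y}\mid\mathbf{z})$, and choose $\varphi$ nearly attaining the supremum in \eqref{eq: final pressure formula for entropy} for $\tilde\chi(\mathbf{x}\mid\mathbf{y},\mathbf{z})$. Let $\Phi$ contain $\varphi$, the formulas of $\Phi_2$ (viewed as formulas in $(\mathbf{x},\mathbf{y},\mathbf{z})$ not depending on $\mathbf{x}$), and the formula $\theta$ above. Then the integrand vanishes unless $\mathbf{Y}\in\Gamma(\mathbf{y}\mid\mathbf{Z}^{(n)};\Phi_2,\eta)$. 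On that set, estimate \eqref{eq: upper bound for pressure using measure} together with the uniform pressure estimate gives
\[
\sigma_{m_1}^{(n)}(\text{slice at }\mathbf{Y}) \leq \exp\bigl(-n^2[\mathcal{P}(\varphi)^{\cM}(\mathbf{y},\mathbf{z}) - \varphi^{\cM}(\mathbf{x},\mathbf{y},\mathbf{z}) - \varepsilon - \eta - 2L^2/k - o(1)]\bigr).
\]
Integrating over the $\mathbf{y}$-microstate space yields $\tilde\chi(\mathbf{x},\mathbf{y}\mid\mathbf{z}) \geq \tilde\chi(\mathbf{x}\mid\mathbf{y},\mathbf{z}) + \tilde\chi(\mathbf{y}\mid\mathbf{z}) - O(\delta)$. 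The infinite cases follow from the same argument with arbitrarily large targets in place of the suprema.

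The main obstacle is this uniformity in $\mathbf{Y}$ of conditional pressure convergence. Theorem \ref{thm: final formula for entropy} is stated only along a fixed convergent sequence $\mathbf{Y}^{(n)}$, so we do not use it directly. Instead we go back to its proof: the error in Lemma \ref{lem: discrete approx of matrix pressure} is independent of $\mathbf{Y}$, and the finitely many formulas $\theta$ are placed into the $\mathbf{y}$-microstate constraints. A fallback, if this bookkeeping fails, is an ultraproduct argument: pick near-worst $\mathbf{Y}^{(n)}$ in the microstate spaces and apply Theorem \ref{thm: final formula for entropy} to that sequence.
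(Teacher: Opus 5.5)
Your argument is correct, but it is organized differently from the paper's. The paper never slices microstate sets. It first proves a chain rule for pressures, $\mathcal{P}_{m_1+m_2,m_3}(\varphi)^{\cM}(\mathbf{z}) = \mathcal{P}_{m_2,m_3}(\mathcal{P}_{m_1,m_2+m_3}(\varphi))^{\cM}(\mathbf{z})$ (Lemma \ref{lem: chain rule for pressure}). To do so it applies Fubini to the Laplace integral of $e^{-n^2\Lambda_\varphi^{(n)}}$, replaces the inner matrix pressure, uniformly in $\mathbf{Y}$, by $\Lambda^{(n)}_{\psi_{k,r}}$ with $\psi_{k,r} = T^k_{1/k,m_1,m_2+m_3,r}\varphi(0,\cdot)$, and then invokes \eqref{eq: convergence of matrix pressure} for the resulting outer pressure. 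The entropy chain rule then follows by convex duality applied to \eqref{eq: final pressure formula for entropy 2}, over the completion $\overline{\mathcal{F}}^0_{\chron}$:
\begin{itemize}
\item For $\leq$, it splits $\mathcal{P}(\varphi)-\varphi$ into two differences, each bounded by one of the conditional entropies.
\item For $\geq$, it merges arbitrary $\psi$ and $\varphi$ into $\varphi' = \psi - \mathcal{P}_{m_1,m_2+m_3}(\varphi) + \varphi$. This uses that $\mathcal{P}_{m_1,m_2+m_3}$ commutes with adding predicates independent of $\mathbf{x}$, and that it is a $\norm{\cdot}_u$-contraction.
\end{itemize}

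You use the same essential input: the $\mathbf{Y}$-uniform error bound in Lemma \ref{lem: discrete approx of matrix pressure}, made effective by forcing $\mathbf{Y}$ into the microstate set of the discretized formula $\theta$. But you apply it fiberwise, directly to Gaussian measures of slices, through the two Varadhan/Bryc-type estimates from the proof of Lemma \ref{lem: microstates versus pressure}.

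Your route needs neither the completion $\overline{\mathcal{F}}^0_{\chron}$, nor the contraction estimate, nor the identity $\mathcal{P}(\psi'+\varphi)=\psi'+\mathcal{P}(\varphi)$. It also shows concretely why equality holds here, where earlier conditional free entropies gave only one-sided inequalities. The paper's route instead yields the dynamic-programming identity for pressures as a by-product of independent interest, and reduces the entropy statement to formal Legendre duality.

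One bookkeeping point in your $\leq$ direction: the $\norm{\cdot}_1$-Lipschitz constant of $a\psi$ grows linearly in $a$, so $k$, and hence $\theta$ and $\Phi_2$, must be chosen after $a$. Nothing conflicts, because your lower bound on $\sigma^{(n)}(\Gamma^{(n)}(\mathbf{y}\mid \mathbf{Z}^{(n)}\rightsquigarrow \mathbf{z};\Phi_2,\eta))$ holds for every choice of $(\Phi_2,\eta)$.
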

	
	The formula for entropy in terms of pressure in Theorem \ref{thm: final formula for entropy} puts us in a good position to prove this, after a few additional lemmas.
	
	\begin{definition} \label{def: uniform of completion of RCF}
		Let $\overline{\mathcal{F}}_{\chron,m}^0$ be the completion of $\mathcal{F}_{\chron,m}^0$ with respect to the uniform norm
		\begin{equation} \label{eq: chain rule for entropy}
			\norm{\varphi}_u = \sup_{\cM \models \rT_{\proc}} \sup_{\mathbf{x} \in M_0^m} |\varphi^{\cM}(\mathbf{x})|.
		\end{equation}
	\end{definition}
	
	\begin{lemma}
		Let $m \in \N$ and $m' \in \N \cup \{0,\infty\}$.  For $\varphi, \psi \in \overline{\mathcal{F}}_{\chron,m+m'}^0$, let $\mathcal{P}_{m,m'}(\varphi)$ be given by \eqref{eq: definition of P}.  Then $\mathcal{P}_{m,m'}(\varphi) \in \overline{\mathcal{F}}_{\chron,m'}^0$ and
		\begin{equation} \label{eq: P contraction}
			\norm{\mathcal{P}_{m,m'}(\varphi) - \mathcal{P}_{m,m'}(\psi)}_u \leq \norm{\varphi - \psi}_u.
		\end{equation}
	\end{lemma}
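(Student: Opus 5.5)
The plan is to establish the contraction estimate \eqref{eq: P contraction} first for honest formulas $\varphi,\psi \in \mathcal{F}_{\chron,m+m'}^0$, and then use it both to define $\mathcal{P}_{m,m'}$ on the completion $\overline{\mathcal{F}}_{\chron,m+m'}^0$ and to show that its values lie in $\overline{\mathcal{F}}_{\chron,m'}^0$.

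\textbf{Step 1: the contraction for formulas.} Fix a model $\cM$ of $\rT_{\proc}$, a tuple $\mathbf{y}$, and an adapted control $\mathbf{\alpha}$ bounded in operator norm. Set $\mathbf{w} = (z_{0,1,j})_{j=1}^m + \int_0^1 \mathbf{\alpha}_t\,dt$, which lies in $M_1^m$ rather than $M_0^m$. The cost functionals in \eqref{eq: definition of P} for $\varphi$ and for $\psi$ differ by
\[
|\varphi^{\cM}(\mathbf{w},\mathbf{y}) - \psi^{\cM}(\mathbf{w},\mathbf{y})|.
\]
I want to bound this by $\norm{\varphi-\psi}_u$, whose definition \eqref{eq: chain rule for entropy} only takes a supremum over inputs in $M_0$. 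To bridge the gap, apply the $1$-shifted structure $S_1\cM$, which again models $\rT_{\proc}$, together with stationarity in the form of Lemma \ref{lem: shift on formulas and structures}. Alternatively, note that in \eqref{eq: definition of P} the formula is really evaluated on variables living at time $1$. The simplest route is to read $\norm{\cdot}_u$ as a supremum over all structures and all input levels, which is harmless because $\cM \mapsto S_t\cM$ preserves $\rT_{\proc}$. Once the pointwise bound holds for every $\mathbf{\alpha}$, taking infima over the same admissible set gives
\[
|\mathcal{P}(\varphi)^{\cM}(\mathbf{y}) - \mathcal{P}(\psi)^{\cM}(\mathbf{y})| \le \norm{\varphi-\psi}_u.
\]
Taking the supremum over $\cM$ and $\mathbf{y}$ yields \eqref{eq: P contraction}.

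\textbf{Step 2: membership in the completion for formulas.} For $\varphi \in \mathcal{F}_{\chron,m+m'}^0$, Lemma \ref{lem: discrete approximation of free pressure} gives the bound \eqref{eq: approx of free pressure}, uniform over all $\cM$ and $\mathbf{y}$. I then need $(T_{1/k,m,m',r})^k\varphi$, evaluated at $\mathbf{x}=0$, to be an element of $\mathcal{F}_{\chron,m'}^0$ or close to one. By the definition \eqref{eq: discretized HJB operator}, $T$ is built from three operations:
\begin{enumerate}[(1)]
\item a shift and substitution of $z_{0,t}$, which is allowed by Definition \ref{def: restricted chronological formulas}(4);
\item an infimum over $D_{0,r}$, which is allowed by item (3);
\item the quadratic term $\frac{t}{2}\norm{\re\gamma_j}_2^2$ added through a Lipschitz connective.
\end{enumerate}
Item (3) is not literally a basic restricted formula, because those are built from resolvents. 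I will handle it by approximating $\gamma \mapsto \norm{\re \gamma}_2^2$ uniformly on $D_{0,r}$ by restricted formulas, in the same way as Lemma \ref{lem: approximation by restricted chronological formulas}. The resulting error is uniform and is then controlled by the contraction property of $\inf$. Substituting $x=0$ is also legitimate, since it amounts to plugging in a constant term. Hence $\mathcal{P}(\varphi)$ is a uniform limit of restricted formulas.

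\textbf{Step 3: extension to the completion.} For general $\varphi \in \overline{\mathcal{F}}_{\chron,m+m'}^0$, choose $\varphi_k \to \varphi$ uniformly. Each $\mathcal{P}(\varphi_k)$ lies in $\overline{\mathcal{F}}_{\chron,m'}^0$ by Step 2. By Step 1 the sequence $\mathcal{P}(\varphi_k)$ is Cauchy in $\norm{\cdot}_u$, and the same pointwise argument shows that it converges to $\mathcal{P}(\varphi)$ as defined by \eqref{eq: definition of P}. The completion is closed, so $\mathcal{P}(\varphi) \in \overline{\mathcal{F}}_{\chron,m'}^0$. The estimate \eqref{eq: P contraction} for completed predicates then follows by passing to the limit.

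The main obstacle is Step 2: realizing the quadratic cost and the evaluation at $\mathbf{x}=0$ inside the restricted class with uniform, structure-independent errors. The time-level subtlety in Step 1 is a secondary difficulty.
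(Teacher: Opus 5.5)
Your three steps match the paper's proof. First, the contraction comes from using each admissible control for $\varphi$ as a candidate for $\psi$ and vice versa. Second, membership for $\varphi \in \mathcal{F}_{\chron,m+m'}^0$ comes from the bound \eqref{eq: approx of free pressure}, which is uniform in the structure and in $\mathbf{y}$. Third, the general case follows by density.

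On the time-level point in Step 1, which the paper does not mention, your first option is the right one. Read \eqref{eq: definition of P} with $S_1\varphi$, which is what $(T_{1/k,m,m',r})^k\varphi$ actually produces. Then $(S_1\varphi)^{\cM}(\mathbf{w},\mathbf{y}) = \varphi^{S_1\cM}(\mathbf{w},\mathbf{y})$, where all coordinates of $\mathbf{w},\mathbf{y}$ lie in $M_1 = M_0^{S_1\cM}$ and $S_1\cM \models \rT_{\proc}$. So the original $\norm{\cdot}_u$ already bounds the difference. Stationarity is not needed, and neither is any change of norm. A supremum of unshifted formulas over higher-level inputs would in fact be a different seminorm.

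The gap is in Step 2, and it is not the quadratic cost, which is harmless because $\gamma$ ranges over $D_{0,r}$. The inner expression of \eqref{eq: discretized HJB operator} composes $S_t\varphi$ with the sum $\mathbf{x} + \mathbf{z}_{0,t} + t\re\gamma$. Definition \ref{def: restricted chronological formulas}(4) does not provide this; it only substitutes $z_{0,t_0}$ for a free variable. Basic restricted formulas are $*$-polynomials in the resolvents $(\re x_j+i)^{-1}$ and $(\im x_j+i)^{-1}$ of single variables. The resolvent $(\re(x+u)+i)^{-1}$ is not such an expression in the resolvents of $x$ and $u$. Since $\norm{\cdot}_u$ is a supremum over unbounded inputs, Lemma \ref{lem: approximation by restricted chronological formulas}, which works on operator-norm balls, does not settle this either. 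Even for commuting unbounded self-adjoint $a,b$, the function $(a+b+i)^{-1}$ has no limit as $(a,b) \to (+\infty,-\infty)$. So it is not a uniform limit of functions of $(a+i)^{-1}$ and $(b+i)^{-1}$.

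What rescues the step is that the perturbation is bounded and that you evaluate at $\mathbf{x}=0$. Here is one way to carry it out.
\begin{enumerate}
\item Build the $k$ stages from the inside out, with the accumulated position $\mathbf{u}$ confined to a fixed ball. Then the first $m$ slots only receive $\mathbf{u}+v+\frac{1}{k}\re\gamma$. Here $\mathbf{u}$, the variable $v$ that later receives $z_{0,1/k}$, and $\gamma$ are all bounded in operator norm.
\item By induction on Definition \ref{def: restricted chronological formulas}, a restricted formula is uniformly continuous as a function of the resolvents of its free variables. This holds uniformly in the other resolvent slots, which lie in the unit ball, and uniformly in the structure.
\item So you can replace the resolvents of $\re(\mathbf{u}+v+\frac{1}{k}\re\gamma)$ and $\im(\mathbf{u}+v)$ by $*$-polynomials in the resolvents of $\mathbf{u}$, $v$, $\gamma$. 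The error is independent of $\mathbf{y}$.
\item The errors add over the $k$ stages, and you set $\mathbf{u}=0$ at the end.
\end{enumerate}

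The paper's proof also takes this closure for granted, since it treats $(T_{1/k,m,m',r})^k\varphi$ as a restricted formula. Your Step 2 explicitly asserts the closure, so your justification must cover it.
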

	
	\begin{proof}
		First, one can show that \eqref{eq: P contraction} holds for $\varphi, \psi \in \mathcal{F}_{\chron,m+m'}^0$ by taking every candidate $\mathbf{\alpha}$ in the infimum \eqref{eq: definition of P} in the definition of $\varphi$ as a candidate in the infimum in the definition of $\psi$ and vice versa.  Since $\mathcal{P}_{m,m'}(\varphi) \in \overline{\mathcal{F}}_{\chron,m'}^0$ for $\varphi \in \mathcal{F}_{\chron,m+m'}^0$ by Lemma \ref{lem: discrete approx of matrix pressure}, the same is true when $\varphi$ is in the completion $\overline{\mathcal{F}}_{\chron,m+m'}^0$, and the inequality \eqref{eq: P contraction} extends to the completion.
	\end{proof}
	
	\begin{lemma}
		Let $(\mathbf{x},\mathbf{y})$ be a tuple in a model of $\cL_{\proc}$ which admits matrix approximations in the sense of \eqref{eq: convergence hypothesis}.  Then
		\begin{equation} \label{eq: final pressure formula for entropy 2}
			\tilde{\chi}_{\chron}^{\cU}(\mathbf{x} \mid \mathbf{y}) = \sup_{\varphi \in \overline{\mathcal{F}}_{\chron,m+m'}^0} \left[ \mathcal{P}(\varphi)^{\cM}(\mathbf{y}) - \varphi^{\cM}(\mathbf{x},\mathbf{y}) \right],
		\end{equation}
		that is, we can replace $\mathcal{F}_{\chron,m+m'}^0$ by its completion in \eqref{eq: final pressure formula for entropy}.
	\end{lemma}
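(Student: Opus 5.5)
The inequality $\leq$ is immediate from Theorem \ref{thm: final formula for entropy}: there $\tilde{\chi}_{\chron}^{\cU}(\mathbf{x} \mid \mathbf{y})$ is a supremum over $\mathcal{F}_{\chron,m+m'}^0$, and this set is contained in its completion $\overline{\mathcal{F}}_{\chron,m+m'}^0$. So the work is the reverse inequality. We must show that enlarging the index set to the completion does not increase the supremum.

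For the reverse inequality, fix $\varphi \in \overline{\mathcal{F}}_{\chron,m+m'}^0$ and $\varepsilon > 0$. By definition of the completion in Definition \ref{def: uniform of completion of RCF}, choose $\psi \in \mathcal{F}_{\chron,m+m'}^0$ with $\norm{\varphi - \psi}_u < \varepsilon$. Two estimates then compare the $\varphi$-term with the $\psi$-term.

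First, the evaluations at the fixed tuple are close. Since the uniform norm is a supremum over all models of $\rT_{\proc}$ and all tuples in $M_0$, we get $|\varphi^{\cM}(\mathbf{x},\mathbf{y}) - \psi^{\cM}(\mathbf{x},\mathbf{y})| \leq \varepsilon$. Here evaluation of an element of the completion is defined as the uniform limit of the evaluations of approximating formulas.

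Second, the pressures are close. The contraction estimate \eqref{eq: P contraction} in the preceding lemma gives $\norm{\mathcal{P}(\varphi) - \mathcal{P}(\psi)}_u \leq \varepsilon$, hence $|\mathcal{P}(\varphi)^{\cM}(\mathbf{y}) - \mathcal{P}(\psi)^{\cM}(\mathbf{y})| \leq \varepsilon$. I also need $\mathcal{P}(\varphi)^{\cM}$, defined through \eqref{eq: definition of P} with $\varphi^{\cM}$ as the integrand, to agree with the value obtained by extending $\mathcal{P}$ by continuity from $\mathcal{F}^0$. This holds because the infimum in \eqref{eq: definition of P} moves by at most $\norm{\varphi-\psi}_u$ when $\varphi$ is replaced by $\psi$, for each fixed candidate $\alpha$ and hence after taking the infimum.

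Combining the two estimates gives
\[
\mathcal{P}(\varphi)^{\cM}(\mathbf{y}) - \varphi^{\cM}(\mathbf{x},\mathbf{y}) \leq \mathcal{P}(\psi)^{\cM}(\mathbf{y}) - \psi^{\cM}(\mathbf{x},\mathbf{y}) + 2\varepsilon \leq \tilde{\chi}_{\chron}^{\cU}(\mathbf{x} \mid \mathbf{y}) + 2\varepsilon,
\]
where the last step uses \eqref{eq: final pressure formula for entropy}. Letting $\varepsilon \to 0$ and then taking the supremum over $\varphi$ proves the lemma.

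The only point needing care is the consistency of the two meanings of $\mathcal{P}(\varphi)$ on the completion, addressed in the second estimate above. I would dispatch it by the candidate-by-candidate comparison of the infima, exactly as in the proof of \eqref{eq: P contraction}. If instead the formula is used only in the abstract form $\mathcal{P}(\varphi) = \lim \mathcal{P}(\psi_k)$, the argument above goes through verbatim.
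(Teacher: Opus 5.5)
Your proposal is correct and follows the paper's proof: the paper likewise gets the result from density of $\mathcal{F}_{\chron,m+m'}^0$ in its uniform completion together with the contraction estimate \eqref{eq: P contraction}. Your write-up spells out what the paper's one-line proof leaves implicit, namely the two $\varepsilon$-estimates for the evaluation and pressure terms and the consistency of $\mathcal{P}(\varphi)^{\cM}$ on the completion.
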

	
	\begin{proof}
		Using \eqref{eq: P contraction}, we get the same result by taking the supremum over $\overline{\mathcal{F}}_{\chron,m+m'}^0$ or over the dense subset $\mathcal{F}_{\chron,m+m'}^0$.
	\end{proof}
	
	\begin{lemma} \label{lem: chain rule for pressure}
		Let $(\mathbf{x},\mathbf{y},\mathbf{w})$ be an $m_1 + m_2 + m_3$-tuple where $m_1, m_2 \in \N$ and $m_3 \in \N \cup \{0,\infty\}$, and assume that there exist matrix tuples with $(\mathbf{X}^{(n)},\mathbf{Y}^{(n)},\mathbf{W}^{(n)})$, with operator norm bounded by some constant $R$, such that
		\begin{equation} \label{eq: matrix approximation of triple}
			\lim_{n \to \cU} \Lambda_{\varphi}^{(n)}(\mathbf{X}^{(n)},\mathbf{Y}^{(n)},\mathbf{W}^{(n)}) = \varphi^{\cM}(\mathbf{x},\mathbf{y},\mathbf{w}).
		\end{equation}
		Let $\varphi \in \overline{\mathcal{F}}_{m_1+m_2+m_3}^0$.  Then
		\begin{equation} \label{eq: chain rule for pressure}
			\mathcal{P}_{m_1+m_2,m_3}(\varphi)^{\cM}(\mathbf{w}) = \mathcal{P}_{m_2,m_3}(\mathcal{P}_{m_1,m_2+m_3}(\varphi))^{\cM}(\mathbf{w}).
		\end{equation}
	\end{lemma}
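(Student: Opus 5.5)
Both sides of \eqref{eq: chain rule for pressure} are determined by matrix limits. Theorem \ref{thm: final formula for entropy} (extended to $\overline{\mathcal{F}}^0$ via the contraction \eqref{eq: P contraction}) identifies each free pressure evaluated at $\mathbf{w}$ with the $\cU$-limit of the corresponding matrix pressures $\mathcal{P}^{(n)}$ at $\mathbf{W}^{(n)}$. The classical matrix pressures satisfy an exact chain rule, namely Fubini for Gaussian integrals. So the plan is to reduce to $\varphi \in \mathcal{F}_{\chron}^0$, prove the identity at finite $n$ up to $o(1)$ errors, and pass to the limit.

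First step: by density of $\mathcal{F}^0_{\chron}$ and the $\norm{\cdot}_u$-contractivity \eqref{eq: P contraction} (applied twice on the right-hand side), we may assume $\varphi \in \mathcal{F}_{\chron,m_1+m_2+m_3}^0$. The Ginibre measure on $\mathbb{M}_n^{m_1+m_2}$ is the product $\sigma_{m_1}^{(n)} \otimes \sigma_{m_2}^{(n)}$. Hence Fubini gives exactly
\[
\mathcal{P}^{(n)}_{m_1+m_2,m_3}(\Lambda_\varphi^{(n)})(\mathbf{W}) = -\frac{1}{n^2}\log \int \exp\bigl(-n^2 \mathcal{P}^{(n)}_{m_1,m_2+m_3}(\Lambda^{(n)}_\varphi)(\mathbf{Y},\mathbf{W})\bigr)\,d\sigma^{(n)}_{m_2}(\mathbf{Y}),
\]
where $\mathcal{P}^{(n)}$ is applied to an arbitrary bounded function. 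The left side tends to $\mathcal{P}_{m_1+m_2,m_3}(\varphi)^{\cM}(\mathbf{w})$ as $n \to \cU$ by Theorem \ref{thm: final formula for entropy}, since $\mathbf{W}^{(n)}$ satisfies \eqref{eq: convergence hypothesis} by restricting \eqref{eq: matrix approximation of triple} to formulas in $\mathbf{w}$.

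Second step: set $\psi = \mathcal{P}_{m_1,m_2+m_3}(\varphi)$. We want to replace the inner matrix pressure $G^{(n)}(\mathbf{Y},\mathbf{W}) := \mathcal{P}^{(n)}_{m_1,m_2+m_3}(\Lambda^{(n)}_\varphi)(\mathbf{Y},\mathbf{W})$ by a function of the form $\Lambda^{(n)}_{\psi_k}$, where $\psi_k := (T_{1/k,m_1,m_2+m_3,r})^k\varphi \in \mathcal{F}^0_{\chron}$. Lemma \ref{lem: discrete approx of matrix pressure} gives
\[
\sup_{\mathbf{Y},\mathbf{W}} |G^{(n)}(\mathbf{Y},\mathbf{W}) - \Lambda^{(n)}_{\psi_k}(0,\mathbf{Y},\mathbf{W})| \leq \varepsilon_n + L^2/k,
\]
with $\varepsilon_n \to 0$. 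This bound is uniform in all matrix arguments, which is what we need. Lemma \ref{lem: discrete approximation of free pressure} gives $\norm{\psi_k(0,\cdot) - \psi}_u \le L^2/k$.

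Third step: the map $F \mapsto -\frac{1}{n^2}\log\int e^{-n^2F}\,d\sigma$ is $1$-Lipschitz in sup norm. Therefore the Fubini identity gives
\[
\bigl|\mathcal{P}^{(n)}_{m_1+m_2,m_3}(\Lambda_\varphi^{(n)})(\mathbf{W}^{(n)}) - \mathcal{P}^{(n)}_{m_2,m_3}(\Lambda^{(n)}_{\psi_k(0,\cdot)})(\mathbf{W}^{(n)})\bigr| \le \varepsilon_n + L^2/k.
\]
Applying Theorem \ref{thm: final formula for entropy} to $\psi_k(0,\cdot)\in\mathcal{F}^0_{\chron,m_2+m_3}$, the second term tends to $\mathcal{P}_{m_2,m_3}(\psi_k(0,\cdot))^{\cM}(\mathbf{w})$ as $n\to\cU$. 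By \eqref{eq: P contraction} this is within $L^2/k$ of $\mathcal{P}_{m_2,m_3}(\psi)^{\cM}(\mathbf{w})$. Letting $k\to\infty$ concludes the proof.

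The main obstacle is bookkeeping rather than conceptual. One point is that $\psi_k(0,\cdot)$ (substituting $0$ for the $\mathbf{x}$-variables) must be a legitimate restricted chronological formula whose $\Lambda^{(n)}$ agrees with $\Lambda^{(n)}_{\psi_k}(0,\cdot)$ up to $O(1/n)$; this follows via Remark \ref{rem: independent of choices}. The other is that the uniformity in Lemma \ref{lem: discrete approx of matrix pressure} does not depend on a bound on $\mathbf{Y}$. If the operator norm of $\mathbf{Y}$ matters, the integral over $\sigma_{m_2}^{(n)}$ is over unbounded $\mathbf{Y}$. The restricted formulas use resolvents and are globally bounded and Lipschitz, so the estimates are global; I would verify this first.
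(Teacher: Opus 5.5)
Your proposal is correct and follows essentially the same route as the paper's proof. Both use Fubini for the product Ginibre measure and replace the inner matrix pressure by $\Lambda^{(n)}$ of $(T_{1/k,m_1,m_2+m_3,r})^k\varphi(0,\cdot)$ via Lemmas \ref{lem: discrete approx of matrix pressure} and \ref{lem: discrete approximation of free pressure}, then pass to the $\cU$-limit with \eqref{eq: convergence of matrix pressure} and close with \eqref{eq: P contraction} and density. The differences are cosmetic: you do the density reduction first, and you use the sup-norm $1$-Lipschitz property of $F \mapsto -\frac{1}{n^2}\log\int e^{-n^2F}\,d\sigma$ instead of separate upper and lower bounds. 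The uniformity-in-$\mathbf{Y}$ concern you flag resolves as you expect, because the bound in Lemma \ref{lem: discrete approx of matrix pressure} depends only on the global bound and the global $\norm{\cdot}_1$-Lipschitz constant of $\Lambda_\varphi^{(n)}$.
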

	
	\begin{proof}
		First, assume that $\varphi \in \mathcal{F}_{m_1+m_2+m_3}^0$.  Let $\varepsilon > 0$.  By Lemma \ref{lem: discrete approx of matrix pressure} and \ref{lem: discrete approximation of free pressure}, choose $k \in \N$ and $r > 0$ such that for sufficiently large $n$, we have
		\[
		|\Lambda_{(T_{1/k,m_1,m_2+m_3,r})^k \varphi}^{(n)}(0,\mathbf{Y},\mathbf{W}) - \mathcal{P}^{(n)}(\varphi)(\mathbf{Y},\mathbf{W})| \leq \varepsilon
		\]
		and
		\begin{equation} \label{eq: approximation error for psi}
			\norm{(T_{1/k,m_1,m_2+m_3,r})^k \varphi(0,\cdot) - \mathcal{P}_{m_1,m_2+m_3}(\varphi)}_u \leq \varepsilon,
		\end{equation}
		and let $\psi_{k,r} = (T_{1/k,m_1,m_2+m_3,r})^k \varphi(0,\cdot)$.  Now observe that
		\begin{align*}
			\mathcal{P}_{m_1+m_2,m_3}^{(n)}(\varphi)(\mathbf{W}^{(n)}) &= -\frac{1}{n^2} \log \int_{(\mathbb{M}_n)_{\sa}^{m_2}} \int_{\mathbb{M}_n^{m_1}} \exp(-n^2 \varphi(\mathbf{X},\mathbf{Y},\mathbf{W}^{(n)})) d\sigma_{m_1}^{(n)}(\mathbf{X}) d\sigma_{m_2}^{(n)}(\mathbf{Y}) \\
			&= -\frac{1}{n^2} \log \int_{\mathbb{M}_n^{m_1+m_2}} \exp(-n^2 \mathcal{P}_{m_1,m_2+m_3}^{(n)}(\varphi)^{(n)}(\mathbf{Y},\mathbf{W}^{(n)}))\,d\sigma_{m_2}^{(n)}(\mathbf{Y}) \\
			&\leq -\frac{1}{n^2} \log \int_{\mathbb{M}_n^{m_1+m_2}} \exp(-n^2 [\Lambda_{\psi_{k,r}}^{(n)}(\mathbf{Y},\mathbf{W}^{(n)}) + \varepsilon])\,d\sigma_{m_2}^{(n)}(\mathbf{Y}) \\
			&= \mathcal{P}_{m_2,m_3}^{(n)}(\psi_{k,r})(\mathbf{W}^{(n)}) + \varepsilon.
		\end{align*}
		We also have $\mathcal{P}_{m_1+m_2,m_3}^{(n)}(\varphi)(\mathbf{W}^{(n)}) \geq \mathcal{P}_{m_2,m_3}^{(n)}(\psi_{k,r})(\mathbf{W}^{(n)}) - \varepsilon$ by similar reasoning, and so
		\[
		|\mathcal{P}_{m_1+m_2,m_3}^{(n)}(\varphi)(\mathbf{W}^{(n)}) - \mathcal{P}_{m_1,m_2+m_3}^{(n)}(\psi_{k,r})(\mathbf{W}^{(n)})| \leq \varepsilon.
		\]
		Therefore, taking $n \to \cU$ and applying \eqref{eq: convergence of matrix pressure},
		\[
		|\mathcal{P}_{m_1+m_2,m_3}(\varphi)^{\cM}(\mathbf{w}) - \mathcal{P}_{m_2,m_3}(\psi_{k,r})^{\cM}(\mathbf{w})| \leq \varepsilon.
		\]
		Combining this with \eqref{eq: approximation error for psi} and \eqref{eq: P contraction}, we get
		\[
		|\mathcal{P}_{m_1+m_2,m_3}(\varphi)^{\cM}(\mathbf{w}) - \mathcal{P}_{m_2,m_3}(\mathcal{P}_{m_1,m_2+m_3}(\varphi))^{\cM}(\mathbf{w})| \leq 2 \varepsilon.
		\]
		Since $\varepsilon$ was arbitrary, we have \eqref{eq: chain rule for pressure} for $\varphi \in \mathcal{F}_{\chron,m_1+m_2+m_3}^0$.  The formula then extends to $\varphi \in \overline{\mathcal{F}}_{\chron,m_1+m_2+m_3}^0$ by density using \eqref{eq: P contraction}.
	\end{proof}
	
	\begin{proof}[Proof of Theorem \ref{thm: chain rule for entropy}]
		If $(\mathbf{x},\mathbf{y},\mathbf{w})$ does not admit matrix approximations in the sense of \eqref{eq: matrix approximation of triple}, then both sides of \eqref{eq: chain rule for entropy} are $+\infty$.  So assume without loss of generality that such matrix approximations exist. We will use the characterization of entropy in \eqref{eq: final pressure formula for entropy 2}.  Let $\varphi \in \overline{\mathcal{F}}_{\chron,m_1+m_2+m_3}^0$.  Using \eqref{eq: chain rule for pressure},
		\begin{align*}
			\mathcal{P}_{m_1+m_2,m_3}(\varphi)^{\cM}(\mathbf{w}) - \varphi^{\cM}(\mathbf{x},\mathbf{y},\mathbf{w}) &= \mathcal{P}_{m_2,m_3}(\mathcal{P}_{m_1,m_2+m_3}(\varphi))^{\cM}(\mathbf{w}) - \mathcal{P}_{m_1,m_2+m_3}(\varphi)^{\cM}(\mathbf{y},\mathbf{w}) \\
			& \quad + \mathcal{P}_{m_1,m_2+m_3}(\varphi)^{\cM}(\mathbf{y},\mathbf{w}) - \varphi^{\cM}(\mathbf{x},\mathbf{y},\mathbf{w}) \\
			&\leq \tilde{\chi}_{\chron}^{\cU}(\mathbf{y} \mid \mathbf{w}) + \tilde{\chi}_{\chron}^{\cU}(\mathbf{x} \mid \mathbf{y}, \mathbf{w}).
		\end{align*}
		Therefore, taking the supremum over $\varphi$ on the left-hand side,
		\[
		\tilde{\chi}_{\chron}^{\cU}(\mathbf{x}, \mathbf{y} \mid \mathbf{w}) \leq \tilde{\chi}_{\chron}^{\cU}(\mathbf{y} \mid \mathbf{w}) + \tilde{\chi}_{\chron}^{\cU}(\mathbf{x} \mid \mathbf{y}, \mathbf{w}).
		\]
		On the other hand, let $\psi \in \overline{\mathcal{F}}_{\chron,m_2+m_3}^0$ and $\varphi \in \overline{\mathcal{F}}_{\chron,m_1+m_2+m_3}^0$.  Let $\psi' = \psi - \mathcal{P}_{m_1,m_2+m_3}(\varphi)$, and let
		\[
		(\varphi')^{\cM}(\mathbf{x},\mathbf{y},\mathbf{w}) = (\psi')^{\cM}(\mathbf{y},\mathbf{w}) + \varphi^{\cM}(\mathbf{x},\mathbf{y},\mathbf{w}).
		\]
		Note that since $\psi'$ is independent of $\mathbf{x}$, we have
		\[
		\mathcal{P}_{m_1,m_2+m_2}(\varphi') = \psi' + \mathcal{P}_{m_1,m_2+m_3}(\varphi);
		\]
		this follows immediately after expressing both sides using \eqref{eq: definition of P}.  Then
		\begin{align*}
			& \quad \mathcal{P}_{m_2,m_3}(\psi)^{\cM}(\mathbf{w}) - \psi^{\cM}(\mathbf{y},\mathbf{w}) + \mathcal{P}_{m_1,m_2+m_3}(\varphi)^{\cM}(\mathbf{y},\mathbf{w}) - \varphi^{\cM}(\mathbf{x},\mathbf{y},\mathbf{w}) \\
			&= \mathcal{P}_{m_2,m_3}(\psi' + \mathcal{P}_{m_1,m_2+m_3}(\varphi))^{\cM}(\mathbf{w}) - (\psi')^{\cM}(\mathbf{y},\mathbf{w}) - \varphi^{\cM}(\mathbf{x},\mathbf{y},\mathbf{w}) \\
			&= \mathcal{P}_{m_2,m_3}(\mathcal{P}_{m_1,m_2+m_3}(\varphi'))^{\cM}(\mathbf{w}) - (\varphi')^{\cM}(\mathbf{x}, \mathbf{y}, \mathbf{w}) \\
			&= \mathcal{P}_{m_1+m_2,m_3}(\varphi')^{\cM}(\mathbf{w}) - (\varphi')^{\cM}(\mathbf{x}, \mathbf{y}, \mathbf{w}) \\
			&\leq \mathbf{\chi}_{\chron}^{\cU}(\mathbf{x}, \mathbf{y} \mid \mathbf{w}).
		\end{align*}
		Now taking the supremum over $\varphi$ and $\psi$ in the first line, we obtain
		\[
		\tilde{\chi}_{\chron}^{\cU}(\mathbf{y} \mid \mathbf{w}) + \tilde{\chi}_{\chron}^{\cU}(\mathbf{x} \mid \mathbf{y}, \mathbf{w}) \leq \mathbf{\chi}_{\chron}^{\cU}(\mathbf{x}, \mathbf{y} \mid \mathbf{w}),
		\]
		which completes the proof.
	\end{proof}
	
	\subsection{Invariance in the second coordinate} \label{subsec: invariance}
	
	In this section, we consider an invariance property for $\chi^{\cU}(\mathbf{x} \mid \mathbf{y})$ with respect to $\mathbf{y}$.  We show that $\mathbf{y}$ and $\mathbf{w}$ generate the same chronological algebraic closure, then $\chi^{\cU}(\mathbf{x} \mid \mathbf{y}) = \chi^{\cU}(\mathbf{x} \mid \mathbf{w})$.  Here the \emph{chronological algebraic closure} is a version of the algebraic closure in model theory \cite[\S 10]{BY2012}.  We frame the definition here in a self-contained way.
	
	\begin{definition} \label{def: acl}
		Let $\cM$ be an $\cL_{\proc}$-structure satisfying $\rT_{\stat}$, let $m \in \N \cup \{\infty\}$ and $z \in M_0$.  We say that $z$ is in the \emph{chronological algebraic closure of} $\mathbf{y} \in M_0^m$, or $z \in \acl_{\chron}^{\cM}(\mathbf{y})$, if the following conditions hold:
		\begin{enumerate}[(1)]
			\item The set $K = \{ z' \in M_0: \tp_{\chron}^{\cM}(z',\mathbf{y}) = \tp_{\chron}^{\cM}(z,\mathbf{y}) \}$ is compact.
			\item There exists a chronologically definable predicate $\varphi$ with respect to $\rT_{\stat}$ such that
			\[
			d(z',K) = \psi^{\cM}(z,\mathbf{y}) \text{ for } z \in M_0.
			\]
		\end{enumerate}
	\end{definition}
	
	We remark that since every $\cL_{\tr}$ formula evaluated in $M_0$ is also a chronological formula, the chronological algebraic closure of $\mathbf{y}$ contains the algebraic closure with respect to $\cL_{\tr}$ and $\rT_{\tr}$ (see \cite[\S 3]{JekelTypeCoupling}), and hence in particular contains the von Neumann algebra generated by $\mathbf{y}$.  Our goal is to prove the following monotonicity result.  Note that in the case that $\mathbf{y}_0$ and $\mathbf{w}_0$ generate the same chronological algebraic closure, then we have inequalities in both directions, hence equality.
	
	\begin{proposition} \label{prop: entropy and acl}
		Let $\cM$ be an $\cL_{\proc}$-structure satisfying $\rT_{\stat}$, let $m \in \N$ and $m_1, m_2 \in \N \cup \{\infty\}$.  Let $\mathbf{x}_0 \in M_0^m$ and $\mathbf{y}_0 \in M_0^{m_1}$ and $\mathbf{w}_0 \in M_0^{m_2}$.  Suppose that for each $j \in \N$, $y_j' \in \acl_{\chron}^{\cM}(\mathbf{y})$.  Then
		\[
		\tilde{\chi}_{\chron}^{\cU}(\mathbf{x}_0 \mid \mathbf{w}_0) \geq \tilde{\chi}_{\chron}^{\cU}(\mathbf{x}_0 \mid \mathbf{y}_0).
		\]
	\end{proposition}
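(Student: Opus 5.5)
Throughout I read the hypothesis as saying that every coordinate of $\mathbf{y}_0$ lies in $\acl_{\chron}^{\cM}(\mathbf{w}_0)$. This is the direction in which the stated inequality is the natural one: conditioning on more information increases $\tilde{\chi}$. I work with the pressure formula \eqref{eq: final pressure formula for entropy 2}. I may assume $(\mathbf{x}_0,\mathbf{w}_0)$ admits matrix approximations, since otherwise the left-hand side is $+\infty$.

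I then prove two inequalities. The first is $\tilde{\chi}_{\chron}^{\cU}(\mathbf{x}_0 \mid \mathbf{y}_0,\mathbf{w}_0) \geq \tilde{\chi}_{\chron}^{\cU}(\mathbf{x}_0 \mid \mathbf{y}_0)$. The second is $\tilde{\chi}_{\chron}^{\cU}(\mathbf{x}_0 \mid \mathbf{w}_0) \geq \tilde{\chi}_{\chron}^{\cU}(\mathbf{x}_0 \mid \mathbf{y}_0,\mathbf{w}_0)$.

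The first is formal. Any $\varphi(\mathbf{x},\mathbf{y})$ in $\overline{\mathcal{F}}^0_{\chron}$ can be regarded as a predicate in $(\mathbf{x},\mathbf{y},\mathbf{w})$ not depending on $\mathbf{w}$. Directly from \eqref{eq: definition of P}, the value $\mathcal{P}(\varphi)^{\cM}(\mathbf{y}_0,\mathbf{w}_0)$ equals $\mathcal{P}(\varphi)^{\cM}(\mathbf{y}_0)$. Hence the supremum defining the left-hand side is taken over a larger set. (Matrix approximations of the triple exist by extending those of $(\mathbf{x}_0,\mathbf{w}_0)$; otherwise the inequality is trivial.)

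For the second inequality, fix $\varphi(\mathbf{x},\mathbf{y},\mathbf{w})$ and $\varepsilon>0$. I want $\psi(\mathbf{x},\mathbf{w})$ satisfying
\[
\mathcal{P}(\psi)^{\cM}(\mathbf{w}_0)-\psi^{\cM}(\mathbf{x}_0,\mathbf{w}_0)\ \geq\ \mathcal{P}(\varphi)^{\cM}(\mathbf{y}_0,\mathbf{w}_0)-\varphi^{\cM}(\mathbf{x}_0,\mathbf{y}_0,\mathbf{w}_0)-\varepsilon .
\]
Reducing to finitely many coordinates of $\mathbf{y}_0$ by density, let $K$ be the compact set of realizations of $\tp_{\chron}(\mathbf{y}_0,\mathbf{w}_0)$ over $\mathbf{w}_0$. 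Let $\delta(\mathbf{y}',\mathbf{w})$ be the chronologically definable predicate from Definition~\ref{def: acl} computing $d(\mathbf{y}',K)$. I set
\[
\psi(\mathbf{x},\mathbf{w})=\inf_{\mathbf{y}'\in D_{0,R}}\bigl[\varphi(\mathbf{x},\mathbf{y}',\mathbf{w})+\lambda\,\delta(\mathbf{y}',\mathbf{w})\bigr].
\]
The quantifier is at time $0$, hence outermost, so $\psi$ is a chronologically definable predicate; approximating it by $\overline{\mathcal{F}}^0_{\chron}$ costs only $\varepsilon$. Taking $\mathbf{y}'=\mathbf{y}_0$ gives $\psi(\mathbf{x}_0,\mathbf{w}_0)\le\varphi(\mathbf{x}_0,\mathbf{y}_0,\mathbf{w}_0)$.

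It remains to show $\mathcal{P}(\psi)(\mathbf{w}_0)\geq \mathcal{P}(\varphi)(\mathbf{y}_0,\mathbf{w}_0)-\varepsilon$. I take $\lambda$ large compared with the bound on $\varphi$ and use uniform continuity together with compactness of $K$. Picking a finite $\eta$-net $\mathbf{y}^{1},\dots,\mathbf{y}^{N}$ of $K$, this gives $\psi(\cdot,\mathbf{w}_0)\geq\min_{k}\varphi(\cdot,\mathbf{y}^{k},\mathbf{w}_0)-\varepsilon$ pointwise.

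The key step, and the one I expect to be the main obstacle, is the inequality
\[
\mathcal{P}\bigl(\min_k\varphi(\cdot,\mathbf{y}^k,\cdot)\bigr)\geq\min_k\mathcal{P}(\varphi)(\mathbf{y}^k,\mathbf{w}_0).
\]
In the free control problem this is not clear, because the optimal $\mathbf{y}^k$ can depend on the control. I would instead pass to the matrix level via Theorem~\ref{thm: final formula for entropy}, choosing matrix approximations of each $(\mathbf{y}^k,\mathbf{w}_0)$ over the common $\mathbf{W}^{(n)}$. (Producing these approximations is where the $\acl$ hypothesis and the type-equality of the $\mathbf{y}^k$ are used.) At the matrix level, $-\tfrac1{n^2}\log\int e^{-n^2\min_k\Lambda_k}\,d\sigma\ge\min_k\bigl(-\tfrac1{n^2}\log\int e^{-n^2\Lambda_k}\,d\sigma\bigr)-\tfrac{\log N}{n^2}$, and the error term vanishes as $n\to\cU$.

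Finally, each $(\mathbf{y}^k,\mathbf{w}_0)$ has the same chronological type as $(\mathbf{y}_0,\mathbf{w}_0)$. Since $\mathcal{P}(\varphi)$ is chronologically definable by Lemma~\ref{lem: discrete approximation of free pressure}, all the values $\mathcal{P}(\varphi)(\mathbf{y}^k,\mathbf{w}_0)$ equal $\mathcal{P}(\varphi)(\mathbf{y}_0,\mathbf{w}_0)$. Taking suprema over $\varphi$ and letting $\varepsilon\to0$ gives the second inequality, and combining it with the first proves the proposition.
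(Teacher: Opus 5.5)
Your reading of the garbled hypothesis, that each coordinate of $\mathbf{y}_0$ lies in $\acl_{\chron}^{\cM}(\mathbf{w}_0)$, is the one that makes the displayed inequality correct. The paper's proof instead takes $\mathbf{w}_0$ algebraic over $\mathbf{y}_0$ and concludes $\tilde{\chi}_{\chron}^{\cU}(\mathbf{x}_0 \mid \mathbf{w}_0) \leq \tilde{\chi}_{\chron}^{\cU}(\mathbf{x}_0 \mid \mathbf{y}_0)$, which is the same result with the names exchanged.

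Your argument is correct in outline, but it works on the dual (pressure) side. The paper argues directly with microstate spaces, in one step:
\begin{itemize}
\item Lemma~\ref{lem: closeness to compact set} gives a finite net $\mathbf{w}_1,\dots,\mathbf{w}_k$ of $K$, and a restricted formula whose small values force the algebraic variables near that net.
\item Lemma~\ref{lem: lifting of deterministic matrices} gives joint deterministic matrix models $\mathbf{W}_i^{(n)}$ of the net points, and the net property is transferred to these matrices.
\item The microstate space over $\mathbf{Y}^{(n)}$ for the penalized formula $\varphi'$ is covered by the $k$ microstate spaces over the $\mathbf{W}_i^{(n)}$.
\end{itemize}
The $\frac{1}{n^2}\log k$ loss, together with type-invariance at the net points, then finishes the proof.

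You use the same ingredients: a penalized infimum eliminating the algebraic variables, a finite net from compactness, and type-invariance of $\mathcal{P}(\varphi)$. This route buys two things. First, your second inequality, together with the equally trivial monotonicity $\tilde{\chi}_{\chron}^{\cU}(\mathbf{x}_0 \mid \mathbf{y}_0,\mathbf{w}_0) \geq \tilde{\chi}_{\chron}^{\cU}(\mathbf{x}_0 \mid \mathbf{w}_0)$, gives the sharper identity $\tilde{\chi}_{\chron}^{\cU}(\mathbf{x}_0 \mid \mathbf{y}_0,\mathbf{w}_0) = \tilde{\chi}_{\chron}^{\cU}(\mathbf{x}_0 \mid \mathbf{w}_0)$. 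Second, you need no matrix models of the net points at all.

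The step you flag as the main obstacle is in fact immediate. The free pressure \eqref{eq: definition of P} is a plain infimum of real numbers over controls, with no expectation. So $\inf_\alpha \min_k = \min_k \inf_\alpha$ gives $\mathcal{P}\bigl(\min_k \varphi(\cdot,\mathbf{y}^k,\cdot)\bigr)(\mathbf{w}_0) = \min_k \mathcal{P}(\varphi)(\mathbf{y}^k,\mathbf{w}_0)$ exactly, however the minimizing index depends on $\alpha$. The adaptivity gap you have in mind, where $\inf_\alpha \mathbb{E}\min_k$ can be strictly below $\min_k \inf_\alpha \mathbb{E}$, exists only at the matrix level. There your $\frac{1}{n^2}\log N$ bound shows it is negligible, so the detour is valid but unnecessary. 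It is also not where $\acl$ enters: the matrix models come from Lemma~\ref{lem: lifting of deterministic matrices}, and $\acl$ is used only to get the finite net.

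Two points need more care than you give them.

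First, Definition~\ref{def: acl} gives distance predicates only for the realization sets $K_j$ of single coordinates. With coordinatewise penalties alone, the infimum defining $\psi$ may use $\mathbf{y}'$ near $\prod_j K_j \setminus K$. Such points are far from your net of $K$ and have other types, so the pointwise inequality fails. You need a predicate whose small values force $\mathbf{y}'$ near the joint set $K$. Lemma~\ref{lem: closeness to compact set}(2), with the roles of $\mathbf{y}$ and $\mathbf{w}$ exchanged, provides exactly such a restricted formula to use in place of $\delta$.

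Second, the free pressure evaluates $\psi$ at $\mathbf{z}_{0,1} + \int_0^1 \alpha_t\,dt \in M_1$. For this to match the matrix pressure (compare the discretization in Lemma~\ref{lem: discrete approximation of free pressure}), the evaluation must be that of the shifted predicate $S_1\psi$, whose infimum runs over $\mathbf{y}' \in D_{1,R}$. So your pointwise comparison has to be checked with $\mathbf{x}$ and $\mathbf{y}'$ in $M_1$, not just $M_0$. It follows from stationarity applied to the chronological formula, with parameters $\mathbf{w}_0, \mathbf{y}^1, \dots, \mathbf{y}^N \in M_0$, expressing ``small penalty implies closeness to some $\mathbf{y}^k$.'' The paper avoids this by making the comparison on deterministic matrices, via $\Lambda_\omega^{(n)} < 0$.
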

	
	We first prepare two technical lemmas for the proof.  The first shows that \emph{tuples} from the algebraic closure also live in a compact set $K$, and that finitely many balls will cover a neighborhood of $K$ defined by some restricted chronological formula.  By expressing this condition in terms of restricted chronological formulas, we make it easier to translate it to a condition on the microstate spaces to prove Proposition \ref{prop: entropy and acl}.
	
	\begin{lemma} \label{lem: closeness to compact set}
		Let $\cM$ be an $\cL_{\proc}$-structure satisfying $\rT_{\stat}$, let $m \in \N$ and $m_1, m_2 \in \N \cup \{\infty\}$.  $\mathbf{y}_0 \in M_0^{m_1}$ and $\mathbf{w}_0 \in M_0^{m_2}$.  Suppose that $w_{0,j} \in \acl_{\chron}^{\cM}(\mathbf{y})$ for each $j \leq m_2$.  Then
		\begin{enumerate}[(1)]
			\item $K = \{\mathbf{w} \in M_0^{m_2}: \tp_{\chron}^{\cU}(\mathbf{y}_0,\mathbf{w}) = \tp_{\chron}^{\cU}(\mathbf{y}_0,\mathbf{w}_0) \}$ is compact with respect to the product topology on $M_0^{m_2}$.
			\item Let $\mathbf{r}_2 \in (0,\infty)^{m_2}$ such that $\mathbf{w}_0 \in D_{0,\mathbf{r}_2}^{\cM}$.  Let $\varepsilon > 0$ and $F \subseteq \{j \leq m_2\}$.  Then there exists $k \in \N$, tuples $\mathbf{w}_1$, \dots, $\mathbf{w}_k \in K$, a restricted chronological formula $\psi$, and $\delta > 0$, such that $\psi \geq 0$ and $\psi^{\cM}(\mathbf{y}_0,\mathbf{w}_0) = 0$ and
			\begin{equation} \label{eq: closeness to compact set}
				\forall \mathbf{w} \in D_{0,\mathbf{r}}^{\cM}, \quad \psi^{\cM}(\mathbf{y}_0,\mathbf{w}) < \delta \implies \min_{i \in [k]} \max_{j \in F} \norm{w_j - w_{i,j}}_2 < \varepsilon,
			\end{equation}
			where $\mathbf{w}_i = (w_{i,j})_{j \leq m_2}$.
		\end{enumerate}
	\end{lemma}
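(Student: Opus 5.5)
For (1), I would reduce to the single-coordinate compactness in Definition \ref{def: acl}. Set $K_j = \{w' \in M_0 : \tp_{\chron}^{\cM}(w',\mathbf{y}_0) = \tp_{\chron}^{\cM}(w_{0,j},\mathbf{y}_0)\}$; each $K_j$ is compact by hypothesis. If $\mathbf{w} \in K$, then each coordinate $w_j$ has the same chronological type over $\mathbf{y}_0$ as $w_{0,j}$, since the formulas in $(\mathbf{y},w_j)$ are among those in $(\mathbf{y},\mathbf{w})$. Hence $K \subseteq \prod_j K_j$, and the product is compact by Tychonoff. It remains to show that $K$ is closed in the product topology. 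Each chronological formula involves only finitely many coordinates and is $\norm{\cdot}_2$-uniformly continuous on operator-norm balls, and every $K_j$ lies in a bounded ball because it is $\norm{\cdot}_2$-compact in $M_0$ and the balls are $\norm{\cdot}_2$-closed. So $\mathbf{w} \mapsto \varphi^{\cM}(\mathbf{y}_0,\mathbf{w})$ is continuous on $\prod_j K_j$, and $K$ is an intersection of closed sets.

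For (2), fix $F$ finite and $\varepsilon$. Using compactness of the projection of $K$ onto the $F$-coordinates, I would choose $\mathbf{w}_1,\dots,\mathbf{w}_k \in K$ whose $\varepsilon/2$-balls in $\max_{j\in F}\norm{\cdot}_2$ cover it. The task is then to produce a nonnegative formula that is small only near $K$. Definition \ref{def: acl}(2) gives chronologically definable predicates $\eta_j$ with $\eta_j^{\cM}(w',\mathbf{y}_0) = d(w',K_j)$, but this only puts each coordinate near its own $K_j$. A tuple of such coordinates need not be near the joint set $K$, so this route is insufficient on its own. Instead I would argue by contradiction with a compactness or saturation argument.

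Suppose that for every restricted chronological $\psi \ge 0$ vanishing at $(\mathbf{y}_0,\mathbf{w}_0)$ and every $\delta>0$, there is some $\mathbf{w}$ in the ball with $\psi(\mathbf{y}_0,\mathbf{w})<\delta$ but $\max_{j\in F}\norm{w_j - w_{i,j}}_2\ge\varepsilon$ for all $i$. The natural candidates are $\psi = \max_{\ell \le N} |\varphi_\ell - \varphi_\ell^{\cM}(\mathbf{y}_0,\mathbf{w}_0)|$ for finite families drawn from the countable dense set of Lemma \ref{lem: separability}, together with Lemma \ref{lem: approximation by restricted chronological formulas}. Taking $N\to\infty$ and $\delta\to 0$ then produces a sequence $\mathbf{w}^{(N)}$ whose chronological types over $\mathbf{y}_0$ converge to that of $\mathbf{w}_0$.

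The main obstacle is passing from this sequence to an actual element of $K$ inside $\cM$, since $\cM$ need not be saturated. Here I would use the predicates $\eta_j$ from Definition \ref{def: acl}(2). The quantity $d(w_j^{(N)},K_j) = \eta_j^{\cM}(w_j^{(N)},\mathbf{y}_0)$ is a value of a definable predicate, and that predicate vanishes on the limit type, so $d(w_j^{(N)},K_j)\to 0$. By compactness of the $K_j$, I can pass to a subsequence converging coordinatewise in $\norm{\cdot}_2$ to some $\mathbf{w}^* \in \prod_j K_j$. By continuity of formulas, $\tp_{\chron}(\mathbf{y}_0,\mathbf{w}^*) = \tp_{\chron}(\mathbf{y}_0,\mathbf{w}_0)$, so $\mathbf{w}^*\in K$. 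But $\mathbf{w}^*$ lies within $\varepsilon/2$ of some $\mathbf{w}_i$ on $F$, which contradicts $\max_{j\in F}\norm{w^{(N)}_j - w_{i,j}}_2 \ge \varepsilon$ for large $N$. This contradiction gives the required $\psi$ and $\delta$.
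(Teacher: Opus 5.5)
Your proposal is correct apart from one wrong justification in (1), and for (2) it takes a different route from the paper.

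In (1) you say each $K_j$ lies in an operator-norm ball ``because it is $\norm{\cdot}_2$-compact in $M_0$ and the balls are $\norm{\cdot}_2$-closed.'' That implication is false. In a diffuse $M_0$, take projections $p_n$ with $\tau(p_n) = n^{-4}$. Then $\{0\} \cup \{n p_n : n \in \N\}$ is $\norm{\cdot}_2$-compact, since $\norm{n p_n}_2 = 1/n$, but it is not contained in any operator-norm ball. The bound has to come from the type, as in the paper. The chronological formula $\inf_{v \in D_{0,r_{2,j}}} d(x,v)$ vanishes at $w_{0,j}$, so it vanishes at every $w \in K_j$. Since $D_{0,r_{2,j}}^{\cM}$ is $\norm{\cdot}_2$-closed, this gives $K_j \subseteq D_{0,r_{2,j}}^{\cM}$. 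With that in place, your continuity argument for closedness of $K$ goes through, and so does (1).

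For (2), the paper argues directly. The set $K'$ of points of $\prod_j K_j$ that are $\varepsilon/2$-far on $F$ from the net is compact and disjoint from $K$. So finitely many deviation sets $\{|\varphi_s - \varphi_s^{\cM}(\mathbf{y}_0,\mathbf{w}_0)| > \eta_s\}$ cover it, which yields one formula $\varphi$ separating $K$ from $K'$ on $\prod_j K_j$. Points of the ball outside $\prod_j K_j$ are then handled explicitly. The paper adds to $\psi$ restricted approximations of the distance predicates $d(\cdot,K_j)$, and uses uniform continuity of $\varphi$ to pass to a nearby point of $\prod_j K_j$.

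You instead argue by contradiction, using a sequence built from the countable dense family of Lemma~\ref{lem: separability}. The distance predicates enter only in the analysis: their values converge to $0$ along the sequence because they are definable predicates. They do not appear as summands of $\psi$. Sequential compactness of the $K_j$ replaces both the finite subcover and the $\gamma$/$\delta$ bookkeeping.

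Your version is shorter. It also shows that $\psi$ can be taken of the special form $\max_{\ell \le N} |\varphi_\ell - \varphi_\ell^{\cM}(\mathbf{y}_0,\mathbf{w}_0)|$ with $\delta = 1/N$. On the other hand, it is non-constructive and uses separability to work with sequences (nets would otherwise be needed). The paper's finite-subcover argument exhibits $\psi$ and $\delta$ directly and needs no metrizability.
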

	
	\begin{proof}
		(1) For $j \leq m_2$,
		\[
		K_j = \{w \in M_0: \tp_{\chron}^{\cM}(\mathbf{y}_0,w) = \tp_{\chron}^{\cM}(\mathbf{y}_0,w_{0,j}) \},
		\]
		which is compact by Definition \ref{def: acl}.  Note also that if $w \in K_j$, then $w \in D_{0,r_{2,j}}^{\cM}$ because $w$ has the same chronological type as $w_{0,j}$.  Note that $K \subseteq \prod_{j \leq m_2} K_j$, which is compact by Tychonoff's theorem.  Moreover, $K$ is closed because $K$ is contained in $D_{0,\mathbf{r}_2}^{\cM}$ and each chronological formula is continuous on $D_{0,\mathbf{r}_2}^{\cM}$.  Hence, $K$ is a closed subset of a compact set and so compact.
		
		(2) Since $K$ is compact, there exists $k$ and $\mathbf{w}_1$, \dots, $\mathbf{w}_k \in K$ such that
		\[
		\forall \mathbf{w} \in K, \qquad \min_{i \in [k]} \max_{j \in F} \norm{w_j - w_{i,j}}_2 < \frac{\varepsilon}{2}.
		\]
		Let
		\[
		K' = \left\{ \mathbf{w} \in \prod_{j \leq m_2} K_j: \min_{i \in [k]} \max_{j \in F} \norm{w_j - w_{i,j}}_2 \geq \frac{\varepsilon}{2}  \right\}.
		\]
		For $\varphi \in \mathcal{F}_{\chron,m_1+m_2}^0$ and $\eta > 0$, let
		\[
		O_{\varphi,\eta} = \{ \mathbf{w} \in \prod_{j \leq m_2} K_j: |\varphi^{\cM}(\mathbf{y}_0,\mathbf{w}) - \varphi^{\cM}(\mathbf{y}_0,\mathbf{w}_0)| > \eta \}.
		\]
		Note that if $\mathbf{w} \in \prod_{j \leq m_2} K_j \setminus O_{\varphi,\eta}$ for all $\varphi$ and $\eta$, then $\mathbf{w}$ must be in $K$ since the chronological type is determined by the evaluation on all restricted chronological formulas.  Hence,
		\[
		\bigcup_{\varphi, \eta} O_{\varphi,\eta} = \prod_{j \leq m_2} K_j \setminus K \supseteq K'.
		\]
		Since $K'$ is closed, hence compact, it is contained in a finite union of $O_{\varphi_s,\eta_s}$ for $s = 1$, \dots, $\ell$.  Hence, setting
		\[
		\varphi = \max_{s \in [\ell]} |\varphi_s - \varphi_s^{\cM}(\mathbf{x}_0,\mathbf{y}_0)|, \qquad \eta = \min_{s \in [\ell]},
		\]
		we have
		\begin{align} 
			&\forall \mathbf{w} \in K, & & \varphi^{\cM}(\mathbf{y}_0,\mathbf{w}) = 0, \label{eq: conclusion of first compactness argument} \\
			&\forall \mathbf{w} \in \prod_{j \leq m_2} K_j, & &  \varphi^{\cM}(\mathbf{y}_0,\mathbf{w}) \leq \eta \implies \min_{i \in [k]} \max_{j \in F} \norm{w_j - w_{i,j}}_2 < \frac{\varepsilon}{2}. \label{eq: conclusion of first compactness argument 2}
		\end{align}
		Let $F'$ be the set of $\mathbf{w}$-coordinates which $\varphi$ depends on.  By uniform continuity of the restricted chronological formulas, let $\gamma > 0$ such that
		\begin{equation} \label{eq: compactness continuity of phi}
			\forall \mathbf{w}, \mathbf{w}' \in D_{0,\mathbf{r}_2}^{\cM}, \qquad \max_{j \in F'} \norm{w_j - w_j'} < \gamma \implies |\varphi^{\cM}(\mathbf{y}_0,\mathbf{w}) - \varphi^{\cM}(\mathbf{y}_0,\mathbf{w}')| < \frac{\eta}{2}.
		\end{equation}
		By Definition \ref{def: acl}, let $\psi_j$ be a chronologically definable predicate such that
		\[
		d(w,K_j) = \psi_j(\mathbf{y}_0,w).
		\]
		Recall that we can choose $\psi_j' \in \mathcal{F}_{\chron,m}^0$ to approximate $\psi_j$ within any given error on $D_{0,r_{2,j}}^{\cM}$, and by considering $\max(\psi_j' - a, 0)$ for a sufficiently small constant $a$, we can also arrange that $\psi_j' \geq 0$ and vanishes on $K_j$.  Hence, we can arrange $\delta > 0$ and $\psi_j' \in \mathcal{F}_{\chron,m_1+m_2}^0$ for each $j \in F \cup F'$, such that
		\begin{equation} \label{eq: compactness def of psi j}
			\forall w \in D_{0,r_{2,j}}^{\cM}, \qquad \psi_j^{\cM}(\mathbf{y}_0,w) < \delta \implies d(w, K_j) < \min(\gamma, \varepsilon / 2).
		\end{equation}
		Finally, let
		\begin{equation} \label{eq: compactness def of psi}
			\psi(\mathbf{y},\mathbf{w}) = \max_{j \in F \cup F'} \psi_j(\mathbf{y},w_j) + \varphi(\mathbf{y},\mathbf{w}).
		\end{equation}
		We claim that
		\begin{equation} \label{eq: main conclusion of compactness}
			\forall \mathbf{w} \in D_{0,\mathbf{r}_2}^{\cM}, \qquad \psi^{\cM}(\mathbf{y}_0,\mathbf{w}) < \min(\delta, \eta/2) \implies \min_{i \in [k]} \max_{j \in F} \norm{w_j - w_{i,j}}_2 < \varepsilon.
		\end{equation}
		Indeed, suppose that $\psi^{\cM}(\mathbf{y}_0,\mathbf{w}) < \min(\delta, \eta/2)$.  Then by \eqref{eq: compactness def of psi} and \eqref{eq: compactness def of psi j},
		\[
		\max_{j \in F \cup F'} d(w_j, K_j) < \min(\gamma,\varepsilon/2).
		\]
		Hence, choose $\mathbf{w}' \in \prod_{j \leq m_2} K_j$ such that
		\begin{equation} \label{eq: compactness half epsilon}
			\max_{j \in F \cup F'} \norm{w_j - w_j'}_2 < \min(\gamma,\varepsilon/2).
		\end{equation}
		Thus, by \eqref{eq: compactness half epsilon} and \eqref{eq: compactness continuity of phi},
		\begin{equation} \label{eq: unnamed equation 1}
			|\varphi^{\cM}(\mathbf{y}_0,\mathbf{w}) - \varphi^{\cM}(\mathbf{y}_0,\mathbf{w}')| < \frac{\eta}{2}.
		\end{equation}
		Since $\psi^{\cM}(\mathbf{y}_0,\mathbf{w}) < \min(\delta, \eta/2)$, using \eqref{eq: compactness def of psi},
		\begin{equation} \label{eq: unnamed equation 2}
			|\varphi^{\cM}(\mathbf{y}_0,\mathbf{w}) - \varphi^{\cM}(\mathbf{y}_0,\mathbf{w}_0)| < \frac{\eta}{2}.
		\end{equation}
		Then \eqref{eq: unnamed equation 1} and \eqref{eq: unnamed equation 2} imply
		\begin{equation} \label{eq: unnamed equation 3}
			|\varphi^{\cM}(\mathbf{y}_0,\mathbf{w}') - \varphi^{\cM}(\mathbf{y}_0,\mathbf{w}_0)| < \eta.
		\end{equation}
		Then \eqref{eq: unnamed equation 3} and \eqref{eq: conclusion of first compactness argument 2} imply that
		\begin{equation} \label{eq: unnamed equation 4}
			\min_{i \in [k]} \max_{j \in F} \norm{w_j' - w_{i,j}}_2 < \frac{\varepsilon}{2}.
		\end{equation}
		Combining \eqref{eq: unnamed equation 4} with \eqref{eq: compactness half epsilon} and the triangle inequality yields
		\[
		\min_{i \in [k]} \max_{j \in F} \norm{w_j' - w_{i,j}}_2 < \varepsilon,
		\]
		which completes the proof of \eqref{eq: main conclusion of compactness}.  It is also straightforward to check from the definitions that $\psi \geq 0$ and $\psi^{\cM}(\mathbf{y}_0,\mathbf{w}) = 0$ for $\mathbf{w} \in K$.
	\end{proof}
	
	The next technical lemma shows that deterministic matrix models for $\mathbf{y}_0$ can be completed to deterministic matrix models for $(\mathbf{y}_0,\mathbf{w}_0)$, which can be proved using a diagonalization argument.  Later, Theorem \ref{thm: optimal coupling lifts} will prove a much stronger result about completing matrix models of optimal couplings.  Hence, for the sake of time, we will skip giving a self-contained proof for Lemma \ref{lem: lifting of deterministic matrices} and simply invoke Theorem \ref{thm: optimal coupling lifts}.  Needless to say, Theorem \ref{thm: optimal coupling lifts} does not depend on the results of this section in any way.
	
	\begin{lemma} \label{lem: lifting of deterministic matrices}
		Let $\cM$ be an $\cL_{\proc}$-structure satisfying $\rT_{\stat}$, let $m_1, m_2 \in \N \cup \{\infty\}$, let $\mathbf{r}_1 \in (0,\infty)^{m_1}$ and $\mathbf{r}_2 \in (0,\infty)^{m_2}$, and let $\mathbf{y}_0 \in D_{0,\mathbf{r}_1}^{\cM}$ and $\mathbf{w}_0 \in D_{0,\mathbf{r}_2}^{\cM}$.  Let $\mathbf{Y}^{(n)} \in D_{\mathbf{r}_1}^{\mathbb{M}_n}$ be matrix tuples satisfying
		\[
		\forall \varphi \in \mathcal{F}_{\chron,m_1}^0, \quad \Lambda_{\varphi}^{(n)}(\mathbf{Y}^{(n)}) = \varphi^{\cM}(\mathbf{y}_0).
		\]
		Then there exist matrix tuples $\mathbf{W}^{(n)} \in D_{\mathbf{r}_2}^{\mathbb{M}_n}$ such that
		\begin{equation} \label{eq: yet another convergence of matrix models}
			\forall \varphi \in \mathcal{F}_{\chron,m_1+m_2}^0, \quad \Lambda_{\varphi}^{(n)}(\mathbf{Y}^{(n)},\mathbf{W}^{(n)}) = \varphi^{\cM}(\mathbf{y}_0,\mathbf{z}_0).
		\end{equation}
	\end{lemma}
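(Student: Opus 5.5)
The statement should be read with ``$=$'' meaning equality in the limit $n \to \cU$, so that hypothesis and conclusion are ultralimit statements. The plan is a diagonal argument over finitely many restricted chronological formulas. We use that $\mathcal{F}_{\chron,m_1+m_2}^0$ is separable in the relevant seminorms (Lemma \ref{lem: separability}) and that each $\Lambda_\varphi^{(n)}$ is bounded and $\norm{\cdot}_1$-Lipschitz uniformly in $n$. It therefore suffices to fix an enumeration $\varphi_1,\varphi_2,\dots$ of a countable dense family. For each $k$ we seek a set $A_k \in \cU$ such that for $n \in A_k$ there is $\mathbf{W}$ in $D_{\mathbf{r}_2}^{\mathbb{M}_n}$ with
\[
\max_{i\le k}|\Lambda_{\varphi_i}^{(n)}(\mathbf{Y}^{(n)},\mathbf{W}) - \varphi_i^{\cM}(\mathbf{y}_0,\mathbf{w}_0)| < 1/k.
\]
Given the sets $A_k$, we can replace them by a decreasing sequence and intersect with $\{n \geq k\}$. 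We then set $\mathbf{W}^{(n)}$ to be the witness for the largest $k$ with $n\in A_k$, and $\mathbf{W}^{(n)} = 0$ if there is none. This choice yields the ultralimit convergence for every $\varphi_i$, and density extends it to all of $\mathcal{F}_{\chron,m_1+m_2}^0$.

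To produce $A_k$, we turn existence of witnesses into the value of a single formula. Let
\[
\eta(\mathbf{y},\mathbf{w}) = \max_{i\le k}|\varphi_i(\mathbf{y},\mathbf{w}) - \varphi_i^{\cM}(\mathbf{y}_0,\mathbf{w}_0)|,
\]
which is a restricted chronological formula since Lipschitz connectives are allowed. Next let $\theta(\mathbf{y}) = \inf_{\mathbf{w}\in D_{0,\mathbf{r}_2}}\eta(\mathbf{y},\mathbf{w})$; only finitely many coordinates of $\mathbf{w}$ appear, so the infimum is over finitely many variables. Then $\theta^{\cM}(\mathbf{y}_0)=0$, witnessed by $\mathbf{w}_0$. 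By the construction in Proposition \ref{prop: approx by pointwise function} (Inductive case 2), we have
\[
\Lambda_\theta^{(n)}(\mathbf{Y}) = \inf_{\mathbf{W}\in D_{\mathbf{r}_2}^{\mathbb{M}_n}}\Lambda_\eta^{(n)}(\mathbf{Y},\mathbf{W}),
\]
and $\Lambda_\eta^{(n)} = \max_i|\Lambda_{\varphi_i}^{(n)} - \text{const}|$ by Inductive case 1. The hypothesis on $\mathbf{Y}^{(n)}$ gives $\lim_{n\to\cU}\Lambda_\theta^{(n)}(\mathbf{Y}^{(n)}) = \theta^{\cM}(\mathbf{y}_0) = 0$. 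Hence $A_k = \{n : \Lambda_\theta^{(n)}(\mathbf{Y}^{(n)}) < 1/(2k)\}$ belongs to $\cU$, and since the infimum over the compact ball is attained by continuity, witnesses $\mathbf{W}$ exist for $n \in A_k$.

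The main obstacle is bookkeeping of the choices of $\Lambda$ (Remark \ref{rem: independent of choices}). The $\Lambda_\theta^{(n)}$ obtained by building $\theta$ along the route above may differ from the fixed choice by $O(1/n)$ uniformly, which is harmless in the ultralimit. A second subtlety is that $\varphi_i$ as elements of $\mathcal{F}^0_{\chron}$ are formulas in resolvents, so the domain $D_{0,\mathbf{r}_2}$ quantifier is legitimate in Definition \ref{def: restricted chronological formulas}. Finally, for $m_2=\infty$ the tuple $\mathbf{W}^{(n)}$ is assembled coordinatewise: coordinates not appearing in $\varphi_1,\dots,\varphi_k$ are set to $0$ at stage $k$, and each coordinate eventually stabilizes as $k$ grows along the diagonal.
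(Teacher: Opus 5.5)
Your argument is correct, and it takes a genuinely different route from the paper's. The paper does not argue directly. It obtains the lemma as the degenerate case $\mathbf{x}_0=\mathbf{0}$, $\mathbf{x}_1=\mathbf{w}_0$ of Theorem~\ref{thm: optimal coupling lifts}: the pair $(\mathbf{0},\mathbf{w}_0)$ is trivially an optimal coupling, and the lift is deterministic because it is a Borel function of $\mathbf{X}_0^{(n)}=0$. It then treats an infinite tuple $\mathbf{w}_0$ by adjoining one coordinate at a time.

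That route imports Monge--Kantorovich duality, the displacement-interpolation predicates, measurable selection, and a {\L}o{\'s}-type argument over all characters. It is also formulated inside the matrix quotient $\cQ$, with types over $\mathbf{y}_0$ in a countably saturated structure. So applying it to a general $\cM\models\rT_{\stat}$ tacitly needs an extra transfer step. Its only gain here is economy of writing, since that theorem is needed later anyway.

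You instead give the self-contained diagonalization that the paper mentions but skips. The condition that some $\mathbf{w}$ in the ball matches $\varphi_1,\dots,\varphi_k$ to within $1/k$ is a single restricted chronological formula $\theta$. Its matrix approximation is literally an infimum over the matrix ball, so the hypothesis on $\mathbf{Y}^{(n)}$ applied to $\theta$ produces witnesses directly. This works verbatim for any $\cM\models\rT_{\stat}$. It also handles $m_2=\infty$ inside the same diagonal. Nothing needs to ``stabilize'' there: take the finite coordinate sets increasing in $k$ and fill the other coordinates with $0$.

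The one step you should justify differently is the final density extension. Lemma~\ref{lem: separability} gives density only in the seminorms $\norm{\cdot}_{0,\mathbf{r},\rT_{\stat}}$ over models. Uniform bounds and Lipschitz constants for the $\Lambda^{(n)}$ do not compare $\Lambda_\varphi^{(n)}$ with $\Lambda_{\varphi_i}^{(n)}$. What you need is
\[
\lim_{n\to\cU}\bigl|\Lambda_\varphi^{(n)}(\mathbf{X}^{(n)})-\Lambda_\psi^{(n)}(\mathbf{X}^{(n)})\bigr|\le\norm{\varphi-\psi}_{0,\mathbf{r},\rT_{\stat}}
\]
for bounded deterministic sequences $\mathbf{X}^{(n)}$. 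To get it, view $[\mathbf{X}^{(n)}]_{n\in\N}$ in the random matrix ultraproduct. Then combine Propositions~\ref{prop: approx by pointwise function} and~\ref{prop: Los theorem} with stationarity of every quotient $\cQ$ (Proposition~\ref{prop: matrix quotient filtration}), as in Corollary~\ref{cor: center-valued definable predicates}.

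The same reasoning settles your worry about choices of $\Lambda_\theta^{(n)}$. Any two constructions satisfy \eqref{eq: pointwise formula approximation}, so they have the same ultralimit along deterministic bounded sequences. The paper's own final diagonal in Theorem~\ref{thm: optimal coupling lifts} glosses over the same density point.
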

	
	\begin{proof}
		First, consider the case that $m_1 < \infty$.  We apply Theorem \ref{thm: optimal coupling lifts} with $\mathbf{x}_0 = \mathbf{0}$ and $\mathbf{x}_1 = \mathbf{z}_0$.  Indeed, it is easy to see that $(\mathbf{0}, \mathbf{w}_0)$ is an optimal coupling in the sense of Definition \ref{def: Wasserstein distance}.  Thus, Theorem \ref{thm: optimal coupling lifts} yields matrix models $\mathbf{W}^{(n)}$ for $\mathbf{w}_0$, which are also deterministic since they are given as a Borel function of $\mathbf{0}$.
		
		Now consider the case where $m_1 = \infty$.  We construct the matrix approximation $W_j^{(n)}$ by induction on $j$ such that $(\mathbf{Y}^{(n)},W_1^{(n)},\dots,W_j^{(n)})$ is a matrix model for $(\mathbf{y}_0,w_{0,1},\dots,w_{0,j})$ in the sense of \eqref{eq: yet another convergence of matrix models}.  The base case $j = 0$ is trivial, and the induction step follows from the $m_1 < \infty$ case above.  Then the tuple $(\mathbf{Y}^{(n)},\mathbf{Z}^{(n)})$ satisfies \eqref{eq: yet another convergence of matrix models} because each restricted chronological formula only uses finitely many coordinates.
	\end{proof}

	\begin{proof}[Proof of Proposition \ref{prop: entropy and acl}]
		Fix a restricted chronological formula $\varphi(\mathbf{x},\mathbf{w})$ and $\varepsilon > 0$.  Recall that $\varphi$ only depends on finite set $F$ of the coordinates of $\mathbf{w}$, and it is Lipschitz with respect to $\norm{\mathbf{w}}_1$ in these coordinates, and so also Lipschitz with respect to $\sum_{j \in F} \norm{w_j}_2$ for some Lipschitz constant $L$ (and the same holds for the matrix approximation $\Lambda_\varphi^{(n)}$ from Proposition \ref{prop: approx by pointwise function}).  Let $\mathbf{w}_1$, \dots, $\mathbf{w}_k$ and $\psi$ and $\delta$ be as in Lemma \ref{lem: closeness to compact set}, and consider the restricted chronological formula
		\begin{equation} \label{eq: definition of omega y w}
			\omega(\mathbf{y}',\mathbf{w}_1',\dots,\mathbf{w}_k') = \max\left( \delta/2 - \psi^{\cM}(\mathbf{y},\mathbf{w}), \, \min_{i \in [k]} \max_{j \in F} \norm{w_j - w_{i,j}}_2 - 2 \varepsilon \right).
		\end{equation}
		Note that \eqref{eq: closeness to compact set} implies that
		\[
		\omega^{\cM}(\mathbf{y}_0,\mathbf{w}_1,\dots,\mathbf{w}_k) \leq \max(-\delta/2, - \varepsilon) < 0.
		\]
		Define also the restricted chronological formula
		\begin{equation} \label{eq: definition of phi prime}
			\varphi'(\mathbf{x},\mathbf{y}) = \inf_{\mathbf{w} \in D_{0,\mathbf{r}_2}} \left[ |\varphi(\mathbf{x},\mathbf{w}) - \varphi^{\cM}(\mathbf{x}_0,\mathbf{w}_0)|  + \psi(\mathbf{y},\mathbf{w}) \right]
		\end{equation}
		which makes sense because $\varphi$ again only depends on finitely many coordinates of $\mathbf{w}$.  Note also that
		\[
		(\varphi')^{\cM}(\mathbf{x}_0,\mathbf{y}_0) = 0
		\]
		since $\mathbf{w}_0$ is a candidate in the infimum.
		
		Using Lemma \ref{lem: lifting of deterministic matrices}, let $\mathbf{W}_1^{(n)}$, \dots, $\mathbf{W}_k^{(n)}$ be deterministic matrix tuples in $D_{\mathbf{r}_2}^{\mathbb{M}_n}$ satisfying \eqref{eq: yet another convergence of matrix models}.  In particular, for sufficiently large $n$ with respect to the $\cU$, we have
		\[
		\Lambda_\omega^{(n)}(\mathbf{Y}^{(n)}, \mathbf{W}_1^{(n)}, \dots, \mathbf{W}_k^{(n)}) < 0.
		\]
		Recalling the definition of $\omega$ \eqref{eq: definition of omega y w} and assuming that $\Lambda_\omega^{(n)}$ is constructed from $\Lambda_\varphi^{(n)}$ and $\Lambda_\psi^{(n)}$ in the same way as $\omega$ is constructed from $\varphi$ and $\psi$ in Proposition \ref{prop: approx by pointwise function}, this means that
		\begin{equation} \label{eq: matrix implication for acl}
			\forall \mathbf{W} \in D_{\mathbf{r}_2}^{\mathbb{M}_n}, \quad \Lambda_\psi^{(n)}(\mathbf{Y}^{(n)},\mathbf{W}) < \frac{\delta}{2} \implies \min_{i \in [k]} \max_{j \in F}  \norm{W_j - W_{i,j}^{(n)}}_2 \leq 2 \varepsilon.
		\end{equation}
		Now we claim that
		\begin{equation} \label{eq: microstate inclusion for acl}
			\Gamma^{(n)}(\mathbf{x}_0 \mid \mathbf{W}^{(n)} \rightsquigarrow \mathbf{w}_0; \{\varphi'\}, \min(\delta/2,\varepsilon)) \subseteq \bigcup_{i=1}^k \Gamma^{(n)}(\mathbf{x}_0 \mid \mathbf{W}_i^{(n)} \rightsquigarrow \mathbf{w}_0; \{\varphi\}, (1 + 2L)\varepsilon).
		\end{equation}
		Indeed, suppose that $\mathbf{X}$ is in the microstate space on the left-hand side of \eqref{eq: microstate inclusion for acl}.  Then since $(\varphi')^{\cM}(\mathbf{x}_0,\mathbf{y}_0) = 0$, we have
		\[
		\Lambda_{\varphi'}^{(n)}(\mathbf{X},\mathbf{Y}^{(n)}) < \min(\delta/2, \varepsilon).
		\]
		By construction of $\varphi'$ \eqref{eq: definition of phi prime} and the corresponding construction of $\Lambda_{\varphi'}^{(n)}$, this implies that there exists $\mathbf{W} \in D_{\mathbf{r}_2}^{\mathbb{M}_n}$ such that
		\[
		|\Lambda_\varphi^{(n)}(\mathbf{X},\mathbf{W}) - \varphi^{\cM}(\mathbf{x}_0,\mathbf{w}_0)| + \Lambda_\psi^{(n)}(\mathbf{Y}^{(n)},\mathbf{W}) < \min(\delta/2,\varepsilon),
		\]
		and therefore
		\begin{align}
			|\Lambda_\varphi^{(n)}(\mathbf{X},\mathbf{W}) - \varphi^{\cM}(\mathbf{x}_0,\mathbf{w}_0)| &\leq \varepsilon \label{eq: nameless acl equation 1} \\
			\Lambda_\psi^{(n)}(\mathbf{Y}^{(n)},\mathbf{W}) &\leq \frac{\delta}{2} \label{eq: nameless acl equation 2}.
		\end{align}
		Now \eqref{eq: nameless acl equation 2} and \eqref{eq: matrix implication for acl} imply that there exists $i \in [k]$ such that
		\begin{equation} \label{eq: nameless acl equation 3}
			\max_{j \in F} \norm{W_j - W_{i,j}^{(n)}}_2 \leq 2 \varepsilon.
		\end{equation}
		Then using the $L$-Lipschitz property of $\Lambda_\varphi^{(n)}$ and \eqref{eq: nameless acl equation 1} with the triangle inequality, we get
		\begin{equation} \label{eq: nameless acl equation 4}
			|\Lambda_\varphi^{(n)}(\mathbf{X},\mathbf{W}_i^{(n)}) - \varphi^{\cM}(\mathbf{x}_0,\mathbf{w}_0)| \leq \varepsilon + L(2 \varepsilon),
		\end{equation}
		and therefore $\mathbf{X}$ is an element in the right-hand side of \eqref{eq: microstate inclusion for acl} as desired.
		
		Next, from \eqref{eq: microstate inclusion for acl}, we deduce that
		\begin{align*}
			\sigma^{(n)}(\Gamma^{(n)}(\mathbf{x}_0 \mid \mathbf{Y}^{(n)} \rightsquigarrow \mathbf{y}_0; \{\varphi'\}, \min(\delta/2,\varepsilon))) &\leq \sum_{i=1}^k \sigma^{(n)}(\Gamma^{(n)}(\mathbf{x}_0 \mid \mathbf{W}_i^{(n)} \rightsquigarrow \mathbf{w}_0; \{\varphi\}, (1 + 2L)\varepsilon)) \\
			&\leq k \max_{i \in [k]} \sigma^{(n)}(\Gamma^{(n)}(\mathbf{x}_0 \mid \mathbf{W}_i^{(n)} \rightsquigarrow \mathbf{w}_0; \{\varphi\}, (1 + 2L)\varepsilon)).
		\end{align*}
		Using \eqref{eq: upper bound for pressure using measure},
		\begin{align*}
			-\frac{1}{n^2} \log & \sigma^{(n)}(\Gamma^{(n)}(\mathbf{x}_0 \mid \mathbf{Y}^{(n)} \rightsquigarrow \mathbf{y}_0; \{\varphi'\}, \min(\delta/2,\varepsilon))) \\
			&\geq \min_{i \in [k]} -\frac{1}{n^2} \sigma^{(n)}(\Gamma^{(n)}(\mathbf{x}_0 \mid \mathbf{W}_i^{(n)} \rightsquigarrow \mathbf{w}_0; \{\varphi\}, (1 + 2L)\varepsilon)) - \frac{1}{n^2} \log k \\
			&\geq \min_{i \in k} \mathcal{P}_{m,m_2}^{(n)}(\varphi)(\mathbf{W}_i^{(n)}) - \varphi^{\cM}(\mathbf{x}_0,\mathbf{w}_0) - (1 + 2L) \varepsilon - \frac{1}{n^2} \log k.
		\end{align*}
		Therefore, by applying Definitions \ref{def: chronological entropy} and \ref{def: chronological entropy final} to $(\mathbf{x}_0,\mathbf{y}_0)$ and applying \eqref{eq: convergence of matrix pressure} to $(\mathbf{x}_0,\mathbf{w}_0)$ for each matrix approximation $\mathbf{W}_i^{(n)}$,
		\begin{align*}
			\chi_{\chron}^{\cU}(\mathbf{x}_0 \mid \mathbf{y}_0) &\geq \lim_{n \to \cU} -\frac{1}{n^2} \log \sigma^{(n)}(\Gamma^{(n)}(\mathbf{x}_0 \mid \mathbf{Y}^{(n)} \rightsquigarrow \mathbf{y}_0; \{\varphi'\}, \min(\delta/2,\varepsilon))) \\
			&\geq \lim_{n \to \cU} \left[ \min_{i \in k} \mathcal{P}_{m,m_2}^{(n)}(\varphi)(\mathbf{W}_i^{(n)}) - \varphi^{\cM}(\mathbf{x}_0,\mathbf{w}_0) - (1 + 2L) \varepsilon - \frac{1}{n^2} \log k \right] \\
			&= \mathcal{P}_{m,m_2}(\varphi)(\mathbf{w}_0) - \varphi^{\cM}(\mathbf{x}_0,\mathbf{w}_0) - (1 + 2L) \varepsilon.
		\end{align*}
		Hence, for every $\varphi$ and $\varepsilon$, we have
		\[
		\mathcal{P}_{m,m_2}(\varphi)(\mathbf{w}_0) - \varphi^{\cM}(\mathbf{x}_0,\mathbf{w}_0) \leq \chi_{\chron}^{\cU}(\mathbf{x}_0 \mid \mathbf{y}_0) + (1 + 2L) \varepsilon.
		\]
		Therefore, by \eqref{eq: final pressure formula for entropy}, we get $\chi_{\chron}^{\cU}(\mathbf{x}_0,\mathbf{w}_0) \leq \chi_{\chron}^{\cU}(\mathbf{x}_0,\mathbf{y}_0)$ as desired.
	\end{proof}

	\subsection{Large deviations} \label{subsec: LDP}
	
	We conclude the section with a discussion of large deviations theory.  Here it is convenient to work with a certain compactification of $\mathbb{S}_{\chron,m}(\rT_{\stat})$.  For similar compactification constructions in the context of free entropy, see \cite{BCG2003} and \cite[\S 7]{JLS2022}.  Note $\mathbb{S}_{\chron,m}(\rT_{\stat})$ is not itself compact without imposing a certain bound on the operator norm, and the large deviations principle can assign weight to regions that do not necessarily satisfy a fixed operator norm bound.  Recall we defined $\overline{\cF}_{\chron,m}^0$ as the completion of $\cF_{\chron,m}^0$ with respect to the uniform norm associated to $\rT_{\stat}$.  It is not hard to see that $\overline{\cF}_{\chron,m}^0$ is a commutative real $\mathrm{C}^*$-algebra, and therefore $\overline{\cF}_{\chron,m}^0$ is isomorphic to $C(\Sigma_{\chron,m};\R)$, where $\Sigma_{\chron,m}$ is its character space or Gelfand spectrum.  The type space $\mathbb{S}_{\chron,m}(\rT_{\stat})$ naturally embeds into $\Sigma_{\chron,m}$ by viewing each type as a character on $\overline{\cF}_{\chron,m}^0$ through the natural dual pairing.
	
	We specialize the results of \S \ref{subsec: stochastic variational} to the case $m' = 0$.  Thus, for each $\varphi \in \overline{\mathcal{F}}_{\chron,m}^0$, $\mathcal{P}(\varphi)$ is a definable predicate with no free variables.  Thus, the value of $\mathcal{P}(\varphi)$ only depends on the chronological theory of the ambient $\cL_{\proc}$-structure. The definition of entropy $\chi_{\chron}^{\cU}$ can naturally be extended to $\Sigma_{\chron,m}$ by letting
	\[
	\tilde{\chi}_{\chron}^{\cU}(\mu) = \sup_{\varphi \in \overline{\mathcal{F}}_{\chron,m}^0} \left[ \mathcal{P}^{\cU}(\varphi) - (\mu,\varphi) \right],
	\]
	where $\mathcal{P}(\varphi)^{\cU}$ denotes the value of $\mathcal{P}(\varphi)$ in the matrix ultraproduct quotients $\cQ$ associated to the given ultrafilter $\cU$.  Note that the same reasoning as Corollary \ref{cor: semicontinuity of entropy} applies to show that $\tilde{\chi}_{\chron}^{\cU}$ is lower semicontinuous on $\Sigma_{\chron,m}$.
	
	The statement \eqref{eq: convergence of matrix pressure} for the convergence of matrix pressure thus gives an analogous statement to Varadhan's lemma with $\tilde{\chi}_{\chron}^{\cU}(\mu)$ as the rate function, with two caveats.  First, the limit is along the ultrafilter $\cU$ rather than as $n \to \infty$, and second, it requires a choice of the finite-dimensional approximations $\Lambda_\varphi^{(n)}$.  The second issue makes it harder to state an analogous large deviations principle; indeed, the evaluation of $\Lambda_\varphi^{(n)}$ on a matrix tuple does \emph{not} produce a character on $\overline{\mathcal{F}}_{\chron,m}^0$ exactly, only approximately.  We will not attempt to address this issue here.
	
	Rather, let us focus on the quantifier-free setting advertised in Corollary \ref{introcor: LDP}.  Let $\mathcal{F}_{\qf,m}^0$ be the set of restricted quantifier-free formulas in $m$ variables; that is, $\mathcal{F}_{\qf,m}$ is generated by Definition \ref{def: restricted chronological formulas} without using the suprema and infima.  Hence, elements in $\mathcal{F}_{\qf,m}^0$ all take the form
	\[
	f(\tr(p_1(\mathbf{x})),\dots,\tr(p_k(\mathbf{x}))),
	\]
	where $f: \C^k \to \R$ is continuous and $p_1$, \dots, $p_k$ are $*$-polynomials in $(\re(x_j) + i)^{-1}$ and $(\im(x_j) + i)^{-1}$.  In that case that $f$ is polynomial, the resulting formula is a \emph{trace polynomial} in these resolvents (see \cite{DHK2013}).  In general, these formulas are the cylindrical test functions from \cite[\S B.4]{GJNPviscositysolution}.  Unlike the case of chronological formulas, there is a canonical choice of $\Lambda_\varphi^{(n)}$ as simply $\varphi^{\mathbb{M}_n}$.  (And more generally, formulas in $\cL_{\tr}$ that do not involve the Brownian motion have a canonical evaluation in $\mathbb{M}_n$.)  Thus, a matrix tuple $\mathbf{X}$ does produce a character on $\cF_{\qf,m}^0$, which we will denote by $\tp_{\qf}^{\M_n}(\mathbf{X})$.
	
	Let $\overline{\cF}_{\qf,m}^0$ be the completion of $\cF_{\qf,m}^0$ with respect to the uniform norm, and let $\Sigma_{\qf,m}$ be its Gelfand spectrum.  We then have the microstate spaces
	\[
	\Gamma^{(n)}(\mu; \Phi, \varepsilon) = \{ \mathbf{X} \in \mathbb{M}_n^m: \max_{\varphi \in \Phi} |\varphi^{\mathbb{M}_n}(\mathbf{X}) - (\mu,\varphi)| < \varepsilon\} 
	\]
	for finite $\Phi \subseteq \overline{\cF}_{\qf,m}^0$, and one can define $\tilde{\chi}_{\qf}^{\cU}(\mu)$ analogously to Definition \ref{def: chronological entropy} using these microstate spaces.  This $\tilde{\chi}_{\qf}^{\cU}(\mu)$ is exactly the free entropy with respect to Gaussian measure studied in \cite{BCG2003}.  The same proof as Lemma \ref{lem: microstates versus pressure} applies, and together with \eqref{eq: convergence of matrix pressure}, this yields
	\begin{equation} \label{eq: qf variational formula for entropy}
		\tilde{\chi}_{\qf}^{\cU}(\mu) = \sup_{\varphi \in \mathcal{F}_{\qf,m}^0} \left[ \mathcal{P}^{\cU}(\varphi) - (\mu,\varphi) \right].
	\end{equation}
	Moreover, standard large deviations techniques show that for $\mathcal{E} \subseteq \Sigma_{\qf,m}$, we have
	\begin{equation} \label{eq: LDP for law}
		\inf_{\mu \in \overline{\mathcal{E}}} \tilde{\chi}_{\qf}^{\cU}(\mu) \leq \lim_{n \to \cU} -\frac{1}{n^2} \log \sigma^{(n)}(\{\mathbf{X}: \tp_{\qf}^{\M_n}(\mathbf{X}) \in \mathcal{E}) \leq \inf_{\mu \in \mathcal{E}^\circ} \tilde{\chi}_{\qf}^{\cU}(\mu).
	\end{equation}
	For instance, the proof of Bryc's theorem (the converse of Varadhan's lemma; see \cite{Rassoul2015course}) works for ultralimits as well since it is simply based on comparing the measures of sets and the values of pressure functional as in Lemma \ref{lem: microstates versus pressure} together with topological arguments using compactness and lower semicontinuity.  See also \cite[Proposition 3.7]{JekelModelEntropy} for a similar variational principle based on a compactness argument.
	
	In order to obtain a true large deviations principle for the space $\Sigma_{\qf,m}$ and the Gaussian measures $\sigma^{(n)}$, we would need to understand whether the value of $\mathcal{P}^{\cU}(\varphi)$ is independent of the choice of ultrafilter.  Although this may seem like merely restating the problem in other terms, Proposition \ref{prop: convergence of type} gives another way of evaluating $\mathcal{P}^{\cU}(\varphi)$ in terms of a noncommutative filtration, rather than merely an ultralimit of classical quantities.  Namely,
	\[
	\mathcal{P}^{\cU}(\varphi) = \mathcal{P}(\varphi)^{\cQ} = \inf_{\mathbf{\alpha}} \left[ \varphi^{\cM}\left((z_{0,t,j})_{j=1}^m + \int_0^1 \mathbf{\alpha}_t\,dt, \mathbf{y} \right) + \frac{1}{2} \int_0^1 \norm{\alpha_t}_2^2 \,dt \right],
	\]
	where $\cQ$ is a quotient obtained from the random matrix ultraproduct with respect to $\cU$ \eqref{eq: random matrix ultraproduct}.  One can than hope to apply noncommutative stochastic analytical tools to the filtration $\cQ$.  Although it is no doubt a hard problem to determine whether $\mathcal{P}^{\cU}(\varphi)$ is independent of the ultrafilter $\cU$, it may very well be easier when $\varphi$ is quantifier-free and so $\mathcal{P}^{\cU}(\varphi)$ only involves inf-quantifiers (see also Remark \ref{rem: existential embedding}).  Let us conclude by finishing the proof of Corollary \ref{introcor: LDP}.
	
	\begin{proof}[Proof of Corollary {\ref{introcor: LDP}}]
		We already explained how the first statement \eqref{eq: qf variational formula for entropy} follows by the analog of Lemma \ref{lem: microstates versus pressure}, together with \eqref{eq: convergence of matrix pressure}.
		
		By Varadhan's lemma and Bryc's theorem, the Gaussian measures $\sigma^{(n)}$ and the space $\Sigma_{\qf,m}$ satisfy a large deviation principle with a good rate function if and only if $\lim_{n \to \infty} \mathcal{P}^{(n)}(\varphi)$ exists for all $\varphi \in \overline{\mathcal{F}}_{\qf,m}^0$.  This is in turn equivalent to the limit as $n \to \cU$ being the same for all non-principal ultrafilters on $\N$.
	\end{proof}
	
	\section{Change of variables for entropy} \label{sec: change of variables}
	
	In this section, we examine the change of variables formula for the free entropy of chronological types.  While we studied entropy $\tilde{\chi}_{\chron}^{\cU}(\mathbf{x} \mid \mathbf{y})$ with respect to the Gaussian measure in the previous section, we introduce the corresponding entropy $\chi_{\chron}^{\cU}(\mathbf{x} \mid \mathbf{y})$ with respect to Lebesgue measure here, including showing that in Definition \ref{def: chronological entropy} one gets the same result by restricting to microstate spaces with an operator-norm bound.
	
	Motivated by the application of geodesic concavity, we focus on extending the change of variables formula to gradients of functions of significantly less smoothness than has been considered in past work.  Indeed, relating $\chi_{\chron}^{\cU}(f(\mathbf{x}) \mid \mathbf{y})$ to $\chi_{\chron}^{\cU}(\mathbf{x} \mid \mathbf{y})$ is clearly possible when $f$ is a power series or a noncommutative smooth function along similar lines as \cite[Proposition 3.5]{VoiculescuFE2}.  However, in studying transport along a Wasserstein geodesic, it is necessary to study the derivative of $\chi_{\chron}^{\cU}((\id + t \nabla \varphi)\mathbf{x} \mid \mathbf{y})$ when $\varphi$ is merely a chronologically definable predicate, which we may assume to be semiconvex and semiconcave.  This requires defining a Laplacian $\Delta \varphi$ in much more general setting.  As explained in the introduction, since $\varphi$ can now be a formula involving suprema and infima, there is no way to give an explicit formula for the Laplacian as one can do in the setting of trace polynomials.  Rather the Laplacian has to be defined abstractly by the action of the process $z_{s,t}$ in our $\cL_{\proc}$-structure.
	
	\subsection{Entropy in terms of Lebesgue measure} \label{subsec: Leb entropy}
	
	\begin{definition}
		For $m \in \N$ and $m' \in \N \cup \{0,\infty\}$.  Let $\cM$ satisfy $\rT_{\stat}$.  Let $(\mathbf{x},\mathbf{y})$ be an $m + m'$-tuple from $M_0$.  Then define
		\[
		\chi_{\chron}^{\cU}(\mathbf{x} \mid \mathbf{y}) = m \log(2\pi) + \frac{1}{2} \norm{\mathbf{x}}_2^2 - \tilde{\chi}_{\chron}^{\cU}(\mathbf{x} \mid \mathbf{y}).
		\]
	\end{definition}
	
	We claim that $\chi_{\chron}^{\cU}$ can be evaluated in terms of the Lebesgue measure for microstate spaces, just as $\tilde{\chi}_{\chron}^{\cU}$ was evaluated using Gaussian measure.  In the Lebesgue measure case, we will also impose an operator-norm cutoff.  This result is a variant of standard results that were proved for the original free entropy \cite[Proposition 2.4]{VoiculescuFE2}, \cite[Lemma 7.1]{BCG2003}, \cite{BelBer2003property}.
	
	\begin{proposition} \label{prop: entropy with norm cutoff}
		Let $m \in \N$ and $m' \in \N \cup \{0,\infty\}$.  Let $R \geq \sqrt{2}$.  Let $\mathbf{x}$ be a self-adjoint $m$-tuple with $\norm{x_j} < R$ and $\mathbf{y}$ a self-adjoint $m'$-tuple.  Let $\mathbf{Y}^{(n)}$ be a tuple of deterministic matrices with $\sup_n \norm{Y_j^{(n)}} < \infty$ and $\lim_{n \to \cU} \Lambda_{\varphi}^{(n)}(\mathbf{Y}^{(n)}) = \varphi^{\cM}(\mathbf{y})$ for restricted chronological formulas $\varphi$.  Let
		\[
		\Gamma_R^{(n)}(\mathbf{x} \mid \mathbf{Y}^{(n)} \rightsquigarrow \mathbf{y}; \Phi, \varepsilon) := \left\{ \mathbf{X} \in (\mathbb{M}_n)_{\sa}^m: \norm{X_j} \leq R, \quad \max_{\varphi \in \Phi} |\Lambda_{\varphi}^{(n)}(\mathbf{X},\mathbf{Y}^{(n)}) - \varphi^{\cM}(\mathbf{x},\mathbf{y})| < \varepsilon \right\}.
		\]
		Then
		\begin{equation} \label{eq: Gaussian entropy with bounded microstates}
			\tilde{\chi}_{\chron}^{\cU}(\mathbf{x} \mid \mathbf{Y}^{(n)} \rightsquigarrow \mathbf{y}) = \sup_{\Phi,\varepsilon} \lim_{n \to \cU} -\frac{1}{n^2} \log \sigma^{(n)}(\Gamma_{4R}^{(n)}(\mathbf{x} \mid \mathbf{Y}^{(n)} \rightsquigarrow \mathbf{y}; \Phi, \varepsilon)).
		\end{equation}
	\end{proposition}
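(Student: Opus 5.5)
The inequality ``$\leq$'' is immediate, since $\Gamma_{4R}^{(n)} \subseteq \Gamma^{(n)}$ for each fixed $(\Phi,\varepsilon)$. A smaller microstate space has smaller Gaussian measure, so $-\frac{1}{n^2}\log\sigma^{(n)}$ can only increase. Strictly speaking, Definition \ref{def: conditional chronological entropy} uses non-self-adjoint tuples while $\Gamma_R^{(n)}$ is stated for self-adjoint ones. I would read both with the same ambient space; the operator-norm cutoff $\norm{X_j}\le 4R$ in the non-self-adjoint case is handled by $F_{4R}$ applied to real and imaginary parts as in \eqref{eq: truncation of tuple}. So the content is the reverse inequality. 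It says that cutting off at operator norm $4R$ does not decrease the entropy, i.e.\ it does not make the microstate spaces exponentially smaller at scale $n^2$.

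For ``$\geq$'' I would follow the standard argument of \cite[Lemma 7.1]{BCG2003} and \cite[Proposition 2.4]{VoiculescuFE2}, adapted to the $\Lambda_\varphi^{(n)}$. Fix $\Phi,\varepsilon$. I aim to find $\Phi'\supseteq\Phi$ and $\varepsilon'<\varepsilon$ with
\[
\lim_{n\to\cU} -\tfrac{1}{n^2}\log\sigma^{(n)}(\Gamma_{4R}^{(n)}(\mathbf{x}\mid\cdots;\Phi,\varepsilon)) \leq \lim_{n\to\cU}-\tfrac{1}{n^2}\log\sigma^{(n)}(\Gamma^{(n)}(\mathbf{x}\mid\cdots;\Phi',\varepsilon')).
\]
First, enlarge $\Phi$ by a restricted chronological formula $\theta$ that controls large spectral mass. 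Concretely, take $\theta(\mathbf{x}) = \sum_j \re\tr\, g(\re x_j) + \re\tr\, g(\im x_j)$, where $g$ is a bounded function of the resolvent vanishing on $[-R,R]$ and positive outside $[-2R,2R]$; this is expressible in $(\re x_j+i)^{-1}$. Since $\norm{x_j}<R$, we have $\theta^{\cM}(\mathbf{x})=0$. Hence any microstate $\mathbf{X}$ in the enlarged space has only an $o(1)$ fraction of eigenvalues of $\re X_j,\im X_j$ outside $[-2R,2R]$. We then map $\mathbf{X}\mapsto F_{4R}(\mathbf{X})$ (or the cutoff at $2R$). This moves $\mathbf{X}$ by a small amount in $\norm{\cdot}_1$, and since every $\Lambda_\varphi^{(n)}$ is $\norm{\cdot}_1$-Lipschitz uniformly in $n$ (Proposition \ref{prop: approx by pointwise function}), the truncated matrix still lies in $\Gamma_{4R}^{(n)}(\ldots;\Phi,\varepsilon)$ once $\varepsilon'$ and the tolerance on $\theta$ are small enough.

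The remaining task is to compare Gaussian measures of a set and its image under truncation. This step is where the real work lies, because $F_{4R}$ is not injective and volume is not preserved. I would instead use a large deviation bound for the Ginibre measure. The set of $\mathbf{X}$ with a fraction $\ge\eta$ of spectrum outside $[-2R,2R]$, but within the microstate space, should be compared directly. Alternatively, and preferably, use Lemma \ref{lem: microstates versus pressure} with a pressure argument. Add to the test function a large multiple $a\,\theta'$ where $\theta'$ penalizes operator norm beyond $4R$. Then invoke the exponential tightness of the Ginibre ensemble: $\sigma^{(n)}(\norm{X_j}>4R)\le e^{-cn^2R^2}$ for $R\ge\sqrt2$, which is where the constant $4R$ and the hypothesis $R\geq\sqrt{2}$ enter, since the circular operator has norm $2\sqrt2$. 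With this bound, the intersection with $\{\norm{X_j}\le 4R\}$ costs nothing at exponential scale $n^2$, provided the entropy is finite. If the entropy is $+\infty$ the inequality is trivial. Concretely,
\[
\sigma^{(n)}(\Gamma_{4R}^{(n)}) \ge \sigma^{(n)}(\Gamma^{(n)}) - \sigma^{(n)}(\exists j:\norm{X_j}>4R),
\]
and the subtracted term is exponentially negligible unless $\tilde\chi(\mathbf{x}\mid\cdots;\Phi,\varepsilon)\ge cR^2$. I expect the main obstacle is exactly this case distinction: making $c R^2$ exceed the relevant entropy value. I would handle it by first bounding $\tilde{\chi}$ from above. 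Using \eqref{eq: final pressure formula for entropy}, or a Gaussian volume estimate of matrices with $\norm{\cdot}_2$-norm near $\norm{\mathbf{x}}_2 < \sqrt{m}R$, gives such a bound. If that fails, I would fall back on the truncation-map argument of the previous paragraph, combined with a concentration estimate as in \cite{BelBer2003property}.
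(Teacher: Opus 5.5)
Your reduction is correct. The direction $\le$ is trivial, and what remains is a lower bound on $\sigma^{(n)}(\Gamma_{4R}^{(n)}(\ldots;\Phi,\varepsilon))$ of the form $e^{-C\varepsilon n^2}\sigma^{(n)}(\Gamma^{(n)}(\ldots;\Phi',\varepsilon'))$. Neither of your routes delivers this.

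\textbf{The preferred route.} It rests on $\sigma^{(n)}(\norm{X_j}>4R)\le e^{-cn^2R^2}$, which is false.
\begin{itemize}
\item Since $\norm{X}\le\sqrt{n}\,\norm{X}_2$ for the $\tr_n$-normalized norm, the operator norm is only $\sqrt{n}$-Lipschitz, so Herbst gives only $e^{-n\delta^2/2}$.
\item This speed-$n$ rate is sharp. Translating by a rank-one matrix $L\,ee^*$, which has $\norm{L\,ee^*}_2^2=L^2/n$, costs only a factor of about $e^{-nL^2/2}$ in Gaussian density. So one escaping eigenvalue costs $e^{-O(n)}$, not $e^{-O(n^2)}$.
\item Hence $\sigma^{(n)}(\exists j:\norm{X_j}>4R)=e^{-O(n)}$. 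This swamps $\sigma^{(n)}(\Gamma^{(n)})\approx e^{-n^2\tilde{\chi}}$ as soon as the entropy is positive. The subtraction $\sigma(\Gamma_{4R})\ge\sigma(\Gamma)-\sigma(\exists j:\norm{X_j}>4R)$ therefore gives nothing, for any cutoff.
\item Even granting speed $n^2$, your case distinction cannot be closed. $R$ is fixed by the statement, and $\tilde\chi$ has no upper bound in terms of $R$: shrinking $\mathbf{x}$ keeps $\norm{x_j}<R$ but drives $\chi$ to $-\infty$.
\item The pressure formula and the norm-shell volume estimate give lower bounds on $\tilde\chi$, not upper bounds.
\end{itemize}

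\textbf{The fallback.} Truncating by $F_\rho$ through functional calculus cannot work either. It sends $\{\norm{\re X_j}>\rho\}$ onto matrices having an eigenvalue exactly $\pm\rho$, which is a Lebesgue-null set. So $F_\rho(\Gamma^{(n)})$ has no more measure than the part of $\Gamma^{(n)}$ already inside the box. Separately, a small fraction of eigenvalues outside $[-2R,2R]$ does not make $\norm{\mathbf{X}-F_{2R}(\mathbf{X})}_1$ small, since the outliers can be huge; you would have to argue through the resolvents instead.

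\textbf{What is missing.} You need an \emph{injective} smooth compression whose Jacobian loss is paid for by the Gaussian weight. The paper proceeds as follows.
\begin{itemize}
\item It uses $G(\mathbf{X})=(g(\re X_j)+i\,g(\im X_j))_j$, where $g(t)=t$ on $[-R,R]$ and $g(t)=\pm2R-R^2/t$ outside. This map is injective, and $\norm{G(\mathbf{X})_j}\le 4R$, which is where $4R$ comes from.
\item The eigenvalue formula for $\det DG$, together with $\frac{g(s)-g(t)}{s-t}\ge g'(s)^{1/2}g'(t)^{1/2}$, gives $\frac{1}{n^2}\log|\det DG(\mathbf{X})|\ge-\frac{2}{R}\delta_R(\mathbf{X})^{1/2}\bigl(\norm{\mathbf{X}}_2^2-\norm{G(\mathbf{X})}_2^2\bigr)^{1/2}$ (Lemma \ref{lem: smooth cutoff}).
\item Under $G$ the Gaussian density gains exactly $e^{\frac{n^2}{2}(\norm{\mathbf{X}}_2^2-\norm{G(\mathbf{X})}_2^2)}$. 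By AM--GM this absorbs the Jacobian loss, leaving only a factor $e^{-2n^2\varepsilon/R^2}$ once $\delta_R(\mathbf{X})\le\varepsilon$.
\item The condition $\delta_R(\mathbf{X})\le\varepsilon$ is enforced by adding a resolvent formula $\eta$ that detects spectral mass outside $[-R,R]$. This is exactly where $\norm{x_j}<R$ is used.
\item The inclusion $G(\Gamma^{(n)}(\ldots;\tilde\Phi\cup\{\eta\},\varepsilon/3))\subseteq\Gamma_{4R}^{(n)}(\ldots;\Phi,\varepsilon)$ comes from replacing each $\varphi$ by the restricted formula $\tilde\varphi$. This is obtained by substituting a polynomial $p((t+i)^{-1})\approx(g(t)+i)^{-1}$ for each resolvent.
\end{itemize}
This argument handles unbounded $\norm{\mathbf{X}}_2$ and arbitrarily large entropy, with no case distinction.
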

	
	The strategy of the proof is the same as \cite{BelBer2003property}; we map the unbounded microstate space into a microstate space with an operator norm cutoff using functional calculus.  The main differences are that we use the Gaussian measure and that our functions for our microstate spaces do not include the traces of polynomials, and only functions that are Lipschitz in $\norm{\cdot}_1$.  In particular, the proof of \cite[Lemma 2.3]{BelBer2003property} for the inclusion of microstate spaces will be replaced by an argument more suited to our test functions.  Moreover, we cannot estimate the Jacobian of the transformation in terms of the $2p$ moment as in \cite{BelBer2003property}.  We therefore use the following estimate for the Jacobian of the transformation.
	
	\begin{lemma} \label{lem: smooth cutoff}
		Let $R > 0$.  Let $g: \R \to \R$ be given by
		\[
		g(t) = \begin{cases} -2R - R^2/t, & t \in (-\infty,-R] \\ t, & t \in [-R,R] \\ 2R - R^2/t, & t \in [R,\infty), \end{cases}
		\]
		Let $G: \mathbb{M}_n^m \to \mathbb{M}_n^m$ be given by
		\[
		G(\mathbf{X}) = (g(\re X_j) + i g(\im X_j))_{j=1}^m.
		\]
		Moreover, let
		\begin{equation} \label{eq: delta R}
			\delta_R(\mathbf{X}) = \sum_{j=1}^m [2 -  \tr_n(\mathbbm{1}_{[-R,R]}(\re(X_j))) - \tr_n(\mathbbm{1}_{[-R,R]}(\re(X_j)))].
		\end{equation}
		Then
		\begin{equation} \label{eq: smooth cutoff Jacobian estimate}
			0 \geq \frac{1}{n^2} \log \det |DG(\mathbf{X})| \geq - \frac{2}{R} \delta_R(\mathbf{X})^{1/2} \left( \norm{\mathbf{X}}_2^2 - \norm{G(\mathbf{X})}_2^2 \right)^{1/2}.
		\end{equation}
	\end{lemma}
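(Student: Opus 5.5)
The plan is to reduce to a one-variable spectral computation. Since $G$ acts separately on $\re X_j$ and $\im X_j$, and $\mathbb{M}_n^m \cong (\mathbb{M}_n)_{\sa}^{2m}$ orthogonally as a real inner-product space, $DG(\mathbf{X})$ is block diagonal. Hence $\log\det|DG(\mathbf{X})|$ is the sum of $\log\det D\Psi_g(A)$ over the $2m$ self-adjoint matrices $A \in \{\re X_j, \im X_j\}$, where $\Psi_g(A) = g(A)$ is the functional calculus map on $(\mathbb{M}_n)_{\sa}$. The function $g$ is $C^1$: at $t = R$ we have $g(R) = R$ and $g'(R) = R^2/R^2 = 1$.

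For a single $A$ with eigenvalues $\lambda_1,\dots,\lambda_n$, the classical Daleckii--Krein formula shows that in an eigenbasis of $A$, $D\Psi_g(A)$ is the Schur multiplier by the divided differences $g^{[1]}(\lambda_i,\lambda_k)$. Counting real dimensions (one for each diagonal entry, two for each pair $i<k$), this gives
\[
\log\det D\Psi_g(A) = \sum_{i,k=1}^n \log g^{[1]}(\lambda_i,\lambda_k).
\]
Since $0 < g' \le 1$ everywhere, with $g' = 1$ on $[-R,R]$ and $g'(t) = R^2/t^2$ outside, every divided difference lies in $(0,1]$. This gives the upper bound $0$.

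For the lower bound, set $\ell(t) = \log_+(|t|/R)$. A divided difference is an average of $g'$ over the segment between $\lambda_i$ and $\lambda_k$. Because $\log g' = -2\ell$ and the average of $g'$ is at least its minimum, I would first try the crude bound $\log g^{[1]}(\lambda_i,\lambda_k) \ge -2\max(\ell(\lambda_i),\ell(\lambda_k)) \ge -2(\ell(\lambda_i)+\ell(\lambda_k))$. If the constant comes out too large, I would instead use concavity of $\log$ together with Jensen on the integral representation to get $-(\ell(\lambda_i)+\ell(\lambda_k))$-type bounds. Summing over $i,k$ and dividing by $n^2$ yields $\frac{1}{n^2}\log\det D\Psi_g(A) \ge -c\,\tr_n \ell(A)$ for an absolute constant $c$. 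Since $\ell$ vanishes on $[-R,R]$, Cauchy--Schwarz gives
\[
\tr_n \ell(A) \le \tr_n(\mathbbm{1}_{[-R,R]^c}(A))^{1/2}\,\tr_n(\ell(A)^2)^{1/2}.
\]

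The main remaining step is the scalar inequality $\ell(t)^2 \lesssim (t^2 - g(t)^2)/R^2$. For $t \ge R$ one has $t - g(t) = (t-R)^2/t$ and $t + g(t) \ge 2R$, so $t^2 - g(t)^2 \ge 2R(t-R)^2/t$. Combined with $\log x \le (x-1)/\sqrt{x}$ at $x = t/R$, this gives $\ell(t)^2 \le (t-R)^2/(tR) \le (t^2 - g(t)^2)/(2R^2)$, and the case $t \le -R$ follows by symmetry. Finally, I would sum over the $2m$ components $A$ and apply Cauchy--Schwarz across them. This produces $\delta_R(\mathbf{X})^{1/2}$ times $\big(\sum_A \tr_n(A^2 - g(A)^2)\big)^{1/2} = (\norm{\mathbf{X}}_2^2 - \norm{G(\mathbf{X})}_2^2)^{1/2}$, reading \eqref{eq: delta R} with the second term involving $\im X_j$. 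The obstacle I expect is tracking constants sharply enough to reach exactly $2/R$ rather than a worse absolute multiple. If the crude estimate loses a factor, I would refine the divided-difference bound through the integral average described above.
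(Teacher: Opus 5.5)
Your architecture matches the paper's: blockwise reduction to $2m$ self-adjoint matrices, the Daleckii--Krein formula $\log\det D\Psi_g(A)=\sum_{i,k}\log g^{[1]}(\lambda_i,\lambda_k)$, Cauchy--Schwarz against $\mathbbm{1}_{[-R,R]^c}(A)$, a scalar inequality, and Cauchy--Schwarz across components. Your upper bound $0$ (which the paper leaves implicit) and your scalar inequality $\ell(t)^2\le (t^2-g(t)^2)/(2R^2)$ are also correct. The gap is the lower bound on divided differences, which is where the real work lies, and neither of your two options delivers it.

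\emph{The crude bound loses a factor of $2$ that cannot be recovered later.} The bound $\log g^{[1]}(\lambda_i,\lambda_k)\ge -2\max(\ell(\lambda_i),\ell(\lambda_k))$ fails in the following configuration. Take $m=1$, $\im X=0$, and $\re X$ with a single eigenvalue $t=R(1+\epsilon)$ and the others in $[-R,R]$. The crude bound then gives only
\[
\frac{1}{n^2}\log|\det DG(X)|\ge -\frac{2(2n-1)}{n^2}\log(1+\epsilon)\approx -\frac{4\epsilon}{n}.
\]
The claimed right-hand side is $-\frac{2}{R}\,n^{-1/2}\bigl(n^{-1}(t^2-g(t)^2)\bigr)^{1/2}\approx -2\sqrt2\,\epsilon/n$ for $n$ large and $\epsilon$ small. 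That is strictly larger, so this route proves the lemma only with $2\sqrt2/R$.

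\emph{The Jensen fallback does not produce $-(\ell(s)+\ell(t))$ either.} Jensen gives $\log g^{[1]}(s,t)\ge -2\,\mathrm{avg}_{[s,t]}\ell$. But $\ell=\log_+(|\cdot|/R)$ is concave on $[R,\infty)$, so that average is at least $(\ell(s)+\ell(t))/2$, not at most. For example, with $s=R$ and $t\to\infty$ the average is $\ell(t)-1+o(1)$, so the bound is $-2\ell(t)+2+o(1)$, which is below $-\ell(t)$ for large $t$.

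\emph{The missing idea} is the paper's pointwise estimate
\[
g^{[1]}(s,t)\ge\sqrt{g'(s)\,g'(t)}=\frac{R^2}{\max(|s|,R)\,\max(|t|,R)},
\]
that is, $\log g^{[1]}(s,t)\ge -\ell(s)-\ell(t)$.
\begin{itemize}
\item It is an equality when $s,t$ lie in the same one of the intervals $(-\infty,-R]$, $[-R,R]$, $[R,\infty)$; for instance, on $[R,\infty)$ one has $g^{[1]}(s,t)=R^2/(st)$.
\item Otherwise, split the segment at $\pm R$ and write $g^{[1]}(s,t)$ as a convex combination of the divided differences over the pieces. Each piece is already at least the right-hand side.
\end{itemize}
With this estimate your constant drops from $4$ to $2$. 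The rest of your chain then gives the lemma even with $\sqrt2/R$ in place of $2/R$, because your scalar inequality is sharper than the paper's combination of $\log x\le x-1$ with $t^2-g(t)^2\ge((|t|-R)_+)^2$.

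Alternatively, you can salvage the Jensen idea by majorizing $\ell$ by the convex function $\tilde\ell(t)=(|t|-R)_+/R$. The mean of a convex function over a segment is at most the average of its endpoint values, so $\log g^{[1]}(s,t)\ge-(\tilde\ell(s)+\tilde\ell(t))$. This recovers the paper's intermediate bound $\frac{1}{n^2}\log\det D\Psi_g(A)\ge-\frac2R\norm{(|A|-R)_+}_1$. You must then replace your scalar inequality by $t^2-g(t)^2\ge((|t|-R)_+)^2$, and this is exactly the paper's route to $2/R$.
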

	
	\begin{proof}
		First, let $G_0$ be the map $(\mathbb{M}_n)_{\sa} \to (\mathbb{M}_n)_{\sa}$ given by application of $g$ through functional calculus. As in the proof of \cite[Proposition 2.4]{VoiculescuFE2} and \cite{BelBer2003property}, the Jacobian is expressed in terms of the eigenvalues $(\lambda_1,\dots,\lambda_n)$ of $X$ as
		\[
		\det |DG_0(X)| = \prod_{i \neq j} \frac{g(\lambda_i) - g(\lambda_j)}{\lambda_i - \lambda_j} \prod_i g'(\lambda_i).
		\]
		Note that for the diagonal terms
		\[
		g'(t) = \frac{R^2}{\max(|t|,R)^2}.
		\]
		We claim that
		\begin{equation} \label{eq: lower bound for difference quotient}
			\frac{g(s) - g(t)}{s-t} \geq \frac{R^2}{\max(|s|,R) \max(|t|,R)} = g'(s)^{1/2} g'(t)^{1/2}.
		\end{equation}
		We remark that \cite{BelBer2003property} show easily a lower bound of $\min(g'(s),g'(t)) \geq g'(s) g'(t)$ from the fact that $g'$ has no local minimum, but the sharper bound \eqref{eq: lower bound for difference quotient} can be deduced by direct casework as follows:
		\begin{enumerate}[(1)]
			\item If $s, t \in [-R,R]$, both sides of \eqref{eq: lower bound for difference quotient} are $1$.
			\item If $s, t \in [R,\infty)$.  Then
			\[
			\frac{g(s) - g(t)}{s-t} = \frac{R/t - R/s}{s - t} = \frac{R^2}{st} = \frac{R^2}{\max(|s|,R) \max(|t|,R)}.
			\]
			\item Suppose $s \in (-R,R)$ and $t \in [R,\infty)$.  Then
			\begin{align*}
				\frac{g(t) - g(s)}{t - s} &= \frac{R-s}{t-s} \frac{g(R) - g(s)}{R - s} + \frac{t-R}{t-s} \frac{g(t) - g(R)}{t - R} \\
				&\geq \frac{R-s}{t-s} \cdot 1 + \frac{t-R}{t-s} \frac{R^2}{R \max(|t|,R)} \\
				&\geq \frac{R-s}{t-s} \frac{R^2}{R \max(|t|,R)} + \frac{t-R}{t-s} \frac{R^2}{R \max(|t|,R)} \\
				&= \frac{R^2}{R \max(|t|,R)}.
			\end{align*}
			\item Similarly, suppose $s \in (-\infty,-R]$ and $t \in [R,\infty)$.  Then
			\begin{align*}
				\frac{g(t) - g(s)}{t - s} &= \frac{(-R)-s}{t-s} \frac{g(-R) - g(s)}{-R - s} + \frac{R - (-R)}{t-s} \frac{g(R) - g(-R)}{R - (-R)} + \frac{t-R}{t-s} \frac{g(t) - g(R)}{t - R} \\
				&\geq \frac{(-R)-s}{t-s} \frac{R^2}{R \max(|s|,R)} + \frac{R - (-R)}{t-s} \cdot 1 + \frac{t-R}{t-s} \frac{R^2}{R \max(|t|,R)} \\
				&\geq  \frac{R^2}{\max(|s|,R) \max(|t|,R)}
			\end{align*}
			\item The remaining cases are symmetrical.
		\end{enumerate}
		Therefore,
		\begin{align*}
			\log |\det DG_1(X)| &\geq \sum_{i,j=1}^n \log \frac{R^2}{\max(|\lambda_i|,R) \max(|\lambda_j|,R)} \\
			&= -2n \sum_{j=1}^n \log \frac{\max(|\lambda_j|,R)}{R} \\
			&\geq -2n \sum_{j=1}^n \left( \frac{\max(|\lambda_j|,R)}{R} - 1\right) \\
			&= -2n \sum_{j=1}^n \frac{(|\lambda_j| - R)^+}{R}.
		\end{align*}
		Hence,
		\begin{equation} \label{eq: lower bound for cutoff Jacobian}
			\frac{1}{n^2} \log |\det DG_0(X)| \geq -\frac{2}{R} \norm{(|X|-R)_+}_1.
		\end{equation}
		Furthermore, using the noncommutative H{\"o}lder's inequality,
		\[
		\norm{(|X| - R)_+}_1 = \norm{(1 - \mathbbm{1}_{[-R,R]}(X))(|X| - R)_+}_1 \leq (1 - \tr_n(\mathbbm{1}_{[-R,R]}(X)))^{1/2} \norm{(|X| - R)_+}_2.
		\]
		We next claim that
		\[
		t^2 - g(t)^2 \geq [(|t| - R)^+]^2.
		\]
		When $t \in [-R,R]$, both sides are zero, and the cases for $(-\infty,-R]$ and $[R,\infty)$ are symmetrical, so it suffices to consider $t \geq R$ and observe that
		\[
		t^2 - g(t)^2 = (t + g(t))(t - g(t)) \geq t(t - 2R + R^2 / t) = (t - R)^2.
		\]
		It follows that for $X \in (\mathbb{M}_n)_{\sa}$,
		\begin{equation} \label{eq: smooth cutoff L1 estimate 2}
			\norm{X}_2^2 - \norm{G_0(X)}_2^2 \geq \norm{(|X| - R)^+}_2^2.
		\end{equation}
		By combining \eqref{eq: smooth cutoff L1 estimate 2} and \eqref{eq: lower bound for cutoff Jacobian}, we see that
		\begin{align*}
			\frac{1}{n^2} \log |\det DG_0(X)| &\geq -\frac{2}{R} (1 - \tr_n(\mathbbm{1}_{[-R,R]}(X)))^{1/2} \norm{(|X| - R)_+}_2 \\
			&\geq  -\frac{2}{R} (1 - \tr_n(\mathbbm{1}_{[-R,R]}(X)))^{1/2} \left( \norm{X}_2^2 - \norm{G_0(X)}_2^2 \right)^{1/2}.
		\end{align*}
		Then summing over the real and imaginary parts of the $m$ coordinates and using the Cauchy--Schwarz inequality produces \eqref{eq: smooth cutoff Jacobian estimate}.
	\end{proof}
	
	\begin{proof}[Proof of Proposition \ref{prop: entropy with norm cutoff}]
		The inequality $\geq$ is true because $\Gamma_{4R}^{(n)}(\mathbf{x} \mid \mathbf{Y}^{(n)} \rightsquigarrow \mathbf{y}; \Phi, \varepsilon)$ is a subset of $\Gamma^{(n)}(\mathbf{x} \mid \mathbf{Y}^{(n)} \rightsquigarrow \mathbf{y}; \Phi, \varepsilon)$.  To show the reverse inequality $\leq$, we need to get a lower bound on the Gaussian measure of $\Gamma_{4R}^{(n)}(\mathbf{x} \mid \mathbf{Y}^{(n)} \rightsquigarrow \mathbf{y}; \Phi, \varepsilon)$.  Let $g$ and $G$ be as in Lemma \ref{lem: smooth cutoff}.  Note that since $(g(t) + i)^{-1} + i$ vanishes at $\infty$, it can be uniformly approximated by polynomials in $(t + i)^{-1}$ by the Stone Weierstrass theorem.  Hence, for every $\delta > 0$, there exists a polynomial $p$ with $|(g(t) + i)^{-1} - p((t+i)^{-1})| \leq \delta$.  For a restricted chronological formula $\varphi$, let $\tilde{\varphi}$ be the formula obtained by replacing each occurrence of $(\re x_j + i)^{-1}$ by $p((\re x_j+i)^{-1})$ and similarly for the imaginary part.  Because $\varphi$ is an ordinary chronological formula $\psi$ evaluated on $(\re x_j + i)^{-1}$ and $(\im x_j + i)^{-1}$ and similarly the resolvents of $\re y_j$ and $\im y_j$, we can apply uniform continuity of $\psi$ on an operator-norm ball to conclude that for a sufficiently small choice of $\delta$, and the resulting choice of $p$, we have
		\[
		|\varphi^{\cM}(G(\mathbf{x}),\mathbf{y}) - \tilde{\varphi}(\mathbf{x},\mathbf{y})| \leq \frac{\varepsilon}{3} \text{ for } \varphi \in \Phi;
		\]
		here also note that $G(\mathbf{x}) = \mathbf{x}$.  By carrying out the analogous substitutions and estimates for the finite-dimensional approximations $\Lambda_{\varphi}^{(n)}$, we likewise see that for sufficiently small $\delta$, for all $\mathbf{X} \in \mathbb{M}_n^m$,
		\[
		|\varphi^{\cM}(G(\mathbf{X}),\mathbf{Y}^{(n)}) - \tilde{\varphi}(\mathbf{X},\mathbf{Y}^{(n)})| \leq \frac{\varepsilon}{3} \text{ for } \varphi \in \Phi.
		\]
		Let $\tilde{\Phi} = \{ \tilde{\varphi}: \varphi \in \Phi\}$.  Using the triangle inequality,
		\[
		\mathbf{X} \in \Gamma^{(n)}(\mathbf{x} \mid \mathbf{Y}^{(n)} \rightsquigarrow \mathbf{y}; \tilde{\Phi}, \varepsilon / 3) \implies G(\mathbf{X}) \in \Gamma_{4R}^{(n)}(\mathbf{x} \mid \mathbf{Y}^{(n)} \rightsquigarrow \mathbf{y}; \Phi, \varepsilon),
		\]
		where of course we use that $\norm{G(\mathbf{x})}_\infty \leq 4R$ since the real and imaginary parts of each coordinate are bounded by $2R$ in operator norm.
		
		To compute the measure of the image, we will use change of coordinates with Lemma \ref{lem: smooth cutoff}, and hence we want a bound for $\delta_R(\mathbf{X})$.  Since $\norm{\re(x_j)} < R$ and $\norm{\im(x_j)} < R$, there exists a continuous function $h: \R \to [0,1]$ supported in $[-R,R]$ such that $\tr^{\cM}(h(\re x_j)) = 1$ and $\tr^{\cM}(h(\im x_j)) = 1$.  We can approximate $h(t)$ uniformly within $\varepsilon / 3$ by a polynomial in $(t + i)^{-1}$.  Hence, given $\delta > 0$, there exists a restricted chronological formula $\eta$ (in fact, a basic restricted formula) such that for all structures $\cN$
		\[
		\left| \eta^{\cN}(\mathbf{x}') - \sum_{j=1}^m \tr^{\cN}(h(\re x_j) + h(\im x_j)) \right| \leq \frac{\varepsilon}{3}
		\]
		and
		\[
		\left|\Lambda_\eta^{(n)}(\mathbf{X}) - \sum_{j=1}^m \tr^{\cM}(h(\re x_j) + h(\im x_j)) \right| \leq \frac{\varepsilon}{3},
		\]
		and therefore, since $h \leq \mathbbm{1}_{[-R,R]}$,
		\[
		\mathbf{X} \in \Gamma^{(n)}(\mathbf{x} \mid \mathbf{Y}^{(n)} \rightsquigarrow \mathbf{y}; \{\eta\},\delta) \implies \delta_R(\mathbf{X}) \leq \varepsilon,
		\]
		where $\delta_R(\mathbf{X})$ is given by \eqref{eq: delta R}.  Hence, by Lemma \ref{lem: smooth cutoff},
		\begin{align*}
			-\frac{1}{n^2} \log \det |DG(\mathbf{X})| &\leq \frac{2}{R} \varepsilon^{1/2} \left( \norm{\mathbf{X}}_2^2 - \norm{G_0(\mathbf{X})}_2^2 \right)^{1/2} \\
			&\leq \frac{4 \varepsilon^2}{2R^2} + \frac{1}{2} \left( \norm{\mathbf{X}}_2^2 - \norm{G(\mathbf{X})}_2^2 \right).
		\end{align*}
		We now compute the Gaussian measure of $G(\Gamma^{(n)}(\mathbf{x} \mid \mathbf{Y}^{(n)} \rightsquigarrow \mathbf{y}; \tilde{\Phi} \cup \{\psi\}, \varepsilon / 3))$ using change of coordinates.  For convenience, write $S^{(n)} = \Gamma^{(n)}(\mathbf{x} \mid \mathbf{Y}^{(n)} \rightsquigarrow \mathbf{y}; \tilde{\Phi} \cup \{\psi\}, \varepsilon / 3)$.  Using \eqref{eq: smooth cutoff Jacobian estimate},
		\begin{align*}
			\int_{G(S^{(n)})} \exp\left(-\frac{n^2}{2} \norm{\mathbf{X}}_2^2 \right)\,d\mathbf{X} &= \int_{S^{(n)}} |\det DG(\mathbf{X})| \exp\left(-\frac{n^2}{2} \norm{G(\mathbf{X})}_2^2 \right)\,d\mathbf{X} \\
			&\geq \int_{S^{(n)}} \exp\left(-2n^2 \varepsilon R^{-2} - \frac{n^2}{2} \left( \norm{\mathbf{X}}_2^2 - \norm{G(\mathbf{X})}_2^2 \right) \right) \exp\left(-\frac{n^2}{2} \norm{\mathbf{X}}_2^2 \right) \,d\mathbf{X} \\
			&\geq \exp(-2n^2 \varepsilon R^{-2}) \int_{S^{(n)}} \exp\left(-\frac{n^2}{2} \norm{\mathbf{X}}_2^2 \right) \,d\mathbf{X}.
		\end{align*}
		Thus,
		\[
		\sigma^{(n)}(\Gamma_{4R}^{(n)}(\mathbf{x} \mid \mathbf{Y}^{(n)} \rightsquigarrow \mathbf{y}; \Phi, \varepsilon)) \geq \exp(-2n^2 \varepsilon R^{-2}) \sigma^{(n)}(\Gamma^{(n)}(\mathbf{x} \mid \mathbf{Y}^{(n)} \rightsquigarrow \mathbf{y}; \tilde{\Phi}, \varepsilon / 3)).
		\]
		Since $\Phi$ and $\varepsilon$ were arbitrary, we have the desired lower bound on the Gaussian measure.
	\end{proof}
	
	\begin{remark}
		It is likely that Proposition \ref{prop: entropy with norm cutoff} holds with $R$ rather than $4R$.  For the self-adjoint setting, this would follow by \cite[Proposition 2.4]{VoiculescuFE2}.  For the non-self-adjoint setting, one would like fix $R' > R$ and apply a transformation of the form $X \mapsto X f(|X|)$ where $f = 1$ on $[-R',R']$ and $f(t) = R'/|t|$ for $|t| \geq R'$, and compute the Jacobian acting on non-self-adjoint matrices.
	\end{remark}
	
	We can now state the characterization of $\chi_{\chron}^{\cU}$ analogous to Voiculescu's original definition of free entropy $\chi$ \cite{VoiculescuFE2}.
	
	\begin{proposition}
		Let $m \in \N$ and $m' \in \N \cup \{0,\infty\}$.  Let $\cM$ satisfy $\rT_{\stat}$ and let $(\mathbf{x},\mathbf{y}) \in \cM^{m+m'}$.  Let $R \geq \sqrt{2}$ and assume $\norm{x_j} \leq R$.   Let $\mathbf{Y}^{(n)}$ be a tuple of deterministic matrices with $\sup_n \norm{Y_j^{(n)}} < \infty$ and $\lim_{n \to \cU} \Lambda_{\varphi}^{(n)}(\mathbf{Y}^{(n)}) = \varphi^{\cM}(\mathbf{y})$ for restricted chronological formulas $\varphi$.  Then
		\[
		\chi_{\chron}^{\cU}(\mathbf{x} \mid \mathbf{y}) = \inf_{\Phi, \varepsilon} \lim_{n \to \cU} \left[ \frac{1}{n^2} \log \vol(\Gamma_{4R}^{(n)}(\mathbf{x} \mid \mathbf{Y}^{(n)} \rightsquigarrow \mathbf{y}; \Phi, \varepsilon)) + 2m \log n \right].
		\]
	\end{proposition}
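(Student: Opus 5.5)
The plan is to reduce the statement to Proposition \ref{prop: entropy with norm cutoff} by converting Gaussian measure into Lebesgue measure on the bounded microstate spaces. On $\Gamma_{4R}^{(n)}$ the Gaussian density is $c_n \exp(-\tfrac{n^2}{2}\norm{\mathbf{X}}_2^2)$, where the normalizing constant $c_n$ comes from the Ginibre density in \S \ref{subsec: free probability}, namely $c_n = (2\pi n^2)^{-mn^2}$ up to the normalization of Lebesgue measure fixed there. Thus
\[
-\frac{1}{n^2}\log \sigma^{(n)}(\Gamma_{4R}^{(n)}) = -\frac{1}{n^2}\log c_n - \frac{1}{n^2}\log \int_{\Gamma_{4R}^{(n)}} e^{-\frac{n^2}{2}\norm{\mathbf{X}}_2^2}\,d\mathbf{X}.
\]
The constant term $-\frac{1}{n^2}\log c_n$ produces $m\log(2\pi) + 2m\log n$, and this is where the renormalization $+2m\log n$ and the additive constant $m\log(2\pi)$ in the definition of $\chi_{\chron}^{\cU}$ come from. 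I will check the exact constant against the conventions of \S \ref{subsec: free probability}.

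The key step is to show that $\norm{\mathbf{X}}_2^2$ is nearly constant, equal to $\norm{\mathbf{x}}_2^2$, on a sufficiently fine microstate space. Since $\norm{\cdot}_2^2$ is not itself a restricted chronological formula, I will approximate it. On the ball of radius $4R$, the map $t\mapsto t^2$ is uniformly approximable by polynomials in $(t+i)^{-1}$ by Stone--Weierstrass, just as in the proof of Proposition \ref{prop: entropy with norm cutoff}. This gives a basic restricted formula $\eta$ with
\[
|\eta^{\cN}(\mathbf{x}') - \norm{\mathbf{x}'}_2^2| \leq \delta \quad\text{and}\quad |\Lambda^{(n)}_\eta(\mathbf{X}) - \norm{\mathbf{X}}_2^2| \leq \delta
\]
for all tuples with $\norm{\re x_j'},\norm{\im x_j'}\leq 4R$, where $\eta = \sum_j \tr(q(\re x_j)+q(\im x_j))$. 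Hence for $\Phi \ni \eta$ and small $\varepsilon$, every $\mathbf{X}\in\Gamma_{4R}^{(n)}$ satisfies $|\norm{\mathbf{X}}_2^2 - \norm{\mathbf{x}}_2^2| \leq 2\delta+\varepsilon$. Consequently the integral equals $\vol(\Gamma_{4R}^{(n)})\,e^{-\frac{n^2}{2}(\norm{\mathbf{x}}_2^2 + O(\delta+\varepsilon))}$.

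Combining the two displays, for such $\Phi$ and $\varepsilon$,
\[
-\frac{1}{n^2}\log\sigma^{(n)}(\Gamma_{4R}^{(n)}) = m\log(2\pi) + \frac{1}{2}\norm{\mathbf{x}}_2^2 - \Big[\frac{1}{n^2}\log\vol(\Gamma_{4R}^{(n)}) + 2m\log n\Big] + O(\delta+\varepsilon).
\]
Taking $n\to\cU$, then the supremum over $(\Phi,\varepsilon)$ on the left (which turns into an infimum of the bracket on the right), and using Proposition \ref{prop: entropy with norm cutoff} together with Theorem \ref{thm: final formula for entropy} to identify the left side with $\tilde{\chi}_{\chron}^{\cU}(\mathbf{x}\mid\mathbf{y})$, yields the stated formula. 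Both the sup and the inf are monotone limits along the directed family of pairs $(\Phi,\varepsilon)$, so restricting to $\Phi\ni\eta$ and letting $\delta\to 0$ does not change their values.

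One point needs care: the hypothesis here is $\norm{x_j}\leq R$, whereas Proposition \ref{prop: entropy with norm cutoff} assumes strict inequality. I plan to handle this by applying that proposition with a slightly larger $R$, or by checking that its proof only needs the test function $h$ to exist. The remaining work, which is routine, is bookkeeping of the normalization constant; this is where I expect the main risk of error, since a mismatch there would shift the formula by a constant.
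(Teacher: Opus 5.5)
Your argument is essentially the paper's proof. You approximate $\tfrac12\norm{\cdot}_2^2$ on the $4R$-ball by a basic restricted formula in the resolvents and add it to $\Phi$, so that the Gaussian density is nearly constant on $\Gamma_{4R}^{(n)}$. You then combine \eqref{eq: Gaussian entropy with bounded microstates} with Definition \ref{def: chronological entropy final}. Your monotonicity remark about the directed family of pairs $(\Phi,\varepsilon)$ correctly handles the limit $\delta\to 0$. Two points in the bookkeeping need correcting.

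First, the constant. Lebesgue measure here comes from $\re\ip{\cdot,\cdot}_{\tr_n}$ on a real space of dimension $2mn^2$. Hence $\int e^{-n^2\norm{\mathbf{X}}_2^2/2}\,d\mathbf{X} = (2\pi/n^2)^{mn^2}$, so $c_n = (n^2/2\pi)^{mn^2}$ and $-\frac{1}{n^2}\log c_n = m\log(2\pi) - 2m\log n$. That is exactly what your final display uses. It is also what makes the statement consistent with $\chi_{\chron}^{\cU} = m\log(2\pi) + \frac12\norm{\mathbf{x}}_2^2 - \tilde{\chi}_{\chron}^{\cU}$. By contrast, your stated $c_n = (2\pi n^2)^{-mn^2}$ is not a probability normalization. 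Your sentence saying it ``produces $m\log(2\pi)+2m\log n$'' contradicts your own final display and would flip the sign of the $2m\log n$ term.

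Second, the boundary case $\norm{x_j}=R$. Neither of your proposed remedies works as stated.
\begin{itemize}
\item Applying Proposition \ref{prop: entropy with norm cutoff} with some $R'>R$ gives \eqref{eq: Gaussian entropy with bounded microstates} only for $\Gamma_{4R'}^{(n)}\supseteq\Gamma_{4R}^{(n)}$. This provides no lower bound on $\sigma^{(n)}(\Gamma_{4R}^{(n)})$, so you would get only the easy inequality: $\chi_{\chron}^{\cU}(\mathbf{x}\mid\mathbf{y})$ is at least the right-hand side with radius $4R$.
\item The test function $h$ from that proof need not exist. A continuous $h\colon\R\to[0,1]$ supported in $[-R,R]$ vanishes at $\pm R$. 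So $\tr(h(\re x_j))=1$ is impossible when $\pm R$ lies in the spectrum of $\re x_j$.
\end{itemize}

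What does work is to keep $g$ at level $R$ and modify only the Jacobian estimate. Keeping $g$ at level $R$ preserves $G(\mathbf{x})=\mathbf{x}$, and $G$ still maps into the $4R$-ball. Insert $(|t|-R)_+\le\kappa+(|t|-R-\kappa)_+$ into \eqref{eq: lower bound for cutoff Jacobian} and apply H\"older. This gives $\norm{(|X|-R)_+}_1\le\kappa+(1-\tr_n\mathbbm{1}_{[-R-\kappa,R+\kappa]}(X))^{1/2}\norm{(|X|-R)_+}_2$. On the microstate space, the fraction of eigenvalues outside $[-R-\kappa,R+\kappa]$ is controlled by a test function equal to $1$ on $[-R,R]$ and supported in $[-R-\kappa,R+\kappa]$. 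The rest of the argument, including the AM--GM absorption into $\frac12(\norm{\mathbf{X}}_2^2-\norm{G(\mathbf{X})}_2^2)$, goes through unchanged. The cost is an extra $O(m\kappa/R)$ in the exponent, which vanishes as $\kappa\to0$.

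The paper's own proof applies \eqref{eq: Gaussian entropy with bounded microstates} under the non-strict hypothesis without comment. So this lacuna is shared with the written argument there, but you should not rely on the fix you proposed.
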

	
	\begin{proof}
		Fix a finite set of restricted chronological formulas $\Phi$ and $\varepsilon > 0$.  Since $t^2$ can be approximated by a polynomial in $(t + i)^{-1}$ uniformly on $[-4R,4R]$, there exists a restricted chronological formula $\psi$ such that
		\[
		\left|\psi^{\cM}(\mathbf{x}) - \frac{1}{2} \norm{\mathbf{x}}_2^2 \right| \leq \varepsilon \text{ for } \mathbf{x} \in (D_{0,4R}^{\cM})^m
		\]
		and
		\[
		\left| \Lambda_\psi^{(n)}(\mathbf{X}) - \frac{1}{2} \norm{\mathbf{X}}_2^2 \right| \leq \varepsilon \text{ for } \mathbf{X} \in (D_{4R}^{\mathbb{M}_n})^m.
		\]
		In particular, for $\mathbf{X} \in \Gamma_{4R}^{(n)}(\mathbf{x} \mid \mathbf{Y}^{(n)} \rightsquigarrow \mathbf{y}; \Phi \cup \{\psi\}, \varepsilon)$, we have
		\[
		\frac{1}{2} |\norm{\mathbf{X}}_2^2 - \norm{\mathbf{x}}_2^2| \leq 3 \varepsilon.
		\]
		Recall that the density for the Gaussian measure $\sigma^{(n)}$ on $\mathbb{M}_n^m$ is
		\[
		\frac{1}{(\sqrt{2 \pi n^2})^{2m}} \exp(-n^2 \norm{\mathbf{X}}_2^2 / 2) = \frac{1}{(2\pi n^2)^m} \exp(-n^2 \norm{\mathbf{X}}_2^2 / 2).
		\]
		Therefore,
		\begin{align*}
			& (2 \pi n^2)^{-m} \exp(-n^2 \norm{\mathbf{x}}_2^2 / 2 - 3 n^2 \varepsilon) \vol(\Gamma_{4R}^{(n)}(\mathbf{x} \mid \mathbf{Y}^{(n)} \rightsquigarrow \mathbf{y}; \Phi \cup \{\psi\}, \varepsilon)) \\
			\leq & 
			\sigma^{(n)}(\Gamma_{4R}^{(n)}(\mathbf{x} \mid \mathbf{Y}^{(n)} \rightsquigarrow \mathbf{y}; \Phi \cup \{\psi\}, \varepsilon)) \\
			\leq &
			(2 \pi n^2)^{-m} \exp(-n^2 \norm{\mathbf{x}}_2^2 / 2 + 3 n^2 \varepsilon) \vol(\Gamma_{4R}^{(n)}(\mathbf{x} \mid \mathbf{Y}^{(n)} \rightsquigarrow \mathbf{y}; \Phi \cup \{\psi\}, \varepsilon)),
		\end{align*}
		and so
		\begin{multline*}
			\biggl| \frac{\norm{\mathbf{x}}_2^2}{2} + m \log(2\pi) + 2m \log n + \frac{1}{n^2} \log \vol(\Gamma_{4R}^{(n)}(\mathbf{x} \mid \mathbf{Y}^{(n)} \rightsquigarrow \mathbf{y}; \Phi \cup \{\psi\}, \varepsilon)) \\ - \frac{1}{n^2} \log \sigma^{(n)}(\Gamma_{4R}^{(n)}(\mathbf{x} \mid \mathbf{Y}^{(n)} \rightsquigarrow \mathbf{y}; \Phi \cup \{\psi\}, \varepsilon)) \biggr| \leq 3 \varepsilon.
		\end{multline*}
		The conclusion now follows from \eqref{eq: Gaussian entropy with bounded microstates} since $\Phi$ and $\varepsilon$ were arbitrary and the measures of the microstate spaces are decrease when $\varepsilon$ decreases and the set $\Phi$ increases.
	\end{proof}

	The next proposition is extremely important for the proofs of our main results.  It says that $\chi_{\chron}^{\cU}(\mathbf{x} \mid \mathbf{y})$ is the largest possible limit of normalized classical entropies for random matrix models for $\mathbf{x}$ compatible with fixed deterministic models for $\mathbf{y}$.  The statement here is a direct adaptation of \cite[Theorem 4.10]{JekelPi2024}.
	
	\begin{proposition} \label{prop: classical-free variational principle}
		Let $m \in \N$ and $m' \in \N \cup \{0,\infty\}$.  Let $\cM$ satisfy $\rT_{\stat}$ and let $(\mathbf{x},\mathbf{y}) \in \cM^{m+m'}$.  Let $\mathbf{Y}^{(n)}$ be a deterministic matrix tuple with $\sup_n \norm{Y_j^{(n)}} < \infty$.  Let $\mathbf{X}^{(n)}$ be random variable in $\mathbb{M}_n^m$ satisfying the following conditions:
		\begin{enumerate}[(1)]
			\item For each restricted chronological formula $\varphi$, we have $\displaystyle \lim_{n \to \cU} \Lambda_{\varphi}^{(n)}(\mathbf{X}^{(n)},\mathbf{Y}^{(n)}) = \varphi^{\cM}(\mathbf{x},\mathbf{y})$.
			\item There exists $R > 0$ such that $\displaystyle \lim_{n \to \cU} \norm{X_j^{(n)}} \leq R$ in probability.
			\item There exist $C, K > 0$ such that for all $\delta > 0$ and $n \in \N$, we have $\displaystyle \mathbb{P}(\norm{\mathbf{X}^{(n)}}_2 \geq C + \delta) \leq e^{-Kn^2 \delta^2}$.
		\end{enumerate}
		Let $h^{(n)}$ be the normalized classical entropy
		\[
		h^{(n)}(\mathbf{X}^{(n)}) = \frac{1}{n^2} h(\mathbf{X}^{(n)}) + 2m \log n.
		\]
		Then
		\begin{equation} \label{eq: classical entropy inequality}
			\lim_{n \to \cU} h^{(n)}(\mathbf{X}^{(n)}) \leq \chi_{\chron}^{\cU}(\mathbf{x} \mid \mathbf{y}).
		\end{equation} 
		On the other hand, suppose that $\mathbf{Y}^{(n)}$ satisfies $\lim_{n \to \cU} \Lambda_{\varphi}^{(n)}(\mathbf{Y}^{(n)}) = \varphi^{\cM}(\mathbf{y})$ for all chronological formulas $\varphi$.  Then there exist random matrix models $\mathbf{X}^{(n)}$ such that (1) holds, $\norm{X_j^{(n)}} \leq R$ for some constant $R$, and 
		\begin{equation} \label{eq: classical entropy equality}
			\lim_{n \to \cU} h^{(n)}(\mathbf{X}^{(n)}) = \chi_{\chron}^{\cU}(\mathbf{x} \mid \mathbf{y}).
		\end{equation}
	\end{proposition}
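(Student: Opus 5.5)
The plan follows the proof of \cite[Theorem 4.10]{JekelPi2024}, but uses the Gaussian pressure formulation of Lemma \ref{lem: microstates versus pressure} and Theorem \ref{thm: final formula for entropy} in place of Lebesgue-measure arguments. Throughout, the Lebesgue and Gaussian entropies are related by
\[
h^{(n)}(\mathbf{X}^{(n)}) = m\log(2\pi) + \tfrac{1}{2}\mathbb{E}\norm{\mathbf{X}^{(n)}}_2^2 - \tfrac{1}{n^2} D(\mathrm{law}(\mathbf{X}^{(n)}) \,\|\, \sigma^{(n)}).
\]
This identity follows directly from the Ginibre density. Condition (3), together with condition (1) applied to a formula approximating $\frac12\norm{\mathbf{x}}_2^2$ on balls, gives $\mathbb{E}\norm{\mathbf{X}^{(n)}}_2^2 \to \norm{\mathbf{x}}_2^2$ along $\cU$; uniform integrability comes from the subgaussian tail in (3). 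Both claims therefore reduce to statements about the normalized relative entropy $\frac{1}{n^2}D(\cdot\|\sigma^{(n)})$ versus $\tilde{\chi}_{\chron}^{\cU}(\mathbf{x}\mid\mathbf{y})$.

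For the inequality \eqref{eq: classical entropy inequality}, fix $\varphi \in \mathcal{F}_{\chron,m+m'}^0$ and apply the Gibbs (Donsker--Varadhan) variational inequality with the bounded function $n^2\Lambda_\varphi^{(n)}(\cdot,\mathbf{Y}^{(n)})$:
\[
\tfrac{1}{n^2}D(\mathrm{law}(\mathbf{X}^{(n)})\|\sigma^{(n)}) \geq \mathcal{P}^{(n)}(\varphi)(\mathbf{Y}^{(n)}) - \mathbb{E}\Lambda_\varphi^{(n)}(\mathbf{X}^{(n)},\mathbf{Y}^{(n)}).
\]
Condition (1), which gives convergence in probability, and the boundedness of $\Lambda_\varphi^{(n)}$ make the expectation converge to $\varphi^{\cM}(\mathbf{x},\mathbf{y})$. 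The pressure term converges to $\mathcal{P}(\varphi)^{\cM}(\mathbf{y})$ by \eqref{eq: convergence of matrix pressure}. Here I must check that $\mathbf{Y}^{(n)}$ satisfies \eqref{eq: convergence hypothesis}; this follows from (1) applied to formulas not depending on $\mathbf{x}$. Taking the supremum over $\varphi$ and using \eqref{eq: final pressure formula for entropy} gives $\lim_{n\to\cU}\frac{1}{n^2}D \geq \tilde{\chi}_{\chron}^{\cU}(\mathbf{x}\mid\mathbf{y})$. The displayed identity then yields \eqref{eq: classical entropy inequality}. Condition (2) is only needed to place everything in a bounded region where the restricted formulas control the norm.

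For the existence statement, I take $\mathbf{X}^{(n)}$ to be uniformly distributed, or $\sigma^{(n)}$-normalized, on the bounded microstate spaces $\Gamma_{4R}^{(n)}(\mathbf{x}\mid\mathbf{Y}^{(n)}\rightsquigarrow\mathbf{y};\Phi_k,\varepsilon_k)$. Here $\Phi_k$ increases through a countable dense family, which exists by Lemma \ref{lem: separability}, and $\varepsilon_k \downarrow 0$. The parameter $k = k(n)$ is chosen by a diagonal argument along $\cU$. For fixed $(\Phi,\varepsilon)$, the relative entropy of $\sigma^{(n)}$ conditioned on a set $A$ equals $-\log\sigma^{(n)}(A)$. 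By Proposition \ref{prop: entropy with norm cutoff}, this normalized quantity converges along $\cU$, as $k\to\infty$, to $\tilde{\chi}_{\chron}^{\cU}(\mathbf{x}\mid\mathbf{y})$. Condition (1) holds pointwise on the microstate space by construction, and the norm bound $4R$ is automatic.

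The diagonal choice of $k(n)$ is the step I expect to require the most care, since each $k$ only gives convergence along $\cU$ rather than for all $n$. I would first handle this by letting $A_k = \{n : \text{the microstate space for } k \text{ is nonempty and its normalized } \sigma\text{-measure is within } 1/k \text{ of its } \cU\text{-limit}\}$, which belongs to $\cU$. Then set $k(n) = \max\{k \leq n : n \in A_1\cap\dots\cap A_k\}$; this is the standard trick, and $k(n)\to\infty$ along $\cU$. A secondary issue is matching the Lebesgue-normalized entropy exactly. On the microstate space $\frac12\norm{\mathbf{X}}_2^2$ is within $\varepsilon_k$ of $\frac12\norm{\mathbf{x}}_2^2$, once a formula $\psi$ approximating it is included in $\Phi_k$ as in the preceding proof. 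Hence the uniform and Gaussian-conditioned laws have entropies differing by $O(\varepsilon_k)$, and \eqref{eq: classical entropy equality} follows.
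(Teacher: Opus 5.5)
Your proof is correct. The existence half is essentially the argument the paper imports from \cite[Theorem 4.10]{JekelPi2024}: take Gaussian-conditioned or uniform laws on the bounded microstate spaces $\Gamma_{4R}^{(n)}$, choose $k(n)$ diagonally along $\cU$, and identify the limit with Proposition \ref{prop: entropy with norm cutoff}.

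Your proof of \eqref{eq: classical entropy inequality}, however, takes a genuinely different route. The argument the paper defers to works on the microstate side. There, $\mathbf{X}^{(n)}$ lies in $\Gamma_R^{(n)}(\mathbf{x}\mid\mathbf{Y}^{(n)}\rightsquigarrow\mathbf{y};\Phi,\varepsilon)$ with probability tending to $1$. Its entropy is then bounded by the log-volume of that set, plus a contribution from the complementary event that is controlled with the tail hypothesis (3).

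You instead argue on the dual side. You combine the Gibbs inequality $\frac{1}{n^2}D(\mathrm{law}(\mathbf{X}^{(n)})\,\|\,\sigma^{(n)}) \geq \mathcal{P}^{(n)}(\varphi)(\mathbf{Y}^{(n)}) - \mathbb{E}\Lambda_\varphi^{(n)}(\mathbf{X}^{(n)},\mathbf{Y}^{(n)})$ with the Laplace-transform formula already proved in Lemma \ref{lem: microstates versus pressure}. What your route buys:
\begin{itemize}
\item It avoids splitting over the event $\{\mathbf{X}^{(n)}\in\Gamma\}$.
\item It does not need Proposition \ref{prop: entropy with norm cutoff} for this direction.
\item It uses (2)--(3) only to get $\mathbb{E}\norm{\mathbf{X}^{(n)}}_2^2\to\norm{\mathbf{x}}_2^2$, so uniform integrability of $\norm{\mathbf{X}^{(n)}}_2^2$ would suffice in place of (3).
\end{itemize}
Lemma \ref{lem: microstates versus pressure} already expresses the supremum through $\lim_{n\to\cU}\mathcal{P}^{(n)}(\varphi)(\mathbf{Y}^{(n)})$ for arbitrary $\mathbf{Y}^{(n)}$. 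So \eqref{eq: convergence of matrix pressure} enters only to make $\tilde{\chi}_{\chron}^{\cU}(\mathbf{x}\mid\mathbf{y})$ independent of $\mathbf{Y}^{(n)}$. The microstate route, in turn, keeps both inequalities in the single framework of volumes of $\Gamma_R^{(n)}$, which is what lets the paper transfer Jekel--Pi's proof nearly verbatim.

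One step in your existence part should be made explicit. Membership in your microstate spaces controls $\Lambda_\varphi^{(n)}$ only for $\varphi$ in the countable family. Lemma \ref{lem: separability} gives density only with respect to $\norm{\cdot}_{0,\mathbf{r},\rT_{\stat}}$ on structures, not for the matrix approximants. To get (1) for every restricted chronological formula, you need $\lim_{n\to\cU}\sup_{\mathbf{X}\in D_{\mathbf{r}}^{\mathbb{M}_n}}|\Lambda_\varphi^{(n)}(\mathbf{X})-\Lambda_{\varphi'}^{(n)}(\mathbf{X})|\leq\norm{\varphi-\varphi'}_{0,\mathbf{r},\rT_{\stat}}$. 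This follows from Proposition \ref{prop: approx by pointwise function} and Corollary \ref{cor: center-valued definable predicates}, evaluated on deterministic near-maximizers of the difference. The paper uses the same step tacitly in Theorem \ref{thm: optimal coupling lifts}.

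Also, your diagonal condition (within $1/k$ of the $\cU$-limit) tacitly assumes those limits are finite. When $\chi_{\chron}^{\cU}(\mathbf{x}\mid\mathbf{y})=-\infty$, the limits can be infinite, and the microstate spaces can even be eventually empty, in which case no model satisfying (1) exists. So the existence claim should be read under $\chi_{\chron}^{\cU}(\mathbf{x}\mid\mathbf{y})>-\infty$.
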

	
	We point out that hypothesis (3) holds for Ginibre random matrices by the Herbst concentration inequality \eqref{eq: Herbst concentration inequality}.  The proof of \cite[Theorem 4.10]{JekelPi2024} works almost verbatim, and so we will not repeat the argument here.  We merely comment on the few differences:
	\begin{itemize}
		\item Because we are using non-self-adjoint matrices, the real dimension is twice as large, so the normalizing constant in $h^{(n)}$ is $2m \log n$ rather than $m \log n$.
		\item In \cite[Theorem 4.10]{JekelPi2024}, the entropy was defined for a fixed choice of matrix approximations $\mathbf{Y}^{(n)}$ for $\mathbf{y}$.  In the setting here, we have already proved that the value is the same for all choices of $\mathbf{Y}^{(n)}$.
		\item The proof of \cite[Theorem 4.10]{JekelPi2024} used microstate spaces associated to general open neighborhoods in the space of laws.  For this proposition, one uses instead the spaces $\Gamma_R^{(n)}(\mathbf{x} \mid \mathbf{Y}^{(n)} \rightsquigarrow \mathbf{y}; \Phi, \varepsilon)$.
	\end{itemize}

	\subsection{Semiconvex and semiconcave definable predicates} \label{subsec: semiconvex and semiconcave}

	\begin{definition}
		Let $\phi$ be a function on an inner-product space $H$ and let $c > 0$.  We say that $f: H \to \R$ is \emph{$c$-semiconvex} if $\phi(x) + \frac{1}{2} c \norm{x}^2$ is convex, or equivalently (after some computation), for $x_0, x_1 \in H$ and $\lambda \in [0,1]$,
		\[
		\phi((1 - \lambda)x_0 + \lambda x_1) \leq (1 - \lambda) \phi(x_0) + \lambda \phi(x_1) + \frac{c \lambda(1 - \lambda)}{2} \norm{x_1 - x_0}^2.
		\]
		Similarly, we say that $\phi$ is $c$-semiconcave if $\phi(x) - \frac{c}{2} \norm{x}^2$ is concave, or equivalently,
		\[
		\phi((1 - \lambda)x_0 + \lambda x_1) \geq (1 - \lambda) \phi(x_0) + \lambda \phi(x_1) - \frac{c \lambda(1 - \lambda)}{2} \norm{x_1 - x_0}^2.
		\]
	\end{definition}
	
	The following is a direct adaptation of \cite[Proposition 5.2]{JekelTypeCoupling}.  Note that in the proposition, we view $L^2(M_0)$ as a real inner product space, using $\re \ip{\cdot,\cdot}_{\tr^{\cM}}$, and differentiability is thus understood with respect to this real inner product.
	
	\begin{proposition} \label{prop: semiconvex semiconcave gradient}
		Let $\mathrm{T}$ be a consistent $\cL_{\chron}$-theory containing $\rT_{\proc}$.  Let $m \in \N$ and $m' \in \N \cup \{0,\infty\}$.  Let $\phi(\mathbf{x},\mathbf{y})$ be a chronologically definable predicate relative to $\rT$.  Suppose that  for some $c > 0$, for all $\cM$ satisfying $\rT$, $\phi^{\cM}(\mathbf{x},\mathbf{y})$ is $c$-semiconvex and $c$-semiconcave as a function of $\mathbf{x}$ in $M_0^m$.
		\begin{enumerate}[(1)]
			\item For each $\cM$ satisfying $\mathrm{T}$, $\phi^{\cM}$ is a differentiable function of $\mathbf{x}$ on $M_0$, and more precisely, for each $\mathbf{x} \in M_0^m$ and $\mathbf{y} \in M_0^{m'}$, there exists a unique $\nabla_{\mathbf{x}} \phi^{\cM}(\mathbf{x},\mathbf{y}) \in L^2(M_0)^m$ such that
			\[
			\left|\phi^{\cM}(\mathbf{x}',\mathbf{y}) - \phi^{\cM}(\mathbf{x},\mathbf{y}) - \re \ip{\mathbf{x}' - \mathbf{x}, \nabla_{\mathbf{x}} \phi^{\cM}(\mathbf{x},\mathbf{y})} \right| \leq \frac{c}{2} \norm{\mathbf{x}' - \mathbf{x}}_{L^2(M_0)}^2.
			\]
			\item The function
			\[
			\iota^{\cM}(\mathbf{x}, \mathbf{y},\mathbf{z}) = \re \ip{\mathbf{z},\nabla \phi^{\cM}(\mathbf{x},\mathbf{y})}_{L^2(\cM)^n}
			\]
			is a chronologically definable predicate relative to $\mathrm{T}$.
			\item Suppose that for each $\mathbf{r} \in (0,\infty)^m$ and $\mathbf{r}' \in (0,\infty)^{m'}$, there exists $\mathbf{R} \in (0,\infty)^m$ such that $\nabla_{\mathbf{y}} \phi$ maps $D_{0,(\mathbf{r},\mathbf{r}')}^{\cM}$ into $D_{0,\mathbf{R}}^{\cM}$.  Then $\nabla_{\mathbf{x}} \phi$ is a definable function.
		\end{enumerate}
	\end{proposition}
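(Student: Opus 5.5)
The plan is to reduce everything to the corresponding statement for ordinary definable predicates, using the Morleyization $\Theta$ from \S\ref{subsec: M zero definable things}. By Lemma \ref{lem: isomorphism of definable predicates}, $\phi$ corresponds to a definable predicate $\Theta(\phi)$ relative to $\Theta_1(\rT)$ in the language $\cL_{\chron,0}$, and $\Theta(\phi)^{\cM_0} = \phi^{\cM}$ on $M_0$. By Remark \ref{rem: restriction of theories}, every model of $\Theta_1(\rT)$ is elementarily equivalent to some restriction $\cM_0$. Semiconvexity and semiconcavity in $\mathbf{x}$ are expressed by the vanishing of the sentences
\[
\sup_{\mathbf{x}_0,\mathbf{x}_1,\mathbf{y}}\Bigl(\phi((1-\lambda)\mathbf{x}_0+\lambda\mathbf{x}_1,\mathbf{y}) - (1-\lambda)\phi(\mathbf{x}_0,\mathbf{y}) - \lambda\phi(\mathbf{x}_1,\mathbf{y}) - \tfrac{c\lambda(1-\lambda)}{2}\norm{\mathbf{x}_1-\mathbf{x}_0}_2^2\Bigr)^+ ,
\]
taken over rational $\lambda$ and operator-norm balls, together with the analogous sentences for concavity. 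Hence these properties transfer to every model of $\Theta_1(\rT)$. The hypotheses of \cite[Proposition 5.2]{JekelTypeCoupling} are then satisfied in the language $\cL_{\chron,0}$, and I would check that its proof uses only the general definable-predicate machinery (suprema and infima over balls, uniform limits, continuous connectives), so that it applies verbatim.

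For (1), fix $\cM$ and extend $\phi^{\cM}(\cdot,\mathbf{y})$ from the operator-norm balls to $L^2$ by uniform continuity. The function $\phi + \frac{c}{2}\norm{\cdot}^2$ is convex and $\frac{c}{2}\norm{\cdot}^2 - \phi$ is concave. Hence $\phi$ has both a supergradient and a subgradient at each point, and the two must coincide, since their difference $v$ satisfies $\re\ip{h,v} \le c\norm{h}^2$ for all $h$. This gives the two-sided estimate. The gradient lies in $L^2(M_0)^m$ because we only perturb within $M_0$, and $L^2(M_0)$ is the closure of the balls $D_{0,r}$.

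For (2), the plan is to express $\iota$ as a uniform limit of difference quotients:
\[
\iota^{\cM}(\mathbf{x},\mathbf{y},\mathbf{z}) = \lim_{t\to 0}\frac{\phi^{\cM}(\mathbf{x}+t\mathbf{z},\mathbf{y})-\phi^{\cM}(\mathbf{x},\mathbf{y})}{t}.
\]
By (1), the error in this approximation is at most $\frac{c}{2}t\norm{\mathbf{z}}_2^2$, which is uniform over all $\cM\models\rT$ and all $\mathbf{z}$ in a ball. Each difference quotient is a chronologically definable predicate, since it is obtained from $\phi$ by composition with a term and a connective. A uniform limit of such predicates on each domain is again a chronologically definable predicate, by Definition \ref{def: chron def pred}.

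For (3), it suffices by Definition \ref{def: chr def funct} to show that $d(\nabla_{\mathbf{x}}\phi(\mathbf{x},\mathbf{y}),w)$ is chronologically definable, coordinatewise. I would write
\[
\norm{\nabla\phi - \mathbf{w}}_2^2 = \norm{\nabla\phi}_2^2 - 2\re\ip{\mathbf{w},\nabla\phi} + \norm{\mathbf{w}}_2^2 .
\]
The cross term comes from (2). For the first term, the assumed bound $\nabla\phi\in D_{0,\mathbf{R}}$ gives
\[
\norm{\nabla\phi}_2 = \sup_{\mathbf{z}\in D_{0,\mathbf{R}}, \norm{\mathbf{z}}_2\le 1}\re\ip{\mathbf{z},\nabla\phi}.
\]
This is a supremum over a domain of a definable predicate, with the constraint handled by the truncation $\mathbf{z}/\max(1,\norm{\mathbf{z}}_2)$ after using the uniform continuity of $\iota$. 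Here I use that $\nabla\phi$ itself is in the ball, so the supremum is attained up to $\varepsilon$ by a $\mathbf{z}$ in the ball. The main obstacle I expect is this step of attaining the norm within the operator-norm domain, together with the bookkeeping that all of this lives in $M_0$ rather than $M_t$. The latter is handled by noting that all quantifiers are over $D_{0,r}$, which are chronological.
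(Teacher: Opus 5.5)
Your first paragraph, the reduction through $\Theta$ to \cite[Proposition 5.2]{JekelTypeCoupling} in $\cL_{\chron,0}$, is exactly what the paper does. The problems are in the self-contained arguments you add, and the one for (1) has a genuine gap.

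You cannot ``extend $\phi^{\cM}(\cdot,\mathbf{y})$ to $L^2$ by uniform continuity''. The modulus of continuity depends on the radius, and each $D_{0,r}^{\cM}$ is already $\norm{\cdot}_2$-closed while $M_0^m$ is not. So nothing makes $\phi^{\cM}$ $\norm{\cdot}_2$-continuous on $M_0^m$. Without that, the Hilbert-space existence of sub- and supergradients is unavailable: in infinite dimensions, operator-norm balls have empty $\norm{\cdot}_2$-interior.

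What convex analysis on the pre-Hilbert space $M_0^m$ does give is the following. Along each line $s \mapsto \mathbf{x} + s\mathbf{v}$, the functions $\frac{c}{2}\norm{\cdot}_2^2 \pm \phi^{\cM}$ are finite and convex. Their one-sided derivatives at $\mathbf{x}$ are sublinear in $\mathbf{v}$ and add up to the linear map $\mathbf{v} \mapsto 2c \re \ip{\mathbf{x},\mathbf{v}}$, hence both are linear. This yields a linear functional $D\phi^{\cM}(\mathbf{x},\mathbf{y})$ on $M_0^m$ with
\[
\bigl|\phi^{\cM}(\mathbf{x}+\mathbf{v},\mathbf{y}) - \phi^{\cM}(\mathbf{x},\mathbf{y}) - D\phi^{\cM}(\mathbf{x},\mathbf{y})[\mathbf{v}]\bigr| \leq \tfrac{c}{2}\norm{\mathbf{v}}_2^2.
\]
Representing this functional by a vector of $L^2(M_0)^m$ requires $|D\phi^{\cM}(\mathbf{x},\mathbf{y})[\mathbf{v}]| \leq C\norm{\mathbf{v}}_2$, and this does not follow. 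For instance, $\mathbf{v} \mapsto \re\tau(b\mathbf{v})$ with $b \in L^1 \setminus L^2$ is uniformly continuous on balls and is $c$-semiconvex and $c$-semiconcave for every $c$.

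For $m' \geq 1$ this is realized by a chronologically definable predicate. Take $m = m' = 1$, $\alpha_k = 2^{-k}$, $\lambda_k = 2^k/k^2$, and continuous $f_k \colon \R \to [0,1]$ with $f_k(1/k) = 1$ and $f_k(1/j) = 0$ for $j \neq k$. Set
\[
\phi(x,y) = \sum_{k \geq 1} \lambda_k \frac{\min(\tau(w_k),\alpha_k)}{\tau(w_k)} \re\tau(w_k x), \qquad w_k = f_k(\re y),
\]
with terms where $w_k = 0$ read as $0$.
\begin{itemize}
\item On $\{\norm{x} \leq r\}$, each term is a continuous function of $\tau(w_k)$ and $\re\tau(w_k x)$, since $w_k \geq 0$ gives $|\re\tau(w_k x)| \leq r\tau(w_k)$. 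Each term is also bounded by $r\lambda_k\alpha_k$.
\item Hence the series converges uniformly on balls over all models. So $\phi$ is a chronologically definable predicate, and it is linear in $x$.
\item Let $y \in M_0$ be self-adjoint with spectral measure $\sum_k \alpha_k \delta_{1/k}$; such $y$ exists in any diffuse $M_0$. Then $w_k$ is the spectral projection $p_k$, and $\phi(x,y) = \re\tau(bx)$ with $b = \sum_k \lambda_k p_k$.
\item Here $\norm{b}_1 = \sum_k k^{-2} < \infty$ but $\norm{b}_2^2 = \sum_k 2^k k^{-4} = \infty$. So no $\nabla \in L^2(M_0)$ satisfies the estimate in (1).
\end{itemize}
Thus the $L^2$ assertion of (1) is not provable as stated once $m' \geq 1$. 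The paper's remark that the convex-analysis argument transfers does not address this either.

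The claim does hold under the hypotheses in which the proposition is used later. For example, if $\phi - aq$ is $L$-Lipschitz for $\norm{\cdot}_1$ in $\mathbf{x}$, as in Lemma \ref{lem: bounded gradient}, then $|D\phi^{\cM}(\mathbf{x},\mathbf{y})[\mathbf{v}]| \leq (L\sqrt{m} + |a|\norm{\mathbf{x}}_2)\norm{\mathbf{v}}_2$. It also holds under the boundedness hypothesis of (3).

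Your difference-quotient argument for (2) is fine. It only uses the displayed estimate, with $\iota^{\cM}(\mathbf{x},\mathbf{y},\mathbf{z}) = D\phi^{\cM}(\mathbf{x},\mathbf{y})[\mathbf{z}]$.

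In (3), however, the identity $\norm{\mathbf{g}}_2 = \sup\{\re\ip{\mathbf{z},\mathbf{g}} : \mathbf{z} \in D_{0,\mathbf{R}},\ \norm{\mathbf{z}}_2 \leq 1\}$ is false when $\norm{\mathbf{g}}_2 < 1$, because $\mathbf{g}/\norm{\mathbf{g}}_2$ leaves the ball. For $\mathbf{g} = Rp$ with $p$ a projection of trace $\epsilon < R^{-2}$, the supremum is at most $R^2\epsilon < R\sqrt{\epsilon} = \norm{\mathbf{g}}_2$. Enlarging the ball does not help, since $\norm{\mathbf{g}}_\infty/\norm{\mathbf{g}}_2$ is unbounded.

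Use the quadratic variational formula instead. If $g_j$ is the $j$th coordinate of $\nabla_{\mathbf{x}}\phi^{\cM}(\mathbf{x},\mathbf{y}) \in D_{0,R_j}^{\cM}$ and $w \in D_{0,s}^{\cM}$, then
\[
\norm{g_j - w}_2^2 = \sup_{z \in D_{0,R_j+s}^{\cM}} \bigl[ 2\re\ip{z,g_j} - 2\re\ip{z,w} - \norm{z}_2^2 \bigr],
\]
with the supremum attained at $z = g_j - w$. The first term is $2\iota$ with $z$ in the $j$th slot. So the right-hand side is a supremum over a $D_0$-domain of a chronologically definable predicate, and taking square roots gives the predicate required in Definition \ref{def: chr def funct}.
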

	
	Although the statement in \cite{JekelTypeCoupling} is given for the language of tracial von Neumann algebras $\cL_{\tr}$, the proof also clearly works for chronologically definable predicates on $M_0$ studied in \S \ref{subsec: M zero definable things}, i.e.\ using $\cL_{\chron,0}$ rather than $\cL_{\tr}$.  Indeed, the arguments for (1) and (2) only use the theory of convex functions on inner product spaces.\footnote{In (1), we omit the claim from \cite[Proposition 5.2]{JekelTypeCoupling} that the vector is in $L^2$ of the definable closure, which comes out of \cite[Proposition 4.5]{JekelTypeCoupling}, to minimize the effort needed to check the claims.  However, this is automatic from (3) anyway.}  The proof of Claim (3) only uses the sup/inf operations, which in this paper will be taken over balls in the $0$-level of the filtration $M_0$.
	
	The following is a direct adaptation of \cite[Theorem 1.5, Theorem 5.10]{JekelTypeCoupling}.
	
	\begin{theorem}
		Let $\mathrm{T}$ be a consistent $\cL_{\chron}$-theory containing $\rT_{\proc}$.  Let $m \in \N$ and $m' \in \N \cup \{0,\infty\}$.  Let $\phi(\mathbf{x},\mathbf{y})$ be a chronologically definable predicate relative to $\rT$.  Let $\mathbf{r} \in (0,\infty)^m$ and $\mathbf{r}' \in (0,\infty)^{m'}$, and let $\varepsilon$.  Then there exists a definable predicate $\psi$ relative to $\rT$ such that $\psi$ is semiconvex and semiconcave, for $\cM$ satisfying $\rT$,
		\[
		|\varphi^{\cM}(\mathbf{x},\mathbf{y}) - \psi^{\cM}(\mathbf{x},\mathbf{y})| \leq \varepsilon \quad \text{ for } \mathbf{x} \in D_{0,\mathbf{r}}^{\cM}, \mathbf{y} \in D_{0,\mathbf{r}'}^{\cM}.
		\]
		Moreover, $\psi$ can be chosen to satisfy Proposition \ref{prop: semiconvex semiconcave gradient}, so that $\nabla_{\mathbf{x}} \psi$ is a chronologically definable function.
	\end{theorem}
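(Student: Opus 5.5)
The approach is to adapt the regularization scheme of \cite[Theorem 5.10]{JekelTypeCoupling}: approximate $\varphi$ by a nicer predicate, then smooth it by a sup-convolution followed by an inf-convolution (Lasry--Lions regularization) in the $\mathbf{x}$-variable. All quantifiers involved range over balls in $M_0$, so the output stays chronologically definable. By Lemma \ref{lem: isomorphism of definable predicates} we may work in $\cL_{\chron,0}$ with the theory $\Theta_1(\rT)$, where every chronological formula is quantifier-free and the inf/sup over $D_r$ correspond to inf/sup over $D_{0,r}$.

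\textbf{Step 1 (reduction).} Approximate $\varphi$ within $\varepsilon/3$ on $D_{0,\mathbf{r}}\times D_{0,\mathbf{r}'}$ by a chronological formula $\varphi_1$, which is uniformly continuous in $\norm{\cdot}_2$ on operator-norm balls with a modulus $\omega$ independent of $\cM$. Only finitely many coordinates of $\mathbf{y}$ enter, so $m'=\infty$ causes no trouble.

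\textbf{Step 2 (extension).} We need a globally defined, bounded, uniformly continuous predicate in $\mathbf{x}$. Precompose with the truncation of Lemma \ref{lem: projection onto R ball} applied to real and imaginary parts, i.e.\ the map $\mathbf{x}\mapsto F_R(\re\mathbf{x})+iF_R(\im\mathbf{x})$. By Lemma \ref{lem: Lipschitz functional calculus} this map is $\norm{\cdot}_2$-Lipschitz, and it is definable since it is given by continuous functional calculus. Then add a bounded penalty so that the result is bounded on all of $L^2(M_0)^m$. Call the result $\varphi_2$.

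\textbf{Step 3 (Lasry--Lions).} For $0<s<t$ small, set
\[
\psi_1(\mathbf{x},\mathbf{y})=\sup_{\mathbf{w}\in D_{0,R'}^m}\Bigl[\varphi_2(\mathbf{w},\mathbf{y})-\tfrac{1}{2t}\norm{\mathbf{x}-\mathbf{w}}_2^2\Bigr],
\qquad
\psi(\mathbf{x},\mathbf{y})=\inf_{\mathbf{v}\in D_{0,R''}^m}\Bigl[\psi_1(\mathbf{v},\mathbf{y})+\tfrac{1}{2s}\norm{\mathbf{x}-\mathbf{v}}_2^2\Bigr].
\]
Both are chronologically definable because they are built from $\varphi_2$ using connectives and quantifiers over $D_{0,\cdot}$. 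The function $\psi_1$ is $1/t$-semiconvex, being a sup of $-\frac1{2t}\norm{\mathbf{x}-\mathbf{w}}^2$ plus constants. The inf-convolution keeps semiconvexity, with a constant of order $1/(t-s)$, and makes $\psi$ $1/s$-semiconcave. Boundedness and uniform continuity of $\varphi_2$ give $|\psi-\varphi_2|\le C\,\omega(\sqrt{t\,\norm{\varphi_2}_u})$ on the relevant balls. Choosing $t$ small then yields the $\varepsilon/3$ error.

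\textbf{Main obstacle.} The main difficulty is that the suprema and infima must run over operator-norm balls rather than all of $L^2$, since the convexity arguments need the full space. The fix is to choose $R',R''$ large enough that the optimizers automatically lie in these balls. This follows from the truncation step: replacing $\mathbf{w}$ by its projection onto the ball, via Lemma \ref{lem: projection onto R ball}(3), cannot decrease the objective. The argument is analogous to Step 3 of Lemma \ref{lem: discrete approx of matrix pressure}. It also needs $\varphi_2$ to be Lipschitz in $\norm{\cdot}_1$, so a Lipschitz approximation should be chosen in Step 1, for instance an element of $\mathcal{F}_{\chron}^0$ as in Remark \ref{rem: bounded and Lipschitz}.

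Finally, the gradient $\nabla_{\mathbf{x}}\psi$ is bounded in operator norm on balls, because it is an affine expression in the operator-norm-bounded optimizers. Hence Proposition \ref{prop: semiconvex semiconcave gradient}(3) applies, and $\nabla_{\mathbf{x}}\psi$ is a chronologically definable function.
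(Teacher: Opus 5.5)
Your construction is essentially the paper's. The paper takes
\[
\psi^{\cM}(\mathbf{x},\mathbf{y})=\sup_{\mathbf{w}\in D_{0,\mathbf{r}}}\inf_{\mathbf{z}\in D_{0,\mathbf{R}}}\Bigl[\phi^{\cM}(\mathbf{z},\mathbf{y})+\tfrac{1}{4t}\norm{\mathbf{w}-\mathbf{z}}_2^2-\tfrac{1}{2t}\norm{\mathbf{x}-\mathbf{w}}_2^2\Bigr],
\]
which is the same Lasry--Lions regularization with the inf-convolution done first. Your reversed order (sup-convolution, then inf-convolution with $s<t$) works equally well. So does your operator-norm bound on $\nabla_{\mathbf{x}}\psi$ through the (near-)minimizer $\mathbf{v}$, namely $\norm{\nabla_{\mathbf{x}}\psi}_\infty\le s^{-1}(\rho+R'')$ on $D_{0,\rho}$. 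You can also skip Step 1 and regularize $\varphi$ directly, as the paper does.

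The ``main obstacle'' you identify is not real, and your fix for it does not work as written. The semiconvexity and semiconcavity arguments only need the quantifier domains to be \emph{convex}, and operator-norm balls are convex:
\begin{itemize}
\item A supremum over any index set of $\tfrac1t$-semiconvex functions is $\tfrac1t$-semiconvex.
\item An infimum over any index set of $\tfrac1s$-semiconcave functions is $\tfrac1s$-semiconcave.
\item The $\tfrac{1}{t-s}$-semiconvexity of $\psi$ follows because $(\mathbf{x},\mathbf{v})\mapsto\psi_1(\mathbf{v},\mathbf{y})+\tfrac{1}{2s}\norm{\mathbf{x}-\mathbf{v}}_2^2+\tfrac{1}{2(t-s)}\norm{\mathbf{x}}_2^2$ is jointly convex. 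A partial infimum of a jointly convex function over a convex set such as $D_{0,R''}^m$ is convex.
\end{itemize}
The approximation also needs only the values of $\varphi_1$ on $D_{0,R'}$. For $\mathbf{x}\in D_{0,\mathbf{r}}$ and $R',R''\ge\max_j r_j$, testing $\mathbf{w}=\mathbf{x}$ inside $\psi_1(\mathbf{v})$ and $\mathbf{v}=\mathbf{x}$ in $\psi$ gives $\varphi_1\le\psi\le\psi_1$. Uniform continuity of $\varphi_1$ on $D_{0,R'}$ then gives $\psi_1\le\varphi_1+o(1)$ as $t\to0$.

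Consequently Step 2 and the localization of optimizers can simply be deleted, just as the paper only ever evaluates $\phi$ on $D_{0,\mathbf{R}}$. Kept as written, they conflict. The truncation $\mathbf{x}\mapsto F_R(\re\mathbf{x})+iF_R(\im\mathbf{x})$ is $\norm{\cdot}_2$-Lipschitz by Lemma \ref{lem: Lipschitz functional calculus}. It is not $\norm{\cdot}_1$-Lipschitz uniformly over tracial von Neumann algebras: already $t\mapsto|t|$ fails this, and $F_R$ inherits the failure near $\pm R$. So your truncated $\varphi_2$ need not have the $\norm{\cdot}_1$-Lipschitz property that your projection argument via Lemma \ref{lem: projection onto R ball}(3) requires. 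If you want that route, drop the truncation and take $\varphi_1\in\mathcal{F}_{\chron}^0$, which is already globally bounded and $\norm{\cdot}_1$-Lipschitz by Remark \ref{rem: bounded and Lipschitz}.
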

	
	As with the previous statement, the proof from \cite{JekelTypeCoupling} works without any changes.  The chronologically definable predicate $\psi$ can be chosen as
	\begin{equation} \label{eq: regularization}
		\psi^{\cM}(\mathbf{x},\mathbf{y}) = \sup_{\mathbf{w} \in D_{\mathbf{r}}} \inf_{\mathbf{z} \in \mathcal{D}_{\mathbf{R}}} \left[ \phi^{\cM}(\mathbf{z},\mathbf{y}) + \frac{1}{4t} d(\mathbf{w},\mathbf{z})^2 - \frac{1}{2t} d(\mathbf{x},\mathbf{w})^2 \right],  \quad \text{ for } \mathbf{x} \in D_{0,\mathbf{r}}, \mathbf{y} \in D_{0,\mathbf{r}'},
	\end{equation}
	for appropriate choice of $t > 0$, and where $\mathbf{R}$ is any tuple with $R_j \geq r_j$.
	
	The later results of \cite{JekelTypeCoupling} made heavy use of the fact that the boundedness in \ref{prop: semiconvex semiconcave gradient} is automatic when $m' = 0$ if we are working in von Neumann algebras with trivial center \cite[Corollary 5.6]{JekelTypeCoupling}.  This is based on the fact that $\nabla \varphi$ is equivariant under unitary conjugation in $\mathbf{x}$, if there is no dependence on $\mathbf{y}$, hence we cannot invoke this argument if $m' > 0$.  Instead, we will work with definable predicates of the form $\psi(\mathbf{x},\mathbf{y}) + a q(\mathbf{x})$ where $q(\mathbf{x}) = \frac{1}{2} \norm{\mathbf{x}}_2^2$ and $\psi$ is $\norm{\cdot}_1$-Lipschitz in $\mathbf{x}$, which guarantees useful bounds on the gradient as follows.
	
	\begin{lemma} \label{lem: bounded gradient}
		Let $m \in \N$ and $m' \in \N \cup \{0,\infty\}$.  Let $\varphi = \psi + aq$ be a chronologically definable predicate with respect to $\rT_{\stat}$, such that $\psi$ is semiconvex, semiconcave and $L$-Lipschitz with respect to $\norm{\cdot}_1$ in first $m$ variables.  Then for $\cM$ satisfying $\rT_{\stat}$ and $(\mathbf{x},\mathbf{y}) \in \cM^{m+m'}$, we have
		\[
		\norm{\nabla_{m,m'} \varphi^{\cM}(\mathbf{x},\mathbf{y}) - a \mathbf{x}}_\infty \leq L.
		\]
		Hence, by Proposition \ref{prop: semiconvex semiconcave gradient}, $\nabla_{m,m'} \varphi$ is a chronologically definable function.
	\end{lemma}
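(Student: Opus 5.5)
The plan is to reduce the operator-norm bound on $\nabla \psi$ to $L^1$--$L^\infty$ duality. First fix $\cM$ satisfying $\rT_{\stat}$ and $(\mathbf{x},\mathbf{y})$. Since $q(\mathbf{x}) = \frac{1}{2}\norm{\mathbf{x}}_2^2$ is smooth with gradient $\mathbf{x}$ with respect to $\re\ip{\cdot,\cdot}$, and $\psi^{\cM}$ is semiconvex and semiconcave in $\mathbf{x}$, Proposition \ref{prop: semiconvex semiconcave gradient}(1) gives a gradient $\nabla_{\mathbf{x}} \psi^{\cM}(\mathbf{x},\mathbf{y}) \in L^2(M_0)^m$. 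This gradient satisfies
\[
\left|\psi^{\cM}(\mathbf{x}',\mathbf{y}) - \psi^{\cM}(\mathbf{x},\mathbf{y}) - \re\ip{\mathbf{x}'-\mathbf{x}, \nabla_{\mathbf{x}}\psi^{\cM}(\mathbf{x},\mathbf{y})}\right| \leq \tfrac{c}{2}\norm{\mathbf{x}'-\mathbf{x}}_2^2 .
\]
Hence $\nabla_{m,m'}\varphi^{\cM}(\mathbf{x},\mathbf{y}) - a\mathbf{x} = \nabla_{\mathbf{x}}\psi^{\cM}(\mathbf{x},\mathbf{y})$, and it suffices to show that each coordinate $g_j$ of $\nabla_{\mathbf{x}}\psi^{\cM}$ has operator norm at most $L$.

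For the bound, take $\mathbf{x}' = \mathbf{x} + t\mathbf{h}$ with $\mathbf{h}$ supported in the $j$th coordinate, $h_j = h \in M_0$, and $t \to 0^+$. The differentiability estimate combined with the $\norm{\cdot}_1$-Lipschitz property in the $j$th variable gives $|\re\ip{h, g_j}| \leq L\norm{h}_1$ for all $h \in M_0$. Replacing $h$ by $e^{i\theta}h$ yields $|\tau(h^* g_j)| \leq L\norm{h}_1$. Since $M_0$ is dense in $L^1(M_0)$ and $L^1(M_0)^* = M_0$ via the trace pairing, $g_j$ defines a bounded functional on $L^1$ of norm at most $L$. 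It is therefore represented by an element of $M_0$ of operator norm at most $L$, which coincides with $g_j$ as an element of $L^2$.

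One point needs care: whether "$L$-Lipschitz in the first $m$ variables" means coordinatewise or with respect to $\sum_j\norm{x_j}_1$. In either reading, perturbing only one coordinate gives the coordinatewise bound, and that is all that $\norm{\cdot}_\infty$ (the maximum over $j$) requires. Concretely, with $h = u^*$ restricted by spectral projections one obtains $\norm{g_j}\le L$. Equivalently, one can test against $h = p\,\mathrm{sgn}$-type elements built from the polar decomposition $g_j = v|g_j|$ and the projection $p = \mathbbm{1}_{(L,\infty)}(|g_j|)$, which forces $\tau(p|g_j|) \leq L\tau(p)$ and hence $p = 0$.

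For the final claim, the bound gives $\norm{\nabla_{m,m'}\varphi^{\cM}(\mathbf{x},\mathbf{y})}_\infty \leq a\norm{\mathbf{x}}_\infty + L$. So on $D_{0,(\mathbf{r},\mathbf{r}')}^{\cM}$ the gradient maps into $D_{0,\mathbf{R}}^{\cM}$ with $R_j = ar_j + L$, uniformly in $\cM$. This is exactly the hypothesis of Proposition \ref{prop: semiconvex semiconcave gradient}(3), applied to $\varphi$, which is itself semiconvex and semiconcave since $aq$ is. Therefore $\nabla_{m,m'}\varphi$ is a chronologically definable function. The only real obstacle is the duality step identifying an $L^2$ vector bounded as an $L^1$-functional with a bounded operator, and that is standard.
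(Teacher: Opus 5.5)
Your proposal is correct and follows the same route as the paper: the $\norm{\cdot}_1$-Lipschitz property gives $|\re\ip{\mathbf{h}, \nabla_{m,m'}\psi^{\cM}(\mathbf{x},\mathbf{y})}| \leq L\norm{\mathbf{h}}_1$, noncommutative $L^1$--$L^\infty$ duality turns this into the operator-norm bound, and Proposition~\ref{prop: semiconvex semiconcave gradient}(3) then gives definability. You additionally supply details the paper leaves implicit (the limit $t \to 0^+$, the phase rotation, identifying the representing element of $M_0$ with the $L^2$ gradient), and the only nit is that the radius should be $R_j = |a| r_j + L$ unless $a \geq 0$.
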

	
	\begin{proof}
		By Proposition \ref{prop: semiconvex semiconcave gradient}, $\varphi$ and $\psi$ have well-defined gradients in $L^2(\cM)^m$.  Since $\psi$ is $\norm{\cdot}_1$-Lipschitz,
		\[
		|\ip{\nabla_{m,m'} \psi^{\cM}(\mathbf{x},\mathbf{y}), \mathbf{x}'}| \leq L \norm{\mathbf{x}'}_1,
		\]
		and hence by noncommutative $L^p$ duality, $\norm{\nabla_{m,m'} \psi^{\cM}(\mathbf{x},\mathbf{y})}_\infty \leq L$.  The conclusions follow immediately using Proposition \ref{prop: semiconvex semiconcave gradient} (3).
	\end{proof}
	
	\begin{lemma} \label{lem: Lipschitz plus quadratic}
		Let $\varphi$ be a chronologically definable predicate over $\rT_{\stat}$ of the form
		\[
		\varphi(\mathbf{x},\mathbf{y}) = \psi(\mathbf{x},\mathbf{y}) + a q(\mathbf{x})
		\]
		where $a > 0$ and $q(\mathbf{x},\mathbf{y}) = q(\mathbf{x}) = \frac{1}{2} \norm{\mathbf{x}}_2^2$ and $\psi$ is $L$-Lipschitz in each coordinate with respect to $\norm{\cdot}_1$.  Let $\cM$ satisfy $\rT_{\stat}$.
		\begin{enumerate}[(1)]
			\item For $\mathbf{x} \in M_0^m$ with $\norm{x_j} \leq r$, letting $r' = a^{-1}(L + r)$, we have
			\[
			\sup_{\mathbf{x}' \in M_0^m} \left[ \re \ip{\mathbf{x}, \mathbf{x}'} - \varphi^{\cM}(\mathbf{x}',\mathbf{y}) \right] = \sup_{\mathbf{x}' \in (D_{0,r'}^{\cM})^m} \left[ \re \ip{\mathbf{x}, \mathbf{x}'} - \varphi^{\cM}(\mathbf{x}',\mathbf{y}) \right].
			\]
			\item The Legendre transform $\varphi^{\star}$ with respect to the first $m$ variables, given by
			\begin{equation} \label{eq: definition of Legendre transform}
				\varphi^{\star}(\mathbf{x},\mathbf{y}) := \sup_{\mathbf{x}' \in M_0^m} \left[ \re \ip{\mathbf{x}, \mathbf{x}'} - \varphi^{\cM}(\mathbf{x}',\mathbf{y}) \right],
			\end{equation}
			is a chronologically definable predicate with respect to $\rT_{\stat}$.
			\item $\varphi^{\star} - a^{-1} q$ is $a^{-1}L$-Lipschitz with respect to $\norm{\cdot}_1$ in the first $m$ variables.
			\item $\varphi^{\star}$ is convex in the first $m$ variables.
			\item If $\varphi$ is $c$-semiconcave in the first $m$ variables , then $\varphi^{\star} - c^{-1}q$ is convex in the first $m$ variables.
			\item If $\varphi$ is convex in the first $m$ variables, then $(\varphi^{\star})^{\star} = \varphi$.
		\end{enumerate}
	\end{lemma}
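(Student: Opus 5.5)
We prove the six items in order, working pointwise in a fixed model $\cM$ of $\rT_{\stat}$ with $\mathbf{y}$ fixed, and only at the end checking definability uniformly in $\cM$.

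\textbf{Item (1).} We use the $\norm{\cdot}_1$-Lipschitz property of $\psi$ together with the projection onto the operator-norm ball from Lemma \ref{lem: projection onto R ball}. Given any $\mathbf{x}'\in M_0^m$, let $\mathbf{x}''$ be the coordinatewise closest point in $D_{0,r'}$, so $\mathbf{x}''=u F_{r'}(|x'_j|)$ in each coordinate. By Lemma \ref{lem: projection onto R ball}(3),
\[
2r'\norm{x'_j-x''_j}_1\le \norm{x'_j}_2^2-\norm{x''_j}_2^2 .
\]
The change in the objective $\re\ip{\mathbf{x},\mathbf{x}'}-\psi-aq$ when passing from $\mathbf{x}'$ to $\mathbf{x}''$ is bounded as follows:
\begin{itemize}
\item the linear term and the $\psi$ term together lose at most $(r+L)\sum_j\norm{x'_j-x''_j}_1$, by H\"older and the Lipschitz bound;
\item the quadratic term gains $\tfrac a2\sum_j(\norm{x'_j}_2^2-\norm{x''_j}_2^2)\ge a r'\sum_j\norm{x'_j-x''_j}_1$.
\end{itemize}
With $r'=a^{-1}(L+r)$ the gain dominates the loss, so the supremum may be restricted to $D_{0,r'}$. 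This is exactly Step 3 of Lemma \ref{lem: discrete approx of matrix pressure}.

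\textbf{Item (2).} By (1), on each domain $D_{0,r}\times D_{0,\mathbf{r}'}$ the Legendre transform $\varphi^\star$ agrees with $\sup_{\mathbf{x}'\in D_{0,r'}}[\re\ip{\mathbf{x},\mathbf{x}'}-\varphi(\mathbf{x}',\mathbf{y})]$. The map $\re\ip{\mathbf{x},\mathbf{x}'}$ is a basic formula. A sup over $D_{0,r'}$ of a chronologically definable predicate is again one, since sup is a uniformly continuous operation on the completion. Hence $\varphi^\star$ is a chronologically definable predicate, the radius depending only on $r$, uniformly in $\cM$.

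\textbf{Items (3)--(6).} These are convex analysis in the real Hilbert space $L^2(M_0)^m$.
\begin{itemize}
\item For (4), $\varphi^\star$ is a supremum of affine functions of $\mathbf{x}$, hence convex.
\item For (3), note that $(aq+\psi)^\star$ is an inf-convolution. Write $\varphi^\star(\mathbf{x})=\sup_{\mathbf{x}'}[\re\ip{\mathbf{x},\mathbf{x}'}-\tfrac a2\norm{\mathbf{x}'}^2-\psi(\mathbf{x}')]$ and substitute $\mathbf{x}'=a^{-1}\mathbf{w}$ after completing the square. This gives
\[
\varphi^\star(\mathbf{x})-a^{-1}q(\mathbf{x})=\sup_{\mathbf{w}}\Bigl[-\tfrac{1}{2a}\norm{\mathbf{x}-\mathbf{w}}_2^2-\psi(a^{-1}\mathbf{w})\Bigr]=\sup_{\mathbf{u}}\Bigl[-\tfrac{1}{2a}\norm{\mathbf{u}}_2^2-\psi(a^{-1}(\mathbf{x}-\mathbf{u}))\Bigr].
\]
For each fixed $\mathbf{u}$ the expression is $a^{-1}L$-Lipschitz in $\mathbf{x}$ for $\norm{\cdot}_1$, and a supremum of such functions stays Lipschitz.
\item For (5), $c$-semiconcavity means $\tfrac c2\norm{\cdot}^2-\varphi$ is convex. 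The standard duality between strong convexity of one function and smoothness of its conjugate (Baillon--Haddad / conjugate of $\varphi=\tfrac c2\norm{\cdot}^2-g$ with $g$ convex) then gives that $\varphi^\star-c^{-1}q$ is convex. One can verify this directly: $\varphi^\star(\mathbf{x})=\sup_{\mathbf{x}'}\inf_{\mathbf{v}}[\dots]$, and exchanging sup and inf exhibits $\varphi^\star-c^{-1}q$ as a supremum of convex functions.
\item For (6), $\varphi$ is convex and continuous on $M_0^m$; it is finite and $L^2$-continuous on bounded sets. One must check that Fenchel--Moreau holds when the sup is taken over $M_0^m$ rather than over $L^2$. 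Both $\varphi$ and $\varphi^\star$ are $L^2$-continuous on operator-norm balls and $M_0$ is dense in $L^2$, so we extend to $L^2$, apply the Hilbert-space biconjugate theorem, and restrict back.
\end{itemize}

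\textbf{Main obstacle.} We expect the delicate points to be (5) and (6), specifically making the Hilbert-space duality work on $M_0^m$ rather than on $L^2(M_0)^m$. The first attempt is the density and extension argument above. The extension step needs the growth control from (1), or equivalently the quadratic lower bound from $aq$, to keep the suprema finite and attained on bounded sets.
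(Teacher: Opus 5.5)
Your items (1)--(4) follow the paper's proof step for step. Item (1) projects onto the $r'$-ball using Lemma~\ref{lem: projection onto R ball}(3). Item (2) restricts the supremum to a ball, (3) completes the square, and (4) is a supremum of affine maps. Your constant $ar' = L + r$ and the signs in your completed square are the correct ones.

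The one real gap is (5), which as written is not a proof.
\begin{itemize}
\item The smooth/strongly-convex conjugacy you invoke (Baillon--Haddad and its relatives) is a statement about \emph{convex} functions. In (5), $\varphi$ is only assumed $c$-semiconcave.
\item ``Exchanging sup and inf'' is not a legitimate step. A genuine $\sup_{\mathbf{x}'}\inf_{\mathbf{v}}$ can only be swapped under a minimax theorem, and the swap would produce an infimum, not a supremum of convex functions.
\item If you meant writing $g = cq - \varphi$ as $g^{\star\star}$, you get a $\sup\sup$ with nothing to exchange. But that route needs Fenchel--Moreau for $g$ on $M_0^m$, which is exactly the $M_0$-versus-$L^2$ issue you flagged.
\end{itemize}
No duality is needed. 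For fixed $\mathbf{x}$,
\[
\re \ip{\mathbf{x},\mathbf{x}'} - \frac{1}{2c}\norm{\mathbf{x}}_2^2 = \frac{c}{2}\norm{\mathbf{x}'}_2^2 - \frac{c}{2}\norm{\mathbf{x}' - c^{-1}\mathbf{x}}_2^2 .
\]
Substituting $\mathbf{x}' = c^{-1}\mathbf{x} - \mathbf{u}$ then gives
\[
(\varphi^{\star} - c^{-1}q)^{\cM}(\mathbf{x},\mathbf{y}) = \sup_{\mathbf{u} \in M_0^m} \left[ (cq - \varphi)^{\cM}(c^{-1}\mathbf{x} - \mathbf{u},\mathbf{y}) - \frac{c}{2}\norm{\mathbf{u}}_2^2 \right].
\]
Each term is convex in $\mathbf{x}$ because $cq - \varphi$ is convex, so the supremum is convex. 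This is the paper's argument. It uses neither convexity of $\varphi$ nor any passage to $L^2$, so (5) is not one of the delicate points after all.

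For (6) you do more than the paper, which simply calls it a standard fact of convex analysis. Your density-and-extension argument is sound, but not for the reason you give: continuity on operator-norm balls does not by itself let you extend a function from $M_0^m$ to $L^2(M_0)^m$. What you need, and have, is global $\norm{\cdot}_2$-continuity. The function $\psi$ is $\norm{\cdot}_2$-Lipschitz because $\norm{\cdot}_1 \leq \norm{\cdot}_2$, and so is $\varphi^{\star} - a^{-1}q$ by (3). Consequently:
\begin{itemize}
\item the suprema defining $\varphi^{\star}$ and $(\varphi^{\star})^{\star}$ over $M_0^m$ agree with those over $L^2(M_0)^m$, by density;
\item the extension of $\varphi$ is a finite, continuous, convex function on the Hilbert space, so Fenchel--Moreau applies.
\end{itemize}
Attainment of the suprema plays no role.
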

	
	\begin{proof}
		(1) For $j = 1$, \dots, $m$, let $x_j''$ be the closest point to $x_j'$ in the $r$-ball per Lemma \ref{lem: projection onto R ball}.  Then
		\begin{align*}
			\re \ip{\mathbf{x}, \mathbf{x}''} &\geq \re \ip{\mathbf{x}, \mathbf{x}'} - \sum_{j=1}^m \norm{x_j} \norm{x_j' - x_j''}_1 \\
			&\geq \ip{\mathbf{x}, \mathbf{x}'} - r \sum_{j=1}^m \norm{x_j' - x_j''}_1 \\
			\psi^{\cM}(\mathbf{x}'',\mathbf{y}) &\leq \psi^{\cM}(\mathbf{x}',\mathbf{y}) + L \sum_{j=1}^m \norm{x_j' - x_j''}_1 \\
			\frac{a}{2} \norm{x''}_2 &\leq \frac{a}{2} \norm{x'}_2^2 - ar' \sum_{j=1}^m \norm{x_j' - x_j''}_1,
		\end{align*}
		where the last line follows from Lemma \ref{lem: projection onto R ball} (3).  Since $r + L = cr'$, subtracting the second and third inequalities from the first yields that
		\begin{align*}
			& \re \ip{\mathbf{x}, \mathbf{x}''} - \psi^{\cM}(\mathbf{x}'',\mathbf{y}) - \frac{a}{2} \norm{x''}_2^2 \\
			\geq & \re \ip{\mathbf{x}, \mathbf{x}'} - \psi^{\cM}(\mathbf{x}',\mathbf{y}) - \frac{a}{2} \norm{x'}_2^2,
		\end{align*}
		which proves the claim.
		
		(2) By claim (1), $\varphi^{\star}$ agrees with a chronologically definable predicate on $\norm{x_j} \leq r$, for each $r > 0$.  Therefore, $\varphi^{\star}$ itself is a chronologically definable predicate.
		
		(3) Observe that
		\begin{align*}
			\varphi^{\star}(\mathbf{x},\mathbf{y}) - \frac{1}{2c} \norm{\mathbf{x}}_2^2 &= \sup_{\mathbf{x}' \in M_0^m} \biggl[ \re \ip{\mathbf{x}', \mathbf{x}} - \frac{c}{2} \norm{\mathbf{x}'}_2^2 - \frac{1}{2c} \norm{\mathbf{x}}_2^2 - \psi^{\cM}(\mathbf{x}',\mathbf{y}) \biggr] \\
			&= \sup_{\mathbf{x}' \in M_0^m} \biggl[ \frac{a}{2} \norm{a^{-1}\mathbf{x} - \mathbf{x}'}_2^2 - \psi^{\cM}(\mathbf{x}',\mathbf{y}) \biggr] \\
			&= \sup_{\mathbf{x}' \in M_0^m} \biggl[ \frac{a}{2} \norm{\mathbf{x}'}_2^2 - \psi^{\cM}(a^{-1} \mathbf{x} - \mathbf{x}',\mathbf{y}) \biggr].
		\end{align*}
		The right-hand side is the supremum of a family of functions that are $a^{-1}L$-Lipschitz in $x_j$ with respect to $\norm{\cdot}_1$, which proves the claim.
		
		(4) is a standard fact which follows because $\varphi^{\star}$ by definition is the supremum of a family of affine functions in $\mathbf{x}$.
		
		(5) is standard and be proved as follows:
		\begin{align*}
			(\varphi^{\star})^{\cM}(\mathbf{x},\mathbf{y}) - \frac{1}{2c} \norm{\mathbf{x}}_2^2 &= \sup_{\mathbf{x}' \in M_0^m} \left[ \re \ip{\mathbf{x},\mathbf{x}'} - \frac{1}{2c} \norm{\mathbf{x}}_2^2 - \varphi^{\cM}(\mathbf{x}',\mathbf{y}) \right] \\
			&= \sup_{\mathbf{x}' \in M_0^m} \left[ -\frac{c}{2} \norm{c^{-1} \mathbf{x} - \mathbf{x}'}_2^2 - \left(\varphi^{\cM}(\mathbf{x}',\mathbf{y}) - \frac{c}{2} \norm{\mathbf{x}'}_2^2  \right) \right] \\
			&= \sup_{\mathbf{x}' \in M_0^m} \left[ -\frac{c}{2} \norm{\mathbf{x}'}_2^2 - \left(\varphi^{\cM}(c^{-1} \mathbf{x} - \mathbf{x}',\mathbf{y}) - \frac{c}{2} \norm{c^{-1} \mathbf{x} - \mathbf{x}'}_2^2  \right) \right],
		\end{align*}
		which is the supremum of a family of convex functions of $\mathbf{x}$ since $-(\varphi - cq)$ is convex.
		
		(6) is also a standard fact from convex analysis.  See also \cite[\S 4.2]{GJNS2021}.
	\end{proof}
	
	\begin{remark}
		The Legendre transform is denoted $\mathcal{L} \varphi$ in \cite{GJNS2021,JekelTypeCoupling}, but we use $\varphi^{\star}$ here to ease the load on the letter $\cL$.  Meanwhile, we use the five-pointed star $\star$ to avoid confusion with the adjoint of an operator or the dual of a Banach space.
	\end{remark}
	
	\begin{lemma} \label{lem: Lipschitz plus quadratic 2}
		Let $\varphi$ be a chronologically definable predicate over $\rT_{\stat}$ of the form
		\[
		\varphi(\mathbf{x},\mathbf{y}) = \psi(\mathbf{x},\mathbf{y}) + a q(\mathbf{x})
		\]
		where $a \geq 0$ and $q(\mathbf{x},\mathbf{y}) = q(\mathbf{x}) = \frac{1}{2} \norm{\mathbf{x}}_2^2$ and $\psi$ is $L$-Lipschitz in each coordinate with respect to $\norm{\cdot}_1$.  Let $\cM$ satisfy $\rT_{\stat}$.  Consider the inf-convolution
		\begin{equation} \label{eq: inf convolution}
			(cq \square \varphi)^{\cM}(\mathbf{x},\mathbf{y}) = \inf_{\mathbf{v} \in M_0^m} \left[ \frac{c}{2} \norm{\mathbf{v} - \mathbf{x}}_2^2 + \varphi^{\cM}(\mathbf{v},\mathbf{y}) \right]
		\end{equation}
		Then
		\begin{enumerate}[(1)]
			\item If $\norm{x_j} \leq r$, then to evaluate the infimum in \eqref{eq: inf convolution}, it suffices to consider $\mathbf{v} \in D_{0,r'}^{\cM}$ where $r' = (c + a)^{-1}(L + cr)$.
			\item $cq \square \varphi$ is a chronologically definable predicate with respect to $\rT_{\stat}$.
			\item $(cq \square \varphi) - (c - (a+c)^{-1}c^2) q$ is $(c+a)^{-1}cL$-Lipschitz with respect to $\norm{\cdot}_1$ in the first $m$ variables.
			\item $cq \square \varphi$ is $c$-semiconcave.
			\item If $\varphi$ is convex, then $cq \square \varphi$ is convex.
			\item $(cq \square \varphi)^{\star} = c^{-1}q + \varphi^{\star}$.
		\end{enumerate}
	\end{lemma}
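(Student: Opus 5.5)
The plan is to follow the proof of Lemma \ref{lem: Lipschitz plus quadratic} closely: first localize the infimum to an operator-norm ball by truncation, then deduce definability, and then obtain the structural claims by rewriting the inf-convolution algebraically. Throughout, fix $\cM \models \rT_{\stat}$ and $\mathbf{y}$, and write $\varphi = \psi + aq$.

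\emph{Step (1).} Let $\norm{x_j} \leq r$ and let $\mathbf{v}$ be any candidate in \eqref{eq: inf convolution}. Let $v_j''$ be the closest point to $v_j$ in the $r'$-ball, as in Lemma \ref{lem: projection onto R ball}. Expanding the objective gives
\[
\frac{c}{2}\norm{\mathbf{v}-\mathbf{x}}_2^2 + \frac{a}{2}\norm{\mathbf{v}}_2^2 + \psi^{\cM}(\mathbf{v},\mathbf{y}) = \frac{c+a}{2}\norm{\mathbf{v}}_2^2 - c\re\ip{\mathbf{v},\mathbf{x}} + \frac{c}{2}\norm{\mathbf{x}}_2^2 + \psi^{\cM}(\mathbf{v},\mathbf{y}).
\]
Replacing $\mathbf{v}$ by $\mathbf{v}''$ affects each term as follows:
\begin{itemize}
\item the quadratic term decreases by at least $(c+a)r' \sum_j \norm{v_j - v_j''}_1$, by Lemma \ref{lem: projection onto R ball} (3);
\item the linear term increases by at most $cr\sum_j \norm{v_j-v_j''}_1$, by H\"older's inequality;
\item $\psi$ increases by at most $L\sum_j\norm{v_j-v_j''}_1$.
\end{itemize}
Since $(c+a)r' = L + cr$, the objective does not increase, so it suffices to take the infimum over $D_{0,r'}^{\cM}$. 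This bookkeeping of constants is the only delicate point, and I expect it to be the main (mild) obstacle.

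\emph{Step (2).} On each ball $\norm{x_j}\leq r$, Step (1) shows that $cq\square\varphi$ coincides with $\inf_{\mathbf{v}\in D_{0,r'}}[\frac{c}{2}d(\mathbf{v},\mathbf{x})^2 + \varphi(\mathbf{v},\mathbf{y})]$. This is an infimum over a domain of $M_0$ applied to a chronologically definable predicate, so it is itself a chronologically definable predicate. Since this holds for every $r$, $cq\square\varphi$ is a chronologically definable predicate, exactly as in Lemma \ref{lem: Lipschitz plus quadratic} (2).

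\emph{Steps (3)--(5).} For (3), complete the square:
\[
\frac{c+a}{2}\norm{\mathbf{v}}_2^2 - c\re\ip{\mathbf{v},\mathbf{x}} + \frac{c}{2}\norm{\mathbf{x}}_2^2 = \frac{c+a}{2}\Bigl\|\mathbf{v}-\frac{c}{c+a}\mathbf{x}\Bigr\|_2^2 + \Bigl(c-\frac{c^2}{c+a}\Bigr)\frac{1}{2}\norm{\mathbf{x}}_2^2 .
\]
Substituting $\mathbf{v} = \mathbf{w} + \frac{c}{c+a}\mathbf{x}$ then gives
\[
\bigl(cq\square\varphi - (c-(a+c)^{-1}c^2)q\bigr)(\mathbf{x},\mathbf{y}) = \inf_{\mathbf{w}}\Bigl[\frac{c+a}{2}\norm{\mathbf{w}}_2^2 + \psi^{\cM}\Bigl(\mathbf{w}+\frac{c}{c+a}\mathbf{x},\mathbf{y}\Bigr)\Bigr].
\]
This is an infimum of functions that are $(c+a)^{-1}cL$-Lipschitz in $\norm{\cdot}_1$, so it has the same Lipschitz constant.

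For (4), subtract the quadratic in $\mathbf{x}$:
\[
cq\square\varphi - cq = \inf_{\mathbf{v}}\Bigl[-c\re\ip{\mathbf{v},\mathbf{x}} + \frac{c}{2}\norm{\mathbf{v}}_2^2 + \varphi^{\cM}(\mathbf{v},\mathbf{y})\Bigr].
\]
This is an infimum of affine functions of $\mathbf{x}$, hence concave, so $cq\square\varphi$ is $c$-semiconcave.

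For (5), the map $(\mathbf{x},\mathbf{v})\mapsto \frac{c}{2}\norm{\mathbf{v}-\mathbf{x}}_2^2 + \varphi^{\cM}(\mathbf{v},\mathbf{y})$ is jointly convex when $\varphi$ is convex. A partial infimum of a jointly convex function is convex, which gives the claim.

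\emph{Step (6).} Compute directly, using $\mathbf{x} = \mathbf{v} + \mathbf{u}$ to interchange the suprema:
\begin{align*}
(cq\square\varphi)^{\star}(\mathbf{x}',\mathbf{y}) &= \sup_{\mathbf{x},\mathbf{v}}\Bigl[\re\ip{\mathbf{x}',\mathbf{x}} - \frac{c}{2}\norm{\mathbf{x}-\mathbf{v}}_2^2 - \varphi^{\cM}(\mathbf{v},\mathbf{y})\Bigr] \\
&= \sup_{\mathbf{u}}\Bigl[\re\ip{\mathbf{x}',\mathbf{u}} - \frac{c}{2}\norm{\mathbf{u}}_2^2\Bigr] + \sup_{\mathbf{v}}\Bigl[\re\ip{\mathbf{x}',\mathbf{v}} - \varphi^{\cM}(\mathbf{v},\mathbf{y})\Bigr].
\end{align*}
The first supremum equals $\frac{1}{2c}\norm{\mathbf{x}'}_2^2 = c^{-1}q(\mathbf{x}')$. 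The second equals $\varphi^{\star}(\mathbf{x}',\mathbf{y})$, which is a definable predicate by Lemma \ref{lem: Lipschitz plus quadratic} when $a>0$.

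When $a=0$, the identity still holds as an identity of extended-real-valued functions. In that case $\varphi^\star$ may take the value $+\infty$, so I would state (6) in that sense. Alternatively, one can reduce to $a>0$ by noting from (3) that $cq\square\varphi$ has the form of Lemma \ref{lem: Lipschitz plus quadratic} with a positive quadratic coefficient.
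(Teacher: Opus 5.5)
Your proof is correct and is essentially the paper's argument with the reduction unwound: the paper writes $cq\square\varphi = cq - (\varphi+cq)^{\star}(c\mathbf{x},\mathbf{y})$ and applies Lemma~\ref{lem: Lipschitz plus quadratic} (1)--(3) to $\psi+(a+c)q$, whose proofs are the same truncation and completing-the-square computations you do directly, while it only cites (4)--(6) as standard facts and you verify them. Your caveat that (6) must be read in the extended-real sense when $a=0$ is well taken, but drop the proposed alternative reduction to $a>0$: by (3) the quadratic coefficient of $cq\square\varphi$ is $c-c^2/(c+a)=ca/(c+a)$, which vanishes exactly when $a=0$.
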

	
	\begin{proof}
		(1) Observe that
		\begin{align*}
			(cq \square \varphi)^{\cM}(\mathbf{x},\mathbf{y}) &= \inf_{\mathbf{v} \in M_0^m} \left[ \frac{c}{2} \norm{\mathbf{x} - \mathbf{v}}_2^2 + \varphi(\mathbf{v}) \right] \\
			&= \frac{c}{2} \norm{\mathbf{x}}_2^2 - \sup_{\mathbf{v}} \left[ \re \ip{c\mathbf{x},\mathbf{v}} - \varphi^{\cM}(\mathbf{v}) - \frac{c}{2} \norm{\mathbf{v}}_2^2 \right] \\
			&= cq^{\cM}(\mathbf{x}) - ((\varphi + cq)^{\star})^{\cM}(c\mathbf{x},\mathbf{y}).
		\end{align*}
		By Lemma \ref{lem: Lipschitz plus quadratic}, the supremum in the definition of $((\varphi+cq)^{\star})^{\cM}(c\mathbf{x},\mathbf{y}) = ((\psi + (a+c)q)^{\star})^{\cM}$ is witnessed by values of $\mathbf{v}$ with $\norm{v_j} \leq (a+c)^{-1}(L + cr)$.
		
		(2) This follows from Lemma \ref{lem: Lipschitz plus quadratic}
		
		(3) By Lemma \ref{lem: Lipschitz plus quadratic}, $((\varphi+cq)^{\star})^{\cM}(\mathbf{x},\mathbf{y}) - (a+c)^{-1} q^{\cM}(\mathbf{x})$ is $(a+c)^{-1}L$-Lipschitz with respect to $\norm{\cdot}_1$ in the first $m$ variables, and hence $((\varphi+cq)^{\star})^{\cM}(c\mathbf{x},\mathbf{y}) - (a+c)^{-1}c^2 q^{\cM}(\mathbf{x})$ is $(a+c)^{-1} cL$-Lipschitz.  Subtracting this from $cq$ yields the desired result.
		
		(4) - (6) are standard properties.  Proof of (4) and (5) can be found in \cite[Fact 2.20]{Jekel2025InfoGeom}.  See also \cite[\S 4.2]{GJNS2021}.
	\end{proof}
	
	\begin{lemma} \label{lem: expression as inf convolution}
		Let $\varphi$ be a chronologically definable predicate over $\rT_{\stat}$ of the form
		\[
		\varphi(\mathbf{x},\mathbf{y}) = \psi(\mathbf{x},\mathbf{y}) + a q(\mathbf{x})
		\]
		where $a > 0$ and $q(\mathbf{x},\mathbf{y}) = q(\mathbf{x}) = \frac{1}{2} \norm{\mathbf{x}}_2^2$ and $\psi$ is $L$-Lipschitz in each coordinate with respect to $\norm{\cdot}_1$.  Assume that $c > a$ and $\varphi$ is convex and $c$-semiconcave in the first $m$ variables.  Let $r > 0$.  Let $r_1 = c^{-1}L + (1 - ac^{-1})r$ and $r_2 = (a + c)^{-1}a(1+c)L + ar$.  Then there exists a chronologically definable predicate $\omega$ such that for $\cM$ satisfying $\rT$ and $\mathbf{x} \in (D_{0,r}^{\cM})^m$ and $\mathbf{y} \in M_0^{m'}$,
		\[
		\varphi^{\cM}(\mathbf{x},\mathbf{y}) = \inf_{\mathbf{v} \in (D_{0,r_1}^{\cM})^m} \left[ \frac{c}{2} \norm{\mathbf{v} - \mathbf{x}}_2^2 + \sup_{\mathbf{w} \in (D_{0,r_2}^{\cM})} \left[ \re \ip{\mathbf{v}, \mathbf{w}} - \omega^{\cM}(\mathbf{w},\mathbf{y}) \right] \right].
		\]
	\end{lemma}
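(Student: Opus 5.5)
The plan is to apply Legendre duality twice. We take $\omega = \varphi^{\star} - c^{-1} q$ and then read off the radii from Lemmas \ref{lem: Lipschitz plus quadratic} and \ref{lem: Lipschitz plus quadratic 2}.

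Since $\varphi$ is $c$-semiconcave, Lemma \ref{lem: Lipschitz plus quadratic}(5) shows that $\omega := \varphi^{\star} - c^{-1}q$ is convex in the first $m$ variables. By Lemma \ref{lem: Lipschitz plus quadratic}(2) it is a chronologically definable predicate. Lemma \ref{lem: Lipschitz plus quadratic}(3) gives the form $\omega = \psi_1 + (a^{-1} - c^{-1}) q$ with $\psi_1$ being $a^{-1}L$-Lipschitz in $\norm{\cdot}_1$, and the coefficient is positive because $c > a$. We want the identity $\varphi = cq \square \omega^{\star}$. Lemma \ref{lem: Lipschitz plus quadratic 2}(6) gives $(cq\square \omega^\star)^\star = c^{-1}q + \omega^{\star\star}$. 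Since $\omega$ is convex, Lemma \ref{lem: Lipschitz plus quadratic}(6) gives $\omega^{\star\star} = \omega$, so $(cq\square \omega^\star)^\star = c^{-1}q + \omega = \varphi^\star$. Both $cq \square \omega^\star$ and $\varphi$ are convex (Lemma \ref{lem: Lipschitz plus quadratic 2}(5) and Lemma \ref{lem: Lipschitz plus quadratic}(4)). Applying Lemma \ref{lem: Lipschitz plus quadratic}(6) once more, we take Legendre transforms and conclude $\varphi = cq \square \omega^\star$. Unwinding the definitions, this reads
\[
\varphi^{\cM}(\mathbf{x},\mathbf{y}) = \inf_{\mathbf{v}} \Bigl[ \tfrac{c}{2}\norm{\mathbf{v}-\mathbf{x}}_2^2 + \sup_{\mathbf{w}} \bigl[\re\ip{\mathbf{v},\mathbf{w}} - \omega^{\cM}(\mathbf{w},\mathbf{y})\bigr]\Bigr],
\]
with $\mathbf{v}$ and $\mathbf{w}$ ranging over all of $M_0^m$.

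It remains to restrict the ranges to operator-norm balls, and this bookkeeping is the main obstacle. For the inner supremum, Lemma \ref{lem: Lipschitz plus quadratic}(1) applies to $\omega$, whose quadratic coefficient is $a' = a^{-1}-c^{-1}$ and Lipschitz constant is $a^{-1}L$. It says that for $\norm{v_j}\le r_1$ the supremum may be taken over $\norm{w_j} \le (a')^{-1}(a^{-1}L + r_1)$. One then checks algebraically that this bound is at most $r_2$ as stated, possibly after slightly enlarging constants. The inequality goes in the safe direction: taking the supremum over a larger ball with $\norm{w_j}\le r_2$ does not change the value once the true maximizers lie inside.

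For the outer infimum, $\omega^\star$ has the form $\psi_2 + (a')^{-1}q$ with $\psi_2$ being $a^{-1}L/a'$-Lipschitz, by Lemma \ref{lem: Lipschitz plus quadratic}(3). Lemma \ref{lem: Lipschitz plus quadratic 2}(1) then gives a radius $r_1$ such that the infimum over $\mathbf{v}$ is attained in $D_{0,r_1}$ whenever $\norm{x_j}\le r$. Alternatively, one can argue directly: $\varphi$ has gradient within $L$ of $a\mathbf{x}$ by Lemma \ref{lem: bounded gradient}, and the minimizer is $\mathbf{v} = \mathbf{x} - c^{-1}\nabla\varphi(\mathbf{x})$, which has norm at most $(1-ac^{-1})r + c^{-1}L = r_1$. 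The direct route is the one I would try first, since it reproduces exactly the stated $r_1$.

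With both radii fixed, the formula is an inf–sup over $M_0$-balls of a chronologically definable predicate. This yields the claim for every $\cM$ satisfying $\rT_{\stat}$.
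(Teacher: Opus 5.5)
Your overall route is the paper's: $\omega=\varphi^{\star}-c^{-1}q$, then the identity $(cq\square\omega^{\star})^{\star}=c^{-1}q+\omega=\varphi^{\star}$ followed by biconjugation, then Lemma \ref{lem: Lipschitz plus quadratic 2}(1) for the outer radius and Lemma \ref{lem: Lipschitz plus quadratic}(1) for the inner one. Your $r_1$ is correct by either of your two arguments. (Small slip: convexity of $\varphi$ is a hypothesis; what Lemma \ref{lem: Lipschitz plus quadratic}(4) supplies is convexity of $\omega^{\star}$, which Lemma \ref{lem: Lipschitz plus quadratic 2}(5) requires.)

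The gap is the algebraic check you deferred, which fails. Your inner bound equals
\[
(a^{-1}-c^{-1})^{-1}\bigl(a^{-1}L+r_1\bigr)=\frac{ac}{c-a}\Bigl(\frac{a+c}{ac}\,L+\frac{c-a}{c}\,r\Bigr)=\frac{c+a}{c-a}\,L+ar .
\]
If $L>0$ and $a\le 1$, this is strictly larger than the stated $r_2=\frac{a(1+c)}{a+c}L+ar$, because then $\frac{a(1+c)}{a+c}\le 1<\frac{c+a}{c-a}$. Since $r_2$ is fixed in the statement, ``enlarging constants'' is not allowed. Your ``safe direction'' remark only helps when the ball is at least as large as the bound.

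In fact no argument can reach the stated $r_2$ when $a<1$. Suppose some $\omega$ works, let $g(\mathbf{v})$ be the inner supremum, and fix $\mathbf{x}$ with $\norm{x_j}<r$ for all $j$. Note that $g$ is $r_2$-Lipschitz for $\norm{\cdot}_1$.
\begin{enumerate}[(1)]
\item The infimum over $(D_{0,r_1}^{\cM})^m$ is attained at some $\mathbf{v}$, since the ball is weakly compact in $L^2$ and the bracket is convex and continuous.
\item Comparing $\varphi(\mathbf{x}+t\mathbf{h})$ with $\frac c2\norm{\mathbf{v}-\mathbf{x}-t\mathbf{h}}_2^2+g(\mathbf{v})$ gives $\nabla\varphi(\mathbf{x})=c(\mathbf{x}-\mathbf{v})$. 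By Lemma \ref{lem: bounded gradient}, $\mathbf{v}$ then lies strictly inside the $r_1$-ball.
\item Minimality of $\mathbf{v}$ along bounded directions $\mathbf{h}$ yields $\re\ip{\mathbf{h},\nabla\varphi(\mathbf{x})}\le r_2\norm{\mathbf{h}}_1$, that is, $\norm{\nabla\varphi(\mathbf{x})}_\infty\le r_2$.
\end{enumerate}
Now take $\varphi(\mathbf{x})=\frac a2\norm{\mathbf{x}}_2^2+L\re\tr(x_1)$, which satisfies every hypothesis. At $\mathbf{x}=(r'\cdot 1,0,\dots,0)$ with $r'<r$, its gradient has first coordinate $(ar'+L)\cdot 1$. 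This forces $ar+L\le r_2$, i.e.\ $a\ge 1$.

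The paper's own proof has the same defect. It uses $L$ instead of the Lipschitz constant $a^{-1}L$ of $\omega-(a^{-1}-c^{-1})q$, and it writes $(a+c)^{-1}$ where $(c-a)^{-1}$ belongs. With $r_2$ replaced by $\frac{c+a}{c-a}L+ar$, your argument is complete. That corrected version is all the later application in Lemma \ref{lem: FD approximation of SCSC} needs, since only the existence of some radii is used there.
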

	
	\begin{proof}
		By Lemma \ref{lem: Lipschitz plus quadratic}, $\varphi^{\star}$ is a chronologically definable predicate with respect to $\rT_{\stat}$ and $\varphi^{\star} - c^{-1}q$ is convex and $\varphi^{\star} - a^{-1}q$ is $a^{-1}L$-Lipschitz with respect to $\norm{\cdot}_1$ in the first $m$ variables.  Let $\omega = \varphi^{\star} - c^{-1} q$.  Hence, $\omega - (a^{-1} - c^{-1}) q$ is $a^{-1}L$-Lipschitz with respect to $\norm{\cdot}_1$ in the first $m$ variables (here $a^{-1} - c^{-1} > 0$ by assumption).
		
		Therefore, by Lemma \ref{lem: Lipschitz plus quadratic} again $\omega^{\star}$ is a chronologically definable predicate and $\omega^{\star} - (a^{-1} - c^{-1})^{-1} q$ is $(a^{-1} - c^{-1})^{-1} a^{-1}L = (1 - ac^{-1})^{-1} L$-Lipschitz in the first $m$ variables.  By Lemma \ref{lem: Lipschitz plus quadratic 2}, $cq \square \omega^{\star}$ is a chronologically definable predicate and is convex, and
		\[
		(cq \square \omega^{\star})^{\star} = c^{-1}q + (\omega^{\star})^{\star} = c^{-1}q + \omega = \varphi^{\star}.
		\]
		Since $cq \square \omega^{\star}$ and $\varphi$ are both convex, by Lemma \ref{lem: Lipschitz plus quadratic} (6), we have $cq \square \omega^{\star} = \varphi$, or
		\[
		\varphi(\mathbf{x},\mathbf{y}) = \inf_{\mathbf{v} \in M_0^m} \left[ \frac{c}{2} \norm{\mathbf{v} - \mathbf{x}}_2^2 + \sup_{\mathbf{w} \in M_0^m} \left[ \re \ip{\mathbf{v}, \mathbf{w}} - \omega^{\cM}(\mathbf{w},\mathbf{y}) \right] \right].
		\]
		If $\mathbf{x}$ satisfies $\norm{x_j} \leq r$, then by Lemma \ref{lem: Lipschitz plus quadratic 2} applied to $\omega^{\star}$, the infimum is witnessed by values of $\mathbf{v}$ with
		\begin{align*}
			\norm{v_j} &\leq (c + (a^{-1} - c^{-1})^{-1})^{-1}((a^{-1} - c^{-1})^{-1} a^{-1}L + cr) \\
			&= (c(a^{-1} - c^{-1}) + 1)^{-1}  (a^{-1}L + (a^{-1} - c^{-1})cr) \\
			&= (ca^{-1})^{-1} (a^{-1}L + (a^{-1} - c^{-1})cr) \\
			&= c^{-1} L + (1 - ac^{-1})r \\
			&= r_1.
		\end{align*}
		Now assume that $\mathbf{v}$ satisfies $\norm{v_j} \leq r_1$, and consider the inner supremum over $\mathbf{w}$.  By applying Lemma \ref{lem: Lipschitz plus quadratic} to $\omega$, the supremum is witnessed by $\mathbf{w}$ with
		\begin{align*}
			\norm{w_j} &\leq (a^{-1} - c^{-1})^{-1}(L + r_1) \\
			&=  (a^{-1} - c^{-1})^{-1}(L + c^{-1} L + (1 - ac^{-1})r) \\
			&= (a + c)^{-1}a(1+c)L + ar \\
			&= r_2. \qedhere
		\end{align*}
	\end{proof}

	\subsection{Finite-dimensional approximations of definable predicates} \label{subsec: more FD approximations}
	
	In Theorem \ref{thm: infinitesimal change of variables}, we will perform a change of variables using the gradient of a semiconvex and semiconcave chronologically definable predicate.  This requires studying an analogous transformation on the finite-dimensional matrix models, and thus we need to construct suitable approximations $\Lambda_{\varphi}^{(n)}$ for chronologically definable predicates $\varphi$.  Note that in Proposition \ref{prop: approx by pointwise function} such finitary approximations are only constructed when $\varphi$ is a restricted chronological formula.  We also only initially defined the center-valued evaluation $\varphi_E^{\cM}$ in the random matrix ultraproduct $\cM$ when $\varphi$ is a chronological formula; however, using Corollary \ref{cor: center-valued definable predicates}, the center-valued evaluation $\varphi_E^{\cM}(\mathbf{x})$ makes sense in the matrix ultraproduct \eqref{eq: random matrix quotient} when $\varphi$ is a definable predicate with respect to some $\rT$ containing $\rT_{\operatorname{stat}}$.
	
	The first lemma constructs finite-dimensional approximations which are bounded.
	
	\begin{lemma} \label{lem: FD approximation 1}
		Let $m \in \N \cup \{0,\infty\}$.  Let $\varphi$ be a chronologically definable predicate with respect to $\rT_{\operatorname{stat}}$ in $m$ variables.  Fix $\mathbf{r} \in (0,\infty)^m$.  Then there exist functions $\Lambda_\varphi^{(n)}: \mathbb{M}_n^m \to \R$ such that $|\Lambda_{\varphi}^{(n)}|$ is globally bounded by some constant $M$ and for all $\mathbf{X}^{(n)} \in (L^\infty(\Omega,\mathcal{G}_0,\mathbb{P}) \otimes \mathbb{M}_n)^m$ with $\norm{X_j^{(n)}} \leq r_j$, we have
		\[
		\varphi_E^{\cM}([\mathbf{X}^{(n)}]_{n \in \N}) = [\Lambda_{\varphi}^{(n)}(\mathbf{X}^{(n)})]_{n \in \N} 
		\]
		where $\cM$ is the random matrix ultraproduct \eqref{eq: random matrix ultraproduct}.
	\end{lemma}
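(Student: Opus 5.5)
The plan is to approximate $\varphi$ by restricted chronological formulas and glue together their finite-dimensional approximations along a sequence of increasingly precise formulas, using a diagonal argument in $n$. By Lemma \ref{lem: approximation by restricted chronological formulas} (together with density of chronological formulas in the definable predicates), we choose $\psi_k \in \mathcal{F}_{\chron,m}^0$ with
\[
\sup_{\cN} \sup_{\mathbf{x} \in D_{0,\mathbf{r}}^{\cN}} \norm{(\psi_k)_E^{\cN}(\mathbf{x}) - \varphi_E^{\cN}(\mathbf{x})} \leq 2^{-k},
\]
where the supremum is over center-valued structures of the relevant kind. Here Corollary \ref{cor: center-valued definable predicates} allows us to pass the uniform estimate from $\varphi$ as a definable predicate to the center-valued evaluation in $\cM$. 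If $m = \infty$, each $\psi_k$ only involves finitely many coordinates. After replacing $\psi_k$ by $\max(\min(\psi_k, M), -M)$, where $M = \norm{\varphi}_{0,\mathbf{r}} + 1$ (a Lipschitz connective, so this stays in $\mathcal{F}^0_{\chron}$), the functions $\Lambda_{\psi_k}^{(n)}$ from Proposition \ref{prop: approx by pointwise function} are uniformly bounded by $M$.

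Next we diagonalize. For $X \in \mathbb{M}_n^m$ with $\norm{X_j} \leq r_j$, we want to set $\Lambda_\varphi^{(n)}(\mathbf{X}) = \Lambda_{\psi_{k(n)}}^{(n)}(\mathbf{X})$, with $k(n) \to \infty$ slowly, and take $\Lambda_\varphi^{(n)} = 0$ outside the ball. The issue is that $\Lambda_{\psi_k}^{(n)}$ approximates $\psi_k$ only in the ultralimit, so we need a quantitative statement: for each $k$ there is $N_k$ such that for $n \geq N_k$,
\[
\sup_{\norm{X_j} \leq r_j} |\Lambda_{\psi_k}^{(n)}(\mathbf{X}) - \Lambda_{\psi_{k+1}}^{(n)}(\mathbf{X})| \leq 3 \cdot 2^{-k}.
\]
We would prove this by contradiction. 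If it failed along a $\cU$-large set (or even an infinite set, choosing an ultrafilter containing it), pick deterministic $\mathbf{X}^{(n)}$ witnessing the failure. Then $[\Lambda_{\psi_k}^{(n)} - \Lambda_{\psi_{k+1}}^{(n)}]$ evaluated at $x = [\mathbf{X}^{(n)}]$ would equal $(\psi_k - \psi_{k+1})_E^{\cM}(x)$ by \eqref{eq: pointwise formula approximation}. That quantity has norm at most $2^{-k} + 2^{-k-1}$, which contradicts the $3 \cdot 2^{-k}$ gap. The subtlety is that $N_k$ must be independent of the ultrafilter, and the argument over arbitrary infinite subsets handles this, since the $\Lambda$'s are ultrafilter-independent. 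We then choose $k(n) = \max\{k : N_1, \dots, N_k \leq n\}$, which tends to $\infty$.

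Finally we verify the identity. For random $\mathbf{X}^{(n)}$ in the $\mathbf{r}$-ball, telescoping gives the pointwise estimate
\[
|\Lambda_\varphi^{(n)}(\mathbf{X}^{(n)}) - \Lambda_{\psi_k}^{(n)}(\mathbf{X}^{(n)})| \leq 6 \cdot 2^{-k} \quad \text{for } n \geq N_k.
\]
Hence $\norm{[\Lambda_\varphi^{(n)}(\mathbf{X}^{(n)})] - (\psi_k)_E^{\cM}(x)} \leq 6 \cdot 2^{-k}$, and so the distance to $\varphi_E^{\cM}(x)$ is $O(2^{-k})$ for every $k$, giving equality. Measurability of $\Lambda_\varphi^{(n)}$ is inherited from the $\Lambda_{\psi_k}^{(n)}$, which are Lipschitz.

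The main obstacle is the uniform-in-$n$ comparison step. We must make sure the contradiction argument legitimately uses deterministic (constant in $\omega$) inputs lying in $\cM_0$, and that the center-valued estimate for $\psi_k - \psi_{k+1}$ holds uniformly over the whole ball, not just at points with matrix approximations. Corollary \ref{cor: center-valued definable predicates} gives exactly this bound for the random matrix ultraproduct, since all its ultrafibers satisfy $\rT_{\stat}$ by Proposition \ref{prop: matrix quotient filtration}.
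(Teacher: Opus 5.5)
Your proof is correct, but it takes a different route from the paper's.

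\textbf{The paper's route.} The paper never compares $\Lambda_{\psi_k}^{(n)}$ and $\Lambda_{\psi_{k+1}}^{(n)}$ uniformly in $n$. Instead it builds the clipping into the formulas themselves. With $f_k(x)=\min(\max(x,-2^{-k}),2^{-k})$ it sets $\psi=\psi_1+\sum_k f_k(\psi_{k+1}-\psi_k)$ and $\Lambda_\varphi^{(n)}=\Lambda_{\psi_1}^{(n)}+\sum_k f_k(\Lambda_{\psi_{k+1}}^{(n)}-\Lambda_{\psi_k}^{(n)})$.
\begin{itemize}
\item Every summand is bounded by $2^{-k}$ for every $n$. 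So the series converges uniformly in $n$, and the sum can be interchanged with taking classes in the ultraproduct.
\item The clipping is invisible on the $\mathbf{r}$-ball. There the differences $\psi_{k+1}-\psi_k$, and their center-valued evaluations by Corollary \ref{cor: center-valued definable predicates}, already have norm at most $2^{-k}$, so $\psi=\varphi$ on the ball.
\end{itemize}
This argument uses only the fixed ultrafilter $\cU$.

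\textbf{Your route.} Your diagonal construction instead needs the contradiction argument over auxiliary ultrafilters. That argument relies on two facts: the $\Lambda_{\psi_k}^{(n)}$ are independent of the ultrafilter, and every matrix ultraproduct quotient, for any ultrafilter, satisfies $\rT_{\stat}$ (Proposition \ref{prop: matrix quotient filtration}). You invoke both correctly. You also correctly use that a class of constant sequences $[c_n 1]$ equals $(\lim_{n\to\cU'} c_n)1$.

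\textbf{What each buys.} The paper's version is shorter and purely ``soft.'' Yours gives a sharper, ultrafilter-free quantitative statement: $\sup_{\norm{X_j}\le r_j}|\Lambda_\varphi^{(n)}-\Lambda_{\psi_k}^{(n)}|\le 6\cdot 2^{-k}$ for all $n\ge N_k$. In addition, each $\Lambda_\varphi^{(n)}$ is Lipschitz on the ball, with a constant depending on $n$.

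\textbf{One small adjustment.} Do not set $\Lambda_\varphi^{(n)}=0$ outside the ball. The truncated $\Lambda_{\psi_{k(n)}}^{(n)}$ is already bounded by $M$ everywhere, so you can use it globally. Continuity of these approximations is used later in the measurable-selection step of Theorem \ref{thm: optimal coupling lifts}. There only continuity on the ball is needed, which your version has, but there is no reason to give it up globally.

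Also specify $k(n)$ when $N_1>n$, for example $k(n)=1$.
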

	
	\begin{proof}
		Since restricted chronological formulas are dense in the space of chronologically definable predicates, there exists a sequence $\psi_n$ of restricted chronological formulas such that
		\[
		\norm{\psi_k - \varphi}_{0,\mathbf{r},\rT} \leq 2^{-k-1},
		\]
		so in particular
		\[
		\norm{\psi_k - \psi_{k+1}}_{0,\mathbf{r},\rT} \leq 2^{-k}.
		\]
		Let $f_k: \R \to \R$ be given by $f_k(x) = \min(\max(x,-2^{-k}),2^k)$.  Thus, $|f_k| \leq 2^{-k}$ and the sum
		\[
		\psi = \psi_1 + \sum_{k=1}^\infty f_k(\psi_{k+1} - \psi_k)
		\]
		converges uniformly.  We also that $\psi^{\cM} = \varphi^{\cM}$ on $D_{0,\mathbf{r}}^{\cM}$ for $\cM$ satisfying $\rT_{\stat}$.  Hence, by Corollary \ref{cor: center-valued definable predicates}, for the random matrix ultraproduct $\cM$, we also have $\varphi_E^{\cM} = \psi_E^{\cM}$ on $D_{0,\mathbf{r}}^{\cM}$.
		
		Let $\Lambda_{\psi_k}^{(n)}$ be the sequence of approximations given by Proposition \ref{prop: approx by pointwise function}.  Let
		\[
		\Lambda_{\varphi}^{(n)} = \Lambda_{\psi}^{(n)} = \Lambda_{\psi_1}^{(n)} + \sum_{k=1}^\infty f_k(\Lambda_{\psi_{k+1}}^{(n)} - \Lambda_{\psi_k}^{(n)}).
		\]
		Let $\cM$ be the random matrix ultraproduct.  Using Proposition \ref{prop: approx by pointwise function} and the fact that each summand is bounded by $2^{-k}$, we have that for a sequence $\mathbf{X}^{(n)}$ with $\sup_n \norm{X_j^{(n)}} \leq r_j$,
		\[
		\varphi_E^{\cM}([\mathbf{X}^{(n)}]_{n \in \N}) = \psi_E^{\cM}([\mathbf{X}^{(n)}]_{n \in \N})  = [\Lambda_\psi^{(n)}(\mathbf{X}^{(n)})]_{n \in \N}.
		\]
		Note that $\Lambda_{\psi}^{(n)}$ is bounded by some constant, since $\Lambda_{\psi_1}^{(n)}$ is bounded by Proposition \ref{prop: approx by pointwise function} and $|f_k(\Lambda_{\psi_{k+1}}^{(n)} - \Lambda_{\psi_k}^{(n)})| \leq 2^{-k}$.
	\end{proof}
	
	The next lemma constructs finite-dimensional approximations that preserve semiconvexity and semiconcavity.
	
	\begin{lemma} \label{lem: FD approximation of SCSC}
		Let $m \in \N$.  Let $c > 0$.  Let $\varphi$ be a  chronologically definable predicate with respect to $\rT_{\stat}$ in $m$ variables which is $c$-semiconcave and $c$-semiconvex in the first $m$ variables, and assume that $\varphi - aq$ is $L$-Lipschitz with respect to $\norm{\cdot}_1$ in the first $m$ variables, where $|a| < c$.  Let $r > 0$.  Then there exist functions $\Lambda_\varphi^{(n)}: \mathbb{M}_n^{m+m'} \to \R$ such that $\Lambda_\varphi^{(n)}$ is $c$-semiconcave and $c$-semiconcave, and for all $\mathbf{X}^{(n)} \in L^\infty(\Omega,\mathcal{G}_0,\mathbb{P})$ with operator norm bounded by $r$ and $\mathbf{Y}^{(n)}$ with $\sup_n \norm{Y_j^{(n)}} < \infty$, we have
		\begin{equation} \label{eq: FD approximation of SCSC}
			\varphi_E^{\cM}([\mathbf{X}^{(n)}]_{n \in \N},[\mathbf{Y}^{(n)}]_{n \in \N}) = [\Lambda_{\varphi}^{(n)}(\mathbf{X}^{(n)},\mathbf{Y}^{(n)})]_{n \in \N},
		\end{equation}
		where $\cM$ is the random matrix ultraproduct \eqref{eq: random matrix ultraproduct}.  Moreover, $\Lambda_{\varphi}^{(n)}(\mathbf{X},\mathbf{Y}) - (c/2) \norm{\mathbf{X}}_2^2$ is $L'$-Lipschitz in $\mathbf{X}$ with respect to $\norm{\cdot}_1$, for some constant $L'$ independent of $N$, and hence also $\nabla \Lambda_{\varphi}^{(n)}(\mathbf{X},\mathbf{Y}) - c \mathbf{X}$ is bounded in $\norm{\cdot}_\infty$.
	\end{lemma}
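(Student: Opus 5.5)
The plan is to build the $\Lambda_\varphi^{(n)}$ by imitating, at the matrix level, the regularization that produces $\varphi$ in the first place. Start from Lemma \ref{lem: FD approximation 1}, which gives bounded functions $\Lambda_0^{(n)}$ with the correct center-valued limit on the $r$-ball, though with no convexity. Then apply a finite-dimensional version of the operations that recover $\varphi$ from a Lipschitz-plus-quadratic predicate. By Lemma \ref{lem: expression as inf convolution}, $\varphi$ can be written, on $\norm{x_j}\le r$, as an inf-convolution $cq \square \omega^\star$. The inner supremum (a Legendre transform) and the outer infimum run over bounded balls in $M_0$ of radii $r_1, r_2$. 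Note that $\omega$ and its approximations need not be convex for the formula to make sense.

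First, I would take $\Lambda_\omega^{(n)}$ from Lemma \ref{lem: FD approximation 1}, applied to $\omega$ on the ball of radius $r_2$. Next, I would define
\[
\Lambda_\varphi^{(n)}(\mathbf{X},\mathbf{Y}) = \inf_{\mathbf{V} \in (D_{r_1}^{\mathbb{M}_n})^m}\Bigl[\tfrac{c}{2}\norm{\mathbf{V}-\mathbf{X}}_2^2 + \sup_{\mathbf{W} \in (D_{r_2}^{\mathbb{M}_n})^m}\bigl(\re\ip{\mathbf{V},\mathbf{W}} - \Lambda_\omega^{(n)}(\mathbf{W},\mathbf{Y})\bigr)\Bigr].
\]
Semiconcavity is then automatic: $\Lambda_\varphi^{(n)} - \frac{c}{2}\norm{\mathbf{X}}_2^2$ is an infimum of affine functions of $\mathbf{X}$. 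The inner supremum $g$ is convex in $\mathbf{V}$, as a supremum of affine functions. An inf-convolution $\inf_{\mathbf{V}}[\frac c2\norm{\mathbf{V}-\mathbf{X}}^2 + g(\mathbf{V})]$ of a convex $g$ restricted to the convex set $D_{r_1}^m$ is convex in $\mathbf{X}$, since it is a partial minimization of a jointly convex function. Hence $\Lambda_\varphi^{(n)}$ is convex, and a fortiori $c$-semiconvex.

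For the Lipschitz claim, $\frac c2\norm{\mathbf{V}-\mathbf{X}}^2 - \frac c2\norm{\mathbf{X}}^2 = \frac c2 \norm{\mathbf{V}}^2 - c\re\ip{\mathbf{V},\mathbf{X}}$ is $cr_1$-Lipschitz in $\norm{\cdot}_1$ because $\norm{V_j}\le r_1$. So $\Lambda_\varphi^{(n)} - \frac c2\norm{\mathbf{X}}_2^2$ is $cr_1$-Lipschitz in $\norm{\cdot}_1$, uniformly in $n$. The gradient bound then follows by $L^1$–$L^\infty$ duality as in Lemma \ref{lem: bounded gradient}.

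For \eqref{eq: FD approximation of SCSC}, I would repeat the argument of Inductive case 2 in Proposition \ref{prop: approx by pointwise function}. Measurable selection of near-optimizers gives the sup and inf in $\mathbb{M}_n$ of the pointwise functions matching the center-valued sup and inf over $D_{0,r_i}^{\cM}$ in the ultraproduct. The quadratic and inner-product terms are basic formulas, handled as in the base case. This yields $[\Lambda_\varphi^{(n)}(\mathbf{X}^{(n)},\mathbf{Y}^{(n)})] = \varphi_E^{\cM}$ by Lemma \ref{lem: expression as inf convolution}, interpreted center-valuedly via Corollary \ref{cor: center-valued definable predicates}.

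I expect the main obstacle to be justifying that the center-valued evaluation commutes with these sup/inf operations when the integrand $\omega$ is only a definable predicate and not a formula. The approximations from Lemma \ref{lem: FD approximation 1} are not Lipschitz, so one cannot take a $1/n$-optimizer uniformly in the way the earlier proof does. I would handle this by approximating $\omega$ uniformly by restricted formulas $\psi_k$, for which Proposition \ref{prop: approx by pointwise function} applies exactly. Then I would pass to the limit using the uniform telescoping construction, noting that sup/inf are $1$-Lipschitz in the uniform norm on both sides. A further worry is that the matrix version also needs $\mathbf{Y}$ bounded, which the hypothesis $\sup_n\norm{Y_j^{(n)}}<\infty$ supplies, together with the need to choose the $r_2$-ball large enough for that bound.
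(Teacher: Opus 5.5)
You invoke Lemma \ref{lem: expression as inf convolution} for $\varphi$ itself, but that lemma requires $\varphi$ to be \emph{convex} in the first $m$ variables, with $\varphi - aq$ Lipschitz for some $0 < a < c$. Here $\varphi$ is only $c$-semiconvex, and $a$ may be zero or negative.

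The representation you write down cannot hold in general. For any $\omega$, its right-hand side is convex in $\mathbf{x}$: the inner supremum is convex in $\mathbf{v}$, and minimizing a jointly convex function of $(\mathbf{v},\mathbf{x})$ over a convex set preserves convexity. So it would force $\varphi$ to be convex on the $r$-ball. Your conclusion that $\Lambda_\varphi^{(n)}$ is convex is the same problem at the matrix level: via \eqref{eq: FD approximation of SCSC} and Proposition \ref{prop: Los theorem}, convex $\Lambda_\varphi^{(n)}$ would make $\varphi^{\cQ}$ convex on the $r$-ball of $Q_0$. Yet $\varphi = -\frac{c}{2}q$ satisfies every hypothesis of the statement (take $a = -c/2$, so $\varphi - aq = 0$) and is strictly concave, so your construction cannot approximate it.

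The missing idea is a quadratic shift before applying the lemma. Since $\varphi$ is $c$-semiconvex and $c$-semiconcave, $\varphi + cq$ is convex and $2c$-semiconcave. Also $(\varphi + cq) - (a+c)q = \varphi - aq$ is $L$-Lipschitz with $0 < a+c < 2c$. So Lemma \ref{lem: expression as inf convolution} applies to $\varphi + cq$ with $(a+c,2c)$ in place of $(a,c)$, giving on the $r$-ball
\[
(\varphi + cq)^{\cM}(\mathbf{x},\mathbf{y}) = \inf_{\mathbf{v} \in (D_{0,r_1}^{\cM})^m} \Bigl[ c\norm{\mathbf{v} - \mathbf{x}}_2^2 + \sup_{\mathbf{w} \in (D_{0,r_2}^{\cM})^m} \bigl( \re\ip{\mathbf{v},\mathbf{w}} - \omega^{\cM}(\mathbf{w},\mathbf{y}) \bigr) \Bigr].
\]
Define $\Lambda_\psi^{(n)}$ as the matrix analogue of the right-hand side (your formula with $\frac{c}{2}$ replaced by $c$), and set $\Lambda_\varphi^{(n)} = \Lambda_\psi^{(n)} - \frac{c}{2}\norm{\mathbf{X}}_2^2$.
\begin{itemize}
\item Convexity of $\Lambda_\psi^{(n)}$, by your partial-minimization argument, gives $c$-semiconvexity of $\Lambda_\varphi^{(n)}$.
\item $\Lambda_\varphi^{(n)} - \frac{c}{2}\norm{\mathbf{X}}_2^2 = \Lambda_\psi^{(n)} - c\norm{\mathbf{X}}_2^2$ is an infimum over $\norm{V_j} \leq r_1$ of functions affine in $\mathbf{X}$ with $\norm{\cdot}_1$-Lipschitz constant $2cr_1$. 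This yields both $c$-semiconcavity and the uniform Lipschitz bound.
\end{itemize}

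With this change, the rest of your plan goes through: the matching via inductive case 2 of Proposition \ref{prop: approx by pointwise function} and Corollary \ref{cor: center-valued definable predicates}, plus the exact center-valued evaluation of $q$. Your worry that $\Lambda_\omega^{(n)}$ is not Lipschitz is harmless. That argument uses only the pointwise identity of Lemma \ref{lem: FD approximation 1} on the relevant balls, and continuity of $\Lambda_\omega^{(n)}$ for the near-optimal selection.
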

	
	\begin{proof}
		Note that $\varphi + cq$ is convex and $2c$-semiconcave in the first $m$ variables, and $\varphi + cq - (a+c)q$ is $L$-Lipschitz in the first $m$ variables.  Moreover, $0 < a + c < 2c$. By Lemma \ref{lem: expression as inf convolution}, there exists $r_1, r_2 > 0$ and a $\rT_{\stat}$ definable predicate $\omega$ such that for $\mathbf{x} \in (D_{0,r}^{\cM})^m$ and $\mathbf{y} \in \cM^{m'}$, we have
		\[
		(\varphi + cq)^{\cM}(\mathbf{x},\mathbf{y}) = \inf_{\mathbf{v} \in (D_{0,r_1}^{\cM})^m} \left[ c \norm{\mathbf{v} - \mathbf{x}}_2^2 + \sup_{\mathbf{w} \in (D_{0,r_2}^{\cM})} \left[ \re \ip{\mathbf{v}, \mathbf{w}} - \omega^{\cM}(\mathbf{w},\mathbf{y}) \right] \right].
		\]
		Let $\psi^{\cM}(\mathbf{x},\mathbf{y})$ be the right-hand side of this formula for general $\mathbf{x}$ not necessarily in the ball of radius $r$.  Let $\Lambda_\omega^{(n)}$ be a sequence of finite-dimensional approximations for $\omega$ given by Lemma \ref{lem: FD approximation 1}.  Define
		\[
		\Lambda_{\psi}^{(n)}(\mathbf{X},\mathbf{Y}) = \inf_{\mathbf{V} \in (D_{r_1}^{\mathbb{M}_n})^m} \left[ c \norm{\mathbf{V} - \mathbf{X}}_2^2 + \sup_{\mathbf{W} \in (D_{r_2}^{\mathbb{M}_n})^m} \left[ \re \ip{\mathbf{V}, \mathbf{W}} - \omega^{\cM}(\mathbf{W},\mathbf{Y}) \right] \right].
		\]
		By the proof of Proposition \ref{prop: approx by pointwise function} (inductive case 2), we have
		\[
		[\Lambda_\psi^{(n)}(\mathbf{X}^{(n)},\mathbf{Y}^{(n)})]_{n \in \N} = \psi_E^{\cM}([\mathbf{X}^{(n)}]_{n \in \N},[\mathbf{Y}^{(n)}]_{n \in \N}).
		\]
		Hence, letting $\Lambda_\varphi^{(n)}(\mathbf{X},\mathbf{Y}) = \Lambda_{\psi}^{(n)}(\mathbf{X},\mathbf{Y}) - (c/2) \norm{\mathbf{X}}_2^2$, we have \eqref{eq: FD approximation of SCSC} whenever $\norm{\mathbf{X}_j^{(n)}} \leq r$.
		
		To prove the claim about Lipschitzness, we observe that
		\[
		\Lambda_{\psi}^{(n)}(\mathbf{X},\mathbf{Y}) - c \mathbf{X}_2^2 = \inf_{\mathbf{V} \in (D_{r_1}^{\mathbb{M}_n})^m} \left[ 2c \re \ip{\mathbf{V},\mathbf{X}} + c \norm{\mathbf{V}}_2^2 + \sup_{\mathbf{W} \in (D_{r_2}^{\mathbb{M}_n})^m} \left[ \re \ip{\mathbf{V}, \mathbf{W}} - \omega^{\cM}(\mathbf{W},\mathbf{Y}) \right] \right],
		\]
		which is the infimum of a family of $r_1$-Lipschitz functions of $\mathbf{X}$.
	\end{proof}
	
	The next lemma shows that the gradient of the finite-dimensional approximation is a finite-dimensional approximation for the gradient.
	
	\begin{lemma} \label{lem: FD approximation of gradient}
		Let $m \in \N$ and $m' \in \N \cup \{0,\infty\}$.  Let $\varphi(\mathbf{x},\mathbf{y})$ be a chronologically definable predicate in $m+m'$ variables with respect to $\rT_{\proc}$ which is $c$-semiconvex and $c$-semiconcave in the first $m$ variables.  Let $\Lambda_{\varphi}^{(n)}: (\mathbb{M}_n)^{m+m'} \to \R$ be a $c$-semiconvex and $c$-semiconcave function satisfying \eqref{eq: FD approximation of SCSC}.  Assume that for each $\mathbf{r} \in (0,\infty)^{m+m'}$ there exists $\mathbf{R} \in (0,\infty)^m$ such that
		\begin{itemize}
			\item For $\cM$ satisfying $\rT_{\proc}$, if $(\mathbf{x},\mathbf{y}) \in D_{0,\mathbf{r}}^{\cM}$, then $\nabla_{m,m'} \varphi^{\cM}(\mathbf{x},\mathbf{y}) \in D_{0,\mathbf{R}}^{\cM}$.
			\item If $(\mathbf{X}, \mathbf{Y}) \in D_{\mathbf{r}}^{\mathbb{M}_n}$, then $\nabla_{m,m'} \Lambda_{\varphi}^{(n)}(\mathbf{X},\mathbf{Y}) \in D_{0,\mathbf{R}}^{\cM}$.
		\end{itemize}
		Let $\cM$ be the filtration on the random matrix ultraproduct \eqref{eq: random matrix ultraproduct}, let $\cV$ be a character on $Z(M_\infty)$, and let $\pi: M_\infty \to Q_\infty$ be the quotient map.  Then for any random matrix tuples $(\mathbf{X}^{(n)}, \mathbf{Y}^{(n)})$ taking values in $D_{\mathbf{r}}^{\mathbb{M}_n}$, we have
		\begin{equation} \label{eq: FD approximation of gradient}
			\nabla_{m,m'} \varphi^{\cQ}(\pi([\mathbf{X}^{(n)}]_{n \in \N}),\pi([\mathbf{Y}^{(n)}]_{n \in \N})) = \pi([\nabla_{m,m'} \Lambda_{\varphi}^{(n)}(\mathbf{X}^{(n)},\mathbf{Y}^{(n)})]_{n \in \N}).
		\end{equation}
	\end{lemma}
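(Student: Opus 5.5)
We will identify the gradient through its defining inequality from Proposition \ref{prop: semiconvex semiconcave gradient} (1), and transfer that inequality from the matrix level to $\cQ$. Write $\mathbf{x} = \pi([\mathbf{X}^{(n)}]_{n\in\N})$, $\mathbf{y} = \pi([\mathbf{Y}^{(n)}]_{n\in\N})$ and $\mathbf{g} = \pi([\nabla_{m,m'}\Lambda_\varphi^{(n)}(\mathbf{X}^{(n)},\mathbf{Y}^{(n)})]_{n\in\N})$. The second bullet hypothesis makes $\mathbf{g}$ a well-defined element of $Q_\infty^m$, since the sequence is bounded in operator norm by $\mathbf{R}$. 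By the uniqueness in Proposition \ref{prop: semiconvex semiconcave gradient} (1), it suffices to show
\[
\left|\varphi^{\cQ}(\mathbf{x}',\mathbf{y}) - \varphi^{\cQ}(\mathbf{x},\mathbf{y}) - \re\ip{\mathbf{x}'-\mathbf{x},\mathbf{g}}\right| \leq \frac{c}{2}\norm{\mathbf{x}'-\mathbf{x}}_2^2
\]
for all $\mathbf{x}'$ in a $\norm{\cdot}_2$-dense subset of a ball around $\mathbf{x}$. The gradient is determined by its pairing with small perturbations, and both sides are continuous in $\mathbf{x}'$ on operator-norm balls.

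Step 1 handles the matrix level. Take $\mathbf{x}' = \pi([\mathbf{X}'^{(n)}]_{n\in\N})$ with $\mathbf{X}'^{(n)}$ uniformly bounded in operator norm. Every element of $Q$ lifts this way, because $\pi$ is surjective and elements of the ultraproduct have bounded representatives. Since $\Lambda_\varphi^{(n)}$ is $c$-semiconvex and $c$-semiconcave in $\mathbf{X}$, the finite-dimensional version of the inequality above holds pointwise on $\Omega$:
\[
\left|\Lambda_\varphi^{(n)}(\mathbf{X}'^{(n)},\mathbf{Y}^{(n)}) - \Lambda_\varphi^{(n)}(\mathbf{X}^{(n)},\mathbf{Y}^{(n)}) - \re\ip{\mathbf{X}'^{(n)}-\mathbf{X}^{(n)},\nabla\Lambda_\varphi^{(n)}}_{\tr_n}\right| \leq \frac{c}{2}\norm{\mathbf{X}'^{(n)}-\mathbf{X}^{(n)}}_{2,\tr_n}^2 .
\]

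Step 2 passes this inequality through the ultraproduct and then applies $\cV$. On each term:
\begin{itemize}
\item The first two terms become $\varphi^{\cQ}$ at $(\mathbf{x}',\mathbf{y})$ and at $(\mathbf{x},\mathbf{y})$. This uses \eqref{eq: FD approximation of SCSC} together with Proposition \ref{prop: Los theorem} (the center-valued evaluation extends to definable predicates by Corollary \ref{cor: center-valued definable predicates}).
\item The inner product $\re\tr_n((\mathbf{X}'-\mathbf{X})^*\nabla\Lambda)$ equals $\re E$ of the corresponding product in the ultraproduct $M$. This is the base case of Proposition \ref{prop: approx by pointwise function}: traces of polynomials give $E$. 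Applying $\cV$ gives $\re\ip{\mathbf{x}'-\mathbf{x},\mathbf{g}}_{\tau_{E,\cV}}$.
\item Similarly, $\cV\circ E$ of $|\mathbf{X}'-\mathbf{X}|^2$ gives $\norm{\mathbf{x}'-\mathbf{x}}_2^2$ in $Q$.
\item Taking absolute values and inequalities commutes with these steps: the inequality holds in the commutative algebra $Z(M)$, and $\cV$ is a positive character.
\end{itemize}

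The main obstacle is that \eqref{eq: FD approximation of SCSC} is only asserted for $\mathbf{G}_0$-measurable tuples, meaning entries of $L^\infty(\Omega,\mathcal{G}_0,\mathbb{P})\otimes\mathbb{M}_n$, and only with operator norm below a fixed radius. The statement, however, allows arbitrary random tuples. First, we choose the radius in Lemma \ref{lem: FD approximation of SCSC} large enough to cover both $\mathbf{X}$ and $\mathbf{X}'$; this is harmless since we only test nearby $\mathbf{x}'$. Second, the gradient statement concerns $\varphi^{\cQ}$ evaluated at elements of $Q_0$, so we restrict attention to tuples in $M_0$, and hence to $\mathbf{x}'\in Q_0$. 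This suffices because the gradient of a predicate on $M_0$ lives in $L^2(Q_0)^m$. If $\mathbf{X}^{(n)}$ is not $\mathcal{G}_0$-measurable, we instead apply the identity with $\mathcal{G}_0$ replaced by the full $\sigma$-algebra. This is allowed by the remark in Proposition \ref{prop: approx by pointwise function} that everything is independent of the choice of classical filtration.
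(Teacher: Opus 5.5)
Your proposal is correct and follows the paper's proof. The paper also starts from the pointwise $c$-semiconvex/semiconcave inequality for $\Lambda_\varphi^{(n)}$, reads it as an inequality in $Z(M)$, transfers it to $\cQ$ with $\cV$ and Proposition \ref{prop: Los theorem}, and then uses surjectivity of $\pi$ and uniqueness of the gradient to get $\nabla_{m,m'}\varphi^{\cQ} = \pi(\mathbf{p})$.

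Two of your side remedies should be dropped or rephrased:
\begin{itemize}
\item Replacing $\mathcal{G}_0$ by the full $\sigma$-algebra is not legitimate. The Brownian increments cannot be independent of that $\sigma$-algebra, so this changes the structure $\cQ$. It is also unnecessary: the gradient only makes sense at points of $Q_0$, so restricting to $\mathcal{G}_0$-measurable tuples, as you and the paper both implicitly do, is all that is needed.
\item Since $\Lambda_\varphi^{(n)}$ is fixed by hypothesis, you cannot enlarge the radius. Instead, test only $\mathbf{x}'$ in the operator-norm ball where \eqref{eq: FD approximation of SCSC} holds. This suffices because the differences $\mathbf{x}'-\mathbf{x}$ over that ball span $Q_0^m$.
\end{itemize}
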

	
	\begin{proof}
		Consider another matrix $m$-tuple $(\mathbf{H}^{(n)})_{n \in \N}$ which is bounded in operator norm.  Since $\Lambda_{\varphi}^{(n)}$ is $c$-semiconvex and $c$-semiconcave in the first $m$ arguments,
		\begin{multline*}
			\biggl| \Lambda_{\varphi}^{(n)}([\mathbf{X}^{(n)} + \mathbf{H}^{(n)}]_{n \in \N}, [\mathbf{Y}^{(n)}]_{n \in \N}) - \Lambda_{\varphi}^{(n)}([\mathbf{X}^{(n)}]_{n \in \N}, [\mathbf{Y}^{(n)}]_{n \in \N}) \\
			- \re \ip*{\nabla_{m,m'} \Lambda_{\varphi}^{(n)}(\mathbf{X}^{(n)},\mathbf{Y}^{(n)}), \mathbf{H}^{(n)}}_{\tr_n} \biggr| \leq \frac{c}{2} \norm{\mathbf{H}^{(n)}}_2^2.
		\end{multline*}
		Let $\mathbf{x} = [\mathbf{X}^{(n)}]_{n \in \N}$ and $\mathbf{y} = [\mathbf{Y}^{(n)}]_{n \in \N}$ and $\mathbf{h} = [\mathbf{H}^{(n)}]_{n \in \N}$ and $\mathbf{p} = [\nabla_{m,m'} \Lambda_{\varphi}^{(n)}(\mathbf{X}^{(n)},\mathbf{Y}^{(n)})]_{n \in \N}$ as tuples in $\cM$.  Then the following inequality holds in $Z(\cM)$:
		\[
		\biggl| \varphi_E^{\cM}(\mathbf{x} + \mathbf{h},\mathbf{y}) - \varphi_E^{\cM}(\mathbf{x},\mathbf{y}) \\
		- \sum_{j=1}^m \re E[p_j^* h_j] \biggr| \leq \frac{c}{2} \sum_{j=1}^m E [|h_j|^2].
		\]
		Then using Proposition \ref{prop: Los theorem},
		\[
		\Bigl|\varphi^{\cQ}(\pi(\mathbf{x}) + \pi(\mathbf{h}), \pi(\mathbf{y})) - \varphi^{\cQ}(\mathbf{x}) - \re \ip{\pi(\mathbf{p}),\pi(\mathbf{h})}_{\tau_{\cQ}} \Bigr| \leq \frac{c}{2} \norm{\pi(\mathbf{h})}_2^2.
		\]
		Since $\pi$ is surjective, this implies that $\nabla \varphi^{\cQ}(\mathbf{x},\mathbf{y}) = \pi(\mathbf{p})$, which is exactly \eqref{eq: FD approximation of gradient}.
	\end{proof}
	
	\begin{lemma} \label{lem: FD approximation of gradient 2}
		Let $m \in \N$ and $m' \in \N \cup \{0,\infty\}$.  Let $\varphi(\mathbf{x},\mathbf{y})$ be a chronologically definable predicate in $m+m'$ variables with respect to $\rT_{\stat}$ satisfying the hypotheses of the previous lemma.  Let $\psi$ be another chronologically definable predicate in $m+m'$ variables.  Let $\Lambda_{\varphi}^{(n)}: (\mathbb{M}_n)^{m+m'} \to \R$ be a $c$-semiconvex and $c$-semiconcave function satisfying \eqref{eq: FD approximation of SCSC}, and let $\Lambda_\psi^{(n)}$ satisfy \eqref{eq: FD approximation of SCSC} for $\psi$.  Define the chronologically definable predicate
		\[
		\omega(\mathbf{x},\mathbf{y}) = \psi(\nabla_{m,m'} \varphi(\mathbf{x},\mathbf{y}),\mathbf{y}).
		\]
		Then for any $\mathbf{x} = [\mathbf{X}^{(n)}]_{n \in \N}$ and $\mathbf{y} = [\mathbf{Y}^{(n)}]_{n \in \N}$ in the matrix ultraproduct $M_0$ given by \eqref{eq: random matrix ultraproduct 2},
		\begin{equation} \label{eq: FD approximation composition}
			\omega_E^{\cM}([\mathbf{X}^{(n)}]_{n \in \N},[\mathbf{Y}^{(n)}]_{n \in \N}) = [\Lambda_\psi^{(n)}(\nabla \Lambda_\varphi^{(n)}(\mathbf{X}^{(n)},\mathbf{Y}^{(n)}),\mathbf{Y}^{(n)})]_{n \in \N}.
		\end{equation}
	\end{lemma}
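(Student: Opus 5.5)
The plan is to reduce \eqref{eq: FD approximation composition} to two ingredients: the characterization of elements of the center $Z(M_\infty)$ by their values under all characters $\cV$, and the \L{}o\'s-type identity of Proposition \ref{prop: Los theorem}. Both sides of \eqref{eq: FD approximation composition} are elements of $Z(M_\infty)$. Two elements of this commutative $\mathrm{C}^*$-algebra are equal iff every character takes the same value on them, so it suffices to fix a character $\cV$ with quotient $\pi: M_\infty \to Q_\infty$ and compare the $\cV$-values.

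For the left-hand side I will argue as follows. By Corollary \ref{cor: center-valued definable predicates}, the center-valued evaluation extends to definable predicates, and Proposition \ref{prop: Los theorem} extends to them by density. Hence
\[
\cV(\omega_E^{\cM}(\mathbf{x},\mathbf{y})) = \omega^{\cQ}(\pi(\mathbf{x}),\pi(\mathbf{y})) = \psi^{\cQ}(\nabla_{m,m'}\varphi^{\cQ}(\pi(\mathbf{x}),\pi(\mathbf{y})),\pi(\mathbf{y})).
\]
Lemma \ref{lem: FD approximation of gradient} now applies, since by hypothesis the matrices stay in a fixed operator-norm ball. It rewrites the inner gradient as $\pi(\mathbf{p})$, where $\mathbf{p} = [\nabla_{m,m'}\Lambda_\varphi^{(n)}(\mathbf{X}^{(n)},\mathbf{Y}^{(n)})]_{n\in\N}$.

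For the right-hand side, I will first check that $\mathbf{p}$ lies in $M_0$. The matrices $\mathbf{X}^{(n)},\mathbf{Y}^{(n)}$ are $\mathcal{G}_0$-measurable and $\nabla\Lambda_\varphi^{(n)}$ is a Borel (indeed continuous) function, so each entry of $\mathbf{p}$ is $\mathcal{G}_0$-measurable. The second bullet hypothesis of Lemma \ref{lem: FD approximation of gradient} bounds $\mathbf{p}$ uniformly in operator norm by $\mathbf{R}$. Then \eqref{eq: FD approximation of SCSC}, applied to $\psi$ with the tuple $(\mathbf{p},\mathbf{y})$, gives
\[
[\Lambda_\psi^{(n)}(\nabla\Lambda_\varphi^{(n)}(\mathbf{X}^{(n)},\mathbf{Y}^{(n)}),\mathbf{Y}^{(n)})]_{n \in \N} = \psi_E^{\cM}(\mathbf{p},\mathbf{y}),
\]
and applying Proposition \ref{prop: Los theorem} once more shows $\cV$ of this equals $\psi^{\cQ}(\pi(\mathbf{p}),\pi(\mathbf{y}))$. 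The two sides therefore agree for every $\cV$, which proves the claim.

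The main obstacle is a bookkeeping issue about radii. The approximations $\Lambda_\psi^{(n)}$ satisfying \eqref{eq: FD approximation of SCSC} are only guaranteed on a prescribed operator-norm ball. So the radius used when constructing $\Lambda_\psi^{(n)}$ must be at least the bound $\mathbf{R}$ on the gradients of both $\varphi^{\cM}$ and $\Lambda_\varphi^{(n)}$ over $D_{\mathbf{r}}$. I would state this choice explicitly: take $\Lambda_\psi^{(n)}$ from Lemma \ref{lem: FD approximation 1} or Lemma \ref{lem: FD approximation of SCSC} with radius $\mathbf{R}$.

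A secondary subtlety is that the random matrices must have $\mathcal{G}_0$-measurable entries and deterministic bounds (not just bounds holding for $\cU$-many $n$). Otherwise the ultraproduct classes are not in $M_0$ and Lemma \ref{lem: FD approximation of gradient} does not apply. Both of these are arranged by the standing hypotheses.
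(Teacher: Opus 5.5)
Your proposal is correct and follows essentially the same route as the paper. Both fix an arbitrary character $\cV$, rewrite $\cV(\omega_E^{\cM}(\mathbf{x},\mathbf{y}))$ via Proposition~\ref{prop: Los theorem} and Lemma~\ref{lem: FD approximation of gradient} as $\psi^{\cQ}(\pi(\mathbf{p}),\pi(\mathbf{y}))$, pull this back through Proposition~\ref{prop: Los theorem} and the defining property of $\Lambda_\psi^{(n)}$, and conclude because $\cV$ was arbitrary. Your checks that $\mathbf{p}$ is $\mathcal{G}_0$-measurable and operator-norm bounded, and that $\Lambda_\psi^{(n)}$ is built on a ball of radius at least $\mathbf{R}$, make explicit what the paper's proof uses silently.
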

	
	\begin{proof}
		Let $\cV$ be a character on $Z(\cM)$ and let $\pi: \cM \to \cQ$ be the corresponding quotient.  Using Theorem \ref{prop: Los theorem} and Lemma \ref{lem: FD approximation of gradient},
		\begin{align*}
			\cV \circ \omega_E^{\cM}(\mathbf{x},\mathbf{y}) &= \omega^{\cQ}(\pi(\mathbf{x}),\pi(\mathbf{y})) \\
			&= \psi^{\cQ}(\nabla_{m,m'} \varphi^{\cQ}(\mathbf{x},\mathbf{y}),\mathbf{y}) \\
			&= \psi^{\cQ}(\pi([\nabla_{m,m'} \Lambda_{\varphi}^{(n)}(\mathbf{X}^{(n)},\mathbf{Y}^{(n)})]_{n \in \N}),\pi([\mathbf{Y}^{(n)}]_{n \in \N})) \\
			&= \cV \circ \psi_E^{\cM}([\nabla_{m,m'} \Lambda_{\varphi}^{(n)}(\mathbf{X}^{(n)},\mathbf{Y}^{(n)})]_{n \in \N},[\mathbf{Y}^{(n)}]_{n \in \N}) \\
			&= \cV( [\Lambda_\psi^{(n)}(\nabla_{m,m'} \Lambda_{\varphi}^{(n)}(\mathbf{X}^{(n)},\mathbf{Y}^{(n)}), \mathbf{Y}^{(n)})]_{n \in \N} ).
		\end{align*}
		Since the character $\cV$ was arbitrary, we have \eqref{eq: FD approximation composition}.
	\end{proof}

	\subsection{The Laplacian} \label{subsec: Laplacian}
	
	\begin{definition} \label{def: heat semigroup}
		Let $m \in \N$ and $m' \in \N \cup \{0,\infty\}$.  We define an operation $P_{m,m',t_0}$ on formulas in free variables $\mathbf{x} = (x_j)_{j=1}^m$ and $\mathbf{y} = (y_j)_{j=1}^{m'}$ by
		\[
		(P_{t_0} \varphi)(\mathbf{x},\mathbf{y}) = (S_{t_0} \varphi)(\mathbf{x} + (z_{0,t_0,j})_{j=1}^m, \mathbf{y}).
		\]
		In other words, we perform the same transformations as $S_{t_0}$ and also replace $x_j$ with $x_j + z_{t_0,j}$.  Note that we have assumed that the variables bound to some quantifier have been disjointified from the free variables, so in particular, the bound variables are left unchanged by our substitutions.  When $m$ and $m'$ are clear from context, we will often abbreviate $P_{m,m',t}$ to simply $P_t$.
	\end{definition}
	
	\begin{remark}
		The operation $P_t$ leaves the space of chronological formulas invariant.  Moreover, for a theory $\rT$ containing $\rT_{\operatorname{stat}}$, we have that $\norm{P_t \varphi}_{0,r,\rT} \leq \norm{\varphi}_{t,r+6\sqrt{t},\rT} = \norm{\varphi}_{0,r+6\sqrt{t},\rT}$.  Hence, by passing to the completion, $T_t$ is well-defined on the space of chronologically definable predicates with respect to $\rT$.
	\end{remark}

	\begin{lemma}
		$\rT_{\proc}$ implies that $P_{t_0} P_{t_1} \varphi = P_{t_0+t_1} \varphi$ for all formulas.  Similarly, if $\rT$ contains $\rT_{\operatorname{stat}}$, then this holds when $\varphi$ is a chronologically definable predicate.
	\end{lemma}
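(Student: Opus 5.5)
The plan is to prove the identity first for actual formulas by a syntactic computation, and then pass to definable predicates by continuity. Unwinding Definition \ref{def: heat semigroup}, $P_{t_1}\varphi(\mathbf{x},\mathbf{y}) = (S_{t_1}\varphi)(\mathbf{x}+\mathbf{z}_{0,t_1},\mathbf{y})$. Applying $P_{t_0}$ to this formula shifts every constant and domain by $t_0$ and substitutes $\mathbf{x}+\mathbf{z}_{0,t_0}$ for $\mathbf{x}$. Since the bound variables were disjointified from the free ones, substitution for free variables commutes with the shift $S_{t_0}$. A routine induction on the complexity of formulas, parallel to the proof that $S_{t_0}S_{t_1}=S_{t_0+t_1}$, then gives
\[
P_{t_0}P_{t_1}\varphi(\mathbf{x},\mathbf{y}) = (S_{t_0}S_{t_1}\varphi)\bigl(\mathbf{x}+\mathbf{z}_{0,t_0}+\mathbf{z}_{t_0,t_0+t_1},\mathbf{y}\bigr).
\]
Here the shift $S_{t_0}$ has turned the constant $z_{0,t_1,j}$ into $z_{t_0,t_0+t_1,j}$.

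Next we use $S_{t_0}S_{t_1}=S_{t_0+t_1}$ together with the additivity axiom (3) of $\rT_{\proc}$, which says $z_{0,t_0,j}+z_{t_0,t_0+t_1,j}=z_{0,t_0+t_1,j}$ as elements in every model. Because formulas are evaluated by plugging in elements, replacing one term by another term with the same interpretation does not change the value. Hence, in every $\cM\models\rT_{\proc}$,
\[
(P_{t_0}P_{t_1}\varphi)^{\cM} = (S_{t_0+t_1}\varphi)^{\cM}(\mathbf{x}+\mathbf{z}_{0,t_0+t_1},\mathbf{y}) = (P_{t_0+t_1}\varphi)^{\cM}.
\]
This is exactly equivalence modulo $\rT_{\proc}$. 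Note that no stationarity is used here: the shift is applied literally and is not compared with the unshifted formula.

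For chronologically definable predicates with $\rT\supseteq\rT_{\stat}$, we extend by density. By the preceding remark, each $P_t$ is continuous for the seminorms, with $\norm{P_t\varphi}_{0,r,\rT}\le\norm{\varphi}_{0,r+6\sqrt t,\rT}$; this bound is where stationarity enters. So both $P_{t_0}P_{t_1}$ and $P_{t_0+t_1}$ are continuous maps on the completion. Take chronological formulas $\varphi_k\to\varphi$ in every seminorm. Then
\[
\norm{P_{t_0}P_{t_1}\varphi-P_{t_0+t_1}\varphi}_{0,r,\rT} \le \norm{P_{t_0}P_{t_1}(\varphi-\varphi_k)}_{0,r,\rT}+\norm{P_{t_0+t_1}(\varphi_k-\varphi)}_{0,r,\rT},
\]
and both terms tend to zero as $k\to\infty$.

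The only real obstacle is bookkeeping in the syntactic step. One has to check that $S_{t_0}$ commutes with substituting terms containing constants for free variables. One also has to check that the quantifier domains are shifted consistently, so the formula stays chronological. Both follow by the same induction on Definition \ref{def: chronological formulas} that was already used for the shift operators.
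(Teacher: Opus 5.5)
Your proposal is correct and follows the paper's proof. The shift turns the substituted $z_{0,t_1,j}$ into $z_{t_0,t_0+t_1,j}$, and the additivity axiom of $\rT_{\proc}$ collapses $z_{0,t_0,j}+z_{t_0,t_0+t_1,j}$ to $z_{0,t_0+t_1,j}$. The definable-predicate case then follows by passing to the completion; your use of the bound $\norm{P_t\varphi}_{0,r,\rT}\le\norm{\varphi}_{0,r+6\sqrt t,\rT}$, which is where stationarity enters, just makes explicit the continuity the paper leaves implicit.
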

	
	\begin{proof}
		Each occurrence of $x_j$ in $\varphi$ is replaced by $x_j + z_{0,t_1,j}$ in $P_{t_1} \varphi$.  Then $x_j + z_{0,t_1,j}$ is replaced by $x_j + z_{0,t_0,j} + z_{t_0,t_0+t_1,j}$ in $P_{t_0} P_{t_1} \varphi$.  Since $\mathrm{T}_{\operatorname{proc}}^0$ requires that $z_{0,t_0,j} + z_{t_0,t_0+t_1,j} = z_{0,t_0+t_1,j}$, the overall effect is the same as if we replaced $x_j$ by $x_j + z_{0,t_0+t_1,j}$ in $P_{t_0} P_{t_1} \varphi$ which is the same as what happens in $P_{t_0+t_1} \varphi$.  The transformation of the constant symbols $z_{t,j}$ and the quantifiers proceeds in the same way as for $P_{t_0} P_{t_1}$.
		
		The second claim follows by applying the first claim to chronological formulas and then passing to the completion.
	\end{proof}

	\begin{lemma} \label{lem: gradient of heat semigroup}
		Let $\varphi$ be a $\rT_{\stat}$-chronologically definable predicate in $m+m'$ variables which is $c$-semiconvex and $c$-semiconcave in the first $m$ variables.  Then $P_{m,m',t} \varphi$ is also a chronologically definable predicate that is $c$-semiconvex and $c$-semiconcave in the first $m$ variables.
		
		Let $\cM$ be an $\cL_{\proc}$-structure satisfying $\rT_{\operatorname{stat}}$.  Let $\nabla_{m,m'} \varphi$ denote the gradient with respect to the first $m$ variables of $\varphi$ as a function on $M_0$, with respect to the real inner product $\re \ip{\cdot,\cdot}_\tau$.  Then
		\begin{equation} \label{eq: gradient of heat semigroup}
			\nabla_{m,m'} (P_t \varphi)^{\cM}(\mathbf{x},\mathbf{y}) = E_{M_0}[(\nabla_{m,m'} \varphi^{S_t \cM})(\mathbf{x} + (z_{0,t,j})_{j=1}^m,\mathbf{y})]
		\end{equation}
		for $(\mathbf{x},\mathbf{y}) \in M_0^{m+m'}$.  Moreover,
		\begin{equation} \label{eq: gradient of heat semigroup 2}
			\norm{ \nabla_{m,m'} P_{m,m',t} \varphi^{\cM}(\mathbf{x},\mathbf{y}) - \nabla_{m,m'} \varphi^{\cM}(\mathbf{x},\mathbf{y})}_2 \leq c \norm{ (z_{0,t,j})_{j=1}^m}_2 \leq c\sqrt{6mt}.
		\end{equation}
	\end{lemma}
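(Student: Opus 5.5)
The plan is to reduce everything to the observation that $(P_t\varphi)^{\cM}(\mathbf{x},\mathbf{y}) = \varphi^{S_t\cM}(\mathbf{x}+\mathbf{z}_{0,t},\mathbf{y})$. This identity is Lemma \ref{lem: shift on formulas and structures} combined with the substitution defining $P_t$, and it passes to definable predicates by density. In this identity, $\mathbf{x}\mapsto \mathbf{x}+\mathbf{z}_{0,t}$ is an affine isometric embedding of $L^2(M_0)^m$ into $L^2(M_t)^m$, and $\varphi^{S_t\cM}$ is evaluated on the $0$-level of $S_t\cM$, namely $M_t$. The structure $S_t\cM$ again satisfies $\rT_{\stat}$: stationarity axioms are preserved under shifting, by \eqref{eq: stationarity for later times}. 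Hence the hypothesis gives that $\varphi^{S_t\cM}$ is $c$-semiconvex and $c$-semiconcave in the first $m$ variables on $M_t^m$. Semiconvexity and semiconcavity are statements about affine segments, and the restriction of $\varphi^{S_t\cM}$ to the affine subspace $\mathbf{x}+\mathbf{z}_{0,t}$, $\mathbf{x}\in M_0^m$, maps segments to segments of the same $L^2$ length. So $(P_t\varphi)^{\cM}$ inherits both properties with the same constant $c$, uniformly over models. That $P_t\varphi$ is a chronologically definable predicate was already noted in the remark after Definition \ref{def: heat semigroup}.

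For the gradient formula \eqref{eq: gradient of heat semigroup}, I will apply Proposition \ref{prop: semiconvex semiconcave gradient}(1) in $S_t\cM$ at the point $\mathbf{x}+\mathbf{z}_{0,t}$, with perturbation $\mathbf{h}\in M_0^m$:
\[
\bigl|\varphi^{S_t\cM}(\mathbf{x}+\mathbf{h}+\mathbf{z}_{0,t},\mathbf{y})-\varphi^{S_t\cM}(\mathbf{x}+\mathbf{z}_{0,t},\mathbf{y})-\re\ip{\mathbf{h},\mathbf{g}}\bigr|\le \tfrac c2\norm{\mathbf{h}}_2^2,
\]
where $\mathbf{g}=\nabla_{m,m'}\varphi^{S_t\cM}(\mathbf{x}+\mathbf{z}_{0,t},\mathbf{y})\in L^2(M_t)^m$. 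Since $\mathbf{h}\in L^2(M_0)$, we have $\re\ip{\mathbf{h},\mathbf{g}}=\re\ip{\mathbf{h},E_{M_0}[\mathbf{g}]}$. The uniqueness statement in Proposition \ref{prop: semiconvex semiconcave gradient}(1) then identifies $\nabla_{m,m'}(P_t\varphi)^{\cM}(\mathbf{x},\mathbf{y})$ with $E_{M_0}[\mathbf{g}]$. Boundedness of $\mathbf{h}$ is harmless because the estimate extends by $L^2$-continuity.

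For \eqref{eq: gradient of heat semigroup 2}, I first use stationarity to identify $\nabla\varphi^{\cM}(\mathbf{x},\mathbf{y})$ with $\nabla\varphi^{S_t\cM}(\mathbf{x},\mathbf{y})$ for $\mathbf{x}\in M_0$. This step is the main obstacle. The values of $\varphi^{\cM}$ and $\varphi^{S_t\cM}$ agree on $M_0$ by Definition \ref{def: stationary}, but the $S_t\cM$-gradient is a priori an element of $L^2(M_t)$ computed using perturbations in $M_t$. I will argue as follows. The $S_t\cM$-gradient at a point of $M_0$ satisfies the defining two-sided inequality for all $M_0$-perturbations, so its projection $E_{M_0}$ equals $\nabla\varphi^{\cM}(\mathbf{x},\mathbf{y})$ by uniqueness. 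It remains to show that the gradient already lies in $L^2(M_0)$. For this I will use Proposition \ref{prop: semiconvex semiconcave gradient}(2): the pairing $\re\ip{\mathbf{w},\nabla\varphi}$ is a definable predicate, so $\norm{\nabla\varphi(\mathbf{x},\mathbf{y})}_2^2$ can be expressed as a supremum over $\mathbf{w}$ of chronologically definable quantities. By stationarity, and since $M_0\preceq M_t$, this supremum has the same value whether $\mathbf{w}$ ranges over $M_0$ or over $M_t$. Hence the norm of the $S_t\cM$-gradient equals the norm of its $E_{M_0}$-projection, which forces the gradient to lie in $L^2(M_0)$. If this route becomes delicate, I will only need the weaker estimate obtained after applying $E_{M_0}$, which I describe next.

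Granting that identification, I write $\nabla(P_t\varphi)^{\cM}-\nabla\varphi^{\cM}=E_{M_0}[\nabla\varphi^{S_t\cM}(\mathbf{x}+\mathbf{z}_{0,t},\mathbf{y})-\nabla\varphi^{S_t\cM}(\mathbf{x},\mathbf{y})]$. Since $E_{M_0}$ is an $L^2$-contraction, it suffices to bound the difference of gradients. A function that is $c$-semiconvex and $c$-semiconcave has a $c$-Lipschitz gradient, by the standard argument for functions $f$ with $f\pm\frac c2\norm{\cdot}^2$ convex. This gives the bound $c\norm{\mathbf{z}_{0,t}}_2$. Finally, $\norm{\mathbf{z}_{0,t}}_2\le\sqrt{m}\cdot 6\sqrt t$ from the operator-norm domain bound, so the bound is at most $c\sqrt{6mt}$ up to the stated constant. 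Alternatively, one can use $\norm{z_{0,t,j}}_2^2=2t$ for circular increments.
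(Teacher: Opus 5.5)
Your proposal is correct and follows the paper's proof. The shared steps are:
\begin{itemize}
\item the identity $(P_t\varphi)^{\cM}(\mathbf{x},\mathbf{y})=\varphi^{S_t\cM}(\mathbf{x}+\mathbf{z}_{0,t},\mathbf{y})$;
\item differentiation along $M_0$-directions to get the $E_{M_0}$ formula;
\item stationarity to compare $\nabla_{m,m'}\varphi^{\cM}$ with the gradient computed in $S_t\cM$;
\item the $c$-Lipschitz gradient together with $L^2$-contractivity of $E_{M_0}$.
\end{itemize}
You are also right that testing only against $M_0$-perturbations yields just $\nabla_{m,m'}\varphi^{\cM}(\mathbf{x},\mathbf{y})=E_{M_0}[\nabla_{m,m'}\varphi^{S_t\cM}(\mathbf{x},\mathbf{y})]$. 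The paper asserts full equality at this step without comment, but this weaker identity is all the estimate needs, so your extra argument via Proposition~\ref{prop: semiconvex semiconcave gradient}(2) is optional.

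The one soft spot, which the paper shares, is the last constant. The domain bound gives $\norm{\mathbf{z}_{0,t}}_2\le 6\sqrt{mt}$ (or $3\sqrt{mt}$ with the constant used in the definition of $\cL_{\proc}$), and neither is $\le\sqrt{6mt}$. The identity $\norm{z_{0,t,j}}_2^2=2t$ requires circular increments, which hold in $\cQ$ but are not part of $\rT_{\stat}$. Since only an $O(\sqrt{t})$ bound is used later, this is harmless.
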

	
	\begin{proof}
		The claim about semiconvexity and semiconcavity is immediate from the definition.
		
		Let $(\mathbf{x},\mathbf{y}) \in M_0^{m+m'}$ and $\mathbf{h} \in M_0^m$.  Let $\delta > 0$.  By stationarity, we have
		\begin{align*}
			(P_t \varphi)^{\cM}(\mathbf{x} + \delta \mathbf{h}, \mathbf{y}) &= S_t \varphi(\mathbf{x} + \delta \mathbf{h} + (z_{0,t,j})_{j=1}^m, \mathbf{y}) \\
			&= \varphi^{S_t \cM}(\mathbf{x} + \delta \mathbf{h} + (z_{0,t,j})_{j=1}^m, \mathbf{y}).
		\end{align*}
		Differentiating at $\delta = 0$, we obtain
		\begin{align*}
			\frac{d}{d\delta} \Bigr|_{\delta = 0} (P_t \varphi)^{\cM}(\mathbf{x} + \delta \mathbf{h}, \mathbf{y}) &= \re \ip*{ \nabla_{m,m'} \varphi^{S_t \cM}(\mathbf{x} + (z_{0,t,j})_{j=1}^m, \mathbf{y}), \mathbf{h} }_{\cM_t} \\
			&= \re \ip*{ E_{\cM_)}[\nabla_{m,m'} \varphi^{S_t \cM}(\mathbf{x} + (z_{0,t,j})_{j=1}^m, \mathbf{y})], \mathbf{h} }_{L^2(M_0)} 
		\end{align*}
		which implies \eqref{eq: gradient of heat semigroup}.  We also note that by stationarity,
		\[
		\varphi^{\cM}(\mathbf{x} + \delta \mathbf{h}, \mathbf{y}) = (S_t \varphi)^{\cM}(\mathbf{x} + \delta \mathbf{h}, \mathbf{y}) = \varphi^{S_t \cM}(\mathbf{x} + \delta \mathbf{h}, \mathbf{y}),
		\]
		and hence by differentiating,
		\[
		\re \ip{ \nabla_{m,m'} \varphi^{\cM}(\mathbf{x}, \mathbf{y}), \mathbf{h} } = \re \ip{ \nabla_{m,m'} \varphi^{S_t \cM}(\mathbf{x}, \mathbf{y}), \mathbf{h} },
		\]
		whence
		\[
		\nabla_{m,m'} \varphi^{\cM}(\mathbf{x},\mathbf{y}) = \nabla_{m,m'} \varphi^{S_t \cM}(\mathbf{x}, \mathbf{y}).
		\]
		Since $\varphi$ is $c$-semiconvex and $c$-semiconcave, its gradient is $c$-Lipschitz, and hence
		\begin{align*}
			\lVert \nabla_{m,m'} P_{m,m',t} \varphi^{\cM}(\mathbf{x},\mathbf{y}) & - \nabla_{m,m'} \varphi^{\cM}(\mathbf{x},\mathbf{y}) \rVert_2 \\
			&=  \norm*{ E_{M_0} \left[ \nabla_{m,m'} \varphi^{S_t \cM}(\mathbf{x} + (z_{0,t,j})_{j=1}^m, \mathbf{y}) - \nabla_{m,m'} \varphi^{S_t \cM}(\mathbf{x} + (z_{0,t,j})_{j=1}^m, \mathbf{y}) \right] }_2 \\
			&\leq \norm*{ \nabla_{m,m'} \varphi^{S_t \cM}(\mathbf{x} + (z_{0,t,j})_{j=1}^m, \mathbf{y}) - \nabla_{m,m'} \varphi^{S_t \cM}(\mathbf{x} + (z_{0,t,j})_{j=1}^m, \mathbf{y}) }_2 \\
			&\leq c \norm{ (z_{0,t,j})_{j=1}^m}_2 \\
			&\leq c \sqrt{6mt},
		\end{align*}
		which shows \eqref{eq: gradient of heat semigroup 2}.
	\end{proof}

	\subsection{Infinitesimal change of variables} \label{subsec: change of variables}
	
	For a chronologically definable predicate $\varphi$ with respect to $\rT_{\stat}$, let
	\begin{equation} \label{eq: heat averaged predicate}
		\psi_t^{\cM}(\mathbf{x},\mathbf{y}) = \frac{1}{t} \int_0^t P_s \varphi(\mathbf{x},\mathbf{y})\,ds.
	\end{equation}
	To see that this is well-defined, we claim that $s \mapsto P_s \varphi$ is continuous with respect to each norm $\norm{\cdot}_{0,(\mathbf{r},\mathbf{r}'),\rT}$.  Because $P_t$ is a semigroup, it suffices to prove the claim when $s = 0$.  Indeed, given $\varepsilon > 0$, there exists $\delta > 0$ such that if $\cM$ satisfies $\rT$, then
	\[
	x, x' \in D_{0,\mathbf{r}+3\sqrt{s}}^{\cM} \text{ and } \mathbf{y} \in D_{0,\mathbf{r}'}^{\cM} \text{ and } \max_j d(x_j,x_j') < \delta \implies |\varphi(\mathbf{x},\mathbf{y}) - \varphi(\mathbf{x}',\mathbf{y})| \leq \varepsilon.
	\]
	Note that $S_s \cM$ also satisfies $\rT_{\stat}$, and therefore, if $6 \sqrt{s} < \delta$, then 
	\[
	|\varphi^{S_s \cM}(\mathbf{x} + (z_{0,s,j})_{j=1}^m, \mathbf{y}) - \varphi^{S_s \cM}(\mathbf{x},\mathbf{y})| \leq \varepsilon.
	\]
	Therefore, the integral \eqref{eq: heat averaged predicate} makes sense as a Riemann integral in the Fr{\'e}chet space of chronologically definable predicates.
	
	\begin{lemma}
		With the setup above, for any $\cQ$ satisfying $\rT_{\stat}$, we have
		\[
		\norm{\nabla_{m,m'} \psi_t^{\cQ}(\mathbf{x},\mathbf{y}) - \nabla_{m,m'} \varphi^{\cM}(\mathbf{x},\mathbf{y})}_2 \leq \frac{2}{\sqrt{3}} c(mt)^{1/2}.
		\]
	\end{lemma}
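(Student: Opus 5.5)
The plan is to differentiate under the integral sign in \eqref{eq: heat averaged predicate} and then average the pointwise estimate \eqref{eq: gradient of heat semigroup 2} of Lemma \ref{lem: gradient of heat semigroup}. (In the statement, $\cQ$ and $\cM$ denote the same structure satisfying $\rT_{\stat}$.)

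\emph{Step 1: the gradient commutes with the average.} By Lemma \ref{lem: gradient of heat semigroup}, each $P_s \varphi$ is $c$-semiconvex and $c$-semiconcave in the first $m$ variables. These two conditions are preserved under averaging in $s$, so $\psi_t$ is again $c$-semiconvex and $c$-semiconcave, and Proposition \ref{prop: semiconvex semiconcave gradient} gives a gradient $\nabla_{m,m'}\psi_t^{\cM}$. For $\mathbf{h} \in M_0^m$, the Riemann integral converges uniformly on operator-norm balls, so
\[
\psi_t^{\cM}(\mathbf{x}+\mathbf{h},\mathbf{y}) - \psi_t^{\cM}(\mathbf{x},\mathbf{y}) = \frac{1}{t}\int_0^t \left[ (P_s\varphi)^{\cM}(\mathbf{x}+\mathbf{h},\mathbf{y}) - (P_s\varphi)^{\cM}(\mathbf{x},\mathbf{y}) \right] ds.
\]
Apply the two-sided quadratic estimate of Proposition \ref{prop: semiconvex semiconcave gradient}(1) to each $P_s\varphi$, with the same constant $c$ for every $s$. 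This shows that $\frac{1}{t}\int_0^t \nabla_{m,m'} (P_s\varphi)^{\cM}(\mathbf{x},\mathbf{y})\,ds$ satisfies the defining inequality for the gradient of $\psi_t$. Here the integral is taken as a Bochner integral in $L^2(M_0)^m$; the integrand is measurable because $s \mapsto P_s\varphi$ is continuous and gradients of $c$-semiconvex/semiconcave functions depend continuously on the function. By uniqueness of the gradient,
\[
\nabla_{m,m'}\psi_t^{\cM} = \frac{1}{t}\int_0^t \nabla_{m,m'}(P_s\varphi)^{\cM}\,ds.
\]

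\emph{Step 2: averaging the bound.} By Minkowski's inequality,
\[
\norm{\nabla_{m,m'} \psi_t^{\cM} - \nabla_{m,m'}\varphi^{\cM}}_2 \leq \frac{1}{t}\int_0^t \norm{\nabla_{m,m'}(P_s\varphi)^{\cM} - \nabla_{m,m'}\varphi^{\cM}}_2\,ds.
\]
We bound the integrand by \eqref{eq: gradient of heat semigroup 2}. The sharper form of that bound is $c\norm{(z_{0,s,j})_{j=1}^m}_2$. Since $z_{0,s,j}$ is $\sqrt{s}$ times a standard circular element with real and imaginary parts standard semicircular, we have $\norm{z_{0,s,j}}_2^2 = 2s$, so the integrand is at most $c\sqrt{2ms}$. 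Then
\[
\frac{1}{t}\int_0^t c\sqrt{2ms}\,ds = \frac{2}{3}\sqrt{2}\,c\,(mt)^{1/2},
\]
and $\tfrac{2\sqrt{2}}{3} \leq \tfrac{2}{\sqrt{3}}$, which gives the claim.

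\emph{Main obstacle.} The only delicate point is Step 1, justifying the interchange of gradient and integral. The route is to use the uniform semiconvexity/semiconcavity constant rather than any differentiation theorem; I would verify that the uniform quadratic remainder bound, integrated in $s$, pins down the gradient uniquely.
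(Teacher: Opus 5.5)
Your proposal is correct and follows the paper's proof. You commute the gradient with the time average defining $\psi_t$ (your Step 1 justifies this more carefully than the paper's one-line assertion), then apply Minkowski's inequality and average the pointwise bound \eqref{eq: gradient of heat semigroup 2}.

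Your use of the exact value $\norm{(z_{0,s,j})_{j=1}^m}_2=\sqrt{2ms}$ is in fact what delivers the stated constant. The paper's integrand $c\sqrt{6ms}$ averages to $\tfrac{2\sqrt6}{3}c(mt)^{1/2}$, which exceeds $\tfrac{2}{\sqrt3}c(mt)^{1/2}$. However, you should say explicitly that this value relies on the increments being circular of variance $s$. That holds in the matrix quotient $\cQ$ by Proposition \ref{prop: matrix quotient filtration}, but it is not an axiom of $\rT_{\stat}$, and the operator-norm bound on the increments alone would give a weaker constant.
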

	
	\begin{proof}
		Now because $\varphi$ is $c$-semiconvex and $c$-semiconcave, Lemma \ref{lem: gradient of heat semigroup} implies that the same is true for $P_s \varphi$, and also $s \mapsto \nabla_{m,m'} P_s \varphi$ is continuous with respect to the relevant Fr{\'e}chet norms.  It follows that
		\[
		\nabla_{m,m'} \psi_t = \frac{1}{t} \int_0^t \nabla_{m,m'} P_s \varphi\,ds.
		\]
		Furthermore, using \eqref{eq: gradient of heat semigroup 2},
		\begin{align*}
			\norm{\nabla \psi_t^{\cQ}(\mathbf{x},\mathbf{y}) - \nabla \varphi^{\cM}(\mathbf{x},\mathbf{y})}_2 &= \norm*{\frac{1}{t} \int_0^t [\nabla P_s \varphi^{\cQ}(\mathbf{x},\mathbf{y}) - \nabla \varphi^{\cM}(\mathbf{x},\mathbf{y})]\,ds }_2 \\
			&\leq \frac{1}{t} \int_0^t \norm*{\nabla P_s \varphi^{\cQ}(\mathbf{x},\mathbf{y}) - \nabla \varphi^{\cM}(\mathbf{x},\mathbf{y})}_2 \,ds \\
			&\leq \frac{1}{t} \int_0^t \sqrt{6ms} \,ds \\
			&= \frac{2}{\sqrt{3}} c(mt)^{1/2}.
		\end{align*}
	\end{proof}
	
	Our next goal is to define matrix approximations $\Lambda_{\psi_t}^{(n)}$ for $\psi_t$ and compute their Laplacian.  The main point of the construction is that
	\[
	\Delta \int_0^t e^{s \Delta} u\,ds = e^{t\Delta} u - u,
	\]
	and so the Laplacian will be well-controlled.  We first need to establish notation for the Hessian and Laplacian on $(\mathbb{M}_n)_{\sa}^m$, taking into account the normalization of the traces / inner products.
	
	Let $H_{m,m'}^{(n)}$ denote the Hessian operator with respect to the inner product $\ip{\cdot,\cdot}_{\tr_n}$ on $(\mathbb{M}_n)_{\sa}^{m+m'}$ with respect to the first $m$ variables.  In other words, for real-valued functions on $(\mathbb{M}_n)_{\sa}^{m+m'}$ that are smooth in the first $m$ variables, $H_{m,m'}^{(n)} u(\mathbf{X},\mathbf{Y})$ is the linear transformation on $(\mathbb{M}_n)_{\sa}^m$ satisfying
	\[
	u(\mathbf{X}+\mathbf{V},\mathbf{Y}) = u(\mathbf{X},\mathbf{Y}) + \ip{\mathbf{V}, \nabla u(\mathbf{X},\mathbf{Y})}_{\tr_n} + \frac{1}{2} \ip{\mathbf{V},H_{m,m'}^{(n)}(\mathbf{X},\mathbf{Y}) [\mathbf{V}]}_{\tr_n} + o(\norm{\mathbf{V}}_2^2).
	\]
	Let $\Delta_{m,m'}^{(n)}$ be the normalized Laplacian on $(\mathbb{M}_n)_{\sa}^{m+m'}$ with respect to the first $m$ variables, given by
	\[
	\Delta_{m,m'}^{(n)} u(\mathbf{X},\mathbf{Y}) = \frac{1}{n^2} \Tr[H_{m,m'}^{(n)} u(\mathbf{X},\mathbf{Y})] = 2 \lim_{\varepsilon \to 0} \frac{1}{\varepsilon} [u(\mathbf{X} + (Z_{0,\varepsilon,j}^{(n)})_{j=1}^m, \mathbf{Y}) - u(\mathbf{X},\mathbf{Y})] 
	\]
	for smooth $u$.  For smooth $u$ with (for instance) polynomial growth at $\infty$, let
	\[
	P_t u(\mathbf{X},\mathbf{Y}) = \mathbb{E}[u(\mathbf{X} + (Z_{0,t,j}^{(n)})_{j=1}^m, \mathbf{Y})],
	\]
	where $\mathbf{X}$ and $\mathbf{Y}$ are deterministic matrix tuples.  Then
	\begin{equation} \label{eq: Laplacian integration identity}
		\frac{1}{2} \Delta_{m,m'} \int_0^t P_s u \,ds = \frac{1}{2} \int_0^t \Delta_{m,m'} P_s u\,ds = \int_0^t \frac{d}{ds} P_s u\,ds = P_t u - u.
	\end{equation}
	It is also easy to see that the equality extends to merely \emph{continuous} functions with polynomial growth because the right-hand side is well-defined for such functions.
	
	We now proceed with the construction of the finite-dimensional approximations of our functions.
	
	\begin{lemma} \label{lem: FD approximation and heat averaging}
		Let $\varphi$ and $\Lambda_{\varphi}^{(n)}$ be as in Lemma \ref{lem: FD approximation of SCSC}.  For self-adjoint matrix tuples, let
		\begin{equation} \label{eq: definition of Lambda psi t}
			\Lambda_{\psi_t}^{(n)}(\mathbf{X},\mathbf{Y}) = \int_0^1 \mathbb{E} \Lambda_{\varphi}^{(n)}(\mathbf{X} + [F_{\sqrt{t}}(Z_{0,s,j}^{(n)})]_{j=1}^m, \mathbf{Y})\,ds,
		\end{equation}
		and let
		\[
		\Lambda_{P_t \varphi}^{(n)}(\mathbf{X},\mathbf{Y}) = \mathbb{E} \Lambda_{\varphi}^{(n)}(\mathbf{X} + [F_{\sqrt{t}}(Z_{0,t,j}^{(n)})]_{j=1}^m, \mathbf{Y}).
		\]
		Then $\Lambda_{\psi_t^{(n)}}$ is $c$-semiconvex and $c$-semiconcave and
		\begin{equation} \label{eq: Laplacian of FD approximation}
			\frac{1}{2} \Delta_{m,m'} \Lambda_{\psi_t}^{(n)} = \frac{1}{t}\left[\Lambda_{P_t \varphi}^{(n)}(\mathbf{X},\mathbf{Y}) - \Lambda_{\varphi}^{(n)}(\mathbf{X},\mathbf{Y}) \right].
		\end{equation}
		Moreover, for self-adjoint tuples $\mathbf{X}^{(n)}, \mathbf{Y}^{(n)}$ from $L^\infty(\Omega,\mathcal{F}_0,\mathbb{P}) \otimes \mathbb{M}_n$ where each coordinate is bounded in operator norm, letting $\mathbf{x}$ and $\mathbf{y}$ be the corresponding elements in the matrix ultraproduct $\cM$ from \eqref{eq: random matrix ultraproduct}, we have
		\begin{align}
			(\psi_t)_E^{\cM}(\mathbf{x},\mathbf{y}) &= [\Lambda_{\psi_t}^{(n)}(\mathbf{X}^{(n)},\mathbf{Y}^{(n)})]_{n \in \N} \label{eq: FD approximation for psi t} \\
			(P_t \varphi)_E^{\cM}(\mathbf{x},\mathbf{y}) &= [\Lambda_{P_t \varphi}^{(n)}(\mathbf{X}^{(n)},\mathbf{Y}^{(n)})]_{n \in \N}. \label{eq: FD approximation for P t phi}
		\end{align}
	\end{lemma}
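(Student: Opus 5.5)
The proof splits into three parts: semiconvexity/semiconcavity, the Laplacian identity \eqref{eq: Laplacian of FD approximation}, and the ultraproduct identities \eqref{eq: FD approximation for psi t} and \eqref{eq: FD approximation for P t phi}. Throughout I read \eqref{eq: definition of Lambda psi t} as the average $\frac{1}{t}\int_0^t \mathbb{E}\Lambda_\varphi^{(n)}(\mathbf{X} + \mathbf{Z}_{0,s}^{(n)},\mathbf{Y})\,ds$, matching \eqref{eq: heat averaged predicate}. The only way to get an exact Laplacian formula from \eqref{eq: Laplacian integration identity} is to use the genuine matrix heat semigroup. So I plan to carry out the computation with the untruncated increments $Z_{0,s,j}^{(n)}$. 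The cutoff $F_{\sqrt t}$ appearing in the statement I will treat as harmless, in one of two ways. Either I note that its effect is a uniformly $O(\norm{F(Z)-Z}_{L^2})\to 0$ perturbation, using that $\Lambda_\varphi^{(n)} - \frac{c}{2}\norm{\mathbf{X}}_2^2$ is $\norm{\cdot}_1$-Lipschitz (Lemma \ref{lem: FD approximation of SCSC}) and \eqref{eq: truncated GUE matrix 2}. Or I simply define the approximations without the cutoff, which changes nothing at the level of the ultraproduct.

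\textbf{Semiconvexity.} For each fixed $s$ and each realization of the noise, $\mathbf{X}\mapsto \Lambda_\varphi^{(n)}(\mathbf{X}+\mathbf{Z},\mathbf{Y})$ is a translate of a $c$-semiconvex and $c$-semiconcave function, so it has the same property. These two properties are preserved under averaging, since the defining inequalities are linear. Taking expectations and integrating in $s$ therefore shows that $\Lambda_{\psi_t}^{(n)}$ is $c$-semiconvex and $c$-semiconcave. The same argument applies to $\Lambda_{P_t\varphi}^{(n)}$.

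\textbf{Laplacian identity.} Let $u=\Lambda_\varphi^{(n)}(\cdot,\mathbf{Y})$. This function is continuous with quadratic growth, since it equals $\frac c2\norm{\mathbf{X}}_2^2$ plus a Lipschitz function. Then $\Lambda_{\psi_t}^{(n)}=\frac1t\int_0^t P_s u\,ds$, and \eqref{eq: Laplacian integration identity} gives $\frac12\Delta\Lambda_{\psi_t}^{(n)}=\frac1t(P_tu-u)$ directly. To justify this for merely continuous $u$, I will approximate $u$ by mollified $u_\varepsilon$, for which the identity holds classically, and pass to the limit. The right-hand sides converge locally uniformly. On the left, $\Lambda_{\psi_t}^{(n)}$ is $C^{1,1}$ by semiconvexity/semiconcavity, so its Laplacian is defined a.e. and as a distribution, and the identity holds in that sense.

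\textbf{Ultraproduct identities.} For \eqref{eq: FD approximation for P t phi} I will imitate inductive case 3 of Proposition \ref{prop: approx by pointwise function}. By \eqref{eq: FD approximation of SCSC} applied with the filtration shifted by $t$, together with stationarity (Corollary \ref{cor: random matrix ultraproduct stationarity}),
\[
(P_t\varphi)_E^{\cM}(\mathbf{x},\mathbf{y}) = \bigl[\Lambda_\varphi^{(n)}(\mathbf{X}^{(n)}+\mathbf{Z}^{(n)}_{0,t},\mathbf{Y}^{(n)})\bigr]_{n\in\N}.
\]
The Poincar\'e inequality \eqref{eq: Poincare inequality} then lets me replace the random quantity inside the brackets by its conditional expectation given $\mathcal{G}_0$, with an error of order $O(1/n)$ in $L^2$; this conditional expectation is exactly $\Lambda_{P_t\varphi}^{(n)}(\mathbf{X}^{(n)},\mathbf{Y}^{(n)})$. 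The main obstacle is that $\Lambda_\varphi^{(n)}$ is only Lipschitz after subtracting a quadratic, so the global Poincar\'e bound does not apply as is. I would handle this by splitting $\Lambda_\varphi^{(n)}=\frac c2\norm{\cdot}_2^2+(\text{Lipschitz})$. The Lipschitz part is covered by \eqref{eq: Poincare inequality}. The quadratic part has explicit Gaussian fluctuations of order $O(1/n)$, since $\norm{\mathbf{X}+\mathbf{Z}}_2^2$ concentrates. A second issue is that \eqref{eq: FD approximation of SCSC} requires an operator norm bound $r$; I will get this from the truncation, choosing $r$ larger than $\norm{\mathbf{x}}+6\sqrt t$. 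Finally, \eqref{eq: FD approximation for psi t} follows by integrating \eqref{eq: FD approximation for P t phi} in $s$. This uses continuity of $s\mapsto P_s\varphi$ (the Riemann integral set up before the lemma) and the fact that the center-valued evaluation is linear and continuous by Corollary \ref{cor: center-valued definable predicates}. Because the bounds are uniform in $s$, the ultraproduct class commutes with $\int_0^t ds$.
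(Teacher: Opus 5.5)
Your plan follows the paper's proof step for step. Averaging preserves the two-sided semiconvexity bounds, \eqref{eq: Laplacian integration identity} gives the Laplacian, and Poincar\'e replaces the random evaluation by its $\mathcal{G}_0$-conditional expectation. That expectation is identified with $(P_s\varphi)_E^{\cM}$ via the shifted form of \eqref{eq: FD approximation of SCSC}, and then the $s$-integral is exchanged with the ultraproduct. Your reading of \eqref{eq: definition of Lambda psi t} as $\frac1t\int_0^t$ is also the intended one.

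The real difference is how you handle the cutoff, and here your version is the more careful one. The paper keeps truncated increments throughout. Its proof works with $F_{3\sqrt t}$; the literal $F_{\sqrt t}$ in the statement would cut into the semicircle bulk for $s>t/4$ and would not be negligible. Truncation keeps every input in a fixed operator-norm ball, so the Poincar\'e step applies to a globally Lipschitz function with no quadratic-growth bookkeeping. However, the paper then quotes \eqref{eq: Laplacian integration identity}, which is an identity for the genuine heat semigroup and fails for truncated increments. For example, if you build the truncated averages from $u=\frac12\norm{\cdot}_2^2$, the two sides of \eqref{eq: Laplacian of FD approximation} are $m$ and $\frac{1}{2t}\mathbb{E}\norm{F_{3\sqrt t}(\mathbf{Z}_{0,t}^{(n)})}_2^2<m$. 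So the paper's identity holds only up to an error exponentially small in $n$, which is all Theorem \ref{thm: infinitesimal change of variables} uses.

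Your second option, defining $\Lambda_{\psi_t}^{(n)}$ and $\Lambda_{P_t\varphi}^{(n)}$ without the cutoff, makes \eqref{eq: Laplacian of FD approximation} exact. Your distributional argument and the $C^{1,1}$ identification of the a.e.\ Laplacian are fine. The price is a truncated-versus-untruncated comparison. Commit to that option: a perturbation of values (your first option) cannot give an exact Laplacian identity for a truncated definition.

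In that comparison, note that $[\Lambda_\varphi^{(n)}(\mathbf{X}^{(n)}+\mathbf{Z}_{0,t}^{(n)},\mathbf{Y}^{(n)})]_{n\in\N}$ is not an element of $Z(\cM)$, because the sequence is unbounded. Instead, start from the truncated class, which equals $(P_t\varphi)_E^{\cM}$, and apply Poincar\'e there; no splitting is needed on a bounded ball. Then pass to the untruncated conditional expectation using the Lipschitz-plus-quadratic structure and \eqref{eq: truncated GUE matrix 2}. The same structure gives $O(|s-s'|^{1/2})$ equicontinuity in $s$, uniform in $n$. That uniform equicontinuity, rather than uniform boundedness in $s$, is what exchanging $\int_0^t ds$ with the ultraproduct actually requires.
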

	
	\begin{proof}
		The semiconvexity and semiconcavity of $\Lambda_{\psi_t}^{(n)}$ is clear because it is an average of $c$-semiconvex and $c$-semiconcave functions.
		
		We obtain \eqref{eq: Laplacian of FD approximation} directly from applying \eqref{eq: Laplacian integration identity} to the definition of $\Lambda_{\psi_t}^{(n)}$ in \eqref{eq: definition of Lambda psi t}.
		
		Next, take random matrix tuples $\mathbf{X}^{(n)}$ and $\mathbf{Y}^{(n)}$ satisfying the hypotheses.  As in the proof of Proposition \ref{prop: approx by pointwise function}, since $\Lambda$ is bounded and $\norm{\cdot}_1$-Lipschitz on each operator norm ball, the Poincar{\'e} inequality \eqref{eq: Poincare inequality} implies that
		\[
		[\Lambda_{\varphi}^{(n)}(\mathbf{X}^{(n)} + [F_{\sqrt{t}}(Z_{0,s,j}^{(n)})]_{j=1}^m, \mathbf{Y})]_{n \in \N} = [\mathbb{E} [\Lambda_{\varphi}^{(n)}(\mathbf{X}^{(n)} + [F_{\sqrt{t}}(Z_{0,s,j}^{(n)})]_{j=1}^m, \mathbf{Y}) \mid \mathcal{F}_0] ]_{n \in \N}.
		\]
		Next, note that for $s \leq t$, we have
		\[
		\norm{F_{3\sqrt{t}}(Z_{0,s}^{(n)}) - F_{3\sqrt{s}}(Z_{0,s}^{(n)})}_{L^2} \leq \norm{Z_{0,s}^{(n)} - F_{3\sqrt{s}}(Z_{0,s}^{(n)})}_{L^2} \to 0
		\]
		by \eqref{eq: truncated GUE matrix 2}.  Therefore,
		\begin{align*}
			[\mathbb{E} [\Lambda_{\varphi}^{(n)}(\mathbf{X}^{(n)} + [F_{3\sqrt{t}}(Z_{0,s,j}^{(n)})]_{j=1}^m, \mathbf{Y}) \mid \mathcal{F}_0] ]_{n \in \N} &= 
			[\Lambda_{\varphi}^{(n)}(\mathbf{X} + [F_{3\sqrt{t}}(Z_{0,s,j}^{(n)})]_{j=1}^m, \mathbf{Y})]_{n \in \N} \\
			&= [\Lambda_{\varphi}^{(n)}(\mathbf{X} + [F_{3\sqrt{s}}(Z_{0,s,j}^{(n)})]_{j=1}^m, \mathbf{Y})]_{n \in \N} \\
			&= (S_s\varphi)_E^{\cM}(\mathbf{x} + (z_{0,s,j})_{j=1}^m, \mathbf{y}) \\
			&= (P_s \varphi)_E^{\cM}(\mathbf{x},\mathbf{y}).
		\end{align*}
		This also shows \eqref{eq: FD approximation for P t phi} by substituting $s = t$.  Now recall that
		\[
		\Lambda_{\psi_t}^{(n)}(\mathbf{X}^{(n)},\mathbf{X}^{(n)}) = \frac{1}{t} \int_0^t \Lambda_{\varphi}^{(n)}(\mathbf{X}^{(n)} + [F_{3\sqrt{t}}(Z_{0,s,j}^{(n)})]_{j=1}^m, \mathbf{Y}) \mid \mathcal{G}_0]\,ds.
		\]
		Using the continuity estimates for the integrand as a function on an operator norm ball, which are uniform in $n$ and $s$, one can exchange the Riemann integral and the ultraproduct, so that
		\begin{align*}
			(\psi_t)_E^{\cM})(\mathbf{x},\mathbf{y}) &= \frac{1}{t} \int_0^t P_s \varphi(\mathbf{x},\mathbf{y})\,ds \\
			&= \frac{1}{t} \int_0^t [\Lambda_{\varphi}^{(n)}(\mathbf{X}^{(n)} + [F_{3\sqrt{t}}(Z_{0,s,j}^{(n)})]_{j=1}^m, \mathbf{Y}) \mid \mathcal{G}_0]]_{n \in \N}\,ds \\
			&= [\Lambda_{\psi_t}^{(n)}(\mathbf{X}^{(n)},\mathbf{Y}^{(n)})]_{n \in \N}.
		\end{align*}
		This proves \eqref{eq: FD approximation for psi t}.
	\end{proof}
	
	\begin{theorem} \label{thm: infinitesimal change of variables}
		Let $m \in \N$ and $m' \in \N \cup \{0,\infty\}$.  Let $|a| < c$ be real numbers.  Let $\varphi$ be a chronologically definable predicate in $m + m'$ variables with respect to $\rT_{\stat}$ which is $c$-semiconvex and $c$-semiconcave in the first $m$ variables, and such that $\varphi - aq$ is Lipschitz in $\norm{\cdot}_1$ with respect to the first $m$ variables.  Let $(\mathbf{x},\mathbf{y})$ be a self-adjoint $m+m'$-tuple from $Q_0$ in the matrix ultraproduct quotient structure $\cQ$ from \eqref{eq: random matrix quotient}. Then for $\varepsilon \in [0,1/2c]$,
		\[
		\chi_{\chron}^{\cU}(\mathbf{x} + \varepsilon \nabla_{m,m'} \varphi^{\cQ}(\mathbf{x}, \mathbf{y}) \mid \mathbf{y}) \geq \chi_{\chron}^{\cU}(\mathbf{x} \mid \mathbf{y}) + \varepsilon \overline{\Delta}_{m,m'} \varphi^{\cQ}(\mathbf{x},\mathbf{y}) - 4m(c \varepsilon)^2.
		\]
	\end{theorem}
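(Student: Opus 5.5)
We will push forward random matrix models for $\mathbf{x}$ under the finite-dimensional map $\mathbf{X} \mapsto \mathbf{X} + \varepsilon \nabla \Lambda_{\varphi}^{(n)}(\mathbf{X},\mathbf{Y}^{(n)})$, apply the classical change of variables for entropy, and then pass to the limit using the classical-free variational principle, Proposition \ref{prop: classical-free variational principle}. Since $\overline{\Delta}_{m,m'}\varphi^{\cQ}$ has not been displayed above, we take it to be the $\limsup$-type Laplacian
\[
\overline{\Delta}_{m,m'} \varphi^{\cQ}(\mathbf{x},\mathbf{y}) = \limsup_{t \to 0^+} \frac{2}{t}\left[(P_t\varphi)^{\cQ}(\mathbf{x},\mathbf{y}) - \varphi^{\cQ}(\mathbf{x},\mathbf{y})\right],
\]
or a variant along a sequence; the argument below is arranged to produce exactly such difference quotients.

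\textbf{Setup.} Without loss of generality $\chi_{\chron}^{\cU}(\mathbf{x}\mid\mathbf{y}) > -\infty$. By the second half of Proposition \ref{prop: classical-free variational principle}, choose deterministic $\mathbf{Y}^{(n)}$ and bounded random models $\mathbf{X}^{(n)}$ with $\lim_{n\to\cU} h^{(n)}(\mathbf{X}^{(n)}) = \chi_{\chron}^{\cU}(\mathbf{x}\mid\mathbf{y})$. Fix $t>0$ and replace $\varphi$ by the heat average $\psi_t$ of \eqref{eq: heat averaged predicate}, with finite-dimensional approximations $\Lambda_{\psi_t}^{(n)}$ from Lemma \ref{lem: FD approximation and heat averaging}; these are $c$-semiconvex and $c$-semiconcave, with gradients that are bounded in operator norm on balls (Lemma \ref{lem: FD approximation of SCSC}).

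\textbf{Change of variables.} Set $G_n(\mathbf{X}) = \mathbf{X} + \varepsilon\nabla\Lambda_{\psi_t}^{(n)}(\mathbf{X},\mathbf{Y}^{(n)})$. Since $c\varepsilon \le 1/2$, the map $G_n$ is injective and bi-Lipschitz, and $DG_n = I + \varepsilon H$ with $-c \le H \le c$. The classical change of variables then gives
\[
h(G_n(\mathbf{X}^{(n)})) = h(\mathbf{X}^{(n)}) + \mathbb{E}\log\det(I+\varepsilon H).
\]
Using $\log(1+u) \ge u - u^2$ for $|u|\le 1/2$ on the $2mn^2$ real eigenvalues, after normalization
\[
h^{(n)}(G_n(\mathbf{X}^{(n)})) \ge h^{(n)}(\mathbf{X}^{(n)}) + \varepsilon\,\mathbb{E}\Delta_{m,m'}\Lambda_{\psi_t}^{(n)} - 2m(c\varepsilon)^2 \cdot (\text{constant}),
\]
and we adjust the constant to land on the stated $4m(c\varepsilon)^2$, depending on the real-dimension normalization. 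By \eqref{eq: Laplacian of FD approximation}, $\Delta\Lambda_{\psi_t}^{(n)} = \tfrac{2}{t}[\Lambda_{P_t\varphi}^{(n)} - \Lambda_\varphi^{(n)}]$. By \eqref{eq: FD approximation for P t phi}, Proposition \ref{prop: Los theorem}, and concentration (the models converge in type in probability), this tends along $\cU$ to $\tfrac{2}{t}[(P_t\varphi)^{\cQ} - \varphi^{\cQ}](\mathbf{x},\mathbf{y})$.

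\textbf{Identifying the pushforward and passing to the limit.} Lemmas \ref{lem: FD approximation of gradient} and \ref{lem: FD approximation of gradient 2} show that $G_n(\mathbf{X}^{(n)})$ is a matrix model for $\mathbf{x} + \varepsilon\nabla\psi_t^{\cQ}(\mathbf{x},\mathbf{y})$: for every restricted chronological $\eta$, the functions $\Lambda_\eta^{(n)}$ evaluated at $(G_n(\mathbf{X}^{(n)}),\mathbf{Y}^{(n)})$ converge in probability to the correct values, and hypotheses (2) and (3) of Proposition \ref{prop: classical-free variational principle} hold by the operator-norm bounds. The first half of that proposition then gives
\[
\chi_{\chron}^{\cU}(\mathbf{x}+\varepsilon\nabla\psi_t^{\cQ}\mid\mathbf{y}) \ge \chi_{\chron}^{\cU}(\mathbf{x}\mid\mathbf{y}) + \varepsilon\,\tfrac{2}{t}\left[(P_t\varphi)^{\cQ}-\varphi^{\cQ}\right] - 4m(c\varepsilon)^2 .
\]
Finally, let $t\to0$ along a sequence realizing the $\limsup$. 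The lemma following \eqref{eq: heat averaged predicate} gives $\|\nabla\psi_t^{\cQ} - \nabla\varphi^{\cQ}\|_2 = O(t^{1/2})$, so the types converge, and we want the entropy on the left-hand side to converge as well.

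\textbf{Main obstacle.} That last limit is the hard step, because $\chi_{\chron}^{\cU}$ is only upper semicontinuous (by Corollary \ref{cor: semicontinuity of entropy}, $\tilde\chi$ is lower semicontinuous), so a limit of lower bounds on the left does not pass through directly. To avoid taking any limit of entropies, we would instead compare directly at the matrix level. Keep $t$ fixed but push forward by $\Lambda_\varphi^{(n)}$ itself, $G_n' = \mathrm{id} + \varepsilon\nabla\Lambda_\varphi^{(n)}$, and bound the log-Jacobian by the $\psi_t$ version plus an error. If that comparison fails, the fallback is to write $G_n' = G_n \circ (G_n^{-1} \circ G_n')$, note that $G_n^{-1}\circ G_n'$ is $O(\varepsilon c\sqrt{t})$-close to the identity in $L^2$, and invoke the upper semicontinuity of $\chi$ along the type-convergent family.
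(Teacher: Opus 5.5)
Your argument follows the paper's proof. You take entropy-realizing models from Proposition \ref{prop: classical-free variational principle} and push forward by $\mathrm{id}+\varepsilon\nabla_{m,m'}\Lambda_{\psi_t}^{(n)}$. Then \eqref{eq: Laplacian of FD approximation} turns the Hessian trace into $\frac{2}{t}[\Lambda^{(n)}_{P_t\varphi}-\Lambda^{(n)}_\varphi]$, and Lemma \ref{lem: FD approximation of gradient 2} identifies the pushforward as a model of $\mathbf{x}+\varepsilon\nabla_{m,m'}\psi_t^{\cQ}(\mathbf{x},\mathbf{y})$. Finally you pass to $n\to\cU$ and then $t\to0^+$. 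Your bound $\log(1+u)\ge u-u^2$ on $[-1/2,1/2]$ already gives the error $2m(c\varepsilon)^2$, which is better than the stated $4m(c\varepsilon)^2$, so no adjustment is needed.

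The step you call the main obstacle is not one: you have the direction of semicontinuity reversed.
\begin{itemize}
\item Upper semicontinuity of $\chi_{\chron}^{\cU}$ follows from Corollary \ref{cor: semicontinuity of entropy} for $\tilde\chi_{\chron}^{\cU}$ together with continuity of $\frac12\|\mathbf{x}\|_2^2$. Once the types converge, it says exactly that $\chi_{\chron}^{\cU}(\mathbf{x}+\varepsilon\nabla_{m,m'}\varphi^{\cQ}\mid\mathbf{y})\ge\limsup_{t\to0^+}\chi_{\chron}^{\cU}(\mathbf{x}+\varepsilon\nabla_{m,m'}\psi_t^{\cQ}\mid\mathbf{y})$. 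This one-sided inequality is all you need; the entropies do not have to converge.
\item The type convergence is immediate. Restricted chronological formulas are $\|\cdot\|_1$-Lipschitz (Remark \ref{rem: bounded and Lipschitz}), and $\|\nabla_{m,m'}\psi_t^{\cQ}-\nabla_{m,m'}\varphi^{\cQ}\|_2=O(t^{1/2})$ by the lemma following \eqref{eq: heat averaged predicate}.
\item Taking $\limsup_{t\to0^+}$ in your displayed inequality therefore finishes the proof, exactly as the paper does.
\end{itemize}

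Drop the matrix-level detour, because it would not work. Pushing forward by $\mathrm{id}+\varepsilon\nabla_{m,m'}\Lambda^{(n)}_\varphi$ produces $\mathbb{E}\,\Delta_{m,m'}\Lambda^{(n)}_\varphi(\mathbf{X}^{(n)})$. That is an a.e.\ Laplacian of a merely $C^{1,1}$ function, and nothing ties it, uniformly in $n$, to $\overline{\Delta}_{m,m'}\varphi^{\cQ}$; avoiding exactly this is the purpose of the heat average $\psi_t$. Also, $L^2$-closeness of $G_n^{-1}\circ G_n'$ to the identity gives no control over classical entropy.
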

	
	\begin{proof}
		By Proposition \ref{prop: classical-free variational principle}, fix random matrix models $(\mathbf{X}^{(n)},\mathbf{Y}^{(n)})$ such that each coordinate is bounded in operator norm, $\mathbf{Y}^{(n)}$ is deterministic, and for restricted chronological formulas $\omega$,
		\[
		\lim_{n \to \cU} \Lambda_\omega^{(n)}(\mathbf{X}^{(n)},\mathbf{Y}^{(n)}) = \omega^{\cQ}(\mathbf{x},\mathbf{y}) \text{ in probability},
		\]
		and the classical entropy of $\mathbf{X}^{(n)}$ converges to $\chi_{\chron}^{\cU}(\mathbf{x} \mid \mathbf{y})$.  Let $\mathbf{x}_0 = [\mathbf{X}^{(n)}]_{n \in \N}$ and $\mathbf{y}_0 = [\mathbf{Y}^{(n)}]_{n \in \N}$ be the corresponding self-adjoint tuples in $\cM$.  Note that $(\pi(\mathbf{x}_0),\pi(\mathbf{y}_0))$ has the same chronological type as the original $(\mathbf{x},\mathbf{y})$.
		
		Let $\Lambda_{\varphi}^{(n)}$ be a sequence of finite-dimensional approximations for $\varphi$ given by Lemma \ref{lem: FD approximation of SCSC}, which are $c$-semiconvex and $c$-semiconcave.  Let $\psi_t$ be given by \eqref{eq: heat averaged predicate}.  Let $\Lambda_{\psi_t}^{(n)}$ and $\Lambda_{P_t \varphi}^{(n)}$ be the finite-dimensional approximations given by Lemma \ref{lem: FD approximation and heat averaging}.  By the same lemma,
		\[
		[\Lambda_{\psi_t}^{(n)}(\mathbf{X}^{(n)},\mathbf{Y}^{(n)})]_{n \in \N} = (\psi_t)_E^{\cM}(\mathbf{x}_0,\mathbf{y}_0) = 1_{Z(\cM)} \psi_t^{\cQ}(\mathbf{x},\mathbf{y}).
		\]
		The same applies to $q + \varepsilon \psi_t$ where $q(\mathbf{x},\mathbf{y}) = \norm{\mathbf{x}}_2^2$.  We therefore also have by Lemma \ref{lem: FD approximation of gradient 2} that for restricted chronological formulas $\omega$,
		\begin{align*}
			[\Lambda_\omega^{(n)}(\mathbf{X}^{(n)} & + \varepsilon \nabla_{m,m'} \Lambda_{\psi_t}^{(n)}(\mathbf{X}^{(n)},\mathbf{Y}^{(n)}),\mathbf{Y}^{(n)})]_{n \in \N} \\
			&= [\omega \circ (\id_m + \varepsilon \nabla_{m,m'}\varphi, \id_{m'})]_E^{\cM} (\mathbf{x}_0 + \varepsilon \nabla_{m,m'} \psi_t^{\cQ}(\mathbf{x}_0,\mathbf{y}_0),\mathbf{y}_0) \\
			&= 1_{Z(\cM)} \omega^{\cQ}( \nabla_{m,m'}\varphi^{\cQ}(\mathbf{x},\mathbf{y}),\mathbf{y}),
		\end{align*}
		where $\id_{m'}$ denotes the identity function in $m'$-many variables.  Hence, by Proposition \ref{prop: convergence of type},
		\[
		\lim_{n \to \cU} \Lambda_\omega^{(n)}(\nabla_{m,m'} \Lambda_{\psi_t}^{(n)}(\mathbf{X}^{(n)},\mathbf{Y}^{(n)}), \mathbf{Y}^{(n)}) = \omega^{\cQ}(\mathbf{x} + \varepsilon \nabla_{m,m'}\varphi^{\cQ}(\mathbf{x},\mathbf{y}),\mathbf{y}) \text{ in probability.}
		\]
		Therefore, by Proposition \ref{prop: classical-free variational principle},
		\[
		\chi_{\chron}^{\cU}(\nabla_{m,m'} \psi^{\cQ}(\mathbf{x},\mathbf{y}) \mid \mathbf{y}) \geq \lim_{n \to \cU} h^{(n)}(\mathbf{X}^{(n)}),
		\]
		where $h^{(n)}$ is the normalized classical entropy.
		
		We now apply the change of variables for classical entropy.  Since $c \varepsilon \leq 1/2$, we have that $\Lambda_{\psi_t}^{(n)}$ is twice differentiable almost everywhere and
		\[
		\frac{1}{2} \leq I + \varepsilon H_{m,m'}^{(n)} \Lambda_{\psi_t}^{(n)}(\mathbf{X},\mathbf{Y}) \leq \frac{3}{2}.
		\]
		By the change of variables for classical entropy
		\[
		h^{(n)}(\mathbf{X}^{(n)} + \varepsilon \nabla_{m,m'} \Lambda_{\psi_t}^{(n)}(\mathbf{X}^{(n)},\mathbf{Y}^{(n)})) = h^{(n)}(\mathbf{X}^{(n)}) + \frac{1}{n^2} \mathbb{E} \Tr \log(I + \varepsilon H_{m,m'}^{(n)} \Lambda_{\psi_t}^{(n)}(\mathbf{X}^{(n)},\mathbf{Y}^{(n)})).
		\]
		We invoke Taylor expansion for the logartithm function at $1$ to deduce that for $z \in [-1/2,1/2]$,
		\[
		|\log(1 + z) - z| \leq \frac{1}{2!} |z|^2 \sup_{w \in [-1/2,1/2]} \left| \frac{-1}{(1+w)^2} \right| = 2 |z|^2.
		\]
		Hence,
		\begin{align*}
			\frac{1}{n^2} \mathbb{E} \Tr \log(I + \varepsilon H_{m,m'}^{(n)} \Lambda_{\psi_t}^{(n)}(\mathbf{X}^{(n)},\mathbf{Y}^{(n)})) &\geq \frac{1}{n^2} \mathbb{E} \Tr(\varepsilon H_{m,m'}^{(n)} \Lambda_{\psi_t}^{(n)}(\mathbf{X}^{(n)},\mathbf{Y}^{(n)})) - 4m(c \varepsilon)^2 \\
			&= \mathbb{E} \Delta_{m,m'}^{(n)} \Lambda_{\psi_t}^{(n)}(\mathbf{X}^{(n)},\mathbf{Y}^{(n)})) - 4 m(c \varepsilon)^2.
		\end{align*}
		Now we plug in \eqref{eq: Laplacian of FD approximation} to obtain
		\[
		h^{(n)}(\mathbf{X}^{(n)} + \varepsilon \nabla_{m,m'} \Lambda_{\psi_t}^{(n)}(\mathbf{X}^{(n)},\mathbf{Y}^{(n)})) \geq h^{(n)}(\mathbf{X}^{(n)}) + \frac{2 \varepsilon}{t} \mathbb{E} \left[ \Lambda_{P_t \varphi}^{(n)}(\mathbf{X}^{(n)},\mathbf{Y}^{(n)}) - \Lambda_{\varphi}^{(n)}(\mathbf{X}^{(n)},\mathbf{Y}^{(n)}) \right] - 4m(c \varepsilon)^2.
		\]
		Taking the limit as $n \to \cU$, and using Proposition \ref{prop: classical-free variational principle} again,
		\begin{align*}
			\chi_{\chron}^{\cU}(\mathbf{x} + \varepsilon \nabla_{m,m'} \psi_t^{\cQ}(\mathbf{x},\mathbf{y}) \mid \mathbf{y}) &\geq \lim_{n \to \cU} h^{(n)}(\mathbf{X}^{(n)} + \varepsilon \nabla_{m,m'} \Lambda_{\psi_t}^{(n)}(\mathbf{X}^{(n)},\mathbf{Y}^{(n)})) \\
			&\geq \lim_{n \to \cU} h^{(n)}(\mathbf{X}^{(n)}) + \lim_{n \to \cU} \frac{\varepsilon}{t} \mathbb{E} \left[ \Lambda_{P_t \varphi}^{(n)}(\mathbf{X}^{(n)},\mathbf{Y}^{(n)}) - \Lambda_{\varphi}^{(n)}(\mathbf{X}^{(n)},\mathbf{Y}^{(n)}) \right] - 4m(c \varepsilon)^2 \\
			&= \chi_{\chron}^{\cU}(\mathbf{x} \mid \mathbf{y}) + \frac{2\varepsilon}{t} [P_t \varphi^{\cQ}(\mathbf{x},\mathbf{y}) - \varphi^{\cQ}(\mathbf{x},\mathbf{y})] - 4m(c \varepsilon)^2.
		\end{align*}
		Then we take the $\limsup$ as $t \to 0^+$.  Recall that $\nabla_{m,m'} \psi_t^{\cQ}(\mathbf{x}, \mathbf{y}) \to \nabla_{m,m'} \varphi^{\cQ}(\mathbf{x},\mathbf{y})$ as $t \to 0$ by \eqref{eq: gradient of heat semigroup 2}.  Note that $\nabla_{m,m'} \psi_t(\mathbf{x})$ is uniformly bounded in operator norm, and $\chi_{\chron}^{\cU}$ is upper semicontinuous on the type space $\mathbb{S}_{\mathbf{r}}$ for each $\mathbf{r} \in (0,\infty)^{m+m'}$ as a consequence of Corollary \ref{cor: semicontinuity of entropy}.  Therefore,
		\begin{align*}
			\chi_{\chron}^{\cU}(\mathbf{x} + \varepsilon \nabla_{m,m'} \varphi^{\cQ}(\mathbf{x},\mathbf{y}) \mid \mathbf{y}) &\geq \limsup_{t \to 0^+} \chi_{\chron}^{\cU}(\mathbf{x} + \varepsilon \nabla_{m,m'} \psi_t^{\cQ}(\mathbf{x},\mathbf{y}) \mid \mathbf{y}) \\
			&\geq \chi_{\chron}^{\cU}(\mathbf{x} \mid \mathbf{y}) + \varepsilon \limsup_{t \to 0^+} \frac{2}{t} [P_t \varphi^{\cQ}(\mathbf{x},\mathbf{y}) - \varphi^{\cQ}(\mathbf{x},\mathbf{y})] - 4m(c \varepsilon)^2 \\
			&= \chi_{\chron}^{\cU}(\mathbf{x} \mid \mathbf{y}) + \varepsilon \overline{\Delta}_{m,m'} \psi^{\cQ}(\mathbf{x},\mathbf{y}) - 4m(c \varepsilon)^2,
		\end{align*}
		which is what we wanted to prove.
	\end{proof}
	
	\section{Entropy and Wasserstein geodesics} \label{sec: info geom}
	
	In this section, we will study the Wasserstein distance on spaces of chronological types, prove an analog of Monge--Kantorovich duality for the Wasserstein distance, and prove geodesic concavity and the evolution variational inequality for $\chi_{\chron}^{\cU}$.  In \cite{Jekel2025InfoGeom}, we studied the behavior of entropy for types in the language of tracial von Neumann algebras, and showed the entropy in that setting is locally Lipschitz along Wasserstein geodesics.  The setting of chronological types in this paper makes the study of entropy more tractable and allows us to obtain geodesic concavity.  In fact, this works more generally for the conditional entropy $\chi_{\chron}^{\cU}(\mathbf{x} \mid \mathbf{y})$ in $\mathbf{x}$.
	
	This is defined by working over a space of chronological types for $\mathbf{x}$ relative to a fixed $\mathbf{y}$.  The \emph{type of $\mathbf{x}$ over $\mathbf{y}$} sends a formula $\varphi$ in $m + m'$ variables to $\varphi^{\cM}(\mathbf{x},\mathbf{y})$.  We remark that formulas in $\mathbf{x}$ and $\mathbf{y}$ can be viewed as formulas in $\mathbf{x}$ in an expanded language with constant symbols added for $\mathbf{y}$, and this is the usual perspective taken in model theory.
	
	Before proving geodesic concavity, we first describe the Wasserstein distance for the types relative to $\mathbf{y}$, and generalize the Monge--Kantorovich duality from \cite{JekelTypeCoupling} to this setting.  The proof of concavity combines this optimal transport machinery with the general infinitesimal change of variables for entropy in Theorem \ref{thm: infinitesimal change of variables}.
	
	\subsection{The space of chronological types over $\mathbf{y}$} \label{subsec: conditional types and distance}
	
	Although it is more proper to associate a type space to a theory $\rT$ rather than a particular structure (hence the notation $\mathbb{S}_{\chron,m}(\rT)$ above), it will be convenient here to work in a (sufficiently large) fixed structure throughout.  This approach will probably be more intuitive to non-model theorists than fixing another theory $\rT$ in the language $\cL_{\chron,\mathbf{y}}$ expanded by constant symbols.
	
	Hence, throughout the discussion, we will work with a fixed $\cL_{\proc}$-structure $\cM$ which is \emph{countably saturated}.  Countable saturation is a condition that guarantees that if some relations in a countable family of variables can be approximately satisfied in $\cM$, then they can be exactly satisfied in $\cM$.  We remark that ultraproducts usually have countable saturation, which can be proved using a diagonalization argument.  See \cite[\S 2.6]{JekelTypeCoupling}.
	
	\begin{definition}
		An $\cL_{\proc}$-structure $\cM$ is said to be \emph{countably saturated} if the following holds:  Let $m, m' \in \N \cup \{\infty\}$.  Let $\Phi$ be a collection of $\cL_{\proc}$-formulas in $m$ variables.  Let $\mathbf{r} \in (0,\infty)^m$ and let $\mathbf{y} \in M_0^{m'}$.  Suppose that for every finite $F \subseteq \Phi$ and $\varepsilon > 0$, there exists $\mathbf{x} \in \prod_{j=1}^m D_{0,r_j}^{\cM}$ such that
		\[
		\max_{\varphi \in F} |\varphi(\mathbf{x},\mathbf{y})| < \varepsilon.
		\]
		Then there exists $\mathbf{x} \in \prod_{j=1}^m D_{0,r_j}^{\cM}$ such that
		\[
		\forall \varphi \in \Phi, \varphi^{\cM}(\mathbf{x},\mathbf{y}) = 0.
		\]
	\end{definition}
	
	\begin{remark}
		Countable saturation is related to compactness because it is saying that if the collection of sets $\{ \mathbf{x}: |\varphi(\mathbf{x},\mathbf{y})| \leq \varepsilon \}$ for $\varphi \in \Phi$ and $\varepsilon > 0$ have the finite intersection property, then their total intersection is nonempty.  See also \cite[Proposition 2.28]{JekelTypeCoupling}.
	\end{remark}
	
	\begin{definition} \label{def: conditional chronological type}
		Fix a countably saturated $\cL_{\proc}$-structure $\cM$ satisfying $\rT_{\stat}$.  Let $m \in \N$ and $m' \in \N \cup \{0,\infty\}$.  Let $\mathbf{y} \in M_0^{m'}$.  We define the space of \emph{chronological types in $\cM$ over $\mathbf{y}$} as
		\[
		\mathbb{S}_{\chron,m}(\cM / \mathbf{y}) = \{ \tp_{\chron}(\mathbf{x},\mathbf{y}): \mathbf{x} \in M_0^m \}.
		\]
		We equip $\mathbb{S}_{\chron,m}(\cM / \mathbf{y})$ with the subspace topology inherited from $\mathbb{S}_{\chron,m+m'}(\Th(\cM))$.  Similarly, for $\mathbf{r} \in (0,\infty)^m$, let
		\[
		\mathbb{S}_{\chron,\mathbf{r}}(\cM / \mathbf{y}) =  \{ \tp(\mathbf{x},\mathbf{y}): \mathbf{x} \in \prod_{j=1}^m D_{0,r_j}^{\cM} \}.
		\]
	\end{definition}
	
	\begin{lemma} \label{lem: conditional chronological type compactness}
		With the setup of Definition \ref{def: conditional chronological type}, the space $\mathbb{S}_{\chron,m,\mathbf{r}}(\cM / \mathbf{y})$ is compact.
	\end{lemma}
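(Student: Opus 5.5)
The plan is to realize $\mathbb{S}_{\chron,\mathbf{r}}(\cM/\mathbf{y})$ as a closed subset of a product of compact intervals, and then use countable saturation to show that every limit point is actually realized in $\cM$. Consider the map sending $\mathbf{x} \in \prod_j D_{0,r_j}^{\cM}$ to $\tp_{\chron}(\mathbf{x},\mathbf{y})$, regarded as a functional on $\mathcal{F}_{\chron,m+m'}$. Each chronological formula $\varphi$ is bounded on $D_{0,(\mathbf{r},\mathbf{r}')}$, with $\mathbf{r}'$ any bound for $\mathbf{y}$; on coordinates of $\mathbf{y}$ the bound is trivial, since only finitely many coordinates appear. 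Hence the set of types lies in $\prod_{\varphi} [-\norm{\varphi}_{0,(\mathbf{r},\mathbf{r}'),\rT}, \norm{\varphi}_{0,(\mathbf{r},\mathbf{r}'),\rT}]$, which is compact by Tychonoff. The weak-$*$ topology on types is exactly the subspace topology of this product. So it suffices to show that $\mathbb{S}_{\chron,\mathbf{r}}(\cM/\mathbf{y})$ is closed in it.

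Take a net $\mathbf{x}_\alpha$ in the $\mathbf{r}$-balls whose types converge pointwise to some functional $\mu$. Consider the family of formulas
\[
\Phi = \{ \varphi(\mathbf{x},\mathbf{y}) - \mu(\varphi) : \varphi \in \mathcal{F}_{\chron,m+m'} \},
\]
with the constants $\mu(\varphi)$ absorbed via a connective. For any finite $F \subseteq \Phi$ and $\varepsilon > 0$, some $\mathbf{x}_\alpha$ satisfies $\max_{F} |\cdot| < \varepsilon$. The countable saturation in the paper's definition applies to collections of $\cL_{\proc}$-formulas. Chronological formulas are $\cL_{\proc}$-formulas, so saturation yields $\mathbf{x} \in \prod_j D_{0,r_j}^{\cM}$ with $\varphi^{\cM}(\mathbf{x},\mathbf{y}) = \mu(\varphi)$ for all $\varphi$, that is, $\tp_{\chron}(\mathbf{x},\mathbf{y}) = \mu$. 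This shows closedness, hence compactness.

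The main obstacle is the word ``countably'': the family $\Phi$ above is uncountable. I would fix this using Lemma~\ref{lem: separability}. There is a countable set $\mathcal{S}_0$ of chronological formulas that is dense for the seminorms $\norm{\cdot}_{0,(\mathbf{r},\mathbf{r}'),\rT_{\stat}}$. When $m'=\infty$, each formula involves only finitely many $y$-coordinates, and the finite-variable sets are combined over countably many choices. I would apply saturation only to $\{\varphi - \mu(\varphi) : \varphi \in \mathcal{S}_0\}$.

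Two points then need to be checked. First, $\mu$ is uniformly continuous with respect to these seminorms, since it is a pointwise limit of evaluations, each of which satisfies $|\mu(\varphi)-\mu(\psi)| \le \norm{\varphi-\psi}_{0,(\mathbf{r},\mathbf{r}'),\rT}$. Second, agreement on $\mathcal{S}_0$ therefore forces agreement on all of $\mathcal{F}_{\chron,m+m'}$. Alternatively, if the paper's notion of saturation permits countably many formulas indexed in any way, the same density reduction handles it.

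Finally, the topology in Definition~\ref{def: conditional chronological type} is inherited from $\mathbb{S}_{\chron,m+m'}(\Th(\cM))$, with the $\mathbf{y}$-part fixed. So I would note that the evaluation topology above coincides with that inherited topology on the relevant bounded piece $\mathbb{S}_{\chron,(\mathbf{r},\mathbf{r}')}$, where the inductive limit topology restricts to the weak-$*$ topology.
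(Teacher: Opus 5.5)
Your proposal is correct and follows the paper's proof: you identify the type space with a subset of a Tychonoff product of bounded intervals, and you get closedness by applying countable saturation to $\{\varphi - (\mu,\varphi)\}$. The reduction to a countable dense family via Lemma~\ref{lem: separability} is not needed under the paper's definition of countable saturation, which places no cardinality restriction on $\Phi$; it is nonetheless valid, and it makes your argument work under the weaker, more standard reading in which only countably many conditions are allowed.
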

	
	\begin{proof}
		The topology on $\mathbb{S}_{\chron,m,\mathbf{r}}(\cM / \mathbf{y})$ is the weak-$*$ topology induced by the evaluation of formulas.  For each formula $\varphi$, there is some bound $M_\varphi$ such that $|\varphi^{\cM}(\mathbf{x}, \mathbf{y})| \leq M_\varphi$ for $\mathbf{x} \in \prod_{j=1}^m D_{0,r_j}^{\cM}$.  Hence, $\mathbb{S}_{\chron,m,\mathbf{r}}(\cM / \mathbf{y})$ can be identified with subspace of $\prod_{\varphi \in \mathcal{F}_{\chron,m+m'}} [-M_\varphi,M_\varphi]$ with the product topology, which is compact by Tychonoff's theorem.  So it suffices to show that $\mathbb{S}_{\chron,m,\mathbf{r}}(\cM / \mathbf{y})$ is closed.  Choose a point $\mu$ in the closure; then for any finite collection $F \subseteq \mathcal{F}_{\chron,m+m'}$, there exists $\mathbf{x} \in \prod_{j=1}^m D_{0,r_j}^{\cM}$ with $|\varphi^{\cM}(\mathbf{x},\mathbf{y}) - \mu_{\varphi}| < \varepsilon$ for $\varphi \in F$.  By applying the definition of countable saturation to $\Phi = \{ \varphi - (\mu,\varphi): \varphi \in \mathcal{F}_{\chron,m+m'}\}$, there exists some $\mathbf{x} \in \prod_{j=1}^m D_{0,r_j}^{\cM}$ with $\varphi^{\cM}(\mathbf{x},\mathbf{y}) = \mu_\varphi$ for all $\varphi$, so $\mathbb{S}_{\chron,m,\mathbf{r}}(\cM / \mathbf{y})$ is closed as desired.
	\end{proof}
	
	\begin{definition}[Wasserstein distance and optimal couplings] \label{def: Wasserstein distance}
		Fix a countably saturated $\cL_{\proc}$-structure $\cM$ satisfying $\rT_{\stat}$.  Let $m \in \N$ and $m' \in \M \cup \{0,\infty\}$.  Let $\mathbf{y} \in M_0^{m'}$.  For types $\mu_0, \mu_1 \in \mathbb{S}_{\chron,m,\mathbf{r}}(\cM / \mathbf{y})$, define
		\begin{equation} \label{eq: definition of Wasserstein}
			d_W(\mu_0,\mu_1) = \inf \{ \norm{\mathbf{x}_0 - \mathbf{x}_1}_2: \mathbf{x}_0, \mathbf{x}_1 \in M_0^m, \tp_{\chron}(\mathbf{x}_j, \mathbf{y}) = \mu_j \}
		\end{equation}
		and
		\begin{equation} \label{eq: definition of coupling constant}
			C(\mu_0,\mu_1) = \sup \{ \re \ip{\mathbf{x}_0, \mathbf{x}_1}: \mathbf{x}_0, \mathbf{x}_1 \in M_0^m, \tp_{\chron}(\mathbf{x}_j, \mathbf{y}) = \mu_j \}.
		\end{equation}
		We say that $(\overline{x}_0,\overline{x}_1)$ is an \emph{optimal coupling} $(\mu_0,\mu_1)$ if it achieves the infimum in \eqref{eq: definition of Wasserstein} or equivalently achieves the supremum in \eqref{eq: definition of coupling constant}.
	\end{definition}
	
	Just as for all the other versions of $L^2$-Wasserstein distance, the equivalence above follows because $\norm{\mathbf{x}_0 - \mathbf{x}_1}_2^2 = \norm{\mathbf{x}_0}_2^2 + \norm{\mathbf{x}_1}_2^2 - 2 \re \ip{\mathbf{x}_0, \mathbf{x}_1}$, and $\norm{\mathbf{x}_0}_2^2$ and $\norm{\mathbf{x}_1}_2^2$ are uniquely determined by the types $\mu_0$ and $\mu_1$.  We remind the reader that the Wasserstein distance does not necessarily generate the weak-$*$ topology on $\mathbb{S}_{\chron,m,\mathbf{r}}(\cM / \mathbf{y})$.  While we will not discuss the chronological case in detail here, see \cite[\S 5.5-5.6]{GJNS2021} for the quantifier-free case and \cite[Proposition 2.4.9]{AGKE2022} for the case of full types in $\mathcal{L}_{\tr}$.
	
	We remark next that optimal couplings always exist, which is a nice example application of countable saturation.
	
	\begin{lemma} \label{lem: existence of optimal couplings}
		Fix a countably saturated $\cL_{\proc}$-structure $\cM$ satisfying $\rT_{\stat}$.  Let $m \in \N$ and $m' \in \M \cup \{0,\infty\}$.  Let $\mathbf{y} \in M_0^{m'}$.  For types $\mu_0, \mu_1 \in \mathbb{S}_{\chron,m,\mathbf{r}}(\cM / \mathbf{y})$.  For every $\mathbf{x}_0 \in M_0^m$ with $\tp_{\chron}^{\cM}(\mathbf{x}_0,\mathbf{y}) = \mu_0$, there exists $\mathbf{x}_1 \in M_0^m$ such that $(\mathbf{x}_0,\mathbf{x}_1)$ is an optimal coupling of $(\mu_0,\mu_1)$.
	\end{lemma}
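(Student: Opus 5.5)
The plan is to realize the supremum in \eqref{eq: definition of coupling constant} with $\mathbf{x}_0$ held fixed, and then use countable saturation to pass from approximate maximizers to an exact one. The first step is a reduction: the quantity $C(\mu_0,\mu_1)$ is unchanged if we fix $\mathbf{x}_0$. Suppose $(\mathbf{x}_0',\mathbf{x}_1')$ is any pair with $\tp_{\chron}(\mathbf{x}_j',\mathbf{y}) = \mu_j$. Since $\tp_{\chron}(\mathbf{x}_0',\mathbf{y}) = \tp_{\chron}(\mathbf{x}_0,\mathbf{y})$, for every finite set $\Phi$ of chronological formulas in $2m+m'$ variables and every $\varepsilon>0$, the following holds. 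The sentence-like formula
\[
\inf_{\mathbf{w} \in D_{0,\mathbf{r}}} \max_{\varphi \in \Phi} |\varphi(\mathbf{x},\mathbf{w},\mathbf{y}) - \varphi^{\cM}(\mathbf{x}_0',\mathbf{x}_1',\mathbf{y})|,
\]
with $\mathbf{r}$ chosen so that $\mathbf{x}_1' \in D_{0,\mathbf{r}}^{\cM}$, is a chronological formula in $(\mathbf{x},\mathbf{y})$. This uses that quantifiers over $D_{0,\cdot}$ preserve chronological formulas. It vanishes at $\mathbf{x}_0'$, and hence it also vanishes at $\mathbf{x}_0$.

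Here the constants $\varphi^{\cM}(\mathbf{x}_0',\mathbf{x}_1',\mathbf{y})$ are just real numbers, so the formula makes sense. Consequently, for each finite $\Phi$ and each $\varepsilon$ there is $\mathbf{w} \in D_{0,\mathbf{r}}^{\cM}$ with $\max_{\varphi\in\Phi}|\varphi^{\cM}(\mathbf{x}_0,\mathbf{w},\mathbf{y}) - \varphi^{\cM}(\mathbf{x}_0',\mathbf{x}_1',\mathbf{y})| < \varepsilon$. We then apply countable saturation, with $\mathbf{x}_0,\mathbf{y}$ as parameters, to the collection $\{\varphi(\mathbf{x}_0,\cdot,\mathbf{y}) - \varphi^{\cM}(\mathbf{x}_0',\mathbf{x}_1',\mathbf{y})\}$. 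This collection is countable up to density by Lemma \ref{lem: separability}; if needed, one can instead use the saturation definition, which does not require countability of $\Phi$ as stated. The result is $\mathbf{w}$ such that $(\mathbf{x}_0,\mathbf{w},\mathbf{y})$ has the same chronological type as $(\mathbf{x}_0',\mathbf{x}_1',\mathbf{y})$. In particular $\tp_{\chron}(\mathbf{w},\mathbf{y}) = \mu_1$ and $\re\ip{\mathbf{x}_0,\mathbf{w}} = \re\ip{\mathbf{x}_0',\mathbf{x}_1'}$, because $\re\ip{\mathbf{x},\mathbf{w}}$ is a basic formula.

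Next I would take a maximizing sequence. By the reduction there are $\mathbf{w}_k$ with $\tp_{\chron}(\mathbf{w}_k,\mathbf{y}) = \mu_1$ and $\re\ip{\mathbf{x}_0,\mathbf{w}_k} \to C(\mu_0,\mu_1)$. All the $\mathbf{w}_k$ lie in $D_{0,\mathbf{r}}^{\cM}$, where $\mathbf{r}$ is fixed by $\mu_1 \in \mathbb{S}_{\chron,m,\mathbf{r}}$; the operator norm bound is determined by the type, and the sup is finite by Cauchy--Schwarz. Now consider the family of conditions on $\mathbf{w} \in D_{0,\mathbf{r}}$ consisting of:
\begin{itemize}
\item $\varphi(\mathbf{w},\mathbf{y}) - (\mu_1,\varphi) = 0$ for all $\varphi \in \mathcal{F}_{\chron,m+m'}$;
\item $\max(C(\mu_0,\mu_1) - \re\ip{\mathbf{x}_0,\mathbf{w}},0) = 0$.
\end{itemize}
Every finite subfamily is $\varepsilon$-satisfied by some $\mathbf{w}_k$ with $k$ large. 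Countable saturation, with parameters $(\mathbf{x}_0,\mathbf{y})$ (concatenating $\mathbf{x}_0$ into the parameter tuple), then yields $\mathbf{x}_1$ satisfying all of these conditions exactly. Thus $\tp_{\chron}(\mathbf{x}_1,\mathbf{y}) = \mu_1$ and $\re\ip{\mathbf{x}_0,\mathbf{x}_1} \ge C(\mu_0,\mu_1)$, so $(\mathbf{x}_0,\mathbf{x}_1)$ attains the supremum and is an optimal coupling.

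I expect the main obstacle to be the bookkeeping in the first step. One has to check that the displayed inf-formula is genuinely chronological and continuous, which holds because the quantifier is over $D_{0,\mathbf{r}}$ and connectives are continuous. One also has to check that the saturation hypothesis permits parameters $(\mathbf{x}_0,\mathbf{y})$ and possibly uncountably many formulas; separability, Lemma \ref{lem: separability}, reduces the formula family to a countable dense set if required.
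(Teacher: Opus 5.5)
Your argument is correct, but it is organized differently from the paper's.

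\textbf{The paper's route.} It first uses metrizability of $\mathbb{S}_{\chron,(\mathbf{r},\mathbf{r}')}$ (Lemma \ref{lem: separability}) to pick nonnegative chronologically definable predicates $\eta_0,\eta_1$ whose zero sets in $D_{0,\mathbf{r}}$ are exactly the realizations of $\mu_0,\mu_1$. It writes $C(\mu_0,\mu_1)$ as the penalized value $\inf_{\varepsilon}\sup_{\mathbf{x}_0,\mathbf{x}_1}[\re\ip{\mathbf{x}_0,\mathbf{x}_1}-\varepsilon^{-1}\eta_0-\varepsilon^{-1}\eta_1]$ and uses saturation to produce \emph{some} optimal coupling $(\mathbf{x}_0',\mathbf{x}_1')$. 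It then transfers to the given $\mathbf{x}_0$ by noting that the single predicate $\sup_{\mathbf{x}_1}[\re\ip{\mathbf{x},\mathbf{x}_1}-\varepsilon^{-1}\eta_1(\mathbf{x}_1,\mathbf{y})]$ takes the same value at $\mathbf{x}_0$ and $\mathbf{x}_0'$, and saturates once more.

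\textbf{Your route.} You transfer the entire joint type $\tp_{\chron}(\mathbf{x}_0',\mathbf{x}_1',\mathbf{y})$ to one with $\mathbf{x}_0$ in the first slot, a homogeneity-style argument via the inf-formulas over $D_{0,\mathbf{r}}$. You then realize the limit of a maximizing sequence directly. You never need an optimal coupling with free first coordinate as an intermediate step.

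\textbf{What each buys.} Your approach avoids the predicates $\eta_j$, and hence metrizability, entirely. It only uses that saturation applies to arbitrary families of formulas, which the paper's definition permits and which the paper itself uses in Lemma \ref{lem: conditional chronological type compactness}. So your fallback appeal to Lemma \ref{lem: separability} is unnecessary. The paper's approach packages the type constraint into one definable predicate via the penalty $\varepsilon^{-1}\eta_1$. That is exactly the device reused in the proof of Theorem \ref{thm: MK duality} through the predicates $\theta_\varepsilon$.

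\textbf{Minor points.} You could shorten your first step. Fold the constraint $\re\ip{\mathbf{x},\mathbf{w}}>C(\mu_0,\mu_1)-\varepsilon$ into the inf-formula and do a single saturation, rather than first realizing the full joint type. Also, $\re\ip{\mathbf{x},\mathbf{w}}=\sum_j\re\tr(x_j^*w_j)$ is a quantifier-free formula obtained from basic formulas by a connective, not itself a basic formula. Neither point affects correctness.
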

	
	\begin{proof}
		First, we show that some optimal coupling exists without the assumption that $\mathbf{x}_0$ is fixed.  Let $\mathbf{r}'$ such that $\mathbf{y} \in D_{0,\mathbf{r}'}^{\cM}$.  Since $\mathbb{S}_{\chron,(\mathbf{r},\mathbf{r}')}$ is metrizable in the weak-$*$ topology by Lemma \ref{lem: separability}, there exists some chronologically definable predicates $\eta_0$, $\eta_1 \geq 0$ such that for $\mathbf{x} \in D_{0,\mathbf{r}}^{\cM}$ we have $\eta_j^{\cM}(\mathbf{x},\mathbf{y}) = 0$ if and only if $\tp_{\chron}^{\cM}(\mathbf{x}, \mathbf{y}) = \mu_j$.  We claim that
		\[
		C(\mu_0,\mu_1) = \inf_{\varepsilon > 0} \sup_{\mathbf{x}_0,\mathbf{x}_1 \in D_{0,\mathbf{r}}^{\cM}}
		\re \ip{\mathbf{x}_0,\mathbf{x}_1} - \varepsilon^{-1} \eta_0^{\cM}(\mathbf{x}_0,\mathbf{y}) -  \varepsilon^{-1} \eta_1^{\cM}(\mathbf{x}_1,\mathbf{y}).
		\]
		Indeed, let $C$ be the value of the right-hand side.  By substituting a coupling $(\mathbf{x}_0,\mathbf{x}_1)$ of $(\mu_0,\mu_1)$ as a candidate for the sup, we see that $C \geq C(\mu_0,\mu_1)$.  For the converse, note that for every $\delta, \varepsilon > 0$, there exist $\mathbf{x}_0$, $\mathbf{x}_1 \in D_{0,\mathbf{r}}^{\cM}$ such that
		\[
		\max(C - \re \ip{\mathbf{x}_0,\mathbf{x}_1} + \varepsilon^{-1} \eta_0^{\cM}(\mathbf{x}_0,\mathbf{y}) + \varepsilon^{-1} \eta_1^{\cM}(\mathbf{x}_1,\mathbf{y}),0) \leq \delta.
		\]
		By countable saturation, there exist $\mathbf{x}_0$, $\mathbf{x}_1$ such that this holds for all $\delta$, $\varepsilon$.  Since $|\re \ip{\mathbf{x}_0,\mathbf{x}_1}|$ is bounded by a constant depending on $\mathbf{r}$, this implies that $\eta_j^{\cM}(\mathbf{x}_j,\mathbf{y}) = 0$, hence $\tp_{\chron}^{\cM}(\mathbf{x}_j,\mathbf{y}) = \mu_j$ and $\re \ip{\mathbf{x}_0,\mathbf{x}_1} \geq C$.  Therefore, $C(\mu_0,\mu_1) \geq C$.  Moreover, the pair $(\mathbf{x}_0,\mathbf{x}_1)$ in the foregoing argument satisfies $\re \ip{\mathbf{x}_0,\mathbf{x}_1} = C = C(\mu_0,\mu_1)$ and is thus an optimal coupling.
		
		Now suppose that $\mathbf{x}_0$ is fixed with $\tp_{\chron}^{\cM}(\mathbf{x}_0,\mathbf{y}) = \mu_0$.  Let $(\mathbf{x}_0',\mathbf{x}_1')$ be some optimal coupling of $(\mu_0,\mu_1)$.  Since $\mathbf{x}_0$ and $\mathbf{x}_0'$ have the same chronological type,
		\[
		\sup_{\mathbf{x}_1 \in D_{0,\mathbf{r}}^{\cM}}
		\re \ip{\mathbf{x}_0,\mathbf{x}_1} -  \varepsilon^{-1} \eta_1^{\cM}(\mathbf{x}_1,\mathbf{y}) = \sup_{\mathbf{x}_1 \in D_{0,\mathbf{r}}^{\cM}}
		\re \ip{\mathbf{x}_0',\mathbf{x}_1} -  \varepsilon^{-1} \eta_1^{\cM}(\mathbf{x}_1,\mathbf{y}) \geq C(\mu_0,\mu_1).
		\]
		So for $\delta, \varepsilon > 0$, there exists $\mathbf{x}_1$ with $\re \ip{\mathbf{x}_0,\mathbf{x}_1} - \varepsilon^{-1} \eta_1^{\cM}(\mathbf{x}_1,\mathbf{y}) \geq C(\mu_0,\mu_1) - \delta$.  Using countable saturation as before, we find some $\mathbf{x}_1$ which satisfies this for all $\delta, \varepsilon$, and hence $(\mathbf{x}_0,\mathbf{x}_1)$ is an optimal coupling of $(\mu_0,\mu_1)$.
	\end{proof}

	\subsection{Monge--Kantorovich duality} \label{subsec: MK duality}
	
	Now we can prove the analog of Monge--Kantorovich duality for the setting of chronological types over a given $\mathbf{y}$.  The theorem and proof are an adaptation of \cite[Theorem 1.1]{JekelTypeCoupling}, but parts of the argument are simpler thanks to working with functions which are quadratic plus $\norm{\cdot}_1$-Lipschitz.  Compare also \cite[Proposition 1.3]{GJNS2021} for the setting of quantifier-free types over $\cL_{\tr}$ (noncommutative laws).
	
	\begin{theorem} \label{thm: MK duality}
		Fix a countably saturated $\cL_{\proc}$-structure $\cM$ satisfying $\rT_{\stat}$.  Let $m \in \N$ and $m' \in \N \cup \{0,\infty\}$.  Let $\mathbf{y}_0 \in M_0^{m'}$.  For any types $\mu_0, \mu_1 \in \mathbb{S}_{\chron,m,\mathbf{r}}(\cM / \mathbf{y}_0)$, there exist chronologically definable predicates $\varphi_0$, $\varphi_1$ in $m+m'$ variables with respect to $\rT_{\stat}$ such that for all $\cN$ satisfying $\rT_{\stat}$,
		\begin{equation} \label{eq: admissibility}
			\varphi_0^{\cN}(\mathbf{x}_0,\mathbf{y}) + \varphi_1^{\cN}(\mathbf{x}_1,\mathbf{y}) \geq \re \ip{\mathbf{x_0}\mathbf{x}_1} \text{ for all } \mathbf{x}_0, \mathbf{x}_1 \in \cN_0^m,
		\end{equation}
		and equality holds when $(\mathbf{x}_0,\mathbf{x}_1)$ is an optimal coupling in $\cM$ of $(\mu_0,\mu_1)$ relative to $\mathbf{y}_0$.  Moreover, $\varphi_j$ can be chosen to be convex in first $m$ variables and of the form $\varphi_j = \psi_j + q$ where $\psi_j$ is Lipschitz with respect to $\norm{\cdot}_1$ in the first $m$ variables.
	\end{theorem}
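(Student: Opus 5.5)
We will follow the strategy of \cite[Theorem 1.1]{JekelTypeCoupling}: realize the coupling constant $C(\mu_0,\mu_1)$ as an infimum over admissible pairs, find a near-optimal admissible pair, and then use compactness in a suitable class of predicates to obtain an exact optimizer. First fix an optimal coupling $(\mathbf{x}_0,\mathbf{x}_1)$ in $\cM$, which exists by Lemma \ref{lem: existence of optimal couplings}. As in that proof, take chronologically definable predicates $\eta_0,\eta_1 \geq 0$ that vanish on $D_{0,\mathbf{r}}^{\cM}$ exactly at tuples of type $\mu_0$, $\mu_1$ over $\mathbf{y}_0$. Set
\[
\varphi_{1,\varepsilon}(\mathbf{x}_1,\mathbf{y}) = \varepsilon^{-1}\eta_1(\mathbf{x}_1,\mathbf{y}) + \text{(a large multiple of } q \text{ outside the ball)},
\]
modified so that it is of the form (quadratic) $+$ ($\norm{\cdot}_1$-Lipschitz). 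Put $\varphi_{0,\varepsilon} = \varphi_{1,\varepsilon}^{\star}$, the Legendre transform in the first $m$ variables, and then $\varphi_{1,\varepsilon} := \varphi_{0,\varepsilon}^{\star}$. By Lemma \ref{lem: Lipschitz plus quadratic}, both are chronologically definable, convex, and of the form $q+\psi$ with $\psi$ $\norm{\cdot}_1$-Lipschitz (after the normalization $a=1$, using that $q^\star = q$). Admissibility \eqref{eq: admissibility} then holds by construction in every $\cN \models \rT_{\stat}$, since $\varphi_0^{\cN}(\mathbf{x}_0,\mathbf{y})\ge \re\ip{\mathbf{x}_0,\mathbf{x}_1}-\varphi_1^{\cN}(\mathbf{x}_1,\mathbf{y})$ is just the definition of the Legendre transform.

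\textbf{Step 1 (approximate duality).} Using the saturation argument of Lemma \ref{lem: existence of optimal couplings}, we will show that
\[
\inf_{\varepsilon}\bigl[(\mu_0,\varphi_{0,\varepsilon}) + (\mu_1,\varphi_{1,\varepsilon})\bigr] = C(\mu_0,\mu_1).
\]
The key point is that $(\mu_0,\varphi_{0,\varepsilon})$ is a sup over $\mathbf{x}_1$ penalized by $\varepsilon^{-1}\eta_1$, evaluated at a fixed $\mathbf{x}_0$ of type $\mu_0$; as $\varepsilon \to 0$, countable saturation forces the maximizers toward type $\mu_1$.

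\textbf{Step 2 (exact minimizer).} Here we use that for convex $\varphi = q+\psi$, the sum $(\mu_0,\varphi)+(\mu_1,\varphi^\star)$ is at least $C(\mu_0,\mu_1)$. We will normalize the pairs by adding constants and restrict to a class of $\psi$'s that are uniformly $L$-Lipschitz in $\norm{\cdot}_1$ and uniformly bounded on the relevant ball. We then pass to a limit. Since chronologically definable predicates correspond to continuous functions on the type space, we can view $\psi$ as defining a function on $\mathbb{S}_{\chron}$, and take a pointwise or uniform limit on types, e.g. a limsup/liminf envelope regularized by a double Legendre transform, which preserves the class. Lower semicontinuity of the functional under such limits then gives a pair achieving $C(\mu_0,\mu_1)$. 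Equality in \eqref{eq: admissibility} follows for every optimal coupling: admissibility gives $\geq$, and the values sum to $C(\mu_0,\mu_1)$, which equals $\re\ip{\mathbf{x}_0,\mathbf{x}_1}$.

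\textbf{Main obstacle.} The hard part is Step 2: obtaining uniform Lipschitz bounds independent of $\varepsilon$ and a compactness argument inside definable predicates. Predicates obtained as limits of families need not be definable unless the convergence is uniform on balls. We would try first to avoid this by replacing $\varepsilon^{-1}\eta_1$ with a single truncation $\min(\varepsilon^{-1}\eta_1, K)$, where $K$ depends only on $\mathbf{r}$, and arguing as in \cite{JekelTypeCoupling} via a Kantorovich potential on the compact type space $\mathbb{S}_{\chron,\mathbf{r}}(\cM/\mathbf{y}_0)$ (Lemma \ref{lem: conditional chronological type compactness}). An Arzel\`a--Ascoli-type argument for uniformly $\norm{\cdot}_1$-Lipschitz predicates on that compact space would supply the uniform limit.
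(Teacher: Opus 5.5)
Your Step 2 has a genuine gap. The type space $\mathbb{S}_{\chron,\mathbf{r}}(\cM/\mathbf{y}_0)$ is compact only in the logic (weak-$*$) topology. Uniform $\norm{\cdot}_1$-Lipschitz bounds, on the other hand, give equicontinuity only with respect to the metric $d$, that is, with respect to $d_W$ on types. The $d_W$-topology is in general strictly finer than the logic topology and is not compact; if it were compact, it would coincide with the coarser Hausdorff logic topology. So no Arzel\`a--Ascoli argument is available, and pointwise limits or envelopes of your potentials need not be chronologically definable predicates.

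The failure already shows up in the natural family $\theta_\varepsilon(\mathbf{x},\mathbf{y}) = \sup_{\mathbf{x}_1 \in D_{0,\mathbf{r}}}[\re\ip{\mathbf{x},\mathbf{x}_1} - \varepsilon^{-1}\eta_1(\mathbf{x}_1,\mathbf{y})]$. These are uniformly $\norm{\cdot}_1$-Lipschitz and bounded on balls. Yet their decreasing limit $C(\tp_{\chron}(\mathbf{x},\mathbf{y}),\mu_1)$ is only upper semicontinuous on the type space. By Dini's theorem the convergence would be uniform if and only if this limit were continuous, and it need not be, since $d_W$ does not generate the weak-$*$ topology. Limsup/liminf envelopes, double Legendre transforms, and a fixed truncation level $K$ do not restore logic-continuity.

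A smaller issue: $\varepsilon^{-1}\eta_1$ plus a quadratic is not of the form quadratic plus $\norm{\cdot}_1$-Lipschitz, because $\eta_1$ is merely uniformly continuous. So Lemma \ref{lem: Lipschitz plus quadratic} does not apply to your $\varphi_{1,\varepsilon}$. The Lipschitz property has to come from a supremum of the affine functions $\re\ip{\mathbf{x},\mathbf{v}}$ over $\mathbf{v}$ in an operator-norm ball.

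The missing idea is that no exact minimizer or limit is needed. It suffices to find a single definable predicate $\theta$ with $\theta \geq C(\tp_{\chron}(\cdot),\mu_1)$ everywhere and equality at type $\mu_0$. The paper builds it as follows.
\begin{itemize}
\item Choose $\varepsilon_k \downarrow 0$ with $\theta_{\varepsilon_k}(\mathbf{x}_0,\mathbf{y}_0) \leq C(\mu_0,\mu_1) + 2^{-k-1}$.
\item Set $\theta = \theta_{\varepsilon_1} + \sum_k F_{2^{-k}}(\theta_{\varepsilon_{k+1}} - \theta_{\varepsilon_k})$.
\item The cutoffs make the series converge uniformly, so $\theta$ is definable.
\item Each increment $\theta_{\varepsilon_{k+1}} - \theta_{\varepsilon_k}$ is nonpositive, and $F_{2^{-k}}$ only raises it toward $0$. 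Hence $\theta \geq \lim_\varepsilon \theta_\varepsilon$.
\item The cutoffs are inactive at $\mathbf{x}_0$, so $\theta(\mathbf{x}_0,\mathbf{y}_0) = C(\mu_0,\mu_1)$.
\end{itemize}
Then put $\varphi_1 = \delta + \sup_{\mathbf{v} \in D_{0,\mathbf{r}}}[\re\ip{\mathbf{x},\mathbf{v}} - \theta(\mathbf{v},\mathbf{y})]$, where $\delta$ is half the squared distance to $D_{0,\mathbf{r}}$. This $\varphi_1$ is convex and $\varphi_1 - q$ is $\norm{\cdot}_1$-Lipschitz. Setting $\varphi_0 = \varphi_1^{\star}$ gives admissibility by definition of the Legendre transform. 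Equality at an optimal coupling follows from two bounds:
\begin{itemize}
\item $\varphi_0(\mathbf{x}_0) \leq \theta(\mathbf{x}_0) = C(\mu_0,\mu_1)$, by testing $\mathbf{v} = \mathbf{x}_0$ in the inner infimum;
\item $\varphi_1(\mathbf{x}_1) \leq 0$, since $\theta(\mathbf{v}) \geq C(\tp_{\chron}(\mathbf{v}),\mu_1) \geq \re\ip{\mathbf{x}_1,\mathbf{v}}$ and $\delta(\mathbf{x}_1) = 0$.
\end{itemize}
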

	
	\begin{proof}
		Fix a chronologically definable predicate $\eta_1 \geq 0$ in $m + m'$ variables such that if $\mathbf{x} \in D_{0,\mathbf{r}}^{\cM}$ and $\eta_1(\mathbf{x},\mathbf{y}) = 0$, then $\tp_{\chron}^{\cM}(\mathbf{x},\mathbf{y}) = \mu_1$.  For $\varepsilon > 0$, let
		\[
		\theta_{\varepsilon}(\mathbf{x},\mathbf{y}) = \sup_{ \mathbf{x}_1 \in D_{0,\mathbf{r}}^{\cM}} \left[ \ip{\mathbf{x},\mathbf{x}_1} - \varepsilon^{-1} \eta_1(\mathbf{x}_1,\mathbf{y}) \right].
		\]
		Observe that for $\mathbf{x} \in M_0^m$,
		\[
		\lim_{\varepsilon \to 0^+} \theta_{\varepsilon}^{\cM}(\mathbf{x},\mathbf{y}) = C(\tp_{\chron}(\mathbf{x},\mathbf{y}, \mu_1).
		\]
		This follows from the same argument as Lemma \ref{lem: existence of optimal couplings}; see also \cite[proof of Proposition 4.1]{JekelTypeCoupling} for a parallel argument worked out in more detail.  Note $\theta_{\varepsilon}$ decreases as $\varepsilon$ decreases.
		
		Fix $\mathbf{x}_0 \in M_0^m$ with $\tp_{\chron}(\mathbf{x}_0,\mathbf{y}) = \mu_0$.  Fix $\varepsilon_k$ for $k \in \N$ such that
		\[
		\theta_{\varepsilon_k}^{\cM}(\mathbf{x},\mathbf{y}) - C(\mu_0,\mu_1) \leq 2^{-k-1},
		\]
		and so also
		\[
		0 \leq \theta_{\varepsilon_k}^{\cM}(\mathbf{x},\mathbf{y}) - \theta_{\varepsilon_{k+1}}^{\cM}(\mathbf{x},\mathbf{y}) \leq 2^{-k}.
		\]
		Let $F_{2^{-k}}$ be the cutoff function as in Lemma \ref{lem: projection onto R ball}, and let
		\[
		\theta = \theta_{\varepsilon_1} + \sum_{k=1}^\infty F_{2^{-k}}(\theta_{\varepsilon_{k+1}} - \theta_{\varepsilon_k}).
		\]
		Since $|F_{2^{-k}}| \leq 2^{-k}$, the series converges uniformly and thus defines a chronologically definable predicate with respect to $\rT_{\stat}$.  We also note since
		\[
		\theta_{\varepsilon_{k+1}} - \theta_{\varepsilon_k} \leq F_{2^{-k}}(\theta_{\varepsilon_{k+1}} - \theta_{\varepsilon_k}) \leq 0,
		\]
		we have that for $\mathbf{x} \in M_0^m$,
		\[
		\theta^{\cM}(\mathbf{x},\mathbf{y}) \geq \lim_{\varepsilon \to 0^+} \theta_{\varepsilon}^{\cM}(\mathbf{x},\mathbf{y}) = C(\tp_{\chron}^{\cM}(\mathbf{x},\mathbf{y}), \mu_1).
		\]
		Moreover, at $\mathbf{x}_0$, equality is achieved since $|\theta_{\varepsilon_{k+1}}^{\cM}(\mathbf{x}_0,\mathbf{y}) - \theta_{\varepsilon_k}(\mathbf{x}_0,\mathbf{y})| \leq 2^{-k}$.
		
		In order to define $\varphi_0$ and $\varphi_1$, we need one more ingredient.  Define the formula
		\[
		\delta(\mathbf{x},\mathbf{y}) := \inf_{\mathbf{v} \in D_{0,\mathbf{r}}} q(\mathbf{x}) = \inf_{\mathbf{v} \in D_{0,\mathbf{r}}} \frac{1}{2} \norm{\mathbf{x} - \mathbf{v}}_2^2.
		\]
		It is routine to show that $\delta$ is convex, and $\delta^{\cN}(\mathbf{x},\mathbf{y}) = 0$ whenever $\mathbf{x} \in D_{0,\mathbf{r}}^{\cN}$ for some $\cN$ satisfying $\rT_{\stat}$.  Moreover,
		\[
		\delta^{\cN}(\mathbf{x},\mathbf{y}) - q(\mathbf{x}) = \inf_{\mathbf{v} \in D_{0,\mathbf{r}}^{\cN}} \left[ \re \ip{\mathbf{x},\mathbf{v}} - \frac{1}{2} \norm{\mathbf{v}}_2^2 \right].
		\]
		Since $\mathbf{v}$ is bounded in operator norm, this is a $\norm{\cdot}_1$-Lipschitz function of $\mathbf{x}$.  Now define
		\[
		\varphi_1(\mathbf{x},\mathbf{y}) = \delta(\mathbf{x},\mathbf{y}) + \sup_{\mathbf{v} \in D_{0,\mathbf{r}}} \left[ \re \ip{\mathbf{x},\mathbf{v}} - \theta(\mathbf{v},\mathbf{y}) \right].
		\]
		Note that $\varphi_1^{\cN}$ is convex in $\mathbf{x}$ for every $\cN$ satisfying $\rT_{\stat}$ because $\delta$ is convex and the second term is the supremum of a family of affine functions.  Moreover, $\sup_{\mathbf{v} \in D_{0,\mathbf{r}}} \left[ \re \ip{\mathbf{x},\mathbf{v}} - \theta(\mathbf{v},\mathbf{y}) \right]$ is also $\norm{\cdot}_1$-Lipschitz in $\mathbf{x}$ since there is a bound on the operator norm of $\mathbf{v}$.  Thus, overall, $\varphi_1$ is convex and $\varphi_1 - q$ is Lipschitz in $\norm{\cdot}_1$ with respect to $\mathbf{x}$.
		
		Let $\varphi_0$ be the Legendre transform $(\varphi_1)^{\star}$.  By Lemma \ref{lem: Lipschitz plus quadratic}, $\varphi_0$ is a chronologically definable predicate with respect to $\rT_{\stat}$, it is convex in the first $m$ variables, and $\varphi - q$ is $\norm{\cdot}_1$-Lipschitz in the first $m$ variables.  Since $\varphi_0$ was chosen as the Legendre transform of $\varphi_1$, we have \eqref{eq: admissibility}.  So it remains to show that equality holds when $(\mathbf{x}_0,\mathbf{x}_1)$ is an optimal coupling of $(\mu_0,\mu_1)$ relative to $\mathbf{y}_0$ in $\cM$.  In this case, note that
		\begin{align*}
			\varphi_0^{\cM}(\mathbf{x}_0,\mathbf{y}_0) &= \sup_{\mathbf{w} \in M_0^m} \inf_{\mathbf{v} \in D_{0,\mathbf{r}}^{\cM}} \left[ \re \ip{\mathbf{x}_1,\mathbf{w}} - \delta^{\cM}(\mathbf{w},\mathbf{y}) + \re \ip{\mathbf{w},\mathbf{v}} + \theta^{\cM}(\mathbf{v},\mathbf{y}_0) \right] \\
			&\leq \sup_{\mathbf{w} \in M_0^m} \left[ \re \ip{\mathbf{x}_0,\mathbf{w}} + 0 - \re \ip{\mathbf{w},\mathbf{x}_0} + \theta^{\cM}(\mathbf{x}_0,\mathbf{y}_0) \right] \\
			&\leq \theta^{\cM}(\mathbf{x}_0,\mathbf{y}_0).
		\end{align*}
		Moreover,
		\begin{align*}
			\varphi_1^{\cM}(\mathbf{x}_1,\mathbf{y}) &= 0 +\sup_{\mathbf{v} \in D_{0,\mathbf{r}}^{\cM}} \left[ \ip{\mathbf{x}_1,\mathbf{v}} - \theta^{\cM}(\mathbf{v},\mathbf{y}_0) \right] \\
			&\leq \sup_{\mathbf{v} \in D_{0,\mathbf{r}}^{\cM}} \left[ \ip{\mathbf{x}_1,\mathbf{v}} - C(\tp_{\chron}^{\cM}(\mathbf{v},\mathbf{y}_0), \mu_1) \right] \\
			&\leq 0.
		\end{align*}
		Therefore,
		\begin{align*}
			\varphi_0^{\cM}(\mathbf{x}_0,\mathbf{y}) + \varphi_1^{\cM}(\mathbf{x}_1,\mathbf{y}_0) &\leq \theta^{\cM}(\mathbf{x}_0,\mathbf{y}_0) + 0 \\
			&\leq C(\tp_{\chron}^{\cM}(\mathbf{x}_0,\mathbf{y}_0),\mu_1) \\
			&= \re \ip{\mathbf{x}_0,\mathbf{x}_1},
		\end{align*}
		where the last line follows because $(\mathbf{x}_0,\mathbf{x}_1)$ is an optimal coupling of $(\mu_0,\mu_1)$.
	\end{proof}
	
	Given an optimal coupling $(\mathbf{x}_0,\mathbf{x}_1)$ for $(\mu_0,\mu_1)$ relative to $\mathbf{y}$, the \emph{displacement interpolation} is the family of variables $\mathbf{x}_t = (1-t) \mathbf{x}_0 + t\mathbf{x}_1$.  Just as in the classical setting, if we let $\mu_t = \tp_{\chron}^{\cM}(\mathbf{x}_t,\mathbf{y}_0)$, we have
	\begin{align*}
		d_W(\mu_0,\mu_1)
		&\leq d_W(\mu_0,\mu_t) + d_W(\mu_t,\mu_1) \\
		&\leq \norm{\mathbf{x}_0 - \mathbf{x}_t}_2 + \norm{\mathbf{x}_t - \mathbf{x}_1}_2 \\
		&= t\norm{\mathbf{x}_0 - \mathbf{x}_1}_2 + t \norm{\mathbf{x}_0 - \mathbf{x}_1}_2 \\
		&= \norm{\mathbf{x}_0 - \mathbf{x}_1}_2 \\
		&= d_W(\mu_0,\mu_1),
	\end{align*}
	hence all the inequalities are equalities.  Thus, the map $t \mapsto \mu_t$ is a geodesic in the metric sense, and that $(\mathbf{x}_s,\mathbf{x}_t)$ are an optimal coupling of $(\mu_s,\mu_t)$ for $s, t \in [0,1]$.  In order to study the behavior of the entropy $\chi_{\chron}^{\cU}(\mathbf{x}_t \mid \mathbf{y})$ along this geodesic, we will describe how to obtain a pair of convex functions witnessing Monge--Kantorovich duality for $(\mathbf{x}_s,\mathbf{x}_t)$ from a given pair of convex functions associated to $(\mathbf{x}_0,\mathbf{x}_1)$ using \cite[Proposition 2.21]{Jekel2025InfoGeom}.
	
	\begin{proposition} \label{prop: displacement interpolation}
		Fix a countably saturated $\cL_{\proc}$-structure $\cM$ satisfying $\rT_{\stat}$.  Let $m \in \N$ and $m' \in \N \cup \{0,\infty\}$.  Let $\mathbf{y}_0 \in M_0^{m'}$.  Fix $\mu_0, \mu_1 \in \mathbb{S}_{\chron,m,\mathbf{r}}(\cM / \mathbf{y}_0)$, let $(\mathbf{x}_0,\mathbf{x}_1)$ be an optimal coupling, and $\mathbf{x}_t = (1-t) \mathbf{x}_0 + t \mathbf{x}_1$ for $t \in [0,1]$.
		
		Let $\varphi_0$ and $\varphi_1$ be definable predicates with respect to $\rT_{\stat}$ satisfying the conclusions of Theorem \ref{thm: MK duality}.  For $0 \leq s \leq t \leq 1$ and $\cN$ satisfying $\rT_{\stat}$, let
		\begin{align*}
			\varphi_{t,s}^{\cN}(\mathbf{x},\mathbf{y}) &= \inf_{\mathbf{x}' \in \cN^m} \left[ \frac{t}{2s} \norm{\mathbf{x}}_2^2 - \frac{t-s}{s} \re \ip{\mathbf{x},\mathbf{x}'} + \frac{(t-s)(1-s)}{2s} \norm{\mathbf{x}'}_2^2 + (t-s) \varphi_0^{\cN}(\mathbf{x}',\mathbf{y}) \right] \text{ when } s > 0, \\
			\varphi_{t,0}^{\cN}(\mathbf{x},\mathbf{y}) &= \frac{1-t}{2} \norm{\mathbf{x}}_2^2 + t \varphi_0^{\cM}(\mathbf{x},\mathbf{y}),
		\end{align*}
		and
		\begin{align*}
			\varphi_{s,t}^{\cN}(\mathbf{x},\mathbf{y}) &= \inf_{\mathbf{x}' \in \cN^m} \left[ \frac{1-s}{2(1-t)} \norm{\mathbf{x}}_2^2 - \frac{t-s}{1-t} \ip{\mathbf{x},\mathbf{x}'} + \frac{(t-s)t}{2(1-t)} \norm{\mathbf{x}'}_2^2 + (t-s) \varphi_1^{\cN}(\mathbf{x}',\mathbf{y}) \right] \text{ when } t < 1 \\
			\varphi_{s,1}^{\cN}(\mathbf{x}) &= \frac{s}{2} \norm{\mathbf{x}}_2^2 + (1-s) \varphi_1^{\cN}(\mathbf{x},\mathbf{y}).
		\end{align*}
		Then for $0 \leq s \leq t \leq 1$,
		\begin{enumerate}[(1)]
			\item $\varphi_{t,s}$ and $\varphi_{s,t}$ are chronologically definable predicates with respect to $\rT_{\stat}$, and $\varphi_{t,s} - q$ and $\varphi_{s,t} - q$ are $\norm{\cdot}_1$-Lipschitz in the first $m$ variables.
			\item $\varphi_{t,s}$ is $t/s$-semiconcave for $s > 0$ and $(1-t) / (1-s)$-strongly convex for $t < 1$.
			\item $\varphi_{s,t}$ is $(1-s)/(1-t)$-semiconcave for $t < 1$ and $s/t$-strongly convex for $s > 0$.
			\item $\varphi_{t,s}^{\cN}(\mathbf{x},\mathbf{y}) + \varphi_{s,t}^{\cN}(\mathbf{x}',\mathbf{y}) \geq \re \ip{\mathbf{x},\mathbf{x}'}$ for all $\cN$, $\mathbf{x}$, and $\mathbf{x}'$.
			\item $\varphi_{t,s}^{\cM}(\mathbf{x}_s,\mathbf{y}_0) + \varphi_{s,t}^{\cM}(\mathbf{x}_t,\mathbf{y}_0) = \re \ip{\mathbf{x}_s,\mathbf{x}_t}$.
			\item For $s > 0$, $\varphi_{t,s}$ is differentiable in the first $m$ variables, $\nabla_{m,m'} \varphi_{t,s}$ is a chronologically definable function, and $\nabla_{m,m'} \varphi_{t,s}^{\cM}(\mathbf{x}_s,\mathbf{y}_0) = \mathbf{x}_t$.
			\item For $t < 1$, $\varphi_{s,t}$ is differentiable in the first $m$ variables, $\nabla_{m,m'} \varphi_{s,t}$ is a chronologically definable function, and $\nabla_{m,m'} \varphi_{s,t}^{\cM}(\mathbf{x}_t,\mathbf{y}_0) = \mathbf{x}_s$.
		\end{enumerate}
	\end{proposition}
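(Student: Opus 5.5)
The plan is to reduce everything to the inf-convolution and Legendre-transform calculus of Lemmas \ref{lem: Lipschitz plus quadratic} and \ref{lem: Lipschitz plus quadratic 2}, following the pattern of \cite[Proposition 2.21]{Jekel2025InfoGeom}.

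\textbf{Rewriting $\varphi_{t,s}$.} For $0<s$, complete the square in $\mathbf{x}'$. The quadratic part $\frac{t}{2s}\norm{\mathbf{x}}_2^2 - \frac{t-s}{s}\re\ip{\mathbf{x},\mathbf{x}'} + \frac{(t-s)(1-s)}{2s}\norm{\mathbf{x}'}_2^2$ equals
\[
\frac{t-s}{2s}\norm{\mathbf{x}-\mathbf{x}'}_2^2 + \frac{1}{2}\norm{\mathbf{x}}_2^2 - \frac{t-s}{2}\norm{\mathbf{x}'}_2^2 .
\]
Hence
\[
\varphi_{t,s} = q + (t-s)\Bigl[\tfrac{1}{s}q \,\square\, (\varphi_0 - q)\Bigr].
\]
Since $\varphi_0 - q$ is $\norm{\cdot}_1$-Lipschitz, Lemma \ref{lem: Lipschitz plus quadratic 2} (with $a=0$, $c=1/s$) gives three facts. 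The infimum may be restricted to an operator-norm ball, so $\varphi_{t,s}$ is a chronologically definable predicate. The inf-convolution is quadratic plus $\norm{\cdot}_1$-Lipschitz, which gives (1). It is $1/s$-semiconcave, so $\varphi_{t,s}$ is $(1+(t-s)/s) = t/s$-semiconcave.

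\textbf{Convexity in (2).} Since $\varphi_0$ is convex, $\varphi_0 - q$ is $1$-semiconvex. I would compute the Legendre transform using Lemma \ref{lem: Lipschitz plus quadratic 2}(6) and Lemma \ref{lem: Lipschitz plus quadratic}. This gives an explicit formula
\[
\varphi_{t,s}^{\star} = \bigl(\text{quadratic}\bigr) + \bigl(\text{scaled } \varphi_0^{\star} = \varphi_1\text{-type term}\bigr),
\]
mirroring the classical identity that $\nabla\varphi_{t,s} = ((1-t)\id + t\nabla\varphi_0)\circ((1-s)\id+s\nabla\varphi_0)^{-1}$. Strong convexity with constant $(1-t)/(1-s)$ then follows from the $(1-s)/(1-t)$-semiconcavity of the transform, by the duality in Lemma \ref{lem: Lipschitz plus quadratic}(5). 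The case $s=0$ is direct: $\varphi_{t,0} = \frac{1-t}{2}\norm{\cdot}_2^2 + t\varphi_0$ is $(1-t)$-strongly convex because $\varphi_0$ is convex. Item (3) is symmetric, with roles $0\leftrightarrow1$, $s\leftrightarrow 1-t$.

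\textbf{Inequality (4).} Use the admissibility inequality $\varphi_0(\mathbf{u}) + \varphi_1(\mathbf{v}) \ge \re\ip{\mathbf{u},\mathbf{v}}$ from Theorem \ref{thm: MK duality}. Take arbitrary candidates $\mathbf{u}$ in the infimum for $\varphi_{t,s}(\mathbf{x})$ and $\mathbf{v}$ in the infimum for $\varphi_{s,t}(\mathbf{x}')$. Bound $(t-s)(\varphi_0(\mathbf{u})+\varphi_1(\mathbf{v}))$ below by $(t-s)\re\ip{\mathbf{u},\mathbf{v}}$. The remainder is a quadratic form in $(\mathbf{x},\mathbf{x}',\mathbf{u},\mathbf{v})$, and I would verify it is $\ge \re\ip{\mathbf{x},\mathbf{x}'}$ by exhibiting it as a sum of squares. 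This is a purely algebraic check that holds identically in any inner-product space, so it holds in every $\cN$.

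\textbf{Equality (5).} Plug in the optimal coupling, taking $\mathbf{u} = \mathbf{x}_0$ and $\mathbf{v} = \mathbf{x}_1$. Equality in Theorem \ref{thm: MK duality} holds there. With $\mathbf{x}=\mathbf{x}_s$ and $\mathbf{x}'=\mathbf{x}_t$, the sum-of-squares terms vanish, so
\[
\varphi_{t,s}(\mathbf{x}_s) + \varphi_{s,t}(\mathbf{x}_t) \le \re\ip{\mathbf{x}_s,\mathbf{x}_t},
\]
and with (4) this is an equality.

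\textbf{Gradients (6) and (7).} For $s>0$, $\varphi_{t,s}$ is $t/s$-semiconcave and, being $q$ plus an inf-convolution of a semiconvex function, also semiconvex. Proposition \ref{prop: semiconvex semiconcave gradient} then gives differentiability. By Lemma \ref{lem: bounded gradient}, the gradient is bounded in operator norm on balls, so it is a chronologically definable function. Finally, (4) and (5) say that $\mathbf{x}\mapsto \varphi_{t,s}(\mathbf{x}) - \re\ip{\mathbf{x},\mathbf{x}_t}$ attains its minimum at $\mathbf{x}_s$, which forces $\nabla\varphi_{t,s}(\mathbf{x}_s)=\mathbf{x}_t$.

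The main obstacle is the algebra in (2) and (4). I must find the right sum-of-squares decomposition, which I would derive by running the classical computation for $\mathbf{x}_t = (1-t)\mathbf{x}_0 + t\nabla\varphi_0(\mathbf{x}_0)$ formally. I must also track the strong-convexity constants through the Legendre transform.
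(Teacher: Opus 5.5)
Your proposal is correct and follows the paper's proof. For (1), the paper uses exactly your rewriting $\varphi_{t,s} = q + (t-s)\bigl[\tfrac{1}{s}q \,\square\, (\varphi_0 - q)\bigr]$ together with Lemma \ref{lem: Lipschitz plus quadratic 2} for $a = 0$, $c = 1/s$. For (6)--(7), it uses Proposition \ref{prop: semiconvex semiconcave gradient}, Lemma \ref{lem: bounded gradient} and the first-order condition coming from (4)--(5). For (2)--(5), the paper simply cites \cite[Proposition 2.21]{Jekel2025InfoGeom}, and your sum-of-squares and Legendre-transform computations reproduce that argument.

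One small point: biconjugation in (2) requires knowing that $\varphi_{t,s}$ is convex, and you do not establish this. Your own remark in (6) settles it, since $\tfrac{1}{s}q \,\square\, (\varphi_0 - q)$ is $\tfrac{1}{1-s}$-semiconvex. That already gives the $\tfrac{1-t}{1-s}$-strong convexity of $\varphi_{t,s}$ directly, without any Legendre transform.
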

	
	\begin{proof}
		For claim (1), observe that for $s > 0$,
		\[
		(\varphi_{t,s} - q)^{\cN}(\mathbf{x},\mathbf{y}) = (t-s) \inf_{\mathbf{x}' \in \cN^m} \left[ \frac{1}{s} \norm{\mathbf{x} - \mathbf{x}'}_2^2  + (\varphi_0 - q)^{\cN}(\mathbf{x}',\mathbf{y}) \right].
		\]
		Hence, by applying Lemma \ref{lem: Lipschitz plus quadratic 2} (1) - (3) to $\varphi_0 - q$ with $a = 0$ and $c = 1/s$, we conclude $\varphi_{t,s} - q$ is a chronologically definable predicate which is $\norm{\cdot}_1$-Lipschitz with respect to the first $m$ variables.  The conclusion for $\varphi_{t,0}$ is immediate.  The cases for $\varphi_{s,t}$ are symmetrical.
		
		Claims (2) - (5) follow directly from \cite[Proposition 2.21]{Jekel2025InfoGeom}.
		
		For claim (6), differentiability and definability of the gradient follow from Proposition \ref{prop: semiconvex semiconcave gradient} and Lemma \ref{lem: bounded gradient}.  Using claims (4) and (5), for $\mathbf{x} \in \cM^m$, we have
		\begin{align*}
			\varphi_{t,s}^{\cM}(\mathbf{x},\mathbf{y}_0) - \varphi_{t,s}^{\cM}(\mathbf{x}_s,\mathbf{y}_0) &\geq [\re \ip{\mathbf{x},\mathbf{x}_t} - \varphi_{s,t}^{\cM}(\mathbf{x}_t,\mathbf{y})] - [\re \ip{\mathbf{x}_s,\mathbf{x}_t} - \varphi_{s,t}^{\cM}(\mathbf{x}_t,\mathbf{y})] \\
			&= \re \ip{\mathbf{x} - \mathbf{x}_s, \mathbf{x}_t},
		\end{align*}
		which forces $\nabla_{m,m'} \varphi_{t,s}^{\cM}(\mathbf{x}_s,\mathbf{y}_0) = \mathbf{x}_t$.
		
		Claim (7) is symmetrical to (6).
	\end{proof}

	The next result is a generalization of \cite[Lemma 4.9]{Jekel2025InfoGeom}, which concerns finite-dimensional approximations of optimal couplings.  We show that if $(\mathbf{x}_0,\mathbf{x}_1)$ is an optimal coupling of $(\mu_0,\mu_1)$ relative to $\mathbf{y}_0$, and if $(\mathbf{X}_0^{(n)},\mathbf{Y}_0^{(n)})$ is a sequence of random matrix models for $(\mathbf{x}_0,\mathbf{y}_0)$, then there exists a compatible random matrix model $\mathbf{X}_1^{(n)}$ for $\mathbf{x}_1$.  This statement is crucial for our proof of geodesic concavity and the evolution variational inequality.  We remark that \cite[\S 4.4]{Jekel2025InfoGeom} shows that the analogous statement is false for noncommutative laws with the plain free entropy $\chi$ and the Biane--Voiculescu Wasserstein distance, which also implies the failure of Monge--Kantorovich duality using quantifier-free definable predicates.
	
	\begin{theorem} \label{thm: optimal coupling lifts}
		Let $\cM$ be the random matrix ultraproduct as in \eqref{eq: random matrix ultraproduct} and $\cQ$ a quotient as in \eqref{eq: random matrix quotient} with respect to some character $\cV$ on $Z(\cM)$.  Let $m \in \N$ and $m' \in \N \cup \{0,\infty\}$.  Let $\mathbf{y}_0 \in M_0^{m'}$.  Fix $\mu_0, \mu_1 \in \mathbb{S}_{\chron,m,\mathbf{r}}(\cM / \mathbf{y}_0)$, and let $(\mathbf{x}_0,\mathbf{x}_1)$ be an optimal coupling.
		
		Let $\mathbf{Y}_0^{(n)}$ be a deterministic matrix tuple with $\sup_n \norm{Y_{0,j}^{(n)}}_\infty < \infty$ for each $j$, and let $\mathbf{X}_0^{(n)}$ be random matrices with $\norm{X_{0,j}^{(n)}}_\infty \leq r_j$ for all $n$ and $j$, such that
		\begin{equation} \label{eq: convergence of type at time zero}
			\lim_{n \to \cU} \Lambda_{\varphi}^{(n)}(\mathbf{X}_0^{(n)},\mathbf{Y}_0^{(n)}) = \varphi^{\cQ}(\mathbf{x}_0,\mathbf{y}_0)
		\end{equation}
		for all restricted chronological formulas in $m + m'$ variables.
		
		Then there exist random matrix tuples $\mathbf{X}_1^{(n)}$ such that
		\begin{enumerate}[(1)]
			\item $\norm{\mathbf{X}_{1,j}^{(n)}}_\infty \leq r_j$ for all $n$ and $j$;
			\item $\mathbf{X}_1^{(n)}$ is a function $\mathbf{X}_0^{(n)}$ for each $n$;
			\item for all restricted chronological formulas in $2m + m'$ many variables,
			\begin{equation} \label{eq: lifting convergence}
				\lim_{n \to \cU} \Lambda_{\psi}^{(n)}(\mathbf{X}_0^{(n)},\mathbf{X}_1^{(n)},\mathbf{Y}_0^{(n)}) = \psi^{\cQ}(\mathbf{x}_0,\mathbf{x}_1,\mathbf{y}_0);
			\end{equation}
			\item $\mathbf{X}_0^{(n)}$ and $\mathbf{X}_1^{(n)}$ are optimally coupled with respect to the classical $L^2$ Wasserstein distance for probability distributions on $(\mathbb{M}_n)^m$.
		\end{enumerate}
	\end{theorem}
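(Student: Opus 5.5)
We follow the strategy of \cite[Lemma 4.9]{Jekel2025InfoGeom}, with the Monge--Kantorovich duality of Theorem \ref{thm: MK duality} and the finite-dimensional approximation lemmas of \S\ref{subsec: more FD approximations} doing the work. The idea is to realize $\mathbf{x}_1$ as the gradient of a convex, regularized potential evaluated at $\mathbf{x}_0$. The same gradient applied to the matrix models then gives the desired $\mathbf{X}_1^{(n)}$. Classical optimality then comes for free from Brenier's theorem, since a gradient of a convex function is always an optimal map.

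\textbf{Step 1: build a smooth potential.} Apply Theorem \ref{thm: MK duality} to get convex potentials $\varphi_0,\varphi_1$, each of the form $q$ plus a $\norm{\cdot}_1$-Lipschitz function. Their Legendre pairing is tight at $(\mathbf{x}_0,\mathbf{x}_1)$. The obstacle is that $\varphi_0$ need not be differentiable at the endpoint $\mathbf{x}_0$. To handle this, pass to the interior of the geodesic using Proposition \ref{prop: displacement interpolation}. For $t<1$, the predicate $\varphi_{0,t}$ is semiconcave and convex, it has a chronologically definable gradient, and it satisfies $\nabla\varphi_{0,t}(\mathbf{x}_t,\mathbf{y}_0)=\mathbf{x}_0$.

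Dually, the inf-convolution in Lemma \ref{lem: Lipschitz plus quadratic 2}, applied to $\varphi_1$, gives a potential whose gradient moves $\mathbf{x}_0$ toward $\mathbf{x}_t$. Concretely, I would take
\[
\Phi_t=\varphi_{t,0}=\tfrac{1-t}{2}q\cdot 2+t\varphi_0 .
\]
This is $(1-t)$-strongly convex. Its Legendre transform $\varphi_{0,t}$ is $C^{1,1}$, so $\nabla\varphi_{0,t}$ is a Lipschitz map sending $\mathbf{x}_t\mapsto\mathbf{x}_0$. Inverting it requires the gradient of $\Phi_t$ itself.

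So the cleaner route is to regularize $\varphi_1$ instead. Set
\[
\Psi_\varepsilon = (\varepsilon^{-1}q)\square \varphi_1 .
\]
This is convex and $\varepsilon^{-1}$-semiconcave by Lemma \ref{lem: Lipschitz plus quadratic 2}, and $\Psi_\varepsilon^\star=\varphi_0+\varepsilon q$. The unique maximizer for $\Psi_\varepsilon^\star$ is a point $\mathbf{x}_1^\varepsilon=\nabla(\varphi_0+\varepsilon q)^\star(\mathbf{x}_0)$, and this is a chronologically definable function of $(\mathbf{x}_0,\mathbf{y}_0)$ by Proposition \ref{prop: semiconvex semiconcave gradient} and Lemma \ref{lem: bounded gradient}.

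\textbf{Step 2: the perturbed points converge to $\mathbf{x}_1$.} By duality tightness and strong convexity, $\mathbf{x}_1^\varepsilon\to\mathbf{x}_1$ in $L^2$ as $\varepsilon\to0$. The argument is the standard one: $(\varphi_0+\varepsilon q)^\star$ is $\varepsilon^{-1}$-smooth, and $\mathbf{x}_1$ is an approximate maximizer with defect $\varepsilon q(\mathbf{x}_1)$. This gives $\norm{\mathbf{x}_1^\varepsilon-\mathbf{x}_1}_2^2=O(\varepsilon)$.

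The operator norm bound $r_j$ needs separate handling. I would apply the projection of Lemma \ref{lem: projection onto R ball}, or else note that $\varphi_1$ can be chosen to be $+\infty$-penalized outside $D_{0,\mathbf{r}}$ through $\delta$, so that the gradients land in the $\mathbf{r}$-ball.

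\textbf{Step 3: transfer to matrices.} Use Lemma \ref{lem: FD approximation of SCSC} to get convex, semiconcave approximations $\Lambda^{(n)}$ of the regularized potential. Define
\[
\mathbf{X}_1^{(n),\varepsilon}=\nabla\Lambda^{(n)}(\mathbf{X}_0^{(n)},\mathbf{Y}_0^{(n)}).
\]
By Lemmas \ref{lem: FD approximation of gradient} and \ref{lem: FD approximation of gradient 2}, combined with Proposition \ref{prop: convergence of type}, the triple $(\mathbf{X}_0^{(n)},\mathbf{X}_1^{(n),\varepsilon},\mathbf{Y}_0^{(n)})$ converges in chronological type to $(\mathbf{x}_0,\mathbf{x}_1^\varepsilon,\mathbf{y}_0)$. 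Since the map is the gradient of a convex function, Brenier's theorem makes the classical coupling optimal, which is item (4). It is also a function of $\mathbf{X}_0^{(n)}$, which is item (2).

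Finally, separability (Lemma \ref{lem: separability}) lets us diagonalize: choose $\varepsilon=\varepsilon_n\to0$ slowly along $\cU$ so that (3) holds for every restricted formula. This works because each $\Lambda_\psi^{(n)}$ is uniformly $\norm{\cdot}_1$-Lipschitz, which absorbs the $O(\sqrt{\varepsilon})$ error.

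\textbf{Main obstacle.} I expect the main difficulty to be the operator norm constraint (1) together with the requirement that the convex approximation $\Lambda^{(n)}$ keep an exact gradient structure. The projection onto the ball destroys the gradient form, so I would first try to build the norm bound into the potential: take the Legendre transform over $D_{0,\mathbf{r}}$ only. Then $\nabla\Lambda^{(n)}$ automatically takes values in the convex set $D_{\mathbf{r}}^{\mathbb{M}_n}$, because it is a limit of maximizers over that set.
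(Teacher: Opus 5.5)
Your Step 2 fails, and the failure is structural. As written, your formula regularizes the wrong potential: $\nabla(\varphi_0+\varepsilon q)^\star(\mathbf{x}_0)=\nabla\Psi_\varepsilon(\mathbf{x}_0)$ approximates $\partial\varphi_1(\mathbf{x}_0)$, whereas duality only gives $\mathbf{x}_1\in\partial\varphi_0(\mathbf{x}_0)$.

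The corrected version does not work either. Take $\mathbf{x}_1^\varepsilon$ to be the unique maximizer of $\re\ip{\mathbf{x}_0,\cdot}-\varphi_1-\varepsilon q$. Your strong-concavity estimate then actually reads $\frac{\varepsilon}{2}\norm{\mathbf{x}_1^\varepsilon-\mathbf{x}_1}_2^2\le\varepsilon\bigl(q(\mathbf{x}_1)-q(\mathbf{x}_1^\varepsilon)\bigr)$, in which $\varepsilon$ cancels. In fact $\mathbf{x}_1^\varepsilon$ converges to the minimal-norm element of the set $\partial\varphi_0(\mathbf{x}_0)$ of all maximizers, not to $\mathbf{x}_1$.

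More fundamentally, $\mathbf{x}_1^\varepsilon$ is a chronologically definable function of $(\mathbf{x}_0,\mathbf{y}_0)$, built from potentials that depend only on $\mu_0,\mu_1$. So it cannot track a particular optimal partner of $\mathbf{x}_0$. For $\mathbf{x}_0=0$, which is exactly the case used in Lemma \ref{lem: lifting of deterministic matrices}, every realization of $\mu_1$ over $\mathbf{y}_0$ is an optimal partner. As soon as there are two such realizations, your $\mathbf{x}_1^\varepsilon$ cannot converge to both, and nothing forces its limit to realize $\mu_1$ at all. This is the phenomenon stressed in the introduction: noncommutative optimal couplings are not transport maps out of $\mathbf{x}_0$. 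Step 3 would therefore produce matrix models for the wrong tuple, and (3) fails.

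There is also a secondary problem. The gradients from Lemma \ref{lem: FD approximation of SCSC} are not bounded by $r_j$. Once you restrict the Legendre transform to $D_{\mathbf{r}}$ to fix this, $\Lambda^{(n)}$ is no longer differentiable, while $\mathbf{X}_0^{(n)}$ need not have a density (again $\mathbf{X}_0^{(n)}=0$). So you need a measurable selection of subgradients, not $\nabla\Lambda^{(n)}$.

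The missing idea is the observation you made in Step 1 and then abandoned. For $t<1$ the definable map runs the other way, $\mathbf{x}_0=\nabla\varphi_{0,t}(\mathbf{x}_t,\mathbf{y}_0)$. Hence the joint type of $(\mathbf{x}_0,\mathbf{x}_t,\mathbf{y}_0)$ is determined by the type $\mu_t$ of $(\mathbf{x}_t,\mathbf{y}_0)$. You never need to invert this map as a function, only to \emph{select} a preimage of the right type. The paper does this as follows.
\begin{enumerate}[(a)]
\item Fix $\eta_t\ge0$ vanishing on $D_{0,(\mathbf{r},\mathbf{r}')}$ exactly at realizations of $\mu_t$.
\item Set $\Lambda_\omega^{(n)}(\mathbf{X},\mathbf{Y})=\sup_{\mathbf{X}'\in D_{\mathbf{r}}^{\mathbb{M}_n}}\bigl[\re\ip{\mathbf{X},\mathbf{X}'}-\Lambda^{(n)}_{\varphi_{0,t}+\eta_t}(\mathbf{X}',\mathbf{Y})\bigr]$, with $\Lambda^{(n)}_{\varphi_{0,t}+\eta_t}$ from Lemma \ref{lem: FD approximation 1}.
\item Choose a Borel maximizer $F_t^{(n)}$ by Kuratowski--Ryll-Nardzewski and put $\mathbf{X}_t^{(n)}=F_t^{(n)}(\mathbf{X}_0^{(n)})$. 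Items (1), (2), (4) are then immediate, since a maximizer is a subgradient of the convex function $\Lambda^{(n)}_\omega$.
\item For (3), work in an arbitrary ultrafiber $\tilde{\cQ}$. There $\omega\ge\varphi_{t,0}$ at $(\tilde{\mathbf{x}}_0,\tilde{\mathbf{y}}_0)$, because $\mathbf{x}_t$ is a competitor in $\cQ$ and the types of $(\mathbf{x}_0,\mathbf{y}_0)$ and $(\tilde{\mathbf{x}}_0,\tilde{\mathbf{y}}_0)$ agree. Combined with $\varphi_{t,0}+\varphi_{0,t}\ge\re\ip{\cdot,\cdot}$, this forces $\eta_t(\tilde{\mathbf{x}}_t,\tilde{\mathbf{y}}_0)=0$ and equality in the duality. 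Hence $\tilde{\mathbf{x}}_0=\nabla\varphi_{0,t}(\tilde{\mathbf{x}}_t,\tilde{\mathbf{y}}_0)$, which pins down the joint type. Since the character $\tilde{\cV}$ is arbitrary, Proposition \ref{prop: convergence of type} gives convergence in probability.
\item Reach the endpoint $t=1$ by a diagonal argument over $t_k\to1$ along $\cU$, using separability rather than any $L^2$ rate.
\end{enumerate}
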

	
	\begin{proof}
		Let $\mathbf{x}_t = (1-t) \mathbf{x}_0 + t \mathbf{x}_1$, and let $\mu_t = \tp_{\chron}^{\cQ}(\mathbf{x}_t,\mathbf{y})$.  We will first construct a random matrix model $\mathbf{X}_t^{(n)}$ for $\mathbf{x}_t$ when $t < 1$, and obtain a matrix model for $\mathbf{x}_1$ at the end using a diagonalization argument.
		
		Let $\varphi_0$ and $\varphi_1$ be as in Theorem \ref{thm: MK duality}, and $\varphi_{s,t}$ and $\varphi_{t,s}$ be as in Proposition \ref{prop: displacement interpolation}. Let $r_j'$ be a bound for $\norm{y_{0,j}}_\infty$ and $\norm{Y_{0,j}^{(n)}}_\infty$.  Fix a nonnegative chronologically definable predicate $\eta_t$ in $m + m'$ variables such that for $\cN$ satisfying $\rT_{\stat}$ and $(\mathbf{x},\mathbf{y}) \in D_{0,(\mathbf{r},\mathbf{r}')}^{\cM}$, we have
		\[
		\eta_t^{\cN}(\mathbf{x},\mathbf{y}) = 0 \iff \tp_{\chron}^{\cN}(\mathbf{x},\mathbf{y}) = \mu_t.
		\]
		Moreover, define
		\[
		\omega^{\cN}(\mathbf{x},\mathbf{y}) = \sup_{\mathbf{x}' \in D_{0,\mathbf{r}}^{\cN}} \left[ \re \ip{\mathbf{x},\mathbf{x}'} - \varphi_{0,t}^{\cN}(\mathbf{x}',\mathbf{y}) - \eta_t^{\cN}(\mathbf{x}',\mathbf{y}) \right].
		\]
		Let $\Lambda_{\varphi_{0,t}+\eta_t}^{(n)}$ be finite-dimensional approximations for $\eta_t + \varphi_{0,t}$ given by Lemma \ref{lem: FD approximation 1} for the given radius bounds $(\mathbf{r},\mathbf{r}')$, and set
		\[
		\Lambda_\omega^{(n)}(\mathbf{X},\mathbf{Y}) = \sup_{\mathbf{X}' \in D_{\mathbf{r}}^{\mathbb{M}_n}} \left[ \re \ip{\mathbf{X},\mathbf{X}'} - \Lambda_{\varphi_{0,t} + \eta_t}^{(n)} (\mathbf{X}',\mathbf{Y}) \right].
		\]
		Thus, $\Lambda_\omega^{(n)}$ is a sequence of finite-dimensional approximations for $\omega$ in the sense of Lemma \ref{lem: FD approximation 1} (see again the proof of Proposition \ref{prop: approx by pointwise function}).  Note that by the construction in Lemma \ref{lem: FD approximation 1} $\Lambda_{\varphi_{0,t} + \eta_t}^{(n)}$ and $\Lambda_\omega^{(n)}$ are continuous functions.  Hence, the supremum in the definition of $\Lambda_\omega^{(n)}$ is achieved.  Now for $\mathbf{X} \in D_{\mathbf{r}}^{\mathbb{M}_n}$, let
		\[
		A^{(n)}(\mathbf{X}) = \{ \mathbf{X}' \in D_{\mathbf{r}}^{\mathbb{M}_n}: \re \ip{\mathbf{X},\mathbf{X}'} - \Lambda_{\varphi_{0,t} + \eta_t}^{(n)} (\mathbf{X}',\mathbf{Y}_0^{(n)}) = \Lambda_\omega^{(n)}(\mathbf{X},\mathbf{Y}_0^{(n)}) \},
		\]
		where $\mathbf{Y}_0^{(n)}$ are the given deterministic tuples.  We want to make a Borel-measurable selection of some $\mathbf{X}' \in A^{(n)}(\mathbf{X})$ for each $\mathbf{X} \in D_{\mathbf{r}}^{\mathbb{M}_n}$ using the Kuratowski--Ryll-Nardzewski selection theorem \cite{KRN1965}.  The hypotheses are satisfied:
		\begin{enumerate}[(1)]
			\item $A^{(n)}(\mathbf{X})$ is a compact set for each $\mathbf{X}$, by continuity of $\Lambda_{\varphi_{0,t} + \eta_t}^{(n)}$.
			\item We need to show that for open $E \subseteq D_{\mathbf{r}}^{\mathbb{M}_n}$, the set
			\[
			\{ \mathbf{X} \in D_{\mathbf{r}}^{\mathbb{M}_n}: A^{(n)}(\mathbf{X}) \cap E \neq \varnothing \},
			\]
			is Borel-measurable.  Since any open set in Euclidean space is a countable union of closed sets, it suffices to prove that the above set is Borel when $E$ is closed, rather than open.  Let
			\[
			K = \{ (\mathbf{X},\mathbf{X}') \in (D_{\mathbf{r}}^{\mathbb{M}_n})^2: \re \ip{\mathbf{X},\mathbf{X}'} - \Lambda_{\varphi_{0,t} + \eta_t}^{(n)} (\mathbf{X}',\mathbf{Y}_0^{(n)}) = \Lambda_\omega^{(n)}(\mathbf{X},\mathbf{Y}_0^{(n)}) \},
			\]
			which is a closed set by continuity of $\Lambda_\omega^{(n)}$ and $\Lambda_{\varphi_{0,t} + \eta_t}^{(n)}$.  Then
			\[
			\{ \mathbf{X} \in D_{\mathbf{r}}^{\mathbb{M}_n}: A^{(n)}(\mathbf{X}) \cap E \neq \varnothing \} = \pi_1(K \cap (D_{\mathbf{r}}^{\mathbb{M}_n} \times E)),
			\]
			where $\pi_1: D_{\mathbf{r}}^{\mathbb{M}_n} \times D_{\mathbf{r}}^{\mathbb{M}_n} \to D_{\mathbf{r}}^{\mathbb{M}_n}$ is the projection onto the first coordinate.  The image is compact by continuity, so that $\{ \mathbf{X} \in D_{\mathbf{r}}^{\mathbb{M}_n}: A^{(n)}(\mathbf{X}) \cap E \neq \varnothing \}$ is Borel as desired.
		\end{enumerate}
		Thus, by the Kuratowski--Ryll-Nardzewski measurable selection theorem, there exists a Borel-measurable $F_t^{(n)}: D_{\mathbf{r}}^{\mathbb{M}_n} \to D_{\mathbf{r}}^{\mathbb{M}_n}$ such that $F_t^{(n)}(\mathbf{X}) \in A^{(n)}(\mathbf{X})$ for all $\mathbf{X} \in D_{\mathbf{r}}^{\mathbb{M}_n}$.
		
		Let
		\[
		\mathbf{X}_t^{(n)} = F_t^{(n)}(\mathbf{X}_0^{(n)}).
		\]
		Note that $\Lambda_\omega^{(n)}$ is a convex function of $\mathbf{X}$ by construction.  For each $\mathbf{X}$, because $F_t^{(n)}(\mathbf{X})$ achieves the supremum in the definition of $\Lambda_\omega^{(n)}$, we know that $F_t^{(n)}(\mathbf{X})$ is in $\underline{\nabla}_{m,m'} \Lambda_\omega^{(n)}(\mathbf{X},\mathbf{Y}_0^{(n)})$, meaning that it is a subgradient vector for $\Lambda_\omega^{(n)}(\cdot, \mathbf{Y}_0^{(n)})$ at the point $\mathbf{X}$.  Therefore, by the classical Monge--Kantorovich duality, $(\mathbf{X}_0^{(n)},\mathbf{X}_t^{(n)})$ are optimally coupled.
		
		Now we have to show that $(\mathbf{X}_0^{(n)},\mathbf{X}_t^{(n)},\mathbf{Y}_0^{(n)})$ satisfy the analog of \eqref{eq: lifting convergence} with respect to $(\mathbf{x}_0,\mathbf{x}_t,\mathbf{y})$.  Let $\tilde{\cV}$ be an arbitrary character on $Z(\cM)$, not necessarily equal to $\cV$, and let $\tilde{\pi}: \cM \to \tilde{\cQ}$ be the corresponding quotient.  Let
		\[
		(\tilde{\mathbf{x}}_0, \tilde{\mathbf{x}}_t, \tilde{\mathbf{y}}_0) = (\tilde{\pi}([\mathbf{X}_0^{(n)}]_{n \in \N}),\tilde{\pi}([\mathbf{X}_t^{(n)}]_{n \in \N}),\tilde{\pi}([\mathbf{Y}_0^{(n)}]_{n \in \N})).
		\]
		By the defining property of the finite-dimensional approximations from Lemma \ref{lem: FD approximation 1} and by Proposition \ref{prop: Los theorem},
		\begin{align*}
			\omega^{\tilde{\cQ}}(\tilde{\mathbf{x}}_0,\tilde{\mathbf{y}}_0) &= \tilde{\cV}([\Lambda_\omega^{(n)}(\mathbf{X}_0^{(n)},\mathbf{Y}_0^{(n)})]_{n \in \N}) \\
			&= \tilde{\cV}([\re \ip{\mathbf{X}_0^{(n)}, \mathbf{X}_t^{(n)}} - \Lambda_{\varphi_{0,t} + \eta_t}^{(n)}(\mathbf{X}_t^{(n)},\mathbf{Y}_0^{(n)})]_{n \in \N}) \\
			&= \re \ip{\tilde{\mathbf{x}}_0,\tilde{\mathbf{x}}_t} - (\varphi_{0,t} + \eta_t)^{\tilde{\cQ}}(\tilde{\mathbf{x}}_t,\tilde{\mathbf{y}}_0).
		\end{align*}
		We also have $\tp_{\chron}^{\tilde{\cQ}}(\tilde{\mathbf{x}}_0,\tilde{\mathbf{y}}_0) = \tp_{\chron}^{\cQ}(\mathbf{x}_0,\mathbf{y}_0)$, and hence
		\begin{align*}
			\omega^{\tilde{\cQ}}(\tilde{\mathbf{x}}_0,\tilde{\mathbf{y}}_0) &= \omega^{\cQ}(\mathbf{x}_0,\mathbf{y}_0) \\
			&= \sup_{\mathbf{x}' \in D_{0,\mathbf{r}}^{\cQ}} \left[ \re \ip{\mathbf{x}_0,\mathbf{x}'} - \varphi_{0,t}^{\cQ}(\mathbf{x}',\mathbf{y}) - \eta_t^{\cN}(\mathbf{x}',\mathbf{y}) \right] \\
			&\geq \re \ip{\mathbf{x}_0,\mathbf{x}_t} - \varphi_{0,t}^{\cQ}(\mathbf{x}_t,\mathbf{y}) - \eta_t^{\cN}(\mathbf{x}_t,\mathbf{y}) \\
			&= \varphi_{t,0}^{\cQ}(\mathbf{x}_0,\mathbf{y}_0) \\
			&= \varphi_{t,0}^{\tilde{\cQ}}(\tilde{\mathbf{x}}_0,\tilde{\mathbf{y}}_0).
		\end{align*}
		Therefore,
		\begin{align*}
			\re \ip{\tilde{\mathbf{x}}_0, \tilde{\mathbf{x}}_t} &\leq \varphi_{t,0}^{\tilde{\cQ}}(\tilde{\mathbf{x}}_0,\tilde{\mathbf{y}}_0) + \varphi_{0,t}^{\tilde{\cQ}}(\tilde{\mathbf{x}}_t,\tilde{\mathbf{y}}_0) \\
			&\leq \omega^{\tilde{\cQ}}(\tilde{\mathbf{x}}_0, \tilde{\mathbf{y}}_0) + \varphi_{0,t}^{\tilde{\cQ}}(\tilde{\mathbf{x}}_t,\tilde{\mathbf{y}}_0) \\
			&= \re \ip{\tilde{\mathbf{x}}_0, \tilde{\mathbf{x}}_t} - \eta_t^{\tilde{\cQ}}(\tilde{\mathbf{x}}_t, \tilde{\mathbf{y}}_0).
		\end{align*}
		Therefore, we have $\eta_t(\tilde{\mathbf{x}}_t,\tilde{\mathbf{y}}_0) = 0$, which implies that $\tp_{\chron}^{\tilde{\cQ}}(\tilde{\mathbf{x}}_t,\tilde{\mathbf{y}}_0) = \mu_t$.  We also deduce that $\re \ip{\tilde{\mathbf{x}}_0, \tilde{\mathbf{x}}_t} = \varphi_{t,0}^{\tilde{\cQ}}(\tilde{\mathbf{x}}_0,\tilde{\mathbf{y}}_0) + \varphi_{0,t}^{\tilde{\cQ}}(\tilde{\mathbf{x}}_t,\tilde{\mathbf{y}}_0)$, which implies that $(\tilde{\mathbf{x}}_0,\tilde{\mathbf{x}}_t)$ are an optimal coupling of $(\mu_0,\mu_1)$.  Since $\varphi_{0,t}(\cdot,\tilde{\mathbf{y}}_0)$ is differentiable at $\tilde{\mathbf{x}}_t$, this means that $\tilde{\mathbf{x}}_0 = \nabla_{m,m'} \varphi_{0,t}^{\tilde{\cQ}}(\tilde{\mathbf{x}}_t,\tilde{\mathbf{y}}_0)$.  Therefore,
		\[
		\tp_{\chron}^{\tilde{\cQ}}(\tilde{\mathbf{x}}_0,\tilde{\mathbf{x}}_t,\tilde{\mathbf{y}}_0) = \tp_{\chron}^{\cQ}(\mathbf{x}_0,\mathbf{x}_t,\mathbf{y}_0).
		\]
		Since the character $\tilde{\cV}$ was arbitrary, we conclude that \eqref{eq: lifting convergence} holds.
		
		We have therefore proved the desired properties for $\mathbf{X}_t^{(n)}$ when $t < 1$.  We proceed to construct some $\mathbf{X}_1^{(n)}$ satisfying the conclusions of the theorem.  Fix a sequence $(\psi_k)_{k \in \N}$ of a restricted chronological formulas which is dense in the space of chronologically definable predicates with respect to $\rT_{\stat}$, which can be done because of Lemma \ref{lem: separability}.  Using continuity of $\psi_j$, fix $t_k$ such that
		\begin{equation} \label{eq: approximation of time 1}
			\max_{j \leq k} |\psi_j^{\cQ}(\mathbf{x}_0,\mathbf{x}_t,\mathbf{y}_0) - \psi_j^{\cQ}(\mathbf{x}_0,\mathbf{x}_1,\mathbf{y}_0)| \leq \frac{1}{k}.
		\end{equation}
		Moreover, let $I_0 = \N$ and for $k \geq 1$,
		\begin{equation} \label{eq: definition of Ik}
			I_k = \left\{n \geq k: \mathbb{P}\left(\max_{j \leq k} |\Lambda_{\psi_j}^{(n)}(\mathbf{X}_0^{(n)},\mathbf{X}_{t_k}^{(n)},\mathbf{Y}_0^{(n)}) - \psi_j^{\cQ}(\mathbf{x}_0,\mathbf{x}_t,\mathbf{y}_0))| \leq \frac{1}{k}\right) \geq 1 - \frac{1}{k} \right\}.
		\end{equation}
		By \eqref{eq: lifting convergence} for $\mathbf{X}_t^{(n)}$, the set $I_k$ is an element of the ultrafilter $\cU$.  Also since $I_k \subseteq [k,\infty)$, we have $\bigcap_{k \in \N} I_k = 0$.  Let $J_k = \bigcap_{k' \leq k} I_{k'}$, so that $J_k \in \cU$ and $J_{k+1} \subseteq J_k$ and $\bigcap_{k \in \N} J_k = 0$.  Thus, $\N$ is the disjoint union of $J_k \setminus J_{k+1}$ for $k \in \N_0$.  Set
		\[
		\mathbf{X}_1^{(n)} = \mathbf{X}_{t_k}^{(n)} \text{ for } n \in J_k \setminus J_{k+1}.
		\]
		Then by \eqref{eq: approximation of time 1} and \eqref{eq: definition of Ik} and the triangle inequality
		\[
		n \in J_k \setminus J_{k+1} \implies \mathbb{P}\left(\max_{j \leq k} |\Lambda_{\psi_j}^{(n)}(\mathbf{X}_0^{(n)},\mathbf{X}_1^{(n)},\mathbf{Y}_0^{(n)}) - \psi_j^{\cQ}(\mathbf{x}_0,\mathbf{x}_1,\mathbf{y}_0))| \leq \frac{2}{k}\right) \geq 1 - \frac{1}{k}.
		\]
		Note that if this condition holds for $k$, it also holds for any $k' \geq k$.  Therefore, we have
		\[
		n \in J_k \implies \mathbb{P}\left(\max_{j \leq k} |\Lambda_{\psi_j}^{(n)}(\mathbf{X}_0^{(n)},\mathbf{X}_1^{(n)},\mathbf{Y}_0^{(n)}) - \psi_j^{\cQ}(\mathbf{x}_0,\mathbf{x}_1,\mathbf{y}_0))| \leq \frac{2}{k}\right) \geq 1 - \frac{1}{k}.
		\]
		Since $J_k \in \cU$, this means that
		\[
		\lim_{n \to \cU} \Lambda_{\psi_j}^{(n)}(\mathbf{X}_0^{(n)},\mathbf{X}_1^{(n)},\mathbf{Y}_0^{(n)}) = \psi_j^{\cQ}(\mathbf{x}_0,\mathbf{x}_1,\mathbf{y}_0) \text{ in probability}
		\]
		for all $j \in \N$.  This establishes \eqref{eq: lifting convergence} for $\mathbf{X}_1^{(n)}$.
	\end{proof}

	\subsection{Geodesic concavity of entropy} \label{subsec: geodesic concavity}
	
	\begin{lemma} \label{lem: displacement Laplacian}
		Consider the same setup as Proposition \ref{prop: displacement interpolation} and for $s \in (0,1)$, let
		\[
		\omega_s = \frac{1}{1-s} (\varphi_{1,s} - q).
		\]
		Then
		\begin{enumerate}[(1)]
			\item $\omega_s$ is $1/s$-semiconcave and $1/(1-s)$-semiconvex in the first $m$ variables.
			\item $\omega_s$ is $\norm{\cdot}_1$-Lipschitz in the first $m$ variables.
			\item For $t \in [0,1]$, we have $\mathbf{x}_t = \mathbf{x}_s + (t-s) \nabla_{m,m'} \omega^{\cM}(\mathbf{x}_s,\mathbf{y}_0)$.
			\item $s \mapsto \overline{\Delta}_{m,m'} \omega^{\cM}(\mathbf{x}_s,\mathbf{y}_0)$ and $s \mapsto \underline{\Delta}_{m,m'} \omega^{\cM}(\mathbf{x}_s,\mathbf{y}_0)$ are decreasing functions.
		\end{enumerate}
	\end{lemma}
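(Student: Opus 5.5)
The plan is to derive (1)–(3) directly from Proposition \ref{prop: displacement interpolation}, and (4) from a monotonicity/semigroup argument on the heat-averaged differences.

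\textbf{Items (1) and (2).} By Proposition \ref{prop: displacement interpolation} (2) with $t=1$, $\varphi_{1,s}$ is $1/s$-semiconcave and $0$-strongly convex, i.e.\ merely convex. Hence $\varphi_{1,s}-q$ is $(1/s-1)$-semiconcave and $1$-semiconvex. Dividing by $1-s$ gives that $\omega_s$ is $\frac{1}{s}$-semiconcave, since $(1/s-1)/(1-s)=1/s$, and $\frac{1}{1-s}$-semiconvex. Item (2) is immediate from Proposition \ref{prop: displacement interpolation} (1), since $\varphi_{1,s}-q$ is $\norm{\cdot}_1$-Lipschitz and scaling preserves this.

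\textbf{Item (3).} By Proposition \ref{prop: displacement interpolation} (6), $\nabla_{m,m'}\varphi_{1,s}^{\cM}(\mathbf{x}_s,\mathbf{y}_0)=\mathbf{x}_1$. Since $\nabla q(\mathbf{x})=\mathbf{x}$, this yields
\[
\nabla_{m,m'}\omega_s^{\cM}(\mathbf{x}_s,\mathbf{y}_0)=\frac{\mathbf{x}_1-\mathbf{x}_s}{1-s}=\mathbf{x}_1-\mathbf{x}_0.
\]
The identity $\mathbf{x}_t=\mathbf{x}_s+(t-s)(\mathbf{x}_1-\mathbf{x}_0)$ is then linear algebra.

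\textbf{Item (4).} I read $\overline{\Delta}$ and $\underline{\Delta}$ as the $\limsup$ and $\liminf$ of $\frac{2}{h}[P_h\omega-\omega]$, as in the proof of Theorem \ref{thm: infinitesimal change of variables}. The task is to show that for $s<s'$,
\[
P_h\omega_{s'}(\mathbf{x}_{s'})-\omega_{s'}(\mathbf{x}_{s'})\le P_h\omega_s(\mathbf{x}_s)-\omega_s(\mathbf{x}_s)
\]
for each $h$, up to $o(h)$ errors, where $\mathbf{y}_0$ is suppressed. The strategy mirrors the classical fact that the Hessian of the transport potential along a geodesic decreases, via $\operatorname{Hess}\omega_{s'}\circ T=\operatorname{Hess}\omega_s(I+(s'-s)\operatorname{Hess}\omega_s)^{-1}$. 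Here I would use the inf-convolution structure. From the formula for $\varphi_{t,s}$ with $t=1$, one gets
\[
\omega_s(\mathbf{x})=\inf_{\mathbf{x}'}\Big[\tfrac{1}{2s}\norm{\mathbf{x}-\mathbf{x}'}_2^2+(\varphi_0-q)(\mathbf{x}')\Big].
\]
So $\omega_s$ is the Hopf–Lax/Moreau envelope at time $s$ of $g=\varphi_0-q$, with minimizer $\mathbf{x}'=\mathbf{x}_0$ at $\mathbf{x}=\mathbf{x}_s$.

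To compare Laplacians, I would plug the competitor $\mathbf{x}'=\mathbf{x}_0+\frac{s}{s'}\mathbf{z}_h$ into the inf defining $P_h\omega_s(\mathbf{x}_s)=S_h\omega_s(\mathbf{x}_s+\mathbf{z}_h)$, and the analogous competitor for $s'$. This bounds each $P_h\omega_s(\mathbf{x}_s)-\omega_s(\mathbf{x}_s)$ above by a term of the form $\frac{1}{2}(\ldots)\norm{\mathbf{z}_h}^2$ plus an increment $g(\mathbf{x}_0+\lambda\mathbf{z}_h)-g(\mathbf{x}_0)$ for $\lambda\in[0,1]$. The Hopf–Lax semigroup identity $\omega_{s'}=\inf[\frac{1}{2(s'-s)}\norm{\cdot}^2+\omega_s]$ then lets me express $\omega_{s'}$ near $\mathbf{x}_{s'}$ through $\omega_s$ near $\mathbf{x}_s$. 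Choosing the competitor $\mathbf{x}_s+\frac{s}{s'}\mathbf{z}_h$ gives
\[
P_h\omega_{s'}(\mathbf{x}_{s'})-\omega_{s'}(\mathbf{x}_{s'})\le \frac{(s'-s)}{2}\cdot\frac{1}{s'^2}\norm{\mathbf{z}_h}^2\,\text{-type term}+\big[\omega_s(\mathbf{x}_s+\tfrac{s}{s'}\mathbf{z}_h)-\omega_s(\mathbf{x}_s)\big]\text{ (heat-averaged)}.
\]
This step is the main obstacle. Two difficulties arise. The rescaling $\frac{s}{s'}\mathbf{z}_h$ is a Brownian increment at a different time $(s/s')^2h$, which requires stationarity and the scaling invariance of free Brownian motion inside the stationary structure $\cM$. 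The remaining quadratic $\norm{\mathbf{z}_h}^2\sim mh$ term must be absorbed. I would first try working with $\varphi_{1,s}=(1-s)\omega_s+q$ directly, where the $q$-parts contribute exactly computable Laplacians $2m$ that cancel between $s$ and $s'$. I would then verify via the semiconcavity bounds in (1) that the $\limsup/\liminf$ are preserved under these comparisons, giving monotonicity for both $\overline{\Delta}$ and $\underline{\Delta}$.
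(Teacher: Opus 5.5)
Your items (1)--(3) are correct and follow the paper's argument. The gap is in (4).

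Your first step bounds both $P_h\omega_s(\mathbf{x}_s)-\omega_s(\mathbf{x}_s)$ and $P_h\omega_{s'}(\mathbf{x}_{s'})-\omega_{s'}(\mathbf{x}_{s'})$ from above by increments of $g=\varphi_0-q$. Two bounds on the same side cannot compare the two quantities.

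In the Hopf--Lax step, the competitor $\mathbf{x}_s+\frac{s}{s'}\mathbf{z}_h$ is the wrong choice. The two ``obstacles'' you list are artifacts of that choice, and neither can be resolved as stated:
\begin{itemize}
\item $\rT_{\stat}$ has no Brownian scaling axiom. So $\frac{s}{s'}\mathbf{z}_{0,h}$ cannot be traded for $\mathbf{z}_{0,(s/s')^2h}$ inside chronologically definable predicates, which may quantify over later times and involve later increments.
\item Even granting such scaling, your estimate only gives
\[
\overline{\Delta}\omega_{s'}(\mathbf{x}_{s'})\le\frac{2m(s'-s)}{s'^2}+\frac{s^2}{s'^2}\,\overline{\Delta}\omega_s(\mathbf{x}_s).
\]
This implies $\overline{\Delta}\omega_{s'}(\mathbf{x}_{s'})\le\overline{\Delta}\omega_s(\mathbf{x}_s)$ only when $(s+s')\,\overline{\Delta}\omega_s(\mathbf{x}_s)\ge 2m$. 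That fails, for instance, whenever the Laplacian is nonpositive.
\end{itemize}
Your closing plan (cancelling the $2m$'s, checking via semiconcavity) does not fix this.

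The missing idea is to use the \emph{unscaled} translate. Your identity $\omega_{s'}=\frac{1}{s'-s}q\,\square\,\omega_s$ is right, and it holds in every model, including $S_h\cM$ with the infimum over $M_h^m$. The argument runs as follows.
\begin{itemize}
\item For $s<s'<1$, the map $\mathbf{v}\mapsto\frac{1}{2(s'-s)}\norm{\mathbf{x}_{s'}-\mathbf{v}}_2^2+\omega_s^{\cM}(\mathbf{v},\mathbf{y}_0)$ is convex, by the $\frac{1}{1-s}$-semiconvexity in (1).
\item Its gradient vanishes at $\mathbf{v}=\mathbf{x}_s$, by (3). Hence $\omega_{s'}^{\cM}(\mathbf{x}_{s'},\mathbf{y}_0)=\frac{1}{2(s'-s)}\norm{\mathbf{x}_{s'}-\mathbf{x}_s}_2^2+\omega_s^{\cM}(\mathbf{x}_s,\mathbf{y}_0)$.
\item Insert $\mathbf{v}=\mathbf{x}_s+\mathbf{z}_h\in M_h^m$ into the infimum defining $\omega_{s'}^{S_h\cM}(\mathbf{x}_{s'}+\mathbf{z}_h,\mathbf{y}_0)$. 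The quadratic term is unchanged, since only the difference $\mathbf{x}_{s'}-\mathbf{x}_s$ enters.
\item Subtracting gives, for every $h>0$ and with no error term,
\[
(P_h\omega_{s'})^{\cM}(\mathbf{x}_{s'},\mathbf{y}_0)-\omega_{s'}^{\cM}(\mathbf{x}_{s'},\mathbf{y}_0)\le(P_h\omega_s)^{\cM}(\mathbf{x}_s,\mathbf{y}_0)-\omega_s^{\cM}(\mathbf{x}_s,\mathbf{y}_0).
\]
Multiplying by $2/h$ and taking $\limsup$ or $\liminf$ gives both halves of (4).
\end{itemize}

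The paper does the same thing one level up, at $\varphi_{1,t}$. It writes $\varphi_{1,t}=\frac{1-s}{t-s}q\,\square\,\frac{1-t}{1-s}\varphi_{1,s}\bigl(\frac{1-s}{1-t}\,\cdot\,,\cdot\bigr)$ and uses the competitor $\frac{1-t}{1-s}(\mathbf{x}_s+\mathbf{z}_\varepsilon)$. The rescaling therefore falls on the argument, and $\varphi_{1,s}$ is evaluated at $\mathbf{x}_s+\mathbf{z}_\varepsilon$ with the \emph{same} increment. The leftover term $\frac{(t-s)m\varepsilon}{1-s}$, computed using $\mathbf{z}_\varepsilon\perp M_0$ and $\norm{\mathbf{z}_\varepsilon}_2^2=2m\varepsilon$, is exactly absorbed by the $2m$ from $q$ when passing to $\omega$.

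Working with $\omega_s$ directly, as you set out to do, avoids that bookkeeping. It also does not use orthogonality or the variance normalization of the increments at all.
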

	
	\begin{proof}
		(1) By Proposition \ref{prop: displacement interpolation} (1), $\varphi_{1,s}$ is $1/s$-semiconcave.  Hence, $\varphi_{1,s} - q$ is $1/s - 1 = (1-s)/s$-semiconcave and so $\omega_s$ is $1/s$-semiconcave.  Moreover, since $\varphi_{1,s}$ is convex, $\omega_s$ is $1/(1-s)$-semiconvex.
		
		(2) This is immediate from Proposition \ref{prop: displacement interpolation} (1).
		
		(3) By Proposition \ref{prop: displacement interpolation} (6), $\mathbf{x}_1 = \nabla_{m,m'} \varphi_{1,s}^{\cM}(\mathbf{x}_s,\mathbf{y}_0)$.  Hence,
		\begin{align*}
			\nabla_{m,m'} \omega^{\cM}(\mathbf{x}_s,\mathbf{y}_0) &= \frac{1}{1-s} (\nabla_{m,m'} \varphi_{1,s}^{\cM}(\mathbf{x}_s,\mathbf{y}_0) - \mathbf{x}_s) \\
			&= \frac{1}{1-s} (\mathbf{x}_1 - \mathbf{x}_s) \\
			&= \mathbf{x}_1 - \mathbf{x}_0.
		\end{align*}
		Since $\mathbf{x}_t = \mathbf{x}_s + (t-s) (\mathbf{x}_1 - \mathbf{x}_0)$, the claim follows.
		
		(4) First, we recall the relationship between $\varphi_{1,s}$ and $\varphi_{1,t}$ (compare \cite[Proposition 4.12(5)]{GJNS2021}).  By definition of $\varphi_{1,s}$,
		\begin{align*}
			\varphi_{1,s}^{\cM}(\mathbf{x},\mathbf{y}) &= \inf_{\mathbf{x}' \in M_0^m} \left[ \frac{1}{2s} \norm{\mathbf{x}}_2^2 - \frac{1-s}{s} \re \ip{\mathbf{x},\mathbf{x}'} + \frac{(1-s)^2}{s} \norm{\mathbf{x}'}_2^2 + (1-s) \varphi_0(\mathbf{x}',\mathbf{y}) \right] \\
			&= \inf_{\mathbf{x}' \in M_0^m} \left[ \frac{1}{2s} \norm{\mathbf{x}}_2^2 - \frac{1}{s} \re \ip{\mathbf{x},\mathbf{x}'} + \frac{1}{s} \norm{\mathbf{x}'}_2^2 + (1-s) \varphi_0((1-s)^{-1}\mathbf{x}',\mathbf{y}) \right] \\
			&= \inf_{\mathbf{x}' \in M_0^m} \left[ \frac{1}{2s} \norm{\mathbf{x} - \mathbf{x}'}_2^2 + (1-s) \varphi_0((1-s)^{-1}\mathbf{x}',\mathbf{y}) \right].
		\end{align*}
		Hence, letting $\varphi_0((1-s)^{-1} \cdot, \cdot)$ denote the function $\varphi_0^{\cM}((1-s)^{-1} \mathbf{x}, \mathbf{y})$, we have
		\[
		\varphi_{1,s} = [s^{-1} q] \square [(1-s) \varphi_0((1-s)^{-1} \cdot, \cdot)].
		\]
		Therefore,
		\[
		\varphi_{1,s}^{\star} = sq + [(1-s) \varphi_0((1-s)^{-1} \cdot, \cdot)]^{\star} = sq + (1-s) \varphi_0^{\star},
		\]
		where the last inequality follows from a direct change of variables in the definition of the Legendre transform.  Thus, for $0 \leq s \leq t$,
		\[
		\varphi_{1,t}^{\star} = tq + (1-t) \varphi_0^{\star} = \frac{t-s}{1-s} q + \frac{1-t}{1-s} (sq + (1-s) \varphi_0^{\star}) = \frac{t-s}{1-s} q + \frac{1-t}{1-s} \varphi_{1,s}^{\star},
		\]
		and hence by Lemma \ref{lem: Lipschitz plus quadratic 2},
		\[
		\varphi_{1,t} = \frac{1-s}{t-s} q \square \frac{1-t}{1-s} \varphi_{1,s}\left( \frac{1-s}{1-t} \cdot, \cdot \right).
		\]
		We note that
		\[
		\varphi_{1,t}(\mathbf{x}_t,\mathbf{y}_0) = \inf_{\mathbf{x}' \in \cM^m}  \left[ \frac{1-s}{2(t-s)} \norm{\mathbf{x}_t - \mathbf{x}'}_2^2 + \frac{1-t}{1-s} \varphi_{1,s}\left(\frac{1-s}{1-t} \mathbf{x}',\mathbf{y}_0 \right) \right].
		\]
		The function to be minimized is differentiable in $\mathbf{x}'$ since $\varphi_{1,s}$ is differentiable, and the gradient is
		\[
		\frac{1-s}{t-s} (\mathbf{x}' - \mathbf{x}_t) + \nabla_{m,m'} \varphi^{\cM}\left( \frac{1-s}{1-t} \mathbf{x}', \mathbf{y}_0 \right).
		\]
		When $\mathbf{x}' = \frac{1-t}{1-s} \mathbf{x}_s$, the gradient becomes
		\[
		\frac{1-t}{t-s} \mathbf{x}_s - \frac{1-s}{t-s} \mathbf{x}_t + \mathbf{x}_1 = 0,
		\]
		and since the function to be minimized is convex, this point is the unique minimizer, that is,
		\[
		\varphi_{1,t}^{\cM}(\mathbf{x}_t,\mathbf{y}_0) = \frac{1-s}{2(t-s)} \norm*{ \frac{1-t}{1-s} \mathbf{x}_s - \mathbf{x}_t}_2^2 + \frac{1-t}{1-s} \varphi^{\cM}(\mathbf{x}_s,\mathbf{y}_0).
		\]
		To estimate the Laplacian, we need to estimate $\varphi_{1,t}^{S_\varepsilon \cM}(\mathbf{x}_t + \mathbf{z}_{\varepsilon})$ where $\mathbf{z}_{\varepsilon} = (z_{0,\varepsilon,j})_{j=1}^m$.  Using the candidate point $\frac{1-t}{1-s} (\mathbf{x}_s + \mathbf{z}_{\varepsilon})$, we obtain
		\begin{align*}
			\varphi_{1,t}^{S_{\varepsilon} \cM}(\mathbf{x}_t + \mathbf{z}_{\varepsilon}, \mathbf{y}_0) &\leq \frac{1-s}{2(t-s)} \norm*{ \frac{1-t}{1-s} (\mathbf{x}_s + \mathbf{z}_{\varepsilon}) - (\mathbf{x}_t + \mathbf{z}_{\varepsilon})}_2^2 + \frac{1-t}{1-s} \varphi^{\cM}(\mathbf{x}_s + \mathbf{z}_{\varepsilon},\mathbf{y}_0) \\
			&= \frac{1-s}{2(t-s)} \left[ \norm*{ \frac{1-t}{1-s} \mathbf{x}_s - \mathbf{x}_t}_2^2 + \norm*{ \frac{s-t}{1-s} \mathbf{z}_{\varepsilon}}_2^2 \right] + \frac{1-t}{1-s} \varphi^{\cM}(\mathbf{x}_s + \mathbf{z}_{\varepsilon},\mathbf{y}_0) \\
			&= \frac{1-s}{2(t-s)} \norm*{ \frac{1-t}{1-s} \mathbf{x}_s - \mathbf{x}_t}_2^2 + \frac{(t-s)m \varepsilon}{(1-s)} + \frac{1-t}{1-s} \varphi^{\cM}(\mathbf{x}_s + \mathbf{z}_{\varepsilon},\mathbf{y}_0),
		\end{align*}
		where we have applied that $\mathbf{z}_{\varepsilon}$ is orthogonal to $M_0$ and $\norm{\mathbf{z}_{\varepsilon}}_2^2 = 2m \varepsilon$.  Subtracting the previous equation for $\varphi_{1,t}^{\cM}(\mathbf{x}_t,\mathbf{y}_0)$ yields
		\[ 
		\varphi_{1,t}^{S_{\varepsilon} \cM}(\mathbf{x}_t + \mathbf{z}_{\varepsilon}, \mathbf{y}_0) -\varphi_{1,t}^{\cM}(\mathbf{x}_t,\mathbf{y}_0) \leq \frac{(t-s)m \varepsilon}{(1-s)} + \frac{1-t}{1-s} \left[ \varphi^{\cM}(\mathbf{x}_s + \mathbf{z}_{\varepsilon},\mathbf{y}_0) - \varphi^{\cM}(\mathbf{x}_s,\mathbf{y}_0) \right].
		\]
		Now dividing by $\varepsilon$, multiplying by $2$, and taking the $\limsup$ as $\varepsilon \to 0$ yields
		\[
		\overline{\Delta}_{m,m'} \varphi_{1,t}^{\cM}(\mathbf{x}_t,\mathbf{y}_0) \leq \frac{2(t-s)m}{(1-s)} + \overline{\Delta}_{m,m'} \varphi_{1,s}^{\cM}(\mathbf{x}_s,\mathbf{y}_0).
		\]
		Rearranging yields
		\[
		\frac{1}{1-t} \left[ \overline{\Delta}_{m,m'} \varphi_{1,t}^{\cM}(\mathbf{x}_t,\mathbf{y}) - 2m \right] \leq \frac{1}{1-s} \left[ \overline{\Delta}_{m,m'} \varphi_{1,s}^{\cM}(\mathbf{x}_s,\mathbf{y}) - 2m \right].
		\]
		Since $\overline{\Delta}_{m,m'} q = 2m = \underline{\Delta}_{m,m'} q$, this means precisely that $\overline{\Delta}_{m,m'} \omega_t^{\cM}(\mathbf{x}_t,\mathbf{y}_0) \leq \overline{\Delta}_{m,m'} \omega_s^{\cM}(\mathbf{x}_s,\mathbf{y}_0)$.  The same also holds for $\underline{\Delta}_{m,m'}$ simply by using the $\liminf$ rather than $\limsup$ in the argument.
	\end{proof}
	
	\begin{theorem} \label{thm: geodesic concavity}
		Consider the same setup as Proposition \ref{prop: displacement interpolation} and let $\omega_s$ be as in Lemma \ref{lem: displacement Laplacian}.
		\begin{enumerate}[(1)]
			\item The map $t \mapsto \chi_{\chron}^{\cU}(\mathbf{x}_t \mid \mathbf{y}_0)$ is concave on $[0,1]$.
			\item For almost every $t \in (0,1)$, we have
			\[
			\frac{d}{dt} \chi_{\chron}^{\cU}(\mathbf{x}_t,\mathbf{y}_0)  = \overline{\Delta}_{m,m'} \omega^{\cM}(\mathbf{x}_t) = \underline{\Delta}_{m,m'} \omega^{\cM}(\mathbf{x}_t).
			\]
			\item For $0 \leq s < t \leq 1$, we have
			\[
			2m \log \frac{1-t}{1-s} \leq \chi_{\chron}^{\cU}(\mathbf{x}_t,\mathbf{y}_0) - \chi_{\chron}^{\cU}(\mathbf{x}_s,\mathbf{y}_0) \leq 2m \log \frac{t}{s}.
			\]
		\end{enumerate}
	\end{theorem}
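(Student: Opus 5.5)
Write $f(t) = \chi_{\chron}^{\cU}(\mathbf{x}_t \mid \mathbf{y}_0)$, and split the argument into the interior $(0,1)$ and the endpoints.

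\textbf{Interior one-sided bounds.} Fix $s \in (0,1)$. By Lemma \ref{lem: displacement Laplacian}, $\omega_s$ is $c$-semiconvex and $c$-semiconcave with $c = \max(1/s, 1/(1-s))$, it is $\norm{\cdot}_1$-Lipschitz, and $\mathbf{x}_t = \mathbf{x}_s + (t-s)\nabla_{m,m'}\omega_s^{\cM}(\mathbf{x}_s,\mathbf{y}_0)$. Transferring to $\cQ$ via types, Theorem \ref{thm: infinitesimal change of variables} applies with $a=0$. For $0 \le t-s \le 1/(2c)$ it gives
\[
f(t) \ge f(s) + (t-s)\,\overline{\Delta}_{m,m'}\omega_s(\mathbf{x}_s) - 4m c^2 (t-s)^2 .
\]
For backward steps, apply the same argument to the reversed geodesic $t \mapsto \mathbf{x}_{1-t}$, using the predicate $-\omega_s$ built from $\varphi_{0,s}$ (also $\norm{\cdot}_1$-Lipschitz plus quadratic by Proposition \ref{prop: displacement interpolation}(1)). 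This yields $f(t) \ge f(s) - (s-t)\,D^-(s) - O((s-t)^2)$ for $t<s$, where $D^-(s)$ is the corresponding Laplacian quantity, presumably $\underline{\Delta}\omega_s$ after a sign flip.

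\textbf{From local bounds to concavity.} Set $D(s) := \overline{\Delta}\omega_s(\mathbf{x}_s)$. The two bounds say that $f$ lies locally above a line with slope $D(s)$, up to a quadratic error, on both sides of $s$. Combined with monotonicity of $s \mapsto \overline{\Delta}\omega_s$ and $\underline{\Delta}\omega_s$ from Lemma \ref{lem: displacement Laplacian}(4), this should force concavity. Given $s_1<s_2$, partition $[s_1,s_2]$ finely and sum the forward inequalities, which gives $f(s_2)-f(s_1) \ge \sum_i (t_{i+1}-t_i)D(t_i) - o(1)$. Summing the backward inequalities gives the reverse comparison with $D$ evaluated at right endpoints. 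Since $D$ is decreasing, the two Riemann sums bracket $f(s_2)-f(s_1)$. Refining the partition then shows $f$ is the integral of the decreasing function $D$ (with $\overline{\Delta}$ and $\underline{\Delta}$ agreeing off a countable set), so $f$ is concave on $(0,1)$ with $f' = D$ almost everywhere. That is claim (2).

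The \textbf{main obstacle} is pinning down the backward inequality with the correct sign and identifying $D^-$. I would first check whether $\underline{\Delta}(-\omega_s) = -\overline{\Delta}\omega_s$ via the semigroup definition, possibly handling $\varphi_{0,s}$ directly as in the proof of Lemma \ref{lem: displacement Laplacian}(4). A finiteness issue also arises if $f \equiv -\infty$; in that case $f$ is trivially concave.

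\textbf{Endpoint continuity.} Upper semicontinuity at $t=0,1$ follows from Corollary \ref{cor: semicontinuity of entropy}, since the type of $\mathbf{x}_t$ varies continuously in $t$. For the lower bound, use Theorem \ref{thm: optimal coupling lifts} together with Proposition \ref{prop: classical-free variational principle}. Take matrix models $\mathbf{X}_0^{(n)}$ nearly achieving $f(0)$ and lift them to optimally coupled $\mathbf{X}_1^{(n)}$. Classical displacement concavity of entropy then gives $h^{(n)}(\mathbf{X}_t^{(n)}) \ge (1-t)h^{(n)}(\mathbf{X}_0^{(n)}) + t\,h^{(n)}(\mathbf{X}_1^{(n)})$. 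Since $\mathbf{X}_t^{(n)}$ models $\mathbf{x}_t$, this yields $f(t) \ge (1-t)f(0)+t f(1)$ after ultralimits, which gives continuity at the endpoints and concavity on $[0,1]$.

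\textbf{Claim (3).} The gap can be bounded using the semiconvexity constants: $\omega_s$ is $1/s$-semiconcave and $1/(1-s)$-semiconvex, so $-2m/(1-s) \le D(s) \le 2m/s$. This follows from the Laplacian bound for a $c$-semiconcave predicate, $\overline{\Delta} \le 2mc$ with circular normalization $\norm{\mathbf{z}_\varepsilon}_2^2 = 2m\varepsilon$. Integrating $f' = D$ from $s$ to $t$ then gives the stated logarithmic bounds.
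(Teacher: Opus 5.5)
Your treatment of the open interval is correct and differs only mildly from the paper's. The paper uses the two one-sided estimates to get local Lipschitz continuity, then squeezes $f'$ between $\overline{\Delta}\omega_t(\mathbf{x}_t)$ and $\underline{\Delta}\omega_t(\mathbf{x}_t)$ at points of differentiability. You instead bracket $f(s_2)-f(s_1)$ between left and right Riemann sums of the decreasing function $D$, which gives $f(s_2)-f(s_1)=\int_{s_1}^{s_2}D$ directly. Your ``main obstacle'' is not one. Apply Theorem \ref{thm: infinitesimal change of variables} to $-\omega_s$ itself, which satisfies the same hypotheses with $a=0$. Then $\overline{\Delta}(-\omega_s)=-\underline{\Delta}\omega_s$ holds by the $\limsup/\liminf$ definition, so $D^-(s)=\underline{\Delta}\omega_s(\mathbf{x}_s)\le D(s)$. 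Do not bring in $\varphi_{0,s}$: it gives a different predicate with the same gradient at $\mathbf{x}_s$, and there is no comparison between its Laplacian and $D(s)$.

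The genuine gap is the endpoint step. Proposition \ref{prop: classical-free variational principle} only gives $\lim_{n\to\cU} h^{(n)}(\mathbf{X}_1^{(n)})\le f(1)$, which is the wrong direction for concluding $f(t)\ge(1-t)f(0)+tf(1)$. The lift $\mathbf{X}_1^{(n)}$ from Theorem \ref{thm: optimal coupling lifts} is a measurable selection determined by $\mathbf{X}_0^{(n)}$, and nothing bounds its entropy from below. For instance, if $\chi_{\chron}^{\cU}(\mathbf{x}_1\mid\mathbf{y}_0)=-\infty$, your bound reads $f(t)\ge-\infty$. It then says nothing about $\liminf_{t\to0^+}f(t)$ versus $f(0)$, which is exactly what concavity on $[0,1]$ requires; upper semicontinuity only gives the other, unneeded inequality. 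The endpoint cases of (3), which you obtain by integrating $f'=D$, inherit the same gap.

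The fix, which is what the paper does, is to discard the $\mathbf{X}_1^{(n)}$-entropy term entirely. Write $\mathbf{X}_t^{(n)}=((1-t)\id+t\nabla\phi^{(n)})(\mathbf{X}_0^{(n)})$ with $\phi^{(n)}$ convex. The Jacobian then has eigenvalues $(1-t)+t\lambda_i\ge 1-t$, so $h^{(n)}(\mathbf{X}_t^{(n)})\ge h^{(n)}(\mathbf{X}_0^{(n)})+2m\log(1-t)$. The paper phrases this via $\mathbf{X}_0^{(n)}=\nabla\varphi_{0,t}^{(n)}(\mathbf{X}_t^{(n)})$ with $\varphi_{0,t}^{(n)}$ convex and $(1-t)^{-1}$-semiconcave. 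Now take $\mathbf{X}_0^{(n)}$ entropy-achieving and apply Proposition \ref{prop: classical-free variational principle} to $\mathbf{X}_t^{(n)}$. This yields $f(t)\ge f(0)+2m\log(1-t)$, which is the $s=0$ case of (3) and gives $\liminf_{t\to0^+}f(t)\ge f(0)$. The $t=1$ case follows by reversing the geodesic and lifting from entropy-achieving models of $\mathbf{x}_1$.
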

	
	\begin{proof}
		We will postpone the proof of Claim (1) until the end, since the endpoints $0$ and $1$ require additional argument.
		
		(2) Suppose that $0 < s < t < 1$ and $t - s \leq \frac{1}{2} \min(s,1-s,t,1-t)$.  Recall from Lemma \ref{lem: displacement Laplacian} that $\omega_s$ is $1/s$-semiconcave and $1/(1-s)$-semiconvex.  Let $c_s = \max(1/s,1/(1-s))$.  Applying Theorem \ref{thm: infinitesimal change of variables} with $\varepsilon = t - s$, we obtain that
		\begin{equation} \label{eq: entropy derivative lower bound}
			\chi_{\chron}^{\cU}(\mathbf{x}_t \mid \mathbf{y}_0) \geq \chi_{\chron}^{\cU}(\mathbf{x}_s \mid \mathbf{y}_0) + (t-s) \overline{\Delta}_{m,m'} \omega_s^{\cM}(\mathbf{x}_s,\mathbf{y}_0) - 2mc_s^2(t-s)^2.
		\end{equation}
		Since $\omega_s^{\cM}$ is $1/(1-s)$-semiconvex, we have $\Delta_{m,m'} \omega_s^{\cM} \geq -2m/(1-s)$, and so
		\begin{equation} \label{eq: entropy derivative lower bound 2}
			\chi_{\chron}^{\cU}(\mathbf{x}_t \mid \mathbf{y}_0) \geq \chi_{\chron}^{\cU}(\mathbf{x}_s \mid \mathbf{y}_0) -\frac{2m(t-s)}{1-s} - 2mc_s^2(t-s)^2.
		\end{equation}
		Symmetrically, we can apply Theorem \ref{thm: infinitesimal change of variables} to the function $-\omega_t$ with $c_t = \max(1/t,1/(1-t))$ and $\varepsilon = t-s$ to obtain
		\begin{equation} \label{eq: entropy derivative upper bound}
			\chi_{\chron}^{\cU}(\mathbf{x}_s \mid \mathbf{y}_0) \geq \chi_{\chron}^{\cU}(\mathbf{x}_t \mid \mathbf{y}_0) + (s-t) \underline{\Delta}_{m,m'} \omega_t^{\cM}(\mathbf{x}_s,\mathbf{y}_0) - 2mc_t^2(t-s)^2,
		\end{equation}
		and since $\omega_t^{\cM}$ is $1/t$-semiconcave,
		\begin{equation} \label{eq: entropy derivative upper bound 2}
			\chi_{\chron}^{\cU}(\mathbf{x}_s \mid \mathbf{y}_0) \geq \chi_{\chron}^{\cU}(\mathbf{x}_t \mid \mathbf{y}_0) + \frac{2m(s-t)}{t} - 2mc_t^2(t-s)^2.
		\end{equation}
		Therefore, when $t - s \leq \frac{1}{2} \min(s,1-s,t,1-t)$, we have
		\begin{equation} \label{eq: entropy Lipschitz estimate}
			|\chi_{\chron}^{\cU}(\mathbf{x}_t \mid \mathbf{y}_0) - \chi_{\chron}^{\cU}(\mathbf{x}_s \mid \mathbf{y}_0)| \leq 2m \max\left(\frac{1}{s},\frac{1}{1-s},\frac{1}{t},\frac{1}{1-t} \right) ( |s - t| + |s - t|^2).
		\end{equation}
		Therefore, $t \mapsto \chi_{\chron}^{\cU}(\mathbf{x}_t \mid \mathbf{y}_0)$ is locally Lipschitz and so differentiable almost everywhere.  Note that \eqref{eq: entropy derivative lower bound} gives a lower bound for the right-hand derivative and \eqref{eq: entropy derivative upper bound} gives an upper bound for the left-hand derivative.  So at every point $t \in (0,1)$ where $\chi_{\chron}^{\cU}(\mathbf{x}_t \mid \mathbf{y}_0)$ is differentiable, we have
		\[
		\overline{\Delta}_{m,m'} \omega_t^{\cM}(\mathbf{x}_t,\mathbf{y}_0) \leq \frac{d}{dt} \chi_{\chron}^{\cU}(\mathbf{x}_t \mid \mathbf{y}_0) \leq \underline{\Delta}_{m,m'} \omega_t^{\cM}(\mathbf{x}_t,\mathbf{y}_0) \leq \overline{\Delta}_{m,m'} \omega_t^{\cM}(\mathbf{x}_t,\mathbf{y}_0),
		\]
		which implies that all three quantities are equal as asserted in claim (2).
		
		(3) First suppose that $0 < s < t < 1$.  We apply \eqref{eq: entropy derivative lower bound 2} and \eqref{eq: entropy derivative upper bound 2} to obtain that for almost every $t \in (0,1)$,
		\[
		-\frac{2m}{1 - t} \leq \frac{d}{dt} \chi_{\chron}^{\cU}(\mathbf{x}_t \mid \mathbf{y}_0) \leq \frac{2m}{t}.
		\]
		Therefore, for $s \leq t$,
		\[
		\chi_{\chron}^{\cU}(\mathbf{x}_t \mid \mathbf{y}_0) - \chi_{\chron}^{\cU}(\mathbf{x}_s \mid \mathbf{y}_0) \leq 2m \int_s^t \frac{1}{u}\,du = 2m \log \frac{t}{s},
		\]
		and symmetrically
		\[
		\chi_{\chron}^{\cU}(\mathbf{x}_t \mid \mathbf{y}_0) - \chi_{\chron}^{\cU}(\mathbf{x}_s \mid \mathbf{y}_0) \geq -2m \int_s^t \frac{1}{1-u}\,du = 2m \log \frac{1-t}{1-s}.
		\]
		Now consider the case when $s = 0$.  Then $\log (t/s) = \infty$ so the right-hand inequality in (3) is trivial.  By Proposition \ref{prop: classical-free variational principle}, let $\mathbf{Y}^{(n)}$ be a deterministic matrix tuple and $\mathbf{X}_0^{(n)}$ a random matrix tuple such that \eqref{eq: convergence of type at time zero} holds and
		\[
		\lim_{n \to \cU} h^{(n)}(\mathbf{X}_0^{(n)}) = \chi_{\chron}^{\cU}(\mathbf{x}_0 \mid \mathbf{y}_0).
		\]
		Let $\mathbf{X}_1^{(n)}$ be the compatible random matrix models for $\mathbf{x}_1$ given by Theorem \ref{thm: optimal coupling lifts}.  Let $\mathbf{X}_t^{(n)} = (1-t)\mathbf{X}_0^{(n)} + t \mathbf{X}_1^{(n)}$.  Since $(\mathbf{X}_0^{(n)}, \mathbf{X}_1^{(n)})$ are classically optimally coupled, the classical Monge--Kantorovich duality and the classical analog of Proposition \ref{prop: displacement interpolation} implies that there is a convex and $1/(1-t)$-semiconcave function $\varphi_{0,t}^{(n)}$ such that $\mathbf{X}_0^{(n)} = \nabla \varphi_{0,t}^{(n)}(\mathbf{X}_t^{(n)})$ almost everywhere in $\mathbb{M}_n^m$.  In particular, $\mathbf{X}_0^{(n)}$ is a $1/(1-t)$-Lipschitz function of $\mathbf{X}_t^{(n)}$, and recall by Theorem \ref{thm: optimal coupling lifts} that $\mathbf{X}_1^{(n)}$ is also a Borel function of $\mathbf{X}_0^{(n)}$.  Hence, by classical change of variables for entropy, we have
		\begin{align*}
			h^{(n)}(\mathbf{X}_0^{(n)}) &= h^{(n)}(\mathbf{X}_1^{(n)}) + \mathbb{E} \left[ \frac{1}{n^2} \Tr(\log H \varphi_{0,t}^{(n)}(\mathbf{X}_t^{(n)}) \right] \\
			&\leq h^{(n)}(\mathbf{X}_1^{(n)}) + 2m \log \frac{1}{1-t}.
		\end{align*}
		Hence, by Proposition \ref{prop: classical-free variational principle},
		\begin{align*}
			\chi_{\chron}^{\cU}(\mathbf{x}_t \mid \mathbf{y}_0) &\geq \lim_{n \to \cU} h^{(n)}(\mathbf{X}_t^{(n)})  \\
			&\geq \lim_{n \to \cU} h^{(n)}(\mathbf{X}_0^{(n)}) + 2m \log(1-t) \\
			&= \chi_{\chron}^{\cU}(\mathbf{x}_t \mid \mathbf{y}_0) + 2m \log(1-t).
		\end{align*}
		Hence, (3) is established when $s = 0$.  The case when $t = 1$ is symmetrical; one simply switches the roles of $\mathbf{x}_t$ and $\mathbf{x}_{1-t}$ for $t \in [0,1]$.
		
		(1) Note that concavity of $t \mapsto \chi_{\chron}^{\cU}(\mathbf{x}_t,\mathbf{y}_0)$ on the open interval $(0,1)$ follows because $t \mapsto \overline{\Delta}_{m,m'} \omega_t^{\cM}(\mathbf{x}_t,\mathbf{y}_0)$ is decreasing by Lemma \ref{lem: displacement Laplacian} (4), and this agrees with the derivative of $\chi_{\chron}^{\cU}(\mathbf{x}_t,\mathbf{y}_0)$ for almost every $t$ by claim (2).  To extend the concavity to the closed interval $[0,1]$, it suffices to show that the value at each endpoint is not any larger than the $\liminf$ as we approach the endpoint.  Note that by claim (3) for $s = 0$,
		\[
		\liminf_{t \to 0^+} \chi_{\chron}^{\cU}(\mathbf{x}_t \mid \mathbf{y}_0) \geq \chi_{\chron}^{\cU}(\mathbf{x}_0 \mid \mathbf{y}),
		\]
		and symmetrically applying claim (3) with $t = 1$,
		\[
		\liminf_{s \to 1^-} \chi_{\chron}^{\cU}(\mathbf{x}_s \mid \mathbf{y}) \geq \chi_{\chron}^{\cU}(\mathbf{x}_1 \mid \mathbf{y}). \qedhere
		\]
	\end{proof}
	
	\subsection{Evolution variational inequality} \label{subsec: EVI}
	
	In this section, we show that the heat evolution in $\mathbb{S}_{\chron,m,\mathbb{r}}(\cQ / \mathbf{y}_0)$ is the gradient flow for entropy in a suitable sense.
	
	Suppose that $\mathbf{x}_0 \in \cQ_0^m$ with $\nu_0 = \tp_{\chron}^{\cQ}(\mathbf{x}_0,\mathbf{y}) \in \mathbb{S}_{\chron,m,\mathbb{r}}(\cQ / \mathbf{y}_0)$.  Let $\mathbf{z}_t = (z_{0,t,j})_{j=1}^m$ be the canonical Brownian motion in $\cQ^m$, and let $\nu_t$ be the chronological type of $(\mathbf{x}_0 + \mathbf{z}_t,\mathbf{y}_0)$ in the shifted structure $S_t \cQ$.  Note that for a chronological formula $\varphi$,
	\[
	(\nu_t,\varphi) = \varphi^{S_t\cQ}(\mathbf{x}_0+\mathbf{z}_t,\mathbf{y}_0) = (P_t \varphi)^{\cQ}(\mathbf{x}_0,\mathbf{y}_0) = (\nu_0,P_t \varphi),
	\]
	where $P_t$ is the heat semigroup with respect to the first $m$ variables.  This shows that $\nu_t$ is a continuous function of $\nu_0$ with respect to the logic topology, and in particular, it does not depend on the choice of representative $(\mathbf{x}_0,\mathbf{y}_0)$ of the type $\nu_0$.  We will thus also denote $\nu_t$ as $P_t^\dagger \nu_0$.  Since $P_t$ is a semigroup acting on chronological formulas, we see that $P_t^{\dagger}$ is a semigroup acting on chronological types over $\mathbf{y}_0$.

	\begin{theorem} \label{thm: EVI}
		Let $\cM$ and $\cQ$ be as in \eqref{eq: random matrix ultraproduct} and \eqref{eq: random matrix quotient}.  Let $m \in \N$ and $m' \in \N \cup \{0,\infty\}$.  Fix $\mathbf{y}_0 \in Q_0^{m'}$ and $\mathbf{x}_0$, $\mathbf{x}_1 \in Q_0^{m}$.  Let
		\[
		\nu_0 = \tp_{\chron}^{\cQ}(\mathbf{x}_0,\mathbf{y}_0), \quad
		\nu_t = \tp_{\chron}^{S_t \cQ}(\mathbf{x}_0+\mathbf{z}_t,\mathbf{y}), \quad 
		\sigma = \tp_{\chron}^{\cQ}(\mathbf{x}_1,\mathbf{y}_0)
		\]
		where $t \geq 0$ and $\mathbf{z}_t$ is the first $m$ coordinates of the canonical Brownian motion.  Then we have the evolution variational inequality
		\[
		\frac{1}{2} \left[ d_W(\nu_t,\sigma)^2 - d_W(\nu_s,\sigma)^2 \right] \leq (t-s) \left[ \chi_{\chron}^{\cU}(\mathbf{x}_0+ \mathbf{z}_t \mid \mathbf{y}_0) - \chi_{\chron}^{\cU}(\mathbf{x}_1 \mid \mathbf{y}_0) \right] \text{ for } 0 \leq s < t < \infty.
		\]
	\end{theorem}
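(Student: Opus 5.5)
Following the introduction, the plan is to reduce the EVI to a classical one for random matrix models, using Theorem \ref{thm: optimal coupling lifts} together with the classical-free variational principle, Proposition \ref{prop: classical-free variational principle}. By the semigroup property of $P_t^\dagger$ it suffices to treat $s=0$: replacing $\cQ$ by $S_s\cQ$ and $\mathbf{x}_0$ by $\mathbf{x}_0+\mathbf{z}_s$ gives a structure with the same chronological theory, since it is stationary. So we aim for
\[
\tfrac12\bigl[d_W(\nu_t,\sigma)^2-d_W(\nu_0,\sigma)^2\bigr]\le t\bigl[\chi_{\chron}^{\cU}(\mathbf{x}_0+\mathbf{z}_t\mid\mathbf{y}_0)-\chi_{\chron}^{\cU}(\mathbf{x}_1\mid\mathbf{y}_0)\bigr].
\]
We may assume $\chi_{\chron}^{\cU}(\mathbf{x}_1\mid\mathbf{y}_0)>-\infty$, otherwise the inequality is trivial. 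We may also assume, by Lemma \ref{lem: existence of optimal couplings} (working in a countably saturated elementary extension if necessary), that $(\mathbf{x}_1,\mathbf{x}_0)$ is an optimal coupling of $(\sigma,\nu_0)$.

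First, by Proposition \ref{prop: classical-free variational principle}, choose deterministic $\mathbf{Y}^{(n)}$ and random, operator-norm bounded $\mathbf{X}_1^{(n)}$ whose normalized classical entropies converge to $\chi_{\chron}^{\cU}(\mathbf{x}_1\mid\mathbf{y}_0)$. Next, apply Theorem \ref{thm: optimal coupling lifts} with the roles of $\mathbf{x}_0$ and $\mathbf{x}_1$ swapped. This gives $\mathbf{X}_0^{(n)}$, a Borel function of $\mathbf{X}_1^{(n)}$, such that $(\mathbf{X}_1^{(n)},\mathbf{X}_0^{(n)},\mathbf{Y}^{(n)})$ models $(\mathbf{x}_1,\mathbf{x}_0,\mathbf{y}_0)$ and the pair is classically optimally coupled. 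In particular $W_2(\mathrm{law}\,\mathbf{X}_0^{(n)},\mathrm{law}\,\mathbf{X}_1^{(n)})^2\to d_W(\nu_0,\sigma)^2$, because $\norm{\mathbf{x}_0-\mathbf{x}_1}_2^2$ is a chronological formula evaluated on the triple. Now run an independent matrix Brownian motion and set $\mathbf{X}_t^{(n)}=\mathbf{X}_0^{(n)}+F_R(\mathbf{Z}_{0,t}^{(n)})$ for a large truncation $R$. By Propositions \ref{prop: approx by pointwise function} and \ref{prop: convergence of type}, applied as in inductive case 3 with the random $\mathbf{X}_0^{(n)}$ measurable at time $0$, the triple $(\mathbf{X}_1^{(n)},\mathbf{X}_t^{(n)},\mathbf{Y}^{(n)})$ asymptotically has the chronological type of $(\mathbf{x}_1,\mathbf{x}_0+\mathbf{z}_t,\mathbf{y}_0)$ in $S_t\cQ$ (up to a truncation error that vanishes as $R\to\infty$). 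Hence
\[
\limsup_{n\to\cU}W_2(\mathrm{law}\,\mathbf{X}_t^{(n)},\mathrm{law}\,\mathbf{X}_1^{(n)})^2\ \ge\ d_W(\nu_t,\sigma)^2 .
\]
This is the only direction we need, since that term appears with a plus sign on the left. Moreover $\liminf h^{(n)}(\mathbf{X}_t^{(n)})\le\chi_{\chron}^{\cU}(\mathbf{x}_0+\mathbf{z}_t\mid\mathbf{y}_0)$ by \eqref{eq: classical entropy inequality}; the Gaussian tail hypothesis (3) there holds because $\mathbf{X}_0^{(n)}$ is bounded and the noise is Ginibre.

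Then apply the classical EVI for the heat flow on $\mathbb{M}_n^m\cong\R^{2mn^2}$, in the normalization of the paper: the heat flow is the EVI$_0$ gradient flow of the (concave) entropy with respect to $W_2$ computed in $\norm{\cdot}_{\tr_n}$. After dividing entropy by $n^2$ and adding $2m\log n$, the normalized inequality reads
\[
\tfrac12\bigl[W_2(\rho_t,\rho')^2-W_2(\rho_0,\rho')^2\bigr]\le t\bigl[h^{(n)}(\rho_t)-h^{(n)}(\rho')\bigr],
\]
where the factor $1/2$ from the Brownian normalization must be checked. The main obstacle is that $\mathbf{X}_t^{(n)}$ uses the truncated $F_R(\mathbf{Z})$, not true heat flow. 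I would instead apply the classical EVI to the untruncated $\mathbf{X}_0^{(n)}+\mathbf{Z}_{0,t}^{(n)}$. By \eqref{eq: truncated GUE matrix 2}, the $L^2$ distance between the truncated and untruncated versions tends to $0$, so the $W_2$ terms agree in the limit. For the entropy, the untruncated variable is the one that has an entropy, and it should itself be a valid microstate model: Proposition \ref{prop: classical-free variational principle} tolerates convergence in probability and operator-norm bounds only in probability, and $\norm{\mathbf{Z}^{(n)}}\to2\sqrt2 t^{1/2}$ almost surely. Then take $n\to\cU$: the left side is bounded below by $\tfrac12[d_W(\nu_t,\sigma)^2-d_W(\nu_0,\sigma)^2]$, and the right side is bounded above by $t[\chi_{\chron}^{\cU}(\mathbf{x}_0+\mathbf{z}_t\mid\mathbf{y}_0)-\chi_{\chron}^{\cU}(\mathbf{x}_1\mid\mathbf{y}_0)]$. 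That concludes the proof.
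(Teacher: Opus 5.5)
Your outline matches the paper's: reduce to $s=0$, take entropy-achieving models $\mathbf{X}_1^{(n)}$ from Proposition~\ref{prop: classical-free variational principle}, lift the optimal coupling by Theorem~\ref{thm: optimal coupling lifts}, apply the classical EVI to the untruncated $\mathbf{X}_0^{(n)}+\mathbf{Z}_t^{(n)}$ against $\mathbf{X}_1^{(n)}$, and let $n\to\cU$. Two steps, however, do not go through as written.

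First, the bound $d_W(\nu_t,\sigma)^2\le\lim_{n\to\cU}W_2(\mathrm{law}\,\mathbf{X}_t^{(n)},\mathrm{law}\,\mathbf{X}_1^{(n)})^2$ does not follow from the joint type of your particular coupling. Knowing that $(\mathbf{X}_1^{(n)},\mathbf{X}_t^{(n)},\mathbf{Y}^{(n)})$ models $(\mathbf{x}_1,\mathbf{x}_0+\mathbf{z}_t,\mathbf{y}_0)$ bounds the classical $W_2$ only from \emph{above}, by the cost of this coupling. That cost tends to $\norm{\mathbf{x}_0+\mathbf{z}_t-\mathbf{x}_1}_2^2=d_W(\nu_0,\sigma)^2+2mt$. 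By itself this coupling shows only $\tfrac12[d_W(\nu_t,\sigma)^2-d_W(\nu_0,\sigma)^2]\le mt$. To bound the free distance by the classical infimum you must handle the classical \emph{optimal} coupling, which is a different joint law. The paper does this as follows:
\begin{itemize}
\item take an optimal coupling $(\mathbf{V}_t^{(n)},\mathbf{W}_t^{(n)})$ of the two laws, realized on $(\Omega,\mathcal{G}_0,\mathbb{P})$;
\item truncate $\mathbf{V}_t^{(n)}$ by $F_{r+3\sqrt t}$, which fixes $\mathbf{W}_t^{(n)}$ and is $\norm{\cdot}_2$-contractive by Lemma~\ref{lem: Lipschitz functional calculus}, so the cost does not increase;
\item push the pair to $Q_0$.
\end{itemize}
Because $\mathbf{Y}^{(n)}$ is deterministic, convergence in probability of $\Lambda_\varphi^{(n)}(\cdot,\mathbf{Y}^{(n)})$ depends only on the law of the first argument. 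Proposition~\ref{prop: convergence of type} then gives the images chronological types $\nu_t$ and $\sigma$ over $\mathbf{y}_0$. They therefore form a free coupling, and $d_W(\nu_t,\sigma)\le\lim_{n\to\cU}W_2$. This ``marginal types depend only on marginal laws'' step is what your argument lacks.

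Second, the normalization check you postponed does not come out as you wrote it. Here $\mathbf{Z}_t^{(n)}$ is $n^{-1}$ times a standard Brownian motion for $\re\ip{\cdot,\cdot}_{\tr_n}$. So the law solves $\partial_t\rho=\frac{1}{2n^2}\Delta\rho$, with $\Delta$ the Euclidean Laplacian of $(\mathbb{M}_n^m,\re\ip{\cdot,\cdot}_{\tr_n})$. This is the $W_2$-gradient flow of $\frac{1}{2n^2}\int\rho\log\rho=-\tfrac12 h^{(n)}+\mathrm{const}$, not of $-h^{(n)}$. The classical EVI$_0$ is therefore $\tfrac12[W_2(\rho_t,\rho')^2-W_2(\rho_0,\rho')^2]\le\tfrac t2[h^{(n)}(\rho_t)-h^{(n)}(\rho')]$. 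Your argument, with the first gap fixed, proves the theorem with $(t-s)/2$ in place of $t-s$.

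The version with $t$, which the paper also asserts in \eqref{eq: classical EVI on matrices}, fails already for Gaussians. Take $m=1$, $m'=0$, $\rho_0=\mathrm{law}(\mathbf{G})$ and $\rho'=\mathrm{law}(1.1\,\mathbf{G})$ with $\mathbf{G}$ Ginibre. Then $W_2(\rho_t,\rho')^2=2(\sqrt{1+t}-1.1)^2$ and $h^{(n)}(\rho_t)-h^{(n)}(\rho')=\log\frac{1+t}{1.21}$. At $t=0.01$ the left side is about $-9.7\cdot 10^{-4}$, while $t[h^{(n)}(\rho_t)-h^{(n)}(\rho')]\approx-1.8\cdot 10^{-3}$.

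The same numbers occur in $\cQ$ for $\mathbf{x}_0=\pi([F_3(\mathbf{G}^{(n)})]_{n\in\N})$, with $\mathbf{G}^{(n)}$ $\mathcal{G}_0$-measurable, and $\mathbf{x}_1=1.1\,\mathbf{x}_0$. For the types modeled by $\sqrt v\,\mathbf{G}^{(n)}$, Proposition~\ref{prop: classical-free variational principle} together with the Gaussian maximum-entropy bound gives $\chi_{\chron}^{\cU}=\log(2\pi e v)$. The distance $d_W$ is pinned down by the semicircular spectral distributions of the real and imaginary parts. So the displayed inequality fails in this example, and only the factor-$\tfrac12$ version can be proved.
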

	
	\begin{proof}
		By the semigroup property of $P_t^{\dagger}$, it suffices to prove the inequality in the case that $s = 0$.  Let $(\mathbf{X}_1^{(n)},\mathbf{Y}_0^{(n)})$ be random matrix approximations for $(\mathbf{x}_1,\mathbf{y}_0)$ given by Proposition \ref{prop: classical-free variational principle}, so that $\mathbf{Y}_0^{(n)}$ is deterministic, $\mathbf{X}_1^{(n)}$ is random,
		\[
		\lim_{n \to \cU} \Lambda_\varphi^{(n)}(\mathbf{X}_1^{(n)},\mathbf{Y}_0^{(n)}) = \varphi^{\cQ}(\mathbf{x}_0,\mathbf{y}_0) \text{ in probability}
		\]
		for restricted chronological formulas $\varphi$, and
		\[
		\lim_{n \to \cU} h^{(n)}(\mathbf{X}_1^{(n)}) = \chi_{\chron}^{\cU}(\mathbf{x}_1 \mid \mathbf{y}).
		\]
		By Lemma \ref{lem: existence of optimal couplings}, assume without loss of generality that $(\mathbf{x}_0,\mathbf{x}_1)$ are an optimal coupling of $(\nu_0,\sigma)$.  Now let $\mathbf{X}_0^{(n)}$ be random matrix models for $\tp_{\chron}^{\cQ}(\mathbf{x}_0,\mathbf{y}_0)$ given by Theorem \ref{thm: optimal coupling lifts}, so that $(\mathbf{X}_0^{(n)},\mathbf{X}_1^{(n)})$ is classically optimally coupled.  Let $\mathbf{Z}_t^{(n)}$ be the first $m$ coordinates of the matrix Brownian motion.  Let $\nu_t^{(n)}$ be the probability distribution of $\mathbf{X}_0^{(n)} + \mathbf{Z}_t^{(n)}$ and let $\sigma^{(n)}$ be the probability distribution of $\mathbf{X}_1^{(n)}$.  The classical heat evolution on $\mathbb{M}_n^m$ and the classical entropy satisfy the EVI with constant $0$ by \cite[Theorem 3.16]{MurSav2020gradient}, and hence also the integral form EVI$_0$'' from \cite[Theorem 3.3]{MurSav2020gradient}.  As explained above, we use a dimensionally normalized entropy $h^{(n)}$, where the normalization is compatible with the normalization of the Brownian motion $\mathbf{Z}_t^{(n)}$ and the corresponding heat flow.  Thus, we have
		\begin{equation} \label{eq: classical EVI on matrices}
			\frac{1}{2} \left[ d_W(\nu_t^{(n)},\sigma^{(n)})^2 - d_W(\nu_0^{(n)},\sigma^{(n)})^2 \right] \leq t \left[ h^{(n)}(\nu_t) - h^{(n)}(\sigma^{(n)}) \right].
		\end{equation}
		Let $(\mathbf{V}_t^{(n)},\mathbf{W}_t^{(n)})$ be a classical optimal coupling of $(\nu_t^{(n)},\sigma^{(n)})$ in $L^2(\Omega,\mathcal{F}_0,\mathbb{P}; \mathbb{M}_n^m)$.  Let $r$ be an upper bound for the operator norm of $\mathbf{X}_0^{(n)}$.  Let
		\begin{align*}
			\mathbf{v}_t &= \pi([F_{r + 3 \sqrt{t}}(\mathbf{V}_t^{(n)})]_{n \in \N}) \in \cQ_0^m \\
			\mathbf{w}_t &= \pi([\mathbf{W}_t^{(n)}]_{n \in \N}) \in \cQ_0^m.
		\end{align*}
		Then using the Lipschitz property of $F_{r+3\sqrt{t}}$,
		\begin{align}
			d_W(\nu_t,\sigma) &\leq \norm{\mathbf{v}_t - \mathbf{w}_t}_2^2 \nonumber \\
			&= \lim_{n \to \cU} \norm{F_{r+3\sqrt{t}}(\mathbf{V}_t^{(n)}) - \mathbf{W}_t^{(n)}}_{L^2} \nonumber \\
			&\leq \lim_{n \to \cU} \norm{\mathbf{V}_t^{(n)} - \mathbf{W}_t^{(n)}}_{L^2} \nonumber \\
			&= \lim_{n \to \cU} d_W(\nu_t^{(n)},\sigma^{(n)}). \label{eq: EVI upper bound for Wasserstein}
		\end{align}
		By the our choice of $\mathbf{X}_0^{(n)}$, we have
		\begin{equation} \label{eq: EVI equality for Wasserstein}
			d_W(\nu_0,\sigma) = \norm{\mathbf{x}_0 - \mathbf{x}_1}_2 = \lim_{n \to \cU} \norm{\mathbf{X}_0^{(n)} - \mathbf{X}_1^{(n)}}_{L^2} = \lim_{n \to \cU} d_W(\nu_0^{(n)},\sigma^{(n)}).
		\end{equation}
		By Proposition \ref{prop: classical-free variational principle},
		\begin{equation} \label{eq: EVI lower bound for entropy}
			\chi_{\chron}^{\cU}(\mathbf{x}_0 + \mathbf{z}_t \mid \mathbf{y}) \geq \lim_{n \to \cU} h^{(n)}(\nu_t^{(n)}).
		\end{equation}
		Note that the hypotheses of Proposition \ref{prop: classical-free variational principle} are satisfied because using concentration of measure
		\begin{align*}
			\lim_{n \to \cU}
			\Lambda_{\varphi}^{(n)}(\mathbf{X}_0^{(n)} + \mathbf{Z}_t^{(n)},\mathbf{Y}_0^{(n)}) &= \lim_{n \to \cU} \Lambda_{\varphi}^{(n)}(\mathbf{X}_0^{(n)} + F_{3\sqrt{t}} (\mathbf{Z}_t^{(n)}),\mathbf{Y}_0^{(n)}) \\
			&= \lim_{n \to \cU} \mathbb{E} \Lambda_{\varphi}^{(n)}(\mathbf{X}_0^{(n)} + F_{3\sqrt{t}} (\mathbf{Z}_t^{(n)}),\mathbf{Y}_0^{(n)}) \\
			&= P_t \varphi^{\cM}(\mathbf{x}_0, \mathbf{y}_0) \\
			&= \varphi^{S_t \cM}(\mathbf{x}_0 + \mathbf{z}_t,\mathbf{y}_0).
		\end{align*}
		Moreover, the operator norm and tail bound hypotheses hold for $\mathbf{X}_0^{(n)} + \mathbf{Z}_t^{(n)}$ since $\mathbf{X}_0^{(n)}$ is bounded in operator norm and the Ginibre random matrices satisfy \eqref{eq: Herbst concentration inequality} and \eqref{eq: operator norm bound for GUE}.  By the choice of $\mathbf{X}_1^{(n)}$,
		\begin{equation} \label{eq: EVI equality for entropy}
			\chi_{\chron}^{\cU}(\mathbf{x}_1 \mid \mathbf{y}_0) = \lim_{n \to \cU} h^{(n)}(\sigma^{(n)})
		\end{equation}
		Hence, plugging in \eqref{eq: EVI upper bound for Wasserstein}, \eqref{eq: EVI equality for Wasserstein}, \eqref{eq: EVI lower bound for entropy}, and \eqref{eq: EVI equality for entropy} into \eqref{eq: classical EVI on matrices}, we obtain
		\[
		\frac{1}{2} \left[ d_W(\nu_t,\sigma)^2 - d_W(\nu_0,\sigma) \right] \leq t \left[ \chi_{\chron}^{\cU}(\mathbf{x}_0+ \mathbf{z}_t \mid \mathbf{y}_0) - \chi_{\chron}^{\cU}(\mathbf{x}_1 \mid \mathbf{y}_0) \right],
		\]
		which completes the proof.
	\end{proof}

	\bibliographystyle{plain}
	\bibliography{typeLDP}
	
\end{document}